%% file: Mapping_class_group_extension.tex
\documentclass[11pt]{amsart}

\usepackage{amscd,amssymb,amsmath,amsfonts,amsthm}
\usepackage{thmtools}
\usepackage{graphicx,import}
\usepackage{subcaption}
\usepackage[top=2.5cm, bottom=2.5cm, left=2cm, right=2cm]{geometry}
\usepackage[colorlinks=true]{hyperref}
\usepackage[nameinlink]{cleveref}

\makeatletter
\AddToHook{cmd/appendix/before}{\def\cref@section@alias{appendix}}
\makeatother

\makeatletter
\AddToHook{cmd/appendix/before}{\def\cref@subsection@alias{appendix}}
\makeatother

\usepackage{import}
\usepackage{csquotes} 
\usepackage[backend=biber,style=alphabetic,giveninits=true,maxbibnames=99]{biblatex}
\renewbibmacro{in:}{}
\DeclareFieldFormat{pages}{#1}
\AtEveryBibitem{%
  \clearfield{pagetotal}
  \clearlist{location}
}
\usepackage{tikz}
\usetikzlibrary{arrows.meta}
\usepackage{array}
\usepackage[all]{xy}
\usepackage{tikz-cd}
\usepackage{float}
\usepackage{enumitem}

\usepackage{xcolor}

\definecolor{myred}{RGB}{255,100,100}

\DeclareMathOperator{\GL}{GL}
\DeclareMathOperator{\PGL}{PGL}
\DeclareMathOperator{\id}{id}

\DeclareMathOperator{\Aut}{Aut}

\DeclareMathOperator{\End}{End}

\DeclareMathOperator{\ad}{ad}
\DeclareMathOperator{\coad}{coad}
\DeclareMathOperator{\inv}{inv}

\newcounter{intro}
\newtheorem{theoremeintro}[intro]{Theorem}

\newtheorem{theoreme}{Theorem}[section]
\newtheorem{proposition}[theoreme]{Proposition}
\newtheorem{lemme}[theoreme]{Lemma}
\newtheorem{corollaire}[theoreme]{Corollary}

\newtheorem*{theoreme*}{Theorem}

\theoremstyle{definition}

\newtheorem{remarqueintro}[intro]{Remark}

\newtheorem{definition}[theoreme]{Definition}

\newtheorem{remarque}[theoreme]{Remark}

\newtheorem*{notation*}{Notation}
\Crefname{theoremeintro}{Thm.}{Thm.}
\Crefname{corollaireintro}{Cor.}{Cor.}
\Crefname{propositionintro}{Prop.}{Prop.}
\Crefname{remarqueintro}{Rem.}{Rem.}

\Crefname{theoreme}{Thm.}{Thm.}
\Crefname{proposition}{Prop.}{Prop.}
\Crefname{lemme}{Lem.}{Lem.}
\Crefname{corollaire}{Cor.}{Cor.}

\Crefname{definition}{Def.}{Def.}
\Crefname{notation}{Not.}{Not.}

\Crefname{remarque}{Rem.}{Rem.}

\Crefname{subsection}{Subsection}{Subsection}
\Crefname{subsubsection}{§}{§}

\creflabelformat{enumi}{#2(#1)#3}
\counterwithin{figure}{section}

\allowdisplaybreaks

\title{Central extensions of mapping class groups of surfaces from stated skein algebras}

\author{Joris Moulai}
\address{IMAG, Univ. Montpellier, CNRS, Montpellier, France}
\email{joris.moulai@umontpellier.fr}

\begin{document}

\begin{abstract}
	Let $\Sigma$ be a surface of genus $g$ with zero or one boundary component and $n$ marked points, and $H$ a finite-dimensional factorizable ribbon Hopf algebra. 
	We compute the central extension of the mapping class group of $\Sigma$, associated with a linearization of the projective representation constructed from the stated skein algebra of $\Sigma$ and $H$. 
	Our proof is purely two-dimensional, and makes no use of TQFT arguments.
\end{abstract}

\subjclass[2020]{57K20, 57K16}
\keywords{Mapping class groups, central extensions, projective representations, skein algebras, graph algebras, TQFTs}

\maketitle

\tableofcontents

\section*{Introduction}\label{sec-introduction}

Let $H$ be a finite-dimensional factorizable ribbon Hopf algebra, $\Sigma$ a compact oriented surface possibly with a finite set of marked points, and $\Gamma$ its mapping class group. 
The goal of this article is to identify the cohomology class associated with a minimal central extension $\widetilde{\Gamma}$ of $\Gamma$ on which we can define a linearization $\tilde{\rho}$ of the projective representation $\rho$ of $\Gamma$ constructed from the stated skein algebra of $\Sigma$ and $H$. 

\smallskip

This is a classical problem in quantum topology. 
It is a general feature of topological quantum field theories (TQFTs) to give rise to functors with anomalies (see \cite{turaev_quantum_2010} and \cite{kerler_non-semisimple_2001}), that is, the image of the gluing of two cobordisms by the functor depends on a gluing data, e.g. lagrangian subspaces.
In particular, TQFTs provide so-called \enquote{quantum} projective representations of the mapping class groups, hence associated central extensions which are characterized by cohomology classes in $H^2(\Gamma, A)$, where $A$ is an abelian group.
For the Reshetikhin–Turaev TQFT, the cohomology classes of the minimal central extensions have been computed for closed surfaces in \cite{masbaum_central_1995} and are equal to 12 times a generator of $H^2(\Gamma, \mathbb{Z})$.
The topological description of this central extension in terms of decorated mapping cylinders was further studied in \cite{gilmer_maslov_2011}.
For the non semisimple TQFTs defined in \cite{de_renzi_renormalized_2018} and \cite{de_renzi_3-dimensional_2022}, the cohomology classes of the corresponding central extensions must be the same since the functor anomaly is solved in a formally similar way by means of the signature of bounding 4-manifolds. 

\smallskip

The computation of the anomalies of the aforementioned quantum representations of surface mapping class groups has relied on the three-dimensional constructions of the corresponding TQFTs.
However, these quantum representations can be defined directly in terms of skein algebras of surfaces (see e.g. \cite{blanchet_topological_1995}, \cite{marche_introduction_2021}, and \cite{de_renzi_3-dimensional_2022}), including surfaces with a finite set of marked points, so that it is a natural problem to solve their anomalies for all finite type surfaces by a purely two-dimensional, skein theoretic, approach.

\smallskip

In another direction, the above skein algebras are associated with modular categories, which by definition satisfy some finiteness assumptions and thus cover only a portion of what skein theory can produce. 
Indeed, skein algebras can be associated with non-modular categories like those of finite dimensional representations of unrestricted quantum groups, which have been the subject of an intense activity these last years, motivated by the construction of generalizations of the above TQFTs to cobordisms endowed with holonomy representations of the fundamental group into a Lie group (see e.g. \cite{jordan_quantum_2025}, \cite{baseilhac_noetherian_2025}, and \cite{baseilhac_structure_2026}). 
The description of the anomaly of the associated quantum actions of mapping class groups is still an open problem, even in the most basic (and most fundamental) situation, namely, for the (quantum Teichmüller algebras or the) Kauffman bracket skein algebras of punctured surfaces at roots of unity. 
As a consequence, the quantum invariants of surface diffeomorphisms that these algebras define are for now only defined up to some ambiguity (\cite{baseilhac_quantum_2018}, \cite{bonahon_representations_2007}, \cite{bonahon_asymptotics_2021}, and \cite{ishibashi_cyclic_2026}). 
Only the related framework of infinite-dimensional quantum Teichmüller theory has been considered, where the central extensions of the mapping class groups of punctured surfaces have been described by Funar-Kashaev in \cite{funar_centrally_2014}.

\smallskip

Let us put our results (stated above) into this context. 
Let $\Sigma = \Sigma_{g,n}^{s, \bullet}$ be the compact oriented surface of genus $g \geq 1$ with $n \geq 0$ marked points and $s \in \{0, 1 \}$ boundary component, and let $H$ be a finite-dimensional factorizable ribbon Hopf algebra over the algebraically closed field $\mathbb{K}$. 
We consider the stated skein algebra $\mathcal{S}_H^{\mathrm{st}}(\Sigma_{g,n}^{1, \bullet})$ introduced in \cite{baseilhac_noetherian_2025}. 
It is isomorphic to a quantum graph algebra $\mathcal{L}_{g, n}(H)$ (\cite[Thm. 6.5]{baseilhac_noetherian_2025}), which provides very convenient tools for the representation theory of $\mathcal{S}_H^{\mathrm{st}}(\Sigma_{g,n}^{1, \bullet})$: the factorizability hypothesis of $H$ guarantees that we have an isomorphism $\mathcal{L}_{g, n}(H) \simeq \End_\mathbb{K}((H^*)^{\otimes g}) \otimes H^{\otimes n}$, and $\mathcal{L}_{g, n}(H)$ has a natural induced representation on $(H^*)^{\otimes (g+n)}$.
That representation allows \enquote{simple} diagrammatic computations, and we will see that it gives rise to a projective representation of the mapping class group $\Gamma_{g,n}^s$ of $\Sigma_{g,n}^{s, \bullet}$, for $s \in \{0,1\}$. 
It is the anomaly (and the related central extension of $\Gamma_{g,n}^s$) of this quantum representation that we compute. 
The isomorphism $\mathcal{L}_{g, n}(H) \simeq \mathcal{S}_H^{\mathrm{st}}(\Sigma_{g,n}^{1, \bullet})$ immediately provides a skein theoretic formulation of our results. 
In the case $n=0$, the quantum representation coincides with the one defined by the Kerler-Lyubashenko TQFTs for $H$-$\mathrm{mod}$ (\cite[Sec. 6]{faitg_holonomy_2024}), so this solves the first \enquote{natural problem} discussed above (see \Cref{remintro-lien_avec_Lyubashenko} for more details).

\smallskip

In view of generalizations of our method and results to infinite-dimensional skein algebras (e.g. the Kauffman bracket algebra at roots of unity), let us mention that small quantum groups $u_\epsilon(\mathfrak{g})$ of complex semisimple Lie algebras $\mathfrak{g}$ at roots of unity $\epsilon$ of odd order are examples of finite-dimensional factorizable ribbon Hopf algebras (\cite{lyubashenko_invariants_1995}), and there is a quantum graph algebra $\mathcal{L}_{g, n}^\epsilon(\mathfrak{g})$, which is a central extension of $\mathcal{L}_{g,n}(u_\epsilon(\mathfrak{g}))$ and satisfies an exact sequence of algebras (see \cite{baseilhac_structure_2026}):

\begin{equation}\label{suite_exacte_generalisation}
	0 \longrightarrow \mathcal{O}^+(G)^{\otimes (2g+n)} \xrightarrow{~ (\mathbb{F}\mathrm{r}_\epsilon^{*})^{\otimes (2g+n)} ~} \mathcal{L}_{g,n}^{\epsilon} \xrightarrow{~ \pi^{\otimes (2g+n)} ~} \mathcal{L}_{g,n}(u_\epsilon(\mathfrak{g})) \longrightarrow 1.
\end{equation}
where $G$ is the simply-connected Lie group with Lie algebra $\mathfrak{g}$, and $\mathcal{O}^+(G)$ is the augmentation ideal of $\mathcal{O}(G)$, which consists of the functions $G \to \mathbb{C}$ vanishing at the unit element of $G$.
A natural strategy is to lift (with a proper meaning) all our computations to linear mapping class group actions defined from $\mathcal{L}_{g, n}^\epsilon(\mathfrak{g})$.

\smallskip 

Our approach to compute the anomalies can be explained as follows. 
Given a projective representation $\rho : G \to \PGL(V)$, the obstruction to find a linearization of $\rho$, i.e. a linear representation $\tilde{\rho} : G \to \GL(V)$ such that $\rho = \pi \circ \tilde{\rho}$, where $\pi : \GL(V) \to \PGL(V)$ is the projection, is entirely characterized by the cohomology class of the central extension $G^\rho := \{ (g,M) \in G \times \GL(V) ~ \vert ~ \rho(g) = \pi(M) \}$.
In the case where the extension $G^\rho$ is non-trivial, one may wonder to what extent the projective representation $\rho$ is not linearizable. 
The task is therefore to find an explicit minimal central extension of $G$ on which one can define a linear representation that linearizes $\rho$. 
We find such a central extension by following a method already considered by Funar-Kashaev (\cite{funar_centrally_2014}) in the context of infinite-dimensional quantum Teichmüller theory, which we now recall.

\smallskip

Let $\langle F \vert R \rangle$ be a presentation of the group $G$. 
We can always construct a linear representation $\tilde{\rho} : F \to \GL(V)$ such that the following diagram commutes:     
\[  
\begin{tikzcd}
    F \arrow[r, "\tilde{\rho}"] \arrow[d, "\pi"] & \GL(V) \arrow[d, "\pi"] \\
    \langle F \vert R \rangle \arrow[r, "\rho"] &  \PGL(V)
\end{tikzcd}
\] 
Denote by $f_i, i \in I$, the generators of $F$.
Since $\rho$ is a projective representation, the image by $\tilde{\rho}$ of a relator $r = \prod_{1}^{n}f_{i_k}^{\varepsilon_{i_k}}$ in $R$ is a scalar multiple of the identity map of $V$, say $\alpha(r) \id_V$. 
If $\alpha(r)$ is not equal to $1$, if possible we renormalize the image by $\tilde{\rho}$ of the generators $f_{i_k}$ in order to have $\alpha(r) = 1$, and this preserves the commutative diagram. 
We do this for as many relators $r$ in $R$ as possible.
Denote by $E$ the quotient of $F$ by the normal subgroup generated by these relators. 
Using the remaining relators, we construct the group $A$ by which $G$ will be extended. 
This gives us a minimal extension $0 \to A \to E \to G \to 1$ on which we can linearize the projective representation $\rho$. 

\subsection*{Main results and organization of the paper}
Let $\Sigma_{g,n}^s$ be a compact oriented surface of genus $g$ with $s \in \{ 0, 1 \}$ boundary component and $n$ marked points, and $\Gamma_{g,n}^s$ its mapping class group.
Let $H$ be a finite-dimensional factorizable ribbon Hopf algebra.

\smallskip

In \Cref{sec-groupe_modulaire}, we recall the key results on $\Gamma_{g,n}^s$ that we will need: the Gervais presentation of $\Gamma_{g,n}^s$ from \cite{harer_second_1983}, \cite{gervais_presentation_1996}, and \cite{funar_centrally_2014}, the Harer presentation of the universal central extension $\widetilde{\Gamma}_{g}^{s,\mathrm{univ}}$  of $\Gamma_g^s$, and basic facts on $H^2(\Gamma_{g,n}^s, \mathbb{Z})$.

\smallskip

In \Cref{sec-Lgn_et_Alekseev}, we recall the definition of the graph algebras $\mathcal{L}_{g, n}(H)$, the Alekseev morphism $\Phi_{g,n}$, and some facts about representations of $\mathcal{L}_{g, n}(H)$, including one denoted by $\mathcal{R}_{g,n}^1(V)$.

\smallskip

In \Cref{sec-construction_representations_projectives}, using $\mathcal{R}_{g,n}^1(V)$ and the Gervais presentation of $\Gamma_{g,n}^1$, we construct a projective representation $\rho_{g,n}^1 : \Gamma_{g,n}^1 \to \PGL(V_{g,n}^1)$, where $V_{g,n}^1 := (H^*)^{\otimes g} \otimes V_1 \otimes \cdots \otimes V_n$ with $\{ V_i \}_{i = 1, \cdots, n}$ a collection of simple $H$-modules. 
For the surface $\Sigma_{g,n}$ with no boundary component ($s = 0$), the capping morphism (\cite[Prop. 3.19]{farb_primer_2012}) and the Birman exact sequence (\cite[Thm. 4.6]{farb_primer_2012}) allow us to restrict $\rho_{g,n}^1$ to a projective representation $\rho_{g,n} := \rho_{g,n}^0 : \Gamma_{g,n} \to \PGL(V_{g,n})$, where $V_{g,n} := V_{g,n}^0 \subset V_{g,n}^1$ is a submodule of invariant elements for an action of $H$, on which a subalgebra $\mathcal{L}_{g, n}^{\mathrm{inv}}(H) \subset \mathcal{L}_{g, n}(H)$ of invariant elements under the right coadjoint action of $H$ acts, via a representation denoted by $\mathcal{R}_{g,n}(V) := \mathcal{R}_{g,n}^0(V) : \mathcal{L}_{g, n}^{\mathrm{inv}}(H) \to \End_{\mathbb{K}} (V_{g,n})$.
The construction of $\rho_{g,n}^1$ and $\rho_{g,n}$ was already done in the case $n = 0$ in \cite{faitg_derived_2026}, we extend it to the case $n > 0$ (see in particular \Cref{prop-action_twists_de_Dehn_non_separants_comme_conjugaison_et_proprietes,prop-twist_le_long_du_bord_et_push_dans_noyau}).
\smallskip

Our main results are obtained in \Cref{sec-calcul_extension_associee}, where we prove \Cref{thmintro-extension_qui_linearise_et_linearisation} below by direct computation, by using the Gervais presentation of $\Gamma_{g,n}^s$ and a diagrammatic description of $\mathcal{L}_{g, n}(H)$ introduced in \cite[Sec. 3]{faitg_holonomy_2024}, that we recall in \Cref{ann-description_diagrammatique_Lgn}.
This diagrammatic description is based on knot maps $\mathfrak{i}_\gamma : \mathcal{L}_{0, 1}(H) \to \mathcal{L}_{g, n}(H)$ associated with oriented simple closed curves $\gamma \subset \Sigma_{g,n}^1$.
Diagrammatic computations are developped in \Cref{ann-calcul_diagrammatique}.

\smallskip
Denote by $v$ the ribbon element of $H$.
Consider in particular the knot map $\mathfrak{i}_{m_{g+i}}$ where $m_{g+i}$ is a small loop around the $(g+i)$-th marked point. 
Given a left integral $\lambda : H \to \mathbb{K}$ and $V := \{ V_i \}_{i = 1, \cdots, n}$ a collection of simple $H$-modules, denote by $l$ the tuple $(l_0, \cdots, l_n) \in \mathbb{N}^{n+1}$ where $l_0$ is the order of the element $\bigl( \lambda(v^{-1}) \lambda(v)^{-1} \bigr)^2$ and $l_i$ is the order of $\mathfrak{i}_{m_{g+i}} \bigl( (v \triangleright \lambda) \lambda(v)^{-1} \bigr)$ acting on $V_{g,n}^s$ by $\mathcal{R}_{g,n}^s(V)$.
By convention, $l_i = 0$ if the associated element is of infinite order.
\begin{theoremeintro}[\Cref{subsubsec-gammatildegns2}]\label{thmintro-extension_qui_linearise_et_linearisation}
	Let $V := \{ V_i \}_{i = 1, \cdots, n}$ be a collection of simple $H$-modules. 
	For all $g \geq 2$, $n \geq 0$, and $s \in \{ 0,1 \}$, a minimal central extension $\widetilde{\Gamma}_{g,n}^s(2,l)$ of $\Gamma_{g,n}^s$ on which we can define a linear representation $\tilde{\rho}_{g,n}^s(2)$ that linearizes $\rho_{g,n}^s$ is given by the following presentation:
	\begin{itemize}
		\item Generators: $\{ \tau_{\gamma} ~ \vert ~ \text{non-separating simple closed curve } \gamma \subset \Sigma_{g,n}^s \} \cup \{T, E_1, \cdots, E_n\}$.
		\item Relations: 
		\begin{enumerate}
			\item All 0 and 1-braid relations; \label{itemintro-0_et_1-tresse}
			\item One lantern relation, if $g \geq 3$; \label{itemintro-lanterne}
			\item $r_c = T^2$, where $r_c$ is the 3-chain relator; \label{itemintro-rc_egale_Tcarre}
			\item $T^{l_0} = 1$ (the unit element); \label{itemintro-T_puissance_l0}
			\item The element $T$ is central; \label{itemintro-T_est_central}
			\item $r_{p_i} = E_i$, where $r_{p_i}$ is the $i$-th puncture relator for each puncture $p_i$; \label{itemintro-rpi_egale_Ei}
			\item $E_i^{l_i} = 1$, for all $i \in [\![ 1, n]\!]$; \label{itemintro-Ei_puissance_li}
			\item $E_i$ is central, for all $i \in [\![ 1, n]\!]$. \label{itemintro-Ei_est_central}
		\end{enumerate}
	\end{itemize}
	By definition $T^{0}$ and $E_i^0$ are the empty word. 
	Moreover, the linearization of $\rho_{g,n}^s$ is defined by:
	\[ 
	\begin{array}{l|ccl}
		\tilde{\rho}_{g,n}^s(2): & \widetilde{\Gamma}_{g,n}^s(2,l) & \longrightarrow & \GL(V_{g,n}^s) \\
		& \tau_\gamma & \longmapsto & \mathcal{R}_{g,n}^s(V)(\hat{\tau}_\gamma), \\
		& T & \longmapsto & \frac{\lambda(v^{-1})}{\lambda(v)} \id_{V_{g,n}^s}, \\
		& E_i & \longmapsto & \xi_i \id_{V_{g,n}^s}.
	\end{array}
	\]
	where $\hat{\tau}_\gamma = \mathfrak{i}_\gamma \bigl( (v \triangleright \lambda) \lambda(v)^{-1} \bigr)$ for any non-separating simple closed curve $\gamma \in \Sigma_{g,n}^s$, and $\xi_i \in \mathbb{K}$ is such that $\mathcal{R}_{g,n}^s(V) \bigl( \mathfrak{i}_{m_{g+i}} \bigl( (v \triangleright \lambda) \lambda(v)^{-1} \bigr) \bigr) = \xi_i \id_{V_{g,n}^s}$, for all $i \in [\![ 1,n ]\!]$.
\end{theoremeintro}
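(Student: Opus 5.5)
We follow the Funar--Kashaev strategy from the introduction, applied to the Gervais presentation of $\Gamma_{g,n}^s$ recalled in \Cref{sec-groupe_modulaire}. Its generators are the Dehn twists $\tau_\gamma$ along non-separating simple closed curves. Its relations are the 0- and 1-braid relations, one lantern relation (for $g \geq 3$), the 3-chain relation $r_c = 1$, and the puncture relations $r_{p_i} = 1$; for $s=1$ the boundary twist is expressed through a chain relation.

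\emph{Step 1: a lift to the free group.} On the free group $F$ on the $\tau_\gamma$ we set $\tilde\rho(\tau_\gamma) := \mathcal{R}_{g,n}^s(V)(\hat\tau_\gamma)$. By the results of \Cref{sec-construction_representations_projectives}, in particular \Cref{prop-action_twists_de_Dehn_non_separants_comme_conjugaison_et_proprietes}, each of these operators acts on $\mathcal{L}_{g,n}$ by conjugation as the Dehn twist. Hence every relator is sent to a scalar.

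\emph{Step 2: evaluating the relators.} We compute, using the diagrammatic calculus of \Cref{ann-calcul_diagrammatique}, the scalar that each relator is sent to.
\begin{itemize}
\item[(a)] For a braid relation $\tau_\alpha\tau_\beta\tau_\alpha=\tau_\beta\tau_\alpha\tau_\beta$, we use that $\hat\tau_\beta = \tau_\alpha\hat\tau_\beta'\tau_\alpha^{-1}$ holds exactly at the level of the algebra. This follows from the equivariance $\tau_\alpha\, \mathfrak{i}_\gamma(x)\,\tau_\alpha^{-1} = \mathfrak{i}_{\tau_\alpha(\gamma)}(x)$ of knot maps, together with the fact that $\hat\tau_\gamma$ depends only on $\gamma$. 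The scalar is therefore exactly $1$; commutation relations for disjoint curves follow from commutation of the knot maps.
\item[(b)] For the lantern relation, the boundary curves are separating or puncture curves. Writing it through non-separating twists, we show by a diagrammatic identity that it holds with scalar $1$ when the normalization by $\lambda(v)^{-1}$ is used. Rescaling $\hat\tau$ is impossible here, because all twists are conjugate and hence must share a single normalization. The lantern relation is homogeneous of degree $4-3=1$ in twists, so the scalar is fixed by this normalization.
\item[(c)] For the 3-chain relator $r_c=(\tau_a\tau_b\tau_c)^4\tau_d^{-1}\tau_e^{-1}$, the image is the scalar $\bigl(\lambda(v^{-1})/\lambda(v)\bigr)^2$. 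We obtain it by reducing to the torus with one boundary and using the $S$,$T$ modular relation $(ST)^3=\frac{\lambda(v^{-1})}{\lambda(v)}S^2$ in $\mathcal{L}_{1,0}$. This is the Lyubashenko anomaly, and its square appears because the chain relation is the square of $(ST)^3 S^{-2}$ in the appropriate sense.
\item[(d)] For the puncture relator $r_{p_i}$, it expresses the twist around $p_i$; by \Cref{prop-twist_le_long_du_bord_et_push_dans_noyau} it maps to $\mathcal{R}(\mathfrak{i}_{m_{g+i}}(\hat v))=\xi_i\,\mathrm{id}$. This is a scalar because $V_i$ is simple and the element is central.
\end{itemize}

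\emph{Step 3: the extension.} The relators that evaluate to $1$ are imposed, while each remaining one is set equal to a new central generator: $r_c=T^2$ and $r_{p_i}=E_i$. Sending $T$ to the scalar of step (c) and $E_i$ to $\xi_i$, their orders $l_0$ and $l_i$ give the relations $T^{l_0}=1$ and $E_i^{l_i}=1$. Taking $T$ rather than $T^2$ as a generator accounts for the scalar $\lambda(v^{-1})/\lambda(v)$ itself. These defining relations show that $\tilde\rho_{g,n}^s(2)$ is a well-defined homomorphism lifting $\rho_{g,n}^s$.

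\emph{Step 4: minimality.} We must show that no renormalization $\tau_\gamma\mapsto c\,\tau_\gamma$ can kill the remaining scalars. All non-separating twists are conjugate, so $c$ is constant. The lantern and braid relations force $c=1$ by homogeneity: the lantern has degree $1$ once the puncture twists are absorbed, so it forces $c=1$ given the relations already imposed. The kernel $\langle T,E_i\rangle$ is then exactly the group of scalars realized, with orders $l_0$ and $l_i$, so the extension is minimal.

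For $s=0$ we pass through the capping and Birman exact sequences, using the invariant subspace $V_{g,n}$; the relators act on it by the same scalars. The main obstacle is step (c), the exact evaluation of the chain relator, which demands a careful diagrammatic computation of $(\hat\tau_a\hat\tau_b\hat\tau_c)^4$.
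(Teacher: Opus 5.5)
Your Step 2(a) is correct and slicker than the paper's treatment. The paper itself uses the exact identity $\hat\tau_{f(\gamma)}=\hat f\,\hat\tau_\gamma\,\hat f^{-1}$, which relies on $\hat\tau_\gamma$ depending only on the unoriented curve. Apply it with $f=\tau_\alpha$ and with $f=\tau_\beta^{-1}$, and use $\tau_\alpha(\beta)=\tau_\beta^{-1}(\alpha)$. This gives $\hat\tau_\alpha\hat\tau_\beta\hat\tau_\alpha^{-1}=\hat\tau_\beta^{-1}\hat\tau_\alpha\hat\tau_\beta$ on the nose. For disjoint curves, $\tau_\alpha(\beta)=\beta$ gives exact commutation. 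So the paper's case-by-case evaluations of the $0$- and $1$-braid relators become unnecessary.

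The genuine gap is the rest of Step 2, which is where the content of the theorem lies.
\begin{itemize}
\item For the lantern you only announce ``a diagrammatic identity''.
\item For the $3$-chain, the reduction to the one-holed torus is not available. A regular neighbourhood of $a\cup b\cup c$ is a torus with \emph{two} boundary components $d,e$, both non-separating in $\Sigma_{g,n}^s$. So the relation does not reduce to a modular relation in $\mathcal{L}_{1,0}(H)$ without further argument, and ``the square of $(ST)^3S^{-2}$ in the appropriate sense'' is not such an argument.
\item For the puncture relator, \Cref{prop-twist_le_long_du_bord_et_push_dans_noyau} concerns $\tau_{\partial_{g,n}}$ and point-pushing (it is what allows passing to $s=0$), not $r_{p_i}$. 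Nothing in your proposal shows that $r_{p_i}$ is sent exactly to $\mathcal{R}_{g,n}^s(V)\bigl(\mathfrak{i}_{m_{g+i}}(\lambda^v)\bigr)$.
\end{itemize}
The missing idea is the paper's. Once a relator is known to act by a scalar, it suffices to apply both sides to one vector, $\varepsilon^{\otimes g}\otimes 1_H^{\otimes n}$ in the lift $\mathcal{R}_{g,n}^1(H)$. Knowing it acts by a scalar needs irreducibility of $\mathcal{R}_{g,n}^1(V)$ plus Schur's lemma when $n>0$, not just centrality, and your Step 1 should say so. On that vector the explicit formulas for $\hat\tau_{a_j},\hat\tau_{b_j},\hat\tau_{c_j}$, and the appendix formulas for $\hat\tau_e,\hat\tau_y,\hat\tau_z$, collapse. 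For the chain, set $\omega:=\lambda(v^{-1})/\lambda(v)$. One gets $\hat\tau_a\hat\tau_b\hat\tau_c\cdot\varepsilon^{\otimes 2}=\lambda(v)^{-1}\lambda\otimes\varepsilon$. Then $v^{-1}\triangleright\lambda=\lambda(v^{-1})\lambda^{v^{-1}}$ and $\lambda^{v^{-1}}\star\lambda^v=\varepsilon$ give $(\hat\tau_a\hat\tau_b\hat\tau_c)^2\cdot\varepsilon^{\otimes 2}=\omega\,\varepsilon^{\otimes 2}$, while $\hat\tau_d\hat\tau_e\cdot\varepsilon^{\otimes 2}=\varepsilon^{\otimes 2}$. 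The lantern and puncture cases are similar direct comparisons. The central factor $\mathfrak{i}_{m_{g+k}}(\lambda^v)$ is identified via $\Phi_{g,n}\bigl(\mathfrak{i}_{m_{g+k}}(\lambda^v)\bigr)=1\otimes\cdots\otimes v^{-1}\otimes\cdots\otimes 1$.

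Steps 3 and 4 also need repair.
\begin{itemize}
\item Since $r_c=T^2$ and $r_c\mapsto\omega^2$, $T$ must go to $\omega$ (as in the statement), not to ``the scalar of step (c)''. Then $T^{l_0}=1$ is respected only if $\omega^{l_0}=1$. This does not follow from $l_0$ being the order of $\omega^2$; it fails whenever $\omega$ has finite even order. Your well-definedness claim therefore needs $l_0$ to be a multiple of the order of $\omega$.
\item In Step 4, what forces a rescaling $\tau_\gamma\mapsto c_\gamma\tau_\gamma$ to be constant is not conjugacy in $\Gamma_{g,n}^s$. It is that the $1$-braid relators stay trivial ($c_\alpha^2c_\beta=c_\alpha c_\beta^2$), together with connectedness of the graph of non-separating curves meeting once.
\item The lantern pins $c=1$ only for $g\geq 3$. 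For $g=2$ the presentation has no lantern relation. None with all seven curves non-separating can hold, since $H_1(\Gamma_{2,n}^s,\mathbb{Z})\simeq\mathbb{Z}_{10}$.
\item At $g=2$ the $0$-braid, $1$-braid and puncture relators have degree $0$, while $r_c$ has degree $10$. Over the algebraically closed field $\mathbb{K}$, a constant $c$ with $c^{10}=\omega^{-2}$ therefore makes the chain relator trivial. This is exactly like the paper's genus-one remark using a sixth root of $\omega$.
\end{itemize}
So your homogeneity argument does not give minimality at $g=2$. The paper does not prove minimality formally either; it only follows the Funar--Kashaev procedure after noting that the braid and lantern relators are already trivial.
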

\noindent
Note that when $n = 0$, the collection $V$ of simple $H$-modules is empty, and the generators $E_i$ and the relations \labelcref{itemintro-rpi_egale_Ei,itemintro-Ei_puissance_li,itemintro-Ei_est_central} do not appear in the presentation of $\widetilde{\Gamma}_{g}^s(2,l_0)$, so the generators $E_i$ do not appear in $\tilde{\rho}_{g}^s(2):= \tilde{\rho}_{g,0}^s(2)$ either. 
In that case, we will remove $n$, which is equal to $0$, from the notation, e.g. $\widetilde{\Gamma}_{g}^s(2,l_0) := \widetilde{\Gamma}_{g,0}^s(2,l_0)$.

\begin{remarqueintro}
	For any non-separating simple closed curve $\gamma \subset \Sigma_{g,n}^s$, the order of the Dehn twist $\tau_\gamma$ through the linear representation $\tilde{\rho}_{g,n}^s(2)$ is given by the order of the ribbon element $v \in H$ (see \Cref{rem-ordre_tau_chapeau_gamma}). 
\end{remarqueintro}
\noindent
For all $g \geq 2$ and $n = 0$, let $\widetilde{\Gamma}_g^s(l_0)$ be the central extension of $\Gamma_g^s$ defined by the same presentation as $\widetilde{\Gamma}_{g}^s(2,l_0)$ except for the relation \labelcref{itemintro-rc_egale_Tcarre} which is replaced by $r_c = T$. 
Denote by $\widetilde{\Gamma}_{g,n}^s(l_0)$ its pullback induced by the morphism $\Gamma_{g,n}^s \xrightarrow{\text{Forget}} \Gamma_g^s$ forgetting the marked points.
Let $c_{\widetilde{\Gamma}_{g,n}^s(2,l)} \in \bigoplus_{i=0}^n H^2(\Gamma_{g,n}^s, \mathbb{Z}_{l_i})$ be the cohomology class associated with the central extension $\widetilde{\Gamma}_{g,n}^s(2,l)$ of \Cref{thmintro-extension_qui_linearise_et_linearisation} (where $\mathbb{Z}_{l_i} := \mathbb{Z} / l_i \mathbb{Z}$ and $\mathbb{Z}_0 := \mathbb{Z}$), $c_{\widetilde{\Gamma}_{g,n}^s(l_0)}$ the cohomology class associated with $\widetilde{\Gamma}_{g,n}^s(l_0)$, and $[e_i]$ the $i$-th Euler class. 

\begin{theoremeintro}[\Cref{thm-cohomologie_Gammagns2l}]\label{propintro-cohomologie_Gammatildegs2l}
		For all $g \geq 2$, $n \geq 0$, and $s \in \{ 0, 1 \}$, we have:
		\[ c_{\widetilde{\Gamma}_{g,n}^s(2,l)} = 2 c_{\widetilde{\Gamma}_{g,n}^s(l_0)} \oplus [e_1](l_1) \oplus \cdots \oplus [e_n](l_n) \in \bigoplus_{i=0}^n H^2(\Gamma_{g,n}^s, \mathbb{Z}_{l_i}), \]
		where $[e_i](l_i)$ is the pushout of $[e_i]$ induced by the map $\mathbb{Z} \to \mathbb{Z}_{l_i}$. 
\end{theoremeintro}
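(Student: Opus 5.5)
We argue directly from the presentation in \Cref{thmintro-extension_qui_linearise_et_linearisation}. The central subgroup of $\widetilde{\Gamma}_{g,n}^s(2,l)$ is generated by $T, E_1, \dots, E_n$. The plan is to first show that it is exactly $\mathbb{Z}_{l_0} \oplus \mathbb{Z}_{l_1} \oplus \cdots \oplus \mathbb{Z}_{l_n}$. For this we use the linearization $\tilde{\rho}_{g,n}^s(2)$ together with the abelianization of the relations: the scalars $\lambda(v^{-1})/\lambda(v)$ and $\xi_i$ detect the orders $l_0$ and $l_i$. We also have to check that no hidden relation identifies $T$ with a product of the $E_i$. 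For that we compare with Harer's presentation of the universal central extension and with the structure of $H^2(\Gamma_{g,n}^s,\mathbb{Z})$, which is free abelian on the Meyer (or Harer) class and the Euler classes $[e_i]$ for $g \geq 3$; the cases $g=2$ and $s$ are handled by the basic facts recalled in \Cref{sec-groupe_modulaire}.

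Since the extension group is a direct sum, the class $c_{\widetilde{\Gamma}_{g,n}^s(2,l)}$ splits into components. Each component is the pushout of the extension along the projection onto one factor, that is, the quotient of $\widetilde{\Gamma}_{g,n}^s(2,l)$ by the other generators.

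\emph{Component $i=0$.} Kill $E_1,\dots,E_n$. The relations \labelcref{itemintro-rpi_egale_Ei} become $r_{p_i}=1$, which are precisely the relations that make the Gervais generators present $\Gamma_{g,n}^s$ as a pullback along the forgetful map. We then compare with $\widetilde{\Gamma}_{g,n}^s(l_0)$, in which $r_c = T'$. Sending $T' \mapsto T^2$ and $\tau_\gamma \mapsto \tau_\gamma$ gives a morphism of extensions over $\Gamma_{g,n}^s$ whose restriction to the kernels is multiplication by $2$ on $\mathbb{Z}_{l_0}$. By the standard functoriality of extension classes under pushout along kernel maps, this gives $2\,c_{\widetilde{\Gamma}_{g,n}^s(l_0)}$.

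\emph{Components $i \geq 1$.} Kill $T$ and all $E_j$ with $j \neq i$. The resulting group has the same generators as $\Gamma_{g,n}^s$ subject to $r_c=1$, $r_{p_j}=1$ for $j\neq i$, and $r_{p_i}=E_i$ central with $E_i^{l_i}=1$. We must identify this as the extension classified by the Euler class $[e_i]$, reduced mod $l_i$. We would realize it as the pullback of the circle-bundle type extension: take $\Gamma_{g,n}^s$ with the $i$-th puncture replaced by a boundary component. In that group the puncture relator $r_{p_i}$ equals the boundary twist, which is central and generates the kernel of the capping map. This extension is the standard one representing $[e_i]$, up to a sign convention that must be fixed. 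The mod $l_i$ version then follows by pushout along $\mathbb{Z}\to\mathbb{Z}_{l_i}$.

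\emph{Main obstacle.} The hardest step is the Euler class identification. One has to verify that the Gervais puncture relator $r_{p_i}$, read in the surface where the $i$-th puncture is replaced by a boundary component, equals exactly the boundary Dehn twist (with the right sign), rather than a product that also involves the chain relator. We would try first to check this directly with lantern and chain relations near the puncture, following Funar--Kashaev's treatment of the same relators \cite{funar_centrally_2014}, and to fix the sign by comparison with the Euler class conventions of \cite{farb_primer_2012}.
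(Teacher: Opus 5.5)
Your identification of the individual pieces agrees with the paper's proof. Killing $E_1,\dots,E_n$ yields $\widetilde{\Gamma}_{g,n}^s(2,l_0)$, which you compare with $\widetilde{\Gamma}_{g,n}^s(l_0)$ through $T\mapsto T^2$. Killing $T$ and the $E_j$, $j\neq i$, yields the paper's $\widetilde{\Gamma}_{g,n}^s(e_i,l_i)$, identified with the mod $l_i$ pushout of the capping extension $\Gamma_{g,n-1}^{s+1}\to\Gamma_{g,n}^s$.

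The gap is your first step, on which the whole decomposition into components rests: the claim that $\langle T,E_1,\dots,E_n\rangle\cong\mathbb{Z}_{l_0}\oplus\cdots\oplus\mathbb{Z}_{l_n}$. The linearization cannot give this. $\tilde{\rho}_{g,n}^s(2)$ sends the whole central subgroup to scalar multiples of the identity, so it bounds the orders of $T$ and $E_i$ from below but is blind to any relation $T^a=E_1^{b_1}\cdots E_n^{b_n}$ with $\bigl(\lambda(v^{-1})/\lambda(v)\bigr)^a=\xi_1^{b_1}\cdots\xi_n^{b_n}$. The abelianization does not help either: the puncture relators have exponent sum $0$, so every $E_i$ dies there. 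Invoking the structure of $H^2(\Gamma_{g,n}^s,\mathbb{Z})$ is circular, since to use it you would need the class of the presented extension, which is what is being computed. Moreover, Harer's presentation only covers $n=0$, $g\geq 4$, and for $g\in\{2,3\}$, $n\geq 1$ the group $H^2$ is only partially known.

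The missing idea, which is exactly the paper's argument, is to build the splitting instead of proving it a priori. The quotient maps you already use assemble into
\[ \varphi\colon\widetilde{\Gamma}_{g,n}^s(2,l)\longrightarrow E:=\widetilde{\Gamma}_{g,n}^s(2,l_0)\times_{\Gamma_{g,n}^s}\widetilde{\Gamma}_{g,n}^s(e_1,l_1)\times_{\Gamma_{g,n}^s}\cdots\times_{\Gamma_{g,n}^s}\widetilde{\Gamma}_{g,n}^s(e_n,l_n), \]
given by $\tau_\gamma\mapsto(\tau_\gamma,\dots,\tau_\gamma)$, $T\mapsto(T,1,\dots,1)$, $E_i\mapsto(1,\dots,E_i,\dots,1)$. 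Killing $T,E_1,\dots,E_n$ recovers the Gervais presentation. Hence the kernel of $\widetilde{\Gamma}_{g,n}^s(2,l)\to\Gamma_{g,n}^s$ is the central subgroup generated by $T,E_1,\dots,E_n$, a quotient of $\bigoplus_i\mathbb{Z}_{l_i}$. The map $\varphi$ sends it onto $\ker(E\to\Gamma_{g,n}^s)=\bigoplus_i\mathbb{Z}_{l_i}$, generators to generators, hence isomorphically. Since $\varphi$ is also surjective, it is an isomorphism of extensions. The class of a fiber product is the direct sum of the classes, so you get the splitting and the formula simultaneously. This only requires each factor to have the expected kernel:
\begin{itemize}
\item $\tau_{\partial_i}$ has infinite order in $\Gamma_{g,n-1}^{s+1}$;
\item $T$ has order $l_0$ in $\widetilde{\Gamma}_{g,n}^s(l_0)$ (implicit in the statement), and $\widetilde{\Gamma}_{g,n}^s(2,l_0)$ is literally the pushout of $\widetilde{\Gamma}_{g,n}^s(l_0)$ along multiplication by $2$.
\end{itemize}
The paper first treats $l=(0,\dots,0)$ and then pushes out along $\mathbb{Z}\to\mathbb{Z}_{l_i}$.

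What you call the main obstacle is a one-line computation. In $\Gamma_{g,n-1}^{s+1}$ the lantern curve $b_4$ is parallel to $\partial_i$ and disjoint from $b_1,b_2,b_3$, so $r_l=\tau_{b_4}r_p$. The lantern relation then gives $r_{p_i}=\tau_{\partial_i}^{-1}$, and no chain relator is involved. Your sign concern is legitimate: with the relators exactly as written, $E_i$ corresponds to $\tau_{\partial_i}^{-1}$. Getting $[e_i]$ rather than $-[e_i]$ is therefore a matter of orientation convention for the Euler class, a point the paper's proof (which sets $E_i\mapsto\tau_{\partial_i}$) does not address. Finally, your remark about $r_{p_i}=1$ presenting $\Gamma_{g,n}^s$ ``as a pullback along the forgetful map'' plays no role. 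The quotient is compared directly with $\widetilde{\Gamma}_{g,n}^s(l_0)$ as defined by the Gervais presentation with $r_c=T$.
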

\noindent
Since $\widetilde{\Gamma}_{g}^s(l_0)$ is the pushout of $\widetilde{\Gamma}_{g}^s(0)$ induced by the map $\mathbb{Z} \to \mathbb{Z}_{l_0}$, a good description of $\widetilde{\Gamma}_{g}^s(l_0)$ is given by the fact that the cohomology class $c_{\widetilde{\Gamma}_g^s(0)}$ associated with $\widetilde{\Gamma}_{g}^s(0)$ is a generator of $H^2(\Gamma_g^s, \mathbb{Z})$ (see \Cref{prop-cohomologie_tildeGammags_genere_H^2}). 
This last statement follows from results of Harer in \cite{harer_second_1983} when $g \geq 4$, and the cases $g = 2$ and $3$ are discussed in \cite{masbaum_central_1995}.

\smallskip

We treat the cases $g = 1$, $n = 0$, and $s \in \{ 0,1 \}$ separately because the presentations of $\Gamma_{1}^{s}$ are quite different, but we obtain the same results as in genus $g \geq 2$ (see \Cref{subsubsec-gammatilde12}).
We do not cover the cases $g = 1$, $n > 1$, and $s \in \{ 0,1 \}$, because in these cases the known presentations of $\Gamma_{1,n}^s$ use separating curves (\cite{gervais_finite_2001}).

\begin{remarqueintro}\label{remintro-lien_avec_Lyubashenko}
	When $n = 0$, the projective representation $\rho_{g} := \rho_{g,0}$ of $\Gamma_g$ is isomorphic to the one obtained with the Kerler-Lyubashenko TQFT for $H$-$\mathrm{mod}$ (\cite[Sec. 6]{faitg_projective_2020}, \cite[App. A]{faitg_derived_2026}).
	A byproduct of our work is a direct computation of the cohomology class of the projective representation of this TQFT, which is 12 times (the pushout of) a generator of $H^2(\Gamma_g, \mathbb{Z})$. 
	In fact, the construction of the TQFT needs a distinguished element $\mathcal{D}$ (see e.g. \cite[(5.2.13) and Lem. 6.3.3]{kerler_non-semisimple_2001}, or \cite[Thm. 3.3]{de_renzi_3-dimensional_2022}). 
	In our context, $\mathcal{D}$ is a square root of $\lambda(v) \lambda(v^{-1})$ (see \cite[Thm. 2.9]{de_renzi_renormalized_2018}) and so $\lambda(v^{-1}) \lambda(v)^{-1} = (\mathcal{D} \lambda(v)^{-1})^2$.
	Then, by expressing the linearization $\tilde{\rho}_g(2)$ in terms of $\mathcal{D} \lambda(v)^{-1}$ the proof of \Cref{propintro-cohomologie_Gammatildegs2l} gives a cohomology class equal to $4$ times a generator, and thus the Meyer class (see \Cref{thm-H^2_grand_genre}).
	Finally, since the anomaly of the TQFT is corrected by adding an integer weight to each 3-manifold, we obtain an extension whose cohomology class is 12 times a generator of $H^2(\Gamma_g, \mathbb{Z})$ (see \cite[Sec. 4 and 5]{masbaum_central_1995} and \cite[Sec. 3]{gervais_presentation_1996}).
\end{remarqueintro}

\subsection*{Acknowledgments}

I firstly want to thank my PhD advisor, Stephane Baseilhac, for his support and his constant help since the beginning.
I would also like to thank Marco De Renzi for our various discussions and his explanations about TQFTs. 
Finally, I would like to thank Matthieu Faitg for his explanations about the construction of projective representations and help with diagrammatic computations. 

\section{Presentation and (co-)homology of the mapping class group of a surface}\label{sec-groupe_modulaire}

In this section, we recall some facts about mapping class groups of surfaces.  
For details, we refer to \cite[Chap. 1 to 5]{farb_primer_2012}.

\medskip

We denote by $\Sigma_{g,n}^s$ the compact oriented surface of genus $g$ with $n$ punctures (or marked points) and $s$ boundary components.
We will not write the index $n$ or the exponent $s$ when they are equal to 0.

\smallskip

The \emph{mapping class group} of $\Sigma_{g,n}^s$ is the group of isotopy classes of diffeomorphisms of $\Sigma_{g,n}^s$ that fix the boundary pointwise, where the isotopies must also fix the boundary pointwise.
Note that a mapping class can permute the punctures.

\smallskip

We denote by $\Gamma_{g,n}^s$ the subgroup of the mapping class group of $\Sigma_{g,n}^s$ that fixes the punctures.
As with the surface $\Sigma_{g,n}^s$, we will not write the index $n$ or the exponent $s$ of $\Gamma_{g,n}^s$ when they are equal to 0.
Note that it is equal to the mapping class group of $\Sigma_{g,n}^s$ when $n = 0$ or $1$. 

\smallskip

First, we recall some relations between left Dehn twists along simple closed curves (i.e. without self-intersection), where the left Dehn twist along $\gamma$ is denoted by $\tau_\gamma$.

\smallskip

The following figures represent surfaces embedded in $\Sigma_{g,n}^s$ where the relations take place.

\begin{proposition}[0 and 1-braid relation]\label{prop-relation_0_et_1-tresse}
    Let $a$ and $b$ be two simple closed curves. 
	We have the following relations in $\Gamma_{g,n}^s$: 
    \begin{itemize}
        \item \emph{0-braid relation}: If $i(a,b) = 0$ then $\tau_a \tau_b = \tau_b \tau_a$,
        \item \emph{1-braid relation}: If $i(a,b) = 1$ then $\tau_a \tau_b \tau_a = \tau_b \tau_a \tau_b$,
    \end{itemize}
    where $i(a,b)$ denotes the geometric intersection number between the curves $a$ and $b$.
\end{proposition}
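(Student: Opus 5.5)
The plan is to prove both relations by reducing them to statements about how a Dehn twist conjugates another, using the conjugation formula $f \tau_\gamma f^{-1} = \tau_{f(\gamma)}$ for any mapping class $f$ and simple closed curve $\gamma$. This formula holds because, if $\tau_\gamma$ is supported in an annular neighbourhood $N$ of $\gamma$, then $f\tau_\gamma f^{-1}$ is supported in $f(N)$ and acts there as the twist along $f(\gamma)$. The orientation of the twist (left) is preserved because $f$ preserves the orientation of $\Sigma_{g,n}^s$. Since mapping classes here fix the boundary pointwise and the punctures, everything takes place in $\Gamma_{g,n}^s$.

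For the \emph{0-braid relation}, assume $i(a,b)=0$.
\begin{enumerate}
\item Isotope $a$ and $b$ to be disjoint. This is allowed because isotopic curves give equal twists, and geometric intersection number zero means disjoint representatives exist; the bigon criterion also applies in the punctured setting, where punctures count as obstructions to bigons.
\item Choose disjoint annular neighbourhoods $N_a$ and $N_b$ of $a$ and $b$.
\item Then $\tau_a$ and $\tau_b$ have disjoint supports, so they commute as diffeomorphisms, hence as mapping classes.
\end{enumerate}
Alternatively, $\tau_a(b)=b$, so $\tau_a\tau_b\tau_a^{-1}=\tau_{\tau_a(b)}=\tau_b$.

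For the \emph{1-braid relation}, assume $i(a,b)=1$. The key step is the curve identity $\tau_a\tau_b(a) = b$. The relation then follows formally:
\[
(\tau_a\tau_b)\,\tau_a\,(\tau_a\tau_b)^{-1} = \tau_{\tau_a\tau_b(a)} = \tau_b,
\]
which rearranges to $\tau_a\tau_b\tau_a = \tau_b\tau_a\tau_b$.

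To verify the curve identity, I would proceed as follows.
\begin{enumerate}
\item Isotope $a$ and $b$ to meet transversely in exactly one point.
\item A regular neighbourhood of $a\cup b$ is then a one-holed torus $T\subset \Sigma_{g,n}^s$. It contains no punctures, since it can be taken arbitrarily thin.
\item Both twists are supported in $T$, so it suffices to check $\tau_a\tau_b(a)\simeq b$ inside $T$.
\item Take $a,b$ to be the standard $(1,0)$ and $(0,1)$ curves of $T$. Then $\tau_b(a)$ is obtained by surgery of $a$ and $b$ at their single intersection point.
\item Applying $\tau_a$ and resolving once more gives a curve isotopic to $b$. Equivalently, in homology the twists act by the transvections $x\mapsto x+\langle a,x\rangle a$, whose product sends $a$ to $\pm b$, and on a one-holed torus homology determines isotopy classes of simple closed curves.
\end{enumerate}

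The main obstacle is making this last step rigorous without hand-waving, in particular getting the sign and orientation conventions right for left twists. I would handle it with the explicit surgery picture for $\tau_a(b)$ (as in \cite[Prop. 3.11]{farb_primer_2012}), or simply cite \cite[Prop. 3.11 and Fact 3.9]{farb_primer_2012}. The rest is formal.
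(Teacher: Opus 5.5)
Your argument is correct and is essentially the paper's: the paper gives no proof of its own and recalls these relations from \cite{farb_primer_2012}, where the 0-braid relation comes from disjoint supports and the 1-braid relation from the curve identity $\tau_a\tau_b(a)=b$ combined with $f\tau_\gamma f^{-1}=\tau_{f(\gamma)}$, exactly as you do. Your homological shortcut on the one-holed torus is also valid: $\tau_a\tau_b(a)$ is non-separating, and non-separating simple closed curves there are determined up to isotopy by their unoriented homology class.
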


\begin{proposition}[Lantern relation]\label{prop-relation_lanterne}
	Let $b_1$, $b_2$, $b_3$, $b_4$, $x$, $y$, and $z$ be the simple closed curves arranged as in \Cref{fig-courbes_relation_lanterne_sur_surface_minimale}.
	We have the following relation in $\Gamma_{g,n}^s$, called the \emph{lantern relation}:
    \[ \tau_{b_1} \tau_{b_2} \tau_{b_3} \tau_{b_4} = \tau_x \tau_y \tau_z. \] 
	We denote by $r_l := \tau_{b_1} \tau_{b_2} \tau_{b_3} \tau_{b_4} \tau_z^{-1} \tau_y^{-1} \tau_x^{-1}$ the associated relator.
\end{proposition}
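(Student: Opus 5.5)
The plan is to reduce the lantern relation to a statement about the mapping class group of the four-holed sphere, and then verify it there by the Alexander method, following \cite[Prop. 5.1]{farb_primer_2012}.

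\textbf{Step 1: reduction to the lantern.} Let $S \subset \Sigma_{g,n}^s$ be the embedded sphere with four boundary components $b_1, b_2, b_3, b_4$ from \Cref{fig-courbes_relation_lanterne_sur_surface_minimale}. Extending diffeomorphisms of $S$ (fixing $\partial S$ pointwise) by the identity outside $S$ gives a homomorphism $\iota : \mathrm{Mod}(S) \to \Gamma_{g,n}^s$. It lands in $\Gamma_{g,n}^s$ because the resulting mapping classes fix every puncture. This map sends $\tau_c$ to $\tau_{\iota(c)}$ for every simple closed curve $c \subset S$. It therefore suffices to prove $\tau_{b_1}\tau_{b_2}\tau_{b_3}\tau_{b_4} = \tau_x\tau_y\tau_z$ in $\mathrm{Mod}(S)$, without needing $\iota$ to be injective.

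\textbf{Step 2: the Alexander method on $S$.} The twists $\tau_{b_i}$ are supported in collars of $\partial S$, so they are central in $\mathrm{Mod}(S)$. Choose three disjoint properly embedded arcs $\alpha_1, \alpha_2, \alpha_3$ in $S$, with $\alpha_j$ joining $b_j$ to $b_4$ (or a similar configuration). These arcs cut $S$ into a disk. By the Alexander lemma, a mapping class of $S$ is then determined by the isotopy classes, rel endpoints, of the images of the $\alpha_j$. Two consequences follow:
\begin{itemize}
\item the left-hand side sends each $\alpha_j$ to $\alpha_j$ twisted once around each of its two endpoint boundary components;
\item for the right-hand side, apply $\tau_z$, then $\tau_y$, then $\tau_x$ successively, and simplify the resulting arc up to isotopy rel $\partial S$ at each stage.
\end{itemize}

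\textbf{Main obstacle.} The hard part is the bookkeeping in this arc-tracking computation. The curves $x, y, z$ pairwise intersect twice, so the intermediate arcs are somewhat complicated. Orientation conventions for left twists must also be kept consistent with the figure. To keep this manageable, I will exploit the rotational symmetry of the lantern, which cyclically permutes $x, y, z$ and the $b_i$ appropriately. This reduces the work to checking a single arc, say $\alpha_1$. I will then verify by picture that its images under both sides agree, each being the arc twisted once around $b_1$ and once around $b_4$.

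\textbf{Alternative check.} A second argument goes through the Birman exact sequence \cite[Thm. 4.6]{farb_primer_2012}. Cap $b_4$ with a punctured disk; both sides then become point-pushing maps in $\pi_1$ of the three-holed sphere. The relation becomes the identity $\tau_x\tau_y\tau_z\tau_{b_1}^{-1}\tau_{b_2}^{-1}\tau_{b_3}^{-1} = \mathrm{Push}(\gamma)$ for a suitable loop $\gamma$, which matches $\tau_{b_4}$ after accounting for the capping kernel. I would use this only as a cross-check of the orientation conventions.
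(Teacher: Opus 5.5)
The paper gives no proof of this statement. It recalls the lantern relation as a classical fact from \cite{farb_primer_2012}, and your Steps 1 and 2 (inclusion homomorphism from the four-holed sphere $S$, then the Alexander method on three disjoint arcs cutting $S$ into a disk) are the proof given there. The gap is the shortcut you use to make the arc computation manageable: the rotational symmetry does not reduce the verification to the single arc $\alpha_1$.

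Here is why. Write $g=\tau_{b_1}\tau_{b_2}\tau_{b_3}\tau_{b_4}$ and $h=g^{-1}\tau_x\tau_y\tau_z$. Let $\rho$ be the rotation fixing $b_4$ with $\rho(x)=y$, $\rho(y)=z$, $\rho(z)=x$, $\rho(\alpha_1)=\alpha_2$ and $\rho(\alpha_2)=\alpha_3$.

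\begin{itemize}
\item Conjugation by $\rho$ fixes $g$, but it sends $\tau_x\tau_y\tau_z$ to $\tau_y\tau_z\tau_x=\tau_x^{-1}(\tau_x\tau_y\tau_z)\tau_x$. You cannot identify these two elements before knowing the relation, i.e. before knowing that $\tau_x\tau_y\tau_z$ is central.
\item Consequently, the check $h(\alpha_1)=\alpha_1$ only transports to $h(\tau_x(\alpha_2))=\tau_x(\alpha_2)$, and via $\rho^{-1}$ to $h(\tau_z^{-1}(\alpha_3))=\tau_z^{-1}(\alpha_3)$.
\item These become the statements you need only when $\alpha_2$ misses $x$ and $\alpha_3$ misses $z$. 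That is, $\alpha_1$ would have to miss both $z=\rho^{-1}(x)$ and $x=\rho(z)$.
\item This is impossible. An arc from $b_1$ to $b_4$ must cross the two lantern curves that separate $b_1$ from $b_4$, so it misses at most one of $x,y,z$.
\end{itemize}

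Nor can you apply the Alexander lemma directly to the transported arcs $\alpha_1,\tau_x(\alpha_2),\tau_z^{-1}(\alpha_3)$, since they are not pairwise disjoint.

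The fix is to carry out the computation on each of the three arcs. Alternatively, use only symmetries that preserve each of $x,y,z$, such as the involutions of the lantern exchanging the $b_i$ in pairs; these conjugate both sides of the relation to themselves.

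Your Birman cross-check has a similar hole. Capping $b_4$ with a punctured disk kills $\tau_{b_4}$, so by the capping exact sequence it only yields $h=\tau_{b_4}^k$ for some unknown $k$. The phrase ``matches $\tau_{b_4}$ after accounting for the capping kernel'' is exactly the part that needs proof. This second route is, however, where genuine economy lies. Once you prove $\tau_x\tau_y\tau_z=\tau_{b_1}\tau_{b_2}\tau_{b_3}$ in the capped surface with push maps, checking $h$ on a single arc with an endpoint on $b_4$ forces $k=0$.
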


\begin{figure}[H]
	\centering
	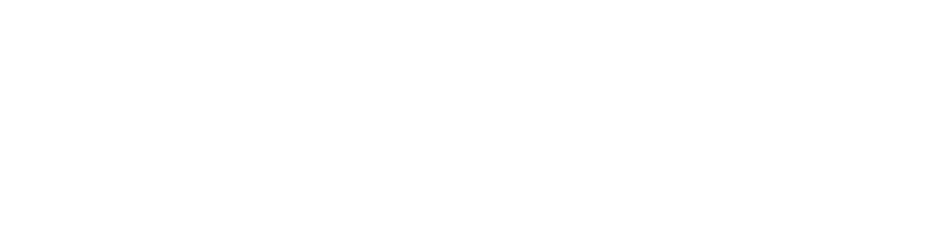
	\caption{Lantern relation}
	\label{fig-courbes_relation_lanterne_sur_surface_minimale}
\end{figure}

The following relation is an immediate consequence of the lantern relation and the capping morphism (\cite[Prop. 3.19]{farb_primer_2012}):

\begin{corollaire}[Puncture relation]\label{prop-relation_piqure}
    Let $b_1$, $b_2$, $b_3$, $x$, $y$, and $z$ be the simple closed curves arranged as in \Cref{fig-courbes_relation_piqure_sur_surface_minimale}.
    We have the following relation in $\Gamma_{g,n}^s$, called the \emph{puncture relation}:
    \[ \tau_{b_1} \tau_{b_2} \tau_{b_3} = \tau_x \tau_y \tau_z. \] 
	We denote by $r_p := \tau_{b_1} \tau_{b_2} \tau_{b_3} \tau_z^{-1} \tau_y^{-1} \tau_x^{-1}$ the associated relator.
\end{corollaire}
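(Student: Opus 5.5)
The plan is to derive the puncture relation from the lantern relation (\Cref{prop-relation_lanterne}) by capping one boundary component with a once-punctured disk.

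\textbf{Setup.} Consider the four-holed sphere $S_{0,4}$ embedded in $\Sigma_{g,n+1}^s$ (or any surface where the lantern configuration of \Cref{fig-courbes_relation_lanterne_sur_surface_minimale} sits). Choose the embedding so that the boundary curve $b_4$ bounds a disk $D$ containing exactly one marked point $p$, and no other topology. The configuration of \Cref{fig-courbes_relation_piqure_sur_surface_minimale} is then obtained as follows:
\begin{itemize}
\item take the three-holed sphere bounded by $b_1,b_2,b_3$, with the puncture $p$ in its interior;
\item keep $x,y,z$ as the curves each enclosing two of the $b_i$ together with $p$, as in the lantern picture.
\end{itemize}
Concretely, we start from the lantern relation in the mapping class group of $S_{0,4}$ (with the $b_i$ as boundary). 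We then apply the capping morphism of \cite[Prop. 3.19]{farb_primer_2012}, which glues a once-punctured disk along $b_4$. This gives a homomorphism from the mapping class group of $S_{0,4}$ to that of the sphere with three boundary components and one puncture.

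\textbf{Image of the lantern relation.} The capping morphism sends each Dehn twist $\tau_c$ to the Dehn twist along the image curve. The twist along $b_4$ becomes the twist around a curve bounding a once-punctured disk, and such a twist is trivial in the mapping class group. Hence $\tau_{b_4}\mapsto 1$. The lantern relation $\tau_{b_1}\tau_{b_2}\tau_{b_3}\tau_{b_4}=\tau_x\tau_y\tau_z$ therefore becomes $\tau_{b_1}\tau_{b_2}\tau_{b_3}=\tau_x\tau_y\tau_z$.

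\textbf{Transport into $\Gamma_{g,n}^s$.} Finally, we push this relation forward along the inclusion of the punctured three-holed sphere into $\Sigma_{g,n}^s$. Inclusions of subsurfaces induce homomorphisms on mapping class groups that send twists to twists. The relation also holds inside $\Gamma_{g,n}^s$, i.e. among mapping classes fixing every puncture, since all the maps involved fix the punctures.

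\textbf{Expected obstacle.} The only delicate point is matching curves and orders. We must check that, after capping, the images of $x,y,z$ are exactly the curves in \Cref{fig-courbes_relation_piqure_sur_surface_minimale}, each enclosing two of the $b_i$ and the puncture. We must also check that the cyclic order in the product $\tau_x\tau_y\tau_z$ is preserved. Both follow from the capping being an orientation-preserving embedding that is the identity away from $D$.
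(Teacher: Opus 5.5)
Your argument is correct and is exactly the one the paper has in mind: the paper states the puncture relation as an immediate consequence of the lantern relation and the capping morphism. That is, cap $b_4$ with a once-punctured disk so that $\tau_{b_4}$ becomes trivial, then push the resulting relation into $\Gamma_{g,n}^s$ via the inclusion homomorphism. (One notational slip: in your setup the ambient surface should be the one in which $p$ is one of the $n$ marked points, not $\Sigma_{g,n+1}^s$, but your concrete capping argument does not depend on this.)
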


\begin{figure}[H]
	\centering
	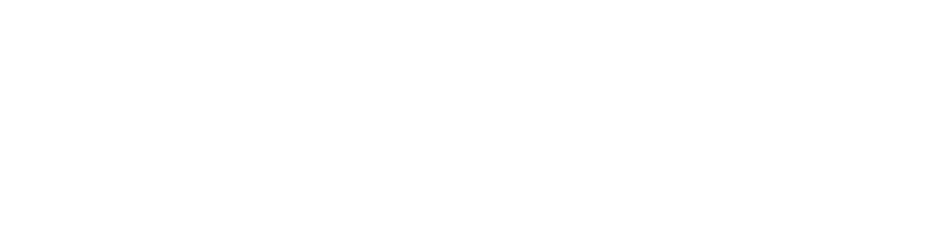
	\caption{Puncture relation}
	\label{fig-courbes_relation_piqure_sur_surface_minimale}
\end{figure}

\begin{proposition}[3-chain relation]\label{prop-relation_3-chaine}
	Let $a$, $b$, $c$, $d$, and $e$ be the simple closed curves arranged as in \Cref{fig-courbes_relation_3-chaine_sur_surface_minimale}.
	We have the following relation in $\Gamma_{g,n}^s$, called the \emph{3-chain relation}:
	\[ (\tau_{a} \tau_{b} \tau_{c})^4 = \tau_d \tau_e. \] 
	We denote by $r_c := (\tau_{a} \tau_{b} \tau_{c})^4 \tau_e^{-1} \tau_d^{-1}$ the associated relator.
\end{proposition}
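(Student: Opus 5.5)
The 3-chain relation lives in a regular neighbourhood of $a \cup b \cup c$, which is a subsurface $S \cong \Sigma_1^2$: a torus with two boundary components, whose boundary curves are $d$ and $e$. The plan is to prove it there and push it forward. Inclusion of subsurfaces induces a homomorphism on mapping class groups that sends Dehn twists to Dehn twists, so it suffices to establish $(\tau_a\tau_b\tau_c)^4 = \tau_d\tau_e$ in $\Gamma_1^2$.

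\textbf{Step 1: reduce to an identity of diffeomorphisms.} I would work with the Alexander method (\cite[Chap. 2]{farb_primer_2012}).

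1. Cut $S$ along the arcs that are dual to the chain. Concretely, choose arcs $\alpha_1$ and $\alpha_2$ so that $S \setminus (b \cup \alpha_1 \cup \alpha_2)$, or a similar system, is a disk.
2. Since mapping classes of $S$ fix $\partial S$ pointwise, two mapping classes agree once they agree on a filling collection of arcs and curves that together with $\partial S$ cuts $S$ into disks.
3. So the task reduces to computing the images of $a$, $b$, $c$ and one or two arcs joining $d$ to $e$ under both sides.

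\textbf{Step 2: compute the action of the left-hand side.} Set $h = \tau_a\tau_b\tau_c$. By the braid relations of \Cref{prop-relation_0_et_1-tresse}, $h$ conjugates $\tau_a \mapsto \tau_b \mapsto \tau_c$ cyclically on the chain. Hence $h^4$ commutes with $\tau_a$, $\tau_b$ and $\tau_c$. Using the standard formula $h(\gamma) = \gamma'$ whenever $h\tau_\gamma h^{-1} = \tau_{\gamma'}$, this gives that $h^4$ fixes $a$, $b$ and $c$ up to isotopy. It remains to follow an arc from $d$ to $e$.

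An alternative route, which I find cleaner, is via the hyperelliptic model: $S$ is a double branched cover of a disk with two boundary lifts. There $h$ corresponds to a rotation by angle $2\pi/4$ of a disk with 4 marked points. Its fourth power is the full twist about the boundary, whose lift is $\tau_d\tau_e$.

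\textbf{Step 3: compare on arcs.} I then check that $h^4$ acts on a transverse arc exactly as $\tau_d\tau_e$ does, namely as one full twist near each boundary component. This is the main obstacle, because it demands careful bookkeeping of orientations and of left twists. I would handle it by the branched-cover argument: the full twist $\Delta^2$ in the braid group $B_4$ equals $(\sigma_1\sigma_2\sigma_3)^4$, and under the Birman–Hilden correspondence $\sigma_i \mapsto \tau$ this lifts to the product of twists about the two preimages of the boundary circle, which are $d$ and $e$.

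Alternatively, I would simply cite \cite[Prop. 4.12]{farb_primer_2012}, where the $k$-chain relation is proved this way.
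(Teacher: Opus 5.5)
Your proposal is correct and matches the paper's treatment. The paper gives no proof of this relation and recalls it as a standard fact from \cite{farb_primer_2012}; your decisive argument is the standard proof. You restrict to the neighbourhood $\Sigma_1^2$ of $a\cup b\cup c$ via the inclusion homomorphism, then lift $(\sigma_1\sigma_2\sigma_3)^4=\Delta^2$, the boundary twist of a disk with $4$ branch points, through the double branched cover, taking the lift that fixes $\partial\Sigma_1^2$ pointwise; under this lift the $\sigma_i$ go to $\tau_a,\tau_b,\tau_c$ and the boundary twist goes to $\tau_d\tau_e$.

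The only slip is in Step 2. Conjugation by $h=\tau_a\tau_b\tau_c$ sends $\tau_a\mapsto\tau_b\mapsto\tau_c$, but $\tau_c\mapsto\tau_{\tau_a\tau_b(c)}\neq\tau_a$. The curves undergo a $4$-cycle, not a $3$-cycle, and a $3$-cycle would only say something about $h^3$. The correct reason $h^4$ commutes with $\tau_a,\tau_b,\tau_c$ is that $(\sigma_1\sigma_2\sigma_3)^4$ is central in $B_4$. This does not affect the proof, since the lifting argument alone proves the relation and makes the Alexander-method steps unnecessary.
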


\begin{figure}[H]
	\centering
	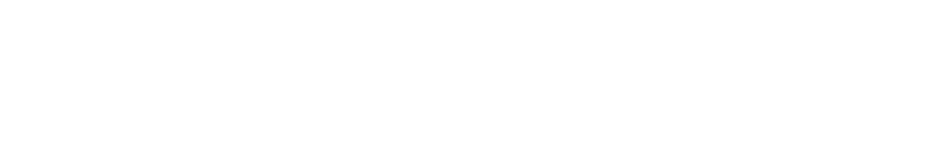
	\caption{3-chain relation}
	\label{fig-courbes_relation_3-chaine_sur_surface_minimale}
\end{figure}
\noindent
We will use the following presentation of $\Gamma_{g,n}^s$.
 
\begin{theoreme}[{{ \cite{harer_second_1983}, \cite[Thm. B]{gervais_presentation_1996}, and \cite[Lem. 2.7]{funar_centrally_2014}}}]\label{thm-presentation_de_Gervais}
	For all $g \geq 2$, $n \geq 0$, and $s \geq 0$, the group $\Gamma_{g,n}^s$ has the following presentation: 
    \begin{itemize}
        \item Generators: All Dehn twists $\tau_{\gamma}$ along non-separating simple closed curves $\gamma \subset \Sigma_{g,n}^s$.
        \item Relations: 
        \begin{enumerate}
            \item All 0 and 1-braid relations,
            \item One lantern relation, if $g \geq 3$,
            \item One 3-chain relation,
            \item One puncture relation for each puncture $p_i$.
        \end{enumerate}
    \end{itemize}
\end{theoreme}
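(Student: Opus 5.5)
The plan is to deduce the presentation from Gervais' presentation of $\Gamma_g^{r}$ for surfaces with boundary and no punctures, and then to pass to punctures by capping. The argument follows the route of \cite[Lem. 2.7]{funar_centrally_2014}.

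\textbf{Step 1: the case $n=0$.} Take $g\geq 2$ and $r\geq 0$ boundary components. By \cite[Thm. B]{gervais_presentation_1996} (building on \cite{harer_second_1983}), $\Gamma_g^{r}$ is generated by the Dehn twists along all non-separating simple closed curves. Its relations are the 0 and 1-braid relations, together with all lantern and all 3-chain relations whose curves are non-separating.

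To reduce to a single lantern and a single 3-chain relation, I use the change of coordinates principle. Any two lantern configurations (resp. 3-chain configurations) of non-separating curves differ by some $f\in\Gamma_g^r$. Write $f$ as a word in non-separating twists. The braid relations give $\tau_\alpha\tau_\gamma^{\pm1}\tau_\alpha^{-1}=\tau_{\tau_\alpha^{\pm1}(\gamma)}$, because $\tau_{\tau_\alpha(\gamma)}$ is a generator. Iterating this shows that $f\,r\,f^{-1}$ equals the relator of the transported configuration modulo the braid relations. Hence one representative of each type suffices.

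When $g=2$ there is no lantern configuration with four non-separating boundary curves and $x,y,z$ non-separating, so no lantern relation appears. I would check in \cite{gervais_presentation_1996} that in genus $2$ the lantern relations are consequences of the braid and chain relations.

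\textbf{Step 2: adding punctures.} Put $r=s+n$. Cap $n$ of the boundary components $\partial_1,\dots,\partial_n$ of $\Sigma_g^{s+n}$ by once-punctured disks. By the capping sequence \cite[Prop. 3.19]{farb_primer_2012}, applied $n$ times, the induced map $\Gamma_g^{s+n}\to\Gamma_{g,n}^s$ is surjective. Its kernel is the central subgroup generated by the boundary twists $\tau_{\partial_1},\dots,\tau_{\partial_n}$.

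Capping induces a bijection between isotopy classes of non-separating curves, since a punctured disk is an annulus up to isotopy. So the generators correspond, and the braid, lantern and 3-chain relations map to the same relations in $\Gamma_{g,n}^s$. It remains to show that killing $\tau_{\partial_i}$ amounts to imposing one puncture relation. Choose a lantern in $\Sigma_g^{s+n}$ with $b_4=\partial_i$ and with $b_1,b_2,b_3,x,y,z$ non-separating; this is possible since $g\geq2$. The lantern relation then expresses $\tau_{\partial_i}$ as the word $\tau_x\tau_y\tau_z(\tau_{b_1}\tau_{b_2}\tau_{b_3})^{-1}$ in the generators. Setting this word to $1$ is exactly the puncture relation of \Cref{prop-relation_piqure} for $p_i$.

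\textbf{Main obstacle.} The delicate point is Step 2. The argument needs the lantern relations involving $\partial_i$ to be available in Gervais' relation set, which Step 1 has reduced to a single lantern. One also has to check that all puncture relations at a fixed $p_i$ are conjugate, modulo braid relations, to the one chosen. For the first point, I would first derive each lantern with one boundary curve from the chosen lantern, using conjugation together with the braid and 3-chain relations; here I would follow the argument of \cite[Lem. 2.7]{funar_centrally_2014}. For the second point, I would use the change of coordinates principle in $\Gamma_{g,n}^s$ for configurations fixing $p_i$.
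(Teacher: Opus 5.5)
Your overall route is the argument behind \cite[Lem. 2.7]{funar_centrally_2014}, which the paper cites without proof: start from Gervais' presentation of $\Gamma_g^{s+n}$, then kill the central boundary twists $\tau_{\partial_1},\dots,\tau_{\partial_n}$ by writing each one as a lantern word. Step 2 is sound.

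\textbf{The gap is in Step 1.} You claim that the braid relations give $\tau_\alpha\tau_\gamma^{\pm1}\tau_\alpha^{-1}=\tau_{\tau_\alpha^{\pm1}(\gamma)}$ because $\tau_{\tau_\alpha(\gamma)}$ is a generator. Being a generator does not relate $\tau_{\tau_\alpha(\gamma)}$ to $\tau_\alpha\tau_\gamma\tau_\alpha^{-1}$. In fact the identity is false in the group $B$ defined by the 0- and 1-braid relations alone, already when $i(\alpha,\gamma)=1$. There the 1-braid relation only gives $\tau_\alpha\tau_\gamma\tau_\alpha^{-1}=\tau_\gamma^{-1}\tau_\alpha\tau_\gamma$, which never involves the third generator.

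Here is a counterexample. Let $W$ be the Coxeter group with one generator $s_v$ for each class $v\in H_1(\Sigma;\mathbb{F}_2)$ of a non-separating curve. Set $m(v,w)=3$ if $v\cdot w=1$, and $m(v,w)=2$ if $v\cdot w=0$ and $v\neq w$.
\begin{itemize}
\item If $i(\alpha,\beta)=0$ then $[\alpha]\cdot[\beta]=0$, and if $i(\alpha,\beta)=1$ then $[\alpha]\cdot[\beta]=1$. Hence $\tau_\gamma\mapsto s_{[\gamma]}$ kills every 0- and 1-braid relator and defines a homomorphism $B\to W$.
\item Take $i(\alpha,\gamma)=1$, $v=[\alpha]$, $w=[\gamma]$. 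Then $\tau_\alpha\tau_\gamma\tau_\alpha^{-1}\mapsto s_vs_ws_v$, which lies in the standard parabolic subgroup $W_{\{v,w\}}$.
\item On the other hand $[\tau_\alpha(\gamma)]=v+w\notin\{v,w\}$, so $\tau_{\tau_\alpha(\gamma)}\mapsto s_{v+w}$. This is not in $W_{\{v,w\}}$, since a standard parabolic subgroup meets the Coxeter generators only in its own generators.
\end{itemize}
So your reduction from \enquote{all lanterns and all 3-chains} to one of each cannot be carried out modulo braid relations. Conjugation invariance has to be extracted from the chosen lantern and chain relations themselves, and that is precisely the nontrivial content of \cite{harer_second_1983} and \cite[Thm. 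B]{gervais_presentation_1996}. Cite the single-lantern (for $g\geq 3$), single-3-chain statement for bordered surfaces directly rather than re-deriving it.

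\textbf{The obstacle you name in Step 2 is not there.} Suppose $\langle X\mid R\rangle$ presents $\Gamma_g^{s+n}$, and let $w_i$ be any word in $X$ representing $\tau_{\partial_i}$ in $\Gamma_g^{s+n}$. Then $\langle X\mid R\cup\{w_1,\dots,w_n\}\rangle$ presents the quotient by the central subgroup generated by the $\tau_{\partial_i}$, which is $\Gamma_{g,n}^s$.
\begin{itemize}
\item The lantern identity with $b_4=\partial_i$ is an equality of mapping classes in $\Gamma_g^{s+n}$, so its word qualifies as $w_i$.
\item Since $\partial_i$ is not a generator, there is no lantern relation involving $\partial_i$ that must be derivable from $R$.
\item You also need not compare different puncture relations at $p_i$. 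Each lifts to a lantern around $\partial_i$ and gives another valid word for $\tau_{\partial_i}$.
\item More generally, once $R$ presents the group, every identity holding in $\Gamma_g^{s+n}$ (for instance $w\tau_\gamma w^{-1}=\tau_{f(\gamma)}$) is automatically a consequence of $R$.
\end{itemize}
With Step 1 replaced by the citation and these observations, your proof is complete.
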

\noindent
In genus 1, we have the following presentations:
\begin{equation}\label{eq-presentation_Gamma11_et_Gamma1}
    \Gamma_1^1 = \langle \tau_a, \tau_b ~ \vert ~ \tau_a \tau_b \tau_a = \tau_b \tau_a \tau_b \rangle ~ \text{ and } ~ \Gamma_1 = \langle \tau_a, \tau_b ~ \vert ~ \tau_a \tau_b \tau_a = \tau_b \tau_a \tau_b \text{ and } (\tau_a \tau_b)^6 = 1 \rangle.
\end{equation}

\smallskip

Recall that for all $g \geq 3$, $n \geq 0$, and $s \geq 0$, the group $\Gamma_{g,n}^s$ is \emph{perfect}, which means $\Gamma_{g,n}^s = [\Gamma_{g,n}^s, \Gamma_{g,n}^s]$, where $[\Gamma_{g,n}^s, \Gamma_{g,n}^s]$ is the commutator subgroup of $\Gamma_{g,n}^s$.
This implies that $H_1( \Gamma_{g,n}^s, \mathbb{Z}) = 0$, for all $g \geq 3$. 
In genus $1$ and $2$, we have the following isomorphisms (see \cite[Rem. 5.2 and Thm. 5.1]{korkmaz_second_2003}):
\begin{equation}\label{eq-H_1_petit_genre}
    H_1(\Gamma_{1,n}^s, \mathbb{Z}) \simeq
    \begin{cases}
        \mathbb{Z}_{12} & \text{ if } s = 0, \\
        \mathbb{Z}^s & \text{ else,}
    \end{cases}
    \qquad
    H_1(\Gamma_{2,n}^s, \mathbb{Z}) \simeq \mathbb{Z}_{10}.
\end{equation}

In particular, when $g \geq 3$, $n \geq 0$, and $s \geq 0$, because $\Gamma_{g,n}^s$ is perfect, it has a \emph{universal central extension} $\widetilde{\Gamma}_{g,n}^{s,\mathrm{univ}}$.
When $g \geq 4$, $n = 0$, and $s \geq 0$, we have the following presentation of $\widetilde{\Gamma}_{g}^{s,\mathrm{univ}}$. 

\begin{theoreme}[{{\cite[Sec. 5 - (2)]{harer_second_1983}}}]\label{thm-presentation_ECU}
 	For all $g \geq 4$ and $s \geq 0$, the universal central extension $\widetilde{\Gamma}_{g}^{s,\mathrm{univ}}$ of $\Gamma_{g}^s$ has the following presentation: 
    \begin{itemize}
        \item Generators: All Dehn twists $\tau_{\gamma}$ along non-separating simple closed curves $\gamma \subset \Sigma_{g}^s$.
        \item Relations: 
        \begin{enumerate}
            \item All 0 and 1-braid relations,
            \item One lantern relation.
        \end{enumerate}
    \end{itemize}
\end{theoreme}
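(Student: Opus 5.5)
This statement is Harer's theorem, which the paper cites rather than proves. A self-contained argument would follow Harer's route through the second homology of $\Gamma_g^s$.

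Let $\widetilde{G}$ be the group defined by the stated presentation: generators all $\tau_\gamma$ with $\gamma$ non-separating, and relations all 0- and 1-braid relations plus one lantern relation. By \Cref{thm-presentation_de_Gervais} (with $n=0$), $\Gamma_g^s$ is the quotient of $\widetilde{G}$ by the normal closure of the single 3-chain relator $r_c$. The proof then has four steps.

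\textbf{Step 1: $r_c$ is central in $\widetilde{G}$.}
\begin{itemize}
\item The non-separating curves $a,b,c,d,e$ of the 3-chain relation bound a subsurface $\Sigma_1^2$.
\item Let $\gamma$ be a non-separating curve disjoint from this $\Sigma_1^2$. Then $\tau_\gamma$ commutes with each letter of $r_c$ by the 0-braid relations, so it commutes with $r_c$.
\item Let $\gamma$ be any non-separating curve. Every $\tau_\gamma$ is conjugate in $\widetilde{G}$, via a product of generators, to such a disjoint twist. This uses the 1-braid relation: if $i(x,y)=1$, then $\tau_x\tau_y\tau_x$ conjugates $\tau_x$ to $\tau_y$. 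It also uses connectivity of the graph of non-separating curves with edges given by intersection number $1$; this needs $g\ge 2$ or so.
\item The harder part is to show that conjugating $r_c$ by such a product returns $r_c$ itself. For this I would use that any two 3-chain configurations are related by the braid relations, which holds because all $\tau$'s transform covariantly under conjugation. The lift of a mapping class $\phi$ sends $r_c$ to the 3-chain relator on $\phi(\text{chain})$. So it suffices to show that the relators for all 3-chains coincide in $\widetilde{G}$.
\item I expect that last claim to need the lantern relation: two chain relators differ by a product of lanterns. This is Harer's and Gervais's key combinatorial lemma.
\end{itemize}

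\textbf{Step 2: $\widetilde{G}$ is perfect.}
\begin{itemize}
\item All $\tau_\gamma$ are conjugate in $\widetilde{G}$, so its abelianization is cyclic, generated by the common class $t$.
\item The lantern relation has 4 twists on the left and 3 on the right, so it gives $4t=3t$, i.e. $t=0$.
\end{itemize}

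\textbf{Step 3: $\widetilde{G}$ is a central extension of $\Gamma_g^s$ with kernel of the right size.}
\begin{itemize}
\item By Step 1, $\widetilde{G}\to\Gamma_g^s$ is a central extension whose kernel is cyclic, generated by $r_c$.
\item A perfect central extension is a quotient of the universal one. So it remains to show that $\langle r_c\rangle$ surjects onto $H_2(\Gamma_g^s,\mathbb{Z})\cong\mathbb{Z}$ for $g\ge4$ and is infinite cyclic.
\item Equivalently, the universal extension, whose kernel is $H_2\cong\mathbb{Z}$, must satisfy the braid and lantern relations when the twists are lifted canonically.
\end{itemize}

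\textbf{Step 4: lifting the relations.}
\begin{itemize}
\item By perfectness, each $\tau_\gamma$ has a unique lift $\tilde\tau_\gamma$ to the universal extension $U$ that lies in the commutator subgroup of a centralizer. Concretely, pick a copy of $\Gamma_{g-1}^1$, or a subsurface of genus $\ge3$, disjoint from $\gamma$. The lift is normalized so that it commutes with the lift of that perfect subgroup.
\item The braid and lantern relations then hold among these lifts. Each such relation lives in a subsurface whose complement still has genus $\ge3$; this is where $g\ge4$ is used. The relation can be compared with a conjugate that is supported away from it, and uniqueness of lifts in the perfect subgroup forces the central discrepancy to vanish.
\item This gives a homomorphism $\widetilde{G}\to U$ over $\Gamma_g^s$. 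It is surjective because $U$ is generated by the lifts, since $U$ is perfect. It is injective on the cyclic kernel because $r_c\mapsto$ a generator of $H_2$. That $r_c$ maps to a generator is Harer's computation, equivalently via the Meyer signature class.
\end{itemize}

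\textbf{Main obstacle.} The hardest steps are showing that $r_c$ is independent of the chosen chain (Step 1), and computing $H_2(\Gamma_g^s)\cong\mathbb{Z}$ with $r_c$ as a generator (Step 4). The latter is genuinely Harer's theorem. I would cite it rather than reprove it.
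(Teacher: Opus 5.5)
\textbf{Step~4.} The normalization of lifts here is vacuous, and the mechanism built on it would fail. Let $P$ be a perfect subgroup of the centralizer of $\tau_\gamma$ in $\Gamma_g^s$, and let $\hat\tau$ be \emph{any} lift of $\tau_\gamma$ to $U$. Then $x\mapsto[\hat\tau,\tilde x]$ is a well-defined homomorphism $P\to H_2(\Gamma_g^s,\mathbb{Z})$, hence trivial. So every lift commutes with the preimage of $P$, and no lift is singled out.

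Nor can an argument of the form ``compare with a conjugate supported elsewhere'' make a relation lift. For $g\ge 5$ the 3-chain relation is supported in a $\Sigma_1^2$ whose complement has genus $g-2\ge 3$, so the same reasoning would lift it too. That would give a section of $U\to\Gamma_g^s$, which is impossible since $U$ is perfect and $H_2\neq 0$.

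What does work is the following.
\begin{itemize}
\item Since the kernel is central, conjugation in $U$ factors through $\Gamma_g^s$. Fix one lift $\hat\tau_a$ and set $\hat\tau_{\phi(a)}:=\tilde\phi\,\hat\tau_a\tilde\phi^{-1}$ for any lift $\tilde\phi$ of $\phi$.
\item This is well defined. The map $\chi\mapsto\tilde\chi\hat\tau_a\tilde\chi^{-1}\hat\tau_a^{-1}$ is a homomorphism $\mathrm{Stab}(a)\to H_2\cong\mathbb{Z}$. It kills the index-$\le 2$ subgroup preserving the sides of $a$, which is a quotient of $\Gamma_{g-1}^{s+2}$ and hence perfect since $g-1\ge 3$. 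It therefore vanishes, because $\mathbb{Z}$ is torsion-free. This is exactly where $g\ge 4$ and Harer's computation enter.
\item These equivariant lifts satisfy every 0-braid relation automatically, since $\tau_b\in\mathrm{Stab}(a)$ when $i(a,b)=0$. They satisfy every 1-braid relation automatically, since $\tau_a\tau_b(a)=b$ when $i(a,b)=1$.
\item The only remaining freedom is a common central shift $\zeta$, which multiplies the lantern defect by $\zeta^{4-3}=\zeta$. So the lantern relation is imposed by choosing $\zeta$, not derived from a local argument. The 3-chain relator, with exponent sum $10$, then carries the extension class.
\end{itemize}

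\textbf{Step~1.} This is where the real content lies, and your reduction is circular. You invoke covariance: $w\tau_\alpha w^{-1}=\tau_{\phi(\alpha)}$ in $\widetilde G$ for every word $w$ representing $\phi$. Apply it to a word lying in the kernel $K$ of $\widetilde G\to\Gamma_g^s$, so that $\phi=\mathrm{id}$. It then says exactly that $K$ commutes with all generators. In other words, it already \emph{is} Step~1, and the further reduction to ``all chain relators coincide'' adds nothing.

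What must be proved is the family of conjugation relations $\tau_\gamma\tau_\alpha\tau_\gamma^{-1}=\tau_{\tau_\gamma(\alpha)}$ for all pairs of non-separating curves. These are not immediate. Already for $i(\gamma,\alpha)=1$, the twists $\tau_\alpha,\tau_\gamma,\tau_{\tau_\gamma(\alpha)}$ satisfy only pairwise 1-braid relations among themselves, and these do not imply it. Deriving the conjugation relations from the 0-/1-braid and lantern relations is the combinatorial core of Harer's and Gervais's work, which you only name.

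Without centrality, the map $\widetilde G\to U$ is surjective but you cannot conclude it is injective. With centrality, universality gives a map $U\to\widetilde G$ over $\Gamma_g^s$ whose composite with $\widetilde G\to U$ is $\mathrm{id}_U$. Hence $U\to\widetilde G$ is injective, and it is surjective by Step~2, so $\widetilde G\cong U$. This needs no separate check that $r_c$ hits a generator.

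The paper itself simply cites \cite[Sec.~5]{harer_second_1983}, so citing is legitimate. But the sketch should not be offered as a proof: as it stands, Step~1 is assumed and Step~4 is incorrect.
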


Next, consider the second homology group of $\Gamma_{g,n}^s$ with $\mathbb{Z}$-coefficient.

\begin{theoreme}[{{\cite{harer_second_1983} and \cite{korkmaz_second_2003}}}]\label{thm-H_2_grand_genre}
	For all $g \geq 4$, $n \geq 0$, and $s \geq 0$, we have an isomorphism: 
	\[ 	H_2(\Gamma_{g,n}^s, \mathbb{Z}) \simeq \mathbb{Z}^{n+1}. \]
\end{theoreme}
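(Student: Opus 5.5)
The plan is to compute $H_2$ through the Hopf formula, applied to the Gervais presentation of \Cref{thm-presentation_de_Gervais}. The upper bound will come from counting central relators, and the lower bound from exhibiting $n+1$ independent cohomology classes. Since $\Gamma_{g,n}^s$ is perfect for $g \geq 3$, the universal central extension exists, and its kernel is $H_2(\Gamma_{g,n}^s,\mathbb{Z})$. Write $F$ for the free group on the non-separating twists and $R$ for the normal closure of the relators. Let $\widehat{\Gamma} := F/[F,R]$, modulo in addition the 0 and 1-braid relations and one lantern relation (if $g\ge 3$).

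\textbf{Step 1 (upper bound).} First I show that in $\widehat{\Gamma}$ the images of the 3-chain relator $r_c$ and of the puncture relators $r_{p_1},\dots,r_{p_n}$ are central. This uses the standard argument: all non-separating curves are conjugate under $\Gamma_{g,n}^s$, and any two twists conjugate by a chain of 1-braid relations. Hence every twist $\tau_\gamma$ can be carried to a twist whose curve is disjoint from (or braid-compatible with) the configuration of the relator, and the relator commutes with it modulo braid relations.

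Next I show that these $n+1$ elements generate $\ker(\widehat{\Gamma}\to\Gamma_{g,n}^s)$. By \Cref{thm-presentation_de_Gervais}, every element of $R$ is a product of conjugates of braid relators, one lantern relator, one 3-chain relator and the $r_{p_i}$. The first two families are killed in $\widehat{\Gamma}$, and the conjugates of the central ones reduce to powers of them. For $n=0$ this is exactly Harer's description in \Cref{thm-presentation_ECU}. Hence $H_2(\Gamma_{g,n}^s,\mathbb{Z})\cap(\text{this kernel})$ is a quotient of $\mathbb{Z}^{n+1}$, and since $H_1=0$ it equals the whole of $H_2$. This gives the bound $\operatorname{rank} \leq n+1$ and shows that $H_2$ is generated by these $n+1$ elements.

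\textbf{Step 2 (lower bound).} I construct $n+1$ homomorphisms $H_2(\Gamma_{g,n}^s,\mathbb{Z})\to\mathbb{Z}$, i.e. classes in $H^2$. One is the pullback via Forget of the Meyer signature class on $\Gamma_g$. The others are the Euler classes $e_i$ of the circle bundles given by the unit tangent vectors at the $i$-th puncture, obtained from the extension $\Gamma_{g,n}^{s}$ with the $i$-th puncture blown up to a boundary component. Then I evaluate these classes on the central elements of Step 1 by computing the corresponding central extensions on the explicit relators:
\begin{itemize}
\item the Meyer class takes a nonzero value on $r_c$ and vanishes on every $r_{p_i}$, since forgetting the punctures turns the puncture relation into a braid/trivial relation;
\item $e_i$ evaluates to $\pm1$ on $r_{p_i}$, because $r_{p_i}$ lifts to the boundary twist around the blown-up puncture, and to $0$ on $r_{p_j}$ for $j\neq i$.
\end{itemize}
The resulting evaluation matrix is triangular with nonzero diagonal, so the rank is $n+1$ and $H_2$ is torsion-free of rank $n+1$.

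The main obstacle is the $n=0$ part of Step 1, namely Harer's theorem that braid plus lantern relations already present the universal central extension and that $r_c$ has infinite order there. That requires the connectivity of the curve complex used in \cite{harer_second_1983}, and this is exactly where $g\geq4$ enters. I would take it as input via \Cref{thm-presentation_ECU} and reduce everything else to the puncture bookkeeping above. For the stability in $n$, I would alternatively cross-check with the Birman exact sequence and the Hochschild–Serre spectral sequence, as in \cite{korkmaz_second_2003}.
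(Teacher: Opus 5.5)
The paper gives no proof of this statement; it quotes Harer and Korkmaz--Stipsicz. Your outline (Hopf's formula on the Gervais presentation for the upper bound, Meyer and Euler classes for the lower bound) is in the spirit of Korkmaz--Stipsicz. However, Step~1 has a genuine gap for $n\geq 1$.

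\textbf{The gap.} Your $\widehat{\Gamma}$ is obtained by \emph{imposing} the braid and lantern relations. So the fact that its central kernel $K$ is generated by the images of $r_c,r_{p_1},\dots,r_{p_n}$ is automatic. The centrality you argue for is automatic too, since $[F,R]$ has been killed.

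The passage from $K$ to $H_2$ then runs in the wrong direction. $\widehat{\Gamma}$ is perfect: the 1-braid relators identify all generators in the abelianization, and the lantern relator kills their common class. Hence the universal central extension maps \emph{onto} $\widehat{\Gamma}$, and $H_2(\Gamma_{g,n}^s,\mathbb{Z})\to K$ is surjective. That bounds $H_2$ from below, not from above.

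Perfectness of $\Gamma_{g,n}^s$ gives no injectivity. Also, \enquote{$H_2\cap K$} is not meaningful, since $K$ is a quotient of $R/[F,R]$, not a group containing $H_2$.

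Concretely, each 0-braid relator $\tau_a\tau_b\tau_a^{-1}\tau_b^{-1}$ lies in $R\cap[F,F]$, so it defines a class in $H_2=(R\cap[F,F])/[F,R]$. Suitable products of 1-braid relators do as well. Setting them equal to $1$ may quotient $H_2$.

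Proving that these classes vanish is the whole content of the upper bound. Equivalently, one must show that lifts to the universal central extension can be chosen satisfying the braid and lantern relations. For $n=0$ this is \Cref{thm-presentation_ECU}. Your puncture bookkeeping does not provide it for $n\geq 1$, so as written you only get $\operatorname{rank}H_2\geq n+1$ there.

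\textbf{A fix using the paper's quoted results.} Apply \Cref{thm-presentation_ECU} to $\Sigma_g^{s+n}$, with the punctures replaced by boundary components.
\begin{enumerate}
\item Its non-separating curves, braid relations and lantern relation are exactly those of $\Sigma_{g,n}^s$. So $U:=\widetilde{\Gamma}_g^{s+n,\mathrm{univ}}$ is your group before the quotient by $[F,R]$.
\item The map $\mathrm{Cap}\colon\Gamma_g^{s+n}\to\Gamma_{g,n}^s$ has central kernel $\langle\tau_{\partial_1},\dots,\tau_{\partial_n}\rangle\cong\mathbb{Z}^n$. Composed with it, $U$ becomes a central extension of $\Gamma_{g,n}^s$: the preimage of a central subgroup is central because $U$ is perfect.
\item Hence $[F,R]$ already dies in $U$, so $U=\widehat{\Gamma}$. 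Since $U$ is superperfect, it is also the universal central extension of $\Gamma_{g,n}^s$.
\item Therefore $H_2(\Gamma_{g,n}^s,\mathbb{Z})=K$, an extension of $\mathbb{Z}^n$ by $H_2(\Gamma_g^{s+n},\mathbb{Z})\cong\mathbb{Z}$, i.e.\ $K\cong\mathbb{Z}^{n+1}$.
\end{enumerate}
With this, the Euler-class evaluation is only needed to identify generators.

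\textbf{Alternative fix.} You can prove the vanishing directly with the centralizer trick. Take disjoint non-separating curves $a,b$. The map $h\mapsto[\tilde h,\tilde\tau_b]$ is a homomorphism from the centralizer of $\tau_b$ to the kernel. Moreover $\tau_a$ lies in the mapping class group of $\Sigma\setminus b$, which is perfect since its genus is $g-1\geq 3$. So $[\tilde\tau_a,\tilde\tau_b]=1$ in every central extension. With lifts defined consistently by conjugation and normalized by the lantern relator, the 1-braid relations then follow.
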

\noindent
In genus $2$ and $3$, we have the following isomorphisms (see \cite[Thm. 1.3]{korkmaz_second_2003} and \cite[Thm. 4.9 and Cor. 4.10]{sakasai_lagrangian_2012}):
\begin{equation}\label{eq-H_2_petit_genre}
    \begin{aligned}
        & H_2(\Gamma_{2}^s, \mathbb{Z}) \simeq \mathbb{Z}_2 & \text{ where } s \in \{0,1 \}, \\
        & H_2(\Gamma_{3}^s, \mathbb{Z}) \simeq \mathbb{Z} \oplus \mathbb{Z}_2 & \text{ where } s \in \{0,1 \}. 
    \end{aligned}
\end{equation}

Recall that for all $g \geq 1$, $n \geq 0$, and $s \geq 0$, the $i$-th Euler class $[e_i] \in H^2(\Gamma_{g,n}^s, \mathbb{Z})$ is the cohomology class of the central extension:
\begin{equation}\label{eq-definition_classe_Euler}
	 0 \longrightarrow \langle \tau_{\partial_i} \rangle \longrightarrow \Gamma_{g,n-1}^{s+1} \xrightarrow{~\mathrm{Cap}~} \Gamma_{g,n}^s \longrightarrow 1,
\end{equation}
where $\partial_i$ denotes the boundary of a disk centered at the $i$-th puncture and $\mathrm{Cap} : \Gamma_{g,n-1}^{s+1} \to \Gamma_{g,n}^s$ is the capping morphism (\cite[Prop. 3.19]{farb_primer_2012}).

\smallskip
\noindent
Also, recall that the Meyer class $[\tau] \in H^2(\Gamma_g, \mathbb{Z})$ is constructed by using the signature of certain 4-manifolds which fiber over a disk with two holes (see \cite{meyer_signatur_1964}).
We also denote by $[\tau]$ the pullback of the Meyer class by the map $\Gamma_{g,n}^s \to \Gamma_g$, which is the composition of the capping morphism and the morphism $\mathrm{Forget} : \Gamma_{g,n}^s \to \Gamma_{g}^s$ forgetting the marked points.

\smallskip
\noindent
The fact that $\Gamma_{g,n}^s$ is perfect in genus $g \geq 3$, \Cref{thm-H_2_grand_genre}, and the universal coefficient theorem imply the isomorphism in the following statement. 
For the generators, see \cite{korkmaz_second_2003}.

\begin{theoreme}\label{thm-H^2_grand_genre}
	For all $g \geq 4$, $n \geq 0$, and $s \geq 0$, we have an isomorphism: 
	\[ 	H^2(\Gamma_{g,n}^s, \mathbb{Z}) \simeq \mathbb{Z}^{n+1}. \]
	The generators of $H^2(\Gamma_{g,n}^s, \mathbb{Z})$ are one fourth of the Meyer class $[\tau]$ and the $n$ Euler classes $[e_i]$, for all $i \in [\![1, n]\!]$. 
\end{theoreme}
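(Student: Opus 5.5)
The isomorphism follows from the universal coefficient theorem, and the generators are then identified via the cited results. Since $g \geq 4 \geq 3$, the group $\Gamma_{g,n}^s$ is perfect, so $H_1(\Gamma_{g,n}^s,\mathbb{Z}) = 0$. The universal coefficient theorem gives a split short exact sequence
\[ 0 \longrightarrow \mathrm{Ext}^1_{\mathbb{Z}}\bigl(H_1(\Gamma_{g,n}^s,\mathbb{Z}),\mathbb{Z}\bigr) \longrightarrow H^2(\Gamma_{g,n}^s,\mathbb{Z}) \longrightarrow \mathrm{Hom}_{\mathbb{Z}}\bigl(H_2(\Gamma_{g,n}^s,\mathbb{Z}),\mathbb{Z}\bigr) \longrightarrow 0. \]
The Ext term vanishes because $H_1 = 0$. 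By \Cref{thm-H_2_grand_genre}, $H_2 \simeq \mathbb{Z}^{n+1}$, so
\[ H^2(\Gamma_{g,n}^s,\mathbb{Z}) \simeq \mathrm{Hom}(\mathbb{Z}^{n+1},\mathbb{Z}) \simeq \mathbb{Z}^{n+1}. \]
This gives the isomorphism.

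For the generators, I would proceed inductively on $n$, using the Birman exact sequence together with the capping sequence \eqref{eq-definition_classe_Euler}. Each extra puncture contributes one $\mathbb{Z}$ summand. I would show that the Euler class $[e_i]$ generates the new summand by evaluating it on a homology class supported near the $i$-th puncture; a natural candidate is the class of a torus obtained from commuting elements, namely a point-push along a curve and a twist, on which $[e_i]$ evaluates to $\pm 1$. The remaining summand comes from $H^2(\Gamma_g^s,\mathbb{Z}) \simeq \mathbb{Z}$ via $\mathrm{Forget}$ and capping. By Meyer's theorem the signature cocycle is divisible by $4$ on $\Gamma_g$ for $g \geq 3$, and Harer's computation shows that $[\tau]/4$ is a generator: it evaluates to $1$ on a generator of $H_2(\Gamma_g,\mathbb{Z}) \simeq \mathbb{Z}$. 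Finally, I would check that the pullbacks of $[\tau]/4$ and the $[e_i]$ form a basis. Because both the isomorphism and all of these classes are natural, it suffices to compute the pairing matrix against $n+1$ explicit homology classes: the generator of $H_2(\Gamma_g)$ lifted via a section, and the $n$ puncture classes. This matrix is triangular with diagonal entries $\pm 1$, so the classes form a basis.

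The main obstacle is this last evaluation, in particular showing that $[\tau]/4$ pairs to exactly $\pm 1$ rather than a larger multiple. Rather than recompute it, I would appeal directly to \cite{korkmaz_second_2003}, as the statement does, and to Harer's theorem that $H_2(\Gamma_g) \simeq \mathbb{Z}$ is detected by the signature divided by $4$.
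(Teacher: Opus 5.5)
Your proof of the isomorphism is the paper's argument. Perfectness of $\Gamma_{g,n}^s$ for $g\geq 3$ kills $H_1$ and hence the $\mathrm{Ext}$ term, and the universal coefficient theorem together with \Cref{thm-H_2_grand_genre} gives $\mathbb{Z}^{n+1}$. For the generators, the paper, like your final step, simply refers to \cite{korkmaz_second_2003}, so the argument you would actually submit is correct and coincides with the paper's.

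The sketch you give for recovering the generators yourself would not work as written, so it should not be presented as a proof route.

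(i) Your ``natural candidate'' cannot detect $[e_i]$. Take $\gamma$ a non-separating simple loop at the $i$-th puncture, so that $\mathrm{Push}(\gamma)=\tau_{\gamma_1}\tau_{\gamma_2}^{-1}$ with $\gamma_1,\gamma_2$ essential and non-isotopic. Suppose $\tau_c$ commutes with it. Then $\tau_{\tau_c(\gamma_1)}\tau_{\tau_c(\gamma_2)}^{-1}=\tau_{\gamma_1}\tau_{\gamma_2}^{-1}$. A multitwist determines its curves and exponents, so $\tau_c$ fixes $\gamma_1$ and $\gamma_2$. Hence $i(c,\gamma_1)=i(c,\gamma_2)=0$, and $c$ can be isotoped off $\gamma_1\cup\gamma_2$. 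The same curves then give lifts $\tau_{\gamma_1}\tau_{\gamma_2}^{-1}$ and $\tau_c$ in $\Gamma_{g,n-1}^{s+1}$ that still commute. The value of $[e_i]$ on an abelian cycle is the commutator of lifts in the capping extension, so here it is $0$, not $\pm 1$, and the corresponding diagonal entries of your pairing matrix vanish.

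(ii) For $s=0$ and $n\geq 1$ there is no group-theoretic section of $\Gamma_{g,n}\to\Gamma_g$. Forgetting all but one puncture would turn such a section into a splitting of the Birman exact sequence for $\Gamma_{g,1}$, which does not exist for $g\geq 2$. So the generator of $H_2(\Gamma_g,\mathbb{Z})$ has to be lifted at the level of homology, e.g.\ from the mapping class group of a subsurface avoiding the punctures.

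Since you ultimately defer to the citation, neither point breaks your proof.
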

\noindent
In particular when $n = 0$, it follows from \Cref{thm-H_2_grand_genre} that for all $g \geq 4$ the group $H^2(\Gamma_g^s, \mathbb{Z})$ is generated by the cohomology class associated with the universal central extension $\widetilde{\Gamma}_{g}^{s,\mathrm{univ}}$.
By \Cref{thm-H^2_grand_genre} it is equal to $[\tau]/4$. 

\smallskip
\noindent
In genus $g \leq 3$, the group $H^2(\Gamma_{g,n}^s, \mathbb{Z})$ is known only for certain values of $n$ and $s$. 
In genus 3 the group $H^2(\Gamma_{3,n}^s, \mathbb{Z})$ contains a factor isomorphic to $\mathbb{Z}^{n+1}$, and in genus 2 the group $H^2(\Gamma_{2,n}^s, \mathbb{Z})$ contains a factor isomorphic to $\mathbb{Z}_{10} \oplus \mathbb{Z}^{n}$.
More precisely, we have (see \cite[Cor. 4.4 and 5.3]{korkmaz_second_2003}, \cite[Lem. 2.11]{funar_centrally_2014}, and \cite[Thm. 5.2]{ramirez_amalgam_2018}):

\begin{proposition}\label{prop-H^2_petit_genre}
    \begin{enumerate}[leftmargin=*]
        \item For $g = 3$, one fourth of the Meyer class $[\tau]$ and the $n$ Euler classes $[e_i]$, for all $i \in [\![1, n]\!]$, generate the factor $\mathbb{Z}^{n+1} \subset H^2(\Gamma_{3,n}^s, \mathbb{Z})$. 
        When $n = 0$ and $s \in \{ 0, 1 \}$, the factor $\mathbb{Z}$ generated by one fourth of the Meyer class $[\tau]$ is isomorphic to $H^2(\Gamma_{3}^s, \mathbb{Z})$.
        \item For $g = 2$, one half of the Meyer class $[\tau]$ and the $n$ Euler classes $[e_i]$, for all $i \in [\![1, n]\!]$, generate the factor $\mathbb{Z}_{10} \oplus \mathbb{Z}^{n} \subset H^2(\Gamma_{2,n}^s, \mathbb{Z})$. 
		When $n = 0$ and $s \in \{ 0, 1 \}$, the factor $\mathbb{Z}_{10}$ generated by one half of the Meyer class $[\tau]$ is isomorphic to $H^2(\Gamma_{2}^s, \mathbb{Z})$.
        \item For $g = 1$, $H^2(\Gamma_{1}, \mathbb{Z}) \simeq \mathbb{Z}_{12}$ is generated by one fourth of the Meyer class $[\tau]$, and $H^2(\Gamma_{1}^1, \mathbb{Z}) = 0$.
    \end{enumerate}
\end{proposition}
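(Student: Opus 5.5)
The plan is to derive everything from the universal coefficient theorem
\[ H^2(\Gamma_{g,n}^s,\mathbb{Z}) \simeq \mathrm{Hom}\bigl(H_2(\Gamma_{g,n}^s,\mathbb{Z}),\mathbb{Z}\bigr) \oplus \mathrm{Ext}\bigl(H_1(\Gamma_{g,n}^s,\mathbb{Z}),\mathbb{Z}\bigr), \]
combined with the homology computations recalled in \eqref{eq-H_1_petit_genre} and \eqref{eq-H_2_petit_genre}. After that, the generators are identified by evaluating the Meyer class and the Euler classes on explicit cycles, and on the torsion part by comparing with the extension defined by abelianization.

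\textbf{Abstract groups.} For $g=3$, $\Gamma_{3,n}^s$ is perfect, so the $\mathrm{Ext}$ term vanishes. Hence $H^2$ is the free group $\mathrm{Hom}(H_2,\mathbb{Z})$, whose rank equals the rank of $H_2(\Gamma_{3,n}^s,\mathbb{Z})$; by \cite{korkmaz_second_2003,sakasai_lagrangian_2012} this rank is $n+1$. When $n=0$ and $s\in\{0,1\}$, $H_2\simeq\mathbb{Z}\oplus\mathbb{Z}_2$ gives $H^2\simeq\mathbb{Z}$ exactly.

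For $g=2$, $\mathrm{Ext}(\mathbb{Z}_{10},\mathbb{Z})\simeq\mathbb{Z}_{10}$, and the free part has rank $n$. When $n=0$ this free part vanishes because $H_2(\Gamma_2^s)\simeq\mathbb{Z}_2$, so $H^2(\Gamma_2^s,\mathbb{Z})\simeq\mathbb{Z}_{10}$.

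For $g=1$, the presentations \eqref{eq-presentation_Gamma11_et_Gamma1} show the following.
\begin{itemize}
\item $\Gamma_1^1\simeq B_3$, the fundamental group of the trefoil complement. This space is an aspherical $2$-complex with Euler characteristic $0$, so $H_1=\mathbb{Z}$ and $H_2=0$, which gives $H^2=0$.
\item $\Gamma_1\simeq SL_2(\mathbb{Z})\simeq\mathbb{Z}_4*_{\mathbb{Z}_2}\mathbb{Z}_6$. Mayer--Vietoris gives $H_2=0$ and $H_1=\mathbb{Z}_{12}$, hence $H^2\simeq\mathbb{Z}_{12}$.
\end{itemize}

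\textbf{Free parts.} To identify generators, I would use the Euler classes first. Via the capping sequence \eqref{eq-definition_classe_Euler} and the Birman exact sequence, the Gysin-type argument of \cite{korkmaz_second_2003} produces $n$ classes in $H_2(\Gamma_{g,n}^s)$: tori built from a point-push commuting with a twist around the $i$-th puncture. The Euler class $[e_i]$ evaluates to $\pm1$ on the $i$-th of these and to $0$ on the others, and it also vanishes on the class pulled back from $\Gamma_g^s$. The Meyer class vanishes on the push tori, since it is pulled back through $\mathrm{Forget}$.

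So it remains to show that $[\tau]$ evaluates to $4$ on a generator of the free part of $H_2(\Gamma_3^s)$. I would compute this by Meyer's signature formula on the $2$-cycle coming from a lantern or $3$-chain relation, following \cite{korkmaz_second_2003}.

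\textbf{Torsion parts.} In genus $2$ and $1$, the $\mathrm{Ext}$ summand is detected by restricting a central $\mathbb{Z}$-extension to the abelianization. Concretely, the class of an extension $0\to\mathbb{Z}\to E\to\Gamma\to1$ with $\Gamma^{\mathrm{ab}}$ finite is determined by the image in $\mathbb{Z}/|\Gamma^{\mathrm{ab}}|$ of the central element obtained by lifting a relator expressing "$N\cdot$(twist) $=0$" in $\Gamma^{\mathrm{ab}}$. For instance, in $\Gamma_1$ this relator is $(\tau_a\tau_b)^6$.

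I would compute this element using Meyer's explicit formula for the signature cocycle on $SL_2(\mathbb{Z})$, and its analogue via Meyer's function in genus $2$. The goal is to find that it is $4$ times (respectively $2$ times) a unit modulo $12$ (respectively $10$), which gives the stated generators.

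The main obstacle is this last step: pinning down the exact multiple of the generator represented by $[\tau]$ in the torsion groups $\mathbb{Z}_{12}$ and $\mathbb{Z}_{10}$, and to a lesser extent the value $4$ on the free generator in genus $3$. Both require explicit signature computations, and not just formal homological algebra. For these I would ultimately rely on the computations in \cite{korkmaz_second_2003}, \cite[Lem. 2.11]{funar_centrally_2014} and \cite{ramirez_amalgam_2018}, checking only the normalizations.
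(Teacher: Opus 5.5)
Your overall route is the paper's. The proposition is not proved there but assembled from \cite{korkmaz_second_2003}, \cite{funar_centrally_2014} and \cite{ramirez_amalgam_2018}. Your universal-coefficient bookkeeping is correct, including the genus-one computations via $B_3$ and $\mathbb{Z}_4*_{\mathbb{Z}_2}\mathbb{Z}_6$.

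The one step you make concrete would fail, however: there are no tori of the kind you describe on which $[e_i]=\pm1$.
\begin{itemize}
\item A twist about a curve bounding a once-punctured disc is trivial in $\Gamma_{g,n}^s$, so your tori degenerate.
\item More generally, $[e_i]$ vanishes on every abelian cycle. The group $\Gamma_{g,n-1}^{s+1}$ acts on $\mathbb{R}$ through the circle at infinity of the universal cover, fixing a lift of $\partial_i$. In this action the central element $\tau_{\partial_i}$ has no fixed points, hence is a unit translation after conjugation. So if $fh=hf$ in $\Gamma_{g,n}^s$ and $\tilde f\tilde h\tilde f^{-1}=\tau_{\partial_i}^m\tilde h$ for lifts, comparing translation numbers forces $m=0$.
\end{itemize}
The classes dual to the Euler classes come instead from the Gysin sequence of \eqref{eq-definition_classe_Euler}:
\[
H_2(\Gamma_{g,n}^s,\mathbb{Z}) \xrightarrow{~\langle [e_i],\cdot\rangle~} \mathbb{Z} \longrightarrow H_1(\Gamma_{g,n-1}^{s+1},\mathbb{Z}) \longrightarrow H_1(\Gamma_{g,n}^s,\mathbb{Z}) \longrightarrow 0 .
\]
Its last arrow is a surjection between groups that are both $0$ (for $g=3$) or both $\mathbb{Z}_{10}$ (for $g=2$), hence injective. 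So evaluation against $[e_i]$ is onto $\mathbb{Z}$. A class with value $1$ is the surface class obtained by writing $\tau_{\partial_i}$ as a product of commutators in $\Gamma_{g,n-1}^{s+1}$ and projecting. These commutators can be chosen supported away from the other punctures, so that the other $[e_j]$ vanish on it.

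The inputs you plan to import can also be obtained directly.

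\textbf{Genus $3$.} A single lantern or $3$-chain relator is not a $2$-cycle, since their exponent sums are $1$ and $10$. The natural cycle combines one $3$-chain relator with ten inverse lantern relators. By Endo--Nagami's signatures of relations it has signature $\pm 4$. Together with Meyer's theorem that signatures of surface bundles over closed surfaces are divisible by $4$, this shows $[\tau]/4$ is primitive in $\mathrm{Hom}(H_2(\Gamma_3^s,\mathbb{Z}),\mathbb{Z})\simeq\mathbb{Z}$. You do not need to know beforehand that the cycle generates.

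\textbf{Genus $1$ and $2$.} \enquote{One fourth} (resp.\ \enquote{one half}) of a class in $\mathbb{Z}_{12}$ (resp.\ $\mathbb{Z}_{10}$) is only defined up to $4$-torsion (resp.\ $2$-torsion). So what must be shown is exactly your \enquote{$4$ (resp.\ $2$) times a unit}, i.e.\ that $[\tau]$ has order $3$ (resp.\ $5$).
\begin{itemize}
\item Since $\Gamma_1$ and $\Gamma_2$ have vanishing rational $H^1$ and $H^2$, we have $H^2(\Gamma,\mathbb{Z})\simeq\mathrm{Hom}(\Gamma,\mathbb{Q}/\mathbb{Z})$, and $[\tau]$ corresponds to Meyer's function $\phi$ modulo $\mathbb{Z}$.
\item Here $\phi(\tau_\gamma)$ is the local signature of a nonseparating Lefschetz fibre: $-2/3$ in genus $1$ and $-3/5$ in genus $2$. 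This gives both orders at once.
\item This criterion, like your lifted-relator one, needs $H^2(\Gamma,\mathbb{Q})=0$. So for $\Gamma_{2,n}^s$ with $n\geq 1$, argue on $\Gamma_2$ and pull back. The pullback is an isomorphism onto the torsion of $H^2(\Gamma_{2,n}^s,\mathbb{Z})$ because $H_1(\Gamma_{2,n}^s,\mathbb{Z})\to H_1(\Gamma_2,\mathbb{Z})$ is an isomorphism.
\end{itemize}
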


\section{The graph algebras and its representations}\label{sec-Lgn_et_Alekseev}

Let $(H, \mu, 1_H, \Delta, \varepsilon, S)$ be a finite-dimensional ribbon Hopf algebra over the field $\mathbb{K}$ (see \cite[Sec. VIII.2 and VIII.4]{kassel_quantum_1995} and \cite[§4.1.C]{chari_guide_2000}). 
Starting from \Cref{subsec-representations_Lgn}, we will further assume that $H$ is factorizable (see \Cref{def-factorisable}).

\begin{notation*}
	The product $\mu$ of $H$ will often be implicit.
	We denote the iterated co-product by $\Delta^{(k)} := (\Delta \otimes \id_H) \circ \Delta^{(k-1)}$. 
	By convention $\Delta^{(1)} := \id_H$ and $\Delta^{(2)} := \Delta$. 
	We use Sweedler's notation: $\Delta^{(k)} (h) = h_{(1)} \otimes \cdots \otimes h_{(k)}$. 
	We denote the evaluation pairing by $\langle - , - \rangle : H^* \otimes H \to \mathbb{K}$.
\end{notation*}
\noindent
We recall the following basic relations for future references.
The \emph{universal R-matrix} $R := \sum_a r_a \otimes r^a \in H^{\otimes 2}$ satisfies the following properties: 
\begin{align}
	& R \Delta = \Delta^{op} R, \\
	& (\Delta \otimes \id_H)(R) = R_{13} R_{23}, \qquad (\id_H \otimes \, \Delta)(R) = R_{13}R_{12}, \label{eq-R-matrice_et_co-produit} \\
	& (S \otimes \id_H)(R) = (\id_H \otimes \, S^{-1})(R) = R^{-1}, \qquad (S \otimes S)(R) = R, \label{eq-R-matrice_et_antipode} \\
	& (\varepsilon \otimes \id_H)(R) = (\id_H \otimes \, \varepsilon)(R) = 1_H, \label{eq-R-matrice_et_co-unite} \\
	& R_{12}R_{13}R_{23} = R_{23}R_{13}R_{12}. \label{eq-Yang-Baxter}
\end{align}
The \emph{Drinfeld element} $u := \sum_a S(r^a)r_a \in H$ satisfies the following properties:
\begin{equation}\label{eq-propriete_u}
	u^{-1} = S^{-2}(r^a)r_a = S^{-1}(r^a)S(r_a) = r^a S^2(r_a), \qquad \forall h \in H, \, S^2(h) = u h u^{-1}.
\end{equation} 
The \emph{ribbon element} $v \in H$ is central, invertible, and satisfies the following properties: 
\begin{equation}\label{eq-propriete_v}
v^2 = uS(u), \qquad \Delta(v) = (R_{21}R)^{-1} v \otimes v, \qquad \varepsilon(v) = 1, \qquad S(v) = v.
\end{equation}
The \emph{pivotal element} $g := uv^{-1}$ satisfies the following properties: 
\begin{equation}\label{eq-propriete_g}
 \Delta(g) = g \otimes g, \qquad S(g) = g^{-1}, \qquad \forall h \in H, \, S^2(h) = g h g^{-1}.
\end{equation}

\subsection{The graph algebras \texorpdfstring{$\mathcal{L}_{g,n}(H)$}{Lgn(H)}}\label{subsec-Lgn}

A reference on the algebras $\mathcal{L}_{g, n}(H)$ is \cite[§3.1, 3.2, and 4.1]{baseilhac_noetherian_2025}. 

\begin{definition}
	The \emph{loop algebra} $\mathcal{L}_{0,1}(H)$ is the dual vector space $H^*$ endowed with the following product: 
    \[\varphi \psi = \sum_{i,j} \bigl( r^j S(r^i)\triangleright \varphi \bigr) \star (r_j \triangleright \psi \triangleleft r_i),\]
    where $\triangleright$ (resp. $\triangleleft$) is is the \emph{left} (resp. \emph{right}) \emph{coregular action} of $H$ on $H^*$ defined by $\langle h \triangleright \varphi,x \rangle = \langle \varphi, xh \rangle$ (resp. $\langle h \triangleleft \varphi,x \rangle = \langle \varphi, hx \rangle$). 
    It is an associative algebra, with neutral element the co-unit $\varepsilon$.
\end{definition}
\noindent
The algebra $\mathcal{L}_{0,1}(H)$ is an $H$-module-algebra for the \emph{right coadjoint action}:
\[ \forall h \in H, \, \forall \varphi \in \mathcal{L}_{0, 1}(H), ~ \coad^r(h) (\varphi) := S(h_{(2)}) \triangleright \varphi \triangleleft h_{(1)}, \]
which means: 
\[ \forall \varphi, \, \psi \in \mathcal{L}_{0, 1}(H), \, \forall h \in H, ~ \coad^r(h)(\varphi\psi) = \coad^r(h_{(1)})(\varphi) \coad^r(h_{(2)})(\psi), \]
where we used the product of $\mathcal{L}_{0, 1}(H)$.

\smallskip
\noindent
An element $\varphi \in \mathcal{L}_{0, 1}(H)$ is invariant under the right coadjoint action if:
\[ \forall h \in H, ~ \coad^r(h)(\varphi) = \varepsilon(h) \varphi, \]
which we denote by $\varphi \in \inv \coad^r$.
A well-known equivalent formulation of the invariance under the right coadjoint action is: 
\begin{equation}\label{eq-reformulation_invariance_coad^r}
	\varphi \in \inv \coad^r \iff \forall x,y \in H, \varphi(xy) = \varphi \bigl( S^{-2}(y)x \bigr).
\end{equation}
We have the following result which will be usefull in \Cref{subsec-calcul_relations}.
\begin{lemme}[{{\cite[Lem. 4.3]{faitg_derived_2026}}}]\label{lem-produit_L01_si_un_est_invariant}
	For all $\varphi \in \mathcal{L}_{0, 1}(H)$ and $\psi \in \inv \coad^r$, we have: 
	\[ \varphi \psi = \varphi \star \psi = \psi \varphi = \psi \star \varphi. \]
\end{lemme}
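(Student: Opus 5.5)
The plan is to unwind the product formula of $\mathcal{L}_{0,1}(H)$ by evaluating both sides on an arbitrary $x \in H$. I use the invariance of $\psi$ only through the reformulation \eqref{eq-reformulation_invariance_coad^r}, so that the $R$-matrix legs can be moved next to each other, and then collapse them with the identities \eqref{eq-R-matrice_et_antipode}. I take $\star$ to be the convolution product dual to $\Delta$, i.e. $\langle \varphi \star \psi, x\rangle = \varphi(x_{(1)})\psi(x_{(2)})$, and I will check this convention against the paper's definition before starting.

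\textbf{Step 1: $\varphi\psi = \varphi \star \psi$.} By definition,
\[ \langle \varphi\psi, x\rangle = \sum_{i,j} \varphi\bigl(x_{(1)} r^j S(r^i)\bigr)\, \psi\bigl(r_i x_{(2)} r_j\bigr). \]
Applying \eqref{eq-reformulation_invariance_coad^r} with $y = r_j$ gives $\psi(r_i x_{(2)} r_j) = \psi\bigl(S^{-2}(r_j) r_i x_{(2)}\bigr)$. Everything therefore reduces to the element
\[ X := \sum_{i,j} S^{-2}(r_j) r_i \otimes r^j S(r^i) \in H \otimes H. \]
Since $(S \otimes S)(R) = R$, we have $(S^{-2} \otimes \id_H)(R) = (\id_H \otimes S^2)(R)$. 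Using $S(ab) = S(b)S(a)$, this gives
\[ X = (\id_H \otimes S)\Bigl(\sum_{i,j} r_j r_i \otimes r^i S(r^j)\Bigr). \]
Applying $\id_H \otimes S^{-1}$ to the inner sum yields $\sum r_j r_i \otimes r^j S^{-1}(r^i) = R \cdot (\id_H \otimes S^{-1})(R) = R R^{-1} = 1 \otimes 1$, by \eqref{eq-R-matrice_et_antipode}. Hence $X = 1 \otimes 1$, and so $\langle \varphi\psi, x\rangle = \varphi(x_{(1)})\psi(x_{(2)})$, which is $\varphi \star \psi$.

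\textbf{Step 2: $\psi\varphi = \psi \star \varphi$.} I apply the same computation with the roles swapped. Now $\psi$ sits in the first slot, in the form $r^j S(r^i) \triangleright \psi$, and invariance moves the right factors $r^j S(r^i)$ to the left of the argument with an $S^{-2}$. The resulting element of $H \otimes H$ is again a product of $R$ with one of its antipode twists, $(S \otimes \id_H)(R) = R^{-1}$ or $(\id_H \otimes S^{-1})(R) = R^{-1}$. I expect it to collapse to $1 \otimes 1$ in the same way, possibly after using $S^2(h) = g h g^{-1}$ and $\Delta(g) = g \otimes g$ from \eqref{eq-propriete_g} to absorb stray powers of $S^2$.

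\textbf{Step 3: $\varphi \star \psi = \psi \star \varphi$.} It remains to see that $\psi$ is central for convolution. Evaluating $\psi \star \varphi$ on $x$ gives $\psi(x_{(1)})\varphi(x_{(2)})$, and I want to rewrite this as $\varphi(x_{(1)})\psi(x_{(2)})$. Dualizing \eqref{eq-reformulation_invariance_coad^r}, the condition says $\Delta_{H^*}(\psi)$ is "twisted cocommutative": writing $\psi(xy) = \sum \psi'(x)\psi''(y)$ with $\psi' \otimes \psi'' \in H^* \otimes H^*$, it gives $\sum \psi' \otimes \psi'' = \sum \psi'' \otimes (\psi' \circ S^{-2})$. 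This swap is exactly what exchanges the two convolution orders, up to a twist of $\varphi$ by $S^{\pm 2}$.

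\textbf{Main obstacle.} I expect the hardest point to be getting this $S^{\pm 2}$ twist (equivalently, a conjugation by the pivotal element $g$) to disappear, and, relatedly, matching the paper's exact convention for $\star$ and for the coregular actions. If a residual twist persists, I would instead deduce $\psi\varphi = \varphi\psi$ directly. Step 1 applied to the invariant element $\psi$ shows that the product by $\psi$ is convolution. One could also combine Step 1 with the known fact that invariants are central in $\mathcal{L}_{0,1}(H)$; this fact follows from $\coad^r$ making $\mathcal{L}_{0,1}(H)$ an $H$-module-algebra together with the braided-commutativity of the product. Together these give Step 3 without any separate Hopf-algebra manipulation.
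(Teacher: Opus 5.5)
\textbf{Steps 2 and 3 do not go through, and the final equality with $\psi\star\varphi$ cannot be obtained from invariance at all.}

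Your Step~1 is correct. Invariance of $\psi$ moves $r_j$ to the left, and $X=1\otimes 1$ follows from $(S\otimes S)(R)=R$ and $(\id_H\otimes S^{-1})(R)=R^{-1}$. So $\varphi\psi=\varphi\star\psi$.

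Step~2, however, does not collapse to $1\otimes 1$. With $\psi$ in the first slot, the non-invariant $\varphi$ keeps $r_i$ and $r_j$ on both sides of $x_{(2)}$. Invariance of $\psi$ only lets you move $S(r^i)$, and you get
\[ \langle \psi\varphi, x\rangle=\sum_{i,j}\psi\bigl(S^{-1}(r^i)\,x_{(1)}\,r^j\bigr)\,\varphi\bigl(r_i\,x_{(2)}\,r_j\bigr)=(\psi\otimes\varphi)\bigl(R_{21}^{-1}\Delta(x)R_{21}\bigr)=(\psi\otimes\varphi)\bigl(\Delta^{\mathrm{op}}(x)\bigr). \]
So what you actually obtain is $\psi\varphi=\varphi\star\psi$, not $\psi\star\varphi$. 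Combined with Step~1, this proves $\varphi\psi=\varphi\star\psi=\psi\varphi$. It also gives the centrality of $\psi$ in $\mathcal{L}_{0,1}(H)$ directly, without any appeal to braided commutativity.

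The genuine gap is the last equality, which amounts to $\varphi\star\psi=\psi\star\varphi$, i.e. $\psi(x_{(1)})x_{(2)}=x_{(1)}\psi(x_{(2)})$.
\begin{itemize}
\item This is a condition on the coproduct of $H$, whereas invariance is a condition on its product.
\item The splitting $\psi(xy)=\sum\psi'(x)\psi''(y)$ in your Step~3 is the coproduct of $\psi$ in $H^*$. No symmetry of it controls $\star$-commutators.
\item Your fallback is circular: Step~1 only treats $\psi$ in the right-hand slot, so Step~1 plus centrality again yields only $\varphi\star\psi$.
\end{itemize}

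With the conventions fixed here, the equality actually fails in general. Take $H=D(S_3)$ over $\mathbb{C}$; it is factorizable and ribbon, and $S^2=\id$.
\begin{itemize}
\item Because $S^2=\id$, $\inv\coad^r$ is exactly the space of trace functions $f(xy)=f(yx)$.
\item The dual basis of $\{\delta_a h\}$ satisfies $e_{a,h}\star e_{b,k}=\delta_{h,k}\,e_{ab,h}$. Hence $f$ is $\star$-central if and only if $a\mapsto f(\delta_a h)$ is a class function for every $h$.
\item Let $\chi$ be the character of the simple module attached to the class of $(123)$ and a nontrivial character of its centralizer $\mathbb{Z}_3$. Then $\chi(\delta_{(123)}(123))=\omega\neq\omega^2=\chi(\delta_{(132)}(123))$, so $\chi$ is invariant but not $\star$-central.
\end{itemize}

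What can be proved is the chain without its last term. In addition, $\psi\star\varphi=\varphi\star\psi$ holds when $\varphi$ is also invariant: apply Step~1 with the roles exchanged and use centrality. This suffices wherever both factors are invariant, as for $\lambda$, $\lambda^v$ and $\lambda^{v^{-1}}$.
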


\begin{definition}\label{def-L10}
	The \emph{handle algebra} $\mathcal{L}_{1,0}(H)$ is the vector space $H^* \otimes H^*$ endowed with the product determined by the following conditions:
    \begin{enumerate}[itemsep=0.2em]
         \item $\mathcal{L}_{0,1}(H) \otimes \varepsilon$ and $\varepsilon \otimes \mathcal{L}_{0,1}(H)$ are subalgebras of $\mathcal{L}_{1,0}(H)$,
         \item $(\varepsilon \otimes \alpha)(\beta \otimes \varepsilon) = \sum_{i,j,k,l} (r^l r_k\triangleright \beta \triangleleft r_i r_j) \otimes (r^k S(r^i) \triangleright \alpha \triangleleft r^j r_l)$,
         \item $(\beta \otimes \varepsilon)(\varepsilon \otimes \alpha) = (\beta \otimes \alpha).$ \label{itemL10:betafoisalpha_egale_betaotimesalpha}
    \end{enumerate}
	It is an associative algebra, with neutral element $\varepsilon \otimes \varepsilon$.
\end{definition}
\noindent
The algebra $\mathcal{L}_{1, 0}(H)$ is an $H$-module-algebra for the right coadjoint action induced by the co-product:
\begin{equation}\label{eq-coad^r_sur_L01}
	\forall h \in H, \, \forall \varphi \otimes \psi \in \mathcal{L}_{1,0}(H), ~ \coad^r(h)(\varphi \otimes \psi) = \coad^r(h_{(1)})(\varphi) \otimes \coad^r(h_{(2)})(\psi).
\end{equation}
Moreover, the embeddings of $\mathbb{K}$-vector spaces: 
\begin{equation}\label{eq-injections_L10}
	\begin{array}{l|ccl}
		\mathfrak{i}_{a}: & \mathcal{L}_{0, 1}(H) & \longrightarrow & \mathcal{L}_{1, 0}(H) \\
		& \alpha & \longmapsto & \varepsilon \otimes \alpha
	\end{array}
	\qquad
	\begin{array}{l|ccl}
		\mathfrak{i}_{b}: & \mathcal{L}_{0, 1}(H) & \longrightarrow & \mathcal{L}_{1, 0}(H) \\
		& \beta & \longmapsto & \beta \otimes \varepsilon
	\end{array}
\end{equation}
are morphisms of $H$-module-algebras and by \labelcref{itemL10:betafoisalpha_egale_betaotimesalpha} of \Cref{def-L10} we have:
\[ \mathfrak{i}_{b}(\beta) \, \mathfrak{i}_{a}(\alpha) = \beta \otimes \alpha. \]

We recall the definition of a braided tensor product (see \cite[Lem. 9.2.12]{majid_foundations_2000}).
The category $\mathrm{mod}$-$H$ of right $H$-modules is braided, with the braiding:
\[ 
\begin{array}{l|ccl}
	c_{M,N}: & M \otimes N & \longrightarrow & N \otimes M \\
	& m \otimes n & \longmapsto & \sum_{a} n.r_a \otimes m.r^a.
\end{array} 
\]
Let $(M, \mu_M, 1_M)$ and $(N, \mu_N, 1_N)$ be two right $H$-module-algebras (i.e. algebras in $\mathrm{mod}$-$H$). 
The maps:
\begin{align*}
	& \mu_{M \tilde{\otimes} N} : (M \otimes N) \otimes (M \otimes N) \xrightarrow{\id_M \otimes \, c_{N,M} \otimes \, \id_N} (M \otimes M) \otimes (N \otimes N) \xrightarrow{\mu_M \otimes \, \mu_N} M \otimes N, \\ 
	& 1_{M \tilde{\otimes} N} := 1_M \otimes 1_N
\end{align*} 
endow the vector space $M \otimes N$ with a structure of $H$-module-algebra, denoted by $M \, \tilde{\otimes} \, N$ and called the \emph{braided tensor product} of $M$ and $N$. 
This operation is associative.
Explicitly, the product of $M \, \tilde{\otimes} \, N$ is: 
\begin{equation*}
	\forall (m \, \tilde{\otimes} \, n) \in M \, \tilde{\otimes} \, N, \, \forall (m' \, \tilde{\otimes} \, n') \in M \, \tilde{\otimes} \, N, ~  (m \, \tilde{\otimes} \, n) (m' \, \tilde{\otimes} \, n') = \sum_a m(m'.r_a) \, \tilde{\otimes} \, (n.r^a)n'.
\end{equation*}
The embeddings of $\mathbb{K}$-vector spaces: 
\begin{equation*}
	\begin{array}{l|ccl}
		i_M: & M & \longrightarrow & M \, \tilde{\otimes} \, N \\
		& m & \longmapsto & m \, \tilde{\otimes} \, 1_N
	\end{array}
	\qquad
	\begin{array}{l|ccl}
		i_N: & M & \longrightarrow & M \, \tilde{\otimes} \, N \\
		& n & \longmapsto & 1_M \, \tilde{\otimes} \, n
	\end{array}
\end{equation*}
are morphisms of $H$-module-algebras and we have: 
\begin{equation*}
	i_M(m) \, i_N(n) = m \, \tilde{\otimes} \, n, \qquad i_N(n) \, i_M(m) = \sum_a i_M(m.r_a) \, i_N(n.r^a).
\end{equation*}

\begin{definition}
	The \emph{graph algebra} of $\Sigma_{g,n}^1$ is $\mathcal{L}_{g,n}(H) :=  \mathcal{L}_{1,0}(H)^{\tilde{\otimes} \, g} \, \tilde{\otimes} \, \mathcal{L}_{0,1}(H)^{\tilde{\otimes} \, n}$.
\end{definition}
\noindent
The algebra $\mathcal{L}_{g,n}(H)$ is an $H$-module-algebra for the right coadjoint action induced by the co-product (see e.g. \labelcref{eq-coad^r_sur_L01} for $\mathcal{L}_{1, 0}(H)$).

\smallskip
\noindent
The embeddings $\mathfrak{i}_{a}$ and $\mathfrak{i}_{b}$ of \labelcref{eq-injections_L10} extend to the following embeddings of $H$-module-algebras:  
\begin{equation}\label{eq-definition_injections_canoniques}
	\mathfrak{i}_{a_j} : \mathcal{L}_{0, 1}(H) \longrightarrow \mathcal{L}_{g, n}(H), 
	\qquad 
	\mathfrak{i}_{b_j} : \mathcal{L}_{0, 1}(H) \longrightarrow \mathcal{L}_{g, n}(H),
	\qquad 
	\mathfrak{i}_{m_{g+k}}: \mathcal{L}_{0, 1}(H) \longrightarrow \mathcal{L}_{g, n}(H),
\end{equation}
defined by: $\forall \alpha, \beta, \psi \in \mathcal{L}_{0, 1}(H)$,
\begin{align*}
		& \mathfrak{i}_{a_j}(\alpha) = \varepsilon^{\otimes 2(j-1)} \otimes \varepsilon \otimes \alpha \otimes \varepsilon^{\otimes 2(g-j)+n}, \qquad \mathfrak{i}_{m_{g+k}}(\psi) = \varepsilon^{\otimes 2g+k-1} \otimes \psi \otimes \varepsilon^{\otimes n-k}, \\
		& \mathfrak{i}_{b_j}(\beta) = \varepsilon^{\otimes 2(j-1)} \otimes \beta \otimes \varepsilon \otimes \varepsilon^{\otimes 2(g-j)+n}.
\end{align*}
These embeddings correspond to the based (at a fixed point in $\partial\Sigma_{g,n}^1$) curves of $\Sigma_{g,n}^1$ represented in \Cref{fig-courbes_standards_Lgn,fig-courbes_standards_Lgn_ruban} below.
\begin{figure}[H]
	\centering
	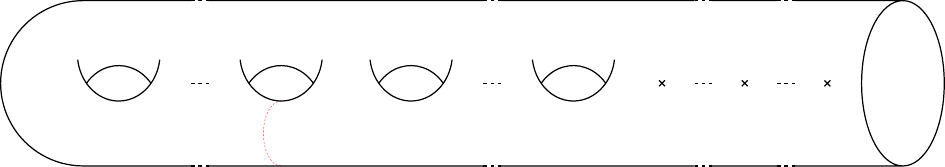
	\caption{Standard curves of $\Sigma_{g,n}^1$}
	\label{fig-courbes_standards_Lgn}
\end{figure}

\begin{figure}[H]
	\centering
	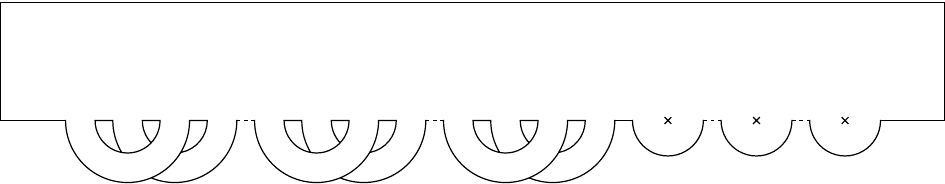
	\caption{Standard curves in the ribbon surface presentation of $\Sigma_{g,n}^1$}
	\label{fig-courbes_standards_Lgn_ruban}
\end{figure}
\noindent
These embeddings have the following properties: 
\begin{align}
 	& \mathfrak{i}_{a_j}, \mathfrak{i}_{b_j}, \mathfrak{i}_{m_{g+k}} : \mathcal{L}_{0,1}(H) \to \mathcal{L}_{g,n}(H) \text{ are morphisms of algebras}, \label{eq-injections_canoniques_morphismes_algebres}\\
 	& \mathfrak{i}_{b_1}(\beta_1) \mathfrak{i}_{a_1}(\alpha_1) \cdots \mathfrak{i}_{b_g}(\beta_g) \mathfrak{i}_{a_g}(\alpha_g) \mathfrak{i}_{m_{g+1}}(\psi_1) \cdots \mathfrak{i}_{m_{g+n}}(\psi_n) = \beta_1 \otimes \alpha_1 \otimes \cdots \otimes \beta_g \otimes \alpha_g \otimes \psi_1 \otimes \cdots \otimes \psi_n, \label{eq-generateurs_Lgn}\\
 	& \mathfrak{i}_{a_i}(\varphi) \mathfrak{i}_{b_i}(\psi) = \sum_{i,j,k,l} \mathfrak{i}_{b_i}(r^l r_k \triangleright \psi \triangleleft r_i r_j) \, \mathfrak{i}_{a_i} \bigl( r^k S(r^i) \triangleright \varphi \triangleleft r^j r_l \bigr), \label{eq-commutation_ai_bi}\\
 	& \mathfrak{i}_{y_j}(\varphi) \mathfrak{i}_{x_i}(\psi) = \sum_{i,j,k,l} \mathfrak{i}_{x_i} \bigl( S(r_k r_l) \triangleright \psi \triangleleft r_i r_j \bigr) \, \mathfrak{i}_{y_j} \bigl( S(r^i r^k) \triangleright \varphi \triangleleft r^j r^l \bigr), \label{eq-commutation_yj_xi}
\end{align}
for all $\alpha_i, \beta_i, \psi_j, \varphi, \psi \in \mathcal{L}_{0,1}(H)$, where $i < j$ and $x_i$ (resp. $y_j$) is either $a_i$, $b_i$, or $m_i$ (resp. $a_j$, $b_j$, or $m_j$). 

\medskip

There is a topological interpretation of the algebra $\mathcal{L}_{g, n}(H)$ in terms of the stated skein algebra $S^{\mathrm{st}}_H(\Sigma_{g,n}^{1,\bullet})$, defined by using the Reshetikhin-Turaev functor on stated $H$-colored oriented ribbon graphs in $\Sigma_{g,n}^{1,\bullet} \times [0,1]$, where $\Sigma_{g,n}^{1,\bullet}$ is the surface $\Sigma_{g,n}^{1}$ with a fixed base point on its boundary.

\begin{theoreme}[{{\cite[Thm. 6.5]{baseilhac_noetherian_2025}}}]\label{thm-isomorphisme_avec_stated_skein}
	There is an (explicit) isomorphism between the graph algebra $\mathcal{L}_{g, n}(H)$ and the stated skein algebra $S^{\mathrm{st}}_H(\Sigma_{g,n}^{1,\bullet})$. 
\end{theoreme}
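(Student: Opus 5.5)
The plan is to build an explicit algebra morphism $\Psi : \mathcal{L}_{g,n}(H) \to S^{\mathrm{st}}_H(\Sigma_{g,n}^{1,\bullet})$ on the generators of \labelcref{eq-generateurs_Lgn}, and then show that $\Psi$ is bijective using a splitting of the surface into elementary pieces.

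\emph{Definition of $\Psi$.} Every $\varphi \in H^*$ is a linear combination of matrix coefficients ${}_V\langle f, ? \cdot w\rangle$, with $V$ a finite-dimensional $H$-module, $w \in V$ and $f \in V^*$. For such a coefficient and a standard based curve $x \in \{a_j, b_j, m_{g+k}\}$ of \Cref{fig-courbes_standards_Lgn_ruban}, set $\Psi(\mathfrak{i}_x(\varphi))$ to be the stated ribbon graph given by:
\begin{itemize}
\item the arc parallel to $x$, with both ends at the boundary near the base point;
\item colored by $V$;
\item with states $w$ and $f$ at its two endpoints.
\end{itemize}
Well-definedness on the vector space $H^*$ has to be checked. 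Two presentations of the same functional differ by module maps $V \to V'$. Such a map can be slid along the strand, and then absorbed into the states by the boundary (state) relations of the stated skein algebra. On a general element of \labelcref{eq-generateurs_Lgn}, $\Psi$ is then defined as the stacked product of these arcs, in the order $b_1,a_1,\dots,b_g,a_g,m_{g+1},\dots,m_{g+n}$.

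\emph{Multiplicativity.} I would check three things.
\begin{itemize}
\item $\Psi \circ \mathfrak{i}_x$ is multiplicative on $\mathcal{L}_{0,1}(H)$. Stacking two parallel copies of the arc $x$ and fusing them at the boundary produces crossings near the base point, evaluated by the Reshetikhin–Turaev functor. These crossings give exactly the $R$-matrix factors $r^jS(r^i)$, $r_j$, $r_i$ in the product of $\mathcal{L}_{0,1}(H)$. Here I use \labelcref{eq-R-matrice_et_co-produit} and \labelcref{eq-R-matrice_et_antipode} to rewrite the fused coloring $V\otimes V'$.
\item The exchange relation \labelcref{eq-commutation_yj_xi} holds for $i<j$. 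Reordering the ends of two arcs at the boundary creates two crossings, which produce the four $R$-matrix legs.
\item The handle relation \labelcref{eq-commutation_ai_bi} holds. The arcs $a_i$ and $b_i$ intersect once in the interior, and one must resolve the commutation of $\mathfrak{i}_{a_i}$ past $\mathfrak{i}_{b_i}$ using an interior crossing together with the boundary reordering.
\end{itemize}
Since $\mathcal{L}_{g,n}(H)$ is presented as an iterated braided tensor product, these relations suffice for $\Psi$ to be an algebra map.

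\emph{Bijectivity.} Both sides should satisfy the same gluing formula. Cut $\Sigma_{g,n}^{1,\bullet}$ along arcs from the boundary into $g$ one-holed tori and $n$ punctured disks, each meeting the original boundary in an interval. The splitting/gluing theorem for stated skein algebras then identifies $S^{\mathrm{st}}_H(\Sigma_{g,n}^{1,\bullet})$ with the braided tensor product of the pieces' algebras, with braiding coming from the order of the boundary intervals. This reduces the problem to two cases, namely $S^{\mathrm{st}}_H$ of the once-marked annulus and of the one-holed torus with one marked boundary interval. For these, I would show that every stated graph reduces, using the boundary relations, to a combination of single arcs along the generators. This gives surjectivity. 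For injectivity, I would construct an inverse by a holonomy-type evaluation of a stated graph along the ciliated fat graph (the functor underlying the Alekseev morphism), or alternatively compare the dimensions of filtered pieces.

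The main obstacle is injectivity in the handle case, that is, proving $S^{\mathrm{st}}_H(\Sigma_{1,0}^{1,\bullet}) \cong H^*\otimes H^*$ as vector spaces. The interior crossing of $a$ and $b$ prevents a naive cutting argument, so this step needs either a well-defined inverse functor or a basis theorem for the stated skein module. I would try the inverse-functor route first.
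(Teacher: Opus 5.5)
There is a genuine gap: injectivity is the whole nontrivial content of the statement, and your proposal only announces it.

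For comparison, the paper gives no proof and imports the result from \cite[Thm.~6.5]{baseilhac_noetherian_2025}. There, following Faitg's holonomy construction (whose diagrammatic calculus is recalled in \Cref{ann-description_diagrammatique_Lgn}), the isomorphism is realized by an evaluation of stated graphs with values in $\mathcal{L}_{g,n}(H)$. That is a map in the direction opposite to your $\Psi$.

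The first two steps of your plan are sound. The coend description of $H^*$ (every linear relation between matrix coefficients comes from module maps) makes $\Psi$ well defined. Since the ordered products \labelcref{eq-generateurs_Lgn} span $\mathcal{L}_{g,n}(H)$, the fusion, handle and exchange relations \labelcref{eq-injections_canoniques_morphismes_algebres,eq-commutation_ai_bi,eq-commutation_yj_xi} suffice to make $\Psi$ an algebra map.

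However, reducing stated graphs to combinations of products of standard arcs only shows that $\Psi$ is onto, hence $\dim S^{\mathrm{st}}_H(\Sigma_{g,n}^{1,\bullet}) \leq (\dim H)^{2g+n}$. The reverse inequality can only come from a well-defined invariant of stated $H$-colored ribbon graphs in $\Sigma_{g,n}^{1,\bullet} \times [0,1]$, i.e.\ a linear map out of the skein algebra, and you do not construct one. The Alekseev morphism $\Phi_{g,n}$ cannot play this role, since it is defined on $\mathcal{L}_{g,n}(H)$, not on graphs.

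The reduction to elementary pieces does not remove the difficulty. Cutting along ideal arcs gives splitting maps, which at best embed the skein algebra into that of the cut surface, whose pieces acquire new boundary edges. They do not identify it with a braided tensor product. What you would need is the boundary-fusion statement: attaching pieces along disjoint subintervals of a single boundary edge yields the braided tensor product of their stated skein algebras. For $S^{\mathrm{st}}_H$ with an arbitrary finite-dimensional ribbon Hopf algebra this is not available off the shelf, and its injective half again needs an invariant of stated graphs. The same is true for the annulus piece, not only for the handle.

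The missing idea is to build that invariant once, on the whole surface:
\begin{enumerate}
\item Present a stated graph in the ribbon-surface picture of \Cref{fig-courbes_standards_Lgn_ruban}, with its strands running parallel inside the bands.
\item Evaluate it \`a la Hennings, so that each passage through the band $x_i$ contributes a leg of the universal element $\mathcal{X}(i)$ of \labelcref{eq-propriete_element_universel}.
\item Pair the result with the states as in \Cref{def-evaluation_a_etats}.
\end{enumerate}
The real work is invariance under isotopy in the thickened surface and under the boundary relations. Moves inside the disc are handled by the Reshetikhin--Turaev labelling. Moves pushing strands, crossings or caps across the feet of the bands are exactly where the fusion and exchange relations of $\mathcal{L}_{g,n}(H)$, rewritten for the $\mathcal{X}(i)$, are used.

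The resulting algebra map $\mathrm{hol}$ satisfies $\mathrm{hol} \circ \Psi = \id$ on the generators $\mathfrak{i}_x(\varphi)$. Your spanning argument, run on the whole surface, then makes $\Psi$ onto: open closed components at the boundary edge, absorb the part of the graph lying in the disc into the states, and fuse parallel passages through each band. With both facts in hand, $\Psi$ is an isomorphism and no gluing theorem is needed.
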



\subsection{The Alekseev morphism}\label{subsec-Alekseev}
In \Cref{sec-construction_representations_projectives}, we will construct a projective representation of the mapping class groups $\Gamma_{g,n}^s$, for $s \in \{0,1 \}$, thanks to a map $\Phi_{g,n}$ on $\mathcal{L}_{g, n}(H)$ which can be defined inductively from the following simpler maps $\Phi_{0,1}$ and $\Phi_{1,0}$. 

\medskip

Consider the map:
\begin{equation}\label{eq-phi01}
	\begin{array}{l|ccl}
		\Phi_{0,1}: & \mathcal{L}_{0, 1}(H) & \longrightarrow & H \\
		& \varphi & \longmapsto & (\varphi \otimes \id_H)(RR_{21}).
	\end{array}
\end{equation}

\begin{definition}\label{def-factorisable}
	We say that $H$ is \emph{factorizable} if $\Phi_{0,1}$ is an isomorphism.
\end{definition}
\noindent
There are plenty examples of factorizable ribbon Hopf algebras, e.g. given by the small quantum groups $u_\epsilon(\mathfrak{g})$, where $\mathfrak{g}$ is a complex semisimple Lie algebra and $\epsilon$ a primitive odd root of unity (see \cite[Cor. A.3.3]{lyubashenko_invariants_1995}).

\smallskip
\noindent
The map $\Phi_{0,1}$ is a morphism of algebras.
Moreover, if we endow $H$ with the \emph{right adjoint action} $\ad^r(h)(x) := S(h_{(1)}) x h_{(2)}$, then $\Phi_{0,1}$ is a morphism of $H$-module-algebras.  

\medskip
\noindent
We will use the \emph{Heisenberg double} $\mathcal{H}(H^*)$ of $H^*$, which is the $\mathbb{K}$-vector space $H^* \otimes H$ endowed with the product $(\varphi \otimes x)(\psi \otimes y) := \varphi \star (x_{(1)} \triangleright \psi ) \otimes x_{(2)}y$,
where $\star$ is the product of $H^*$ (i.e. $\varphi \star \psi = (\varphi \otimes \psi) \circ \Delta$, with $\Delta$ the co-product of $H$).
It is an associative algebra, with neutral element $\varepsilon \otimes 1_H$.
The map:
\begin{equation}\label{eq-representation_Montgomery}
	\begin{array}{l|ccl}
		\Psi: & \mathcal{H}(H^*) & \longrightarrow & \End_\mathbb{K}(H^*) \\
		& \varphi \otimes x & \longmapsto & \bigl( \psi \mapsto \varphi \star (x \triangleright \psi) \bigr)
	\end{array}
\end{equation}
is an embedding of algebras (see \cite[Lem. 9.4.2]{montgomery_hopf_1993}).
Since $H$ is finite-dimensional, by equality of dimensions it follows that $\Psi$ is an isomorphism.

\medskip
\noindent
Define the map: 
\[ 
\begin{array}{l|ccl}
	\Phi_{1,0}: & \mathcal{L}_{1, 0}(H) & \longrightarrow & \mathcal{H}(H^*) \\
	& (\varphi \otimes \psi) & \longmapsto & \sum_{i,j,k}(r^ir^j\triangleright \varphi \triangleleft r_k r_i) \otimes \bigl( r^k r_j \Phi_{0,1}(\psi) \bigr).
\end{array}
\]
It is a morphism of algebras (even $H$-module-algebras) (see \cite[Prop. 3.8]{baseilhac_noetherian_2025}).

\begin{theoreme}[{{\cite[§3.3]{faitg_projective_2020}, \cite[Sec. 5]{baseilhac_noetherian_2025}}}]\label{thm-isomorphisme_phign}
	\begin{enumerate}[leftmargin=*]
		\item The morphisms of algebras $\Phi_{0,1}$ and $\Phi_{1,0}$ extend to a morphism of algebras $\Phi_{g,n} : \mathcal{L}_{g,n}(H) \to \mathcal{H}(H^*)^{\otimes g} \otimes H^{\otimes n}$, called the \emph{Alekseev morphism}.
		\item Let us also assume that $H$ is factorizable. 
		Then $\Phi_{g,n}$ is an isomorphism. 
		Therefore, it induces an isomorphism between $\mathcal{L}_{g, n}(H)$ and $\End_\mathbb{K}(H^*)^{\otimes g} \otimes H^{\otimes n}$. \\
		In particular, the algebra $\mathcal{L}_{g, 0}(H)$ is isomorphic to $\End_\mathbb{K}(H^*)^{\otimes g} \simeq M_{\dim(H)}(\mathbb{K})^{\otimes g}$.
	\end{enumerate}
\end{theoreme}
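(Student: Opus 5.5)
The statement to be proved is \Cref{thm-isomorphisme_phign}: $\Phi_{0,1}$ and $\Phi_{1,0}$ extend to an algebra morphism $\Phi_{g,n}$, which is an isomorphism when $H$ is factorizable. I will build $\Phi_{g,n}$ inductively along the braided tensor product decomposition $\mathcal{L}_{g,n}(H) = \mathcal{L}_{1,0}(H)^{\tilde{\otimes} g} \tilde{\otimes} \mathcal{L}_{0,1}(H)^{\tilde{\otimes} n}$. The obvious candidate $\Phi_{1,0}^{\otimes g} \otimes \Phi_{0,1}^{\otimes n}$ is not multiplicative, because the target is an ordinary tensor product while the source is braided. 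So I will twist it in the Alekseev manner.

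The idea is that $\mathcal{H}(H^*)$ and $H$ receive $H$-valued "moment maps". For $H$ itself this is the identity, and for $\mathcal{H}(H^*)$ it is $x \mapsto \varepsilon \otimes x$ together with its conjugations. These moment maps implement the right adjoint action by inner conjugation. Concretely, for $A$ an $H$-module-algebra whose action is inner, $h \cdot a = S(\mu(h_{(1)}))\, a\, \mu(h_{(2)})$ for an algebra map $\mu : H \to A$, the braided tensor product $A \tilde{\otimes} B$ is isomorphic to the ordinary tensor product. The isomorphism is $a \tilde{\otimes} b \mapsto \sum a\, \mu(r_a) \otimes (r^a \cdot b)$, or a variant with $R^{-1}$ depending on conventions.

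First step: check that $\Phi_{0,1}$ and $\Phi_{1,0}$ are morphisms of $H$-module-algebras. For $\Phi_{0,1}$ this is recorded above; for $\Phi_{1,0}$ it is \cite[Prop. 3.8]{baseilhac_noetherian_2025}. Braided tensor products are functorial in morphisms of module-algebras, so $\Phi_{1,0}^{\tilde{\otimes} g} \tilde{\otimes} \Phi_{0,1}^{\tilde{\otimes} n}$ is an algebra morphism
\[ \mathcal{L}_{g,n}(H) \to \mathcal{H}(H^*)^{\tilde{\otimes} g} \tilde{\otimes} H^{\tilde{\otimes} n}. \]

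Second step: unbraid the target by the general lemma above. The lemma is checked directly using the quasitriangularity identities \labelcref{eq-R-matrice_et_co-produit}, \labelcref{eq-R-matrice_et_antipode} and \labelcref{eq-Yang-Baxter}. The composite is $\Phi_{g,n}$, which proves part (1). Its restrictions to the factors $\mathfrak{i}_{a_j},\mathfrak{i}_{b_j},\mathfrak{i}_{m_{g+k}}$ recover $\Phi_{1,0}$ and $\Phi_{0,1}$ up to these conjugations, which is what "extend" means here.

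Third step: under factorizability, $\Phi_{0,1}$ is bijective by \Cref{def-factorisable}. Then $\Phi_{1,0}$ is bijective too: it is the composite of $\varphi \otimes \psi \mapsto \varphi \otimes \Phi_{0,1}(\psi)$ with a map of the form $\varphi \otimes x \mapsto \sum (r^i r^j \triangleright \varphi \triangleleft r_k r_i) \otimes r^k r_j x$, whose inverse is obtained by replacing $R$ with $R^{-1}$. The unbraiding map in the second step is invertible for the same reason. Therefore $\Phi_{g,n}$ is a linear bijection between spaces of equal dimension $\dim(H)^{2g+n}$, hence an algebra isomorphism. Composing with $\Psi^{\otimes g}$, which is an isomorphism by \cite[Lem. 9.4.2]{montgomery_hopf_1993} and a dimension count, gives $\mathcal{L}_{g,n}(H) \simeq \End_\mathbb{K}(H^*)^{\otimes g} \otimes H^{\otimes n}$. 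For $n=0$ this is $M_{\dim H}(\mathbb{K})^{\otimes g}$.

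The main obstacle is the second step. One has to identify the correct inner moment map on $\mathcal{H}(H^*)$, namely an algebra map $H \to \mathcal{H}(H^*)$ implementing $\ad^r$ transported through $\Phi_{1,0}$, and then verify the unbraiding identity. I would first try $x \mapsto \varepsilon \otimes x$, twisted if necessary by the $R$-factor $r^k r_j$ already appearing in $\Phi_{1,0}$. I would then check compatibility on the generators $\mathfrak{i}_{b}(\beta)$ and $\mathfrak{i}_{a}(\alpha)$ using \labelcref{eq-commutation_yj_xi}.
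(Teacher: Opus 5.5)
Your overall plan is the right one, and it is the mechanism behind Alekseev's map in the cited references. You obtain an algebra map $\mathcal{L}_{g,n}(H)\to\mathcal{H}(H^*)^{\tilde{\otimes} g}\,\tilde{\otimes}\,H^{\tilde{\otimes} n}$ by functoriality of $\tilde{\otimes}$, then remove the braiding using elements that implement the $H$-actions. Your unbraiding lemma is correct. With the paper's conventions, if $a.h=\mu(S(h_{(1)}))\,a\,\mu(h_{(2)})$ for an algebra map $\mu\colon H\to A$, then $a\,\tilde{\otimes}\,b\mapsto\sum_c a\,\mu(S(r_c))\otimes b.r^c$ is an algebra isomorphism $A\,\tilde{\otimes}\,B\to A\otimes B$, with inverse $a\otimes b\mapsto\sum_c a\,\mu(r_c)\,\tilde{\otimes}\,b.r^c$. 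Step 3 is also fine. The inverse of your $R$-twist is not literally obtained by substituting $R^{-1}$: for instance $\varphi\mapsto\sum_i r^i\triangleright\varphi\triangleleft r_i$ is inverted using $(\id\otimes S)(R)$. Each elementary twist is nevertheless invertible, so the conclusion stands.

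The gap is Step 2, which you flag yourself. It carries the whole content of part (1) as soon as a handle must be unbraided against another factor. You need an algebra map $\mu\colon H\to\mathcal{H}(H^*)$ implementing the $H$-module structure for which $\Phi_{1,0}$ is equivariant, and you do not produce one. Its existence is not a formality: a Hopf action on a matrix algebra is in general inner only up to a $2$-cocycle. For example, $\mathbb{Z}/2\times\mathbb{Z}/2$ acting on $M_2(\mathbb{K})$ by conjugation by Pauli matrices admits no implementing algebra map.

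Your candidate $x\mapsto\varepsilon\otimes x$ is not the right one. In the factorizable case $\varepsilon\otimes x=\Phi_{1,0}\bigl(\mathfrak{i}_a(\Phi_{0,1}^{-1}(x))\bigr)$, so you are using the moment element of the loop $a$. It implements $\coad^r$ on $\mathfrak{i}_a(\mathcal{L}_{0,1}(H))$ but not on the rest. Conjugation by elements of $\varepsilon\otimes H$ only sees the left coregular action:
$(\varepsilon\otimes S(h_{(1)}))(\beta\otimes 1)(\varepsilon\otimes h_{(2)})=(S(h_{(2)})\triangleright\beta)\otimes S(h_{(1)})h_{(3)}$.
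It never produces the right translation in $\coad^r(h)(\beta)=S(h_{(2)})\triangleright\beta\triangleleft h_{(1)}$. Twisting by legs of $R$ acting on $\varepsilon$ just returns other elements of $\varepsilon\otimes H$, so it cannot repair this.

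Here is a concrete failure. Take $H=\mathbb{K}[\mathbb{Z}/N]$ with $N$ odd, generator $t$, and $R=\frac{1}{N}\sum_{a,b}\omega^{ab}\,t^a\otimes t^b$. This $H$ is factorizable, commutative and cocommutative, so $\coad^r$ is trivial. Since $\Phi_{1,0}$ is bijective, the action on $\mathcal{H}(H^*)$ is trivial as well. Because $\mathcal{H}(H^*)\cong\End_{\mathbb{K}}(H^*)$ has trivial center, any valid $\mu$ must then be scalar-valued. Yet, with $\delta_x\in H^*$ the basis dual to the group basis, conjugation by $\varepsilon\otimes t$ sends $\delta_1\otimes 1$ to $\delta_t\otimes 1$.

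What works is the moment element of the boundary loop of the handle, which uses both commuting copies of $H$ in $\End_{\mathbb{K}}(H^*)$. In the factorizable case, take $\mu=\Phi_{1,0}\circ\mu_{1,0}$ with $\mu_{1,0}$ as in \labelcref{eq-definition_application_moment}. This works because, for $x\in\mathcal{L}_{1,0}(H)$, the quantum moment map identity $\mu_{1,0}(h)x=\coad^r(S^{-1}(h_{(2)}))(x)\,\mu_{1,0}(h_{(1)})$ is equivalent to $\coad^r(h)(x)=\mu_{1,0}(S(h_{(1)}))\,x\,\mu_{1,0}(h_{(2)})$.

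Part (1), however, does not assume factorizability. There you still need a $\mu$ that does not invert $\Phi_{0,1}$, together with a verification. One option is $\Psi^{-1}\circ\rho$ for a representation $\rho$ of $H$ on $H^*$ through which $\Psi$ intertwines the action with conjugation. The alternative is to work with the explicit formula for $\Phi_{g,n}$ given in the cited references.
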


\subsection{Representations of \texorpdfstring{$\mathcal{L}_{g, n}(H)$}{Lgn}}\label{subsec-representations_Lgn}

Recall that $H$ is a finite-dimensional ribbon Hopf algebra. 
From now on, $H$ is also assumed to be factorizable.

\begin{proposition}\label{prop-representations_irreductibles_Lgn}
	The irreducible representations of $\mathcal{L}_{g, n}(H)$ are isomorphic to $(H^*)^{\otimes g} \otimes V_1 \otimes \cdots \otimes V_n$, where $\{ V_i \}_{i = 1, \cdots, n}$ is a collection of simple $H$-modules. \\
	In particular, there is a unique irreducible representation of $\mathcal{L}_{g, 0}(H)$ up to isomorphism, which is $(H^*)^{\otimes g}$.
\end{proposition}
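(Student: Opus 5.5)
Via the Alekseev isomorphism of \Cref{thm-isomorphisme_phign}, the statement reduces to a classification over the factorizable ribbon Hopf algebra $H$, which we then handle with Wedderburn theory. Since $H$ is factorizable, $\Phi_{g,n}$ identifies $\mathcal{L}_{g,n}(H)$ with $\mathcal{H}(H^*)^{\otimes g}\otimes H^{\otimes n}$. The map $\Psi$ of \labelcref{eq-representation_Montgomery} is an isomorphism $\mathcal{H}(H^*)\simeq \End_\mathbb{K}(H^*)$. Composing these, we obtain an algebra isomorphism
\[ \mathcal{L}_{g,n}(H) \simeq \End_\mathbb{K}(H^*)^{\otimes g}\otimes H^{\otimes n} \simeq \End_\mathbb{K}\bigl((H^*)^{\otimes g}\bigr)\otimes H^{\otimes n}. \]
Isomorphic algebras have the same irreducible representations, with the correspondence given by pulling back along the isomorphism. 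It therefore suffices to classify the simple modules of $A\otimes B$, where $A=\End_\mathbb{K}(W)$ with $W=(H^*)^{\otimes g}$ and $B=H^{\otimes n}$.

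The key step is the following description of simple modules of $A\otimes B$. The algebra $A$ is a full matrix algebra, so it is Morita equivalent to $\mathbb{K}$ via $M\mapsto \operatorname{Hom}_A(W,M)$. More concretely, if $M$ is a simple $A\otimes B$-module, then as an $A$-module $M\simeq W\otimes U$ with $U=\operatorname{Hom}_A(W,M)$, because every $A$-module is a direct sum of copies of $W$ (finite-dimensionality of $M$ follows from simplicity together with finite-dimensionality of the algebra). The action of $B$ commutes with that of $A$, so $U$ inherits a $B$-module structure. Submodules of $M$ then correspond to $B$-submodules of $U$, and hence $M$ is simple if and only if $U$ is simple. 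Conversely, $W\otimes U$ is simple whenever $U$ is a simple $B$-module.

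It remains to classify the simple modules of $B=H^{\otimes n}$. Since $\mathbb{K}$ is algebraically closed, the simple modules of a tensor product of finite-dimensional algebras are exactly the $V_1\otimes\cdots\otimes V_n$ with each $V_i$ a simple $H$-module. This can be argued inductively. Apply Jacobson density, or Burnside's theorem using $\End_H(V_1)=\mathbb{K}$, to the action of $H\otimes H'$ on a simple module: restrict to $H$, pick a simple $H$-submodule $V_1$, and write the module as $V_1\otimes \operatorname{Hom}_H(V_1,-)$ on the isotypic component. Equivalently, one can pass to $B/\mathrm{rad}(B)$, which equals $\bigotimes_i H/\mathrm{rad}(H)$ because $\mathbb{K}$ is perfect, and this is a product of matrix algebras.

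Putting the steps together, the irreducible representations of $\mathcal{L}_{g,n}(H)$ are exactly $(H^*)^{\otimes g}\otimes V_1\otimes\cdots\otimes V_n$, transported along $\Phi_{g,n}$ and $\Psi^{\otimes g}$. For $n=0$ the algebra is a full matrix algebra and has the unique simple module $(H^*)^{\otimes g}$. The main point needing care is the tensor-product-of-algebras step, since $H$ is not semisimple. The density and isotypic argument above, or the radical argument, handles this, using algebraic closedness of $\mathbb{K}$.
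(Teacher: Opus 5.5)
Your proposal is correct and follows the same route as the paper. You reduce through the isomorphisms $\Phi_{g,n}$ and $\Psi^{\otimes g}$ to $\End_\mathbb{K}\bigl((H^*)^{\otimes g}\bigr)\otimes H^{\otimes n}$, then combine the uniqueness of the simple module of a matrix algebra with the classification of simple modules of a tensor product of finite-dimensional algebras over an algebraically closed field. The only difference is that you sketch a proof of that tensor-product classification, which the paper cites from Etingof et al., Thm.~3.10.2; your radical argument is essentially the proof given there.
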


\begin{proof}
    If $A$ and $B$ are algebras, the finite-dimensional irreducible representations of $A \otimes B$ are the tensor products of the finite-dimensonal irreducible representations of $A$ and $B$ (see \cite[Thm. 3.10.2]{etingof_introduction_2011}). 
	Also, recall that a matrix algebra $M_n(\mathbb{K})$ has a unique irreducible representation, which is $\mathbb{K}^n$. 
	Since $\Phi_{g,n}$ is an isomorphism of algebras (see \Cref{thm-isomorphisme_phign}), $\Psi$ is an isomorphism (see under \labelcref{eq-representation_Montgomery}), and $H$ is finite-dimensional, the result follows. 
\end{proof}
\noindent
Let $X := \{X_i\}_{i = 1, \cdots, n}$ be a collection of $H$-modules and $\mathcal{R}_i : H \to \End_\mathbb{K}(X_i)$ the associated representations.
A representation of $\mathcal{L}_{g, n}(H)$ on $(H^*)^{\otimes g} \otimes X_1 \otimes \cdots \otimes X_n$ is defined by:
\begin{equation}\label{eq-definition_representation_phign}
	\begin{array}{l|ccl}
        \mathcal{R}_{\Phi_{g,n}}^1(X) : & \mathcal{L}_{g, n}(H) & \longrightarrow & \End_\mathbb{K} \big( (H^*)^{\otimes g} \otimes X_1 \otimes \cdots \otimes X_n \big) \\
		& x & \longmapsto & (\Psi^{\otimes g} \otimes \mathcal{R}_1 \otimes \cdots \otimes \mathcal{R}_n) \circ \Phi_{g,n} (x).
    \end{array}
\end{equation}

\medskip

We now introduce another representation of $\mathcal{L}_{g, n}(H)$ which was initially obtained by braided tensor product of representations of $\mathcal{L}_{1, 0}(H)$ and $\mathcal{L}_{0, 1}(H)$. 

\begin{proposition}[{{\cite[Prop. 4.10]{faitg_derived_2026}}}]\label{prop-definition_representation_combinee}
	For any collection $X := \{ X_i \}_{i = 1, \cdots, n}$ of $H$-modules, there is a representation $\mathcal{R}_{g,n}^1(X) : \mathcal{L}_{g, n}(H) \to \End_\mathbb{K}((H^*)^{\otimes g} \otimes X_1 \otimes \cdots \otimes X_n)$ defined by the following formulas:
	\begin{align*}
        & \mathfrak{i}_{a_j}(\varphi) \cdot (\psi_1 \otimes \cdots \otimes \psi_{g} \otimes x_1 \otimes \cdots \otimes x_n) = \coad^r ({r_a}_{(1)})(\psi_1) \otimes \cdots \otimes \coad^r ({r_a}_{(j-1)})(\psi_{j-1}) \\
		& \qquad \qquad \qquad \qquad \qquad \qquad \qquad \otimes \Phi_{0,1} \bigl( \coad^r(r^a r^b)(\varphi) \bigr) \triangleright \psi_j \triangleleft r_b \otimes \psi_{j+1} \otimes \cdots \otimes \psi_{g} \otimes x_1 \otimes \cdots \otimes x_n, \\[.8em]
        & \mathfrak{i}_{b_j}(\varphi) \cdot (\psi_1 \otimes \cdots \otimes \psi_{g} \otimes x_1 \otimes \cdots \otimes x_n) = \coad^r( {r_a}_{(1)})(\psi_1) \otimes \cdots \otimes \coad^r({r_a}_{(j-1)})(\psi_{j-1}) \\
		& \qquad \qquad \qquad \qquad \qquad \qquad \qquad \qquad \qquad \qquad \otimes \coad^r(r^a)(\varphi) \psi_j \otimes \psi_{j+1} \otimes \cdots \otimes \psi_{g} \otimes x_1 \otimes \cdots \otimes x_n , \\[.8em]
		& \mathfrak{i}_{m_{g+k}}(\varphi) \cdot (\psi_1 \otimes \cdots \otimes \psi_{g} \otimes x_1 \otimes \cdots \otimes x_n) = \coad^r({r_a}_{(1)})(\psi_1) \otimes \cdots \otimes \coad^r({r_a}_{(g)})(\psi_{g}) \\
		& \qquad \qquad \qquad \otimes S({r_a}_{(g+1)}) . x_1 \otimes \cdots \otimes S({r_a}_{(g+k-1)}) . x_{k-1} \otimes \Phi_{0,1} \bigl(\coad^r(r^a)(\varphi) \bigr) . x_k \otimes x_{k+1} \otimes \cdots \otimes x_n,
    \end{align*}
	where the curves $a_j$, $b_j$, and $m_{g+k}$ are as in \Cref{fig-courbes_standards_Lgn}, $j \in [\![ 1, g ]\!]$, and $k \in [\![ 1, n ]\!]$.
	If $n = 0$, the collection $X$ is empty so we will write more briefly $\mathcal{R}_{g}^1 := \mathcal{R}_{g,0}^1(\emptyset)$.
\end{proposition}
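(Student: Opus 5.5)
The claim is that the formulas of the statement define a representation of $\mathcal{L}_{g,n}(H)$. I will show that the assignments on the three kinds of generators are compatible with a presentation of $\mathcal{L}_{g,n}(H)$. By \labelcref{eq-generateurs_Lgn}, the algebra $\mathcal{L}_{g,n}(H)$ is spanned by ordered products of images of the maps $\mathfrak{i}_{b_j}$, $\mathfrak{i}_{a_j}$ and $\mathfrak{i}_{m_{g+k}}$. Its defining relations are of three kinds: each embedding is an algebra morphism \labelcref{eq-injections_canoniques_morphismes_algebres}; the handle exchange relation \labelcref{eq-commutation_ai_bi} holds; and the braided exchange relations \labelcref{eq-commutation_yj_xi} hold for $i<j$. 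So it suffices to check that the proposed operators satisfy these relations on $(H^*)^{\otimes g}\otimes X_1\otimes\cdots\otimes X_n$.

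Rather than verifying this by brute force, I would realize the formulas as a braided tensor product of simpler representations. First, $\mathcal{L}_{1,0}(H)$ acts on $H^*$ through $\Psi\circ\Phi_{1,0}$. Unwinding $\Phi_{1,0}$ should give the rule that $\mathfrak{i}_b(\varphi)$ acts by left multiplication in $\mathcal{L}_{0,1}(H)$, and $\mathfrak{i}_a(\varphi)$ acts by $\psi\mapsto \Phi_{0,1}(\coad^r(r^b)\varphi)\triangleright\psi\triangleleft r_b$. These are exactly the formulas with $j=1$, $g=1$. Second, $\mathcal{L}_{0,1}(H)$ acts on each $X_k$ through $\mathcal{R}_k\circ\Phi_{0,1}$, which is a representation because $\Phi_{0,1}$ is an algebra morphism. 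Third, I need the general fact that if $A$ and $B$ are $H$-module-algebras and $M$, $N$ are modules over them that are compatible with the $H$-actions, then $A\,\tilde\otimes\, B$ acts on $M\otimes N$ by
\[(a\,\tilde\otimes\, b)\cdot(m\otimes n)=\sum_a a\cdot(m.r_a)\otimes (b.r^a)\cdot n\]
or by a suitable variant. Here $H$ acts on the factors $H^*$ by $\coad^r$ and on the $X_k$ through $S$; this accounts for the $S({r_a}_{(\cdot)})$ terms. I would prove this fact directly from the product formula for $\tilde\otimes$, using the quasitriangularity identities \labelcref{eq-R-matrice_et_co-produit} and the Yang–Baxter equation \labelcref{eq-Yang-Baxter}.

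Iterating this construction over $\mathcal{L}_{1,0}^{\tilde\otimes g}\tilde\otimes\mathcal{L}_{0,1}^{\tilde\otimes n}$ produces operators in which the generator in slot $j$ passes the $R$-matrix legs $r_a$ through the earlier slots via iterated coproducts. This is where the factors ${r_a}_{(1)},\dots,{r_a}_{(j-1)}$ in the formulas come from. The remaining task is to identify the result with the stated formulas, which is bookkeeping.

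I expect the main obstacle to be the module compatibility needed for the braided tensor product of modules. Concretely, one must show that the action map of $\mathcal{L}_{1,0}(H)$ on $H^*$ is $H$-equivariant, for $\coad^r$ on both sides. This should follow from the fact that $\Phi_{1,0}$ and $\Phi_{0,1}$ are morphisms of $H$-module-algebras, combined with an equivariance property of $\Psi$. I would verify the latter directly from the definition of $\Psi$ in \labelcref{eq-representation_Montgomery}. One also has to track carefully whether the left or right convention is used for the braiding $c_{M,N}$. Getting these conventions consistent is the delicate part. Once it is settled, associativity of $\tilde\otimes$ yields the representation on all factors at once.
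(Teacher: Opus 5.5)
\textbf{The base case fails.} Your overall architecture is the right one, and the paper itself says the representation ``was initially obtained by braided tensor product of representations of $\mathcal{L}_{1,0}(H)$ and $\mathcal{L}_{0,1}(H)$''. That architecture consists of three pieces: an $H$-equivariant $\mathcal{L}_{1,0}(H)$-module structure on $H^*$, with $\coad^r$ as the $H$-action; the modules $X_k$ over $\mathcal{L}_{0,1}(H)$ via $\Phi_{0,1}$, with $x.h=S(h).x$; and iterated braided tensor products. The $X_k$ part and the iteration are fine.

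The problem is the base case: $\Psi\circ\Phi_{1,0}$ does \emph{not} reproduce the $g=1$ formulas.
\begin{itemize}
\item From the definition, $\Phi_{1,0}(\mathfrak{i}_a(\alpha))=\varepsilon\otimes\Phi_{0,1}(\alpha)$. So $\mathfrak{i}_a(\alpha)$ acts by $\psi\mapsto\Phi_{0,1}(\alpha)\triangleright\psi$, with neither the twist $\coad^r(r^b)$ nor the factor $\triangleleft r_b$.
\item Also $\Phi_{1,0}(\mathfrak{i}_b(\beta))=\sum(r^ir^j\triangleright\beta\triangleleft r_kr_i)\otimes r^kr_j$. This gives $\mathfrak{i}_b(\beta)\cdot\varepsilon=\sum_i r^i\triangleright\beta\triangleleft r_i$, which evaluates to $\beta(\sum_i r_ir^i)$ at $1_H$. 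The statement instead gives $\beta\varepsilon=\beta$.
\end{itemize}
The two representations are only isomorphic, which is exactly why \Cref{rem-isomorphisme_combinee_phign_et_irreductible} needs an explicit intertwiner from the cited work.

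The step you flag as the main obstacle is also false for this model. $\Psi\circ\Phi_{1,0}$ is not $\coad^r$-equivariant: $\coad^r(h)(k\triangleright\psi)$ and $\ad^r(h_{(1)})(k)\triangleright\coad^r(h_{(2)})(\psi)$ differ in general, already for skew-primitive $h$. So you cannot feed this module, with $\coad^r$ on $H^*$, into the braided tensor product and obtain the stated operators.

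\textbf{Fix: use the induced module as base.} Take $H^*\cong\mathcal{L}_{1,0}(H)\otimes_{\mathfrak{i}_a(\mathcal{L}_{0,1}(H))}\mathbb{K}$ via $\mathfrak{i}_b(\psi)\otimes 1_{\mathbb{K}}\mapsto\psi$. Here $\mathfrak{i}_a(\alpha)$ acts on $\mathbb{K}$ by $\alpha(1_H)=\varepsilon(\Phi_{0,1}(\alpha))$, which is a $\coad^r$-invariant character.
\begin{itemize}
\item Then $\mathfrak{i}_b(\beta)$ acts by the product of $\mathcal{L}_{0,1}(H)$.
\item The exchange relation \labelcref{eq-commutation_ai_bi} gives $\mathfrak{i}_a(\alpha)\cdot\psi=\sum_{i,j,k,l}\alpha\bigl(r^jr_lr^kS(r^i)\bigr)\,r^lr_k\triangleright\psi\triangleleft r_ir_j$.
\item This is exactly $\Phi_{0,1}(\coad^r(r^b)(\alpha))\triangleright\psi\triangleleft r_b$. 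To see it, expand $\Phi_{0,1}(\varphi)=\sum_{p,q}\varphi(r_pr^q)\,r^pr_q$ and $\coad^r$, and split $r^b$ using $(\id_H\otimes\Delta)(R)=R_{13}R_{12}$.
\end{itemize}
The representation property is then automatic. The $\coad^r$-equivariance comes for free, because $\mathcal{L}_{1,0}(H)$ is an $H$-module-algebra and $\mathfrak{i}_b$ is $H$-linear. With this base module your braided-tensor-product iteration produces precisely the stated formulas. It is the same mechanism the paper uses for $\mathcal{R}_{\mathrm{ind}_{g,n}}^1$, whose $a_j$ and $b_j$ formulas coincide with those of the statement.
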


We will now construct a representation of $\mathcal{L}_{g, n}(H)$ that lifts any $\mathcal{R}_{g,n}^1(V)$, where $V := \{V_i\}_{i = 1, \cdots, n}$ is a collection of simple $H$-modules.
Let  $\{ X_i \}_{i = 1, \cdots, n}$ be the collection of $H$-modules defined by $X_i := {}_HH$, the left regular representation of $H$, for all $i \in [\![ 1, n ] \!]$. 
Consider the representation defined in \Cref{prop-definition_representation_combinee} with this collection of $H$-modules:
\begin{equation}\label{eq-representation_relevee_a_H}
	\mathcal{R}_{g,n}^1(H) : \mathcal{L}_{g, n}(H) \longrightarrow \End_\mathbb{K} \bigl( (H^*)^{\otimes g} \otimes {}_HH^{\otimes n} \bigr).
\end{equation}
We can compare $\mathcal{R}_{g,n}^1(H)$ with $\mathcal{R}_{g,n}^1(V)$:
\begin{proposition}\label{prop-representation_relevee_a_H}
	For all collections $V := \{ V_i \}_{i = 1, \cdots, n}$ of simple $H$-modules, the representation $\mathcal{R}_{g,n}^1(H)$ lifts the representation $\mathcal{R}_{g,n}^1(V)$.
\end{proposition}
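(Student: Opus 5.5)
The plan is to make precise what ``lifts'' means and then check it on generators. I read it as follows: there is a surjective linear map
\[ \pi_V : (H^*)^{\otimes g} \otimes {}_HH^{\otimes n} \twoheadrightarrow (H^*)^{\otimes g} \otimes V_1 \otimes \cdots \otimes V_n \]
which intertwines $\mathcal{R}_{g,n}^1(H)$ and $\mathcal{R}_{g,n}^1(V)$. In other words, $\mathcal{R}_{g,n}^1(V)$ is a quotient of $\mathcal{R}_{g,n}^1(H)$.

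To build $\pi_V$, fix for each $i$ a nonzero vector $w_i \in V_i$. Define $p_i : {}_HH \to V_i$ by $p_i(h) := h.w_i$. This map is left $H$-linear. Its image is a nonzero submodule of the simple module $V_i$, so $p_i$ is surjective. Set
\[ \pi_V := \id_{(H^*)^{\otimes g}} \otimes p_1 \otimes \cdots \otimes p_n . \]
It is surjective as a tensor product of surjections.

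Next I check the intertwining relation $\pi_V \circ \mathcal{R}_{g,n}^1(H)(x) = \mathcal{R}_{g,n}^1(V)(x) \circ \pi_V$. Both sides are multiplicative in $x$ and linear in $x$. By \labelcref{eq-generateurs_Lgn}, the images of $\mathfrak{i}_{a_j}$, $\mathfrak{i}_{b_j}$ and $\mathfrak{i}_{m_{g+k}}$ generate $\mathcal{L}_{g,n}(H)$ as an algebra. So it suffices to check the relation on elements $\mathfrak{i}_{a_j}(\varphi)$, $\mathfrak{i}_{b_j}(\varphi)$ and $\mathfrak{i}_{m_{g+k}}(\varphi)$.

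For these elements I use the explicit formulas of \Cref{prop-definition_representation_combinee}. In every formula, the $(H^*)^{\otimes g}$ factors are transformed in a way that does not depend on the modules $X_i$. Each module factor $x_k$ is transformed in one of three ways:
\begin{itemize}
\item it is left unchanged;
\item it is acted on by some element $S({r_a}_{(\cdot)}) \in H$;
\item it is acted on by $\Phi_{0,1}(\coad^r(r^a)(\varphi)) \in H$.
\end{itemize}
All the $R$-matrix coefficients are summed in the same way in both representations. Since each $p_k$ commutes with the action of any element of $H$, applying $\pi_V$ after the action gives the same result as acting on $\pi_V(\cdot)$. Structurally, this says that $X \mapsto \mathcal{R}_{g,n}^1(X)$ is natural with respect to $H$-module maps $X_i \to Y_i$.

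The main point needing care is the bookkeeping in the $\mathfrak{i}_{m_{g+k}}$ formula. There, the iterated coproduct of $r_a$ acts simultaneously on the $\psi$'s and on $x_1,\dots,x_{k-1}$. One must make sure that the same Sweedler components are used on both sides, so that the identity holds term by term before summation. Once this is written out, the verification is immediate.

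I also note that only the quotient direction is needed here: simple modules are quotients of ${}_HH$. One could alternatively embed $V_i$ into ${}_HH$, using that $H$ is Frobenius, but that is unnecessary.
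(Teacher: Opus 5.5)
Your proof is correct, but it takes a different route from the paper's.

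\textbf{The paper's route.} The paper uses Krull--Schmidt to decompose ${}_HH$ into PIMs and takes the projective covers $P_i$ of the $V_i$. It then restricts and projects $\mathcal{R}^1_{g,n}(H)$ to $(H^*)^{\otimes g}\otimes P_1\otimes\cdots\otimes P_n$, and factors out the sum of the subspaces $\mathbf{P}_i$ built from $\mathrm{Rad}(P_i)$. In this way $V_i\cong P_i/\mathrm{Rad}(P_i)$ is realized as a subquotient of ${}_HH$. Compatibility with the action is justified only by the remark that ``the formulas are the same''.

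\textbf{Your route.} You use only that a simple module is cyclic, so $h\mapsto h.w_i$ is an $H$-linear surjection ${}_HH\to V_i$. You then make explicit that $X\mapsto\mathcal{R}^1_{g,n}(X)$ is natural in $H$-module maps, which is exactly the principle the paper leaves implicit. You also reduce the check to algebra generators, which is a clean step.

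\textbf{Comparison.} Your version is shorter and needs no projective covers or radicals. It directly exhibits $\mathcal{R}^1_{g,n}(V)$ as a quotient of $\mathcal{R}^1_{g,n}(H)$, via a map sending $\varepsilon^{\otimes g}\otimes 1_H^{\otimes n}$ to the nonzero vector $\varepsilon^{\otimes g}\otimes w_1\otimes\cdots\otimes w_n$. This is precisely what is used later, when the scalars of relators are read off by evaluating on $\varepsilon^{\otimes g}\otimes 1_H^{\otimes n}$. In fact the paper's composite ${}_HH\to P_i\to P_i/\mathrm{Rad}(P_i)\cong V_i$ is itself a surjection of your form, with $w_i$ the image of $1_H$. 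The PIM picture adds structure, namely an intermediate subrepresentation on $P_1\otimes\cdots\otimes P_n$, but the statement does not need it.
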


\begin{proof}
	Since $H$ is finite-dimensional, the Krull-Schmidt theorem (see \cite[Thm. 3.8.1]{etingof_introduction_2011}) implies that the left regular representation ${}_HH$ admits a direct sum decomposition into indecomposable $H$-modules (the PIMs) ${}_HH \simeq \bigoplus_{i=1}^n n_i P_i$, which is unique up to the order of the $P_i$ and isomorphism. 
	Furthermore, these indecomposable modules are the projective covers of the simple $H$-modules (see \cite[Thm. 9.2.1]{etingof_introduction_2011}).  

	\smallskip
	\noindent
	Let $V := \{ V_i \}_{i = 1, \cdots, n}$ be a collection of simple $H$-modules and $P := \{ P_i \}_{i = 1, \cdots, n}$ the collection of projective coverings $P_i$ associated with the simple $H$-modules $V_i$. 
	Let $i_j : P_j \hookrightarrow {}_HH$ and $\pi_j : {}_HH \to P_j$ be the associated injections and projections.
	We can restrict and project $\mathcal{R}_{g,n}^1(H)$ to obtain the map:
	\[
	\begin{array}{l|ccl}
		\mathcal{R}_{g,n}^1(P): & \mathcal{L}_{g, n}(H) & \longrightarrow & \mathrm{End}_\mathbb{K} \bigl( (H^*)^{\otimes g} \otimes P_1 \otimes \cdots \otimes P_n \bigr) \\
		& x & \longmapsto & \bigl( \id_{(H^*)^{\otimes g}} \otimes \, \pi_1 \otimes \cdots \otimes \pi_n \bigr) \circ \mathcal{R}_{g,n}^1(H)(x) \circ \bigl( \id_{(H^*)^{\otimes g}} \otimes \, i_1 \otimes \cdots \otimes i_n \bigr).
	\end{array}
	\]
	Denote by $\mathrm{Rad}(P_i) := \pi_i(\mathrm{Rad}(H))$, where $\mathrm{Rad}(H)$ is the radical of $H$, and let $\mathbf{P}_i := (H^*)^{\otimes g} \otimes P_1 \otimes \cdots \otimes P_{i-1} \otimes \mathrm{Rad}(P_i) \otimes P_{i+1} \otimes \cdots \otimes P_n$, for all $i \in [\![1, n]\!]$. 
	Since each PIM $P_j$ is a submodule of ${}_HH$ it is clear that $\mathcal{R}_{g,n}^1(P)$ is a subrepresentation of $\mathcal{R}_{g,n}^1(H)$.
	Moreover, since $\mathrm{Rad}(P_i)$ is a submodule of $P_i$, for all $i \in [\![ 1,n]\!]$, $\varphi \in H^*$, and $v \in \mathbf{P}_i$, we have $\mathcal{R}_{g,n}(P)(\mathfrak{i}_{x_r}(\varphi))(v) \in \mathbf{P}_i$, where $x_r$ is either $a_j$, $b_j$, or $m_{g+k}$.

	\smallskip
	\noindent
	Therefore, the representation $\mathcal{R}_{g,n}^1(P)$ factors by $\bigoplus_{i=1}^n \mathbf{P}_i$ and yields a representation of $\mathcal{L}_{g, n}(H)$ on $\End_\mathbb{K}((H^*)^{\otimes g} \otimes V_1 \otimes \cdots \otimes V_n).$
	The formulas of the representation constructed from $\mathcal{R}_{g,n}^1(H)$ are the same as the formulas for the representation $\mathcal{R}_{g,n}^1(V)$, which concludes the proof.
\end{proof}

\begin{remarque}\label{rem-isomorphisme_combinee_phign_et_irreductible}
It is proved in \cite[App. A]{faitg_derived_2026} that the representation $\mathcal{R}_{g,n}^1(X)$ is isomorphic to $\mathcal{R}_{\Phi_{g,n}}^1(X)$ (with an explicit isomorphism).
In particular, by \Cref{prop-representations_irreductibles_Lgn}, for any collection $V := \{ V_i \}_{i = 1, \cdots, n}$ of simple $H$-modules the representation $\mathcal{R}_{g,n}^1(V)$ is irreducible. 
\end{remarque}

\noindent
We will use the irreducible representations $\mathcal{R}_{g,n}^1(V)$ in \Cref{subsec-representation_projective_rhogn1,subsec-representation_projective_rhogn} to define projective representations of $\Gamma_{g,n}^1$.
The reason why we use $\mathcal{R}_{g,n}^1(V)$ instead of $\mathcal{R}_{\Phi_{g,n}}^1(V)$ is that it allows diagrammatic computations for the actions of the elements of $\mathcal{L}_{g, 0}(H) \subset \mathcal{L}_{g, n}(H)$.
Let us explain this.

\medskip
\noindent
The embeddings defined in \labelcref{eq-definition_injections_canoniques} and the relations \labelcref{eq-injections_canoniques_morphismes_algebres,eq-generateurs_Lgn,eq-commutation_ai_bi,eq-commutation_yj_xi} imply that $\mathcal{L}_{g, n}(H)$ is generated as a $\mathbb{K}$-vector space by the elements: 
\begin{equation}\label{eq-generateurs_Lgn_pour_representation_induite}
	\mathfrak{i}_{b_1}(\beta_1) \cdots \mathfrak{i}_{b_g}(\beta_g) \mathfrak{i}_{a_1}(\alpha_1)\cdots \mathfrak{i}_{a_g}(\alpha_g) \mathfrak{i}_{m_{g+1}}(\psi_1)\cdots \mathfrak{i}_{m_{g+n}}(\psi_n), \quad \forall \alpha_j, \beta_j, \psi_k \in H^*.
\end{equation}
Consider the $\mathbb{K}$-vector subspace $\mathcal{A} \subset \mathcal{L}_{g, n}(H)$ defined by: 
\begin{equation*}
    \mathcal{A} := \mathrm{Vect}_\mathbb{K} \bigl\{ \mathfrak{i}_{a_1}(\alpha_1)\cdots \mathfrak{i}_{a_g}(\alpha_g) ~ \vert ~ \alpha_j \in H^* \bigr\}.
\end{equation*}
The relations \labelcref{eq-injections_canoniques_morphismes_algebres,eq-generateurs_Lgn,eq-commutation_ai_bi,eq-commutation_yj_xi} imply that $\mathcal{A}$ is a subalgebra of $\mathcal{L}_{g, n}(H)$ isomorphic to $\mathcal{L}_{0, g}(H)$.
The algebra $\mathcal{A}$ has a representation on $\mathbb{K}$ defined by: 
\begin{equation*}\label{eq-representation_de_A}
	\forall \alpha_j \in H^*, ~ \mathfrak{i}_{a_j}(\alpha_j) \cdot 1_\mathbb{K} := \alpha_j (1_H) \, 1_\mathbb{K}.
\end{equation*}
The algebra $\mathcal{L}_{g, n}(H)$ is a right $\mathcal{A}$-module via right multiplication in $\mathcal{L}_{g, n}(H)$ so we define:
\begin{definition}\label{def-definition_representation_induite}
	We call the morphism:
	\[ 
	\begin{array}{l|ccl}
		\mathcal{R}_{\mathrm{ind}_{g,n}}^1: & \mathcal{L}_{g, n}(H) & \longrightarrow & \End_\mathbb{K} \bigl( \mathcal{L}_{g, n}(H) \otimes_\mathcal{A} \mathbb{K} \bigr) \\
		& x & \longmapsto & (y \otimes z \mapsto xy \otimes z)
	\end{array}
	\] 
	the \emph{induced representation} of $\mathcal{L}_{g, n}(H)$.
\end{definition}
\noindent
Since $\mathcal{L}_{g, n}(H)$ is the $\mathbb{K}$-vector space generated by the elements \labelcref{eq-generateurs_Lgn_pour_representation_induite}, using the relation \labelcref{eq-commutation_yj_xi} to swap $\mathfrak{i}_{a_j}(\alpha_j)$ and $\mathfrak{i}_{m_{g+k}}(\psi_k)$, we obtain:
\begin{equation*}
	\mathcal{L}_{g, n}(H) \otimes_\mathcal{A} \mathbb{K} = \mathrm{Vect}_\mathbb{K} \bigl\{ \mathfrak{i}_{b_1}(\beta_1) \cdots \mathfrak{i}_{b_g}(\beta_g) \mathfrak{i}_{m_{g+1}}(\psi_1) \cdots \mathfrak{i}_{m_{g+n}}(\psi_n) \otimes 1_\mathbb{K} ~ \big\vert ~ \beta_j, \psi_k \in H^* \bigr\} \simeq \mathcal{L}_{0, g+n}(H),
\end{equation*}
where the isomorphism is defined by:
\begin{equation}\label{eq-isomorphisme_representation_induite_L0g+n}
	\mathfrak{i}_{b_1}(\beta_1) \cdots \mathfrak{i}_{b_g}(\beta_g) \mathfrak{i}_{m_{g+1}}(\psi_1) \cdots \mathfrak{i}_{m_{g+n}}(\psi_n) \otimes 1_\mathbb{K} \longmapsto \mathfrak{i}_{m_1}(\beta_1) \cdots \mathfrak{i}_{m_g}(\beta_g) \mathfrak{i}_{m_{g+1}}(\psi_1) \cdots \mathfrak{i}_{m_{g+n}}(\psi_n).
\end{equation}
Thus $\mathcal{L}_{g, n}(H) \otimes_\mathcal{A} \mathbb{K} \simeq (H^*)^{\otimes (g+n)}$ as a $\mathbb{K}$-vector space. 
By a proof similar to the one of \Cref{prop-definition_representation_combinee}, one can check that the action of $\mathcal{L}_{g, n}(H)$ on $(H^*)^{\otimes (g+n)}$ via the representation $\mathcal{R}_{\mathrm{ind}_{g,n}}^1$ has the following formulas:
\begin{equation}\label{eq-formules_representation_induite}
	\begin{aligned}
		& \mathfrak{i}_{a_j}(\varphi) \cdot (\psi_1 \otimes \cdots \otimes \psi_{g+n}) = \coad^r({r_a}_{(1)})(\psi_1) \otimes \cdots \otimes \coad^r({r_a}_{(j-1)})(\psi_{j-1}) \\
		& \qquad \qquad \qquad \qquad \qquad \qquad \qquad \qquad \qquad \quad \otimes \Phi_{0,1} \bigl( \coad^r (r^a r^b)(\varphi) \bigr) \triangleright \psi_j \triangleleft r_b \otimes \psi_{j+1} \otimes \cdots \otimes \psi_{g+n}, \\[.8em]
		& \mathfrak{i}_{b_j}(\varphi) \cdot (\psi_1 \otimes \cdots \otimes \psi_{g+n}) = \coad^r({r_a}_{(1)})(\psi_1) \otimes \cdots \otimes \coad^r({r_a}_{(j-1)})(\psi_{j-1}) \\
		& \, \qquad \qquad \qquad \qquad \qquad \qquad \qquad \qquad \qquad \qquad \qquad \qquad \quad \otimes \coad^r(r^a)(\varphi) \psi_j \otimes \psi_{j+1} \otimes \cdots \otimes \psi_{g+n}, \\[.8em]
		& \mathfrak{i}_{m_{g+k}}(\varphi) \cdot (\psi_1 \otimes \cdots \otimes \psi_{g+n}) = \coad^r({r_a}_{(1)})(\psi_1) \otimes \cdots \otimes \coad^r({r_a}_{(g+k-1)})(\psi_{g+k-1}) \\
		& \qquad \qquad \qquad \qquad \qquad \qquad \qquad \qquad \qquad \qquad \qquad \quad\otimes \coad^r(r^a)(\varphi) \psi_{g+k} \otimes \psi_{g+k+1} \otimes \cdots \otimes \psi_{g+n},
	\end{aligned}
\end{equation}
where the curves $a_j$, $b_j$, and $m_{g+k}$ are as in \Cref{fig-courbes_standards_Lgn}, $j \in [\![1, g]\!]$, and $k \in [\![1, n]\!]$. 

\smallskip
\noindent
By the very definition of $\mathcal{R}_{\mathrm{ind}_{g,n}}^1$, we will see that we can use diagrammatic computations for this representation (see \Cref{subsubsec-representation_induite}). 
On another hand, since $H$ is factorizable, $\Phi_{0,1}$ is an isomorphism.
We then consider the representation:
\begin{equation}\label{eq-representation_induite_tiree_en_arriere}
	\begin{array}{l|ccl}
		& \mathcal{L}_{g, n}(H) & \longrightarrow & \End_{\mathbb{K}} \bigl( (H^*)^{\otimes g} \otimes H^{\otimes n} \bigr) \\
		& x & \longmapsto & \bigl( \id_{(H^*)^{\otimes g}} \otimes \Phi_{0,1}^{\otimes n} \bigr) \circ \mathcal{R}_{\mathrm{ind}_{g,n}}^1 \circ \bigl( \id_{(H^*)^{\otimes g}} \otimes (\Phi_{0,1}^{-1})^{\otimes n} \bigr)(x).
	\end{array}
\end{equation}
The formulas of the actions of $\mathfrak{i}_{a_j}(\varphi)$ and $\mathfrak{i}_{b_j}(\varphi)$ in \Cref{prop-definition_representation_combinee} and for the representation \labelcref{eq-representation_induite_tiree_en_arriere} are the same.
Therefore, the diagrammatic computations can also be used for the actions by $\mathcal{R}_{g,n}^1(V)$ and $\mathcal{R}_{g,n}^1(H)$ of elements in $\mathcal{L}_{g, 0}(H) \subset \mathcal{L}_{g, n}(H)$.

\smallskip
One may ask if the representation \labelcref{eq-representation_induite_tiree_en_arriere} and the representation $\mathcal{R}_{g,n}^1(H)$ (see \labelcref{eq-representation_relevee_a_H}) are isomorphic. 
At that point we do not know if that is the case.

\section{Construction of the projective representations \texorpdfstring{$\rho_{g,n}^1$}{rhogn1} and \texorpdfstring{$\rho_{g,n}$}{rhogn}}\label{sec-construction_representations_projectives}

Recall that $H$ is a finite-dimensional factorizable ribbon Hopf algebra. 
A reference on the construction of the projective representations in the case $n = 0$ is \cite[§5.2]{faitg_derived_2026}.

\subsection{The maps \texorpdfstring{$\mathfrak{i}_\gamma$}{igamma} and the action of \texorpdfstring{$\Gamma_{g,n}^1$}{Gammagn1}}\label{subsec-applications_igamma_et_action_de_Gammagn1}

In \Cref{subsec-Lgn}, we defined the embeddings $\mathfrak{i}_{x_r}$ where $x_r$ is a curve $a_j \subset \Sigma_{g,n}^1$ (resp. $b_j$, or $m_{g+k}$) which is simple, closed, based (at a point in $\partial\Sigma_{g,n}^1$), and oriented (see \labelcref{eq-definition_injections_canoniques} and \Cref{fig-courbes_standards_Lgn}).

\smallskip
\noindent
To define the embeddings $\mathfrak{i}_{x_r^{-1}}$ along the same curve $a_j$ (resp.  $b_j$ or $m_{g+k}$) equipped with the opposite orientation, denoted $a_j^{-1}$ (resp.  $b_j^{-1}$ or $m_{g+k}^{-1}$), we need an antipode of $\mathcal{L}_{0, 1}(H)$ (i.e. an inverse of $\id_{\mathcal{L}_{0, 1}(H)}$ for the convolution product).

\smallskip
\noindent
The algebra $\mathcal{L}_{0, 1}(H)$ is equipped with the antipode defined by the following map: 
\[
\begin{array}{l|ccl}
	S_{\mathcal{L}_{0,1}(H)}: & \mathcal{L}_{0,1}(H) & \longrightarrow & \mathcal{L}_{0,1}(H) \\
	& \varphi & \longmapsto & \sum_i S_{H^*} \bigl( S(r_i) \triangleright \varphi \triangleleft r^i u^{-1} \bigr),
\end{array}
\]
where $u = \sum_{i} S(r^i)r_i$ is the Drinfeld element and $S_{H^*}$ is the antipode of $H^*$. 

\smallskip
\noindent
We define the embeddings $\mathfrak{i}_{a_j^{-1}}$ (resp. $\mathfrak{i}_{b_j^{-1}}$ and $\mathfrak{i}_{m_{g+k}^{-1}}$) for the curves $a_j$ (resp. $b_j$ and $m_{g+k}$) equipped with the opposite orientation by:
\[ \mathfrak{i}_{a_j^{-1}} = \mathfrak{i}_{a_j} \circ S_{\mathcal{L}_{0,1}(H)} \quad (\text{resp. } \mathfrak{i}_{b_j^{-1}} = \mathfrak{i}_{b_j} \circ S_{\mathcal{L}_{0,1}(H)} ~ \text{ and } ~ \mathfrak{i}_{m_{g+k}^{-1}} = \mathfrak{i}_{m_{g+k}} \circ S_{\mathcal{L}_{0,1}(H)}). \]

\medskip

From now on, we fix a base point on $\partial \Sigma_{g,n}^1$ for $\pi_1(\Sigma_{g,n}^1)$.
A simple closed oriented and based (at this base point of $\partial{\Sigma_{g,n}^1}$) curve of $\Sigma_{g,n}^1$ is said to be \emph{positively oriented} if its orientation near the basepoint looks like 
\tikz[baseline=-2ex]{
    \fill (0,0) circle (1.5pt);
    \draw[-{Stealth}, myred] (0,0) -- (-0.3,-0.3);
	\draw[myred] (-0.3,-0.3) -- (-0.5,-0.5);
    \draw[-{Stealth}, myred] (0.5,-0.5) -- (0.15,-0.15);
	\draw[myred] (0.2,-0.2) -- (0,0);
} when we draw it in the ribbon surface (see \Cref{fig-courbes_standards_Lgn_ruban}); otherwise the curve is said to be \emph{negatively oriented}. 
\begin{figure}[H]
    \centering
	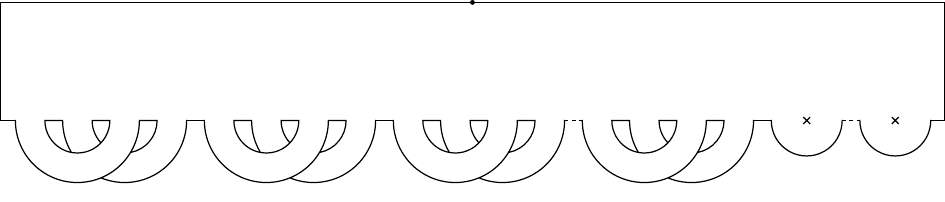
    \caption{Example of a simple closed based and positively oriented curve in the ribbon surface, with normalization $N(z) = 3 - 3 = 0$}
	\label{fig-exemple_courbe_positivement_orientee}
\end{figure}
An element $\gamma \in \pi_1(\Sigma_{g,n}^1)$ is said to be \emph{simple} if it is represented by a simple closed and based curve which we still denote by $\gamma$. 
Let $\gamma \in \pi_1(\Sigma_{g,n}^1)$ be simple, see \Cref{fig-exemple_courbe_positivement_orientee} for an example. 
The \emph{normalization} of $\gamma$ is $N(\gamma) \in \mathbb{Z}$ defined by: 
\begin{equation*}
	N(\gamma) :=
    \begin{cases}
        N_\cup(\gamma) - N_\cap(\gamma) & \text{ if $\gamma$ is positively oriented,} \\
        N_\cap(\gamma^{-1}) - N_\cup(\gamma^{-1}) & \text{ if $\gamma$ is negatively oriented,}
	\end{cases}
\end{equation*}
where: 
\begin{itemize}
    \item $N_\cup(\gamma)$ is the number of times the curve $\gamma$ makes a cup from right to left when drawn in the ribbon surface,
 	\item $N_\cap(\gamma)$ is the number of times the curve $\gamma$ makes a cap when drawn in the ribbon surface without counting the potential cap formed at the base point.
\end{itemize}
The integer $N(\gamma)$ does not depend on the choice of a simple representative of $\gamma$ in $\Sigma_{g,n}^1$ (see \cite[App. B.2]{faitg_derived_2026}).
Consequently, the integer $N(\gamma)$ is well-defined for all simple elements $\gamma \in \pi_1(\Sigma_{g,n}^1)$.

\begin{definition}\label{def-injection_igamma}
	Let $\gamma \in \pi_1(\Sigma_{g,n}^1)$ be simple. 
    Write $\gamma = \gamma_1 \cdots \gamma_k$ where each $\gamma_i$ is a usual generator ($a_j$, $b_j$, or $m_{g+k}$) of $\pi_1(\Sigma_{g,n}^1)$ or its inverse. 
	We define: 
    \[
    \begin{array}{l|ccl}
        \mathfrak{i}_{\gamma}: & \mathcal{L}_{0,1}(H) & \longrightarrow & \mathcal{L}_{g,n}(H) \\
		& \varphi & \longmapsto & \varphi_{(1)}(v^{N(\gamma)}) \mathfrak{i}_{\gamma_1}(\varphi_{(2)}) \cdots  \mathfrak{i}_{\gamma_k}(\varphi_{(k+1)}).
    \end{array}
    \]
\end{definition}
\noindent
The map $\mathfrak{i}_{\gamma}$ does not depend on the factorization $\gamma = \gamma_1 \cdots \gamma_k$ (see \cite[Prop. 5.14]{faitg_derived_2026}).
It satisfies the following properties:

\begin{proposition}[{{\cite[Lem. 5.4, Prop. 5.6, and Cor. 5.8]{faitg_derived_2026}}}]\label{prop-proprietes_igamma}
	Let $\gamma \in \pi_1(\Sigma_{g,n}^1)$ be simple.
    \begin{enumerate}[leftmargin=*]
        \item 
        \begin{enumerate}
            \item If $\gamma$ is positively oriented, then $\mathfrak{i}_{\gamma}$ is a morphism of $H$-module-algebras.
			\item If $\gamma$ is negatively oriented, then $\mathfrak{i}_\gamma$ is a morphism of $H$-modules and satisfies: 
            \[ \forall \varphi, \psi \in \mathcal{L}_{0, 1}(H), ~ \mathfrak{i}_\gamma(\varphi \psi) = \sum_i \mathfrak{i}_\gamma \bigl( \coad^r(r_i)(\psi) \bigr) \, \mathfrak{i}_\gamma \bigl( \coad^r(r^i)(\varphi) \bigr). \]
		\end{enumerate}
        \item If $\gamma \in \pi_1(\Sigma_{g,n}^1)$ is positively oriented, then $\mathfrak{i}_{\gamma^{-1}} = \mathfrak{i}_{\gamma} \circ S_{\mathcal{L}_{0, 1}(H)}$.
	\end{enumerate}
\end{proposition}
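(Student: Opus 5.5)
The plan is to reduce everything to the case of the standard generators, where the statements are built into the definitions, and then propagate them along a factorization $\gamma=\gamma_1\cdots\gamma_k$. By \cite[Prop. 5.14]{faitg_derived_2026}, recalled just before the statement, $\mathfrak{i}_\gamma$ does not depend on the factorization chosen. We will therefore use whichever factorization is most convenient, and argue by induction on the word length $k$ of a minimal factorization of the simple element $\gamma\in\pi_1(\Sigma_{g,n}^1)$.

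\emph{Base case.} For $k=1$ with $\gamma_1\in\{a_j,b_j,m_{g+k}\}$, the curve is positively oriented with $N(\gamma_1)=0$. Then \labelcref{eq-injections_canoniques_morphismes_algebres} and the remark below \labelcref{eq-definition_injections_canoniques} give (1)(a) directly. For $\gamma_1^{-1}$, item (2) is the definition $\mathfrak{i}_{\gamma_1^{-1}}=\mathfrak{i}_{\gamma_1}\circ S_{\mathcal{L}_{0,1}(H)}$, once we check $N(\gamma_1^{-1})=0$ from the definition of $N$ for negatively oriented curves. Item (1)(b) then follows from the standard fact that $S_{\mathcal{L}_{0,1}(H)}$ is a braided anti-automorphism of $\mathcal{L}_{0,1}(H)$:
\[ S_{\mathcal{L}_{0,1}(H)}(\varphi\psi)=\sum_i S_{\mathcal{L}_{0,1}(H)}\bigl(\coad^r(r_i)(\psi)\bigr)S_{\mathcal{L}_{0,1}(H)}\bigl(\coad^r(r^i)(\varphi)\bigr). \]
This fact is verified directly from the formula for $S_{\mathcal{L}_{0,1}(H)}$, using \labelcref{eq-R-matrice_et_antipode,eq-propriete_u}.

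\emph{Equivariance for general $\gamma$.} $H$-linearity of $\mathfrak{i}_\gamma$ follows from three ingredients:
\begin{itemize}
\item each $\mathfrak{i}_{\gamma_j}$ is $H$-linear;
\item the coadjoint action on $\mathcal{L}_{g,n}(H)$ is compatible with products;
\item $\coad^r$ intertwines the iterated coproduct of $H^*$, i.e. $\Delta^{(k+1)}_{H^*}\bigl(\coad^r(h)\varphi\bigr)$ equals the diagonal action of $h$ on $\Delta^{(k+1)}_{H^*}(\varphi)$.
\end{itemize}
The scalar factor $\varphi_{(1)}(v^{N(\gamma)})$ is harmless here because $v$ is central, so $S(h_{(2)})v^{N}h_{(1)}=\varepsilon(h)v^N$.

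\emph{Multiplicativity.} For $\gamma$ positively oriented, we must show $\mathfrak{i}_\gamma(\varphi\psi)=\mathfrak{i}_\gamma(\varphi)\mathfrak{i}_\gamma(\psi)$. Expanding, the left side involves the $\mathcal{L}_{0,1}$-product of $\varphi,\psi$ distributed over the letters. The right side is $\mathfrak{i}_{\gamma_1}(\varphi_{(2)})\cdots\mathfrak{i}_{\gamma_k}(\varphi_{(k+1)})\,\mathfrak{i}_{\gamma_1}(\psi_{(2)})\cdots\mathfrak{i}_{\gamma_k}(\psi_{(k+1)})$. The strategy is to move each $\mathfrak{i}_{\gamma_j}(\psi_{(j+1)})$ to the left past the letters $\mathfrak{i}_{\gamma_l}(\varphi_{(l+1)})$ with $l>j$, using the exchange relations \labelcref{eq-commutation_ai_bi,eq-commutation_yj_xi}. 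We then recombine each pair $\mathfrak{i}_{\gamma_j}(\cdot)\mathfrak{i}_{\gamma_j}(\cdot)$ via the base case, and use \labelcref{eq-R-matrice_et_co-produit} to collapse the accumulated $R$-matrices into the single pair $r^jS(r^i)$, $r_j$, $r_i$ appearing in the definition of the $\mathcal{L}_{0,1}$-product.

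Rather than doing this letter by letter, it is cleaner to transport the question through the isomorphism of \Cref{thm-isomorphisme_avec_stated_skein}. There, $\mathfrak{i}_\gamma(\varphi)$ is the simple based curve $\gamma$ coloured by the coregular comodule and stated by $\varphi$. The product $\mathfrak{i}_\gamma(\varphi)\mathfrak{i}_\gamma(\psi)$ is two stacked parallel copies of $\gamma$, which can be fused near the basepoint into one copy coloured by the product. The $R$-matrix crossings created by fusion are exactly those of the $\mathcal{L}_{0,1}$-product when $\gamma$ is positively oriented. When $\gamma$ is negatively oriented, the stacking order near the basepoint is reversed, which produces the opposite product twisted by $R$, i.e. formula (1)(b).

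\emph{Item (2).} For $\gamma$ positive, reversing orientation of the stated curve is implemented on the state by $S_{\mathcal{L}_{0,1}(H)}$, up to a framing correction. This correction is exactly $v^{N(\gamma^{-1})-N(\gamma)}$ compared against the $u^{-1}$ inside $S_{\mathcal{L}_{0,1}(H)}$. It is checked by writing $\gamma^{-1}=\gamma_k^{-1}\cdots\gamma_1^{-1}$ and applying the base case letter by letter together with (1)(b).

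The main obstacle is the framing bookkeeping. One has to show that the scalar $\varphi_{(1)}(v^{N(\gamma)})$ exactly absorbs the discrepancy between the blackboard framing of $\gamma$ in the ribbon surface and the product of the framings of the generator pieces. Equivalently, the cups and caps counted by $N_\cup$ and $N_\cap$ each contribute a pivotal factor $g^{\pm1}$, and these must combine, via $g=uv^{-1}$, $S^2=\mathrm{Ad}(g)$ and \labelcref{eq-propriete_u,eq-propriete_v}, into powers of $v$. I would first verify this on the curve of \Cref{fig-exemple_courbe_positivement_orientee} and on the two-letter curves $a_jb_j$ and $b_ja_j^{-1}$. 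I would then argue locally that each elementary isotopy move changes both sides consistently, which is essentially the content of the independence result \cite[App. B.2]{faitg_derived_2026}.
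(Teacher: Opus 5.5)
The paper does not prove this statement; it quotes \cite[Lem.~5.4, Prop.~5.6, Cor.~5.8]{faitg_derived_2026}. Your sketch therefore has to carry the argument on its own. Your base case and your $H$-equivariance argument are correct. The gap is multiplicativity in (1)(a) for an arbitrary positively oriented simple $\gamma$, which is the real content of the proposition.

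You reduce this to the claim that, under \Cref{thm-isomorphisme_avec_stated_skein}, the element $\varphi_{(1)}(v^{N(\gamma)})\,\mathfrak{i}_{\gamma_1}(\varphi_{(2)})\cdots\mathfrak{i}_{\gamma_k}(\varphi_{(k+1)})$ is the simple curve $\gamma$ with blackboard framing and state $\varphi$. Granting this, your fusion of two parallel copies would finish the job. But the identification is only asserted, and your plan for it does not close. Checking a few curves proves nothing in general. The claim ``each elementary isotopy move changes both sides consistently'' is empty, because both sides are already invariants of the based class of $\gamma$: the skein element by isotopy invariance, the word side by \cite[Prop.~5.14]{faitg_derived_2026} and \cite[App.~B.2]{faitg_derived_2026} (the latter is only about the integer $N(\gamma)$).

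What is missing is a computation valid for \emph{every} simple curve drawn in the ribbon surface. One option is to cut the stated curve near the boundary arc between consecutive letters, which produces the iterated coproduct of the state, and prove that the curls so created contribute exactly $v^{N(\gamma)}$. Another is to prove in $\bar{\mathcal{D}}$ (\Cref{lem-correspondance_Lgn_diagrammes}) that the concatenation of two copies of the diagram of $\gamma$ is isotopic to its $2$-cable. Either way, this is where simplicity and the sign of the orientation must enter.

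The letter-by-letter route cannot replace this step, for three reasons. It never uses simplicity. A repeated letter (e.g.\ $b_{i+1}$ in $c_i$) forces you to move a factor $\mathfrak{i}_x(\cdot)$ past another factor $\mathfrak{i}_x(\cdot)$, which \labelcref{eq-commutation_ai_bi,eq-commutation_yj_xi} do not cover. And your induction on word length cannot carry multiplicativity, since proper subwords of a simple loop are in general not simple, and $N$ is not defined for them.

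Your argument for (2) does not close either. In $\mathfrak{i}_{\gamma^{-1}}$ the factors appear in the order $\mathfrak{i}_{\gamma_k^{-1}}(\cdot)\cdots\mathfrak{i}_{\gamma_1^{-1}}(\cdot)$. Expanding $\mathfrak{i}_\gamma\circ S_{\mathcal{L}_{0,1}(H)}$ keeps them in the order $\mathfrak{i}_{\gamma_1}(\cdot)\cdots\mathfrak{i}_{\gamma_k}(\cdot)$, with the antipode and braidings pushed onto the arguments. Reversing a product order is exactly what you do not yet control.

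There is a formal route showing that everything reduces to (1)(a). By telescoping, $\mathfrak{i}_{\gamma^{-1}}$ is a two-sided inverse of $\mathfrak{i}_\gamma$ for the convolution product on $\mathrm{Hom}_{\mathbb{K}}(H^*,\mathcal{L}_{g,n}(H))$. The telescoping uses $N(\gamma^{-1})=-N(\gamma)$, centrality of $v$ to combine the two scalar prefactors, and, for each generator $x$,
$\mathfrak{i}_x(\alpha_{(1)})\,\mathfrak{i}_x\bigl(S_{\mathcal{L}_{0,1}(H)}(\alpha_{(2)})\bigr)=\varepsilon_{H^*}(\alpha)\,1=\mathfrak{i}_x\bigl(S_{\mathcal{L}_{0,1}(H)}(\alpha_{(1)})\bigr)\,\mathfrak{i}_x(\alpha_{(2)})$.
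Once $\mathfrak{i}_\gamma$ is multiplicative, $\mathfrak{i}_\gamma\circ S_{\mathcal{L}_{0,1}(H)}$ is also a right convolution inverse, so (2) follows by uniqueness of inverses.

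For negatively oriented $\gamma$, applying (2) to $\gamma^{-1}$ gives $\mathfrak{i}_\gamma=\mathfrak{i}_{\gamma^{-1}}\circ S_{\mathcal{L}_{0,1}(H)}$. Then (1)(b) follows from (1)(a) for $\gamma^{-1}$ together with the braided anti-multiplicativity of $S_{\mathcal{L}_{0,1}(H)}$, which you already use in the base case. So the only substantive input is (1)(a), and that is exactly the step your proposal leaves open.
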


\medskip

For all $f \in \Gamma_{g,n}^1$, all simple elements $\gamma \in \pi_1(\Sigma_{g,n}^1)$, and all $\varphi \in \mathcal{L}_{0,1}(H)$ define:
\begin{equation}\label{eq-definition_action_Gammagn1_sur_Lgn}
\tilde{f} \bigl( \mathfrak{i}_\gamma(\varphi) \bigr) := \mathfrak{i}_{f(\gamma)}(\varphi).
\end{equation}

\begin{proposition}[{{\cite[(81) and §5.2.1]{faitg_derived_2026}}}]\label{prop-action_de_groupes_et_automorphisme_dalgebres}
	For all $f \in \Gamma_{g,n}^1$, the map $\tilde{f}$ is an algebra automorphism of $\mathcal{L}_{g, n}(H)$, and the map:
    \[ 
    \begin{array}{l|ccl}
		\widetilde{\bullet}: & \Gamma_{g,n}^1 & \longrightarrow & \Aut_{alg} \bigr( \mathcal{L}_{g, n}(H) \bigl) \\
        & f & \longmapsto & \tilde{f}
    \end{array} 
    \]
    is a group morphism.
\end{proposition}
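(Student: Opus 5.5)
The statement to prove is \Cref{prop-action_de_groupes_et_automorphisme_dalgebres}: each $\tilde f$ is a well-defined algebra automorphism of $\mathcal{L}_{g,n}(H)$, and $f\mapsto\tilde f$ is a group morphism. The plan is to first make sense of \labelcref{eq-definition_action_Gammagn1_sur_Lgn} as a linear map, then check multiplicativity on generators, and finally deduce functoriality.

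\textbf{Step 1: well-definedness.} By \labelcref{eq-generateurs_Lgn}, every element of $\mathcal{L}_{g,n}(H)$ is a linear combination of products
\[
\mathfrak{i}_{b_1}(\beta_1)\,\mathfrak{i}_{a_1}(\alpha_1)\cdots \mathfrak{i}_{m_{g+n}}(\psi_n),
\]
and these products are linearly independent since they correspond to pure tensors. So I define $\tilde f$ by
\[
\tilde f(\beta_1\otimes\alpha_1\otimes\cdots) := \mathfrak{i}_{f(b_1)}(\beta_1)\,\mathfrak{i}_{f(a_1)}(\alpha_1)\cdots\mathfrak{i}_{f(m_{g+n})}(\psi_n).
\]
This is legitimate because $f$ fixes the boundary, hence the base point, so $f(a_j)$, $f(b_j)$, $f(m_{g+k})$ are simple based loops. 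It is independent of the representative of $f$, because $\mathfrak{i}_\gamma$ depends only on the based homotopy class $\gamma$ (\cite[Prop.\ 5.14]{faitg_derived_2026}) and isotopies fixing the boundary preserve based classes.

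I must then check that this $\tilde f$ agrees with \labelcref{eq-definition_action_Gammagn1_sur_Lgn} for every simple $\gamma$, not only for the generators. Write $\gamma=\gamma_1\cdots\gamma_k$, so $f(\gamma)=f(\gamma_1)\cdots f(\gamma_k)$. The needed compatibility is
\[
\mathfrak{i}_{\gamma\delta}(\varphi)=\varphi_{(1)}(v^{c})\,\mathfrak{i}_\gamma(\varphi_{(2)})\,\mathfrak{i}_\delta(\varphi_{(3)})
\]
for concatenations, with a correction $c$ built from normalizations. The key fact is then that $f$ preserves $N$ up to the same correction terms. I would obtain this from the invariance of $N$ under the choice of simple representative (\cite[App.\ B.2]{faitg_derived_2026}), combined with \Cref{prop-proprietes_igamma}(2) to handle inverses.

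\textbf{Step 2: $\tilde f$ is an algebra morphism.} It suffices to check that the images $\tilde f(\mathfrak{i}_{x}(\varphi))$ satisfy the defining relations of $\mathcal{L}_{g,n}(H)$:
\begin{enumerate}
\item each $\mathfrak{i}_{f(x)}$ is an algebra map, which holds by \Cref{prop-proprietes_igamma}(1a) since $f$ preserves positive orientation at the base point (it fixes a collar of the boundary);
\item the exchange relations \labelcref{eq-commutation_ai_bi} and \labelcref{eq-commutation_yj_xi} hold between $\mathfrak{i}_{f(x)}$ and $\mathfrak{i}_{f(y)}$.
\end{enumerate}
Point (2) is the main obstacle. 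These exchange relations depend only on how the two based loops meet near the base point (their cyclic order there) and on their algebraic intersection, and both data are preserved by $f$. I would prove this via the isomorphism with the stated skein algebra (\Cref{thm-isomorphisme_avec_stated_skein}). There the mapping class group acts tautologically on stated skeins in $\Sigma\times[0,1]$ by algebra automorphisms, and $\mathfrak{i}_\gamma$ corresponds to the $\mathcal{L}_{0,1}$-colored skein along $\gamma$; the factor $v^{N(\gamma)}$ corrects the framing of the blackboard ribbon. Hence $\tilde f$ is simply the transport of the geometric action, which is multiplicative.

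\textbf{Step 3: group morphism and bijectivity.} From the definition, $\widetilde{fh}(\mathfrak{i}_\gamma(\varphi))=\mathfrak{i}_{fh(\gamma)}(\varphi)=\tilde f\tilde h(\mathfrak{i}_\gamma(\varphi))$. Since the elements $\mathfrak{i}_\gamma(\varphi)$ generate $\mathcal{L}_{g,n}(H)$, this gives $\widetilde{fh}=\tilde f\tilde h$. Clearly $\widetilde{\id}=\id$. Consequently $\tilde f$ is invertible with inverse $\widetilde{f^{-1}}$, so each $\tilde f$ is an automorphism and $\widetilde{\bullet}$ is a group morphism.
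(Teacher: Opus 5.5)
Your decisive step is the one the paper relies on. The paper gives no independent proof; it cites \cite[(81) and (89)]{faitg_derived_2026}, which transports the tautological action of $\Gamma_{g,n}^1$ on $\mathcal{S}^{\mathrm{st}}_H(\Sigma_{g,n}^{1,\bullet})$ through \Cref{thm-isomorphisme_avec_stated_skein}, as in your Step 2.

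The problem is Step 1, on which Step 3 rests. Your $\tilde f$ is defined on an ordered basis. To show $\tilde f(\mathfrak{i}_\gamma(\varphi))=\mathfrak{i}_{f(\gamma)}(\varphi)$ for every simple $\gamma$, you must first rewrite $\mathfrak{i}_{\gamma_1}(\varphi_{(2)})\cdots\mathfrak{i}_{\gamma_k}(\varphi_{(k+1)})$ in that basis before applying $\tilde f$. So Step 1 already needs the multiplicativity you only establish in Step 2. You also need a concatenation formula
\[
\mathfrak{i}_{\delta_1\cdots\delta_k}(\varphi)=\varphi_{(1)}(v^{c})\,\mathfrak{i}_{\delta_1}(\varphi_{(2)})\cdots\mathfrak{i}_{\delta_k}(\varphi_{(k+1)})
\]
for the non-standard loops $\delta_j=f(\gamma_j)$, some repeated and some inverted, with exactly $c=N(\gamma)$.

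The fact you invoke does not give this. Independence of $N$ from the choice of simple representative compares drawings of one fixed class, not $\gamma$ with $f(\gamma)$. Moreover $N$ genuinely changes along orbits: $N(a_i)=0$ and $N(c_i)=2$, although both are positively oriented non-separating simple based loops and hence exchanged by some $f\in\Gamma_{g,n}^1$. It is plausible that the relevant defect depends only on the local picture at the base point, but that is precisely what would have to be proved. Step 3 then inherits the gap, since $\tilde f(\mathfrak{i}_{h(\gamma)}(\varphi))=\mathfrak{i}_{f(h(\gamma))}(\varphi)$ is needed for arbitrary simple loops $h(\gamma)$.

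The repair is to let the skein picture do all the work instead of patching Step 1. Define $\tilde f$ as the conjugate of $f_*$ by the isomorphism of \Cref{thm-isomorphisme_avec_stated_skein}. The input from \cite{faitg_derived_2026}, which you assert but do not prove, is the following. The isomorphism sends $\mathfrak{i}_\gamma(\varphi)$, normalization $v^{N(\gamma)}$ included, to the stated skein supported on $\gamma$ with its blackboard framing relative to $\Sigma$. This must hold for every simple $\gamma$, not only for $a_j$, $b_j$, $m_{g+k}$.

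Granting this, everything follows at once:
\begin{itemize}
\item $\tilde f$ is an algebra automorphism because $f\times\id_{[0,1]}$ acts on stated skeins by algebra automorphisms.
\item \labelcref{eq-definition_action_Gammagn1_sur_Lgn} holds for all simple $\gamma$ because $f\times\id_{[0,1]}$ carries the framed skein on $\gamma$ to the framed skein on $f(\gamma)$.
\item $\widetilde{f\circ h}=\tilde f\circ\tilde h$ because $(f\circ h)_*=f_*\circ h_*$.
\end{itemize}
Your basis definition and the separate exchange-relation check then become unnecessary.
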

\noindent
A proof of this result uses the isomorphism between $\mathcal{L}_{g, n}(H)$ and the stated skein algebra $\mathcal{S}^{\mathrm{st}}_H(\Sigma_{g,n}^{1, \bullet})$ (see \Cref{thm-isomorphisme_avec_stated_skein} and \cite[(81) and (89)]{faitg_derived_2026}).
Another proof, that does not rely on this isomorphism, involves tedious computations in $\mathcal{L}_{g, n}(H)$ (see \cite[Prop. 5.1]{faitg_projective_2020} for the case $n = 0$).

\medskip
\noindent
Recall that $v \in H$ is the ribbon element of $H$.
Since $H$ is assumed to be factorizable, $\Phi_{0,1}$ is invertible. 

\begin{proposition}\label{prop-action_twists_de_Dehn_non_separants_comme_conjugaison_et_proprietes}
    For any Dehn twist $\tau_\gamma$ along a non-separating simple closed curve $\gamma \subset \Sigma_{g,n}^1$, we have:
	\[ \forall x \in \mathcal{L}_{g, n}(H), ~ \tilde{\tau}_\gamma(x) = \hat{\tau}_\gamma \, x \, \hat{\tau}_\gamma^{-1}, \]
	where $\hat{\tau}_\gamma := \mathfrak{i}_{\gamma'}(\Phi_{0,1}^{-1}(v^{-1}))$ and $\gamma' \in \pi_1(\Sigma_{g,n}^1)$ is simple with the same free homotopy class as $\gamma$.
	Moreover, $\hat{\tau}_\gamma$ is independent of the orientation of $\gamma' \in \pi_1(\Sigma_{g,n}^1)$, and for any left integral $\lambda : H \to \mathbb{K}$ we have: 
    \[ \Phi_{0,1}^{-1}(v^{-1}) = \frac{v \triangleright \lambda}{\lambda(v)}. \]
\end{proposition}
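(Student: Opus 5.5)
The plan is to reduce the conjugation formula to a single standard curve by naturality of the maps $\mathfrak{i}_\gamma$, to treat that base case by a direct computation, and then to check the orientation independence and the closed formula for $\Phi_{0,1}^{-1}(v^{-1})$ separately.

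\textbf{Reduction to $a_1$.} First I will show that $\Phi_{0,1}^{-1}(v^{-1})$ is invariant under $\coad^r$. This holds because $v$ is central, hence $\ad^r$-invariant, and $\Phi_{0,1}$ is an isomorphism of $H$-module-algebras. Now let $\gamma$ be non-separating. By the change of coordinates principle there is $f\in\Gamma_{g,n}^1$ with $f(a_1)=\gamma$ as free curves. Then $\tau_\gamma=f\tau_{a_1}f^{-1}$. By \Cref{prop-action_de_groupes_et_automorphisme_dalgebres},
\[
\tilde\tau_\gamma=\tilde f\,\tilde\tau_{a_1}\,\tilde f^{-1}.
\]
By \eqref{eq-definition_action_Gammagn1_sur_Lgn}, $\tilde f$ sends $\hat\tau_{a_1}=\mathfrak{i}_{a_1}(\Phi_{0,1}^{-1}(v^{-1}))$ to $\mathfrak{i}_{f(a_1)}(\Phi_{0,1}^{-1}(v^{-1}))$. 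Since $\tilde f$ is an algebra automorphism, the conjugation formula for $a_1$ transports to $\gamma$. This argument also gives independence of the based representative $\gamma'$: two simple based representatives of the same free class differ by a mapping class fixing the free curve, namely a point-push along the boundary, which commutes with $\tau_\gamma$. I will double-check this step, using the invariance of $\Phi_{0,1}^{-1}(v^{-1})$.

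\textbf{Base case $\gamma=a_1$.} Here I would verify $\tilde\tau_{a_1}(x)=\hat\tau_{a_1}x\hat\tau_{a_1}^{-1}$ on the generators $\mathfrak{i}_{a_j},\mathfrak{i}_{b_j},\mathfrak{i}_{m_{g+k}}$.
\begin{itemize}
\item For generators other than $\mathfrak{i}_{a_1}$ and $\mathfrak{i}_{b_1}$, the curves are disjoint from $a_1$, so $\tilde\tau_{a_1}$ fixes them. Commutation then follows from \eqref{eq-commutation_yj_xi}: the $\coad^r$-invariance of $\hat\tau_{a_1}$ collapses the $R$-matrix factors to $\varepsilon$.
\item For $\mathfrak{i}_{a_1}$, the element $\Phi_{0,1}^{-1}(v^{-1})$ is central in $\mathcal{L}_{0,1}(H)$ by \Cref{lem-produit_L01_si_un_est_invariant}, and $\mathfrak{i}_{a_1}$ is an algebra morphism.
\item The real content is $\mathfrak{i}_{b_1a_1^{\pm1}}(\varphi)=\hat\tau_{a_1}\mathfrak{i}_{b_1}(\varphi)\hat\tau_{a_1}^{-1}$, with $b_1a_1^{\pm1}$ the image curve including its normalization factor $v^{N}$. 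For this I would apply the isomorphism $\Phi_{1,0}$ to $\mathcal{H}(H^*)$ and then $\Psi$. There, $\hat\tau_{a_1}$ becomes $\varepsilon\otimes r^kr_j v^{-1}$-type elements, conjugation by which is explicit. The computation reduces to the identity $\Delta(v)=(R_{21}R)^{-1}(v\otimes v)$.
\end{itemize}
This step is the main obstacle. To keep the $R$-matrix bookkeeping and the power of $v$ coming from $N(\gamma)$ under control, I will first try the diagrammatic calculus of \Cref{ann-calcul_diagrammatique}.

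\textbf{Orientation independence and the explicit formula.} By \Cref{prop-proprietes_igamma}(2), $\mathfrak{i}_{\gamma^{-1}}=\mathfrak{i}_\gamma\circ S_{\mathcal{L}_{0,1}(H)}$. So it suffices to show that $S_{\mathcal{L}_{0,1}(H)}$ fixes $\Phi_{0,1}^{-1}(v^{-1})$. I will check that $\Phi_{0,1}$ intertwines $S_{\mathcal{L}_{0,1}(H)}$ with a twisted antipode of $H$ that acts by $S$ on central elements, and then use $S(v)=v$.

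For the formula, I compute
\[
\Phi_{0,1}(v\triangleright\lambda)=(\lambda\otimes\id)\bigl((v\otimes1)RR_{21}\bigr).
\]
Using the centrality of $v$ and conjugating $RR_{21}$ into $R_{21}R$, the relation $\Delta(v)R_{21}R=v\otimes v$ gives
\[
(v\otimes 1)R_{21}R=\Delta(v)\,(1\otimes v^{-1}).
\]
The left integral property $\lambda(x_{(1)})x_{(2)}=\lambda(x)1$, with unimodularity ensured by factorizability, then yields $\lambda(v)v^{-1}$. Dividing by $\lambda(v)$, which is nonzero because $\Phi_{0,1}$ is invertible, gives the claimed formula.
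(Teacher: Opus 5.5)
\textbf{Gap in the explicit formula.} Your derivation of $\Phi_{0,1}^{-1}(v^{-1})=\lambda^v$ has a genuine gap. The identity you display, $(v\otimes 1)R_{21}R=\Delta(v)(1\otimes v^{-1})$, is false. From $\Delta(v)R_{21}R=v\otimes v$ one gets $R_{21}R=\Delta(v^{-1})(v\otimes v)$, hence $(v\otimes 1)R_{21}R=\Delta(v^{-1})(v^{2}\otimes v)$. The true identity is $(v\otimes 1)(R_{21}R)^{-1}=\Delta(v)(1\otimes v^{-1})$. So to reach $\Delta(v)$ you need the \emph{inverse} of the monodromy, but $\Phi_{0,1}(v\triangleright\lambda)=(\lambda\otimes\id)\bigl((v\otimes 1)RR_{21}\bigr)$ contains the monodromy itself. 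Conjugation relates $RR_{21}$ to $R_{21}R$, not to its inverse. Conjugating under $\lambda\otimes\id$ is not legitimate anyway, since only the first leg is under $\lambda$. Using the correct relation directly, you only get
\[
\Phi_{0,1}(v\triangleright\lambda)=(\lambda\otimes\id)\bigl(\Delta^{op}(v^{-1})(v^{2}\otimes v)\bigr)=\lambda\bigl(v^{-1}_{(2)}v^{2}\bigr)\,v^{-1}_{(1)}\,v,
\]
and the leftover $v^{2}$ in the first leg blocks any use of the integral property. This direct approach only closes for $v^{-1}\triangleright\lambda$, where the factors of $v$ cancel and give $\Phi_{0,1}(v^{-1}\triangleright\lambda)=\lambda(v^{-1})\,v$.

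\textbf{The missing idea.} The key step, and the heart of the paper's argument, is to let the antipode invert the monodromy.
Put $\lambda^{r}:=\lambda\circ S^{-1}$, which is a right integral by property (b), and write $\lambda(r_ir^jv)=\lambda^{r}\bigl(S(v)S(r^j)S(r_i)\bigr)$.
The twisted trace property $\lambda^{r}(xy)=\lambda^{r}\bigl(yS^{-2}(x)\bigr)$, which follows from $\lambda\in\inv\coad^r$, turns this into $\lambda^{r}\bigl(vS(r_i)S^{-1}(r^j)\bigr)$.
Then $(S\otimes\id)(R)=(\id\otimes S^{-1})(R)=R^{-1}$ gives $\Phi_{0,1}(v\triangleright\lambda)=(\lambda^{r}\otimes\id)\bigl((v\otimes 1)R^{-1}R_{21}^{-1}\bigr)$.
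Only at this point does $\Delta(v)=(R_{21}R)^{-1}(v\otimes v)$ apply, yielding $\lambda^{r}(v)\,v^{-1}=\lambda(v)\,v^{-1}$.
This uses the right integral $\lambda^{r}$, so you do not need $\lambda$ itself to be two-sided.

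\textbf{Based representatives (secondary).} Your argument that $\hat\tau_\gamma$ does not depend on the based representative $\gamma'$ does not work as stated. Mapping classes in $\Gamma^1_{g,n}$ fix the boundary pointwise. A mapping class preserving the free curve $\gamma$ and sending $\gamma'$ to $\gamma''$ only shows that $\mathfrak{i}_{\gamma'}(\lambda^v)$ and $\mathfrak{i}_{\gamma''}(\lambda^v)$ both implement $\tilde\tau_\gamma$, hence differ by a central unit. You do not need this step: choosing $f$ with $f(a_1)=\gamma'$ as based curves gives the conjugation formula for every choice of $\gamma'$.

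\textbf{The rest.} The remaining steps are a reasonable plan: the reduction to $a_1$, the exchange-relation argument for the generators disjoint from $a_1$, the $b_1$ case via $\Phi_{1,0}$ and $\Psi$, and orientation independence via a braided antipode on $H$. The paper itself defers the $b_1$ case and the identity $S_{\mathcal{L}_{0,1}(H)}(\lambda^v)=\lambda^v$ to Faitg.
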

\noindent
From now on, we will always write: 
\begin{equation}\label{eq:lambda^v_et_lambda^v-1}
	\lambda^v := \frac{v \triangleright \lambda}{\lambda(v)}, \qquad \lambda^{v^{-1}} := \frac{v^{-1} \triangleright \lambda}{\lambda(v^{-1})}.
\end{equation}
Before proving \Cref{prop-action_twists_de_Dehn_non_separants_comme_conjugaison_et_proprietes}, we deduce the following corollary: 
\begin{corollaire}\label{cor-action_mapping_class_comme_conjugaison}
	For all $f \in \Gamma_{g,n}^1$, the automorphism $\tilde{f}$ is interior. 
\end{corollaire}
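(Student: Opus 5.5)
The corollary follows from \Cref{prop-action_twists_de_Dehn_non_separants_comme_conjugaison_et_proprietes} combined with the Gervais presentation (\Cref{thm-presentation_de_Gervais}) and the fact that $\widetilde{\bullet}$ is a group morphism (\Cref{prop-action_de_groupes_et_automorphisme_dalgebres}).

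First I would treat the case $g \geq 2$. By \Cref{thm-presentation_de_Gervais}, applied with $s = 1$, the group $\Gamma_{g,n}^1$ is generated by the Dehn twists $\tau_\gamma$ along non-separating simple closed curves $\gamma \subset \Sigma_{g,n}^1$. Any $f \in \Gamma_{g,n}^1$ can therefore be written as $f = \tau_{\gamma_1}^{\epsilon_1} \cdots \tau_{\gamma_k}^{\epsilon_k}$ with $\epsilon_i \in \{\pm 1\}$. Since $\widetilde{\bullet}$ is a group morphism, we get $\tilde f = \tilde\tau_{\gamma_1}^{\epsilon_1} \circ \cdots \circ \tilde\tau_{\gamma_k}^{\epsilon_k}$. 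By \Cref{prop-action_twists_de_Dehn_non_separants_comme_conjugaison_et_proprietes}, each $\tilde\tau_{\gamma_i}$ is conjugation by the invertible element $\hat\tau_{\gamma_i}$, so $\tilde\tau_{\gamma_i}^{-1}$ is conjugation by $\hat\tau_{\gamma_i}^{-1}$. A composition of inner automorphisms is inner, so
\[ \tilde f(x) = u_f \, x \, u_f^{-1}, \qquad u_f := \hat\tau_{\gamma_1}^{\epsilon_1} \cdots \hat\tau_{\gamma_k}^{\epsilon_k}, \]
which shows that $\tilde f$ is interior. The element $u_f$ depends on the chosen word, but the statement does not ask for it to be canonical.

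The only obstacle is small genus, where \Cref{thm-presentation_de_Gervais} is not stated. For $g = 1$ (and $n = 0$) I would use \eqref{eq-presentation_Gamma11_et_Gamma1}: $\Gamma_1^1$ is generated by the twists $\tau_a,\tau_b$, both along non-separating curves, and the same argument applies. For $g = 1$ with $n \geq 1$ marked points, and for $g = 0$, I would need the classical fact that $\Gamma_{g,n}^1$ is generated by Dehn twists. The generators that are not along non-separating curves are twists around curves enclosing punctures or boundary-parallel curves. For those I would try to show directly that $\tilde\tau_\gamma$ is conjugation by $\mathfrak{i}_{\gamma'}(\Phi_{0,1}^{-1}(v^{-1}))$, reusing whatever mechanism proves \Cref{prop-action_twists_de_Dehn_non_separants_comme_conjugaison_et_proprietes}. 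If that fails, I would restrict the corollary to the range where the paper's main results live ($g \geq 2$, and $g = 1$ with $n = 0$).
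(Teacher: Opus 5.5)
Your proof is the paper's proof. The paper likewise writes $f$ as a word $\prod_i \tau_{\gamma_i}^{\varepsilon_i}$ in twists along non-separating curves, using generation from \Cref{thm-presentation_de_Gervais}, uses that $\widetilde{\bullet}$ is a group morphism and that each $\tilde{\tau}_{\gamma_i}$ is conjugation by $\hat{\tau}_{\gamma_i}$, and concludes that $\tilde{f}$ is conjugation by $\hat{f} := \prod_i \hat{\tau}_{\gamma_i}^{\varepsilon_i}$. Your extra discussion of $g = 1$ with $n \geq 1$ and of $g = 0$ goes beyond the paper: it does not treat those cases, and only uses the corollary where non-separating twists are available as generators ($g \geq 2$, or $g = 1$ with $n = 0$).
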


\begin{proof}
	We know that $\Gamma_{g,n}^1$ is generated by the Dehn twists along non-separating simple closed curves of $\Sigma_{g,n}^1$ (see \Cref{thm-presentation_de_Gervais}).
	Let $f \in \Gamma_{g,n}^1$ and $w_f := \prod_{i = 1}^n \tau_{\gamma_i}^{\varepsilon_i}$ be a word representing $f$, where $\varepsilon_i \in \{\pm 1 \}$ and $\gamma_i \subset \Sigma_{g,n}^1$ is a non-separating simple closed curve, for all $i \in [\![ 1, n ]\!] $.
    Using \Cref{prop-action_de_groupes_et_automorphisme_dalgebres,prop-action_twists_de_Dehn_non_separants_comme_conjugaison_et_proprietes}, we have:
	\[ \tilde{f}(x) = \tilde{\tau}_{\gamma_1}^{\varepsilon_1} \circ \cdots \circ \tilde{\tau}_{\gamma_n}^{\varepsilon_n} (x) = \biggl( \prod_{i = 1}^n \hat{\tau}_{\gamma_i}^{\varepsilon_i} \biggr) x  \biggl( \prod_{i = 1}^n \hat{\tau}_{\gamma_i}^{\varepsilon_i} \biggr)^{-1} = \hat{f} x \hat{f}^{-1}, \]
 	where $\hat{f} := \prod_{i = 1}^n \hat{\tau}_{\gamma_i}^{\varepsilon_i}$.
\end{proof}

\begin{remarque}\label{not:representation_projective_vient_de_Skolem-Noether}
    When $n = 0$, it is not surprising that $\Gamma_g^1$ acts on $\mathcal{L}_{g, 0}(H)$ by conjugation.
	Indeed, we saw in \Cref{thm-isomorphisme_phign} the isomorphism between $\mathcal{L}_{g,0}(H)$ and $M_{\dim(H)}(\mathbb{K})^{\otimes g}$. 
	Consequently, the Skolem-Noether theorem (see \cite[Thm. 3.14]{farb_noncommutative_1993}) applies, so every automorphism of $\mathcal{L}_{g, 0}(H)$ is interior.  
\end{remarque}

The rest of this section is devoted to the proof of \Cref{prop-action_twists_de_Dehn_non_separants_comme_conjugaison_et_proprietes}.
The case $n = 0$ has been treated in \cite[§5.2.2]{faitg_derived_2026}. 
We recall arguments for completeness.

\smallskip
\noindent
Recall that a \emph{left integral} $\lambda$ of $H$ is an element of $H^*$ that is invariant under the regular left action of $H^*$, i.e. for all $\varphi \in H^*$, $\varphi \star \lambda = \varepsilon_{H^*}(\varphi)\lambda$, or equivalently: 
\begin{equation}\label{eq-definition_integrale_a_gauche}
	\forall h \in H, ~ h_{(1)} \lambda(h_{(2)}) = \lambda(h)1_H.
\end{equation} 
Similarly, we can define right integrals by the relation $\lambda \star \varphi = \varepsilon_{H^*}(\varphi)\lambda$, for all $\varphi \in H^*$.
We will use without restriction the following properties of integrals (see \cite[§2.1]{montgomery_hopf_1993} or \cite[Chap. 10]{radford_hopf_2012}):
\begin{enumerate}[label=(\alph*),ref=\alph*]
	\item\label{item-espace_des_integrales_dimension_1} Since $H$ is finite-dimensional, the space of left integrals (resp. right integrals) is a 1-dimensional vector space.
    \item\label{item-passage_integrale_a_gauche_a_integrale_a_droite} Let $\lambda$ be a left integral (resp. right integral). Then $\lambda \circ S^{\pm 1}$ is a right integral (resp. a left integral).
    \item\label{item-integrale_invariante_coad^r} Let $\lambda$ be a left integral. Then $\lambda \in \inv \coad^r$. 
    Since this fact is not standard, let us give details. 
	First, we compute $\Phi_{0,1} (v^{-1} \triangleright \lambda)$:
	\begin{equation}\label{eq-Phi01lambda^vinverse}
		\begin{aligned}
			\Phi_{0,1} (v^{-1} \triangleright \lambda) & = (\lambda \otimes \id_H ) \bigl( R R_{21} (v^{-1} \otimes 1) \bigr) \\
			& \overset{\text{\labelcref{eq-propriete_v}}}{=} (\lambda \otimes \id_H) \bigl( \Delta^{op}(v^{-1})(v \otimes v)(v^{-1} \otimes 1) \bigr) \\
			& ~~ = (\lambda \otimes \id_H) \bigl( \Delta^{op}(v^{-1})(1 \otimes v) \bigr) = (\lambda \otimes \id_H) \bigl( \Delta^{op}(v^{-1}) \bigr) v = \lambda(v^{-1}) \, v.
		\end{aligned}
	\end{equation}
	Note that it is using the ribbon structure of $H$.
	Since $v$ is central, $v$ is invariant under the right adjoint action ($\ad^r(h)(x) := S(h_{(1)}) x h_{(2)}$).
	Moreover, the factorizability of $H$ implies that $\Phi_{0,1}$ is an isomorphism of $H$-modules, thus $v^{-1} \triangleright \lambda \in \inv \coad^r$. 
	It follows from the centrality of $v$ and \labelcref{eq-reformulation_invariance_coad^r} that $v \triangleright (v^{-1} \triangleright \lambda) = \lambda \in \inv \coad^r$. 
	\item\label{item-lambda_circ_S^2_egale_lambda} Let $\lambda$ be a left integral. From \labelcref{item-espace_des_integrales_dimension_1,item-passage_integrale_a_gauche_a_integrale_a_droite,item-integrale_invariante_coad^r}, by using \labelcref{eq-propriete_g} it follows that $\lambda \circ S^2 = \lambda$. 
\end{enumerate}

\begin{remarque}\label{rem-lambda_est_integrale_a_droite}
    Since $\lambda \in \inv \coad^r$, \Cref{lem-produit_L01_si_un_est_invariant} implies that $\lambda$ is also a right integral. 
	This is not true in general; here the result is due to the factorizability of $H$. 
\end{remarque}

\begin{proof}[Proof (\Cref{prop-action_twists_de_Dehn_non_separants_comme_conjugaison_et_proprietes})]
    Let $\lambda$ be a left integral of $H$.
	Let us begin by showing that $\Phi_{0,1}(v \triangleright \lambda) = \lambda(v) \, v^{-1}$.
	The following computations are based on those in \cite[Prop. 2.3.4]{faitg_mapping_2019}. Since some points differ, we give details.
    Using \labelcref{item-integrale_invariante_coad^r} and \labelcref{eq-reformulation_invariance_coad^r} we see that $\lambda^r := \lambda \circ S^{-1}$ satisfies:
	\begin{equation}\label{eq-reformulation_invariance_coad^l}
        \forall x, y \in H, ~ \lambda^r(xy) = \lambda^r \bigl( y S^{-2}(x) \bigr).
    \end{equation}
	Since $v$ is central, the relation \labelcref{eq-reformulation_invariance_coad^l} holds for $v \triangleright \lambda^r$.
	These identities and \labelcref{eq-propriete_v} allow us to compute $\Phi_{0,1} (v \triangleright \lambda)$: 
    \begin{equation}\label{eq-Phi01lambda^v}
		\begin{aligned}
			\Phi_{0,1} (v \triangleright \lambda) = (v \triangleright \lambda \otimes \id_H)(R R_{21}) & = \sum_{i,j}\lambda (r_i r^j v) r^i r_j \\
			& = \sum_{i,j}\lambda^r \bigl( S(v) S(r^j) S(r_i) \bigr) r^i r_j \\
			& \overset{\text{\labelcref{eq-reformulation_invariance_coad^l}}}{=} \sum_{i,j}\lambda^r \bigl( v S(r_i) S^{-1}(r^j) \bigr) r^i r_j \\
			& = (\lambda^r \otimes \id_H) \bigl( R^{-1} R_{21}^{-1} (v \otimes 1_H) \bigr) \\
			& = (\lambda^r \otimes \id_H) \bigl( \Delta(v) (v^{-1} \otimes v^{-1}) (v \otimes 1_H) \bigr) \\
			& = (\lambda^r \otimes \id_H) \bigl( \Delta(v) (1_H \otimes v^{-1}) \bigr) \\
			& = (\lambda^r \otimes \id_H) \bigl( \Delta(v) \bigr) v^{-1} = \lambda^r(v) \, v^{-1} = \lambda(v) \, v^{-1}.
		\end{aligned}
	\end{equation}
	Thus we have  $\hat{\tau}_\gamma := \mathfrak{i}_{\gamma'}(\Phi_{0,1}^{-1}(v^{-1})) = \mathfrak{i}_{\gamma'} (\lambda^v)$.

	Let us now show that $\hat{\tau}_\gamma$ is independent of the orientation of $\gamma'$. 
    This relies on the equality $S_{\mathcal{L}_{0, 1}(H)}(\lambda^v) = \lambda^v$ (see \cite[Lem. 5.20]{faitg_derived_2026}).
	Indeed, if $\gamma'$ is positively oriented, we have $\mathfrak{i}_{\gamma'^{-1}}(\lambda^v) \overset{\text{\labelcref{prop-proprietes_igamma}}}{=} \mathfrak{i}_{\gamma'}(S_{\mathcal{L}_{0, 1}(H)}(\lambda^v)) = \mathfrak{i}_{\gamma'}(\lambda^v)$.
    Thus $\hat{\tau}_\gamma$ does not depend on the orientation of $\gamma'$.

	Finally, let us show that $\forall x \in \mathcal{L}_{g, n}(H), ~ \tilde{\tau}_\gamma(x) = \hat{\tau}_\gamma \, x \, \hat{\tau}_\gamma^{-1}$.  
    We begin by showing the result for $\gamma = a_1$. 
    By the definition of the action of $\Gamma_{g,n}^1$ on $\mathcal{L}_{g, n}(H)$ (see \labelcref{eq-definition_action_Gammagn1_sur_Lgn}) and the fact that $\tau_{a_1}(b_1) = b_1 a_1$, we must show: 
	\begin{alignat}{3}
        & \hat{\tau}_{a_1} \mathfrak{i}_{a_1}(\varphi) = \mathfrak{i}_{a_1}(\varphi) \hat{\tau}_{a_1}, \qquad & & \hat{\tau}_{a_1} \mathfrak{i}_{b_1}(\varphi) = \varphi_{(1)}(v^{-1}) \mathfrak{i}_{b_1}(\varphi_{(2)}) \mathfrak{i}_{a_1}(\varphi_{(3)}) \hat{\tau}_{a_1}, \qquad & & \label{eq-conjugaison_a1_et_b1}\\
        & \hat{\tau}_{a_1} \mathfrak{i}_{a_j}(\varphi) = \mathfrak{i}_{a_j}(\varphi) \hat{\tau}_{a_1}, \qquad & & \hat{\tau}_{a_1} \mathfrak{i}_{b_j}(\varphi)= \mathfrak{i}_{b_j}(\varphi) \hat{\tau}_{a_1}, \quad & & \forall j \geq 2, \label{eq-conjugaison_aj_et_bj}\\
		& \hat{\tau}_{a_1} \mathfrak{i}_{m_{g+k}}(\varphi) = \mathfrak{i}_{m_{g+k}}(\varphi) \hat{\tau}_{a_1}, \qquad & & & & \forall k \geq 1. \label{eq-conjugaison_mg+k}
    \end{alignat}
	A direct adaptation of the proof of \cite[Prop. 5.21]{faitg_derived_2026} allows us to show \labelcref{eq-conjugaison_a1_et_b1,eq-conjugaison_aj_et_bj}. 
	It remains to show \labelcref{eq-conjugaison_mg+k}.
	We have:
    \begin{equation*}
		\mathfrak{i}_{m_{g+k}}(\varphi) \hat{\tau}_{a_1} = \mathfrak{i}_{m_{g+k}}(\varphi) \, \mathfrak{i}_{a_1} (\lambda^v) = \sum_{a,b,c,d} \mathfrak{i}_{a_1} \bigl( S(r_c r_d) \triangleright \lambda^v \triangleleft r_a r_b \bigr) \, \mathfrak{i}_{m_{g+k}} \bigl( S (r^a r^c) \triangleright \varphi \triangleleft r^b r^d \bigr).
    \end{equation*} 
	To simplify the notations, we will only consider (with implicit summations):
	\[ \bigl( S(r_c r_d) \triangleright \lambda^v \triangleleft r_a r_b \bigr) \otimes \bigl( S (r^a r^c) \triangleright \varphi \triangleleft r^b r^d \bigr). \]
    In the following, $\langle \psi, h \bullet g \rangle$ means $g \triangleright \psi \triangleleft h$:
    \begin{align*}
		\bigl( S(r_c r_d) \triangleright \lambda^v \triangleleft r_a r_b \bigr) \otimes \bigl( S (r^a r^c) \triangleright \varphi \triangleleft r^b r^d \bigr) & = \bigl\langle \lambda^v, r_a r_b \bullet S(r_d) S(r_c) \bigr\rangle \otimes \bigl\langle \varphi, r^b r^d \bullet S(r^c) S(r^a) \bigr\rangle \\
		& = \bigl\langle \lambda^v, r_b \bullet S(r_d) S(r_c) S^{2}(r_a) \bigr\rangle \otimes \bigl\langle \varphi, r^b r^d \bullet S(r^c) S(r^a) \bigr\rangle \\ 
		& = \bigl\langle \lambda^v, r_b \bullet S(r_d) S \big( S(r_a) r_c \big) \bigr\rangle \otimes \bigl\langle \varphi, r^b r^d \bullet S(r^a r^c) \bigr\rangle \\ 
		& \overset{\text{\labelcref{eq-R-matrice_et_antipode}}}{=} \bigl\langle \lambda^v, r_b \bullet S(r_d) \bigr\rangle \otimes \langle \varphi, r^b r^d \bullet \rangle \\ 
		& = \bigl\langle \lambda^v, \bullet \, S(r_d) S^2(r_b) \bigr\rangle \otimes \langle \varphi, r^b r^d \bullet \rangle \\ 
		& = \bigl\langle \lambda^v, \bullet \, S \bigl( S(r_b) r_d \bigr) \bigr\rangle \otimes \langle \varphi, r^b r^d \bullet \rangle \overset{\text{\labelcref{eq-R-matrice_et_antipode}}}{=} \langle \lambda^v, \bullet \rangle \otimes \langle \varphi, \bullet \rangle = \lambda^v \otimes \varphi.
    \end{align*}
	In the second and fifth equality we used the fact that $\lambda^v \in \inv \coad^r$, which is an immediate consequence from the centrality of $v$ and the fact that $\lambda \in \inv \coad^r$. 
    Thus we obtain: 
	\[ \mathfrak{i}_{m_{g+k}}(\varphi) \hat{\tau}_{a_1} = \mathfrak{i}_{a_1} (\lambda^v) \mathfrak{i}_{m_{g+k}}(\varphi) = \hat{\tau}_{a_1} \mathfrak{i}_{m_{g+k}}(\varphi). \]
	To extend this result to any non-separating simple closed curve $\gamma \subset \Sigma_{g,n}^1$, we will use the following two properties of $\Gamma_{g,n}^1$. 
	Given a non-separating simple closed curve $\gamma \subset \Sigma_{g,n}^1$, we have:
    \begin{enumerate}
		\item There exists $f \in \Gamma_{g,n}^1$ such that $f(a_1) = \gamma$ (change of coordinates principle, see \cite[§1.3]{farb_primer_2012}),
		\item For any $f \in \Gamma_{g,n}^1$, we have $\tau_{f(\gamma)} = f \tau_\gamma f^{-1}$ (see \cite[Fact 3.7]{farb_primer_2012}).
    \end{enumerate}
    Therefore, we have $\tilde{\tau}_\gamma (x) = \tilde{\tau}_{f(a_1)} (x) = \widetilde{f \tau_{a_1} f^{-1}} (x) = \tilde{f} \circ \tilde{\tau}_{a_1} \circ \tilde{f}^{-1} (x)$.
	Thus, if we set $y = \tilde{f} (x)$ we obtain: 
	\[ \tilde{\tau}_\gamma (y) = \tilde{f} \circ \tilde{\tau}_{a_1} (x) = \tilde{f}(\hat{\tau}_{a_1} \, x \, \hat{\tau}_{a_1}^{-1}) = \tilde{f}(\hat{\tau}_{a_1})  \tilde{f}(x) \tilde{f}(\hat{\tau}_{a_1}^{-1}) = \hat{\tau}_\gamma \, y \, \hat{\tau}_\gamma^{-1}, \]
    which concludes the proof.
\end{proof}

The following lemma will be useful in \Cref{subsec-calcul_relations}.

\begin{lemme}\label{lem-lambda^v_star_lambda^v-1_egale_epsilon}
    We have the following equality: $\lambda^v \star \lambda^{v^{-1}} = \lambda^{v^{-1}} \star \lambda^v = \varepsilon$.
\end{lemme}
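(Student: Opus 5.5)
The plan is to turn the convolution product $\star$ into the product of $\mathcal{L}_{0,1}(H)$, and then apply the algebra isomorphism $\Phi_{0,1}$, which exists because $H$ is factorizable. We already know the values of $\Phi_{0,1}$ on $v^{\pm1} \triangleright \lambda$ from \labelcref{eq-Phi01lambda^v} and \labelcref{eq-Phi01lambda^vinverse}.

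First, I would check that $\lambda^v$ and $\lambda^{v^{-1}}$ are well defined, i.e.\ that $\lambda(v) \neq 0$ and $\lambda(v^{-1}) \neq 0$. Since $\lambda \neq 0$ and $v$ is invertible, $v^{\pm 1} \triangleright \lambda \neq 0$. Because $\Phi_{0,1}$ is injective, the images $\Phi_{0,1}(v \triangleright \lambda) = \lambda(v)\, v^{-1}$ and $\Phi_{0,1}(v^{-1} \triangleright \lambda) = \lambda(v^{-1})\, v$ are nonzero. As $v^{\pm1} \neq 0$, both scalars $\lambda(v^{\pm1})$ are nonzero.

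Next, both $\lambda^v$ and $\lambda^{v^{-1}}$ lie in $\inv \coad^r$. This follows from \labelcref{item-integrale_invariante_coad^r} together with the centrality of $v$ and the reformulation \labelcref{eq-reformulation_invariance_coad^r}, exactly as already remarked in the proof of \Cref{prop-action_twists_de_Dehn_non_separants_comme_conjugaison_et_proprietes}. Hence \Cref{lem-produit_L01_si_un_est_invariant} applies with $\psi = \lambda^{v^{-1}}$ (or $\psi = \lambda^v$) and gives
\[ \lambda^v \star \lambda^{v^{-1}} = \lambda^v \lambda^{v^{-1}} = \lambda^{v^{-1}} \lambda^v = \lambda^{v^{-1}} \star \lambda^v, \]
where the middle products are taken in $\mathcal{L}_{0,1}(H)$. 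It therefore suffices to prove that $\lambda^v \lambda^{v^{-1}} = \varepsilon$ in $\mathcal{L}_{0,1}(H)$.

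Since $\Phi_{0,1}$ is an algebra morphism, we get
\[ \Phi_{0,1}\bigl(\lambda^v \lambda^{v^{-1}}\bigr) = \Phi_{0,1}(\lambda^v)\, \Phi_{0,1}(\lambda^{v^{-1}}) = v^{-1} \cdot v = 1_H = \Phi_{0,1}(\varepsilon). \]
The middle equality uses \labelcref{eq-Phi01lambda^v} and \labelcref{eq-Phi01lambda^vinverse} divided by the respective scalars. The last equality holds because $\varepsilon$ is the unit of $\mathcal{L}_{0,1}(H)$, or directly because $(\varepsilon \otimes \id_H)(RR_{21}) = 1_H$ by \labelcref{eq-R-matrice_et_co-unite}. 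Injectivity of $\Phi_{0,1}$ then yields $\lambda^v \lambda^{v^{-1}} = \varepsilon$.

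The only delicate point is justifying the passage from $\star$ to the $\mathcal{L}_{0,1}$-product. This rests on the coadjoint invariance of $\lambda^{v^{\pm 1}}$, which uses factorizability through \labelcref{item-integrale_invariante_coad^r}. Once that is in place, the rest is formal.
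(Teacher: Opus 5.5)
Your proposal is correct and follows essentially the same route as the paper. You use that $\Phi_{0,1}$ is an injective algebra morphism with $\Phi_{0,1}(\lambda^{v})=v^{-1}$ and $\Phi_{0,1}(\lambda^{v^{-1}})=v$ to show the $\mathcal{L}_{0,1}(H)$-product equals $\varepsilon$, then invoke \Cref{lem-produit_L01_si_un_est_invariant} via the $\coad^r$-invariance of $\lambda^{v^{\pm1}}$ to pass to $\star$ and to the other order; your preliminary check that $\lambda(v^{\pm1})\neq 0$, so that $\lambda^{v^{\pm1}}$ are well defined, is a detail the paper leaves implicit.
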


\begin{proof}
	Using \labelcref{eq-Phi01lambda^vinverse,eq-Phi01lambda^v} we get: 
	\[ 1_H = v v^{-1} = \Phi_{0,1}(\lambda^{v^{-1}}) \Phi_{0,1}(\lambda^v). \]
	Since $\Phi_{0,1}$ is an embedding of algebras, we have $\varepsilon = \lambda^{v^{-1}} \lambda^v$. 
	Furthermore, since $\lambda \in \inv \coad^r$ and $v$ is central we have $\lambda^v \in \inv \coad^r$, and hence \Cref{lem-produit_L01_si_un_est_invariant} implies $\lambda^{v^{-1}} \lambda^v = \lambda^{v^{-1}} \star \lambda^v = \lambda^v \lambda^{v^{-1}}$, which proves the result.
\end{proof}

\subsection{The projective representation \texorpdfstring{$\rho_{g,n}^1$}{rhogn1} of \texorpdfstring{$\Gamma_{g,n}^1$}{Gammagn1}}\label{subsec-representation_projective_rhogn1}

The goal of this subsection is to define a projective representation of the mapping class group $\Gamma_{g,n}^1$.
Let $F_\tau$ denote the free group generated by all Dehn twists along non-separating simple closed curves of $\Sigma_{g,n}^1$ if $g \geq 2$, and the free group generated by $\tau_a$ and $\tau_b$ if $g = 1$ and $n = 0$. 
For any collection $X := \{ X_i \}_{i = 1, \cdots, n}$ of $H$-modules, \Cref{prop-action_twists_de_Dehn_non_separants_comme_conjugaison_et_proprietes} allows us to define the following map:
\begin{equation}\label{eq-application_rho_chapeau}
    \begin{array}{l|ccccl}
        \hat{\rho}_{g,n}^1(X): & F_\tau & \longrightarrow & \mathcal{L}_{g, n}(H)^\times & \longrightarrow & \GL \bigl( (H^*)^{\otimes g} \otimes X_1 \otimes \cdots \otimes X_n \bigr) \\
        & \tau_\gamma & \longmapsto & \hat{\tau}_\gamma & \longmapsto & \mathcal{R}_{g,n}^1(X)(\hat{\tau}_\gamma),
    \end{array}
\end{equation}
where $\mathcal{R}_{g,n}^1(X)$ is the representation of $\mathcal{L}_{g, n}(H)$ defined in \Cref{prop-definition_representation_combinee}.
By \Cref{lem-f_chapeau_dans_Lgninv} below, we will see that the elements $\hat{\tau}_\gamma$ belong to a certain subalgebra $\mathcal{L}_{g, n}^{\mathrm{inv}}(H) \subset \mathcal{L}_{g, n}(H)$.
We will show that the map $\hat{\rho}_{g,n}^1(V)$, for $V := \{ V_i \}_{i = 1, \dots, n}$ a collection of simple $H$-modules, induces a projective representation of $\Gamma_{g,n}^1$.
For this, we will use the Gervais presentation of $\Gamma_{g,n}^1$ (see \Cref{thm-presentation_de_Gervais}) for all $g \geq 2$, and the presentation $\langle \tau_a, \tau_b ~ \vert ~ \tau_a \tau_b \tau_a =\tau_b \tau_a \tau_b \rangle$ of $\Gamma_1^1$ (see \labelcref{eq-presentation_Gamma11_et_Gamma1}) for $g = 1$ and $n = 0$.

\smallskip

We must verify that the images by $\hat{\rho}_{g,n}^1(X)$ of all the relators of these presentations are scalar multiples of the identity of $(H^*)^{\otimes g} \otimes X_1 \otimes \cdots \otimes X_n$. 
Let us take an arbitrary relator $r = \prod_{1}^{n} \tau_{\gamma_i}^{\varepsilon_i}$, where $\varepsilon_i = \pm 1$. 
Since $\widetilde{\bullet}$ defines a group action of $\Gamma_{g,n}^1$ on $\mathcal{L}_{g, n}(H)$ (see \Cref{prop-action_de_groupes_et_automorphisme_dalgebres}), for all $x \in \mathcal{L}_{g, n}(H)$ we have:
\[ x = \tilde{r}(x) = \tilde{\tau}_{\gamma_1}^{\varepsilon_1} \circ \cdots \circ \tilde{\tau}_{\gamma_n}^{\varepsilon_n}(x) \overset{\text{\labelcref{prop-action_twists_de_Dehn_non_separants_comme_conjugaison_et_proprietes}}}{=} (\hat{\tau}_{\gamma_1}^{\varepsilon_1} \cdots \hat{\tau}_{\gamma_n}^{\varepsilon_n}) \, x \, (\hat{\tau}_{\gamma_1}^{\varepsilon_1} \cdots \hat{\tau}_{\gamma_n}^{\varepsilon_n})^{-1}. \]
Therefore, if we denote by $Z(\mathcal{L}_{g, n}(H))$ the center of $\mathcal{L}_{g, n}(H)$, we obtain: 
\begin{equation}\label{eq-image_de_relation_est_dans_centre_de_Lgn}
	\hat{r} := \hat{\tau}_{\gamma_1}^{\varepsilon_1} \cdots \hat{\tau}_{\gamma_n}^{\varepsilon_n} \in Z \bigl( \mathcal{L}_{g, n}(H) \bigr).
\end{equation}
In general, $Z(\mathcal{L}_{g, n}(H)) \neq \mathbb{K} \id_{\mathcal{L}_{g, n}(H)}$.
However, the Schur lemma (see \cite[Prop. 2.3.9 and Cor. 2.3.10]{etingof_introduction_2011}) implies that every central element of $\mathcal{L}_{g, n}(H)$ acts by scalar multiplication on every irreducible representation of $\mathcal{L}_{g, n}(H)$. 
Consequently, since $\mathcal{R}_{g,n}^1(V)$ is irreducible for any collection $V := \{ V_i \}_{i = 1, \cdots, n}$ of simple $H$-modules (see \Cref{rem-isomorphisme_combinee_phign_et_irreductible}), for all relators $r$ in the Gervais presentation $\hat{\rho}_{g,n}^1(V)(r)$ acts on $(H^*)^{\otimes g} \otimes V_1 \otimes \cdots \otimes V_n$ by a scalar, so we obtain a projective representation of $\Gamma_{g,n}^1$: 

\begin{definition}\label{def-representation_projective_rhogn1}
    Let $V := \{ V_i \}_{i = 1, \cdots, n}$ be a collection of simple $H$-modules.
    We denote by $\rho_{g,n}^1(V) : \Gamma_{g,n}^1 \to \PGL((H^*)^{\otimes g} \otimes V_1 \otimes \cdots \otimes V_n)$ the projective representation defined by the following commutative diagram, where $\pi$ is the quotient map: 
    \[ 
    \begin{tikzcd}[column sep=large]
		F_\tau \arrow[r, "\hat{\rho}_{g,n}^1(V)"]  \arrow[d, "\pi"] & \GL \bigl( (H^*)^{\otimes g} \otimes V_1 \otimes \cdots \otimes V_n \bigr)  \arrow[d, "\pi"] \\
		\Gamma_{g,n}^1 \arrow[r, "\rho_{g,n}^1(V)"] & \PGL \bigl( (H^*)^{\otimes g} \otimes V_1 \otimes \cdots \otimes V_n \bigr) 
    \end{tikzcd}
    \] 
\end{definition}
\noindent
In the following, for all collections $ V := \{ V_i \}_{i = 1, \cdots, n}$ of simple $H$-modules we denote: 
\begin{equation}\label{eq-notation_representations_projectives_et_espaces_de_representations_Gammagn1}
	\rho_{g,n}^1 := \rho_{g,n}^1(V), \qquad V_{g,n}^1 := (H^*)^{\otimes g} \otimes V_1 \otimes \cdots \otimes V_n.
\end{equation}

\begin{remarque}\label{rem-ordre_tau_chapeau_gamma}
	\begin{enumerate}[leftmargin=*]
		\item Recall from \Cref{prop-action_twists_de_Dehn_non_separants_comme_conjugaison_et_proprietes} that for any non-separating simple closed curve $\gamma \subset \Sigma_{g,n}^1$, we have $\hat{\tau}_\gamma = \mathfrak{i}_\gamma(\Phi_{0,1}^{-1}(v^{-1}))$. 
		This, the fact that $\Phi_{0,1}$ is a morphism of algebras, the first statement of \Cref{prop-proprietes_igamma}, and \labelcref{eq-application_rho_chapeau} immediately imply that the order of $\hat{\rho}_{g,n}^1(V)(\tau_{\gamma})$ is given by the order of the ribbon element $v \in H$.
		\item When $n = 0$ the collection $V$ of simple $H$-modules is empty, and since $\mathcal{L}_{g, 0}(H)$ is isomorphic to a matrix algebra (see \Cref{thm-isomorphisme_phign}), $(H^*)^{\otimes g}$ is already an irreducible representation and $Z(\mathcal{L}_{g, 0}(H)) = \mathbb{K}\id_{\mathcal{L}_{g, 0}(H)}$.
	\end{enumerate}
\end{remarque}

In \Cref{sec-calcul_extension_associee} we construct a minimal central extension of $\Gamma_{g,n}^1$ that linearizes the projective representation $\rho_{g,n}^1$ of $\Gamma_{g,n}^1$.
Since the representation $\mathcal{R}_{g,n}^1(H)$ lifts $\mathcal{R}_{g,n}^1(V)$ for all collections $V$ of simple $H$-modules (see \Cref{prop-representation_relevee_a_H}), we will use $\mathcal{R}_{g,n}^1(H)$ to compute the action of any relation in the Gervais presentation of $\Gamma_{g,n}^s$. 

\begin{figure}[H]
    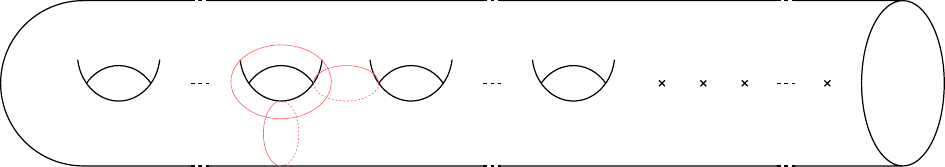
    \caption{Curves $a_j$, $b_j$, and $c_j$ of $\Sigma_{g,n}^1$.}
    \label{fig-courbes_ai_bi_ci}
\end{figure}

\begin{proposition}\label{prop-formules_ai_bi_ci}
	We have the following formulas: 
	\begin{align*}
        & \mathcal{R}_{g,n}^1(H)(\hat{\tau}_{a_j}) (\psi_1 \otimes \cdots \otimes \psi_g \otimes h_1 \otimes \cdots \otimes h_n) \\
		& \qquad \qquad \qquad \qquad \qquad \qquad \qquad \quad = \psi_1 \otimes \cdots \otimes \psi_{j-1} \otimes (v^{-1} \triangleright \psi_j) \otimes \psi_{j+1} \otimes \cdots \otimes \psi_g \otimes h_1 \otimes \cdots \otimes h_n, \\[.8em]
        & \mathcal{R}_{g,n}^1(H) (\hat{\tau}_{b_j})(\psi_1 \otimes \cdots \otimes \psi_g \otimes h_1 \otimes \cdots \otimes h_n) \\
        & \qquad \qquad \qquad \qquad \qquad \qquad \qquad \hspace{1.5em} = \psi_1 \otimes \cdots \otimes \psi_{j-1} \otimes (\psi_j \star \lambda^v) \otimes \psi_{j+1} \otimes \cdots \otimes \psi_g \otimes h_1 \otimes \cdots \otimes h_n, \\[.8em] 
        & \mathcal{R}_{g,n}^1(H)(\hat{\tau}_{c_j}) (\psi_1 \otimes \cdots \otimes \psi_g \otimes h_1 \otimes \cdots \otimes h_n) \\
        & \qquad \qquad \qquad \quad = \psi_1 \otimes \cdots \otimes \psi_{j-1} \otimes \bigl( S(v^{-1}_{(1)}) \triangleright \psi_j \bigr) \otimes (\psi_{j+1} \triangleleft v^{-1}_{(2)}) \otimes \psi_{j+2} \otimes \cdots \otimes \psi_g \otimes h_1 \otimes \cdots \otimes h_n,
	\end{align*}
	for the curves $a_j$, $b_j$, and $c_j$ represented in \Cref{fig-courbes_ai_bi_ci} above.
\end{proposition}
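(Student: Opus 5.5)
The plan is to compute each $\hat{\tau}_{\gamma}=\mathfrak{i}_{\gamma}(\lambda^v)$ directly from the formulas of \Cref{prop-definition_representation_combinee}, with $X_i={}_HH$. Three facts will be used throughout:
\begin{itemize}
\item $\lambda^v\in\inv\coad^r$, as noted in the proof of \Cref{prop-action_twists_de_Dehn_non_separants_comme_conjugaison_et_proprietes};
\item the identity $\Phi_{0,1}(\lambda^v)=v^{-1}$ from \labelcref{eq-Phi01lambda^v};
\item the counit property \labelcref{eq-R-matrice_et_co-unite} of $R$.
\end{itemize}

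\emph{The curve $a_j$.} We have $\hat{\tau}_{a_j}=\mathfrak{i}_{a_j}(\lambda^v)$. In the formula for $\mathfrak{i}_{a_j}(\varphi)$, the element $\varphi$ only enters through $\coad^r(r^ar^b)(\varphi)$. For $\varphi=\lambda^v$, invariance gives $\coad^r(r^ar^b)(\lambda^v)=\varepsilon(r^a)\varepsilon(r^b)\lambda^v$. By \labelcref{eq-R-matrice_et_co-unite}, $\varepsilon(r^a)r_a=1_H$ and $\varepsilon(r^b)r_b=1_H$. Hence:
\begin{itemize}
\item every $\coad^r({r_a}_{(k)})$ on the factors $\psi_1,\dots,\psi_{j-1}$ becomes $\coad^r(1_H)=\id$;
\item the right action $\triangleleft r_b$ becomes trivial;
\item the $j$-th factor becomes $\Phi_{0,1}(\lambda^v)\triangleright\psi_j=v^{-1}\triangleright\psi_j$;
\item the $H$-factors $h_k$ are untouched, because $\mathfrak{i}_{a_j}$ does not act on them.
\end{itemize}

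\emph{The curve $b_j$.} The same argument removes the $R$-matrix legs, so the $j$-th factor becomes the $\mathcal{L}_{0,1}(H)$-product $\lambda^v\psi_j$. Since $\lambda^v$ is invariant, \Cref{lem-produit_L01_si_un_est_invariant} gives $\lambda^v\psi_j=\psi_j\star\lambda^v$, which is the claimed formula.

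\emph{The curve $c_j$.} This is the main obstacle, since $c_j$ is not a standard generator. First I would write $c_j$, read off \Cref{fig-courbes_ai_bi_ci}, as a simple based element of $\pi_1(\Sigma_{g,n}^1)$. I expect a form like $c_j=a_j^{-1}a_{j+1}$ (up to orientation and conventions), with normalization $N(c_j)$ read from the ribbon picture. By \Cref{prop-action_twists_de_Dehn_non_separants_comme_conjugaison_et_proprietes} the orientation is irrelevant, so I would pick the positively oriented representative. \Cref{def-injection_igamma} then gives
\[
\hat{\tau}_{c_j}=\lambda^v_{(1)}(v^{N(c_j)})\,\mathfrak{i}_{a_j^{\pm1}}(\lambda^v_{(2)})\,\mathfrak{i}_{a_{j+1}}(\lambda^v_{(3)}).
\]
I would not expand $\Delta(\lambda^v)$ by hand. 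Since the formulas for elements of $\mathcal{L}_{g,0}(H)$ coincide with those of the pulled-back induced representation \labelcref{eq-representation_induite_tiree_en_arriere}, I would instead move to the diagrammatic calculus of \Cref{ann-description_diagrammatique_Lgn,ann-calcul_diagrammatique}. There the composite of an $\mathfrak{i}_{a_j^{-1}}$-action and an $\mathfrak{i}_{a_{j+1}}$-action should be of the form
\[
\psi_j\otimes\psi_{j+1}\longmapsto S(x_{(1)})\triangleright\psi_j\otimes\psi_{j+1}\triangleleft x_{(2)},\qquad x=\Phi_{0,1}(\varphi),
\]
with the $R$-matrix legs coming from the crossing of the two handles, together with the $\coad^r$ legs on the earlier factors. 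Those legs collapse as before because $\lambda^v$ is invariant.

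The two things to verify carefully in the $c_j$ computation are the following. First, the factor $\lambda^v_{(1)}(v^{N(c_j)})$ and the $S_{\mathcal{L}_{0,1}(H)}$ coming from $a_j^{-1}$ (which introduces $u^{-1}$ and $R$-legs) combine so that the effective element is exactly $x=\Phi_{0,1}(\lambda^v)=v^{-1}$. Second, the remaining $R$-factors disappear, using \labelcref{eq-propriete_v} for $\Delta(v^{-1})$, the identities \labelcref{eq-R-matrice_et_antipode}, and the invariance criterion \labelcref{eq-reformulation_invariance_coad^r}. Once this is done, substituting $x=v^{-1}$ yields the stated formula for $\hat{\tau}_{c_j}$.
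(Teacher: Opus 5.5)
Your treatment of $a_j$ and $b_j$ is correct, and more self-contained than the paper, which simply refers to Faitg's earlier computation. Invariance of $\lambda^v$ together with $\varepsilon(r^a)r_a=1_H$ removes every $R$-leg, $\Phi_{0,1}(\lambda^v)=v^{-1}$ gives the $a_j$ formula, and \Cref{lem-produit_L01_si_un_est_invariant} turns $\lambda^v\psi_j$ into $\psi_j\star\lambda^v$.

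For $c_j$ your route is the paper's: a word for $c_j$, then \Cref{def-injection_igamma}, then the diagrammatic calculus of the induced representation. However, there is a genuine gap. First, the word is wrong. The bands of $a_j$ and $a_{j+1}$ are not interleaved and both loops are positively oriented, so the two arcs in the disk that close up $a_j^{-1}a_{j+1}$ (or $a_ja_{j+1}^{-1}$) must cross. It is a figure-eight, not a simple element, so \Cref{def-injection_igamma} does not apply to it and it is not $c_j$. The paper uses $c_j=a_j\,b_{j+1}a_{j+1}^{-1}b_{j+1}^{-1}$, with normalization $N(c_j)=2$.

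The conjugation by $b_{j+1}$ determines the shape of the answer. An $\mathfrak{i}_{a_k}$-factor acts on the $k$-th tensor factor through $\Phi_{0,1}(\cdots)\triangleright$. A product of $\mathfrak{i}_{a_j^{\pm1}}$ and $\mathfrak{i}_{a_{j+1}}$ therefore puts the relevant element on the left of both $\psi_j$ and $\psi_{j+1}$, with only $R$-legs acting on the right. The right action $\psi_{j+1}\triangleleft v^{-1}_{(2)}$ in the statement comes from the two passages of $c_j$ through the handle $b_{j+1}$. So the intermediate form $S(x_{(1)})\triangleright\psi_j\otimes\psi_{j+1}\triangleleft x_{(2)}$ you announce is not supported by the computation you set up.

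Second, the two points you leave ``to verify carefully'' are the whole proof, and they are not carried out. In \Cref{prop-formule_ci} one labels the diagram of $\mathfrak{i}_{c_j}(\lambda^v)$ acting on $\psi_1\otimes\cdots\otimes\psi_{j+1}$. This yields a single evaluation of $\lambda^v$ on a product of $R$-legs. All but four of these $R$-matrices cancel in pairs by moving legs cyclically inside $\lambda^v$, using $\lambda^v(xy)=\lambda^v(S^{-2}(y)x)$ and $(S\otimes\id_H)(R)=R^{-1}$. This is how the legs on $\psi_1,\dots,\psi_{j-1}$ disappear. It is not the one-line argument you used for $a_j$ and $b_j$, because the pieces $\lambda^v_{(i)}$ entering the product are not themselves invariant.

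The four remaining $R$-matrices are merged with $(\Delta\otimes\id_H)(R)=R_{13}R_{23}$ and $(\id_H\otimes\Delta)(R)=R_{13}R_{12}$ into
\[
\lambda^v(r_br^y)\,(r^b_{(2)}{r_y}_{(2)}\triangleright\psi_j)\otimes\bigl(\psi_{j+1}\triangleleft S^{-1}(r^b_{(1)}{r_y}_{(1)})\bigr).
\]
Only at this point does $\Delta\bigl(\Phi_{0,1}(\lambda^v)\bigr)=\Delta(v^{-1})$ appear. Then $(S\otimes S)\circ\Delta^{op}=\Delta\circ S$ and $S(v)=v$ finish the proof. You need to redo the $c_j$ case with the correct word and normalization and actually perform this reduction.
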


\begin{proof}
    See \cite[Prop. 5.22]{faitg_derived_2026} for the actions of $\hat{\tau}_{a_j}$ and $\hat{\tau}_{b_j}$. 
	The action of $\hat{\tau}_{c_j}$ is computed in \Cref{ann-action_ci} using the diagrammatic computation described in \Cref{ann-description_diagrammatique_Lgn}.
\end{proof}

\subsection{The projective representation \texorpdfstring{$\rho_{g,n}$}{rhogn} of \texorpdfstring{$\Gamma_{g,n}$}{Gammagn}}\label{subsec-representation_projective_rhogn}

The goal of this section is to extend the results of \Cref{subsec-representation_projective_rhogn1} to surfaces without boundary component $\Sigma_{g,n}$.

\medskip

We will need the based curve $\partial_{g,n} \subset \Sigma_{g,n}^1$ parallel to the boundary component, represented in the following figure: 
\begin{figure}[H]
    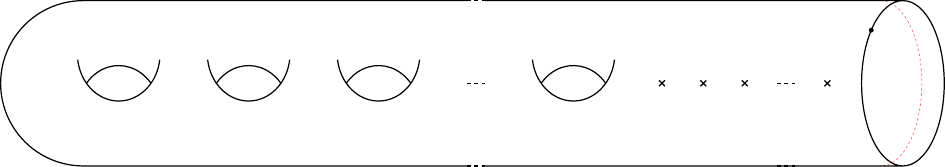
    \caption{Curve $\partial_{g,n}$ of $\Sigma_{g,n}^1$.}
    \label{fig-courbe_bord_sigma}
\end{figure}
\noindent
The capping morphism (\cite[Prop. 3.19]{farb_primer_2012}) and the Birman exact sequence (\cite[Thm. 4.6]{farb_primer_2012}) imply that $\Gamma_{g,n}$ is a quotient of $\Gamma_{g,n}^1$. 
More precisely, we have the following composition of morphisms: 
\begin{equation}\label{eq-application_Gammagn1_vers_Gammagn}
    \Gamma_{g,n}^1 \xrightarrow{\mathrm{Cap}} \Gamma_{g,n+1} \xrightarrow{~\mathrm{Forget}~} \Gamma_{g,n},
\end{equation}
where $\mathrm{Push} : \pi_1(\Sigma_{g,n+1}) \to \Gamma_{g,n+1}$ is the map from the Birman exact sequence and: 
\begin{itemize}
	\item $\ker (\mathrm{Cap}) = \langle \tau_{\partial_{g,n}} \rangle$, 
    \item $\ker (\mathrm{Forget}) = \mathrm{Push}( \pi_1(\Sigma_{g,n+1}))$,
    \item $\mathrm{Cap}$ and $\mathrm{Forget}$ are surjective maps. 
\end{itemize}
Consequently, to obtain a projective representation of $\Gamma_{g,n}$ from the projective representation $\rho_{g,n}^1$ of $\Gamma_{g,n}^1$, it is necessary for $\tau_{\partial_{g,n}}$ and $\mathrm{Cap}^{-1}(\mathrm{Push}(\pi_1(\Sigma_{g,n+1}))$) to be in the kernel of $\rho_{g,n}^1$. 
This is not the case (see \Cref{rem-restriction_a_Inv_necessaire}), but we will see that a subalgebra $\mathcal{L}_{g, n}^{\mathrm{inv}}(H) \subset \mathcal{L}_{g, n}(H)$ has a subrepresentation $V_{g,n} \subset V_{g,n}^1$ which satisfies this property, and thus will allow us to construct a projective representation of $\Gamma_{g,n}$.

\begin{definition}
    The \emph{moduli algebra} $\mathcal{L}_{g, n}^{\mathrm{inv}}(H)$ is the $\mathbb{K}$-vector subspace of $\mathcal{L}_{g, n}(H)$ invariant under the right coadjoint action, that is: 
	\[ \mathcal{L}_{g, n}^{\mathrm{inv}}(H) := \bigl\{ x \in \mathcal{L}_{g, n}(H) ~ \vert ~ \forall h \in H, \coad^r(h)(x) = \varepsilon(h)x \bigr\}. \] 
\end{definition}
\noindent
Since $\mathcal{L}_{g, n}(H)$ is an $H$-module-algebra, $\mathcal{L}_{g, n}^{\mathrm{inv}}(H)$ is a subalgebra of $\mathcal{L}_{g, n}(H)$.

\smallskip
Recall that the element $\partial_{g,n} \in \pi_1(\Sigma_{g,n}^{1})$ represented in \Cref{fig-courbe_bord_sigma} induces a map $\mathfrak{i}_{\partial_{g,n}}$ (see \Cref{def-injection_igamma}).
The following map:
\begin{equation}\label{eq-definition_application_moment}
    \mu_{g,n} := \mathfrak{i}_{\partial_{g,n}} \circ \Phi_{0,1}^{-1} : H \longrightarrow \mathcal{L}_{g, n}(H)
\end{equation}
is a \emph{quantum moment map}, i.e. $\forall h \in H, \, \forall x \in \mathcal{L}_{g, n}(H), ~ \mu_{g,n}(h)x = \coad^r \bigl( S^{-1}(h_{(2)}) \bigr)(x)\mu(h_{(1)})$ (see \cite[Thm. 7.14]{baseilhac_noetherian_2025}).
This identity implies: 
\begin{equation}\label{eq-caracterisation_Lgninv_avec_application_moment}
	\mathcal{L}_{g, n}^{\mathrm{inv}}(H) = \bigl\{ x \in \mathcal{L}_{g, n}(H) ~ \vert ~ \forall h \in H, \mu_{g,n}(h)x = x\mu_{g,n}(h) \bigr\}.
\end{equation} 
Let us now define:
\begin{equation}\label{eq-representation_Lgninv}
	V_{g,n} := \bigl\{ v \in V_{g,n}^1 ~ \vert ~ \forall h \in H, \mathcal{R}_{g,n}^1 \bigl( \mu_{g,n}(h) \bigr) (v) = \varepsilon(h)v \bigr\}.
\end{equation} 
The identity \labelcref{eq-caracterisation_Lgninv_avec_application_moment} implies that $V_{g,n}$ is a $\mathcal{L}_{g, n}^{\mathrm{inv}}(H)$-module. 
We denote by $\mathcal{R}_{g,n}(V) : \mathcal{L}_{g, n}^{\mathrm{inv}}(H) \to \End_\mathbb{K}(V_{g,n})$ the associated representation.

\smallskip
\noindent
The following lemma is standard: 

\begin{lemme}\label{lem-f_chapeau_dans_Lgninv}
    For all $f \in \Gamma_{g,n}^1$, we have $\hat{f} \in \mathcal{L}_{g, n}^{\inv}(H)$.
\end{lemme}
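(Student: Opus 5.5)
The argument reduces the claim to Dehn twists along non-separating curves, and then further to twists along one fixed curve. First, $\mathcal{L}_{g,n}^{\mathrm{inv}}(H)$ is a subalgebra. Its inverses also stay inside: if $x \in \mathcal{L}_{g,n}^{\mathrm{inv}}(H)$ is invertible in $\mathcal{L}_{g,n}(H)$, then, since $\coad^r$ is a module-algebra action,
\[ \varepsilon(h)\,1 = \coad^r(h)(x x^{-1}) = x\, \coad^r(h)(x^{-1}), \]
using the Sweedler expansion together with $\coad^r(h_{(1)})(x) = \varepsilon(h_{(1)})x$. Hence $\coad^r(h)(x^{-1}) = \varepsilon(h)x^{-1}$. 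As in the proof of \Cref{cor-action_mapping_class_comme_conjugaison}, $\hat{f}$ is a product of elements $\hat{\tau}_\gamma^{\pm 1}$ with $\gamma$ non-separating, because $\Gamma_{g,n}^1$ is generated by such twists (\Cref{thm-presentation_de_Gervais}, or \labelcref{eq-presentation_Gamma11_et_Gamma1} in genus one). It therefore suffices to show $\hat{\tau}_\gamma = \mathfrak{i}_{\gamma'}(\lambda^v) \in \mathcal{L}_{g,n}^{\mathrm{inv}}(H)$.

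For the key step, let $\gamma'$ be positively oriented; this is allowed since $\hat{\tau}_\gamma$ does not depend on the orientation (\Cref{prop-action_twists_de_Dehn_non_separants_comme_conjugaison_et_proprietes}). By \Cref{prop-proprietes_igamma}, $\mathfrak{i}_{\gamma'}$ is a morphism of $H$-module-algebras, so in particular it is $H$-equivariant for the right coadjoint actions. This gives
\[ \coad^r(h)\bigl(\mathfrak{i}_{\gamma'}(\lambda^v)\bigr) = \mathfrak{i}_{\gamma'}\bigl(\coad^r(h)(\lambda^v)\bigr). \]
Since $\lambda^v \in \inv \coad^r$ (established in item (\ref{item-integrale_invariante_coad^r}) and the subsequent proof, using the centrality of $v$), the right-hand side equals $\varepsilon(h)\,\mathfrak{i}_{\gamma'}(\lambda^v)$. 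This proves the claim.

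The main point to check is whether the property ``morphism of $H$-modules'' in \Cref{prop-proprietes_igamma} really refers to equivariance for $\coad^r$ on both sides, as it does for the canonical embeddings \labelcref{eq-definition_injections_canoniques}. If that reading were in doubt, I would instead argue from \Cref{def-injection_igamma}. There $\mathfrak{i}_\gamma$ is built from iterated coproducts and the canonical embeddings $\mathfrak{i}_{\gamma_i}$ and $\mathfrak{i}_{\gamma_i^{-1}}$, and each of these is equivariant, because $S_{\mathcal{L}_{0,1}(H)}$ commutes with $\coad^r$. Invariance of the coproduct pieces then has to be handled via the module-algebra structure of the braided tensor product. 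I expect this fallback to be the only place where real computation might be needed.
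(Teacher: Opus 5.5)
Your argument is correct, but it is not the paper's route. The paper never reduces to generators. It uses the quantum moment map $\mu_{g,n}=\mathfrak{i}_{\partial_{g,n}}\circ\Phi_{0,1}^{-1}$ and the characterization of $\mathcal{L}_{g,n}^{\mathrm{inv}}(H)$ as the centralizer of $\mu_{g,n}(H)$. Since $f$ fixes the boundary curve $\partial_{g,n}$, one gets $\hat{f}\,\mu_{g,n}(h)=\tilde{f}\bigl(\mu_{g,n}(h)\bigr)\hat{f}=\mathfrak{i}_{f(\partial_{g,n})}\bigl(\Phi_{0,1}^{-1}(h)\bigr)\hat{f}=\mu_{g,n}(h)\,\hat{f}$, and invariance follows at once.

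That argument is global and applies to \emph{any} element implementing $\tilde f$ by conjugation. In particular it absorbs the ambiguity of $\hat f$ up to a central unit, because the same characterization gives $Z(\mathcal{L}_{g,n}(H))\subset\mathcal{L}_{g,n}^{\mathrm{inv}}(H)$. Its cost is the nontrivial moment-map identity imported from \cite{baseilhac_noetherian_2025}.

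Your argument is local and more elementary. It only needs three things: $\mathfrak{i}_{\gamma'}$ is $\coad^r$-equivariant, $\lambda^v\in\inv\coad^r$, and invariants are closed under products and inverses (your inverse argument is fine). It directly yields the statement actually used later, namely $\hat\tau_\gamma\in\mathcal{L}_{g,n}^{\mathrm{inv}}(H)$. Its scope is $\hat f$ taken as a word in the $\hat\tau_\gamma^{\pm1}$, which is how $\hat f$ is defined in the corollary preceding the lemma. If you want every element implementing $\tilde f$, you additionally need that central elements are invariant, and your method does not supply this.

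Your worry about how to read \Cref{prop-proprietes_igamma} is unnecessary. The $H$-module structure there is $\coad^r$ on both sides, exactly as for the canonical embeddings. Part (1b) gives equivariance for negatively oriented curves as well, so you need not fix the orientation. Even the fallback is short: it reduces to $\Delta_{H^*}$ being $\coad^r$-equivariant, which is the identity $\varphi\bigl(h_{(1)}xS(h_{(2)})h_{(3)}yS(h_{(4)})\bigr)=\varphi\bigl(h_{(1)}xyS(h_{(2)})\bigr)$, plus centrality of $v$ for the normalization factor.
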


\begin{proof}
    First, note that the element $\hat{f}$ is defined up to a central and invertible element of $\mathcal{L}_{g, n}(H)$. 
	However, \labelcref{eq-caracterisation_Lgninv_avec_application_moment} implies that $Z(\mathcal{L}_{g, n}(H)) \subset \mathcal{L}_{g, n}^{\mathrm{inv}}(H)$. 
	Consequently, if $\hat{f} \in \mathcal{L}_{g, n}^{\inv}(H)$, then for all $z \in Z (\mathcal{L}_{g, n}(H))$ we have $\hat{f}z \in \mathcal{L}_{g, n}^{\mathrm{inv}}(H)$ since $\mathcal{L}_{g, n}^{\mathrm{inv}}(H)$ is a subalgebra of $\mathcal{L}_{g, n}(H)$. \\
	Let us now prove the lemma. 
	We have:
    \begin{equation*}
		\hat{f} \mu_{g,n}(h) = \tilde{f} \bigl( \mu_{g,n}(h) \bigr) \hat{f} = \tilde{f} \bigl( (\mathfrak{i}_{\partial_{g,n}} \circ \Phi_{0,1}^{-1})(h) \bigr) \hat{f} = \mathfrak{i}_{f(\partial_{g,n})} \bigl( \Phi_{0,1}^{-1}(h) \bigr) \hat{f} = \mathfrak{i}_{\partial_{g,n}} \bigl( \Phi_{0,1}^{-1}(h) \bigr) \hat{f} = \mu_{g,n}(h) \hat{f}.
	\end{equation*}
	By \labelcref{eq-caracterisation_Lgninv_avec_application_moment}, the conclusion follows.
\end{proof}

Denote by $\rho_{g,n} : \Gamma_{g,n}^1 \to \PGL(V_{g,n})$ the restriction of $\rho_{g,n}^1$, and by $\hat{\rho}_{g,n}: F_\tau \to \GL(V_{g,n})$ the restriction of $\hat{\rho}_{g,n}^1$. 
\begin{proposition}\label{prop-twist_le_long_du_bord_et_push_dans_noyau}
	For all $g \geq$ 2 and $n \geq 0$, the elements $\tau_{\partial_{g,n}}$ and $\mathrm{Cap}^{-1}(\mathrm{Push}(\pi_1(\Sigma_{g,n+1})))$ are in the kernel of $\rho_{g,n}$.
\end{proposition}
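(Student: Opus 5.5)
The plan is to exploit the fact that the kernel of $\mathrm{Forget}\circ\mathrm{Cap}:\Gamma_{g,n}^1\to\Gamma_{g,n}$ is generated, as a normal subgroup, by $\tau_{\partial_{g,n}}$ together with lifts of point-pushing maps. It therefore suffices to show that $\rho_{g,n}$ is projectively trivial on $\tau_{\partial_{g,n}}$ and on a generating set of $\mathrm{Cap}^{-1}(\mathrm{Push}(\pi_1(\Sigma_{g,n+1})))$. Recall that a push along a simple loop $\delta$ based at the capped point lifts to $\tau_{\delta_1}\tau_{\delta_2}^{-1}$, where $\delta_1,\delta_2$ are the boundary curves of a regular neighbourhood of $\delta\cup\partial$. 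The kernel is thus generated by $\tau_{\partial_{g,n}}$ and products $\tau_{\delta_1}\tau_{\delta_2}^{-1}$, and it is enough to compute the images of these specific elements in $\mathcal{L}_{g,n}^{\mathrm{inv}}(H)$ acting on $V_{g,n}$.

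For $\tau_{\partial_{g,n}}$, the key step is to show that a lift of it acts on $\mathcal{L}_{g,n}(H)$ by conjugation by $\mu_{g,n}$ of a central element of $H$. Concretely, I would aim to show that $\tilde\tau_{\partial_{g,n}}(x)=\mu_{g,n}(v^{-1})\,x\,\mu_{g,n}(v^{-1})^{-1}$, up to replacing $v^{-1}$ by $v$ or using the expression $\mathfrak{i}_{\partial_{g,n}}(\lambda^v)$, exactly as in \Cref{prop-action_twists_de_Dehn_non_separants_comme_conjugaison_et_proprietes}. This is natural since twisting along a boundary-parallel curve acts on based loops by conjugation by $\partial_{g,n}$. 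The quantum moment map identity then gives $\mu_{g,n}(h)x\mu_{g,n}(h)^{-1}=\coad^r(\cdot)(x)$, which is consistent. On the other hand, $\hat\tau_{\partial_{g,n}}$ (the product of non-separating twists representing it via, for example, the chain relation) differs from $\mu_{g,n}(v^{\pm1})$ by a central element of $\mathcal{L}_{g,n}(H)$ by \Cref{cor-action_mapping_class_comme_conjugaison}. On $V_{g,n}$, by definition \labelcref{eq-representation_Lgninv}, $\mu_{g,n}(h)$ acts by $\varepsilon(h)$, so $\mu_{g,n}(v^{\pm1})$ acts by $\varepsilon(v^{\pm1})=1$. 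The central element acts by a scalar on the irreducible $V_{g,n}^1$ by Schur's lemma, and hence also on $V_{g,n}$. Thus $\rho_{g,n}(\tau_{\partial_{g,n}})$ is trivial in $\PGL(V_{g,n})$.

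For the push elements, I would use the same idea. On based loops of $\Sigma_{g,n}^1$, the automorphism $\widetilde{\tau_{\delta_1}\tau_{\delta_2}^{-1}}$ coincides with conjugation by an element of the form $\mathfrak{i}_{\partial}$-type combined with $\mathfrak{i}_\delta$. More robustly, I would show that its action differs from the identity on $\mathcal{L}_{g,n}^{\mathrm{inv}}(H)$ only by a coadjoint twist implementable by $\mu_{g,n}(h)$ for suitable $h$. To do so I would compute, via \Cref{prop-formules_ai_bi_ci} and the induced representation, the action on $V_{g,n}$ for the standard generators of $\pi_1(\Sigma_{g,n+1})$ (pushes along $a_j,b_j,m_{g+k}$), and then extend to all of $\pi_1$ by the change-of-coordinates principle and the conjugation formula, as in the proof of \Cref{prop-action_twists_de_Dehn_non_separants_comme_conjugaison_et_proprietes}. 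The element obtained should commute with $\mathcal{L}_{g,n}^{\mathrm{inv}}(H)$ on $V_{g,n}$. Since $V_{g,n}$ should be irreducible for $\mathcal{L}_{g,n}^{\mathrm{inv}}(H)$ (a double-centralizer argument, given that $\mathcal{L}_{g,n}(H)\simeq \End\otimes H^{\otimes n}$ and $\mu_{g,n}$ generates the commutant of the invariants), Schur's lemma yields a scalar.

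The main obstacle will be the push elements. Identifying the conjugating element for $\tau_{\delta_1}\tau_{\delta_2}^{-1}$ explicitly, and checking that it lies in $\mu_{g,n}(H)\cdot Z$ or commutes with the invariants on $V_{g,n}$, requires either a diagrammatic computation or the irreducibility of $V_{g,n}$ as an $\mathcal{L}^{\mathrm{inv}}_{g,n}(H)$-module. I would try the irreducibility and double-centralizer route first, because it avoids explicit computation.
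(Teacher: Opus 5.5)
\textbf{The gap is in the push part.} You never exhibit an element implementing $\tau_{\delta_1}\tau_{\delta_2}^{-1}$ that visibly acts by a scalar on $V_{g,n}$. Your fallback, irreducibility of $V_{g,n}$ over $\mathcal{L}_{g,n}^{\mathrm{inv}}(H)$ via a double-centralizer argument, is not available, because $H$ is in general not semisimple (small quantum groups are the motivating examples). For $n=0$, $\mathcal{L}_{g,0}^{\mathrm{inv}}(H)$ is $\End_H(V_{g,0}^1)$ with $H$ acting through $\mu_{g,0}$, and $V_{g,0}=\mathrm{Hom}_H(\mathbb{K},V_{g,0}^1)$. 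Such a space need not be simple over $\End_H(V_{g,0}^1)$: if $V_{g,0}^1$ contains both a trivial summand and the non-simple projective cover $P$ of $\mathbb{K}$, the maps $\mathbb{K}\to\mathrm{soc}(P)$ span a proper submodule. Even granting irreducibility, you would still have to prove that $\hat\tau_{\delta_1}\hat\tau_{\delta_2}^{-1}$ commutes with $\mathcal{L}_{g,n}^{\mathrm{inv}}(H)$ on $V_{g,n}$, which is essentially the claim itself.

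The missing idea is short. As based loops, for a suitable labelling, ${\delta_1}_-={\delta_2}_+\partial_{g,n}^{-1}$. Hence $\mathfrak{i}_{{\delta_1}_-}(\varphi)=\mathfrak{i}_{{\delta_2}_+}(\varphi_{(1)})\,\mathfrak{i}_{\partial_{g,n}^{-1}}(\varphi_{(2)})$. Moreover $\mathfrak{i}_{\partial_{g,n}^{-1}}(\psi)=\mu_{g,n}\bigl(\Phi_{0,1}(S_{\mathcal{L}_{0,1}(H)}(\psi))\bigr)$ acts on $V_{g,n}$ by the scalar $\psi(1_H)$. Taking $\varphi=\lambda^v$ (orientation is irrelevant since $S_{\mathcal{L}_{0,1}(H)}(\lambda^v)=\lambda^v$) gives $\hat\tau_{\delta_1}=\hat\tau_{\delta_2}$ on $V_{g,n}$. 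So every lift $\tau_{\delta_1}\tau_{\delta_2}^{-1}$ of a push acts as the identity on $V_{g,n}$, with no irreducibility needed.

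\textbf{The $\tau_{\partial_{g,n}}$ part takes a different route from the paper and is sound \emph{provided} $\tilde\tau_{\partial_{g,n}}=\mathrm{Ad}\bigl(\mu_{g,n}(v^{-1})\bigr)$}, where $\mu_{g,n}(v^{-1})=\mathfrak{i}_{\partial_{g,n}}(\lambda^v)$. Given that formula, any word $\hat f$ in non-separating twists representing $\tau_{\partial_{g,n}}$ lies in $\mu_{g,n}(v^{-1})\,Z(\mathcal{L}_{g,n}(H))$. Schur's lemma on the irreducible $V_{g,n}^1$, together with $\varepsilon(v^{-1})=1$, then concludes. It would even show that $\hat\rho_{g,n}^1(\tau_{\partial_{g,n}})$ is a scalar multiple of $\mathcal{R}_{g,n}^1(V)\bigl(\mu_{g,n}(v^{-1})\bigr)$ on all of $V_{g,n}^1$. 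This is consistent with the paper's genus-one computation, where $(\tau_a\tau_b)^6$ acts by a scalar times $S_{\mathcal{L}_{0,1}(H)}^2=\coad^r(v^{-1})$.

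However, that formula is the whole difficulty, and you only state it as an aim. The change-of-coordinates argument used for non-separating curves is unavailable, since every mapping class fixes $\partial_{g,n}$. You would need a direct computation, via the moment-map identity, comparing $\mathrm{Ad}\bigl(\mu_{g,n}(v^{-1})\bigr)$ with $\mathfrak{i}_\gamma(\varphi)\mapsto\mathfrak{i}_{\tau_{\partial_{g,n}}(\gamma)}(\varphi)$ on generators.

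The paper avoids this by reusing the boundary-difference mechanism above. It takes a star relation $(\tau_{\alpha_3}\tau_{\alpha_2}\tau_{\alpha_1}\tau_\beta)^3=\tau_{\sigma_1}\tau_{\sigma_2}\tau_{\partial_{g,n}}$ in which $\alpha_3$ and $\alpha_2$ differ by $\partial_{g,n}$, so $\rho_{g,n}(\tau_{\alpha_3})=\rho_{g,n}(\tau_{\alpha_2})$. Braid relations turn $(\tau_{\alpha_2}^2\tau_{\alpha_1}\tau_\beta)^3$ into $(\tau_{\alpha_1}\tau_\beta\tau_{\alpha_2})^4$. A 3-chain relation rewrites this as $\tau_{\sigma_1}\tau_{\sigma_2'}$, where $\sigma_2'$ and $\sigma_2$ again differ by $\partial_{g,n}$, so $\rho_{g,n}(\tau_{\sigma_2'})=\rho_{g,n}(\tau_{\sigma_2})$.
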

\noindent
From \Cref{lem-f_chapeau_dans_Lgninv} and the discussion after \labelcref{eq-application_Gammagn1_vers_Gammagn} it follows:

\begin{corollaire}\label{cor-representation_projective_rhogn}
	For all $g \geq 2$ and $n \geq 0$, the representation $\rho_{g,n}^1 : \Gamma_{g,n}^1 \to \PGL(V_{g,n}^1)$ factors into a projective representation of $\Gamma_{g,n}$ on $V_{g,n}$, denoted by $\rho_{g,n}$.
    Therefore, we have the following commutative diagram: 
	\[ 
    \begin{tikzcd}
        F_\tau \arrow[r, "\hat{\rho}_{g,n}"] \arrow[d, "\pi"] & \GL(V_{g,n}) \arrow[d, "\pi"] \\
		\Gamma_{g,n} \arrow[r, "\rho_{g,n}"] & \PGL(V_{g,n})
    \end{tikzcd}
	\] 
\end{corollaire}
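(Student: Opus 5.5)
The plan is to deduce the corollary formally from \Cref{lem-f_chapeau_dans_Lgninv}, \Cref{prop-twist_le_long_du_bord_et_push_dans_noyau} and the surjection \labelcref{eq-application_Gammagn1_vers_Gammagn}; no new computation in $\mathcal{L}_{g,n}(H)$ should be needed.

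\textbf{Step 1: restriction to $V_{g,n}$.} For every generator $\tau_\gamma$ of $F_\tau$, \Cref{lem-f_chapeau_dans_Lgninv} gives $\hat{\tau}_\gamma \in \mathcal{L}_{g,n}^{\mathrm{inv}}(H)$. The same holds for $\hat{\tau}_\gamma^{-1}$, since the invertible elements of the subalgebra $\mathcal{L}_{g,n}^{\mathrm{inv}}(H)$ are closed under inversion: by \labelcref{eq-caracterisation_Lgninv_avec_application_moment}, if $x$ commutes with every $\mu_{g,n}(h)$, then so does $x^{-1}$. Since $V_{g,n}$ is an $\mathcal{L}_{g,n}^{\mathrm{inv}}(H)$-module by \labelcref{eq-representation_Lgninv}, both $\mathcal{R}_{g,n}^1(V)(\hat{\tau}_\gamma)$ and its inverse preserve $V_{g,n}$. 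Their restrictions are therefore mutually inverse elements of $\GL(V_{g,n})$, which defines $\hat{\rho}_{g,n} : F_\tau \to \GL(V_{g,n})$.

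\textbf{Step 2: relators of $\Gamma_{g,n}^1$.} By \labelcref{eq-image_de_relation_est_dans_centre_de_Lgn} and the irreducibility in \Cref{rem-isomorphisme_combinee_phign_et_irreductible}, each relator $r$ of the Gervais presentation of $\Gamma_{g,n}^1$ (\Cref{thm-presentation_de_Gervais}) acts on $V_{g,n}^1$ by a scalar $\alpha(r)\,\id_{V_{g,n}^1}$. Restricting to the invariant subspace $V_{g,n}$ gives the same scalar. Hence $\pi\circ\hat{\rho}_{g,n}$ factors through $\Gamma_{g,n}^1$, and we obtain a projective representation $\Gamma_{g,n}^1 \to \PGL(V_{g,n})$, namely the restricted $\rho_{g,n}$.

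\textbf{Step 3: passing to $\Gamma_{g,n}$.} The composite $\mathrm{Forget}\circ\mathrm{Cap} : \Gamma_{g,n}^1 \to \Gamma_{g,n}$ in \labelcref{eq-application_Gammagn1_vers_Gammagn} is surjective. Its kernel is $\mathrm{Cap}^{-1}(\ker \mathrm{Forget}) = \mathrm{Cap}^{-1}(\mathrm{Push}(\pi_1(\Sigma_{g,n+1})))$, which contains $\ker\mathrm{Cap}=\langle\tau_{\partial_{g,n}}\rangle$. \Cref{prop-twist_le_long_du_bord_et_push_dans_noyau} says exactly that this kernel lies in $\ker \rho_{g,n}$. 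By the universal property of the quotient, $\rho_{g,n}$ then descends to $\Gamma_{g,n}\to\PGL(V_{g,n})$. Composing $F_\tau\to\Gamma_{g,n}^1\to\Gamma_{g,n}$ yields the vertical map $\pi$ of the diagram, and commutativity is inherited from \Cref{def-representation_projective_rhogn1}. Here $F_\tau$ is read through the images of non-separating twists of $\Sigma^1_{g,n}$; these images are twists along non-separating curves in $\Sigma_{g,n}$.

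\textbf{Main obstacle.} All the real content sits in \Cref{prop-twist_le_long_du_bord_et_push_dans_noyau}, which is assumed here. Concretely, $\hat{\tau}_{\partial_{g,n}}=\mu_{g,n}(v^{-1})$ acts on $V_{g,n}$ by $\varepsilon(v^{-1})=1$. For the point-pushing elements, one uses that they act by conjugation by elements built from $\mu_{g,n}$, which are trivial on $V_{g,n}$.

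Within the deduction itself, the only point needing care is Step 1. Invertibility on the subspace must come from $\hat{\tau}_\gamma^{-1}$ also lying in $\mathcal{L}^{\mathrm{inv}}_{g,n}(H)$. Finite-dimensionality alone would not suffice unless we know the restriction is injective, which it is, but stability of the subspace under the inverse is what makes the restriction a group homomorphism.
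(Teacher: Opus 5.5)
Your deduction is correct and is the paper's own. The paper likewise restricts to $V_{g,n}$ via \Cref{lem-f_chapeau_dans_Lgninv} and then descends along $\mathrm{Forget}\circ\mathrm{Cap}$, whose kernel $\mathrm{Cap}^{-1}(\mathrm{Push}(\pi_1(\Sigma_{g,n+1})))$ is killed by \Cref{prop-twist_le_long_du_bord_et_push_dans_noyau}; your check that $\hat{\tau}_\gamma^{-1}$ also lies in $\mathcal{L}_{g,n}^{\mathrm{inv}}(H)$ is a useful detail the paper leaves implicit.

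Your heuristic aside on $\tau_{\partial_{g,n}}$ does not match the paper, though it plays no role in your argument. Since $\partial_{g,n}$ is separating, the paper never sets $\hat{\tau}_{\partial_{g,n}}=\mu_{g,n}(v^{-1})$. Instead it proves \Cref{prop-twist_le_long_du_bord_et_push_dans_noyau} via the star relation and the identity ${\alpha_2}_- = {\alpha_3}_+\partial_{g,n}^{-1}$, which forces $\rho_{g,n}(\tau_{\alpha_2})=\rho_{g,n}(\tau_{\alpha_3})$ on $V_{g,n}$.
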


\begin{proof}[Proof (\Cref{prop-twist_le_long_du_bord_et_push_dans_noyau})]
    Let us begin by showing that $\tau_{\partial_{g,n}}$ is in the kernel of $\rho_{g,n}$.
	We will use the star relation (see \cite[§5.2.3]{farb_primer_2012}) to express the Dehn twist $\tau_{\partial_{g,n}}$ as a product of Dehn twists along non-separating curves. 
	The star relation gives us $(\tau_{\alpha_3} \tau_{\alpha_2} \tau_{\alpha_1} \tau_\beta)^3 = \tau_{\delta_1} \tau_{\delta_2} \tau_{\partial_{g,n}}$ with the curves in the following figure.
	\begin{figure}[H]
        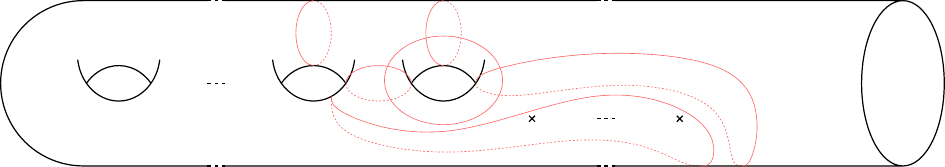
        \caption{Curves of the star relation}
        \label{fig-courbes_relation_etoile}
    \end{figure}
	\noindent
	Therefore, we have: 
    \[ \tau_{\partial_{g,n}} \in \ker \rho_{g,n} \iff \rho_{g,n} \bigl( (\tau_{\alpha_3} \tau_{\alpha_2} \tau_{\alpha_1} \tau_\beta)^3 \bigr) = \rho_{g,n}(\tau_{\delta_1} \tau_{\delta_2}). \]
	We will now prove the equality $\rho_{g,n} \bigl( (\tau_{\alpha_3} \tau_{\alpha_2} \tau_{\alpha_1} \tau_\beta)^3 \bigr) = \rho_{g,n}(\tau_{\delta_1} \tau_{\delta_2})$. 
	As based curves, we have ${\alpha_2}_- = {\alpha_3}_+ \partial_{g,n}^{-1}$, where ${\alpha_2}_- \in \pi_1(\Sigma_{g,n}^1)$ (resp. ${\alpha_3}_+ \in \pi_1(\Sigma_{g,n}^1)$) denotes a representative of $\alpha_2$ (resp. $\alpha_3$) that is negatively (resp. positively) oriented (\Cref{prop-action_twists_de_Dehn_non_separants_comme_conjugaison_et_proprietes} implies that the orientation does not matter since $S_{\mathcal{L}_{0, 1}(H)}(\lambda^v) = \lambda^v$).
	Therefore, for all $\varphi \in H^*$ we have (see \cite[Lem. 5.11]{faitg_derived_2026}):
	\[ \mathfrak{i}_{{\alpha_2}_-}(\varphi) = \mathfrak{i}_{{\alpha_3}_+}(\varphi_{(1)}) \mathfrak{i}_{\partial_{g,n}^{-1}} (\varphi_{(2)}). \]
	Consequently, for all $x \in V_{g,n}$, we have (where the representation $\mathcal{R}_{g,n}(V)$ is defined after \labelcref{eq-representation_Lgninv}):
	\begin{equation}\label{eq-egalite_action_de_courbes_qui_different_du_bord}
		\begin{aligned}
			\hat{\rho}_{g,n}(\tau_{\alpha_2})(x) = \mathcal{R}_{g,n}(V) (\hat{\tau}_{\alpha_2}) & (x) = \mathcal{R}_{g,n}(V) \bigl( \mathfrak{i}_{{\alpha_2}_-}(\lambda^v) \bigr)(x) \\ 
			& = \mathcal{R}_{g,n}(V) \bigl( \mathfrak{i}_{{\alpha_3}_+}(\lambda^v_{(1)}) \mathfrak{i}_{\partial_{g,n}^{-1}}(\lambda^v_{(2)}) \bigr) (x) \\ 
			& \overset{\text{\labelcref{prop-proprietes_igamma}}}{=} \mathcal{R}_{g,n}(V) \bigl( \mathfrak{i}_{{\alpha_3}_+}(\lambda^v_{(1)}) \bigr) \circ \mathcal{R}_{g,n}(V) \bigl( \mathfrak{i}_{\partial_{g,n}} \circ S_{\mathcal{L}_{0, 1}(H)} (\lambda^v_{(2)}) \bigr) (x) \\
			& = \mathcal{R}_{g,n}(V) \bigl( \mathfrak{i}_{{\alpha_3}_+}(\lambda^v_{(1)}) \bigr) \circ \mathcal{R}_{g,n}(V) \bigl( \mathfrak{i}_{\partial_{g,n}} \circ \Phi_{0,1}^{-1} \circ \Phi_{0,1} \circ S_{\mathcal{L}_{0, 1}(H)}(\lambda^v_{(2)}) \bigr) (x) \\
			& \overset{\text{\labelcref{eq-definition_application_moment}}}{=} \mathcal{R}_{g,n}(V) \bigl( \mathfrak{i}_{{\alpha_3}_+}(\lambda^v_{(1)}) \bigr) \circ \mathcal{R}_{g,n}(V) \bigl( \mu_{g,n} \circ \Phi_{0,1} \circ S_{\mathcal{L}_{0, 1}(H)} (\lambda^v_{(2)}) \bigr) (x) \\
			& = \mathcal{R}_{g,n}(V) \bigl( \mathfrak{i}_{{\alpha_3}_+}(\lambda^v_{(1)}) \bigr) \bigl( \varepsilon \circ \Phi_{0,1} \circ S_{\mathcal{L}_{0, 1}(H)}(\lambda^v_{(2)}) x \bigr) \\
			& \overset{\text{\labelcref{eq-representation_Lgninv}}}{=} \mathcal{R}_{g,n}(V) \bigl( \mathfrak{i}_{{\alpha_3}_+}(\lambda^v_{(1)}) \bigr) \bigl( \varepsilon_{H^*} \circ S_{\mathcal{L}_{0, 1}(H)}(\lambda^v_{(2)}) \, x \bigr) \\
			& = \mathcal{R}_{g,n}(V) \bigl( \mathfrak{i}_{{\alpha_3}_+}(\lambda^v_{(1)}) \bigr) \bigl( \varepsilon_{H^*}(\lambda^v_{(2)}) \, x \bigr) \\
			& = \mathcal{R}_{g,n}(V) \bigl( \mathfrak{i}_{{\alpha_3}_+}(\lambda^v) \bigr) (x) = \mathcal{R}_{g,n}(V) (\hat{\tau}_{\alpha_3})(x) = \hat{\rho}_{g,n} (\tau_{\alpha_3})(x). 
		\end{aligned}
	\end{equation}
	Thus we obtain: 
	\[ \rho_{g,n} \bigl( (\tau_{\alpha_3} \tau_{\alpha_2} \tau_{\alpha_1} \tau_\beta)^3 \bigr) = \rho_{g,n} \bigl( (\tau_{\alpha_2}^2 \tau_{\alpha_1} \tau_\beta)^3 \bigr). \]
	We know that all 0-braid and 1-braid relations in $\Gamma_{g,n}^1$ are in the kernel of $\rho_{g,n}^1$, so they are also in the kernel of $\rho_{g,n}$ since it is a restriction of $\rho_{g,n}^1$.
    Therefore, we compute: 
		\begin{align*}
			(\tau_{\alpha_2}^2 \tau_{\alpha_1} \tau_\beta)^3 & = \tau_{\alpha_2} \underline{\tau_{\alpha_2} \tau_{\alpha_1}} \tau_\beta \tau_{\alpha_2} \underline{\tau_{\alpha_2} \tau_{\alpha_1}} \tau_\beta \tau_{\alpha_2} \tau_{\alpha_2} \tau_{\alpha_1} \tau_\beta \\ 
			& = \tau_{\alpha_2} \tau_{\alpha_1} \underline{\tau_{\alpha_2} \tau_\beta \tau_{\alpha_2}} \tau_{\alpha_1} \underline{\tau_{\alpha_2} \tau_\beta \tau_{\alpha_2}} \tau_{\alpha_2} \tau_{\alpha_1} \tau_\beta \\ 
			& = \underline{\tau_{\alpha_2} \tau_{\alpha_1}} \tau_\beta \tau_{\alpha_2} \tau_\beta \tau_{\alpha_1} \tau_\beta \tau_{\alpha_2} \tau_\beta \tau_{\alpha_2} \tau_{\alpha_1} \tau_\beta \\ 
			& = \tau_{\alpha_1} \underline{\tau_{\alpha_2} \tau_\beta \tau_{\alpha_2}} \, \underline{\tau_\beta \tau_{\alpha_1} \tau_\beta} \tau_{\alpha_2} \tau_\beta \tau_{\alpha_2} \tau_{\alpha_1} \tau_\beta \\ 
			& = \tau_{\alpha_1} \tau_\beta \tau_{\alpha_2} \underline{\tau_\beta \tau_{\alpha_1} \tau_\beta} \tau_{\alpha_1} \tau_{\alpha_2} \tau_\beta \tau_{\alpha_2} \tau_{\alpha_1} \tau_\beta \\ 
			& = \tau_{\alpha_1} \tau_\beta \tau_{\alpha_2} \tau_{\alpha_1} \tau_\beta \tau_{\alpha_1} \underline{\tau_{\alpha_1} \tau_{\alpha_2}} \tau_\beta \underline{\tau_{\alpha_2} \tau_{\alpha_1}} \tau_\beta \\ 
			& = \tau_{\alpha_1} \tau_\beta \tau_{\alpha_2} \tau_{\alpha_1} \tau_\beta \underline{\tau_{\alpha_1} \tau_{\alpha_2}} \, \underline{\tau_{\alpha_1} \tau_\beta \tau_{\alpha_1}} \tau_{\alpha_2} \tau_\beta \\ 
			& = \tau_{\alpha_1} \tau_\beta \tau_{\alpha_2} \tau_{\alpha_1} \tau_\beta \tau_{\alpha_2} \tau_{\alpha_1} \tau_\beta \tau_{\alpha_1} \underline{\tau_\beta \tau_{\alpha_2} \tau_\beta} \\ 
			& = \tau_{\alpha_1} \tau_\beta \tau_{\alpha_2} \tau_{\alpha_1} \tau_\beta \tau_{\alpha_2} \tau_{\alpha_1} \tau_\beta \underline{\tau_{\alpha_1} \tau_{\alpha_2}} \tau_\beta \tau_{\alpha_2} \\ 
			& = \tau_{\alpha_1} \tau_\beta \tau_{\alpha_2} \tau_{\alpha_1} \tau_\beta \tau_{\alpha_2} \tau_{\alpha_1} \tau_\beta \tau_{\alpha_2} \tau_{\alpha_1} \tau_\beta \tau_{\alpha_2} = (\tau_{\alpha_1} \tau_\beta \tau_{\alpha_2})^4.
		\end{align*}
	We obtain: 
    \[ \rho_{g,n} \bigl( (\tau_{\alpha_2}^2 \tau_{\alpha_1} \tau_\beta)^3 \bigr) = \rho_{g,n} \bigl( (\tau_{\alpha_1} \tau_\beta \tau_{\alpha_2})^4 \bigr). \]
	Furthermore, in $\Gamma_{g,n}^1$ we also have the 3-chain relation $(\tau_{\alpha_1} \tau_\beta \tau_{\alpha_2})^4 = \tau_{\delta_1} \tau_{\gamma}$ where $\gamma$ is the curve represented in the following figure.
    We denote by $r_{c_0}$ its associated relator.
	\begin{center}
        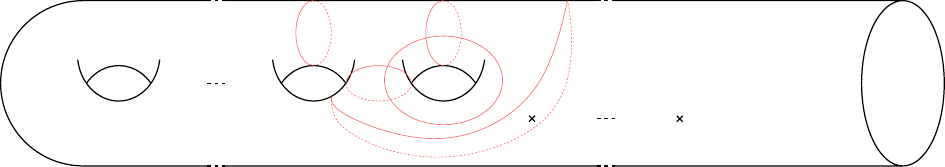
    \end{center}
    Since $r_{c_0}$ is in the kernel of $\rho_{g,n}^1$, it is also in the kernel of $\rho_{g,n}$ because it is a restriction of $\rho_{g,n}^1$. 
	Furthermore, ${\delta_2}_+\partial_{g,n}^{-1} = {\gamma}_-$, where $\delta_2$ is the curve represented in \Cref{fig-courbes_relation_etoile}. 
	Thus, by the same computations as for \labelcref{eq-egalite_action_de_courbes_qui_different_du_bord} we have $\rho_{g,n}(\tau_{\gamma}) = \rho_{g,n}(\tau_{\delta_2})$, and all the previous computations imply: 
    \begin{equation*}
		\rho_{g,n} \bigl( (\tau_{\alpha_3} \tau_{\alpha_2} \tau_{\alpha_1} \tau_\beta)^3 \bigr) = \rho_{g,n} \bigl( (\tau_{\alpha_2}^2 \tau_{\alpha_1} \tau_\beta)^3 \bigr) = \rho_{g,n} \bigl( (\tau_{\alpha_1} \tau_\beta \tau_{\alpha_2})^4 \bigr) = \rho_{g,n}(\tau_{\delta_1} \tau_{\gamma}) = \rho_{g,n}(\tau_{\delta_1} \tau_{\delta_2}),
    \end{equation*}
	hence the result. 

	\smallskip
	
    Let us now show that $\mathrm{Cap}^{-1}(\mathrm{Push}(\pi_1(\Sigma_{g,n+1})))$ is contained in the kernel of $\rho_{g,n}$. 
	Recall that $\pi_1(\Sigma_{g,n+1})$ is generated by homotopy classes of non-separating simple closed curves of $\Sigma_{g,n+1}$. 
	Therefore, it suffices to verify that $\mathrm{Cap}^{-1}(\mathrm{Push}(\gamma)) \in \ker \rho_{g,n}$ for all non-separating simple closed curves $\gamma \in \pi_1(\Sigma_{g,n+1})$. 
	Let $\gamma \in \pi_1(\Sigma_{g,n+1})$ be a non-separating simple curve. 
	We have $\mathrm{Push}(\gamma) = \tau_{\gamma_1} \tau_{\gamma_2}^{-1}$, where $\gamma_1$ and $\gamma_2$ are the boundary components of a tubular neighborhood of $\gamma$ (see \cite[§4.2.2]{farb_primer_2012}). 
	Furthermore, $\mathrm{Cap}^{-1}(\tau_{\gamma_1} \tau_{\gamma_2}^{-1}) = \tau_{\gamma_1} \tau_{\gamma_2}^{-1} \in \Gamma_{g,n}^1$ by the definition of the capping morphism.
	Also, ${\gamma_1}_- = {\gamma_2}_+ \partial_{g,n}^{-1}$, so by the same computations as for \labelcref{eq-egalite_action_de_courbes_qui_different_du_bord} we obtain $\rho_{g,n}(\tau_{\gamma_1}) = \rho_{g,n}(\tau_{\gamma_2})$.
    Consequently, we have $\mathrm{Cap}^{-1}(\mathrm{Push}(\gamma)) \in \ker \rho_{g,n}$. 
\end{proof}

\begin{remarque}
    For all $g \geq 2$ and $n = 0$, the Wajnryb presentation (see \cite[Thm. 5.3]{farb_primer_2012}) implies that $\Gamma_g$ is the quotient of $\Gamma_g^1$ by the hyper-elliptic relation.
	We could have used this relation to prove the result in this specific case, as it was done in \cite{faitg_derived_2026}.
    The advantage of our method is that it covers all cases where $n \geq 0$. 
\end{remarque}

We now need to consider the case where $g = 1$ and $n = 0$. 
Recall that $\Gamma_{1}$ is the quotient of $\Gamma_{1}^1$ by the relator $(\tau_a \tau_b)^6$. 
Note that in the group $\Gamma_{1}^1$ the word $(\tau_a \tau_b)^6$ represents the Dehn twist $\tau_{\partial_{1,0}}$.

\begin{lemme}\label{lem-ab_puissance6_dans_noyau}
    The element $(\tau_a \tau_b)^6$ is in the kernel of $\rho_{1,0}$.
    Consequently, the map $\rho_{1,0}$ factors into a projective representation of $\Gamma_1$ and satisfies the following commutative diagram: 
	\[ 
    \begin{tikzcd}
        F_\tau \arrow[r, "\hat{\rho}_{1,0}"] \arrow[d, "\pi"] & \GL (V_{1,0}) \arrow[d, "\pi"] \\
		\Gamma_1 \arrow[r, "\rho_{1,0}"]  & \PGL (V_{1,0})
    \end{tikzcd}
    \] 
\end{lemme}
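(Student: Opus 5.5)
The plan is to identify the element $\hat{r} := (\hat{\tau}_a \hat{\tau}_b)^6 \in \mathcal{L}_{1,0}(H)$ with a scalar multiple of $\mu_{1,0}(v^{-1}) = \mathfrak{i}_{\partial_{1,0}}(\lambda^v)$. The latter acts on $V_{1,0}$ by $\varepsilon(v^{-1}) = 1$, by the very definition \labelcref{eq-representation_Lgninv} of $V_{1,0}$. This replaces the star relation argument of \Cref{prop-twist_le_long_du_bord_et_push_dans_noyau}, which needs $g \geq 2$, by a direct use of the fact that in $\Gamma_1^1$ the word $(\tau_a\tau_b)^6$ equals $\tau_{\partial_{1,0}}$.

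\textbf{Step 1.} Since $\tau_{\partial_{1,0}}$ is supported in a collar of the boundary, which contains the base point, its action on based loops is conjugation by the boundary loop. Up to orientation conventions, $\tau_{\partial_{1,0}}(\gamma) = \partial_{1,0}\,\gamma\,\partial_{1,0}^{-1}$ for every $\gamma \in \pi_1(\Sigma_{1,0}^1)$. By \labelcref{eq-definition_action_Gammagn1_sur_Lgn}, $\tilde{\tau}_{\partial_{1,0}}$ therefore sends $\mathfrak{i}_{a}(\varphi)$ and $\mathfrak{i}_{b}(\varphi)$ to $\mathfrak{i}_{\partial a \partial^{-1}}(\varphi)$ and $\mathfrak{i}_{\partial b \partial^{-1}}(\varphi)$.

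\textbf{Step 2.} Show that this automorphism is conjugation by $\mathfrak{i}_{\partial_{1,0}}(\lambda^v) = \mu_{1,0}(v^{-1})$. For this I would use the quantum moment map identity $\mu_{1,0}(h)x = \coad^r(S^{-1}(h_{(2)}))(x)\,\mu_{1,0}(h_{(1)})$ with $h = v^{-1}$, together with $\Delta(v^{-1}) = (v^{-1}\otimes v^{-1})R_{21}R$ from \labelcref{eq-propriete_v}. This gives
\[ \mu_{1,0}(v^{-1})\,x\,\mu_{1,0}(v^{-1})^{-1} = \coad^r\bigl(S^{-1}(v^{-1}r^i r_j)\bigr)(x)\,\mu_{1,0}(v^{-1}\,r_i r^j)\,\mu_{1,0}(v^{-1})^{-1}. \]
The task is then to recognize the right-hand side, for $x = \mathfrak{i}_a(\varphi)$ and $x = \mathfrak{i}_b(\varphi)$, as $\mathfrak{i}_{\partial a\partial^{-1}}(\varphi)$, respectively $\mathfrak{i}_{\partial b\partial^{-1}}(\varphi)$, using \Cref{def-injection_igamma} and the normalization factor $v^{N(\gamma)}$.

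\textbf{Step 3.} Conclude. By \Cref{cor-action_mapping_class_comme_conjugaison}, $\hat{r}$ also implements $\tilde{\tau}_{\partial_{1,0}}$, so $\hat{r}\,\mu_{1,0}(v^{-1})^{-1}$ is central in $\mathcal{L}_{1,0}(H)$. That center is $\mathbb{K}$ by \Cref{rem-ordre_tau_chapeau_gamma}, so $\hat{r} = c\,\mu_{1,0}(v^{-1})$ for some $c \in \mathbb{K}^\times$. On $V_{1,0}$ this gives $\hat{\rho}_{1,0}((\tau_a\tau_b)^6) = c\,\varepsilon(v^{-1})\id = c\,\id$, so the relator lies in the kernel of $\rho_{1,0}$. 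The factorization through $\Gamma_1 = \Gamma_1^1/\langle (\tau_a\tau_b)^6\rangle$ then follows from \labelcref{eq-presentation_Gamma11_et_Gamma1}. That $\hat{\rho}_{1,0}$ preserves $V_{1,0}$ is given by \Cref{lem-f_chapeau_dans_Lgninv}.

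\textbf{Main obstacle.} I expect Step 2 to be the hard part: matching the $R$-matrix factors and the ribbon normalization in the conjugation formula with the knot maps of the conjugated curves. I would first try to avoid the algebra by using the diagrammatic calculus of \Cref{ann-description_diagrammatique_Lgn}, where conjugation by a boundary-parallel loop should be an isotopy. Failing that, I would use the isomorphism with the stated skein algebra (\Cref{thm-isomorphisme_avec_stated_skein}), where $\tau_{\partial}$ is visibly implemented by the boundary-parallel skein.
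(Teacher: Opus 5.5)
\textbf{Gap in Step 2.} Your reduction works as far as it goes. If $\tilde{\tau}_{\partial_{1,0}}$ is conjugation by $\mathfrak{i}_{\partial_{1,0}}(\lambda^v)=\mu_{1,0}(v^{-1})$, then Step 3 goes through as you say: $(\tau_a\tau_b)^6=\tau_{\partial_{1,0}}$ in $\Gamma_1^1$, $Z(\mathcal{L}_{1,0}(H))=\mathbb{K}$, and $\mu_{1,0}(v^{-1})$ acts on $V_{1,0}$ by $\varepsilon(v^{-1})=1$. The problem is that Step 2 carries the whole content of the lemma, and you do not prove it; you only describe what would have to be matched.

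Nothing in the paper supplies it for free:
\begin{itemize}
\item \Cref{prop-action_twists_de_Dehn_non_separants_comme_conjugaison_et_proprietes} is a direct computation for $a_1$, transported by the change of coordinates principle. That transport is unavailable here, because $\partial_{1,0}$ is separating and no mapping class sends $a_1$ to it.
\item The moment map identity only tells you that $\mu_{1,0}$ implements the gauge action $\coad^r$. After conjugating you are left with $\coad^r\bigl(S^{-1}(v^{-1}r_ir^j)\bigr)(x)\,\mu_{1,0}(v^{-1}r^ir_j)\,\mu_{1,0}(v^{-1})^{-1}$. (Your display swaps the two legs of $R_{21}R=\sum r^ir_j\otimes r_ir^j$.)
\item To identify this with $\mathfrak{i}_{\partial a\partial^{-1}}(\varphi)$ and $\mathfrak{i}_{\partial b\partial^{-1}}(\varphi)$, you must expand these knot maps via \Cref{def-injection_igamma}, compute the normalizations $N(\partial a\partial^{-1})$ and $N(\partial b\partial^{-1})$, and carry out a genuine $R$-matrix and integral computation.
\end{itemize}
Naming the diagrammatic calculus or the stated skein isomorphism as fallbacks does not supply this. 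In either setting, moving $x$ through the boundary-parallel element is not a plain isotopy; as in the non-separating case, it rests on the integral property of $\lambda$.

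\textbf{How the paper avoids this.} The paper never identifies $\tilde{\tau}_{\partial_{1,0}}$. It uses only $(\tau_a\tau_b)^3(b)=b^{-1}$, which gives $(\hat\tau_a\hat\tau_b)^3\,\mathfrak{i}_b(\varphi)=\mathfrak{i}_{b^{-1}}(\varphi)\,(\hat\tau_a\hat\tau_b)^3$. It combines this with two facts:
\begin{itemize}
\item $\varepsilon$ is a cyclic vector, with $\mathfrak{i}_b(\varphi)\cdot\varepsilon=\varphi$;
\item $(\hat\tau_a\hat\tau_b)^3\cdot\varepsilon=\frac{\lambda(v^{-1})}{\lambda(v)}\varepsilon$, computed in \Cref{prop-calcul_relation_ab_puissance6}.
\end{itemize}
This yields $(\hat\tau_a\hat\tau_b)^3=\frac{\lambda(v^{-1})}{\lambda(v)}S_{\mathcal{L}_{0,1}(H)}$ on $H^*$. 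A short Hopf-algebraic computation gives $S_{\mathcal{L}_{0,1}(H)}^2=\coad^r(v^{-1})$. The explicit formula $\mathfrak{i}_{\partial_{1,0}}(\psi)\cdot\varphi=\coad^r\bigl(S^{-1}(\Phi_{0,1}(\psi))\bigr)(\varphi)$ then identifies $\coad^r(v^{-1})$ with the action of $\mathfrak{i}_{\partial_{1,0}}(\lambda^v)$, which is the identity on $V_{1,0}$.

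Since $\mathcal{L}_{1,0}(H)\simeq\End_{\mathbb{K}}(H^*)$ is simple, $\mathcal{R}^1_{1,0}$ is faithful. So this computation shows a posteriori that $(\hat\tau_a\hat\tau_b)^6=\bigl(\frac{\lambda(v^{-1})}{\lambda(v)}\bigr)^2\mathfrak{i}_{\partial_{1,0}}(\lambda^v)$. Your Step 2 claim is therefore true, but here the efficient proof is the cyclic-vector computation, not a general conjugation identity.

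\textbf{What your route would buy.} If you proved Step 2 for all $(g,n)$, you would get a uniform argument that $\tau_{\partial_{g,n}}$ acts by a scalar on $V_{g,n}$. That would replace the star-relation argument in \Cref{prop-twist_le_long_du_bord_et_push_dans_noyau}.
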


\begin{proof}
	First, recall the positively oriented curve $b := b_1$ of $\Sigma_1^1$ (see \Cref{fig-courbes_standards_Lgn}). 
    We have $(\tau_{a} \tau_{b})^3(b) = b^{-1}$. 
	Consequently, for all $\varphi \in H^*$ we have (by \labelcref{eq-definition_action_Gammagn1_sur_Lgn} and \Cref{prop-action_twists_de_Dehn_non_separants_comme_conjugaison_et_proprietes}):
	\[ (\hat{\tau}_{a} \hat{\tau}_{b})^3 \, \mathfrak{i}_b(\varphi) = \mathfrak{i}_{b^{-1}}(\varphi) (\hat{\tau}_{a} \hat{\tau}_{b})^3. \]
	Furthermore, for all $x \in \mathcal{L}_{1, 0}(H), \, \mathcal{R}_{1,0} (x)(\varphi) \overset{\text{\labelcref{prop-definition_representation_combinee}}}{=} \mathcal{R}_{1,0} \bigl( x \, \mathfrak{i}_{b}(\varphi) \bigr) (\varepsilon)$.
	Also, we will see in \Cref{prop-calcul_relation_ab_puissance6} (which only uses the results up to \Cref{lem-lambda^v_star_lambda^v-1_egale_epsilon}) that $\hat{\rho}_{1,0} \bigl( (\tau_{a} \tau_{b})^3 \bigr) (\varepsilon) = \frac{\lambda(v^{-1})}{\lambda(v)} \, \varepsilon$.
    Thus, combining these three equalities, we obtain: $\forall \varphi \in H^*$,
	\begin{align*}
		\hat{\rho}_{1,0} \bigl( (\tau_{a} \tau_{b})^3 \bigr)(\varphi) = \mathcal{R}_{1,0} \bigl( (\hat{\tau}_{a} \hat{\tau}_{b})^3 \, \mathfrak{i}_{b}(\varphi) \bigr) (\varepsilon) & = \mathcal{R}_{1,0} \bigl( \mathfrak{i}_{b^{-1}}(\varphi) (\hat{\tau}_{a} \hat{\tau}_{b})^3 \bigr) (\varepsilon) \\
		& = \mathcal{R}_{1,0} \bigl( \mathfrak{i}_{b^{-1}}(\varphi) \bigr) \biggl( \frac{\lambda(v^{-1})}{\lambda(v)}\varepsilon \biggr) = \frac{\lambda(v^{-1})}{\lambda(v)} S_{\mathcal{L}_{0, 1}(H)}(\varphi).
	\end{align*}
	Therefore:
	\begin{equation}\label{eq-ab_puissance6_egale_SL01carre}
		\hat{\rho}_{1,0} \bigl( (\tau_{a} \tau_{b})^6 \bigr)(\varphi) = \biggl( \frac{\lambda(v^{-1})}{\lambda(v)} \biggr)^2 S_{\mathcal{L}_{0, 1}(H)}^2(\varphi). 
	\end{equation}
	Recall that (see \Cref{subsec-applications_igamma_et_action_de_Gammagn1}) for all $\varphi \in H^*, \, S_{\mathcal{L}_{0, 1}(H)}(\varphi) = S_{H^*} \bigl( S(r_a) \triangleright \varphi \triangleleft r^a u^{-1} \bigr)$.
    Then using the notation $\varphi(h \bullet g)$ for $g \triangleright \varphi \triangleleft h$, we have:
	\begin{align}
		S_{\mathcal{L}_{0, 1}(H)}^2(\varphi) & = S_{H^*} \Bigl( S(r_b) \triangleright \bigl( S^* \bigl( S(r_a) \triangleright \varphi \triangleleft r^a u^{-1} \bigr) \bigr) \triangleleft r^b u^{-1} \Bigr) \nonumber \\
		& = \Bigl\langle S_{H^*} \bigl( S(r_a) \triangleright \varphi \triangleleft r^a u^{-1} \bigr), r^b u^{-1} S(\bullet) S(r_b) \Bigr\rangle \nonumber \\
		& = \varphi \bigl( r^a u^{-1} S(r^b u^{-1} S(\bullet) S(r_b)) S(r_a) \bigr) \nonumber \\
		& = \varphi \bigl( r^a u^{-1} S^2(r_b) S^2(\bullet) S(u^{-1}) S(r^b) S(r_a) \bigr) \nonumber \\
		& \overset{\text{\labelcref{eq-propriete_u}}}{=} \varphi \bigl( r^a r_b u^{-1} S^2(\bullet) S(u^{-1}) S(r^b) S(r_a) \bigr) \nonumber \\
		& \overset{\text{\labelcref{eq-propriete_u}}}{=} \varphi \bigl( r^a r_b \bullet u^{-1} S(u^{-1}) S(r^b) S(r_a) \bigr) \nonumber \\
		& = \varphi \bigl( r^a r_b \bullet u^{-1} (g g^{-1}) S(u^{-1}) S(r^b) S(r_a) \bigr) \nonumber \\
		& = \varphi \bigl( r^a r_b \bullet (u^{-1} g) S(u^{-1}g) S(r^b) S(r_a) \bigr) \nonumber \\
		& \overset{\text{\labelcref{eq-propriete_g}}}{=} \varphi \bigl( r^a r_b \bullet v^{-1} S(v^{-1}) S(r^b) S(r_a) \bigr) \nonumber \\
		& = \varphi \bigl( r^a r_b v^{-1} \bullet S(r_a r^b v^{-1}) \bigr) \overset{\text{\labelcref{eq-propriete_v}}}{=} \varphi \bigl( v_{(1)}^{-1} \bullet S(v_{(2)}^{-1}) \bigr) = \coad^r(v^{-1})(\varphi). \label{eq-SL01carre_egale_coadrvinverse}
	\end{align}
	We know that for all $\psi \in \mathcal{L}_{0, 1}(H)$ and $\varphi \in H^*$ we have $\mathfrak{i}_{\partial_{1,0}}(\psi) \cdot \varphi = \coad^r \bigl( S^{-1}(\Phi_{0,1}(\psi)) \bigr)(\varphi)$ (see \cite[Prop. C.1]{faitg_derived_2026}).
    On the other hand, if $\varphi \in V_{1,0}$ we have: 
    \begin{equation}\label{eq-passage_ibord_a_mubord}
		\mathfrak{i}_{\partial_{1,0}}(\psi) \cdot \varphi \overset{\text{\labelcref{eq-definition_application_moment}}}{=} \mu_{1,0} \bigl( \Phi_{0,1}(\psi) \bigr) \cdot \varphi \overset{\text{\labelcref{eq-representation_Lgninv}}}{=} \varepsilon \bigl( \Phi_{0,1}(\psi) \bigr) \, \varphi \overset{\text{\labelcref{eq-R-matrice_et_co-unite}}}{=} \psi(1_H) \, \varphi.
    \end{equation}
    Recall that $\Phi_{0,1}(\lambda^v) = v^{-1}$ and $S(v^{-1}) = v^{-1}$. 
	Using \labelcref{eq-SL01carre_egale_coadrvinverse,eq-passage_ibord_a_mubord}, for all $\varphi \in V_{1,0}$ we obtain:   
    \begin{equation}\label{eq-SL01carre_egale_id_pour_Inv}
		S_{\mathcal{L}_{0, 1}(H)}^2(\varphi) = \coad^r(v^{-1})(\varphi) = \coad^r \bigl( S^{-1}(\Phi_{0,1}(\lambda^v)) \bigr)(\varphi) = \mathfrak{i}_{\partial_{1,0}}(\lambda^v) \cdot \varphi = \lambda^v(1_H) \, \varphi = \varphi.
    \end{equation}
	Thus, using \labelcref{eq-ab_puissance6_egale_SL01carre,eq-SL01carre_egale_id_pour_Inv}, it follows: 
	\[ \forall \varphi \in V_{1,0}, ~ \hat{\rho}_{1,0} \bigl( (\tau_{a} \tau_{b})^6 \bigr)(\varphi) = \biggl( \frac{\lambda(v^{-1})}{\lambda(v)} \biggr)^2 \varphi, \]
	and hence $\rho_{1,0}\bigl( (\tau_{a} \tau_{b})^6 \bigr) = \id_{V_{1,0}}$.  
\end{proof}

\begin{remarque}\label{rem-restriction_a_Inv_necessaire}
	The proof of \Cref{lem-ab_puissance6_dans_noyau} shows that the restriction of $\rho_{g,n}^1$ to $V_{g,n}$ in \Cref{cor-representation_projective_rhogn} is necessary, otherwise in the case of the torus we should have $S_{\mathcal{L}_{0, 1}(H)}^2(\varphi) = \varphi$, for all $\varphi \in H^*$, which is not true in general. 
\end{remarque}

\section{Computation of the central extensions associated with \texorpdfstring{$\rho_{g,n}^1$}{rhogn1} and \texorpdfstring{$\rho_{g,n}$}{rhogn}}\label{sec-calcul_extension_associee}

Let $V := \{ V_i \}_{i = 1, \cdots, n}$ be a fixed collection of simple $H$-modules. 
In the following, we write:
\[ V_{g,n}^0 := V_{g,n}, \qquad \mathcal{R}_{g,n}^0(V) := \mathcal{R}_{g,n}(V), \qquad  \rho_{g,n}^0 := \rho_{g,n}. \]
Recall that we have the projective representations (see \Cref{def-representation_projective_rhogn1} and \Cref{cor-representation_projective_rhogn}):
\begin{equation*} 
	\rho_{g,n}^s : \Gamma_{g,n}^s \longrightarrow \PGL(V_{g,n}^s),
\end{equation*}
using the representations $\mathcal{R}_{g,n}^s(V)$ of $\mathcal{L}_{g, n}^{\mathrm{inv}}(H)$ defined in \Cref{prop-definition_representation_combinee} and \labelcref{eq-representation_Lgninv} (by using \Cref{lem-f_chapeau_dans_Lgninv} to see that the elements $\hat{\tau}_\gamma$ belong to $\mathcal{L}_{g, n}^{\mathrm{inv}}(H)$).

\smallskip
\noindent
Also, let us recall quickly the method for computing a minimal central extension on which we can construct a linearization of $\rho_{g,n}^1$. 
First, we consider the commutative diagram of \Cref{def-representation_projective_rhogn1}:
\[ 
\begin{tikzcd}
    F_\tau \arrow[r, "\hat{\rho}_{g,n}^1"] \arrow[d, "\pi"] & \GL(V_{g,n}^1) \arrow[d, "\pi"] \\
    \Gamma_{g,n}^1 \arrow[r, "\rho_{g,n}^1"] & \PGL(V_{g,n}^1)
\end{tikzcd}
\] 
where $\hat{\rho}_{g,n}^1(\tau_\gamma) := \mathcal{R}_{g,n}^1(V)(\hat{\tau}_\gamma) \in \GL(V_{g,n}^1)$, with $\hat{\tau}_\gamma$ as in \Cref{prop-action_twists_de_Dehn_non_separants_comme_conjugaison_et_proprietes}.  
We will say that a relator is \emph{trivial} if its image by $\hat{\rho}_{g,n}^1$ is $\id_{V_{g,n}^1}$.

\medskip
\noindent
\textsc{Step 1:} We compute the image by $\hat{\rho}_{g,n}^1$ of all the relators $r$ that appear in the Gervais presentation of $\Gamma_{g,n}^1$ ($g \geq 2$), in the presentation of $\Gamma_1^1$, or in the presentation of $\Gamma_1$ (see \Cref{sec-groupe_modulaire}). 
These images are scalar multiples of the identity of $V_{g,n}^1$ since $\rho_{g,n}^1$ is a projective representation.

\medskip
\noindent
\textsc{Step 2:} We renormalize, if necessary and possible, the image by $\hat{\rho}_{g,n}^1$ of the generators $\tau_\gamma$ in order to have as many trivial relators as possible.  

\medskip
\noindent
\textsc{Step 3:} Denote by $E$ the quotient of $F_\tau$ by the normal subgroup generated by these trivial relators. 
Using the remaining (non trivial) relators, we construct the group $A$  and the central extension $0 \to A \to E \to \Gamma_{g,n}^1 \to 1$ we are looking for.

\begin{remarque}
    The equalities we will obtain hold in $\GL(V_{g,n}^1)$.
    When $n = 0$, we know that $\mathcal{L}_{g, 0}(H) \simeq \mathrm{End}_\mathbb{K}(V_{g,0}^1)$ (see \Cref{thm-isomorphisme_phign}), so these equalities hold in $\mathcal{L}_{g, 0}(H)$. 
\end{remarque}

\subsection{Computation of relations}\label{subsec-calcul_relations}

The Gervais presentation has an infinite number of 0-braid and 1-braid relations. 
We must therefore, a priori, compute the image by $\hat{\rho}_{g,n}^1$ of an infinite number of relations. 
The change of coordinates principle (see \cite[§1.3]{farb_primer_2012}) allows us to compute only a finite number of relations.

\medskip
For simplicity of notations, in the following we write $\hat{\tau}_\gamma$ for $\hat{\rho}_{g,n}^1(\tau_\gamma)$ and we omit the composition; for example, by $\hat{\tau}_\alpha \hat{\tau}_\beta$ we mean $\hat{\rho}_{g,n}^1(\tau_\alpha) \circ \hat{\rho}_{g,n}^1(\tau_\beta) \in \GL(V_{g,n}^1)$. 
Let us first consider the case of 0-braid relations.

\begin{lemme}\label{lem-relation_0-tresse}
	Let $\alpha$ and $\beta$ be two non-separating simple closed curves of $\Sigma_{g,n}^1$ that do not intersect, and let $\tilde{\tau}_\alpha$ (resp. $\tilde{\tau}_\beta$) be a lift of $\rho_{g,n}^1(\tau_\alpha)$ (resp. $\rho_{g,n}^1(\tau_\beta)$) in $\GL(V_{g,n}^1)$. 
	Then the value of $\tilde{\tau}_\alpha \, \tilde{\tau}_\beta \, \tilde{\tau}_\alpha^{-1} \, \tilde{\tau}_\beta^{-1}$ does not depend on the choices of the lifts $\tilde{\tau}_\alpha$ and $\tilde{\tau}_\beta$.
\end{lemme}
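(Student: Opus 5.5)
Two lifts of the same element of $\PGL(V_{g,n}^1)$ differ by a nonzero scalar, so we write a second choice of lifts as $\lambda \tilde{\tau}_\alpha$ and $\mu \tilde{\tau}_\beta$ with $\lambda, \mu \in \mathbb{K}^\times$. Scalar multiples of $\id_{V_{g,n}^1}$ are central in $\GL(V_{g,n}^1)$, so they can be pulled out of the commutator:
\[ (\lambda \tilde{\tau}_\alpha)(\mu \tilde{\tau}_\beta)(\lambda \tilde{\tau}_\alpha)^{-1}(\mu \tilde{\tau}_\beta)^{-1} = \lambda \mu \lambda^{-1} \mu^{-1} \, \tilde{\tau}_\alpha \tilde{\tau}_\beta \tilde{\tau}_\alpha^{-1} \tilde{\tau}_\beta^{-1} = \tilde{\tau}_\alpha \tilde{\tau}_\beta \tilde{\tau}_\alpha^{-1} \tilde{\tau}_\beta^{-1}. \]

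The only point to settle first is that every lift really has this form. By definition, $\tilde{\tau}_\alpha$ and $\hat{\rho}_{g,n}^1(\tau_\alpha)$ have the same image under $\pi : \GL(V_{g,n}^1) \to \PGL(V_{g,n}^1)$. Hence they differ by an element of $\ker \pi = \mathbb{K}^\times \id_{V_{g,n}^1}$, and the same holds for $\beta$. After this, the display above is the whole argument.

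We note, though the lemma does not require it, that this common value is a scalar. Indeed, $\tau_\alpha \tau_\beta \tau_\alpha^{-1} \tau_\beta^{-1}$ is a relator (a 0-braid relation, by \Cref{prop-relation_0_et_1-tresse}), so its image under the projective representation $\rho_{g,n}^1$ is trivial. There is no real obstacle: the statement is purely formal and uses only that $\mathbb{K}^\times \id$ is central.
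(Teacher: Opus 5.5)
Your proof is correct and is essentially the paper's argument. Both write the other lifts as nonzero scalar multiples, which is justified since $\ker \pi = \mathbb{K}^\times \id_{V_{g,n}^1}$, and then use that these scalars are central, so they cancel in the commutator. The only difference is that you change both lifts at once while the paper changes one at a time, which is immaterial.
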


\begin{proof}
	Let $\tilde{\tau}_\alpha' := \lambda \tilde{\tau}_\alpha$ be another lift of $\rho_{g,n}^1(\tau_\alpha)$ in $\GL(V_{g,n}^1)$. 
	We have:
	\begin{equation*}
		\tilde{\tau}_\alpha' \, \tilde{\tau}_\beta \, \tilde{\tau}_\alpha'^{-1} \, \tilde{\tau}_\beta^{-1} = \lambda \tilde{\tau}_\alpha \, \tilde{\tau}_\beta \, \frac{1}{\lambda}\tilde{\tau}_\alpha^{-1} \, \tilde{\tau}_\beta^{-1} = \tilde{\tau}_\alpha \, \tilde{\tau}_\beta \, \tilde{\tau}_\alpha^{-1} \, \tilde{\tau}_\beta^{-1},
	\end{equation*} 
	and similarly for $\beta$. 
\end{proof}

We say that two pairs of non-separating simple closed curves $(\alpha_1, \beta_1)$ and $(\alpha_2, \beta_2)$ such that the number of geometric intersection is $i(\alpha_i, \beta_i) = 0$ are in the \emph{same position} if one of the following two cases is true: 
\begin{enumerate}
	\item $\alpha_1 \cup \beta_1$ does not separate the surface $\Sigma_{g,n}^1$, and $\alpha_2 \cup \beta_2$ does not separate the surface $\Sigma_{g,n}^1$.
	\item There exists $k \in [\![1, g-1 ]\!]$ and $l \in [\![ 0, n]\!]$ such that  $\alpha_1 \cup \beta_1$ separates $\Sigma_{g,n}^1$ into $\Sigma_{k,l}^2$ and $\Sigma_{g-k-1, n-l}^3$, and $\alpha_2 \cup \beta_2$ separates $\Sigma_{g,n}^1$ into $\Sigma_{k,l}^2$ and $\Sigma_{g-k-1,n-l}^3$.
\end{enumerate}

\begin{figure}[H]
	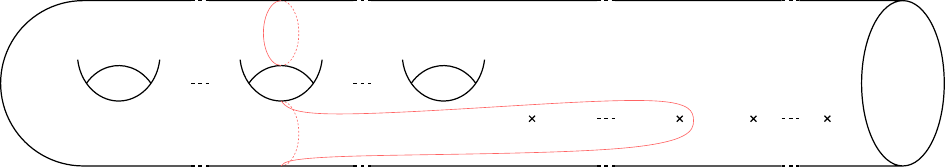
	\caption{Example of curves whose union separates $\Sigma_{g,n}^1$}
    \label{fig-courbes_qui_separent}
\end{figure}

\begin{lemme}
	Let $(\alpha_1, \beta_1)$ and $(\alpha_2, \beta_2)$ be two pairs of non-separating simple closed curves of $\Sigma_{g,n}^1$ that do not intersect and are in the same position. 
	Then we have:
	\[ \hat{\tau}_{\alpha_1} \, \hat{\tau}_{\beta_1} \, \hat{\tau}_{\alpha_1}^{-1} \, \hat{\tau}_{\beta_1}^{-1} = \hat{\tau}_{\alpha_2} \, \hat{\tau}_{\beta_2} \, \hat{\tau}_{\alpha_2}^{-1} \, \hat{\tau}_{\beta_2}^{-1}. \]
\end{lemme}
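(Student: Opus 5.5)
The plan is to use the change of coordinates principle together with the equivariance of the elements $\hat{\tau}_\gamma$ under the action of $\Gamma_{g,n}^1$. Since the two pairs are in the same position, the classification of surfaces shows that the cut surfaces $\Sigma_{g,n}^1 \setminus (\alpha_1 \cup \beta_1)$ and $\Sigma_{g,n}^1 \setminus (\alpha_2 \cup \beta_2)$ are homeomorphic. Moreover, such a homeomorphism can be chosen to respect the boundary identifications: it matches the two sides of $\alpha_1$ with those of $\alpha_2$, the two sides of $\beta_1$ with those of $\beta_2$, it is the identity on $\partial \Sigma_{g,n}^1$, and it sends each marked point $p_i$ to $p_i$. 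In the separating case, the punctures on each side must be matched. The statement only records the numbers $l$ and $n-l$, but since $\Gamma_{g,n}^1$ fixes the punctures individually, I will need either a finer notion of same position or an argument that the commutator does not depend on which punctures lie on which side. Gluing back yields $f \in \Gamma_{g,n}^1$ with $f(\alpha_1) = \alpha_2$ and $f(\beta_1) = \beta_2$ (\cite[§1.3]{farb_primer_2012}).

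Given such an $f$, I then use \Cref{cor-action_mapping_class_comme_conjugaison}. There is an invertible $\hat{f} \in \mathcal{L}_{g,n}(H)$ with $\tilde{f}(x) = \hat{f} x \hat{f}^{-1}$. By \labelcref{eq-definition_action_Gammagn1_sur_Lgn} and \Cref{prop-action_twists_de_Dehn_non_separants_comme_conjugaison_et_proprietes} (orientation-independence of $\hat{\tau}_\gamma$), we have
\[ \tilde{f}(\hat{\tau}_{\alpha_1}) = \tilde{f}\bigl( \mathfrak{i}_{\alpha_1'}(\lambda^v) \bigr) = \mathfrak{i}_{f(\alpha_1')}(\lambda^v) = \hat{\tau}_{\alpha_2}, \]
because $f(\alpha_1')$ is a simple based loop freely homotopic to $\alpha_2$. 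Similarly, $\tilde{f}(\hat{\tau}_{\beta_1}) = \hat{\tau}_{\beta_2}$. Since $\tilde{f}$ is an algebra automorphism, applying it to the commutator $c_1 := \hat{\tau}_{\alpha_1} \hat{\tau}_{\beta_1} \hat{\tau}_{\alpha_1}^{-1} \hat{\tau}_{\beta_1}^{-1}$ gives $\hat{f} c_1 \hat{f}^{-1} = c_2$ in $\mathcal{L}_{g,n}(H)$.

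It remains to see that $c_1$ is central. This follows from \labelcref{eq-image_de_relation_est_dans_centre_de_Lgn}: $c_1$ is the image of the 0-braid relator $\tau_{\alpha_1}\tau_{\beta_1}\tau_{\alpha_1}^{-1}\tau_{\beta_1}^{-1}$, which is trivial in $\Gamma_{g,n}^1$ by \Cref{prop-relation_0_et_1-tresse}. Hence $\hat{f} c_1 \hat{f}^{-1} = c_1$, so $c_1 = c_2$ in $\mathcal{L}_{g,n}(H)$. Applying $\mathcal{R}_{g,n}^1(V)$ gives the equality in $\GL(V_{g,n}^1)$.

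The main obstacle is the first step: producing $f$ inside the pure mapping class group $\Gamma_{g,n}^1$ rather than just a homeomorphism that may permute punctures. In the nonseparating case this is standard, because the cut surface is connected and a homeomorphism can be adjusted by a permutation of its punctures. In the separating case, I would first try to show directly that the scalar $c_1$ depends only on the topology of each side. Failing that, I would refine the position data to record which punctures lie in $\Sigma_{k,l}^2$, as the later computations presumably only need one representative per configuration.
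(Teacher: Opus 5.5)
Your argument is the paper's. Change of coordinates gives $f$ with $f(\alpha_1)=\alpha_2$ and $f(\beta_1)=\beta_2$, conjugation by $\hat f$ carries one commutator to the other, and the commutator is central, so this conjugation does nothing.

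The only difference is where you work. The paper stays in $\GL(V_{g,n}^1)$: it uses $\hat f\hat\tau_{\alpha_1}\hat f^{-1}$ and $\hat f\hat\tau_{\beta_1}\hat f^{-1}$ merely as \emph{some} lifts of $\rho_{g,n}^1(\tau_{\alpha_2})$ and $\rho_{g,n}^1(\tau_{\beta_2})$, and then appeals to \Cref{lem-relation_0-tresse}. You use the exact identity $\tilde f(\hat\tau_{\alpha_1})=\hat\tau_{\alpha_2}$, as the paper does for 1-braid relations in \labelcref{eq-conjugaison_entre_tau_chapeau}, and get $c_1=c_2$ already in $\mathcal{L}_{g,n}(H)$.

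That identity silently assumes that $\mathfrak{i}_{\gamma'}(\lambda^v)$ does not depend on the simple based representative $\gamma'$ of the free class. The paper only checks independence of the orientation. You can avoid the assumption: $\tilde f(\hat\tau_{\alpha_1})$ and $\hat\tau_{\alpha_2}$ both implement $\tilde\tau_{\alpha_2}=\tilde f\circ\tilde\tau_{\alpha_1}\circ\tilde f^{-1}$ by conjugation. Hence they differ by a central unit, and that unit cancels in the commutator.

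The obstacle you flag is a genuine gap, and the paper's proof has it too. The paper obtains ``a direct homeomorphism $f$'' from the change of coordinates principle, then takes a lift of $\rho_{g,n}^1(f)$, which is only defined for $f\in\Gamma_{g,n}^1$. Such an $f$ fixes $\partial\Sigma_{g,n}^1$ and every $p_i$. So it must carry the component of $\Sigma_{g,n}^1\setminus(\alpha_1\cup\beta_1)$ avoiding the boundary, together with its punctures, onto the corresponding component for $(\alpha_2,\beta_2)$. When the two $\Sigma_{k,l}^2$-pieces contain different sets of $l$ punctures, no admissible $f$ exists, and neither argument proves the lemma as stated in that case. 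Your treatment of the nonseparating case is fine.

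Your option (a) is not yet an argument, and a naive symmetry argument does not suffice. Even granting an action of puncture-permuting mapping classes on $\mathcal{L}_{g,n}(H)$, those automorphisms are not inner and move the center. For instance, they would exchange $\mathfrak{i}_{m_{g+i}}(\lambda^v)$ and $\mathfrak{i}_{m_{g+j}}(\lambda^v)$, which act by $\xi_i\neq\xi_j$ in general. Meanwhile the commutator is a priori just some central element. The statement is presumably still true, since these commutators turn out to be trivial, but that is exactly what the lemma is meant to help establish.

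The clean repair is your option (b): in case (2), also record the subset $P\subset[\![1,n]\!]$ of punctures lying in the $\Sigma_{k,l}^2$-piece. Homeomorphisms of the pieces can then be chosen to send each $p_i$ to itself, the glued $f$ lies in $\Gamma_{g,n}^1$, and your argument goes through.

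The cost is downstream. \Cref{prop-calcul_relations_0-tresse} must then treat one pair for each $(k,P)$, not only the curves $f_{k,l}$ enclosing $p_1,\dots,p_l$. Its computation appears to adapt, because $\hat\tau_{e_k}$ fixes $\varepsilon^{\otimes g}\otimes\clubsuit$ for every $\clubsuit\in H^{\otimes n}$.

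Relatedly, case (2) of the definition starts at $k=1$. So pairs cobounding an annulus that contains $l\geq 1$ punctures lie in no position class. This does not affect the lemma, but such pairs also need representatives in that reduction.
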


\begin{proof}
	Let $(\alpha_1, \beta_1)$ and $(\alpha_2, \beta_2)$ be two pairs of non-separating simple closed curves that do not intersect and are in the same position.
	By the change of coordinates principle there exists a direct homeomorphism $f$ of $\Sigma_{g,n}^1$ such that $f(\alpha_1)= \alpha_2$ and $f(\beta_1) = \beta_2$ (see \cite[§1.3.3]{farb_primer_2012}).
	Let $\hat{f}$ be a lift of $\rho_{g,n}^1(f)$ in $\GL(V_{g,n}^1)$. Set 
	\[ \tilde{\tau}_{\alpha_2} := \hat{f} \hat{\tau}_{\alpha_1} \hat{f}^{-1} \text{ and } \tilde{\tau}_{\beta_2} := \hat{f} \hat{\tau}_{\beta_1} \hat{f}^{-1}. \]
	Then $\tilde{\tau}_{\alpha_2}$ (resp. $\tilde{\tau}_{\beta_2}$) is a lift of $\rho_{g,n}^1(\tau_{\alpha_2})$ (resp. $\rho_{g,n}^1(\tau_{\beta_2})$) and we have:
	\[ \tilde{\tau}_{\alpha_2} \, \tilde{\tau}_{\beta_2} \, \tilde{\tau}_{\alpha_2}^{-1} \, \tilde{\tau}_{\beta_2}^{-1} = \hat{f} \underbrace{\hat{\tau}_{\alpha_1} \, \hat{\tau}_{\beta_1} \, \hat{\tau}_{\alpha_1}^{-1} \, \hat{\tau}_{\beta_1}^{-1}}_{\in \mathbb{K}^* \id_{V_{g,n}^1}} \hat{f}^{-1} = \hat{\tau}_{\alpha_1} \, \hat{\tau}_{\beta_1} \, \hat{\tau}_{\alpha_1}^{-1} \, \hat{\tau}_{\beta_1}^{-1}.
	\]
	Therefore, \Cref{lem-relation_0-tresse} implies $\hat{\tau}_{\alpha_1} \, \hat{\tau}_{\beta_1} \, \hat{\tau}_{\alpha_1}^{-1} \, \hat{\tau}_{\beta_1}^{-1} = \hat{\tau}_{\alpha_2} \, \hat{\tau}_{\beta_2} \, \hat{\tau}_{\alpha_2}^{-1} \, \hat{\tau}_{\beta_2}^{-1}$.
\end{proof}
\noindent
The lemma shows that we only need to compute the image by $\hat{\rho}_{g,n}^1$ of $(g-1)(n+1)+1$ 0-braid relations, one for the case where the union $\alpha_1 \cup \beta_1$ does not separate the surface and $(g-1)(n+1)$ for the cases where $\alpha_1 \cup \beta_1$ separates the surface.

\medskip
\noindent
Let us now consider the case of 1-braid relations.
Let $\gamma_1$ and $\gamma_2$ be two non-separating simple closed curves of $\Sigma_{g,n}^1$. 
By the change of coordinates principle there exists a direct homeomorphism $f$ of $\Sigma_{g,n}^1$ such that $f(\gamma_1) = \gamma_2$.  
Then we have: 
\begin{equation}\label{eq-conjugaison_entre_tau_chapeau}
	\hat{\tau}_{\gamma_2} \overset{\text{\labelcref{prop-action_twists_de_Dehn_non_separants_comme_conjugaison_et_proprietes}}}{=} \mathfrak{i}_{\gamma_2}(\lambda^v) = \mathfrak{i}_{f(\gamma_1)}(\lambda^v) \overset{\text{\labelcref{eq-definition_action_Gammagn1_sur_Lgn}}}{=} \tilde{f} \bigl( \mathfrak{i}_{\gamma_1}(\lambda^v) \bigr) \overset{\text{\labelcref{prop-action_twists_de_Dehn_non_separants_comme_conjugaison_et_proprietes}}}{=} \tilde{f}(\hat{\tau}_{\gamma_1}) \overset{\text{\labelcref{cor-action_mapping_class_comme_conjugaison}}}{=} \hat{f} \hat{\tau}_{\gamma_1} \hat{f}^{-1} \in \GL(V_{g,n}^1).
\end{equation}
where $\hat{f}$ is any lift of $\rho_{g,n}^1(f)$ and $\tilde{f}$ is the automorphism of $\mathcal{L}_{g, n}(H)$ induced by $f$. 
The choice of the lift of $\rho_{g,n}^1(f)$ is irrelevant because the scalar of $\hat{f}$ simplifies with the one of $\hat{f}^{-1}$.
Furthermore, since $\mathfrak{i}_{\gamma_2}(\lambda^v)$ does not depend on $f$, the choice of homeomorphism $f$ such that $f(\gamma_1) = \gamma_2$ is irrelevant.

\begin{lemme}\label{lem-egalite_entre_les_relations_1-tresse}
	Let $(\alpha_1, \beta_1)$ be a pair of non-separating simple closed curves of $\Sigma_{g,n}^1$ that intersect at a single point.
	For any other pair of non-separating simple closed curves $(\alpha_2, \beta_2)$ that intersect at a single point we have:
	\[ \hat{\tau}_{\alpha_1} \, \hat{\tau}_{\beta_1} \hat{\tau}_{\alpha_1} \hat{\tau}_{\beta_1}^{-1} \hat{\tau}_{\alpha_1}^{-1} \hat{\tau}_{\beta_1}^{-1} = \hat{\tau}_{\alpha_2} \, \hat{\tau}_{\beta_2} \hat{\tau}_{\alpha_2} \hat{\tau}_{\beta_2}^{-1} \hat{\tau}_{\alpha_2}^{-1} \hat{\tau}_{\beta_2}^{-1}. \]
\end{lemme}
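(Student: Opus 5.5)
The plan is to argue exactly as for the 0-braid relations, using the change of coordinates principle together with the conjugation formula \eqref{eq-conjugaison_entre_tau_chapeau}. First, I would note that the pairs $(\alpha_1,\beta_1)$ and $(\alpha_2,\beta_2)$ are always ``in the same position''. Indeed, two non-separating simple closed curves meeting transversally in exactly one point have a regular neighbourhood which is a one-holed torus. Its complement in $\Sigma_{g,n}^1$ is connected, of genus $g-1$, with two boundary components and all $n$ punctures. By the change of coordinates principle (\cite[§1.3]{farb_primer_2012}) there is therefore an orientation-preserving homeomorphism $f$ of $\Sigma_{g,n}^1$, fixing the boundary and the punctures, with $f(\alpha_1)=\alpha_2$ and $f(\beta_1)=\beta_2$. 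If needed, one can compose with a mapping class of the one-holed torus to match orientations; this does not matter here, since Dehn twists and $\hat{\tau}_\gamma$ are independent of the orientation of $\gamma$ by \Cref{prop-action_twists_de_Dehn_non_separants_comme_conjugaison_et_proprietes}.

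Next, let $\hat{f}$ be any lift of $\rho_{g,n}^1(f)$ in $\GL(V_{g,n}^1)$, for instance $\mathcal{R}_{g,n}^1(V)$ applied to the element $\hat f\in\mathcal{L}_{g,n}(H)$ from \Cref{cor-action_mapping_class_comme_conjugaison}. By \eqref{eq-conjugaison_entre_tau_chapeau} we have, as exact equalities rather than equalities up to scalars,
\[ \hat{\tau}_{\alpha_2} = \hat{f}\,\hat{\tau}_{\alpha_1}\,\hat{f}^{-1}, \qquad \hat{\tau}_{\beta_2} = \hat{f}\,\hat{\tau}_{\beta_1}\,\hat{f}^{-1}. \]
This equality holds in $\mathcal{L}_{g,n}(H)$: there $\tilde f(\hat\tau_{\alpha_1})=\mathfrak{i}_{f(\alpha_1)}(\lambda^v)=\hat\tau_{\alpha_2}$ with no scalar ambiguity, and applying the representation preserves it.

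Substituting these into the 1-braid relator and telescoping the inner factors $\hat f^{-1}\hat f$ gives
\[ \hat{\tau}_{\alpha_2}\hat{\tau}_{\beta_2}\hat{\tau}_{\alpha_2}\hat{\tau}_{\beta_2}^{-1}\hat{\tau}_{\alpha_2}^{-1}\hat{\tau}_{\beta_2}^{-1} = \hat{f}\bigl(\hat{\tau}_{\alpha_1}\hat{\tau}_{\beta_1}\hat{\tau}_{\alpha_1}\hat{\tau}_{\beta_1}^{-1}\hat{\tau}_{\alpha_1}^{-1}\hat{\tau}_{\beta_1}^{-1}\bigr)\hat{f}^{-1}. \]
The bracketed element is a scalar multiple of $\id_{V_{g,n}^1}$, because $\rho_{g,n}^1$ is a projective representation and the 1-braid relation holds in $\Gamma_{g,n}^1$. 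Hence conjugation by $\hat f$ leaves it unchanged, which gives the claim.

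The only step requiring care is the topological one: showing that any two such pairs are related by a mapping class fixing the boundary and each puncture. I expect this to follow directly from the classification of surfaces applied to the cut surface, since there is a single topological type of complement. Unlike the 0-braid case, no separating configurations can occur here.
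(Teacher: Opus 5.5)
Your proposal is correct and follows the paper's proof essentially step for step. You use the change of coordinates principle to get $f$ with $f(\alpha_1)=\alpha_2$ and $f(\beta_1)=\beta_2$, then the exact conjugation formula $\hat{\tau}_{\gamma_2}=\hat{f}\,\hat{\tau}_{\gamma_1}\hat{f}^{-1}$ stated just before the lemma, telescope the factors, and conclude because the inner relator is a scalar. Your extra remark that the complement of a regular neighbourhood of $\alpha_1\cup\beta_1$ is always $\Sigma_{g-1,n}^2$ makes explicit the topological point the paper leaves to the citation of Farb--Margalit.
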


\begin{proof}
	Let $(\alpha_1, \beta_1)$ and $(\alpha_2, \beta_2)$ be two pairs of non-separating simple closed curves, that intersect at a single point.
	By the change of coordinates principle there exists a direct homeomorphism $f$ of $\Sigma_{g,n}^1$ such that $f(\alpha_1) = \alpha_2$ and $f(\beta_1) = \beta_2$ (see \cite[§1.3.3]{farb_primer_2012}).
	If we denote $x^f := \hat{f} x \hat{f}^{-1}$, we have: 
	\begin{align*}
		\hat{\tau}_{\alpha_2} \, \hat{\tau}_{\beta_2} \hat{\tau}_{\alpha_2} \hat{\tau}_{\beta_2}^{-1} \hat{\tau}_{\alpha_2}^{-1} \hat{\tau}_{\beta_2}^{-1} & \overset{\text{\labelcref{eq-conjugaison_entre_tau_chapeau}}}{=} \hat{\tau}_{\alpha_1}^f \, \hat{\tau}_{\beta_1}^f \hat{\tau}_{\alpha_1}^f (\hat{\tau}_{\beta_1}^f)^{-1} (\hat{\tau}_{\alpha_1}^f)^{-1} (\hat{\tau}_{\beta_1}^f)^{-1} \\
		& ~ = \hat{f} \underbrace{\hat{\tau}_{\alpha_1} \hat{\tau}_{\beta_1} \hat{\tau}_{\alpha_1} \hat{\tau}_{\beta_1}^{-1} \hat{\tau}_{\alpha_1}^{-1} \hat{\tau}_{\beta_1}^{-1}}_{\in \mathbb{K}^* \id_{V_{g,n}^1}} \hat{f}^{-1} = \hat{\tau}_{\alpha_1} \, \hat{\tau}_{\beta_1} \hat{\tau}_{\alpha_1} \hat{\tau}_{\beta_1}^{-1} \hat{\tau}_{\alpha_1}^{-1} \hat{\tau}_{\beta_1}^{-1}. \qedhere
	\end{align*}
\end{proof}
\noindent
The lemma shows that we only need to compute the image by $\hat{\rho}_{g,n}^1$ of a single 1-braid relation.

\begin{lemme}
	Suppose that all 0-braid and 1-braid relators are trivial (i.e. their image by $\hat{\rho}_{g,n}^1$ is $\id_{V_{g,n}^1}$).
    We can then renormalize $\hat{\rho}_{g,n}^1$ such that the lantern relator, all the 0-braid relators, and all the 1-braid relators are trivial.
\end{lemme}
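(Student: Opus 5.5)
The lantern relator $r_l = \tau_{b_1}\tau_{b_2}\tau_{b_3}\tau_{b_4}\tau_z^{-1}\tau_y^{-1}\tau_x^{-1}$ contains each of its seven curves exactly once, with exponent $\pm 1$. So the idea is to absorb the scalar $\hat{\rho}_{g,n}^1(r_l) = \alpha_l \id_{V_{g,n}^1}$ by a \emph{uniform} rescaling of all generators. This keeps every braid relator trivial, because each braid relator has total exponent zero in every generator.

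\textbf{Step 1.} Since $\rho_{g,n}^1$ is projective, $\hat{\rho}_{g,n}^1(r_l) = \alpha_l \id_{V_{g,n}^1}$ for some $\alpha_l \in \mathbb{K}^*$. The curves $b_1, \dots, b_4, x, y, z$ of the lantern used in \Cref{thm-presentation_de_Gervais} are non-separating, and all seven Dehn twists about them are generators of $F_\tau$.

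\textbf{Step 2.} The exponent sum of $r_l$ is $4 - 3 = 1$. Since $\mathbb{K}$ is algebraically closed, no root extraction is even needed here. Define the renormalization
\[ \hat{\rho}'(\tau_\gamma) := c \, \hat{\rho}_{g,n}^1(\tau_\gamma) \]
for \emph{every} non-separating curve $\gamma$, with a single constant $c \in \mathbb{K}^*$. Then
\[ \hat{\rho}'(r_l) = c^{4-3}\alpha_l \id = c\,\alpha_l \id, \]
and choosing $c := \alpha_l^{-1}$ makes the lantern relator trivial. The new map still lifts $\rho_{g,n}^1$, since each generator's image only changes by a scalar.

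\textbf{Step 3.} Check that the braid relators stay trivial. A $0$-braid relator $\tau_a\tau_b\tau_a^{-1}\tau_b^{-1}$ and a $1$-braid relator $\tau_a\tau_b\tau_a\tau_b^{-1}\tau_a^{-1}\tau_b^{-1}$ both have exponent sum $0$ in each generator. Their images are therefore multiplied by $c^0 = 1$ and remain $\id_{V_{g,n}^1}$ by hypothesis. Alternatively, for the $0$-braid case one can invoke \Cref{lem-relation_0-tresse} directly.

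The argument is short and I expect no real obstacle. The one point that needs care is that the rescaling must be uniform over all generators, not just the seven lantern curves. A non-uniform rescaling would break the $1$-braid relators linking lantern curves to other curves, and it would also be incompatible with the change-of-coordinates equalities such as \labelcref{eq-conjugaison_entre_tau_chapeau}. Uniformity is also exactly what makes the twists stay conjugate to one another, which the later steps (the $3$-chain and puncture relators) will rely on.
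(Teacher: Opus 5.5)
Your proof is correct and is essentially the paper's: a single uniform rescaling of every generator by $c=\alpha_l^{-1}$ (the paper divides by $\eta$) kills the lantern scalar because the lantern relator has total exponent sum $4-3=1$, and it leaves the braid relators unchanged because their total exponent sum is $0$. One small slip: the $1$-braid relator $\tau_a\tau_b\tau_a\tau_b^{-1}\tau_a^{-1}\tau_b^{-1}$ does \emph{not} have exponent sum $0$ in each generator (it is $+1$ in $\tau_a$ and $-1$ in $\tau_b$). Only the total exponent sum vanishes, and that is exactly why the rescaling must be uniform, as your last paragraph correctly says.
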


\begin{proof}
	Since $\rho_{g,n}^1$ is a projective representation, the image by $\hat{\rho}_{g,n}^1$ of the lantern relator (see \Cref{prop-relation_lanterne}) is a scalar multiple of the identity of $V_{g,n}^1$. 
	So we have $\hat{\tau}_a \hat{\tau}_b \hat{\tau}_c \hat{\tau}_d \hat{\tau}_z^{-1} \hat{\tau}_y^{-1} \hat{\tau}_x^{-1} = \eta \id_{V_{g,n}^1}$, where $\eta \in \mathbb{K}^*$. 
	Let us define $\tilde{\rho}_{g,n}^1 := \frac{\hat{\rho}_{g,n}^1}{\eta}$.
	Then we have:	
	\begin{align*}
		\tilde{\rho}_{g,n}^1 (\tau_{b_1} \tau_{b_2} \tau_{b_3} \tau_{b_4} \tau_z^{-1} \tau_y^{-1} \tau_x^{-1}) & = \frac{\hat{\tau}_{b_1}}{\eta} \, \frac{\hat{\tau}_{b_2}}{\eta} \, \frac{\hat{\tau}_{b_3}}{\eta} \, \frac{\hat{\tau}_{b_4}}{\eta} \, \eta \hat{\tau}_z^{-1} \, \eta \hat{\tau}_y^{-1} \, \eta \hat{\tau}_x^{-1} \\
		& = \frac{1}{\eta} \hat{\tau}_{b_1} \hat{\tau}_{b_2} \hat{\tau}_{b_3} \hat{\tau}_{b_4} \hat{\tau}_z^{-1} \hat{\tau}_y^{-1} \hat{\tau}_x^{-1} = \frac{1}{\eta} \eta \id_{V_{g,n}^1}= \id_{V_{g,n}^1}.
	\end{align*}
	Furthermore, since the sum of the exponents in the 0-braid and 1-braid relators is $0$, the images by $\tilde{\rho}_{g,n}^1$ and by $\hat{\rho}_{g,n}^1$ of these relators are equal, and therefore also equal to $\id_{V_{g,n}^1}$. 
\end{proof}

In the following, we compute the image by $\hat{\rho}_{g,n}^1$ of the $(g-1)(n+1)+1$ 0-braid relations, the 1-braid relation, the lantern relation, the relation $(\tau_a \tau_b)^6$, and the puncture relations.

\begin{proposition}[0-braid relation]\label{prop-calcul_relations_0-tresse}
	Consider the curves $a_1$, $e_k$, and $f_{k,l}$ of $\Sigma_{g,n}^1$, for all $k \in [\![ 2, g ]\!]$ and $l \in [\![ 0, n ]\!]$, represented in the following figure. 
	\begin{center}
		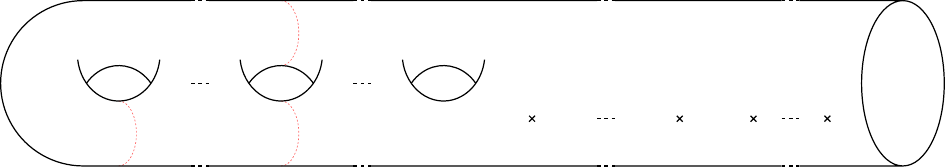
	\end{center}
	We have the following equalities in $\GL(V_{g,n}^1)$: 
	\begin{align*}
		\hat{\tau}_{a_1}\hat{\tau}_{f_{2,0}} = \hat{\tau}_{f_{2,0}}\hat{\tau}_{a_1} ~ \text{ and } ~ \hat{\tau}_{f_{k,l}} \hat{\tau}_{e_k} = \hat{\tau}_{e_k}\hat{\tau}_{f_{k,l}}.
	\end{align*}
\end{proposition}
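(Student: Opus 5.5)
Since the $\hat\tau_\gamma$ are images under $\mathcal{R}_{g,n}^1(V)$ of elements of $\mathcal{L}_{g,n}(H)$, I will prove the commutations in $\mathcal{L}_{g,n}(H)$ itself, or directly on $V_{g,n}^1$. The tool is \Cref{prop-action_twists_de_Dehn_non_separants_comme_conjugaison_et_proprietes}: for disjoint non-separating $\alpha,\beta$ we have $\tau_\alpha(\beta)=\beta$ as a free isotopy class. If a based representative $\beta'$ of $\beta$ can be chosen disjoint from $\alpha$, then $\tau_\alpha(\beta')=\beta'$ in $\pi_1(\Sigma_{g,n}^1)$. Hence
\[ \hat{\tau}_\alpha \, \mathfrak{i}_{\beta'}(\lambda^v) \, \hat{\tau}_\alpha^{-1} = \tilde{\tau}_\alpha\bigl(\mathfrak{i}_{\beta'}(\lambda^v)\bigr) = \mathfrak{i}_{\tau_\alpha(\beta')}(\lambda^v) = \mathfrak{i}_{\beta'}(\lambda^v) = \hat{\tau}_\beta, \]
using \labelcref{eq-definition_action_Gammagn1_sur_Lgn} and the independence of $\hat\tau_\beta$ from the chosen representative and orientation. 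This is then an equality in $\mathcal{L}_{g,n}(H)$, and applying $\mathcal{R}_{g,n}^1(V)$ gives the claim in $\GL(V_{g,n}^1)$.

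First step: for each pair $(a_1,f_{2,0})$ and $(e_k,f_{k,l})$ in the figure, choose the based loop $\beta'$ (a representative of $f_{2,0}$, resp. $f_{k,l}$) connected to the base point on $\partial\Sigma_{g,n}^1$ by an arc avoiding $\alpha$. This is possible whenever $\alpha$ does not separate the base point from $\beta$ in a way that forces crossing. Because $\alpha$ is non-separating, $\Sigma\setminus\alpha$ is connected, so an arc from the boundary to $\beta$ in the complement of $\alpha$ always exists. Then $\beta'$ is a simple based loop disjoint from $\alpha$, the twist $\tau_\alpha$ is supported in a collar of $\alpha$, and $\tau_\alpha(\beta')=\beta'$ holds in $\pi_1$ on the nose, not only up to free homotopy.

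Second step: check that the resulting based class is simple, so that \Cref{def-injection_igamma} applies. It is represented by a simple closed based curve, which is the construction itself. Also note that \Cref{prop-action_twists_de_Dehn_non_separants_comme_conjugaison_et_proprietes} says $\hat\tau_\beta=\mathfrak{i}_{\gamma'}(\lambda^v)$ for \emph{any} simple based $\gamma'$ freely homotopic to $\beta$, so this choice is legitimate. Both cases, whether or not $\alpha\cup\beta$ separates, are handled uniformly.

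The main obstacle is making sure the based representative really avoids $\alpha$ and that the identification $\tilde\tau_\alpha(\mathfrak{i}_{\beta'}(\varphi))=\mathfrak{i}_{\tau_\alpha(\beta')}(\varphi)$ is used for based classes, with the normalization $N$ unchanged. Since $\tau_\alpha(\beta')=\beta'$ as elements of $\pi_1$, both sides are literally the same map, so $N$ causes no trouble. As a fallback I would compute directly with \Cref{prop-formules_ai_bi_ci} in the simplest case $(a_1,f_{2,0})$: $\hat\tau_{a_1}$ acts as $v^{-1}\triangleright$ on the first factor, and $v$ is central, so $\hat\tau_{a_1}$ commutes with anything built from coregular actions on the other factors.
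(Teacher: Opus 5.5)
Your proposal is correct, and it takes a genuinely different route from the paper.

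\textbf{The paper's route.} The paper does not use the conjugation formula here. It only uses that $\rho_{g,n}^1$ is projective, so $\hat{\tau}_\alpha\hat{\tau}_\beta$ and $\hat{\tau}_\beta\hat{\tau}_\alpha$ agree up to a scalar. It then shows the scalar is $1$ by evaluating both sides on the single vector $\varepsilon^{\otimes g}\otimes 1_H^{\otimes n}$ through the lift $\mathcal{R}_{g,n}^1(H)$. That evaluation uses the formulas of \Cref{prop-definition_representation_combinee}, \Cref{prop-formules_ai_bi_ci} and \Cref{prop-formule_di}, together with explicit words in the generators for $e_k$ and $f_{k,l}$. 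This is the same test-vector scheme the paper then runs for the lantern, 3-chain and puncture relators, where nontrivial scalars do appear. It also needs the change-of-coordinates reduction to the $(g-1)(n+1)+1$ standard positions to cover all 0-braid relators.

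\textbf{Your route.} You get $\hat{\tau}_\alpha\hat{\tau}_\beta\hat{\tau}_\alpha^{-1}=\hat{\tau}_\beta$ exactly in $\mathcal{L}_{g,n}(H)$, for every disjoint pair at once, from \Cref{prop-action_de_groupes_et_automorphisme_dalgebres} and \Cref{prop-action_twists_de_Dehn_non_separants_comme_conjugaison_et_proprietes} alone. The key step is choosing a based representative that $\tau_\alpha$ fixes pointwise, which is possible since $\Sigma_{g,n}^1\setminus\alpha$ is connected. In effect this is \labelcref{eq-conjugaison_entre_tau_chapeau} with $f=\tau_\alpha$ and $\gamma_1=\gamma_2=\beta$. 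It needs no diagrammatic formula, and it explains conceptually why this conjugation-type relator carries no scalar.

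\textbf{The one input you can remove.} You rely on $\hat{\tau}_\beta$ being independent of the based representative. \Cref{prop-action_twists_de_Dehn_non_separants_comme_conjugaison_et_proprietes} asserts this, but its proof only treats the orientation. You do not actually need it. For any two simple based representatives $\beta',\beta''$ of $\beta$, both $\mathfrak{i}_{\beta'}(\lambda^v)$ and $\mathfrak{i}_{\beta''}(\lambda^v)$ implement $\tilde{\tau}_\beta$ by conjugation: the proof there covers every $f(a_1)$, and orientation is irrelevant. Hence they differ by a central unit of $\mathcal{L}_{g,n}(H)$, and that unit cancels in the commutator.
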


\begin{proof}
	First, let us show that $\hat{\tau}_{a_1}\hat{\tau}_{f_{2,0}} = \hat{\tau}_{f_{2,0}}\hat{\tau}_{a_1}$. 
	Note that the curve $f_{2,0}$ is the same as the curve $a_2$ in \Cref{fig-courbes_ai_bi_ci}. 
	Since $\rho_{g,n}^1$ is a projective representation, we know that $\hat{\tau}_{a_1}\hat{\tau}_{a_2}$ and $\hat{\tau}_{a_2}\hat{\tau}_{a_1}$ are equal up to a scalar. 
	To find this scalar, by the discussion above \Cref{prop-formules_ai_bi_ci} we simply need to evaluate $\hat{\tau}_{a_1}\hat{\tau}_{a_2}$ and $\hat{\tau}_{a_2}\hat{\tau}_{a_1}$ against the element $\varepsilon^{\otimes g} \otimes 1_H^{\otimes n} \in (H^*)^{\otimes g} \otimes H^{\otimes n}$.
	Since the actions of $\mathfrak{i}_{a_i}(\varphi)$, $i \in \{1, 2 \}$, affect only the first two copies of $H^*$ in $(H^*)^{\otimes g} \otimes H^{\otimes n}$, it suffices to evaluate $\hat{\tau}_{a_1}\hat{\tau}_{a_2}$ and $\hat{\tau}_{a_2}\hat{\tau}_{a_1}$ against the element $\varepsilon^{\otimes 2}$. 
	On one hand we have:
	\begin{equation*}
		\hat{\tau}_{a_1}\hat{\tau}_{a_2} \cdot (\varepsilon^{\otimes 2}) \overset{\text{\labelcref{prop-formules_ai_bi_ci}}}{=} \hat{\tau}_{a_1} \cdot \bigl( \varepsilon \otimes (v^{-1} \triangleright \varepsilon) \bigr) = \hat{\tau}_{a_1} \cdot (\varepsilon^{\otimes 2}) \overset{\text{\labelcref{prop-formules_ai_bi_ci}}}{=} (v^{-1} \triangleright \varepsilon) \otimes \varepsilon = \varepsilon^{\otimes 2},
	\end{equation*}
	and on the other hand we have:
	\begin{equation*}
		\hat{\tau}_{a_2}\hat{\tau}_{a_1} \cdot (\varepsilon^{\otimes 2}) \overset{\text{\labelcref{prop-formules_ai_bi_ci}}}{=} \hat{\tau}_{a_2} \cdot \bigl( (v^{-1} \triangleright \varepsilon) \otimes \varepsilon \bigr) = \hat{\tau}_{a_2} \cdot (\varepsilon^{\otimes 2}) \overset{\text{\labelcref{prop-formules_ai_bi_ci}}}{=} \varepsilon \otimes (v^{-1} \triangleright \varepsilon) = \varepsilon^{\otimes 2},
	\end{equation*}
	where we used the fact that $\varepsilon$ is a morphism of algebras and $\varepsilon(v) = 1_\mathbb{K}$.
	It follows $\hat{\tau}_{a_1}\hat{\tau}_{a_2} = \hat{\tau}_{a_2}\hat{\tau}_{a_1}$ in $\GL(V_{g,n}^1)$.
	
	\smallskip

	Now let us show that $\hat{\tau}_{f_{k,l}}\hat{\tau}_{e_k} = \hat{\tau}_{e_k}\hat{\tau}_{f_{k,l}}$.
	Since $\rho_{g,n}^1$ is a projective representation, we know that $\hat{\tau}_{f_{k,l}}\hat{\tau}_{e_k}$ and $\hat{\tau}_{e_k}\hat{\tau}_{f_{k,l}}$ are equal up to a scalar. 
	To find this scalar, we simply need to evaluate $\hat{\tau}_{f_{k,l}}\hat{\tau}_{e_k}$ and $\hat{\tau}_{e_k}\hat{\tau}_{f_{k,l}}$ against the element $\varepsilon^{\otimes g} \otimes 1_H^{\otimes n} \in (H^*)^{\otimes g} \otimes H^{\otimes n}$.
	
	\noindent
	First, let us evaluate $\hat{\tau}_{e_k}$ against $\varepsilon^{\otimes g} \otimes 1_H^{\otimes n}$.
    The curve $e_k$ is represented by the word $\Bigl( \prod_{i=1}^{k-1} b_i a_i^{-1} b_i^{-1}a_i \Bigr) \\ 
	b_k a_k^{-1} b_k^{-1}$ (in the usual generators of $\pi_1(\Sigma_{g,n}^1)$ and their inverses), with normalization $N(e_k) = 2k$. 
	Since the actions of $\mathfrak{i}_{a_i}(\varphi)$, $\mathfrak{i}_{b_i}(\varphi)$, $i \in [\![ 1, k ]\!] $ affect only the first $k$ copies of $H^*$ in $(H^*)^{\otimes g} \otimes H^{\otimes n}$, it suffices to evaluate $\hat{\tau}_{e_k}$ against $\varepsilon^{\otimes k}$.
    We have:
	\begin{align*}
		\hat{\tau}_{e_k} = \mathfrak{i}_{e_k}(\lambda^v) \overset{\text{\labelcref{def-injection_igamma}}}{=} \lambda^v_{(1)}(v^{2k}) \biggl( \prod_{i=1}^{k-1} \mathfrak{i}_{b_i}(\lambda^v_{(4i-2)}) \mathfrak{i}_{a_i^{-1}}(\lambda^v_{(4i-1)}) \mathfrak{i}_{b_i^{-1}}(\lambda^v_{(4i)}) & \mathfrak{i}_{a_i}(\lambda^v_{(4i+1)}) \biggr) \\
		& \mathfrak{i}_{b_k}(\lambda^v_{(4k-2)}) \mathfrak{i}_{a_k^{-1}}(\lambda^v_{(4k-1)}) \mathfrak{i}_{b_k^{-1}}(\lambda^v_{(4k)}).
	\end{align*}
	If we denote by $d_i$ the curve represented by the word $b_i a_i^{-1}b_i^{-1}a_i$, since $v$ is central and $N(d_i) = 2$, we can rewrite this expression as: 
	\begin{equation*}
			\hat{\tau}_{e_k} = \mathfrak{i}_{d_1}(\lambda^v_{(1)}) \cdots \mathfrak{i}_{d_{k-1}}(\lambda^v_{(k-1)}) \, \lambda^v_{(k)}(v^2) \, \mathfrak{i}_{b_k}(\lambda^v_{(k+1)}) \mathfrak{i}_{a_k^{-1}}(\lambda^v_{(k+2)}) \mathfrak{i}_{b_k^{-1}}(\lambda^v_{(k+3)}).
	\end{equation*}
	Since the co-unit $\varepsilon$ is invariant under the right co-adjoint action, the formulas of \Cref{prop-definition_representation_combinee} for $\mathcal{R}_{g,n}^1(H)$ imply $\mathfrak{i}_{a_k}(\varphi) \cdot (\varepsilon^{\otimes g}) = \varphi(1_H) \, \varepsilon^{\otimes g}$.
	Furthermore, by the definition of the co-product of $H^*$ (i.e. $\Delta (\varphi) (x \otimes y) = \varphi (xy)$) we have $\Delta(\varphi)(x \otimes 1_H) = \varphi(x)$.
    Thus we deduce:
	\begin{align*}
		\hat{\tau}_{e_k} \cdot (\varepsilon^{\otimes k}) & = \mathfrak{i}_{d_1}(\lambda^v_{(1)}) \cdots \mathfrak{i}_{d_{k-1}}(\lambda^v_{(k-1)}) \lambda^v_{(k)}(v^2) \, \mathfrak{i}_{b_k}(\lambda^v_{(k+1)}) \mathfrak{i}_{a_k^{-1}}(\lambda^v_{(k+2)}) \mathfrak{i}_{b_k^{-1}}(\lambda^v_{(k+3)}) \cdot (\varepsilon^{\otimes k}) \\
		& = \mathfrak{i}_{d_1}(\lambda^v_{(1)}) \cdots \mathfrak{i}_{d_{k-1}}(\lambda^v_{(k-1)}) \lambda^v_{(k)}(v^2) \, \mathfrak{i}_{b_k}(\lambda^v_{(k+1)}) \mathfrak{i}_{a_k^{-1}}(\lambda^v_{(k+2)}) \mathfrak{i}_{b_k^{-1}}(\lambda^v_{(k+3)}) \lambda^v_{(k+4)}(1_H) \cdot (\varepsilon^{\otimes k}) \\
		& = \mathfrak{i}_{d_1}(\lambda^v_{(1)}) \cdots \mathfrak{i}_{d_{k-1}}(\lambda^v_{(k-1)}) \lambda^v_{(k)}(v^2) \, \mathfrak{i}_{b_k}(\lambda^v_{(k+1)}) \mathfrak{i}_{a_k^{-1}}(\lambda^v_{(k+2)}) \mathfrak{i}_{b_k^{-1}}(\lambda^v_{(k+3)}) \mathfrak{i}_{a_k}(\lambda^v_{(k+4)}) \cdot (\varepsilon^{\otimes k}) \\
		& = \mathfrak{i}_{d_1}(\lambda^v_{(1)}) \cdots \mathfrak{i}_{d_{k-1}}(\lambda^v_{(k-1)}) \mathfrak{i}_{d_{k}}(\lambda^v_{(k)}) \cdot (\varepsilon^{\otimes k}) \\
		& \overset{\text{\labelcref{prop-formule_di}}}{=} \lambda^v(1_H) \, \varepsilon^{\otimes k} = \varepsilon^{\otimes k}.
	\end{align*}
	Let us now evaluate $\hat{\tau}_{f_{k,l}}$ against $\varepsilon^{\otimes g} \otimes 1_H^{\otimes n}$. 
    The curve $f_{k,l}$ is represented by the word $b_g^{-1} \cdots b_{k+1}^{-1} a_k b_{k+1} \cdots \\
	b_g m_{g+1} \cdots m_{g+l}$ (in the usual generators of $\pi_1(\Sigma_{g,n}^1)$ and their inverses), with normalization $N(f_{k,l}) = 0$.
	Since the actions of $\mathfrak{i}_{a_k}(\varphi)$, $\mathfrak{i}_{b_i}(\varphi)$, and $\mathfrak{i}_{m_{g+j}}(\varphi)$, $i \in [\![ k+1, g ]\!]$, $j \in [\![ 1, l ]\!] $ affect only $(H^*)^{\otimes g} \otimes H^{\otimes l} \subset (H^*)^{\otimes g} \otimes H^{\otimes n}$, it suffices to evaluate $\hat{\tau}_{f_{k,l}}$ against $\varepsilon^{\otimes g} \otimes 1_H^{\otimes l}$.
	Thus we have: 
	\begin{equation*}
		\begin{split}
			\hat{\tau}_{f_{k,l}} = \mathfrak{i}_{f_{k,l}}(\lambda^v) = \mathfrak{i}_{b_g^{-1}}(\lambda^v_{(1)}) \cdots \mathfrak{i}_{b_{k+1}^{-1}}(\lambda^v_{(g-k)}) \, \mathfrak{i}_{a_k}(\lambda^v_{(g-k+1)}) \, & \mathfrak{i}_{b_{k+1}}(\lambda^v_{(g-k+2)}) \cdots \mathfrak{i}_{b_g}(\lambda^v_{(2g-2k+1)}) \\
			& \mathfrak{i}_{m_{g+1}}(\lambda^v_{(2g-2k+2)}) \cdots \mathfrak{i}_{m_{g+l}}(\lambda^v_{(2g-2k+l+1)}).
		\end{split}
	\end{equation*}
	Using the formulas of \Cref{prop-definition_representation_combinee} for $\mathcal{R}_{g,n}^1(H)$ and $\varepsilon \in \inv \coad^r$, we obtain: 
	\begin{align*}
		\hat{\tau}_{f_{k,l}} \cdot (\varepsilon^{\otimes g} & \otimes 1_H^{\otimes l}) = \cdots \mathfrak{i}_{m_{g+l}}(\lambda^v_{(2g-2k+l+1)}) \cdot (\varepsilon^{\otimes g} \otimes 1_H^{\otimes l}) \\ 
		& = \cdots \mathfrak{i}_{m_{g+l-1}}(\lambda^v_{(2g-2k+l)}) \cdot \Bigl( \varepsilon^{\otimes g} \otimes \Bigl(\bigotimes_{i=1}^{l-1}S({r_{a_l}}_{(i)}) \Bigr) \otimes \Phi_{0,1}\big(\coad^r(r^{a_l})(\lambda^v_{(2g-2k+l+1)})\big) \Bigr) \\ 
		& = \cdots \mathfrak{i}_{b_g}(\lambda^v_{(2g-2k+1)}) \cdot \Bigl( \varepsilon^{\otimes g} \otimes \Phi_{0,1}(\lambda^v_{(2g-2k+2)}) S(r_{a_2}) S({r_{a_3}}_{(1)}) \cdots S({r_{a_l}}_{(1)}) \\
		& \qquad \qquad \quad \otimes \Phi_{0,1}\bigl(\coad^r(r^{a_2})(\lambda^v_{(2g-2k+3)})\bigr) S({r_{a_3}}_{(2)}) \cdots S({r_{a_l}}_{(2)}) \otimes \cdots \\
		& \qquad \qquad \quad \otimes \Phi_{0,1} \bigl( \coad^r( r^{a_{l-1}})(\lambda^v_{(2g-2k+l)})\bigr) S({r_{a_l}}_{(l-1)}) \otimes \Phi_{0,1}\bigl(\coad^r(r^{a_l})(\lambda^v_{(2g-2k+l+1)})\bigr) \Bigr).
	\end{align*}
	In the rest of the computations, since the component in $H^{\otimes l}$ remains unchanged (up to the index of the co-product of $\lambda^v$), we denote it by $\clubsuit$.
	\begin{align*}
		\hat{\tau}_{f_{k,l}} \cdot (\varepsilon^{\otimes g} & \otimes 1_H^{\otimes l}) = \cdots \mathfrak{i}_{b_g}(\lambda^v_{(2g-2k+1)}) \cdot (\varepsilon^{\otimes g} \otimes \clubsuit) \\
		& = \cdots \mathfrak{i}_{a_k}(\lambda^v_{(g-k+1)}) \cdot \bigl( \varepsilon^{\otimes k} \otimes \lambda^v_{(g-k+2)} \otimes \cdots \otimes \lambda^v_{(2g-2k+1)} \otimes \clubsuit \bigr) \\ 
		& = \cdots \mathfrak{i}_{b_{k+1}^{-1}}(\lambda^v_{(g-k)}) \cdot \bigl( \varepsilon^{\otimes k-1} \otimes \bigl( \Phi_{0,1}(\lambda^v_{(g-k+1)}) \triangleright \varepsilon \bigr) \otimes \lambda^v_{(g-k+2)} \otimes \cdots \otimes \lambda^v_{(2g-2k+1)} \otimes \clubsuit \bigr) \\
		& = \cdots \mathfrak{i}_{b_{k+1}^{-1}}(\lambda^v_{(g-k)}) \cdot \bigl( \varepsilon^{\otimes k-1} \otimes \bigl( (\varepsilon \circ \Phi_{0,1})(\lambda^v_{(g-k+1)}) \, \varepsilon \bigr) \otimes \lambda^v_{(g-k+2)} \otimes \cdots \otimes \lambda^v_{(2g-2k+1)} \otimes \clubsuit \bigr) \\
		& = \cdots \mathfrak{i}_{b_{k+1}^{-1}}(\lambda^v_{(g-k)}) \cdot \bigl( \varepsilon^{\otimes k} \otimes \lambda^v_{(g-k+1)} \otimes \cdots \otimes \lambda^v_{(2g-2k)} \otimes \clubsuit \bigr) \\
		& = \cdots \mathfrak{i}_{b_{k+2}^{-1}}(\lambda^v_{(g-k-1)}) \cdot \bigl( \varepsilon^{\otimes k} \otimes \bigl( S_{\mathcal{L}_{0, 1}(H)}(\lambda^v_{(g-k)}) \lambda^v_{(g-k+1)} \bigr) \otimes \cdots \otimes \lambda^v_{(2g-2k)} \otimes \clubsuit \bigr) \\
		& = \cdots \mathfrak{i}_{b_{k+2}^{-1}}(\lambda^v_{(g-k-1)}) \cdot \bigl( \varepsilon^{\otimes k+1} \otimes \lambda^v_{(g-k)} \otimes \cdots \otimes \lambda^v_{(2g-2k-2)} \otimes \clubsuit \bigr) \\
		& = \varepsilon^{\otimes g} \otimes \clubsuit,
	\end{align*}
	where we use the formulas of \Cref{prop-definition_representation_combinee} for $\mathcal{R}_{g,n}^1(H)$, the fact that $\varepsilon \in \inv \coad^r$, \labelcref{eq-R-matrice_et_co-unite}, and the fact that $S_{\mathcal{L}_{0,1}(H)}$ is an antipode.
    Note finally that $\clubsuit$ is equal to:
	\begin{equation*}
		\begin{split}
			\Phi_{0,1}(\lambda^v_{(1)}) S(r_{a_2}) S({r_{a_3}}_{(1)}) \cdots & S({r_{a_l}}_{(1)}) \otimes \Phi_{0,1}\bigl(\coad^r(r^{a_2})(\lambda^v_{(2)})\bigr) S({r_{a_3}}_{(2)}) \cdots S({r_{a_l}}_{(2)}) \\
			& \otimes \cdots \otimes \Phi_{0,1} \bigl( \coad^r( r^{a_{l-1}})(\lambda^v_{(l-1)})\bigr) S({r_{a_l}}_{(l-1)}) \otimes \Phi_{0,1}\bigl(\coad^r(r^{a_l})(\lambda^v_{(l)})\bigr).
		\end{split}
	\end{equation*}
	So on one hand we get: 
	\begin{equation*}
		\hat{\tau}_{f_{k,l}} \hat{\tau}_{e_k} \cdot (\varepsilon^{\otimes g} \otimes 1_H^{\otimes l}) = \hat{\tau}_{f_{k,l}} \cdot (\varepsilon^{\otimes g} \otimes 1_H^{\otimes l}) = \varepsilon^{\otimes g} \otimes \clubsuit,
	\end{equation*}
	and on the other hand:	
	\begin{equation*}
		\hat{\tau}_{e_k} \hat{\tau}_{f_{k,l}} \cdot (\varepsilon^{\otimes g} \otimes 1_H^{\otimes l}) = \hat{\tau}_{e_k} \cdot (\varepsilon^{\otimes g} \otimes \clubsuit) = \varepsilon^{\otimes g} \otimes \clubsuit.
	\end{equation*}
	It follows $\hat{\tau}_{f_{k,l}} \hat{\tau}_{e_k} = \hat{\tau}_{e_k} \hat{\tau}_{f_{k,l}}$ in $\GL(V_{g,n}^1)$.
\end{proof}

\begin{proposition}[1-braid relation]\label{prop-calcul_relation_1-tresse}
	Consider the curves $a_1$ and $b_1$ of $\Sigma_{g,n}^1$ represented in the following figure. 
	\begin{center}
		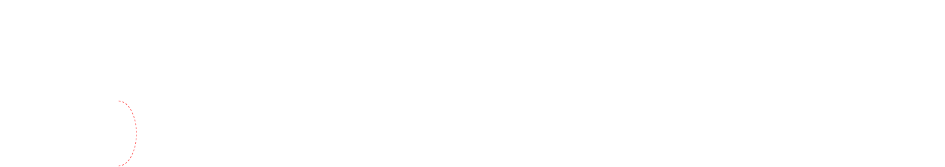
	\end{center}
	We have $\hat{\tau}_{a_1}\hat{\tau}_{b_1}\hat{\tau}_{a_1} = \hat{\tau}_{b_1}\hat{\tau}_{a_1}\hat{\tau}_{b_1}$ in $\GL(V_{g,n}^1)$.
\end{proposition}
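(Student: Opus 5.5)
The plan is to use the fact, established in \Cref{subsec-representation_projective_rhogn1}, that $\rho_{g,n}^1$ is a projective representation. Hence $\hat{\tau}_{a_1}\hat{\tau}_{b_1}\hat{\tau}_{a_1}$ and $\hat{\tau}_{b_1}\hat{\tau}_{a_1}\hat{\tau}_{b_1}$ differ by a scalar $\eta \in \mathbb{K}^*$, so it suffices to compare them on a single nonzero vector. As in the proof of \Cref{prop-calcul_relations_0-tresse}, I would work in the lift $\mathcal{R}_{g,n}^1(H)$ (\Cref{prop-representation_relevee_a_H}) and evaluate on $\varepsilon^{\otimes g} \otimes 1_H^{\otimes n}$. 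By \Cref{prop-formules_ai_bi_ci}, $\hat{\tau}_{a_1}$ and $\hat{\tau}_{b_1}$ act only on the first tensor factor $\psi_1$, via
\[ \psi_1 \mapsto v^{-1} \triangleright \psi_1 \qquad \text{and} \qquad \psi_1 \mapsto \psi_1 \star \lambda^v \]
respectively. So everything reduces to a computation in $H^*$ starting from $\varepsilon$.

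For the left-hand side, I first use $v^{-1} \triangleright \varepsilon = \varepsilon(v^{-1})\varepsilon = \varepsilon$, so $\hat{\tau}_{a_1}$ fixes $\varepsilon$. Then $\hat{\tau}_{b_1}$ sends $\varepsilon$ to $\varepsilon \star \lambda^v = \lambda^v$. Finally $\hat{\tau}_{a_1}$ sends $\lambda^v$ to
\[ v^{-1} \triangleright \lambda^v = \frac{v^{-1}v \triangleright \lambda}{\lambda(v)} = \frac{\lambda}{\lambda(v)}. \]
For the right-hand side, $\hat{\tau}_{b_1}(\varepsilon) = \lambda^v$ and then $\hat{\tau}_{a_1}(\lambda^v) = \lambda/\lambda(v)$, exactly as above. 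The last step is $\hat{\tau}_{b_1}$ applied to $\lambda/\lambda(v)$, which gives $(\lambda \star \lambda^v)/\lambda(v)$.

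The only non-formal point, and the step I expect to need care, is evaluating $\lambda \star \lambda^v$. Here I invoke \Cref{rem-lambda_est_integrale_a_droite}: by factorizability, $\lambda$ is also a right integral, so $\lambda \star \varphi = \varphi(1_H)\lambda$ for all $\varphi \in H^*$. Moreover
\[ \lambda^v(1_H) = \frac{(v \triangleright \lambda)(1_H)}{\lambda(v)} = \frac{\lambda(v)}{\lambda(v)} = 1. \]
Hence $\lambda \star \lambda^v = \lambda$, and the right-hand side also gives $\lambda/\lambda(v)$. (Should one prefer to avoid the remark, one can instead rewrite $\lambda \star \lambda^v$ via \Cref{lem-produit_L01_si_un_est_invariant} as $\lambda^v \star \lambda$, since both $\lambda$ and $\lambda^v$ lie in $\inv \coad^r$, and then apply the left integral property.)

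Thus both sides send $\varepsilon^{\otimes g} \otimes 1_H^{\otimes n}$ to the same nonzero vector $\frac{\lambda}{\lambda(v)} \otimes \varepsilon^{\otimes (g-1)} \otimes 1_H^{\otimes n}$. The vector is nonzero since $\lambda \neq 0$, and $\lambda(v) \neq 0$ because of \Cref{prop-action_twists_de_Dehn_non_separants_comme_conjugaison_et_proprietes}. Therefore $\eta = 1$. This identity holds for $\mathcal{R}_{g,n}^1(H)$, and it descends to $\mathcal{R}_{g,n}^1(V)$ through the lift of \Cref{prop-representation_relevee_a_H}, giving the equality in $\GL(V_{g,n}^1)$. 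By \Cref{lem-egalite_entre_les_relations_1-tresse}, this single computation shows that all 1-braid relators are trivial.
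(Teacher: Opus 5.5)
Your proof is correct and follows the paper's argument essentially verbatim. Like the paper, you reduce to comparing both products on $\varepsilon^{\otimes g}\otimes 1_H^{\otimes n}$, hence on $\varepsilon$ in the first factor, compute with \Cref{prop-formules_ai_bi_ci}, and conclude via $\lambda\star\lambda^v=\lambda^v(1_H)\,\lambda=\lambda$ using \Cref{rem-lambda_est_integrale_a_droite}. (The passage from $\mathcal{R}_{g,n}^1(H)$ to $\mathcal{R}_{g,n}^1(V)$ is more direct than your "descends through the lift": the formulas for $\mathfrak{i}_{a_1}$ and $\mathfrak{i}_{b_1}$ in \Cref{prop-definition_representation_combinee} do not touch the $V_i$ factors, so you can evaluate directly on $\varepsilon^{\otimes g}\otimes x$ with $x\neq 0$ in $V_1\otimes\cdots\otimes V_n$.)
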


\begin{proof}
	Since $\rho_{g,n}^1$ is a projective representation, we know that $\hat{\tau}_{a_1}\hat{\tau}_{b_1}\hat{\tau}_{a_1}$ and $\hat{\tau}_{b_1}\hat{\tau}_{a_1}\hat{\tau}_{b_1}$ are equal up to a scalar.
	To find this scalar, we simply need to evaluate $\hat{\tau}_{a_1}\hat{\tau}_{b_1}\hat{\tau}_{a_1}$ and $\hat{\tau}_{b_1}\hat{\tau}_{a_1}\hat{\tau}_{b_1}$ against the element $\varepsilon^{\otimes g} \otimes 1_H^{\otimes n} \in (H^*)^{\otimes g} \otimes H^{\otimes n}$.
	Since the actions of $\mathfrak{i}_{a_1}(\varphi)$ and $\mathfrak{i}_{b_1}(\varphi)$ affect only the first copy of $H^*$ in $(H^*)^{\otimes g} \otimes H^{\otimes n}$, it suffices to evaluate $\hat{\tau}_{a_1}\hat{\tau}_{b_1}\hat{\tau}_{a_1}$ and $\hat{\tau}_{b_1}\hat{\tau}_{a_1}\hat{\tau}_{b_1}$ against the co-unit $\varepsilon$. 
    On one hand, we compute:
	\begin{equation*}
		\hat{\tau}_{a_1}\hat{\tau}_{b_1}\hat{\tau}_{a_1} \cdot (\varepsilon) \overset{\text{\labelcref{prop-formules_ai_bi_ci}}}{=} \hat{\tau}_{a_1}\hat{\tau}_{b_1} \cdot (v^{-1} \triangleright \varepsilon) = \hat{\tau}_{a_1}\hat{\tau}_{b_1} \cdot (\varepsilon) \overset{\text{\labelcref{prop-formules_ai_bi_ci}}}{=} \hat{\tau}_{a_1} \cdot (\lambda^v) \overset{\text{\labelcref{prop-formules_ai_bi_ci}}}{=} v^{-1} \triangleright \lambda^v = \frac{1}{\lambda(v)} \lambda.
	\end{equation*}
	And on the other hand, we compute:
	\begin{equation*}
		\hat{\tau}_{b_1}\hat{\tau}_{a_1}\hat{\tau}_{b_1} \cdot (\varepsilon) \overset{\text{\labelcref{prop-formules_ai_bi_ci}}}{=} \hat{\tau}_{b_1}\hat{\tau}_{a_1} \cdot (\lambda^v) \overset{\text{\labelcref{prop-formules_ai_bi_ci}}}{=} \hat{\tau}_{b_1} \cdot (v^{-1} \triangleright \lambda^v) = \hat{\tau}_{b_1} \cdot \biggl( \frac{1}{\lambda(v)} \lambda \biggr) \overset{\text{\labelcref{prop-formules_ai_bi_ci}}}{=} \frac{1}{\lambda(v)} \, \lambda \star \lambda^v \overset{\text{\labelcref{rem-lambda_est_integrale_a_droite}}}{=} \frac{1}{\lambda(v)} \, \lambda^v(1_H) \, \lambda = \frac{1}{\lambda(v)} \, \lambda.
	\end{equation*}
	It follows $\hat{\tau}_{a_1}\hat{\tau}_{b_1}\hat{\tau}_{a_1} = \hat{\tau}_{b_1}\hat{\tau}_{a_1}\hat{\tau}_{b_1}$ in $\GL(V_{g,n}^1)$.
\end{proof}

\begin{proposition}[Lantern relation]\label{prop-calcul_relation_lanterne}
	Consider the curves $b_1$, $b_2$, $b_3$, $b_4$, $x$, $y$, and $z$ of $\Sigma_{g,n}^1$ represented in the following figure. 
	\begin{center}
		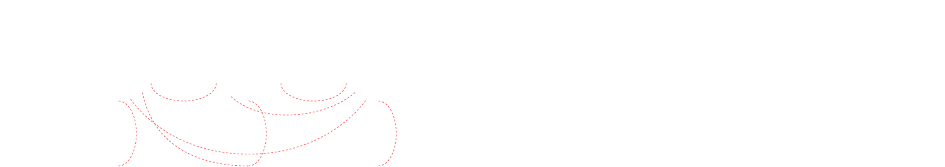
	\end{center}
	We have $\hat{\tau}_{b_1} \hat{\tau}_{b_2} \hat{\tau}_{b_3} \hat{\tau}_{b_4} = \hat{\tau}_x \hat{\tau}_y \hat{\tau}_z$ in $\GL(V_{g,n}^1)$.
\end{proposition}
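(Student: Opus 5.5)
The plan follows the scheme used for the 0- and 1-braid relations. Since $\rho_{g,n}^1$ is projective, $\hat{\tau}_{b_1} \hat{\tau}_{b_2} \hat{\tau}_{b_3} \hat{\tau}_{b_4}$ and $\hat{\tau}_x \hat{\tau}_y \hat{\tau}_z$ are equal up to a scalar in $\mathbb{K}^*$. It therefore suffices to evaluate both sides on a single well-chosen nonzero vector and compare. By \Cref{prop-representation_relevee_a_H} and the discussion preceding \Cref{prop-formules_ai_bi_ci}, we may compute with $\mathcal{R}_{g,n}^1(H)$ and evaluate on $\varepsilon^{\otimes g} \otimes 1_H^{\otimes n}$. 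The lantern is embedded in a genus-$3$ subsurface, so only the first three tensor factors of $(H^*)^{\otimes g}$ are involved. It suffices to check that both sides send $\varepsilon^{\otimes 3}$ to the same vector.

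\textbf{Choice of curves.} I would use the change of coordinates principle together with \Cref{lem-relation_0-tresse}-type invariance: the scalar of the lantern relator does not depend on the embedding, by the same conjugation argument as in \Cref{lem-egalite_entre_les_relations_1-tresse}. This lets me place the lantern so that as many curves as possible are standard ones. I would take the four boundary curves to be $a_1$, $a_2$, $a_3$ and a curve of type $e$ or $f_{k,l}$ (compare \Cref{prop-calcul_relations_0-tresse}). The interior curves $x$, $y$, $z$ would be curves such as $c_1$, $c_2$ and a curve linking handles $1$ and $3$.

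\textbf{Evaluating the left-hand side.} By \Cref{prop-formules_ai_bi_ci}, each $\hat{\tau}_{a_j}$ acts by $v^{-1} \triangleright$. Since $\varepsilon(v^{-1}) = 1$, each of them fixes $\varepsilon^{\otimes 3}$. The remaining boundary curve also acts trivially on $\varepsilon^{\otimes g} \otimes 1_H^{\otimes n}$. I would show this by writing it as a word in the generators of $\pi_1(\Sigma_{g,n}^1)$ and repeating the argument used for $\hat{\tau}_{e_k}$, namely that $\mathfrak{i}_{a_k}(\varphi)$ acts on $\varepsilon$ by $\varphi(1_H)$, together with the formula for $\mathfrak{i}_{d_i}$. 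So the left side should give $\varepsilon^{\otimes 3}$.

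\textbf{Evaluating the right-hand side.} By \Cref{prop-formules_ai_bi_ci}, $\hat{\tau}_{c_j}$ acts through $\Delta(v^{-1})$ split as $S(v^{-1}_{(1)}) \triangleright$ and $\triangleleft v^{-1}_{(2)}$, and this fixes $\varepsilon \otimes \varepsilon$ because $\varepsilon$ is multiplicative and $\varepsilon(v^{-1}) = 1$. The third curve, joining handles $1$ and $3$, is not standard. I would express it as a conjugate of $c_1$ or $c_2$ by a based word, expand $\mathfrak{i}$ of that curve via \Cref{def-injection_igamma} with its normalization $N$, and use the diagrammatic rules of the appendix together with the fact that $\lambda^v \in \inv \coad^r$. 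Cancelling pairs $\mathfrak{i}_{b}(\varphi_{(1)}) \mathfrak{i}_{b^{-1}}(\varphi_{(2)})$ through the antipode of $\mathcal{L}_{0,1}(H)$, as in the $f_{k,l}$ computation, should reduce it to $\lambda^v(v^{N})$-type factors times $\varepsilon^{\otimes 3}$.

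\textbf{Main obstacle.} The hardest step is the non-standard curve. Its normalization $N$ and the resulting $v$-powers must cancel exactly. That cancellation is what shows the lantern scalar is $1$, which is what \Cref{thmintro-extension_qui_linearise_et_linearisation} requires since it imposes no correction on the lantern relation. If a direct evaluation on $\varepsilon^{\otimes 3}$ proves awkward, I would first try a different placement of the lantern in which all seven curves are of the form $a_j$, $c_j$, $e_k$, $f_{k,l}$, or conjugates thereof by $\tau_{b_j}$. Conjugation changes nothing up to the scalar-independent term, and $\hat{\tau}_{b_j}$ acts by $\star \lambda^v$. This reduces everything to the formulas already available in \Cref{prop-formules_ai_bi_ci}.
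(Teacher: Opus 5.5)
Your overall strategy is the paper's: since both sides agree up to a scalar, evaluate them on $\varepsilon^{\otimes 3}$, using \Cref{prop-formules_ai_bi_ci} for the standard curves and a diagrammatic computation for the others. The gap is in the step that matters, the choice of curves: the configuration you propose is not a lantern.

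The three interior curves of a lantern pairwise intersect twice. But the standard curves $c_1$ and $c_2$ are disjoint. In the paper they are two of the \emph{boundary} curves of the lantern. Consistently with this, $\hat{\tau}_{c_1}$ and $\hat{\tau}_{c_2}$ commute: on the second tensor factor one acts by $\triangleleft\, v^{-1}_{(2)}$ and the other by $S(v^{-1}_{(1)}) \triangleright$. The fourth boundary curve also cannot be of type $e_k$ or $f_{k,l}$. These are homologous to $\pm[a_k]$ plus puncture classes. A curve cobounding a four-holed sphere with $a_1, a_2, a_3$ must instead be homologous to $\pm[a_1] \pm [a_2] \pm [a_3]$, because $H_1(\Sigma_{g,n}^1)$ is free on the classes of $a_i$, $b_i$, $m_j$. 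Your fallback, a placement where all seven curves are standard or $\tau_{b_j}$-conjugates of standard curves, is only a hope, since you do not exhibit such a lantern.

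The placement that works is the paper's. The boundary curves are $a_1$, $c_1$, $c_2$, $a_3$. The interior curves are $x = a_2$ and two genuinely non-standard curves $y$, $z$. For instance, $z$ is represented by the word $a_1 b_2 b_3 a_3 b_3^{-1} a_2^{-1} b_2^{-1}$, with $N(z) = 0$. With this choice, the left-hand side and $\hat{\tau}_x$ fix $\varepsilon^{\otimes 3}$ directly by \Cref{prop-formules_ai_bi_ci}, using $\varepsilon(v^{-1}) = 1$ and multiplicativity of $\varepsilon$.

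All the real work lies in the explicit formulas for $\hat{\tau}_y$ and $\hat{\tau}_z$ (\Cref{prop-formule_y,prop-formule_z}). These come from the Hennings-type diagrammatic evaluation, simplified with $\lambda^v \in \inv \coad^r$ and the $R$-matrix identities. On $\varepsilon^{\otimes 3}$ both reduce to $\lambda^v(1_H)\, \varepsilon^{\otimes 3} = \varepsilon^{\otimes 3}$; for $y$ one uses $S^{-1}(h_{(2)}) h_{(1)} = \varepsilon(h) 1_H$. So no cancellation of $v$-powers of the kind you anticipate is needed. Until you fix an actual lantern and carry out (or cite) these two computations, the proposal does not prove the relation.
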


\begin{proof}
	The formulas of the actions of $\tau_y$ and $\tau_z$ are given in \Cref{prop-formule_y,prop-formule_z}; they affect only the first three copies of $H^*$ in $(H^*)^{\otimes g} \otimes H^{\otimes n}$.
	Since the actions of $\mathfrak{i}_{a_j}(\varphi)$ and $\mathfrak{i}_{b_j}(\varphi)$, $j \in \{1, 2, 3\}$, also affect only the first three copies of $H^*$ in $(H^*)^{\otimes g} \otimes H^{\otimes n}$, it suffices to evaluate $\hat{\tau}_{b_1} \hat{\tau}_{b_2} \hat{\tau}_{b_3} \hat{\tau}_{b_4}$ and $\hat{\tau}_x \hat{\tau}_y \hat{\tau}_z$ against $\varepsilon^{\otimes 3}$ in order to compare them. 
	On one hand, we compute: 
	\begin{align*}
		\hat{\tau}_{b_1} \hat{\tau}_{b_2} \hat{\tau}_{b_3} \hat{\tau}_{b_4} \cdot (\varepsilon^{\otimes 3}) & \overset{\text{\labelcref{prop-formules_ai_bi_ci}}}{=} \hat{\tau}_{b_1} \hat{\tau}_{b_2} \hat{\tau}_{b_3} \cdot \bigl( \varepsilon^{\otimes 2} \otimes (v^{-1} \triangleright \varepsilon) \bigr) \\
		& = \hat{\tau}_{b_1} \hat{\tau}_{b_2} \hat{\tau}_{b_3} \cdot (\varepsilon^{\otimes 3}) \\
		& \overset{\text{\labelcref{prop-formules_ai_bi_ci}}}{=} \hat{\tau}_{b_1} \hat{\tau}_{b_2} \cdot \Bigl(  \varepsilon \otimes \bigl( S(v_{(1)}^{-1}) \triangleright \varepsilon \bigr) \otimes (\varepsilon \triangleleft v_{(2)}^{-1}) \Bigr) \\
		& = \hat{\tau}_{b_1} \hat{\tau}_{b_2} \cdot \bigl( \varepsilon \otimes (v^{-1} \triangleright \varepsilon) \otimes \varepsilon \bigr) \\
		& = \hat{\tau}_{b_1} \hat{\tau}_{b_2} \cdot (\varepsilon^{\otimes 3}) \\
		& \overset{\text{\labelcref{prop-formules_ai_bi_ci}}}{=} \hat{\tau}_{b_1} \cdot \Bigl(  \bigl( S(v_{(1)}^{-1}) \triangleright \varepsilon \bigr) \otimes (\varepsilon \triangleleft v_{(2)}^{-1}) \otimes \varepsilon \Bigr) \\
		& = \hat{\tau}_{b_1} \cdot (\varepsilon^{\otimes 3}) \overset{\text{\labelcref{prop-formules_ai_bi_ci}}}{=} (v^{-1} \triangleright \varepsilon) \otimes \varepsilon^{\otimes 2} = \varepsilon^{\otimes 3}.
	\end{align*}
	In the following, for all $\varphi \in H^*$, $\varphi (h \bullet g)$ denotes $g \triangleright \varphi \triangleleft h$, and $\varphi(\bullet_{(1)} \bullet_{(2)})$ denotes $\varphi \circ \mu \circ \Delta$.
	On the other hand, we compute:
	\begin{align*}
		\hat{\tau}_x \hat{\tau}_y \hat{\tau}_z \cdot (\varepsilon^{\otimes 3}) & \overset{\text{\labelcref{prop-formule_z}}}{=} \hat{\tau}_x \hat{\tau}_y \cdot \bigl(\lambda^v(1_H) \, \varepsilon^{\otimes 3} \bigr) \\
		& = \hat{\tau}_x \hat{\tau}_y \cdot (\varepsilon^{\otimes 3}) \\
		& \overset{\text{\labelcref{prop-formule_y}}}{=} \hat{\tau}_x \cdot \Bigl( \varepsilon \otimes \bigl( \lambda^v \bigl( S^{-1}(\bullet_{(2)}) \bullet_{(1)} \bigr) \star \varepsilon \bigr) \otimes \varepsilon \Bigr) \\
		& = \hat{\tau}_x \cdot \Bigl( \varepsilon \otimes \lambda^v \bigl( S^{-1}(\bullet_{(2)}) \bullet_{(1)} \bigr) \otimes \varepsilon \Bigr) \\
		& = \hat{\tau}_x \cdot \Bigl( \varepsilon \otimes \lambda^v \bigl( \varepsilon( \bullet ) \bigr) \otimes \varepsilon \Bigr) \\
		& = \hat{\tau}_x \cdot \bigl( \lambda^v(1_H) \, \varepsilon^{\otimes 3} \bigr) 
		\overset{\text{\labelcref{prop-formules_ai_bi_ci}}}{=} \varepsilon \otimes (v^{-1} \triangleright \varepsilon) \otimes \varepsilon = \varepsilon^{\otimes 3}.
	\end{align*}
	In these computations, we used the relations \labelcref{eq-R-matrice_et_co-unite} and the fact that $S^{-1}$ is an antipode for $\Delta^{op}$. 	
	It follows $\hat{\tau}_{b_1} \hat{\tau}_{b_2} \hat{\tau}_{b_3} \hat{\tau}_{b_4} = \hat{\tau}_x \hat{\tau}_y \hat{\tau}_z$ in $\GL(V_{g,n}^1)$.
\end{proof}

Since the 0-braid, 1-braid, and lantern relators are trivial, we do not need to renormalize $\hat{\rho}_{g,n}^1$ in order to determine the central extension associated with $\rho_{g,n}^1$.
All that remains is to compute the image by $\hat{\rho}_{g,n}^1$ of the 3-chain relation and the puncture relations, if $g \geq 2$, and of $(\tau_a \tau_b)^6$, if $g = 1$. 

\begin{proposition}[3-chain relation]\label{prop-calcul_relation_3-chaine}
	Consider the curves $a, b, c, d$, and $e$ of $\Sigma_{g,n}^1$ represented in the following figure. 
	\begin{center}
		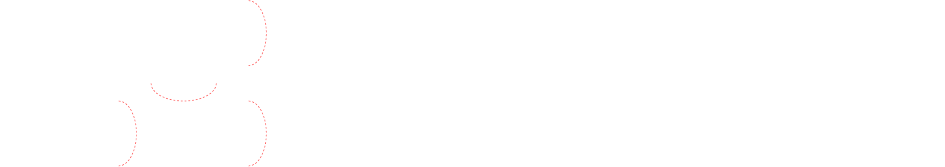
	\end{center}
	We have the following equality in $\GL(V_{g,n}^1)$:
	\[ (\hat{\tau}_a \hat{\tau}_b \hat{\tau}_c)^4 = \biggl( \frac{\lambda(v^{-1})}{\lambda(v)} \biggr)^2 \hat{\tau}_d \hat{\tau}_e. \]
\end{proposition}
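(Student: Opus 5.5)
Following the same strategy as for the 0-braid, 1-braid and lantern relations, we know a priori that $(\hat{\tau}_a \hat{\tau}_b \hat{\tau}_c)^4$ and $\hat{\tau}_d \hat{\tau}_e$ differ by a scalar, since $\rho_{g,n}^1$ is projective. So it suffices to evaluate both sides on one well-chosen vector for which the answer is nonzero. We work with the lift $\mathcal{R}_{g,n}^1(H)$ (\Cref{prop-representation_relevee_a_H}) and the vector $\varepsilon^{\otimes g} \otimes 1_H^{\otimes n}$. By the change of coordinates principle and \Cref{lem-egalite_entre_les_relations_1-tresse}-type arguments (the scalar is conjugation invariant), we may place the 3-chain in the first handle(s). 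We choose $a = a_1$, $b = b_1$, $c = c_1$, so that the actions are given explicitly by \Cref{prop-formules_ai_bi_ci}. Then $d$ and $e$ are the two boundary curves of a neighbourhood of $a \cup b \cup c$, which is a copy of $\Sigma_1^2$. One of them, say $d$, is isotopic to $a_2$, whose action $v^{-1} \triangleright -$ fixes $\varepsilon$. The other, $e$, is a curve of the form $e_2$ from \Cref{prop-calcul_relations_0-tresse}, or more precisely a curve separating the first two handles from the rest together with $a_2$. Its action on $\varepsilon^{\otimes g}$ is computed as there, via the knot map decomposition of \Cref{def-injection_igamma} and the fact that $\mathfrak{i}_{a_k}(\varphi)$ acts on $\varepsilon^{\otimes g}$ by $\varphi(1_H)$. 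This should give $\hat{\tau}_d \hat{\tau}_e \cdot (\varepsilon^{\otimes g}\otimes 1_H^{\otimes n}) = \varepsilon^{\otimes g}\otimes 1_H^{\otimes n}$, with the $H^{\otimes n}$ part untouched since no $m_{g+k}$ appear.

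For the left-hand side, only the first two tensor factors of $H^*$ are affected. We iterate the three formulas of \Cref{prop-formules_ai_bi_ci} twelve times on $\varepsilon \otimes \varepsilon$. To keep this manageable, we first use the 1-braid relations, which are already known to hold exactly, to rewrite $(\hat{\tau}_a\hat{\tau}_b\hat{\tau}_c)^4$ as $\bigl((\hat{\tau}_a\hat{\tau}_b)^3\bigr)\cdot X$. We would try to exploit the torus computation from the proof of \Cref{lem-ab_puissance6_dans_noyau}: there $(\hat{\tau}_a\hat{\tau}_b)^3$ sends $\varphi$ to $\frac{\lambda(v^{-1})}{\lambda(v)} S_{\mathcal{L}_{0,1}(H)}(\varphi)$, and evaluates on $\varepsilon$ to $\frac{\lambda(v^{-1})}{\lambda(v)}\varepsilon$, as established in \Cref{prop-calcul_relation_ab_puissance6}. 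The classical identity $(\tau_a\tau_b\tau_c)^4 = (\tau_a\tau_b)^3 \cdot \tau_c^{-1}(\ldots)$ is not directly available. Instead, we would compute directly by pushing $\varepsilon\otimes\varepsilon$ through the word $(\hat{\tau}_a\hat{\tau}_b\hat{\tau}_c)^4$. Each $\hat{\tau}_a$ multiplies the first factor via $v^{-1}\triangleright$, each $\hat{\tau}_b$ convolves with $\lambda^v$, and each $\hat{\tau}_c$ applies $S(v^{-1}_{(1)})\triangleright - \otimes - \triangleleft v^{-1}_{(2)}$. We simplify using centrality of $v$, $\Delta(v) = (R_{21}R)^{-1}(v\otimes v)$, the integral properties (\Cref{rem-lambda_est_integrale_a_droite}, $\lambda\circ S^2=\lambda$, invariance under $\coad^r$) and \Cref{lem-lambda^v_star_lambda^v-1_egale_epsilon}. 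The expected outcome is that two Gauss-sum-type factors appear, each $\lambda(v^{-1})/\lambda(v)$. They arise when $\lambda^v$ gets paired against $v^{-1}$ after a $c$-twist transfers $v$ into the second factor: $\lambda^v(v^{-2})$-type evaluations reduce, via $\Phi_{0,1}(\lambda^v)=v^{-1}$ and \eqref{eq-Phi01lambda^vinverse}, to $\lambda(v^{-1})/\lambda(v)$.

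The main obstacle is this twelve-step evaluation. The intermediate vectors stop being pure tensors after the first $\hat{\tau}_c$, and the $R$-matrix legs from $\Delta(v^{-1})$ must be tracked carefully. If the direct computation becomes unwieldy, the fallback is the diagrammatic calculus of \Cref{ann-description_diagrammatique_Lgn}. There, $(\hat{\tau}_a\hat{\tau}_b\hat{\tau}_c)^4$ applied to the vacuum is represented graphically, and the ribbon/twist identities produce the two scalars. Alternatively, we could reduce to the torus computation by choosing the vector $\varepsilon\otimes\varepsilon$ so that the $c$-twist acts through the boundary relation ${\alpha_2}_- = {\alpha_3}_+\partial^{-1}$, mimicking the star-relation argument.
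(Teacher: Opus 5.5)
Your overall strategy is the paper's: evaluate both sides on the vacuum, where only the first two $H^*$ factors matter, and show that the right-hand side fixes it. That half is essentially right. The paper obtains $\hat{\tau}_d\hat{\tau}_e\cdot(\varepsilon^{\otimes 2})=\varepsilon^{\otimes 2}$ from two facts: $\hat{\tau}_d$ acts like $\hat{\tau}_{a_2}$, and $\hat{\tau}_e$ has a direct formula (\Cref{prop-formule_e}). If you instead reuse the computation of $\hat{\tau}_{e_2}$ from \Cref{prop-calcul_relations_0-tresse}, you must first check that $e_2$ really is the second boundary curve of a neighbourhood of $a_1\cup b_1\cup c_1$, since $e$ is determined by $a,b,c$.

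The genuine gap is the left-hand side, which is the whole content of the proposition. You never compute it, and the mechanism you predict is wrong. The intermediate vectors never stop being pure tensors, and nothing is transferred into the second factor. Since $\varepsilon\triangleleft h=\varepsilon(h)\,\varepsilon$ and $S(v)=v$, \Cref{prop-formules_ai_bi_ci} gives
\[ \hat{\tau}_c\cdot(\psi\otimes\varepsilon)=\bigl(S(v^{-1}_{(1)})\,\varepsilon(v^{-1}_{(2)})\triangleright\psi\bigr)\otimes\varepsilon=(v^{-1}\triangleright\psi)\otimes\varepsilon=\hat{\tau}_a\cdot(\psi\otimes\varepsilon). \]
So $H^*\otimes\varepsilon$ (tensored with the untouched factors) is stable under $\hat{\tau}_a,\hat{\tau}_b,\hat{\tau}_c$, and $\hat{\tau}_c$ acts there exactly as $\hat{\tau}_a$. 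The computation then takes a few lines:
\begin{itemize}
\item First pass: $\hat{\tau}_a\hat{\tau}_b\hat{\tau}_c\cdot(\varepsilon\otimes\varepsilon)=(v^{-1}\triangleright\lambda^v)\otimes\varepsilon=\lambda(v)^{-1}\,\lambda\otimes\varepsilon$.
\item Second pass: this gives $\lambda(v)^{-1}\bigl((v^{-1}\triangleright\lambda)\star\lambda^v\bigr)\otimes\varepsilon=\frac{\lambda(v^{-1})}{\lambda(v)}\,(\lambda^{v^{-1}}\star\lambda^v)\otimes\varepsilon=\frac{\lambda(v^{-1})}{\lambda(v)}\,\varepsilon\otimes\varepsilon$, by \Cref{lem-lambda^v_star_lambda^v-1_egale_epsilon}.
\item Squaring yields the factor $\bigl(\lambda(v^{-1})/\lambda(v)\bigr)^2$.
\end{itemize}
The scalar therefore comes from the normalization $v^{-1}\triangleright\lambda=\lambda(v^{-1})\,\lambda^{v^{-1}}$, not from $\lambda^v(v^{-2})$-type pairings.

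This observation is also exactly what your abandoned torus idea needed. On the stable subspace, the word becomes $(\hat{\tau}_a\hat{\tau}_b\hat{\tau}_a)^4=(\hat{\tau}_a\hat{\tau}_b)^6$, using the exact 1-braid relation of \Cref{prop-calcul_relation_1-tresse}. The evaluation on $\varepsilon$ in \Cref{prop-calcul_relation_ab_puissance6} then finishes the proof.

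Your last fallback, the boundary relation ${\alpha_2}_-={\alpha_3}_+\partial_{g,n}^{-1}$, would not help anyway. It only yields equalities on the invariant subspace $V_{g,n}$, whereas the statement is in $\GL(V_{g,n}^1)$.
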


\begin{proof}
	The formula of the action of $\tau_e$ is given in \Cref{prop-formule_e}; it affects only the first two copies of $H^*$ in $(H^*)^{\otimes g} \otimes H^{\otimes n}$.
	Since the actions of $\mathfrak{i}_{a_j}(\varphi)$ and $\mathfrak{i}_{b_j}(\varphi)$, $i \in \{ 1, 2\}$, also affect only the first two copies of $H^*$ in $(H^*)^{\otimes g} \otimes H^{\otimes n}$, it suffices to evaluate $( \hat{\tau}_a \hat{\tau}_b \hat{\tau}_c )^4$ and $\hat{\tau}_d \hat{\tau}_e$ against $\varepsilon^{\otimes 2}$ in order to compare them.
	First, let us look at $\hat{\tau}_d \hat{\tau}_e$ against $\varepsilon^{\otimes 2}$:
	\begin{align*}
		\hat{\tau}_d \hat{\tau}_e \cdot (\varepsilon^{\otimes 2}) & \overset{\text{\labelcref{prop-formule_e}}}{=} \hat{\tau}_d \cdot \biggl( \frac{1}{\lambda(v)} \, \lambda \bigl( S(r_f) r_e S(r_d) S(r^c) r^b S(r^a) v \bigr) ( r^e r_b \triangleright \varepsilon \triangleleft r_a r^f ) \otimes (\varepsilon \triangleleft r_c r^d) \biggr)\\
		& = \hat{\tau}_d \cdot \biggl( \frac{1}{\lambda(v)} \, \lambda (v) \, \varepsilon^{\otimes 2} \biggr) = \hat{\tau}_d \cdot (\varepsilon^{\otimes 2}) \overset{\text{\labelcref{prop-formules_ai_bi_ci}}}{=} \varepsilon \otimes (v^{-1} \triangleright \varepsilon) = \varepsilon^{\otimes 2}.
	\end{align*}
	In these computations and in what follows, we use the relations \labelcref{eq-R-matrice_et_antipode,eq-R-matrice_et_co-unite,eq-propriete_v}. 
	
	\smallskip
	\noindent
	Let us now look at $(\hat{\tau}_a \hat{\tau}_b \hat{\tau}_c)^4$ against $\varepsilon^{\otimes 2}$:
	\begin{align*}
		\hat{\tau}_a \hat{\tau}_b \hat{\tau}_c \cdot (\varepsilon^{\otimes 2}) & \overset{\text{\labelcref{prop-formules_ai_bi_ci}}}{=} \hat{\tau}_a \hat{\tau}_b \cdot \Bigl( \bigl( S(v_{(1)}^{-1}) \triangleright \varepsilon \bigr) \otimes (\varepsilon \triangleleft v_{(2)}^{-1}) \Bigr) \\
		& = \hat{\tau}_a \hat{\tau}_b \cdot \Bigl(  S \bigl( v_{(1)}^{-1} \, \varepsilon(v_{(2)}^{-1}) \bigr)  \triangleright \varepsilon \otimes \varepsilon \Bigr) \\
		& = \hat{\tau}_a \hat{\tau}_b \cdot \Bigl( \bigl( S(v^{-1}) \triangleright \varepsilon \bigr) \otimes \varepsilon \Bigr) \\
		& = \hat{\tau}_a \hat{\tau}_b \cdot \bigl( (v^{-1} \triangleright \varepsilon) \otimes \varepsilon \bigr) \\
		& = \hat{\tau}_a \hat{\tau}_b \cdot (\varepsilon^{\otimes 2}) \overset{\text{\labelcref{prop-formules_ai_bi_ci}}}{=} \hat{\tau}_a \cdot \bigl( \lambda^v \otimes \varepsilon \bigr) \overset{\text{\labelcref{prop-formules_ai_bi_ci}}}{=} (v^{-1} \triangleright \lambda^v) \otimes \varepsilon = \frac{1}{\lambda(v)} \, \lambda \otimes \varepsilon. \\
	\end{align*}
	Thus we get: 
	\begin{align*}
		(\hat{\tau}_a \hat{\tau}_b \hat{\tau}_c)^2 \cdot (\varepsilon^{\otimes 2}) & = \hat{\tau}_a \hat{\tau}_b \hat{\tau}_c \cdot \biggl( \frac{1}{\lambda(v)} \, \lambda \otimes \varepsilon \biggr) \\
		& \overset{\text{\labelcref{prop-formules_ai_bi_ci}}}{=} \hat{\tau}_a \hat{\tau}_b \cdot \biggl( \frac{1}{\lambda(v)} \, (v^{-1} \triangleright \lambda) \otimes \varepsilon \biggr) \\
		& \overset{\text{\labelcref{prop-formules_ai_bi_ci}}}{=} \hat{\tau}_a \cdot \biggl( \frac{1}{\lambda(v)}  \bigl( (v^{-1} \triangleright \lambda) \star \lambda^v \bigr) \otimes \varepsilon \biggr) \\
		& = \hat{\tau}_a \cdot \biggl( \frac{\lambda(v^{-1})}{\lambda(v)} \, (\lambda^{v^{-1}} \star \lambda^v) \otimes \varepsilon \biggr) \\
		& \overset{\text{\labelcref{lem-lambda^v_star_lambda^v-1_egale_epsilon}}}{=} \hat{\tau}_a \cdot \biggl( \frac{\lambda(v^{-1})}{\lambda(v)} \, \varepsilon^{\otimes 2} \biggr) \overset{\text{\labelcref{prop-formules_ai_bi_ci}}}{=} \frac{\lambda(v^{-1})}{\lambda(v)} \, (v^{-1} \triangleright \varepsilon) \otimes \varepsilon = \frac{\lambda(v^{-1})}{\lambda(v)} \, \varepsilon^{\otimes 2}.
	\end{align*}
	Therefore we have: 
	\begin{equation*}
		(\hat{\tau}_a \hat{\tau}_b \hat{\tau}_c)^4 \cdot (\varepsilon^{\otimes 2}) = (\hat{\tau}_a \hat{\tau}_b \hat{\tau}_c)^2 \cdot \biggl( \frac{\lambda(v^{-1})}{\lambda(v)} \, \varepsilon^{\otimes 2} \biggr) = \biggl( \frac{\lambda(v^{-1})}{\lambda(v)} \biggr)^{2} \varepsilon^{\otimes 2}.
	\end{equation*}
	It follows $( \hat{\tau}_a \hat{\tau}_b \hat{\tau}_c )^4 = \Bigl( \frac{\lambda(v^{-1})}{\lambda(v)} \Bigr)^2 \hat{\tau}_d \hat{\tau}_e$ in $\GL(V_{g,n}^1)$.
\end{proof}

The following relator is used when $g = 1$. 

\begin{proposition}[Relator $(ab)^6$]\label{prop-calcul_relation_ab_puissance6}
	Consider the curves $a$ and $b$ of $\Sigma_{1}^1$ represented in the following figure.
	\begin{center}
		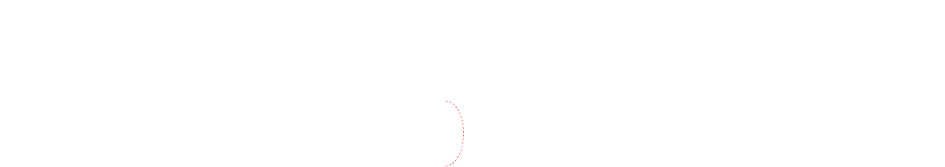
	\end{center}
	We have $(\hat{\tau}_a \hat{\tau}_b)^6 = \Bigl( \frac{\lambda(v^{-1})}{\lambda(v)} \Bigr)^2 \id_{V_{1,0}}$ in $\GL(V_{1,0})$.
\end{proposition}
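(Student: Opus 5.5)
The idea is to obtain $(\hat{\tau}_a\hat{\tau}_b)^6$ as the square of $(\hat{\tau}_a\hat{\tau}_b)^3$, and to compute the latter by evaluating it on the single vector $\varepsilon \in H^* = V_{1,0}^1$. This is the value $\hat{\rho}_{1,0}((\tau_a\tau_b)^3)(\varepsilon) = \frac{\lambda(v^{-1})}{\lambda(v)}\,\varepsilon$ that is already invoked in the proof of \Cref{lem-ab_puissance6_dans_noyau}. It only uses \Cref{prop-formules_ai_bi_ci}, \Cref{rem-lambda_est_integrale_a_droite} and \Cref{lem-lambda^v_star_lambda^v-1_egale_epsilon}, so there is no circularity.

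\textbf{Step 1.} On $H^*$ we have $\hat{\tau}_a\cdot\psi = v^{-1}\triangleright\psi$ and $\hat{\tau}_b\cdot\psi = \psi\star\lambda^v$. Starting from $\varepsilon$ and using $\varepsilon(v^{\pm1})=1$:
\begin{itemize}
	\item $\hat{\tau}_b\cdot\varepsilon = \lambda^v$;
	\item $\hat{\tau}_a\cdot\lambda^v = v^{-1}\triangleright\lambda^v = \lambda/\lambda(v)$;
	\item $\hat{\tau}_b\cdot\bigl(\lambda/\lambda(v)\bigr) = \frac{1}{\lambda(v)}\,\lambda\star\lambda^v = \frac{1}{\lambda(v)}\,\lambda^v(1_H)\,\lambda = \frac{1}{\lambda(v)}\,\lambda$, since $\lambda$ is also a right integral (\Cref{rem-lambda_est_integrale_a_droite}) and $\lambda^v(1_H)=1$;
	\item $\hat{\tau}_a\cdot\bigl(\lambda/\lambda(v)\bigr) = \frac{1}{\lambda(v)}\,v^{-1}\triangleright\lambda = \frac{\lambda(v^{-1})}{\lambda(v)}\,\lambda^{v^{-1}}$;
	\item $\hat{\tau}_b$ then gives $\frac{\lambda(v^{-1})}{\lambda(v)}\,\lambda^{v^{-1}}\star\lambda^v = \frac{\lambda(v^{-1})}{\lambda(v)}\,\varepsilon$ by \Cref{lem-lambda^v_star_lambda^v-1_egale_epsilon};
	\item a final $\hat{\tau}_a$ fixes $\varepsilon$.
\end{itemize}
Hence $(\hat{\tau}_a\hat{\tau}_b)^3\cdot\varepsilon = \frac{\lambda(v^{-1})}{\lambda(v)}\,\varepsilon$, and therefore $(\hat{\tau}_a\hat{\tau}_b)^6\cdot\varepsilon = \bigl(\frac{\lambda(v^{-1})}{\lambda(v)}\bigr)^2\varepsilon$.

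\textbf{Step 2.} Pass from one vector to the whole space $V_{1,0}$. This cannot be done by simply noting that the relator acts by a scalar on $V_{1,0}^1$, because $(\tau_a\tau_b)^6$ is not a relator of $\Gamma_1^1$. Instead, use the argument of \Cref{lem-ab_puissance6_dans_noyau}:
\begin{itemize}
	\item $(\tau_a\tau_b)^3(b)=b^{-1}$ gives the commutation relation $(\hat{\tau}_a\hat{\tau}_b)^3\,\mathfrak{i}_b(\varphi) = \mathfrak{i}_{b^{-1}}(\varphi)\,(\hat{\tau}_a\hat{\tau}_b)^3$;
	\item every $\varphi$ can be written as $\mathcal{R}_{1,0}(\mathfrak{i}_b(\varphi))(\varepsilon)$;
	\item combining these with Step 1 yields $(\hat{\tau}_a\hat{\tau}_b)^6\cdot\varphi = \bigl(\frac{\lambda(v^{-1})}{\lambda(v)}\bigr)^2 S_{\mathcal{L}_{0,1}(H)}^2(\varphi)$;
	\item by \eqref{eq-SL01carre_egale_coadrvinverse} and \eqref{eq-passage_ibord_a_mubord}, $S_{\mathcal{L}_{0,1}(H)}^2$ restricts to the identity on $V_{1,0}$, since $\lambda^v(1_H)=1$.
\end{itemize}
This gives the stated equality in $\GL(V_{1,0})$.

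The main obstacle is Step 1, specifically the two integral identities: $\lambda\star\lambda^v=\lambda$, which needs the right-integral property coming from factorizability, and the cancellation $\lambda^{v^{-1}}\star\lambda^v=\varepsilon$. The order of the convolution products must match the formula for $\hat{\tau}_b$ exactly. Once both identities are secured, Step 2 is a formal consequence of the earlier lemma.
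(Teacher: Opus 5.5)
Your proposal is correct and follows the paper. Step 1 is exactly the paper's computation $(\hat{\tau}_a\hat{\tau}_b)^3\cdot\varepsilon = \frac{\lambda(v^{-1})}{\lambda(v)}\,\varepsilon$; the paper stops after evaluating at $\varepsilon \in V_{1,0}$, implicitly using that $(\hat{\tau}_a\hat{\tau}_b)^6$ acts by a scalar there, while your Step 2 makes this explicit by running the argument of \Cref{lem-ab_puissance6_dans_noyau}, which already gives the full identity on $V_{1,0}$ (and you are right that this is needed, since $(\tau_a\tau_b)^6$ is not a relator of $\Gamma_1^1$).
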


\begin{proof}
	To find the scalar given by the relator $(\tau_a \tau_b)^6$, it suffices to evaluate $(\hat{\tau}_a \hat{\tau}_b)^6$ against the co-unit $\varepsilon \in V_{1,0}$: 
	\begin{equation*}
		\hat{\tau}_a \hat{\tau}_b \cdot (\varepsilon) \overset{\text{\labelcref{prop-formules_ai_bi_ci}}}{=} \hat{\tau}_a \cdot (\lambda^{v}) \overset{\text{\labelcref{prop-formules_ai_bi_ci}}}{=} v^{-1} \triangleright \lambda^v = \frac{1}{\lambda(v)} \, \lambda.
	\end{equation*}
	So we have: 
	\begin{equation*}
		(\hat{\tau}_a \hat{\tau}_b)^2 \cdot (\varepsilon) = \hat{\tau}_a \hat{\tau}_b \cdot \biggl( \frac{1}{\lambda(v)} \, \lambda \biggr) \overset{\text{\labelcref{prop-formules_ai_bi_ci}}}{=} \hat{\tau}_a \cdot \biggl( \frac{1}{\lambda(v)} \, (\lambda \star \lambda^{v}) \biggr) \overset{\text{\labelcref{rem-lambda_est_integrale_a_droite}}}{=} \hat{\tau}_a \cdot \biggl( \frac{1}{\lambda(v)} \, \lambda^{v}(1_H )\, \lambda \biggr) \overset{\text{\labelcref{prop-formules_ai_bi_ci}}}{=} \frac{1}{\lambda(v)} \, v^{-1} \triangleright \lambda.
	\end{equation*}
	Thus we get: 
	\begin{align*}
		(\hat{\tau}_a \hat{\tau}_b)^3 \cdot (\varepsilon) = \hat{\tau}_a \hat{\tau}_b \cdot \biggl( \frac{1}{\lambda(v)} \, v^{-1} \triangleright \lambda \biggr) & \overset{\text{\labelcref{prop-formules_ai_bi_ci}}}{=} \hat{\tau}_a \cdot \biggl( \frac{1}{\lambda(v)} \, (v^{-1} \triangleright \lambda) \star \lambda^v \biggr) \\
		& = \hat{\tau}_a \cdot \biggl( \frac{\lambda(v^{-1})}{\lambda(v)} \, \lambda^{v^{-1}} \star \lambda^v \biggr) \\
		& \overset{\text{\labelcref{lem-lambda^v_star_lambda^v-1_egale_epsilon}}}{=} \hat{\tau}_a \cdot \biggl( \frac{\lambda(v^{-1})}{\lambda(v)} \, \varepsilon \biggr) \overset{\text{\labelcref{prop-formules_ai_bi_ci}}}{=} \frac{\lambda(v^{-1})}{\lambda(v)} \, v^{-1} \triangleright \varepsilon = \frac{\lambda(v^{-1})}{\lambda(v)} \, \varepsilon.
	\end{align*}
	Therefore we have: 
	\begin{equation*}
			(\hat{\tau}_a \hat{\tau}_b)^6 \cdot (\varepsilon) = (\hat{\tau}_a \hat{\tau}_b)^3 \cdot \biggl( \frac{\lambda(v^{-1})}{\lambda(v)} \, \varepsilon \biggr) = \biggl( \frac{\lambda(v^{-1})}{\lambda(v)} \biggr)^2 \varepsilon.
	\end{equation*}
	It follows $(\hat{\tau}_a \hat{\tau}_b)^6 = \Bigl( \frac{\lambda(v^{-1})}{\lambda(v)} \Bigr)^2 \id_{V_{1,0}}$ in $\GL (V_{1,0})$.
\end{proof}

For surfaces with marked points, we need to compute the image by $\hat{\rho}_{g,n}^1$ of the puncture relations.

\begin{proposition}[$k$-th puncture relation]\label{prop-calcul_relation_piqure_k_avec_formules_Derived}
	Consider the curves $\gamma_1^k$, $\gamma_2^k$, $\gamma_3^k$, $x^k$, $y^k$, and $z^k$ of $\Sigma_{g,n}^1$ represented in the following figures.
	\begin{center}
		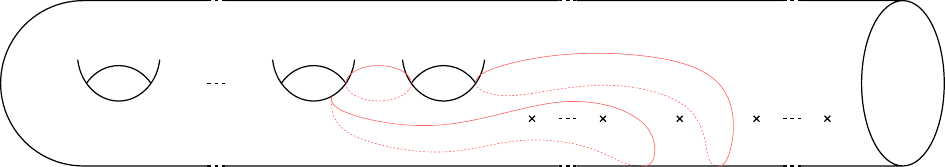
	\end{center}

	\smallskip

	\begin{center}
		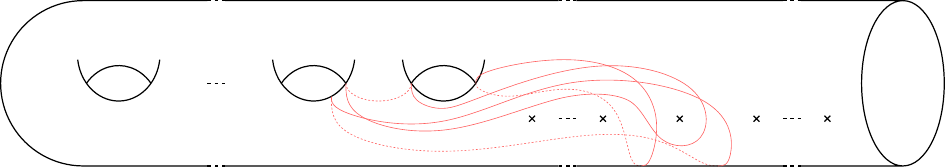
	\end{center}
	We have the following equality in $\GL(V_{g,n}^1)$:
	\[ \hat{\tau}_{x^k} \hat{\tau}_{y^k} \hat{\tau}_{z^k} = \mathcal{R}_{g,n}^1(V) \bigl( \mathfrak{i}_{m_{g+k}} (\lambda^v) \bigr) \hat{\tau}_{\gamma_1^k} \hat{\tau}_{\gamma_2^k} \hat{\tau}_{\gamma_3^k}, \]
	where $\mathfrak{i}_{m_{g+k}} (\lambda^v) \in Z(\mathcal{L}_{g, n}(H))^\times$.
\end{proposition}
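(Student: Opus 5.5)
The plan is to prove the statement in two stages, both at the level of the algebra $\mathcal{L}_{g,n}(H)$ rather than directly in $\GL(V_{g,n}^1)$. This matters because, by \eqref{eq-image_de_relation_est_dans_centre_de_Lgn}, the puncture relator $\hat{r}_{p_k} := \hat{\tau}_{x^k}\hat{\tau}_{y^k}\hat{\tau}_{z^k}\hat{\tau}_{\gamma_3^k}^{-1}\hat{\tau}_{\gamma_2^k}^{-1}\hat{\tau}_{\gamma_1^k}^{-1}$ is a central element of $\mathcal{L}_{g,n}(H)$. When $n>0$ this center is not reduced to scalars. Indeed, via $\Phi_{g,n}$ (\Cref{thm-isomorphisme_phign}) it is isomorphic to $Z(H)^{\otimes n}$. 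So the claim to establish is the equality of central elements $\hat{r}_{p_k} = \mathfrak{i}_{m_{g+k}}(\lambda^v)$, and then apply $\mathcal{R}_{g,n}^1(V)$.

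\textbf{Centrality of $\mathfrak{i}_{m_{g+k}}(\lambda^v)$.} Since $\mathfrak{i}_{m_{g+k}}$ is an algebra morphism and $\lambda^v \in \inv\coad^r$, \Cref{lem-produit_L01_si_un_est_invariant} shows that $\mathfrak{i}_{m_{g+k}}(\lambda^v)$ commutes with $\mathfrak{i}_{m_{g+k}}(\mathcal{L}_{0,1}(H))$. For the other generators $\mathfrak{i}_{x}(\varphi)$ with $x \in \{a_j,b_j,m_{g+l}\}$, $l\neq k$, I will use the exchange relation \eqref{eq-commutation_yj_xi}, in either order depending on the relative position of $x$ and $m_{g+k}$. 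I then redo verbatim the $R$-matrix manipulation from the proof of \eqref{eq-conjugaison_mg+k}: move the $R$-matrix legs acting on $\lambda^v$ across using \eqref{eq-reformulation_invariance_coad^r}, and cancel them with $(S\otimes\id)(R)=R^{-1}$. This collapses the exchange to $\lambda^v \otimes \varphi$. By \eqref{eq-generateurs_Lgn} these elements generate $\mathcal{L}_{g,n}(H)$, so $\mathfrak{i}_{m_{g+k}}(\lambda^v) \in Z(\mathcal{L}_{g,n}(H))$. It is invertible, with inverse $\mathfrak{i}_{m_{g+k}}(\lambda^{v^{-1}})$, by \Cref{lem-lambda^v_star_lambda^v-1_egale_epsilon} together with \Cref{lem-produit_L01_si_un_est_invariant}.

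\textbf{Identifying the central element.} I will use the faithful-on-the-center representation $\mathcal{R}_{g,n}^1(H)$ together with the vector $w := \varepsilon^{\otimes g}\otimes 1_H^{\otimes n}$. The key point is that a central element $z$ is determined by $z\cdot w$. Under the isomorphism $\Phi_{g,n}$, $z$ corresponds to $\id \otimes \zeta$ with $\zeta \in Z(H)^{\otimes n}$, and acting on $1_H^{\otimes n}$ in the regular module ${}_HH^{\otimes n}$ recovers $\zeta$. I must check that this survives the passage from $\mathcal{R}^1_{\Phi_{g,n}}$ to $\mathcal{R}^1_{g,n}(H)$. I expect this follows because $\varepsilon^{\otimes g}$ corresponds to the same vector under the explicit isomorphism of \Cref{rem-isomorphisme_combinee_phign_et_irreductible}; if that is delicate, I will instead argue that $w$ generates $(H^*)^{\otimes g}\otimes H^{\otimes n}$ under the $\mathfrak{i}_{b_j}$ and $\mathfrak{i}_{m_{g+l}}$ actions, and then any central element is determined by its value on $w$.

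It therefore suffices to compute $\hat{\tau}_{x^k}\hat{\tau}_{y^k}\hat{\tau}_{z^k}\cdot w$ and $\mathfrak{i}_{m_{g+k}}(\lambda^v)\hat{\tau}_{\gamma_1^k}\hat{\tau}_{\gamma_2^k}\hat{\tau}_{\gamma_3^k}\cdot w$. I will proceed as in \Cref{prop-calcul_relations_0-tresse}:
\begin{enumerate}
\item Write each curve as a word in $a_j^{\pm1}, b_j^{\pm1}, m_{g+l}^{\pm1}$ and compute its normalization $N$.
\item Expand $\mathfrak{i}_\gamma(\lambda^v)$ by \Cref{def-injection_igamma}.
\item Apply the formulas of \Cref{prop-definition_representation_combinee} from right to left, using $\varepsilon\in\inv\coad^r$, \eqref{eq-R-matrice_et_co-unite}, the antipode property of $S_{\mathcal{L}_{0,1}(H)}$, and $\Phi_{0,1}(\lambda^v)=v^{-1}$.
\end{enumerate}
The curves avoiding the $k$-th puncture loop should act on $w$ trivially or via already-known formulas such as those in \Cref{prop-formules_ai_bi_ci} and \Cref{prop-formule_di}. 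The expected outcome is that the $m_{g+k}$-letters leave exactly a factor $\Phi_{0,1}(\coad^r(r^a)(\lambda^v))$ on the $k$-th $H$-slot, together with the braiding leftovers on the preceding slots. This is precisely $\mathfrak{i}_{m_{g+k}}(\lambda^v)\cdot w$.

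\textbf{Main obstacle.} The hardest step is this last computation for the curves $x^k,y^k,z^k$, which wind around the puncture. Their words mix $b_j^{\pm1}$ for $j$ near $g$ with $m_{g+1},\dots,m_{g+k}$, so many $R$-matrix legs appear on the $H$-slots. I would first treat $k=1$ and the case where the $\gamma_i^k$ are chosen among $a$-type and $c$-type curves, so that their contributions on $w$ are trivial. Then I would reduce general $k$ to this case using the cancellation already seen in the computation of $\hat{\tau}_{f_{k,l}}\cdot(\varepsilon^{\otimes g}\otimes 1_H^{\otimes l})$. If the direct computation becomes unmanageable, the fallback is the diagrammatic calculus of \Cref{ann-calcul_diagrammatique} applied through the pulled-back induced representation \eqref{eq-representation_induite_tiree_en_arriere}.
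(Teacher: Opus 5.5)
Your architecture is the paper's. You prove that $\mathfrak{i}_{m_{g+k}}(\lambda^v)$ is central and invertible, then compare the two triple products on the single vector $w=\varepsilon^{\otimes g}\otimes 1_H^{\otimes n}$ of $\mathcal{R}_{g,n}^1(H)$, expanding each curve as a word in the standard generators.

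Two points differ, both legitimately.
\begin{itemize}
\item \emph{Centrality.} The paper just observes that $\Phi_{g,n}(\mathfrak{i}_{m_{g+k}}(\lambda^v))=1\otimes\cdots\otimes v^{-1}\otimes\cdots\otimes 1$, which relies on the explicit form of $\Phi_{g,n}$ from the cited work. Your exchange-relation argument is longer but self-contained, and it works. For a generator preceding $m_{g+k}$, move $S(r^i)$ and then $S(r^k)$ to the front of the $\lambda^v$-factor by $\coad^r$-invariance, and contract with $(\id\otimes S^{-1})(R)\,R=1\otimes 1$. For later punctures it is verbatim the paper's computation for $\hat{\tau}_{a_1}$.
\item \emph{Why $w$ suffices.} The paper leaves this implicit; your fallback argument supplies it. $w$ is cyclic: the $\mathfrak{i}_{b_j}(\beta)$ insert an arbitrary $\beta$ into the $\varepsilon$-slots, and the $\mathfrak{i}_{m_{g+l}}$ fill the $H$-slots up to an invertible $R$-twist, since $\Phi_{0,1}$ is bijective. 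Hence the action of a central element on $(H^*)^{\otimes g}\otimes H^{\otimes n}$ is determined by its value on $w$. Equality on this module already suffices, because it descends to the subquotient $\mathcal{R}_{g,n}^1(V)$. The algebra-level equality you aim for also needs faithfulness, which does follow from the isomorphism with $\mathcal{R}^1_{\Phi_{g,n}}(H)$.
\end{itemize}

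What is weak is the computation itself, which is the entire content of the paper's proof, and your expectations about it are wrong in two places. First, you cannot take the $\gamma_i^k$ to be of $a$- or $c$-type. They bound a pair of pants containing $p_k$, so their homology classes add up, with signs, to $[m_{g+k}]\neq 0$ in $H_1(\Sigma_{g,n}^1)$. Second, curves avoiding the $k$-th loop do not act trivially on $w$ when $k\geq 2$.

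In the paper's configuration (all normalizations $0$) the computation closes as follows.
\begin{itemize}
\item $\gamma_1^k=a_g m_{g+1}\cdots m_{g+k}$ and $x^k=b_g^{-1}a_{g-1}b_g m_{g+1}\cdots m_{g+k}$ share their $m$-tail. So do $\gamma_3^k=b_g^{-1}a_{g-1}b_g m_{g+1}\cdots m_{g+k-1}$ and $y^k=a_g m_{g+1}\cdots m_{g+k-1}$.
\item On vectors $\varepsilon^{\otimes g}\otimes Y$ the heads collapse, exactly as for $f_{k,l}$. This uses $\varepsilon\in\inv\coad^r$, $\varepsilon\circ\Phi_{0,1}=\langle -,1_H\rangle$, and the antipode property of $S_{\mathcal{L}_{0,1}(H)}$.
\item Hence each pair acts by left multiplication by one and the same element of $H^{\otimes k}$, namely $\spadesuit$ for the first pair and $\clubsuit$ for the second. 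Note $\clubsuit$ is nontrivial, e.g.\ $\clubsuit=v^{-1}\otimes 1_H$ for $k=2$.
\item The $c$-type curve $\gamma_2^k$ acts trivially.
\item $z^k=a_g b_g^{-1}a_{g-1}^{-1}b_g m_{g+k}$ sends $w$ to $\varepsilon^{\otimes g}\otimes 1_H^{\otimes(k-1)}\otimes v^{-1}$. This equals $\mathfrak{i}_{m_{g+k}}(\lambda^v)\cdot w$, with no braiding leftovers since they collapse by invariance of $\lambda^v$.
\end{itemize}
Both sides are therefore $\spadesuit\,\clubsuit\,(1_H^{\otimes(k-1)}\otimes v^{-1})$ on the $H$-slots, the last identification using that $v$ is central. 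Once you see this pairing of tails, your direct-evaluation plan closes exactly as in the paper.
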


\begin{proof}
	Let us first verify that the element $\mathfrak{i}_{m_{g+k}} (\lambda^v)$ is central and invertible. 
	To do this, we consider its image under the isomorphism of algebras $\Phi_{g,n} : \mathcal{L}_{g, n}(H) \to \mathcal{H}(H^*)^{\otimes g} \otimes H^{\otimes n}$. 
	Using the fact that $\lambda^v \in \inv \coad^r$, the definition of $\Phi_{g,n}$ (see \cite[Thm. 5.3]{baseilhac_noetherian_2025}) immediately implies: 
	\[ 
        \Phi_{g,n} \bigl( \mathfrak{i}_{m_{g+k}} (\lambda^v) \bigr) = 1_{\mathcal{H}(H^*)^{\otimes g}} \otimes 1_H^{\otimes k-1} \otimes v^{-1} \otimes 1_H^{\otimes n-k}.
	\]
    Since $v^{-1}$ is central and invertible, the element $\mathfrak{i}_{m_{g+k}} (\lambda^v)$ is also central and invertible.

	\smallskip
	\noindent
	We have: 
	\begin{itemize}
        \item The curve $\gamma_1^k$ is represented by the word $a_g m_{g+1} \cdots m_{g+k}$, with normalization $N(\gamma_1^k) = 0$,
		\item The curve $\gamma_3^k$ is represented by the word $b_{g}^{-1} a_{g-1} b_g m_{g+1} \cdots m_{g+k-1}$, with normalization $N(\gamma_3^k) \\ = 0$,
		\item The curve $x^k$ is represented by the word $b_{g}^{-1} a_{g-1} b_g m_{g+1} \cdots m_{g+k}$, with normalization $N(x^k) = 0$,
		\item The curve $y^k$ is represented by the word $a_{g} m_{g+1} \cdots m_{g+k-1}$, with normalization $N(y^k) = 0$,
		\item The curve $z^k$ is represented by the word $a_{g} b_{g}^{-1} a_{g-1}^{-1} b_g m_{g+k}$, with normalization $N(z^k) = 0$.
    \end{itemize}
	Since the actions of $\hat{\tau}_{\gamma_1^k}$, $\hat{\tau}_{\gamma_2^k}$, $\hat{\tau}_{\gamma_3^k}$, $\hat{\tau}_{x^k}$, $\hat{\tau}_{y^k}$, and $\hat{\tau}_{z^k}$ affect only $(H^*)^{\otimes g} \otimes H^{\otimes k} \subset (H^*)^{\otimes g} \otimes H^{\otimes n}$, it suffices to evaluate $\hat{\tau}_{\gamma_1^k} \hat{\tau}_{\gamma_2^k} \hat{\tau}_{\gamma_3^k}$ and $\hat{\tau}_{x^k} \hat{\tau}_{y^k} \hat{\tau}_{z^k}$ against the element $\varepsilon^{\otimes g} \otimes 1_H^{\otimes k}$ to find the central element given by the $k$-th puncture relation.
	Let us begin with the action of $\hat{\tau}_{\gamma_1^k} \hat{\tau}_{\gamma_2^k} \hat{\tau}_{\gamma_3^k}$ against $\varepsilon^{\otimes g} \otimes 1_H^{\otimes k}$; the summations will be implicit.
	First, let us evaluate $\hat{\tau}_{\gamma_3^k}$ against $\varepsilon^{\otimes g} \otimes 1_H^{\otimes k}$:	
	\begin{align*}
		\hat{\tau}_{\gamma_3^k} & \cdot (\varepsilon^{\otimes g} \otimes 1_H^{\otimes k}) = \mathfrak{i}_{b_g^{-1}}(\lambda^v_{(1)}) \mathfrak{i}_{a_{g-1}}(\lambda^v_{(2)}) \mathfrak{i}_{b_g}(\lambda^v_{(3)}) \mathfrak{i}_{m_{g+1}}(\lambda^v_{(4)}) \cdots \mathfrak{i}_{m_{g+k-1}}(\lambda^v_{(k+2)}) \cdot (\varepsilon^{\otimes g} \otimes 1_H^{\otimes k}) \\
		& = \cdots \mathfrak{i}_{m_{g+k-2}}(\lambda^v_{(k+2)}) \cdot \bigl( \varepsilon^{\otimes g} \otimes S({r_{a_{k-1}}}_{(1)}) \otimes \cdots \otimes S({r_{a_{k-1}}}_{(k-2)}) \otimes \Phi_{0,1}\big(\coad^r(r^{a_{k-1}})(\lambda^v_{(k+2)})\big) \otimes 1_H \bigr) \\
		& = \cdots \mathfrak{i}_{b_g}(\lambda^v_{(3)}) \cdot \bigl( \varepsilon^{\otimes g} \otimes \Phi_{0,1}(\lambda^v_{(4)}) S(r_{a_2}) S({r_{a_3}}_{(1)}) \cdots S({r_{a_{k-1}}}_{(1)}) \\
		& \qquad \qquad \qquad \qquad \quad \otimes \Phi_{0,1}\big(\coad^r(r^{a_2})(\lambda^v_{(5)})\big) S({r_{a_3}}_{(2)}) \cdots S({r_{a_{k-1}}}_{(2)}) \otimes \cdots \\
		& \qquad \qquad \qquad \qquad \quad \otimes \Phi_{0,1} \big( \coad^r( r^{a_{k-2}})(\lambda^v_{(k+1)})\big) S({r_{a_{k-1}}}_{(k-2)}) \otimes \Phi_{0,1}\big(\coad^r(r^{a_{k-1}})(\lambda^v_{(k+2)})\big) \otimes 1_H \bigr).
	\end{align*}
	In the rest of the computation, since the component in $H^{\otimes k}$ remains unchanged (up to the index of the co-product of $\lambda^v$), we denote it by $\clubsuit$.
	\begin{align*}
		\hat{\tau}_{\gamma_3^k} \cdot (\varepsilon^{\otimes g} \otimes 1_H^{\otimes k}) & = \cdots \mathfrak{i}_{b_g}(\lambda^v_{(3)}) \cdot (\varepsilon^{\otimes g} \otimes \clubsuit) \\
		& = \cdots \mathfrak{i}_{a_{g-1}}(\lambda^v_{(2)}) \cdot (\varepsilon^{\otimes g-1} \otimes \lambda^v_{(3)} \otimes \clubsuit) \\
		& = \mathfrak{i}_{b_g^{-1}}(\lambda^v_{(1)}) \cdot \bigl( \varepsilon^{\otimes g-2} \otimes \Phi_{0,1}(\lambda^v_{(2)}) \triangleright \varepsilon \otimes \lambda^v_{(3)} \otimes \clubsuit \bigr) \\ 
		& = \mathfrak{i}_{b_g^{-1}}(\lambda^v_{(1)}) \cdot \bigl( \varepsilon^{\otimes g-2} \otimes (\varepsilon \circ \Phi_{0,1})(\lambda^v_{(2)}) \, \varepsilon \otimes \lambda^v_{(3)} \otimes \clubsuit \bigr) \\ 
		& = \mathfrak{i}_{b_g^{-1}}(\lambda^v_{(1)}) \cdot \bigl( \varepsilon^{\otimes g-1} \otimes \lambda^v_{(2)} \otimes \clubsuit \bigr) = \varepsilon^{\otimes g-1} \otimes \bigl( S_{\mathcal{L}_{0, 1}(H)}(\lambda^v_{(1)}) \lambda^v_{(2)} \bigr) \otimes \clubsuit = \varepsilon^{\otimes g} \otimes \clubsuit,
	\end{align*}
	where $\clubsuit$ is equal to:
	\begin{equation}\label{eq-trefle_relation_piqure}
		\begin{split}
			\Phi_{0,1}(\lambda^v_{(1)}) & S(r_{a_2}) S({r_{a_3}}_{(1)}) \cdots S({r_{a_{k-1}}}_{(1)}) \otimes \Phi_{0,1} \bigl( \coad^r(r^{a_2})(\lambda^v_{(2)}) \bigr) S({r_{a_3}}_{(2)}) \cdots S({r_{a_{k-1}}}_{(2)}) \\
			& \otimes \cdots \otimes \Phi_{0,1} \bigl( \coad^r( r^{a_{k-2}})(\lambda^v_{(k-2)}) \bigr) S({r_{a_{k-1}}}_{(k-2)}) \otimes \Phi_{0,1} \bigl( \coad^r(r^{a_{k-1}})(\lambda^v_{(k-1)}) \bigr) \otimes 1_H. 
		\end{split}
	\end{equation}
	Then, let us evaluate $\hat{\tau}_{\gamma_2^k} \hat{\tau}_{\gamma_3^k}$ against $\varepsilon^{\otimes g} \otimes 1_H^{\otimes k}$: 
	\begin{equation*}
		\hat{\tau}_{\gamma_2^k} \hat{\tau}_{\gamma_3^k} \cdot (\varepsilon^{\otimes g} \otimes 1_H^{\otimes k}) = \hat{\tau}_{\gamma_2^k} \cdot (\varepsilon^{\otimes g} \otimes \clubsuit) \overset{\text{\labelcref{prop-formules_ai_bi_ci}}}{=} \varepsilon^{\otimes g-2} \otimes \bigl( S(v_{(1)}^{-1}) \triangleright \varepsilon \bigr) \otimes \bigl( \varepsilon \triangleleft v_{(2)}^{-1} \bigr) \otimes \clubsuit = \varepsilon^{\otimes g} \otimes \clubsuit.
	\end{equation*} 
	Let us now evaluate $\hat{\tau}_{\gamma_1^k} \hat{\tau}_{\gamma_2^k} \hat{\tau}_{\gamma_3^k}$ against $\varepsilon^{\otimes g} \otimes 1_H^{\otimes k}$:
	\begin{align*}
		\hat{\tau}_{\gamma_1^k} \hat{\tau}_{\gamma_2^k} \hat{\tau}_{\gamma_3^k} & \cdot (\varepsilon^{\otimes g} \otimes 1_H^{\otimes k}) = \hat{\tau}_{\gamma_1^k} \cdot (\varepsilon^{\otimes g} \otimes \clubsuit) \\
		& = \mathfrak{i}_{a_{g}}(\lambda^v_{(1)}) \mathfrak{i}_{m_{g+1}}(\lambda^v_{(2)}) \cdots \mathfrak{i}_{m_{g+k}}(\lambda^v_{(k+1)}) \cdot (\varepsilon^{\otimes g} \otimes \clubsuit) \\ 
		& = \mathfrak{i}_{a_{g}}(\lambda^v_{(1)}) \cdot \biggl( \Bigl( \varepsilon^{\otimes g} \otimes \Phi_{0,1}(\lambda^v_{(2)}) S(r_{b_2}) S({r_{b_3}}_{(1)}) \cdots S({r_{b_{k}}}_{(1)}) \\
		& \qquad \qquad \qquad \otimes \Phi_{0,1} \bigl( \coad^r(r^{b_2})(\lambda^v_{(3)}) \bigr) S({r_{b_3}}_{(2)}) \cdots S({r_{b_{k}}}_{(2)}) \otimes \cdots \\
		& \qquad \qquad \qquad \otimes \Phi_{0,1} \bigl( \coad^r( r^{b_{k-1}})(\lambda^v_{(k)}) \bigr) S({r_{b_{k}}}_{(k-1)}) \otimes \Phi_{0,1} \bigl( \coad^r(r^{b_{k}})(\lambda^v_{(k+1)}) \bigr) \Bigr) (\varepsilon^{\otimes g} \otimes \clubsuit) \biggr) \\
		& = \Bigl( \varepsilon^{\otimes g-1} \otimes \Phi_{0,1}(\lambda^v_{(1)}) \triangleright \varepsilon \otimes \Phi_{0,1}(\lambda^v_{(2)}) S(r_{b_2}) S({r_{b_3}}_{(1)}) \cdots S({r_{b_{k}}}_{(1)}) \\
		& \qquad \qquad \qquad \otimes \Phi_{0,1} \bigl( \coad^r(r^{b_2})(\lambda^v_{(3)}) \bigr) S({r_{b_3}}_{(2)}) \cdots S({r_{b_{k}}}_{(2)}) \otimes \cdots \\
		& \qquad \qquad \qquad \otimes \Phi_{0,1} \bigl( \coad^r( r^{a_{k-1}})(\lambda^v_{(k)}) \bigr) S({r_{b_{k}}}_{(k-1)}) \otimes \Phi_{0,1} \bigl( \coad^r(r^{b_{k}})(\lambda^v_{(k+1)}) \bigr) \Bigr) (\varepsilon^{\otimes g} \otimes \clubsuit) \\
		& = \Bigl( \varepsilon^{\otimes g} \otimes \Phi_{0,1}(\lambda^v_{(1)}) S(r_{b_2}) S({r_{b_3}}_{(1)}) \cdots S({r_{b_{k}}}_{(1)}) \otimes \Phi_{0,1} \bigl( \coad^r(r^{b_2})(\lambda^v_{(2)}) \bigr) S({r_{b_3}}_{(2)}) \cdots S({r_{b_{k}}}_{(2)}) \\
		& \qquad \qquad \qquad \otimes \cdots \otimes \Phi_{0,1} \bigl( \coad^r( r^{b_{k-1}})(\lambda^v_{(k)}) \bigr) S({r_{b_{k}}}_{(k-1)}) \otimes \Phi_{0,1} \bigl( \coad^r(r^{b_{k}})(\lambda^v_{(k)}) \bigr) \Bigr) (\varepsilon^{\otimes g} \otimes \clubsuit), 
	\end{align*}
	which we will denote more briefly by :
	\begin{equation*}
		\hat{\tau}_{\gamma_1^k} \hat{\tau}_{\gamma_2^k} \hat{\tau}_{\gamma_3^k} \cdot (\varepsilon^{\otimes g} \otimes 1_H^{\otimes k}) = (\varepsilon^{\otimes g} \otimes \spadesuit) (\varepsilon^{\otimes g} \otimes \clubsuit).
	\end{equation*}
	However, we have $\mathfrak{i}_{m_{g+k}} (\lambda^v) \cdot (\varepsilon^{\otimes g} \otimes 1_H^{\otimes l}) = \varepsilon^{\otimes g} \otimes 1_H^{\otimes l-1} \otimes v^{-1}$.
	Since $v$ is central, it follows:
	\begin{equation}\label{eq-membre_de_droite_relation_piqure}
		\mathfrak{i}_{m_{g+k}} (\lambda^v)\hat{\tau}_{\gamma_1^k} \hat{\tau}_{\gamma_2^k} \hat{\tau}_{\gamma_3^k} \cdot (\varepsilon^{\otimes g} \otimes 1_H^{\otimes k}) = (\varepsilon^{\otimes g} \otimes \spadesuit) (\varepsilon^{\otimes g} \otimes \clubsuit)(\varepsilon^{\otimes g} \otimes 1_H^{\otimes l-1} \otimes v^{-1}).
	\end{equation}
	Now let us compute the action of $\hat{\tau}_{x^k} \hat{\tau}_{y^k} \hat{\tau}_{z^k}$ against $\varepsilon^{\otimes g} \otimes 1_H^{\otimes k}$; the summations will be implicit. 
	First, let us evaluate $\hat{\tau}_{z^k}$ against $\varepsilon^{\otimes g} \otimes 1_H^{\otimes k}$: 
	\begin{align*}
		\hat{\tau}_{z^k} & \cdot (\varepsilon^{\otimes g} \otimes 1_H^{\otimes k}) = \mathfrak{i}_{a_{g}}(\lambda^v_{(1)}) \mathfrak{i}_{b_{g}^{-1}}(\lambda^v_{(2)}) \mathfrak{i}_{a_{g-1}^{-1}}(\lambda^v_{(3)}) \mathfrak{i}_{b_{g}}(\lambda^v_{(4)}) \mathfrak{i}_{m_{g+k}}(\lambda^v_{(5)}) \cdot (\varepsilon^{\otimes g} \otimes 1_H^{\otimes k}) \\
		& = \cdots \mathfrak{i}_{b_{g}}(\lambda^v_{(4)}) \cdot \Bigl( \varepsilon^{\otimes g} \otimes S({r_a}_{(1)}) \otimes \cdots \otimes S({r_a}_{(k-1)}) \otimes \Phi_{0,1} \bigl( \coad^r(r^a)(\lambda^v_{(5)}) \bigr) \Bigr) \\
		& = \cdots \mathfrak{i}_{a_{g-1}^{-1}}(\lambda^v_{(3)}) \cdot \Bigl( \varepsilon^{\otimes g-1} \otimes \lambda^v_{(4)} \otimes S({r_a}_{(1)}) \otimes \cdots \otimes S({r_a}_{(k-1)}) \otimes \Phi_{0,1} \bigl( \coad^r(r^a)(\lambda^v_{(5)}) \bigr) \Bigr) \\
		& = \cdots \mathfrak{i}_{b_{g}^{-1}}(\lambda^v_{(2)}) \cdot \Bigl( \varepsilon^{\otimes g-2} \otimes \Phi_{0,1} \bigl( S_{\mathcal{L}_{0, 1}(H)}(\lambda^v_{(3)}) \bigr) \triangleright \varepsilon \otimes \lambda^v_{(4)} \otimes S({r_a}_{(1)}) \otimes \cdots \otimes S({r_a}_{(k-1)}) \\
		& \qquad \qquad \qquad \qquad \qquad \qquad \qquad \qquad \qquad \qquad \qquad \qquad \qquad \qquad \qquad \otimes \Phi_{0,1} \bigl( \coad^r(r^a)(\lambda^v_{(5)}) \bigr) \Bigr) \\
		& = \cdots \mathfrak{i}_{b_{g}^{-1}}(\lambda^v_{(2)}) \cdot \Bigl( \varepsilon^{\otimes g-1} \otimes \lambda^v_{(3)} \otimes S({r_a}_{(1)}) \otimes \cdots \otimes S({r_a}_{(k-1)}) \otimes \Phi_{0,1} \bigl( \coad^r(r^a)(\lambda^v_{(4)}) \bigr) \Bigr) \\
		& = \mathfrak{i}_{a_{g}}(\lambda^v_{(1)}) \cdot \Bigl( \varepsilon^{\otimes g-1} \otimes S_{\mathcal{L}_{0, 1}(H)}(\lambda^v_{(2)}) \lambda^v_{(3)} \otimes S({r_a}_{(1)}) \otimes \cdots \otimes S({r_a}_{(k-1)}) \otimes \Phi_{0,1} \bigl( \coad^r(r^a)(\lambda^v_{(4)}) \bigr) \Bigr) \\
		& = \mathfrak{i}_{a_{g}}(\lambda^v_{(1)}) \cdot \Bigl( \varepsilon^{\otimes g} \otimes S({r_a}_{(1)}) \otimes \cdots \otimes S({r_a}_{(k-1)}) \otimes \Phi_{0,1} \bigl( \coad^r(r^a)(\lambda^v_{(2)}) \bigr) \Bigr) \\
		& = \varepsilon^{\otimes g-1} \otimes \Phi_{0,1}(\lambda^v_{(1)}) \triangleright \varepsilon \otimes S({r_a}_{(1)}) \otimes \cdots \otimes S({r_a}_{(k-1)}) \otimes \Phi_{0,1} \bigl( \coad^r(r^a)(\lambda^v_{(2)}) \bigr) \\
		& = \varepsilon^{\otimes g} \otimes S({r_a}_{(1)}) \otimes \cdots \otimes S({r_a}_{(k-1)}) \otimes \Phi_{0,1} \big(\coad^r(r^a)(\lambda^v) \big) \\
		& = \varepsilon^{\otimes g} \otimes 1_H^{\otimes k-1} \otimes \Phi_{0,1}(\lambda^v) = \varepsilon^{\otimes g} \otimes 1_H^{\otimes k-1} \otimes v^{-1}.
	\end{align*}
	Then, let us evaluate $\hat{\tau}_{y^k} \hat{\tau}_{z^k}$ against $\varepsilon^{\otimes g} \otimes 1_H^{\otimes k}$: 
	\begin{align*}
		\hat{\tau}_{y^k} \hat{\tau}_{z^k} & \cdot (\varepsilon^{\otimes g} \otimes 1_H^{\otimes k}) = \hat{\tau}_{y^k} \cdot (\varepsilon^{\otimes g} \otimes 1_H^{\otimes k-1} \otimes v^{-1}) \\
		& = \mathfrak{i}_{a_{g}}(\lambda^v_{(1)}) \mathfrak{i}_{m_{g+1}}(\lambda^v_{(2)}) \cdots \mathfrak{i}_{m_{g+k-1}}(\lambda^v_{(k)}) \cdot (\varepsilon^{\otimes g} \otimes 1_H^{\otimes k-1} \otimes v^{-1}) \\
		& = \mathfrak{i}_{a_{g}}(\lambda^v_{(1)}) \cdot \Bigl( \varepsilon^{\otimes g} \otimes \Phi_{0,1}(\lambda^v_{(2)}) S(r_{a_2}) S({r_{a_3}}_{(1)}) \cdots S({r_{a_{k-1}}}_{(1)}) \\
		& \qquad \qquad \quad \otimes \Phi_{0,1} \bigl( \coad^r(r^{a_2})(\lambda^v_{(3)}) \bigr) S({r_{a_3}}_{(2)}) \cdots S({r_{a_{k-1}}}_{(2)}) \otimes \cdots \\
		& \qquad \qquad \quad \otimes \Phi_{0,1} \bigl( \coad^r( r^{a_{k-2}})(\lambda^v_{(k-1)}) \bigr) S({r_{a_{k-1}}}_{(k-2)}) \otimes \Phi_{0,1} \bigl( \coad^r(r^{a_{k-1}})(\lambda^v_{(k)}) \bigr) \otimes v^{-1} \Bigr) \\ 
		& = \varepsilon^{\otimes g-1} \otimes \Phi_{0,1}(\lambda^v_{(1)}) \triangleright \varepsilon \otimes \Phi_{0,1}(\lambda^v_{(2)}) S(r_{a_2}) S({r_{a_3}}_{(1)}) \cdots S({r_{a_{k-1}}}_{(1)}) \\
		& \qquad \qquad \quad \otimes \Phi_{0,1} \bigl( \coad^r(r^{a_2})(\lambda^v_{(3)}) \bigr) S({r_{a_3}}_{(2)}) \cdots S({r_{a_{k-1}}}_{(2)}) \otimes \cdots \\
		& \qquad \qquad \quad \otimes \Phi_{0,1} \bigl( \coad^r( r^{a_{k-2}})(\lambda^v_{(k-1)}) \bigr) S({r_{a_{k-1}}}_{(k-2)}) \otimes \Phi_{0,1} \bigl( \coad^r(r^{a_{k-1}})(\lambda^v_{(k)}) \bigr) \otimes v^{-1} \\ 
		& = \varepsilon^{\otimes g} \otimes \Phi_{0,1}(\lambda^v_{(1)}) S(r_{a_2}) S({r_{a_3}}_{(1)}) \cdots S({r_{a_{k-1}}}_{(1)}) \\
		& \qquad \qquad \quad \otimes \Phi_{0,1} \bigl( \coad^r(r^{a_2})(\lambda^v_{(2)}) \bigr) S({r_{a_3}}_{(2)}) \cdots S({r_{a_{k-1}}}_{(2)}) \otimes \cdots \\
		& \qquad \qquad \quad \otimes \Phi_{0,1} \bigl( \coad^r( r^{a_{k-2}})(\lambda^v_{(k-2)}) \bigr) S({r_{a_{k-1}}}_{(k-2)}) \otimes \Phi_{0,1} \bigl( \coad^r(r^{a_{k-1}})(\lambda^v_{(k-1)}) \bigr) \otimes v^{-1} \\ 
		& = (\varepsilon^{\otimes g} \otimes \clubsuit) (\varepsilon^{\otimes g} \otimes 1_H^{\otimes k-1} \otimes v^{-1}),
	\end{align*}
	where $\clubsuit$ is defined in \labelcref{eq-trefle_relation_piqure}.
	Let us now evaluate $\hat{\tau}_{x^k} \hat{\tau}_{y^k} \hat{\tau}_{z^k}$ against $\varepsilon^{\otimes g} \otimes 1_H^{\otimes k}$: 
	\begin{align*}
		& \hat{\tau}_{x^k} \hat{\tau}_{y^k} \hat{\tau}_{z^k} \cdot (\varepsilon^{\otimes g} \otimes 1_H^{\otimes k}) = \hat{\tau}_{x^k} \cdot \bigl( (\varepsilon^{\otimes g} \otimes \clubsuit) (\varepsilon^{\otimes g} \otimes 1_H^{\otimes k-1} \otimes v^{-1}) \bigr) \\
		& = \mathfrak{i}_{b_g^{-1}}(\lambda^v_{(1)}) \mathfrak{i}_{a_{g-1}}(\lambda^v_{(2)}) \mathfrak{i}_{b_g}(\lambda^v_{(3)}) \mathfrak{i}_{m_{g+1}}(\lambda^v_{(4)}) \cdots \mathfrak{i}_{m_{g+k}}(\lambda^v_{(k+3)}) \cdot \bigl( (\varepsilon^{\otimes g} \otimes \clubsuit) (\varepsilon^{\otimes g} \otimes 1_H^{\otimes k-1} \otimes v^{-1}) \bigr) \\
		& = \cdots \mathfrak{i}_{b_g}(\lambda^v_{(3)}) \cdot \Bigl( \varepsilon^{\otimes g} \otimes \Phi_{0,1}(\lambda^v_{(4)}) S(r_{b_2}) S({r_{b_3}}_{(1)}) \cdots S({r_{b_{k}}}_{(1)}) \\
		& \qquad \otimes \Phi_{0,1} \bigl( \coad^r(r^{b_2})(\lambda^v_{(5)}) \bigr) S({r_{b_3}}_{(2)}) \cdots S({r_{b_{k}}}_{(2)}) \otimes \cdots \otimes \Phi_{0,1} \bigl( \coad^r(r^{b_{k-1}})(\lambda^v_{(k+2)}) \bigr) S({r_{b_{k}}}_{(k-1)}) \\
		& \qquad \qquad \qquad \qquad \qquad \qquad \qquad \qquad \otimes \Phi_{0,1} \bigl( \coad^r(r^{b_{k}})(\lambda^v_{(k+3)}) \bigr) (\varepsilon^{\otimes g} \otimes \clubsuit) (\varepsilon^{\otimes g} \otimes 1_H^{\otimes k-1} \otimes v^{-1}) \Bigr) \\
		& = \cdots \mathfrak{i}_{a_{g-1}}(\lambda^v_{(2)}) \cdot \Bigl( \varepsilon^{\otimes g-1} \otimes \lambda^v_{(3)} \otimes \Phi_{0,1}(\lambda^v_{(4)}) S(r_{b_2}) S({r_{b_3}}_{(1)}) \cdots S({r_{b_{k}}}_{(1)}) \\
		& \quad \otimes \Phi_{0,1} \bigl( \coad^r(r^{b_2})(\lambda^v_{(5)}) \bigr) S({r_{b_3}}_{(2)}) \cdots S({r_{b_{k}}}_{(2)}) \otimes \cdots \otimes \Phi_{0,1} \big( \coad^r( r^{b_{k-1}})(\lambda^v_{(k+2)})\big) S({r_{b_{k}}}_{(k-1)}) \\
		& \qquad \qquad \qquad \qquad \qquad \qquad \qquad \quad \otimes \Phi_{0,1}\big(\coad^r(r^{b_{k}})(\lambda^v_{(k+3)})\big) (\varepsilon^{\otimes g} \otimes \clubsuit) (\varepsilon^{\otimes g} \otimes 1_H^{\otimes k-1} \otimes v^{-1}) \Bigr) \\
		& = \mathfrak{i}_{b_g^{-1}}(\lambda^v_{(1)}) \cdot \Bigl( \varepsilon^{\otimes g-2} \otimes \Phi_{0,1}(\lambda^v_{(2)}) \triangleright \varepsilon \otimes \lambda^v_{(3)} \otimes \Phi_{0,1}(\lambda^v_{(4)}) S(r_{b_2}) S({r_{b_3}}_{(1)}) \cdots S({r_{b_{k}}}_{(1)}) \\
		& \quad \otimes \Phi_{0,1} \bigl( \coad^r(r^{b_2})(\lambda^v_{(5)}) \bigr) S({r_{b_3}}_{(2)}) \cdots S({r_{b_{k}}}_{(2)}) \otimes \cdots \otimes \Phi_{0,1} \bigl( \coad^r( r^{b_{k-1}})(\lambda^v_{(k+2)}) \bigr) S({r_{b_{k}}}_{(k-1)}) \\
		& \qquad \qquad \qquad \qquad \qquad \qquad \qquad \quad \otimes \Phi_{0,1} \bigl( \coad^r(r^{b_{k}})(\lambda^v_{(k+3)}) \bigr) (\varepsilon^{\otimes g} \otimes \clubsuit) (\varepsilon^{\otimes g} \otimes 1_H^{\otimes k-1} \otimes v^{-1}) \Bigr) \\
		& = \mathfrak{i}_{b_g^{-1}}(\lambda^v_{(1)}) \cdot \Bigl( \varepsilon^{\otimes g-1} \otimes \lambda^v_{(2)} \otimes \Phi_{0,1}(\lambda^v_{(3)}) S(r_{b_2}) S({r_{b_3}}_{(1)}) \cdots S({r_{b_{k}}}_{(1)}) \\
		& \quad \otimes \Phi_{0,1} \bigl( \coad^r(r^{b_2})(\lambda^v_{(4)}) \bigr) S({r_{b_3}}_{(2)}) \cdots S({r_{b_{k}}}_{(2)}) \otimes \cdots \otimes \Phi_{0,1} \bigl( \coad^r( r^{b_{k-1}})(\lambda^v_{(k+1)}) \bigr) S({r_{b_{k}}}_{(k-1)}) \\
		& \qquad \qquad \qquad \qquad \qquad \qquad \qquad \quad \otimes \Phi_{0,1} \bigl( \coad^r(r^{b_{k}})(\lambda^v_{(k+2)}) \bigr) (\varepsilon^{\otimes g} \otimes \clubsuit) (\varepsilon^{\otimes g} \otimes 1_H^{\otimes k-1} \otimes v^{-1}) \Bigr) \\
		& = \varepsilon^{\otimes g-1} \otimes S_{\mathcal{L}_{0, 1}(H)}(\lambda^v_{(1)}) \lambda^v_{(2)} \otimes \Phi_{0,1}(\lambda^v_{(3)}) S(r_{b_2}) S({r_{b_3}}_{(1)}) \cdots S({r_{b_{k}}}_{(1)}) \\
		& \quad \otimes \Phi_{0,1} \bigl( \coad^r(r^{b_2})(\lambda^v_{(4)}) \bigr) S({r_{b_3}}_{(2)}) \cdots S({r_{b_{k}}}_{(2)}) \otimes \cdots \otimes \Phi_{0,1} \bigl( \coad^r( r^{b_{k-1}})(\lambda^v_{(k+1)}) \bigr) S({r_{b_{k}}}_{(k-1)}) \\
		& \qquad \qquad \qquad \qquad \qquad \qquad \qquad \quad \otimes \Phi_{0,1} \bigl( \coad^r(r^{b_{k}})(\lambda^v_{(k+2)}) \bigr) (\varepsilon^{\otimes g} \otimes \clubsuit) (\varepsilon^{\otimes g} \otimes 1_H^{\otimes k-1} \otimes v^{-1}) \\
		& = \varepsilon^{\otimes g} \otimes \Phi_{0,1}(\lambda^v_{(1)}) S(r_{b_2}) S({r_{b_3}}_{(1)}) \cdots S({r_{b_{k}}}_{(1)}) \\
		& \quad \otimes \Phi_{0,1} \bigl( \coad^r(r^{b_2})(\lambda^v_{(2)}) \bigr) S({r_{b_3}}_{(2)}) \cdots S({r_{b_{k}}}_{(2)}) \otimes \cdots \otimes \Phi_{0,1} \bigl( \coad^r( r^{b_{k-1}})(\lambda^v_{(k-1)}) \bigr) S({r_{b_{k}}}_{(k-1)}) \\
		& \qquad \qquad \qquad \qquad \qquad \qquad \qquad \qquad \quad \otimes \Phi_{0,1} \bigl( \coad^r(r^{b_{k}})(\lambda^v_{(k)}) \bigr) (\varepsilon^{\otimes g} \otimes \clubsuit) (\varepsilon^{\otimes g} \otimes 1_H^{\otimes k-1} \otimes v^{-1}) \\
		& = (\varepsilon^{\otimes g} \otimes \spadesuit) (\varepsilon^{\otimes g} \otimes \clubsuit) (\varepsilon^{\otimes g} \otimes 1_H^{\otimes k-1} \otimes v^{-1}).
	\end{align*}
	where $\spadesuit$ is defined just above \labelcref{eq-membre_de_droite_relation_piqure}.
	Comparing this with \labelcref{eq-membre_de_droite_relation_piqure}, we obtain:
	\[ \hat{\tau}_{x^k} \hat{\tau}_{y^k} \hat{\tau}_{z^k} = \mathcal{R}_{g,n}^1(V) \bigl( \mathfrak{i}_{m_{g+k}} (\lambda^v) \bigr) \hat{\tau}_{\gamma_1^k} \hat{\tau}_{\gamma_2^k} \hat{\tau}_{\gamma_3^k} \text{ in } \GL(V_{g,n}^1). \qedhere \]
\end{proof}

\subsection{The intrinsic extension}\label{subsec-extension_intrinseque}

In this subsection, we construct a central extension $\widetilde{\Gamma}_{g,n}^s(2,l)$ of $\Gamma_{g,n}^s$ on which we can define a linearization of the projective representation $\rho_{g,n}^s$. 
In order to study $\widetilde{\Gamma}_{g,n}^s(2,l)$, we will compare it to another central extension $\widetilde{\Gamma}_{g,n}^s(l_0)$ which will be a generator of $H^2(\Gamma_{g}^s, \mathbb{Z})$ for $n = 0$ and $l_0 = 0$.

\medskip

Let $V := \{ V_i \}_{i = 1, \cdots, n}$ be a fixed collection of simple $H$-modules. 
Recall from \Cref{subsec-representation_projective_rhogn1,subsec-representation_projective_rhogn} (by using \Cref{lem-f_chapeau_dans_Lgninv} to see that the elements $\hat{\tau}_\gamma$ belong to $\mathcal{L}_{g, n}^{\mathrm{inv}}(H)$) that the projective representations $\rho_{g,n}^s : \Gamma_{g,n}^s \to \PGL(V_{g,n}^s)$ are quotients of the compositions of maps:
\[
\begin{array}{l|ccccl}
	\hat{\rho}_{g,n}^s : & F_\tau & \longrightarrow & \mathcal{L}_{g, n}^{\mathrm{inv}}(H)^\times & \longrightarrow & \GL(V_{g,n}^s)\\
    & \tau_\gamma & \longmapsto & \hat{\tau}_\gamma & \longmapsto & \mathcal{R}_{g,n}^s(V)(\hat{\tau}_\gamma).
\end{array}
\]  
The computations made in \Cref{subsec-calcul_relations} are also true for surfaces without boundary component, since the curves appearing in these computations are also non-separating simple closed curves of $\Sigma_{g,n}$ and the elements of $V_{g,n}^1$ on which we evaluate are elements of $V_{g,n}$. 
Therefore, in what follows we can deal with the two representations $\rho_{g,n}$ and $\rho_{g,n}^1$ simultaneously. 
We distinguish the case $g = 1$ from the cases $g \geq 2$ because of the different presentations of $\Gamma_1^s$. 

\smallskip
\noindent
Denote by $l_0 \in \mathbb{N}$ the order of the element $\Bigl( \frac{\lambda(v^{-1})}{\lambda(v)} \Bigr)^2 \in \mathbb{K}$.
We say that an element $x$ is of order $0$ if $x$ has infinite order.

\subsubsection{\textbf{Case \texorpdfstring{$g = 1$}{g=1}: the extension \texorpdfstring{$\widetilde{\Gamma}_{1}(2,l_0)$}{widetildeGamma1s(2)}}}\label{subsubsec-gammatilde12}

Recall that we have the following presentations (see \labelcref{eq-presentation_Gamma11_et_Gamma1}): 
\[ \Gamma_1^1 = \langle \tau_a, \tau_b ~ \vert ~ \tau_a \tau_b \tau_a = \tau_b \tau_a \tau_b \rangle ~ \text{ and } ~ \Gamma_1 = \langle \tau_a, \tau_b ~ \vert ~ \tau_a \tau_b \tau_a = \tau_b \tau_a \tau_b \text{ and } (\tau_a \tau_b)^6 = 1 \rangle. \]
\begin{definition}\label{def-gammatilde12}
	The group $\widetilde{\Gamma}_{1}(2, l_0)$ is defined by the following presentation: 
    \begin{equation*}
		\widetilde{\Gamma}_{1}(2,l_0) = \langle \tau_a, \tau_b, T ~ \vert ~ \tau_a \tau_b \tau_a = \tau_b \tau_a \tau_b, (\tau_a \tau_b)^6 = T^2, T^{l_0} = 1 \text{ and } T \text{ is central} \rangle,
	\end{equation*}
    where $T^{0}$ is by definition the empty word.
\end{definition}
\noindent
\Cref{prop-calcul_relation_1-tresse,prop-calcul_relation_ab_puissance6} allow us to define a linear representation of $\widetilde{\Gamma}_{1}(2,l_0)$ as follows:
\[ 
\begin{array}{l|ccl}
	\tilde{\rho}_{1}(2): & \widetilde{\Gamma}_{1}(2,l_0) & \longrightarrow & \GL (V_{1,0}) \\
	& \tau_\gamma & \longmapsto & \mathcal{R}_{1,0}(\hat{\tau}_\gamma), \\
	& T & \longmapsto & \frac{\lambda(v^{-1})}{\lambda(v)} \id_{V_{1,0}}.  
\end{array}
\]
Furthermore, we have the following commutative diagrams, where $\pi(\tau_\gamma) = \tau_\gamma$ for all the generators $\tau_\gamma$ and $\pi(T) = 1_{\Gamma_1}$ (the identity element of $\Gamma_1$):
\begin{equation}\label{eq-diagramme_commutatif_rhotildeg2_en_genre_1}
    \begin{tikzcd}
        \widetilde{\Gamma}_{1}(2,l_0) \arrow[r, "\tilde{\rho}_{1}(2)"] \arrow[d, "\pi"] & \GL (V_{1,0}) \arrow[d, "\pi"] \\
        \Gamma_1 \arrow[r, "\rho_{1}"] & \PGL (V_{1,0})
    \end{tikzcd}
\end{equation}

\begin{remarque}\label{rem-linearisation_de_rho_en_genre_1}
	\begin{enumerate}[leftmargin=*]
		\item In the case of $\Gamma_1^1$, $\rho_{1}^1$ is already a linear representation since the only relator is trivial (see \Cref{prop-calcul_relations_0-tresse}).
    	This makes sense since $H^2(\Gamma_1^1, \mathbb{Z}) = 0$.
		\item We can linearize the projective representation $\rho_{1}$ of $\Gamma_1$ if there exists a sixth root $\kappa \in \mathbb{K}$ of $\lambda(v^{-1})\lambda(v)^{-1}$ by setting $\tilde{\tau}_a := \kappa^{-1} \hat{\tau}_a$ and $\tilde{\tau}_b := \kappa^{-1} \hat{\tau}_b$.
		\item The linear representation $\tilde{\rho}_{1}(2)$ does not depend on the choice of the left integral $\lambda$.
		Indeed, since the space of left integrals is one dimensional (see \labelcref{item-espace_des_integrales_dimension_1}), the image of $T$ under $\tilde{\rho}_1(2)$ remains invariant because the scalar of $\lambda(v^{-1})$ will simplify to that of $\lambda(v)$.
	\end{enumerate}
\end{remarque}
\noindent
Let $\widetilde{\Gamma}_{1}(l_0)$ be the central extension of $\Gamma_1$ defined by the same presentation as $\widetilde{\Gamma}_1(2,l_0)$ in \Cref{def-gammatilde12} except for the relation $(\tau_a \tau_b)^6 = T^2$ which is replaced by $(\tau_a \tau_b)^6 = T$.
Using the presentations of $\widetilde{\Gamma}_{1}(l_0)$ and $\widetilde{\Gamma}_{1}(2,l_0)$, we obtain the following commutative diagram: 
\[
\begin{tikzcd}
0  \arrow[r] & \langle T  ~ \vert ~ T^{l_0} = 1 \rangle \arrow[r] \arrow[d,"\varphi"] & \widetilde{\Gamma}_{1}(l_0) \arrow[r, "\pi"] \arrow[d,"\psi"]& \Gamma_{1} \arrow[r] \arrow[d,"\id_{\Gamma_{1}}"]& 1\\
0  \arrow[r] & \langle T  ~ \vert ~ T^{l_0} = 1 \rangle  \arrow[r] & \widetilde{\Gamma}_{1}(2,l_0) \arrow[r, "\pi"] & \Gamma_{1} \arrow[r] & 1
\end{tikzcd}
\]
where $\varphi$, $\psi$, and $\pi$ are the group morphisms defined by:
\begin{itemize}
	\item $\varphi(T) = T^2$,
    \item $\psi (\tau_{\gamma}) = \tau_{\gamma}$ for all the generators $\tau_{\gamma}$ and $\psi(T) = T^2$,
    \item $\pi$ is the quotient map.
\end{itemize}

Denote by $c_{\widetilde{\Gamma}_{1}(l_0)}$ (resp. by $c_{\widetilde{\Gamma}_{1}(2,l_0)}$) the cohomology class in $H^2(\Gamma_{1}, \mathbb{Z}_{l_0})$ associated with the central extension $\widetilde{\Gamma}_{1}(l_0)$ (resp. $\widetilde{\Gamma}_{1}(2,l_0)$).

\begin{proposition}\label{prop-extension_ordre_2_en_genre_1}
	We have $c_{\widetilde{\Gamma}_{1}(2,l_0)} = 2 c_{\widetilde{\Gamma}_{1}(l_0)} \in H^2(\Gamma_1, \mathbb{Z}_{l_0})$, where $\mathbb{Z}_0 := \mathbb{Z}$ by definition.
\end{proposition}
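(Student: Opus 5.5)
The plan is to use the commutative diagram of extensions displayed just above, together with the standard functoriality of cohomology classes of central extensions with respect to the kernel. Recall the setup: let $0 \to A \to E \to G \to 1$ be a central extension with class $c \in H^2(G,A)$, and let $f : A \to B$ be a homomorphism of abelian groups. Then $f_*(c) \in H^2(G,B)$ is the class of the pushout extension $(B \times E)/\{(f(a),a^{-1})\}$. Moreover, any morphism of central extensions which restricts to $f$ on kernels and to the identity on $G$ identifies the target extension with this pushout.

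First I check that the morphisms $\varphi$ and $\psi$ in the diagram are well defined. For $\varphi : \mathbb{Z}_{l_0} \to \mathbb{Z}_{l_0}$, $T \mapsto T^2$, this is clear. For $\psi$ it suffices to verify that the defining relations of $\widetilde{\Gamma}_1(l_0)$ are sent to relations of $\widetilde{\Gamma}_1(2,l_0)$:
\begin{itemize}
\item the braid relation is sent to itself;
\item $(\tau_a\tau_b)^6 = T$ is sent to $(\tau_a\tau_b)^6 = T^2$, which holds;
\item $T^{l_0}=1$ is sent to $T^{2l_0} = 1$, which holds;
\item centrality of $T$ implies centrality of $T^2$.
\end{itemize}

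Second, I check that both rows are exact. In each group, the subgroup generated by $T$ is central and normal, and quotienting by it returns the presentation of $\Gamma_1$ from \eqref{eq-presentation_Gamma11_et_Gamma1}. The one point needing care is that $\langle T\rangle$ is exactly $\mathbb{Z}_{l_0}$, i.e. no smaller power of $T$ becomes trivial. For $\widetilde{\Gamma}_1(2,l_0)$, I detect the order of $T$ with the linear representation $\tilde{\rho}_1(2)$. It sends $T$ to $\frac{\lambda(v^{-1})}{\lambda(v)}$, whose square has order $l_0$. Consequently the image of $T$ has order $l_0$ or $2l_0$, and $T^{l_0}=1$ forces order exactly $l_0$. (For $l_0=0$ the image has infinite order.)

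For $\widetilde{\Gamma}_1(l_0)$, I use the map $\widetilde{\Gamma}_1(l_0) \to \widetilde{\Gamma}_1(2,l_0)$ when $l_0$ is odd. In the general case I instead build a representation into $\mathbb{Z}_{l_0}$-valued data. One option is to note that $\widetilde{\Gamma}_1(0)$ is a quotient of $B_3 = \langle \tau_a,\tau_b\rangle$ in which $(\tau_a\tau_b)^6$ generates the center $\mathbb{Z}$; abelianization sends $\tau_a\tau_b$ to $2$, hence $T$ to $12$, so $T$ has infinite order. Reducing modulo $l_0$ then gives order $l_0$ for $\widetilde{\Gamma}_1(l_0)$. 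I expect this order/injectivity check to be the main obstacle. The first fallback is to compare with the known fact that $c_{\widetilde{\Gamma}_1(0)}$ is a generator of $H^2(\Gamma_1,\mathbb{Z}) \simeq \mathbb{Z}_{12}$ (see \Cref{prop-H^2_petit_genre}).

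Finally, the diagram is a morphism of central extensions over $\id_{\Gamma_1}$ restricting to $\varphi$, which is multiplication by $2$ on kernels. By the pushout characterization, $c_{\widetilde{\Gamma}_1(2,l_0)} = \varphi_*(c_{\widetilde{\Gamma}_1(l_0)}) = 2c_{\widetilde{\Gamma}_1(l_0)}$. Here $\varphi_*$ is induced by multiplication by $2$ on the coefficients, hence is multiplication by $2$ on $H^2(\Gamma_1,\mathbb{Z}_{l_0})$. The same identity can be seen on an explicit cocycle: choosing the set-theoretic section given by the words in $\tau_a,\tau_b$ in both groups, the two cocycles differ exactly by applying $\varphi$.
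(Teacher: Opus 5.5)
Your main argument is the paper's. The paper writes out the cocycle for the section $s$ of $\widetilde{\Gamma}_1(0)$ with $s(\tau_\gamma)=\tau_\gamma$, notes that the cocycle of $\psi\circ s$ is obtained by applying $\psi$ to it, i.e. it is twice it, and then handles $l_0\geq 1$ by pushing out along $\mathbb{Z}\to\mathbb{Z}_{l_0}$. Your appeal to functoriality of the class under the coefficient map $\varphi$ is the same computation packaged abstractly, done for all $l_0$ at once.

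The one step you should not keep is the check, via $\tilde{\rho}_1(2)$, that $T$ has order exactly $l_0$ in $\widetilde{\Gamma}_1(2,l_0)$. Put $\kappa=\lambda(v^{-1})/\lambda(v)$. Then $\tilde{\rho}_1(2)$ is a homomorphism on $\widetilde{\Gamma}_1(2,l_0)$ only if $\kappa^{l_0}=1$, and this does not follow from $l_0$ being the order of $\kappa^2$. If $\kappa$ has order $2l_0$ (e.g. $\kappa=-1$, $l_0=1$), the relation $T^{l_0}=1$ would have to go to $\kappa^{l_0}=-1\neq 1$, so your argument gives nothing in that case.

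The fact is purely group-theoretic and should not depend on $H$. Abelianizing, with coordinates the common image of $\tau_a,\tau_b$ and the image of $T$, gives
\[
\widetilde{\Gamma}_1(2,l_0)^{\mathrm{ab}}\cong\mathbb{Z}^2/\langle(12,-2),(0,l_0)\rangle .
\]
In this group $T=(0,1)$ has order exactly $l_0$ (infinite order when $l_0=0$).

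Similarly, state the reduction for $\widetilde{\Gamma}_1(l_0)$ precisely. Its abelianization is $\mathbb{Z}_{12l_0}$, generated by the image of $\tau_a$, with $T\mapsto 12$; this element has order exactly $l_0$.

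With these replacements your proof is complete and slightly more careful than the paper's, which takes the injectivity of $\langle T\rangle$ for granted.
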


\begin{proof}
    First, consider the case where $l_0 = 0$.
	Let us construct a cocycle $c_{\widetilde{\Gamma}_{1}(0)}$ associated with $\widetilde{\Gamma}_{1}(0)$ and a cocycle $c_{\widetilde{\Gamma}_{1}(2,0)}$ associated with $\widetilde{\Gamma}_{1}(2,0)$.
	Let $s$ be a set-theoretic section of $\widetilde{\Gamma}_{1}(0)$ such that $s(\tau_\gamma) = \tau_\gamma$, for all the generators $\tau_\gamma$.
    The map:
    \[
    \begin{array}{l|ccl}
		\mu_{\widetilde{\Gamma}_{1}(0)}: & \Gamma_{1} \times \Gamma_{1} & \longrightarrow & \langle T \rangle \\
        & (f, g) & \longmapsto & s(f) s(g) s(g \circ f)^{-1}
	\end{array}
    \]
    is a 2-cocycle associated with $\widetilde{\Gamma}_{1}(0)$. 
	In order to obtain a cocycle with values in $\mathbb{Z}$, we compose $\mu_{\widetilde{\Gamma}_{1}(0)}$ with the isomorphism $i_T : \langle T \rangle \to \mathbb{Z}$ defined by $i_T(T) = 1$ and we denote this cocycle by $c_{\widetilde{\Gamma}_{1}(0)}$.
    Similarly, let us consider the set-theoretic section $\psi \circ s$ of $\widetilde{\Gamma}_{1}(2,0)$. 	
	The map: 
    \[
    \begin{array}{l|ccl}
        \mu_{\widetilde{\Gamma}_{1}(2,0)}: & \Gamma_{1} \times \Gamma_{1} & \longrightarrow & \langle T \rangle \\
		& (f, g) & \longmapsto & (\psi \circ s)(f) (\psi \circ s)(g) (\psi \circ s)(g \circ f)^{-1}
    \end{array}
    \]
    induces a cocycle $c_{\widetilde{\Gamma}_{1}(2,0)} := i_T \circ \mu_{\widetilde{\Gamma}_{1}(2,0)}$ with values in $\mathbb{Z}$.
	Note that $i_T \circ \varphi = 2 \, i_T$. 
    Indeed, $(i_T \circ \varphi)(T) = i_T(T^2) = 2$.
	Thus we obtain: 
    \begin{align*}
        c_{\widetilde{\Gamma}_{1}(2,0)}(f,g) & = i_T \bigl( (\psi \circ s)(f) (\psi \circ s)(g) (\psi \circ s)(g \circ f)^{-1} \bigr) \\
		& = (i_T \circ \psi) \bigl( s(f) s(g) s(g \circ f)^{-1} \bigr) = 2 \, i_T \bigl( s(f) s(g) s(g \circ f)^{-1} \bigr) = 2 \, c_{\widetilde{\Gamma}_{1}(0)}(f,g).
    \end{align*}
    Hence the result. 

	\smallskip
	\noindent
	For the case $l_0 \geq 1$, it suffices to note that the extensions $\widetilde{\Gamma}_{1}(l_0)$ and $\widetilde{\Gamma}_{1}(2,l_0)$ are isomorphic to the pushouts of $\widetilde{\Gamma}_{1}(0)$ and $\widetilde{\Gamma}_{1}(2,0)$ induced by the map $\langle T \rangle \to \langle T ~ \vert ~ T^{l_0} = 1 \rangle$.
	Consequently, the equality $c_{\widetilde{\Gamma}_{1}(2,l_0)} = 2 c_{\widetilde{\Gamma}_{1}(l_0)}$ also holds.
\end{proof}
\noindent
We know that there is a group isomorphism $H^2(\Gamma_{1}, \mathbb{Z}) \simeq \mathbb{Z}_{12}$ (see \Cref{prop-H^2_petit_genre}). 
The following result allows us to identify $c_{\widetilde{\Gamma}_{1}(2,0)}$ more precisely:
\begin{proposition}\label{prop-generateur_H^2_genre_1}
	The cohomology class $c_{\widetilde{\Gamma}_1(0)}$ is a generator of $H^2(\Gamma_{1}, \mathbb{Z})$. 
\end{proposition}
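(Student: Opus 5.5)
We identify $\widetilde{\Gamma}_1(0)$ explicitly with a known central extension of $\Gamma_1 \simeq \mathrm{SL}_2(\mathbb{Z})$ whose class is known to generate $H^2(\Gamma_1,\mathbb{Z}) \simeq \mathbb{Z}_{12}$. By definition,
\[ \widetilde{\Gamma}_1(0) = \langle \tau_a, \tau_b, T ~ \vert ~ \tau_a\tau_b\tau_a = \tau_b\tau_a\tau_b, \ (\tau_a\tau_b)^6 = T, \ T \text{ central} \rangle . \]
Since $T$ is expressed through the generators, we can eliminate it. What remains is the braid relation together with the requirement that $(\tau_a\tau_b)^6$ be central. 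In the braid group $B_3 = \langle \tau_a,\tau_b ~\vert~ \tau_a\tau_b\tau_a=\tau_b\tau_a\tau_b\rangle$, the element $(\tau_a\tau_b)^3 = (\tau_a\tau_b\tau_a)^2$ is already central. Indeed, $\Delta = \tau_a\tau_b\tau_a$ conjugates $\tau_a$ to $\tau_b$ and $\tau_b$ to $\tau_a$. Hence $\widetilde{\Gamma}_1(0) \simeq B_3 = \Gamma_1^1$, and the extension is
\[ 0 \to \langle (\tau_a\tau_b)^6 \rangle \simeq \mathbb{Z} \to B_3 \to \mathrm{SL}_2(\mathbb{Z}) \to 1. \]
This is exactly the capping extension $0 \to \langle \tau_{\partial}\rangle \to \Gamma_1^1 \to \Gamma_1 \to 1$, since $(\tau_a\tau_b)^6 = \tau_{\partial_{1,0}}$ in $\Gamma_1^1$, as noted before \Cref{lem-ab_puissance6_dans_noyau}. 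The kernel $\langle T\rangle$ maps isomorphically onto $\mathbb{Z}$ because $T$ has infinite order in $B_3$; for instance, its image in the abelianization $\mathbb{Z}$ is $12$.

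It then remains to show that this class generates $\mathbb{Z}_{12}$. Our first approach uses the universal coefficient sequence. Since $H_2(\mathrm{SL}_2(\mathbb{Z}),\mathbb{Z}) = 0$ (by the amalgam $\mathbb{Z}_4 *_{\mathbb{Z}_2} \mathbb{Z}_6$), we get
\[ H^2(\Gamma_1,\mathbb{Z}) \simeq \mathrm{Ext}(H_1(\Gamma_1),\mathbb{Z}) \simeq \mathrm{Ext}(\mathbb{Z}_{12},\mathbb{Z}). \]
Under this identification, a central extension $0\to\mathbb{Z}\to E\to\Gamma_1\to1$ is sent to the class of the abelianized extension $0 \to \mathbb{Z} \to E^{ab}\to \Gamma_1^{ab}$, after quotienting by the image of torsion/commutators. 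Concretely, the class is determined by the induced map $\mathbb{Z} = \langle T\rangle \to E^{ab}$ composed with the projection to $\mathbb{Z}_{12}$ structure. The cleanest formulation is the following: the class generates $\mathrm{Ext}(\mathbb{Z}_{12},\mathbb{Z})$ if and only if $0 \to \mathbb{Z} \to E^{ab} \to \mathbb{Z}_{12} \to 0$ is exact with $E^{ab} \simeq \mathbb{Z}$.

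We check this for $E = B_3$. We have $B_3^{ab} = \mathbb{Z}$, generated by the common image of $\tau_a$ and $\tau_b$. The element $T = (\tau_a\tau_b)^6$ maps to $12$, and the quotient is $\mathbb{Z}_{12} = \Gamma_1^{ab}$, which matches \labelcref{eq-H_1_petit_genre}. An extension $0\to\mathbb{Z}\xrightarrow{\times 12}\mathbb{Z}\to\mathbb{Z}_{12}\to0$ is a generator of $\mathrm{Ext}(\mathbb{Z}_{12},\mathbb{Z})\simeq\mathbb{Z}_{12}$, since the extension corresponding to $k$ has middle term $\mathbb{Z}\oplus\mathbb{Z}_{\gcd(k,12)}$.

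The main obstacle is justifying cleanly that the UCT identification sends the extension class to its abelianized extension, with the correct naturality. An alternative that avoids this is to cite that $\Gamma_1^1 \to \Gamma_1$ realizes the Euler class, or one fourth of the Meyer class, which generates $H^2(\Gamma_1,\mathbb{Z})$ by \Cref{prop-H^2_petit_genre}. Its class is then the Euler class $[e]$ of \labelcref{eq-definition_classe_Euler}, applied with a boundary instead of a puncture. We would also mention this alternative as a cross-check.
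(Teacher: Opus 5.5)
Your proof is correct, but it takes a different route from the paper. Both arguments begin with $\widetilde{\Gamma}_1(0)\simeq \Gamma_1^1 = B_3$, and your elimination of $T$ via the centrality of $(\tau_a\tau_b)^3=(\tau_a\tau_b\tau_a)^2$ is fine.

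The paper then argues \emph{locally}. It restricts the class to the finite cyclic subgroups $\langle \tau_a\tau_b\tau_a\rangle\simeq\mathbb{Z}_4$ and $\langle\tau_a\tau_b\rangle\simeq\mathbb{Z}_6$ of $\Gamma_1$. Since $(\tau_a\tau_b\tau_a)^4=(\tau_a\tau_b)^6=T$, the pulled-back extensions are $\mathbb{Z}\xrightarrow{\times 4}\mathbb{Z}$ and $\mathbb{Z}\xrightarrow{\times 6}\mathbb{Z}$. These generate $H^2(\mathbb{Z}_4,\mathbb{Z})$ and $H^2(\mathbb{Z}_6,\mathbb{Z})$, so the order of the class is divisible by $12$. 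The cited isomorphism $H^2(\Gamma_1,\mathbb{Z})\simeq\mathbb{Z}_{12}$ then finishes the proof. That argument needs nothing beyond restriction maps and the known value of $H^2$.

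You argue \emph{globally} through the abelianization. Using $H_2(\Gamma_1,\mathbb{Z})=0$, you recover $H^2(\Gamma_1,\mathbb{Z})\simeq\mathrm{Ext}(\mathbb{Z}_{12},\mathbb{Z})$ rather than taking it as input, and you read the class off from $B_3^{ab}\simeq\mathbb{Z}$ with $T\mapsto 12$. The cost is the Ext identification you flag, which closes directly as follows.
\begin{itemize}
\item The map $x\mapsto(\pi(x),\mathrm{ab}(x))$ is a morphism of central extensions $B_3\to\Gamma_1\times_{\mathbb{Z}_{12}}\mathbb{Z}$ over $\Gamma_1$.
\item On kernels it sends $T$ to $(1,12)$, a generator of $\{1\}\times 12\mathbb{Z}$, so it is an isomorphism by the five lemma.
\item Hence the class is the inflation of the class of $0\to\mathbb{Z}\xrightarrow{\times 12}\mathbb{Z}\to\mathbb{Z}_{12}\to 0$, which generates $H^2(\mathbb{Z}_{12},\mathbb{Z})$.
\item Inflation $H^2(\mathbb{Z}_{12},\mathbb{Z})\to H^2(\Gamma_1,\mathbb{Z})$ is an isomorphism by naturality of the universal coefficient sequence: both $\mathrm{Hom}(H_2,\mathbb{Z})$ terms vanish, and abelianization is an isomorphism on $H_1$.
\end{itemize}

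Your fallback cross-check is not supported by the paper as written. \Cref{prop-H^2_petit_genre} only says that $[\tau]/4$ generates; it neither identifies the capping extension $\Gamma_1^1\to\Gamma_1$ with that class nor states that this Euler class generates. It would need an external reference.
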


\begin{proof} 
	We know that (see \Cref{prop-H^2_petit_genre} and \cite[Chap. III - Sec. 1 - Ex. 2]{brown_cohomology_1982}): 
	\[ H^2(\Gamma_1, \mathbb{Z}) \simeq \mathbb{Z}_{12}, \qquad H^2(\mathbb{Z}_{n}, \mathbb{Z}) \simeq \mathbb{Z}_{n}. \]
	Note that $\widetilde{\Gamma}_{1}(0)$ is isomorphic to $\Gamma_1^1$ and we denote by $c_{\Gamma_1^1}$ its associated cohomology class. 
	Then to prove the statement, we show that the restriction of $c_{\Gamma_1^1}$ to the subgroup $\langle \tau_a \tau_b \tau_a \rangle$, isomorphic to $\mathbb{Z}_4$, is a generator and thus of order 4, and that its restriction to $\langle \tau_a \tau_b \rangle$, isomorphic to $\mathbb{Z}_6$, is a generator and thus of order 6. \\
	Let us start with the restriction to $\mathbb{Z}_4$. 
	Note that in $\Gamma_1$ we have $(\tau_a \tau_b)^6 = (\tau_a \tau_b \tau_a)^4$, so we obtain a commutative diagram: 
	\[
	\begin{tikzcd}
		0  \arrow[r] & \langle (\tau_a \tau_b \tau_a)^4 \rangle_{\mathrm{free}} \arrow[r]  & \Gamma_{1}^1 \arrow[r, "\pi"] & \Gamma_{1} \arrow[r] & 1\\
		0 \arrow[r] &  \langle (\tau_a \tau_b \tau_a)^4 \rangle_{\mathrm{free}} \arrow[u, "\id"'] \arrow[r] &  (i_4)^*\Gamma_1^1 \arrow[u] \arrow[r] & \langle \tau_a \tau_b \tau_a \rangle \arrow[r] \arrow[u, hook, "i_4"'] & 1.
	\end{tikzcd}
	\]
	where $i_4$ is the injection and $(i_4)^*\Gamma_1^1$ is the pullback of $\Gamma_1^1$. 
	Let us define the following set-theoretic section: 
	\[ 
	\begin{array}{l|ccl}
		s: & \langle \tau_a \tau_b \tau_a \rangle & \longrightarrow & (i_4)^*\Gamma_1^1 \\
		& (\tau_a \tau_b \tau_a)^k & \longmapsto & \big(1_{\langle \tau_a \tau_b \tau_a \rangle} , (\tau_a \tau_b \tau_a)^k \big) \in \langle \tau_a \tau_b \tau_a \rangle \times \Gamma_1^1.
	\end{array}
	\]
	We have $s((\tau_a \tau_b \tau_a)^4) = (\tau_a \tau_b \tau_a)^4$, which is the generator of $\langle (\tau_a \tau_b \tau_a)^4 \rangle_{\mathrm{free}}$.
	Thus, the class associated with the extension $(i_4)^*\Gamma_1^1$ generates $H^2(\langle \tau_a \tau_b \tau_a \rangle, \mathbb{Z}) \simeq H^2(\mathbb{Z}_{4}, \mathbb{Z}) \simeq \mathbb{Z}_{4}$. 
	Doing the same with $i_6 : \langle \tau_a \tau_b \rangle \to \Gamma_1$, we obtain that the class associated with the extension $(i_6)^*\Gamma_1^1$ is a generator of $H^2(\langle \tau_a \tau_b \rangle, \mathbb{Z}) \simeq H^2(\mathbb{Z}_{6}, \mathbb{Z}) \simeq \mathbb{Z}_{6}$. 
	Consequently, $c_{\Gamma_{1}^1}$ is of order a multiple of 4 and 6 so at least 12. 
	However, $H^2(\Gamma_1, \mathbb{Z}) \simeq \mathbb{Z}_{12}$ so $c_{\Gamma_{1}^1}$ is of order 12 and is therefore a generator.
\end{proof}

\subsubsection{\textbf{Case \texorpdfstring{$g \geq 2$}{g superieur a 2} : the extension \texorpdfstring{$\widetilde{\Gamma}_{g,n}^s(2,l)$}{widetildeGammagns(2)}}}\label{subsubsec-gammatildegns2}

Recall that for all $g \geq 2$, $s \geq 0$, and $n \geq 1$, $\Gamma_{g,n}^s$ has the Gervais presentation (see \Cref{thm-presentation_de_Gervais}).

\noindent
Also, recall that $l_0 \in \mathbb{N}$ denotes the order of the element $\Bigl( \frac{\lambda(v^{-1})}{\lambda(v)} \Bigr)^2 \in \mathbb{K}$.
Recall that the element $\mathfrak{i}_{m_{g+i}}(\lambda^v)$ acts as a scalar on $V_{g,n}^s$ (see \Cref{prop-calcul_relation_piqure_k_avec_formules_Derived}).
Denote by $l_i \in \mathbb{N}$ the order of this element. 
We say that an element $x$ is of order $0$ if $x$ has infinite order. 
Let $l := (l_0, \cdots, l_n)$.

\begin{definition}\label{def-gammatildegns2}
	For all $g \geq 2$, $n \geq 0$, and $s \in \{ 0, 1 \}$, the group $\widetilde{\Gamma}_{g,n}^s(2,l)$ is defined by the following presentation: 
    \begin{itemize}
        \item Generators: $\{ \tau_{\gamma} ~ \vert ~ \text{non-separating simple closed curve } \gamma \subset \Sigma_{g,n}^s \} \cup \{ T, E_1, \cdots, E_n \}$.
        \item Relations: 
        \begin{enumerate}
            \item All 0 and 1-braid relations; \label{item-0_et_1-tresse}
            \item One lantern relation, if $g \geq 3$; \label{item-lanterne}
            \item $r_c = T^2$, where $r_c$ is the 3-chain relator; \label{item-rc_egale_Tcarre}
            \item $T^{l_0} = 1$; \label{item-T_puissance_l0}
            \item The element $T$ is central; \label{item-T_est_central}
            \item $r_{p_i} = E_i$, where $r_{p_i}$ is the $i$-th puncture relator for each puncture $p_i$; \label{item-rpi_egale_Ei}
            \item $E_i^{l_i} = 1$, for all $i \in [\![ 1, n]\!]$; \label{item-Ei_puissance_li}
            \item $E_i$ is central, for all $i \in [\![ 1, n]\!]$; \label{item-Ei_est_central}
        \end{enumerate}
    \end{itemize}
	where $T^0$ and $E_i^0$ are by definition the empty words. 
	When $n = 0$, we denote the group by $\widetilde{\Gamma}_{g}^s(2,l_0)$.
\end{definition}
\noindent
Let $\xi_i$ be the element of $\mathbb{K}$ such that $\mathcal{R}_{g,n}^s(V)(\mathfrak{i}_{m_{g+i}}(\lambda^v)) = \xi_i \id_{V_{g,n}^s}$.
The results of \Cref{subsec-calcul_relations} and the relations of the presentation in \Cref{def-gammatildegns2} allow us to define a linear representation of $\widetilde{\Gamma}_{g,n}^s(2,l)$ as follows: 
\[ 
\begin{array}{l|ccl}
	\tilde{\rho}_{g,n}^s(2): & \widetilde{\Gamma}_{g,n}^s(2,l) & \longrightarrow & \GL(V_{g,n}^s) \\
	& \tau_\gamma & \longmapsto & \mathcal{R}_{g,n}^s(V)(\hat{\tau}_\gamma), \\
	& T & \longmapsto & \frac{\lambda(v^{-1})}{\lambda(v)} \id_{V_{g,n}^s}, \\
	& E_i & \longmapsto & \xi_i \id_{V_{g,n}^s}.
\end{array}
\]
Furthermore, we have the following commutative diagram, where $\pi(\tau_\gamma) = \tau_\gamma$, $\pi(T) = 1_{\Gamma_{g,n}^s}$, and $\pi(E_i) = 1_{\Gamma_{g,n}^s}$, for all $i \in [\![ 1, n ]\!]$: 
\begin{equation}\label{eq-diagramme_commutatif_rhotildegnV2}
	\begin{tikzcd}
        \widetilde{\Gamma}_{g,n}^s(2,l) \arrow[r, "\tilde{\rho}_{g,n}^s(2)"] \arrow[d, "\pi"] & \GL(V_{g,n}^s) \arrow[d, "\pi"] \\
        \Gamma_{g,n}^s \arrow[r, "\rho_{g,n}^s"] & \PGL(V_{g,n}^s)
    \end{tikzcd}
\end{equation}

\begin{remarque}
	\begin{enumerate}[leftmargin=*]
		\item When $n = 0$, the same arguments as in the third statement of \Cref{rem-linearisation_de_rho_en_genre_1} imply that the linearization $\tilde{\rho}_{g}^s(2)$ does not depend on the choice of the left integral $\lambda$.
		\item When $n \geq 1$, the linear representation $\tilde{\rho}_{g,n}^s(2)$ depends on the choice of the left integral $\lambda$.
		Choosing a different integral simultaneously renormalizes $\tilde{\rho}_{g,n}^s(2)(E_i)$, for all $i \in [\![1, n]\!]$.
	\end{enumerate}
\end{remarque}

For all $g \geq 2$, $n \geq 0$, and $s \in \{ 0,1 \}$, let $\widetilde{\Gamma}_{g,n}^s(l_0)$ be the central extension of $\Gamma_{g,n}^s$ defined by the presentation $\langle F \vert R \rangle$, where the set of generators $F$ consists of all Dehn twists $\tau_\gamma$ along non-separating simple closed curves $\gamma \subset \Sigma_{g,n}^s$ and $T$, and the set of relations $R$ consists of all the relations in the Gervais presentation of $\Gamma_{g,n}^s$ (see \Cref{thm-presentation_de_Gervais}), except for the relation $r_{c} = 1$ which is replaced by $r_{c} = T$, and the relations \labelcref{item-T_puissance_l0} and \labelcref{item-T_est_central} from \Cref{def-gammatildegns2}. 
Denote by $c_{\widetilde{\Gamma}_{g,n}^s(l_0)} \in H^2(\Gamma_{g,n}^s, \mathbb{Z}_{l_0})$ the cohomology class associated with $\widetilde{\Gamma}_{g,n}^s(l_0)$.
Let $c_{\widetilde{\Gamma}_{g,n}^s(2,l)}$ be the cohomology class in $\bigoplus_{i=0}^n H^2(\Gamma_{g,n}^s, \mathbb{Z}_{l_i})$ associated with the central extension $\widetilde{\Gamma}_{g,n}^s(2,l)$.

\begin{theoreme}\label{thm-cohomologie_Gammagns2l}
    For all $g \geq 2$, $n \geq 0$, and $s \in \{ 0, 1 \}$, we have:
	\[ c_{\widetilde{\Gamma}_{g,n}^s(2,l)} = 2 c_{\widetilde{\Gamma}_{g,n}^s(l_0)} \oplus [e_1](l_1) \oplus \cdots \oplus [e_n](l_n) \in \bigoplus_{i=0}^n H^2(\Gamma_{g,n}^s, \mathbb{Z}_{l_i}), \]
	where $[e_i](l_i)$ is the pushout of the $i$-th Euler class $[e_i]$ induced by the map $\mathbb{Z} \to \mathbb{Z}_{l_i}$.
\end{theoreme}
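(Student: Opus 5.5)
Because the kernel of $\widetilde{\Gamma}_{g,n}^s(2,l)\to\Gamma_{g,n}^s$ is generated by the central elements $T,E_1,\dots,E_n$, the extension has kernel $A=\mathbb{Z}_{l_0}\oplus\mathbb{Z}_{l_1}\oplus\cdots\oplus\mathbb{Z}_{l_n}$, with generators $T,E_1,\dots,E_n$. Its class therefore splits as the tuple of classes of the pushouts $p_{i*}\widetilde{\Gamma}_{g,n}^s(2,l)$ along the projections $p_i:A\to\mathbb{Z}_{l_i}$. I will identify each component separately.

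\emph{Component $i=0$.} Killing the $E_i$ in $\widetilde{\Gamma}_{g,n}^s(2,l)$ gives the presentation of the Gervais relations with $r_c=T^2$, $T^{l_0}=1$, $T$ central, and puncture relations trivial. I will construct the morphism $\psi:\widetilde{\Gamma}_{g,n}^s(l_0)\to p_{0*}\widetilde{\Gamma}_{g,n}^s(2,l)$ defined by $\tau_\gamma\mapsto\tau_\gamma$ and $T\mapsto T^2$. It respects all relations: the braid, lantern and puncture relations are untouched, $r_c=T$ maps to $r_c=T^2$, and centrality and $T^{l_0}=1$ are preserved. On kernels $\psi$ restricts to $\varphi:T\mapsto T^2$, and it induces the identity on $\Gamma_{g,n}^s$. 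I will then repeat verbatim the cocycle argument of \Cref{prop-extension_ordre_2_en_genre_1}: choose a set-theoretic section $s$ sending each $\tau_\gamma$ to itself, use $\psi\circ s$ as the section of the second extension, and note that $i_T\circ\varphi=2\,i_T$. This gives the component $2c_{\widetilde{\Gamma}_{g,n}^s(l_0)}$. I will handle the case $l_0=0$ first and obtain $l_0\geq 1$ by pushout along $\mathbb{Z}\to\mathbb{Z}_{l_0}$.

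\emph{Components $i\geq1$.} Killing $T$ and all $E_j$ with $j\neq i$ leaves the Gervais presentation of $\Gamma_{g,n}^s$ with the single modification $r_{p_i}=E_i$, $E_i$ central, $E_i^{l_i}=1$. For $l_i=0$ I claim this group is isomorphic to $\Gamma_{g,n-1}^{s+1}$ via the capping sequence \labelcref{eq-definition_classe_Euler}, with $E_i\mapsto\tau_{\partial_i}$ up to sign. To prove the claim, I apply the Gervais presentation (\Cref{thm-presentation_de_Gervais}) to $\Gamma_{g,n-1}^{s+1}$ itself. Its generators are the twists on non-separating curves, which correspond bijectively to the non-separating curves of $\Sigma_{g,n}^s$ once the $i$-th puncture is filled back by a boundary. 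Its relations are the same braid, lantern and 3-chain relations, together with the puncture relations for the remaining $n-1$ punctures. The $i$-th puncture relator, read in $\Gamma_{g,n-1}^{s+1}$, becomes a lantern relation with $\tau_{\partial_i}$ as a fourth boundary twist, so $r_{p_i}=\tau_{\partial_i}^{\pm1}$ there. Hence the group $\langle\,\text{Gervais relations of }\Gamma_{g,n}^s\text{ minus }r_{p_i},\ r_{p_i}=E_i \text{ central}\,\rangle$ maps isomorphically onto $\Gamma_{g,n-1}^{s+1}$, and the kernel $\langle E_i\rangle$ goes to $\langle\tau_{\partial_i}\rangle$. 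The relation that $E_i$ is central holds automatically, because $\tau_{\partial_i}$ is central. This identifies the component with $[e_i]$, and a pushout along $\mathbb{Z}\to\mathbb{Z}_{l_i}$ gives $[e_i](l_i)$.

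The main obstacle is this last identification. I must check that the Gervais presentation of $\Gamma_{g,n-1}^{s+1}$ differs from that of $\Gamma_{g,n}^s$ exactly by the $i$-th puncture relation, with the same curve sets, and that the sign convention makes $r_{p_i}$ equal to $\tau_{\partial_i}$ rather than its inverse. A sign flip would only change the class to $-[e_i]$, so I will fix orientations using the lantern relation of \Cref{prop-relation_lanterne} together with the derivation of \Cref{prop-relation_piqure}, in which capping sends the lantern boundary curve $b_4$ to the puncture. I will also verify that the splitting of the total class into components is compatible with the chosen sections, which follows because all the pushouts are taken from a single section $s$.
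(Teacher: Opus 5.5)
Your argument is the paper's proof run in the dual direction. The paper first treats $l=(0,\dots,0)$. It forms the fibre product $E$ over $\Gamma_{g,n}^s$ of the extension with $r_c=T^2$ and all $r_{p_i}=1$ (your $i=0$ quotient) and of the extensions with only $r_{p_i}=E_i$ (your $i$-th quotients). It then writes down an isomorphism $\widetilde{\Gamma}_{g,n}^s(2,l)\to E$, reads off the class of $E$ as the direct sum, and finally pushes out along $\mathbb{Z}\to\mathbb{Z}_{l_i}$. You instead quotient $\widetilde{\Gamma}_{g,n}^s(2,l)$ by all but one central generator. A central extension with kernel $\bigoplus_i A_i$ is canonically the fibre product of its pushouts along the projections, so this is the same decomposition. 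Your two component computations (the $T\mapsto T^2$ cocycle comparison from genus one, and the capping sequence) are the paper's. Your treatment of the capping component is more complete. Comparing the Gervais presentations of $\Gamma_{g,n}^s$ and $\Gamma_{g,n-1}^{s+1}$ and eliminating $E_i=r_{p_i}$ by a Tietze move gives the isomorphism outright, and shows that centrality of $E_i$ is redundant. The paper only exhibits a surjection $\psi$ with the relators in its kernel and asserts that it is an isomorphism.

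Two points need repair.

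\emph{The kernel.} Your opening sentence does not show that the kernel is $A$. Being generated by central elements with $T^{l_0}=E_i^{l_i}=1$ only makes it a quotient of $A$, so the projections $p_i$ are not yet defined on it. Define the components directly as the quotients by the central subgroups generated by the other generators, which is what your ``killing'' does. Once each is identified with an extension whose kernel is exactly $\mathbb{Z}_{l_i}$, the product of the quotient maps shows that $A$ injects into the kernel. For $i=0$ this rests, as in the paper, on $T$ having infinite order in $\widetilde{\Gamma}_{g,n}^s(0)$, which the statement presupposes by placing $c_{\widetilde{\Gamma}_{g,n}^s(l_0)}$ in $H^2(\Gamma_{g,n}^s,\mathbb{Z}_{l_0})$. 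Indeed, for $l_0=0$ your group with $r_c=T^2$ is exactly the pushout of $\widetilde{\Gamma}_{g,n}^s(0)$ along multiplication by $2$.

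\emph{The sign.} The check you plan comes out negative. With $\partial_i$ in the role of $b_4$, the lantern relation in $\Gamma_{g,n-1}^{s+1}$ reads $\tau_{b_1}\tau_{b_2}\tau_{b_3}\tau_{\partial_i}=\tau_x\tau_y\tau_z$. Since $\tau_{\partial_i}$ commutes with the $\tau_{b_j}$, this gives $r_{p_i}=\tau_{b_1}\tau_{b_2}\tau_{b_3}\tau_{\partial_i}^{-1}\tau_{b_3}^{-1}\tau_{b_2}^{-1}\tau_{b_1}^{-1}=\tau_{\partial_i}^{-1}$. So $E_i$ corresponds to $\tau_{\partial_i}^{-1}$. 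You obtain $[e_i](l_i)$ only if the Euler class is normalized by identifying $\langle\tau_{\partial_i}\rangle$ with $\mathbb{Z}$ via $\tau_{\partial_i}^{-1}\mapsto 1$; with $\tau_{\partial_i}\mapsto 1$ the component is $-[e_i](l_i)$. The paper's map $E_i\mapsto\tau_{\partial_i}$ does not address this either, so state the normalization explicitly.
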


\begin{proof}
    Let us first consider the case where $l = (0, \cdots, 0)$.
	For all $i \in [\![1, n]\!]$, we define the group $\widetilde{\Gamma}_{g,n}^s(e_i,0)$ by the presentation $\langle F_i \vert R_i \rangle$, where the set of generators $F_i$ consists of all Dehn twists $\tau_\gamma$ along non-separating simple closed curves $\gamma \subset \Sigma_{g,n}^s$ and $E_i$, and the set of relations $R_i$ consists of all the relations in the Gervais presentation of $\Gamma_{g,n}^s$ (see \Cref{thm-presentation_de_Gervais}), except for the relation $r_{p_i} = 1$ which is replaced by $r_{p_i} = E_i$, and the relations \labelcref{item-Ei_puissance_li,item-Ei_est_central} from \Cref{def-gammatildegns2} only for $E_i$. 
	
	\smallskip
	\noindent	
	We claim that the cohomology class $c_{\widetilde{\Gamma}_{g,n}^s(e_i,0)}$ associated with $\widetilde{\Gamma}_{g,n}^s(e_i,0)$ is the $i$-th Euler class $[e_i] \in H^2(\Gamma_{g,n}^s, \mathbb{Z}$).
	Indeed, recall that $[e_i]$ is the cohomology class associated with the central extension $0 \to \langle \tau_{\partial_i} \rangle \to \Gamma_{g,n-1}^{s+1} \xrightarrow{~\mathrm{Cap}~} \Gamma_{g,n}^{s} \to 1$, where $\mathrm{Cap} : \Gamma_{g,n-1}^{s+1} \to \Gamma_{g,n}^s$ is the capping morphism.
	Let us construct an isomorphism between the central extensions $\widetilde{\Gamma}_{g,n}^s(e_i,0)$ and $\Gamma_{g,n-1}^{s+1}$. 
    We define the following map: 
    \[ \begin{array}{l|ccl}
		\psi: & \langle F_i \rangle & \longrightarrow & \Gamma_{g,n-1}^{s+1} \\
        & \tau_\gamma & \longmapsto & \tau_\gamma \\ 
        & E_i & \longmapsto & \tau_{\partial_i}.
    \end{array} \] 
	First, it is clear that $\psi$ is surjective because $\Gamma_{g,n-1}^{s+1}$ is generated by Dehn twists along non-separating simple closed curves of $\Sigma_{g,n-1}^{s+1}$ (which are also non-separating simple closed curves of $\Sigma_{g,n}^s$). 
    Next, the 0-braid and 1-braid relators, the lantern, the chain, the $j$-th puncture relators for all $j \neq i$, and $(r_{p_i}E_i^{-1})$ are in the kernel of $\psi$.
	Consequently, $\psi$ induces an isomorphism $\psi : \widetilde{\Gamma}_{g,n}^s(e_i,0) \to \Gamma_{g,n-1}^{s+1}$. 
	Finally, it is clear that $\psi$ satisfies the following commutative diagram, where $\varphi(E_i) = \tau_{\partial_i}$, $\pi_i(\tau_\gamma) = \tau_\gamma$, and $\pi_i(E_i) = 1_{\Gamma_{g,n}^s}$: 
	\begin{equation}\label{eq-diagramme_commutatif_extension_ieme_classe_Euler}
		\begin{tikzcd}
        	0 \arrow[r] & \langle \tau_{\partial_i} \rangle \arrow[r] & \Gamma_{g,n-1}^{s+1} \arrow[r, "\pi"] & \Gamma_{g,n}^s \arrow[r] & 1 \\
        	0 \arrow[r] & \langle E_i \rangle \arrow[r, "\iota_i"] \arrow[u, "\varphi"'] & \widetilde{\Gamma}_{g,n}^s(e_i,0) \arrow[r, "\pi_i"] \arrow[u, "\psi"'] & \Gamma_{g,n}^s \arrow[r] \arrow[u, "\id_{\Gamma_{g,n}^s}"'] & 1,
    	\end{tikzcd} 
	\end{equation}
	hence $c_{\widetilde{\Gamma}_{g,n}^s(e_i,0)} = [e_i] \in H^2(\Gamma_{g,n}^s, \mathbb{Z})$, as claimed.

	\smallskip
	\noindent
	Denote by $\widetilde{\Gamma}_{g,n}^s(2,0)$ the group defined by the presentation $\langle F_0 \vert R_0 \rangle$, where the set of generators $F_0$ consists of all Dehn twists $\tau_\gamma$ along non-separating simple closed curves $\gamma \subset \Sigma_{g,n}^s$ and $T$, and the set of relations $R_0$ consists of the relations \labelcref{item-0_et_1-tresse,item-lanterne,item-rc_egale_Tcarre,item-T_est_central,item-T_puissance_l0} from \Cref{def-gammatildegns2} and the relation $r_{p_i} = 1$ for all $i \in [\![ 1, n]\!]$. 
	It is a central extension of $\Gamma_{g,n}^s$ with the maps: 
	\begin{equation}\label{eq-applications_extension_centrale_Gammagns2_un_seul_0}
		\begin{array}{l|ccl}
			\iota_0: &  \langle T \rangle & \longrightarrow & \widetilde{\Gamma}_{g,n}^s(2,0) \\
			& T & \longmapsto & T,
		\end{array}
		\qquad 
		\begin{array}{l|ccl}
			\pi_0: & \widetilde{\Gamma}_{g,n}^s(2,0) & \longrightarrow & \Gamma_{g,n}^s \\
			& \tau_\gamma & \longmapsto & \tau_\gamma \\
			& T & \longmapsto & 1_{\Gamma_{g,n}^s}.
		\end{array}
	\end{equation}
	Using the same arguments as in the proof of \Cref{prop-extension_ordre_2_en_genre_1}, the cohomology class $c_{\widetilde{\Gamma}_{g,n}^s(2,0)}$ of $\widetilde{\Gamma}_{g,n}^s(2,0)$ is equal to $2 c_{\widetilde{\Gamma}_{g,n}^s(0)} \in H^2(\Gamma_{g,n}^s, \mathbb{Z})$.
	
	\smallskip
	\noindent
	Let $E$ be the fiber product of the central extensions $\widetilde{\Gamma}_{g,n}^s(2,0)$ and $\widetilde{\Gamma}_{g,n}^s(e_i,0)$ for all $i \in [\![1, n]\!]$. 
    We obtain the following exact sequence: 
    \begin{equation}\label{eq-somme_directe_des_extensions}
	0 \longrightarrow \langle T \rangle \oplus \langle E_1 \rangle \oplus \cdots \oplus \langle E_n \rangle \xrightarrow{~~\iota~~} E \xrightarrow{~~\pi~~} \Gamma_{g,n}^s \longrightarrow 1,
    \end{equation}
	where $\iota := \iota_0 \oplus \cdots \oplus \iota_n$ with $\iota_k$ from \labelcref{eq-diagramme_commutatif_extension_ieme_classe_Euler,eq-applications_extension_centrale_Gammagns2_un_seul_0}, and $\pi$ is defined by $\pi(x_0, \cdots, x_n) := \pi_0(x_0)$ with $\pi_0$ from \labelcref{eq-applications_extension_centrale_Gammagns2_un_seul_0}.
	Let $c \in \bigoplus_{i=0}^n H^2(\Gamma_{g,n}^s, \mathbb{Z})$ denote the cohomology class of the central extension \labelcref{eq-somme_directe_des_extensions}.
    We have: 
    \begin{equation}\label{eq-cohomologie_somme_directe}
		c = c_{\widetilde{\Gamma}_{g,n}^s(2,0)} \oplus c_{\widetilde{\Gamma}_{g,n}^s(e_1,0)} \oplus \cdots \oplus c_{\widetilde{\Gamma}_{g,n}^s(e_n,0)} = 2 c_{\widetilde{\Gamma}_{g,n}^s(0)} \oplus [e_1] \oplus \cdots \oplus [e_n] \in \bigoplus_{i=0}^n H^2(\Gamma_{g,n}^s, \mathbb{Z}).
    \end{equation}
	The equality \labelcref{eq-cohomologie_somme_directe} follows from the fact that a set-theoretic section $s : \Gamma_{g,n}^s \to E$ of the central extension \labelcref{eq-somme_directe_des_extensions} can be defined by $s_0 \oplus \cdots \oplus s_n$, where $s_0 : \Gamma_{g,n}^s \to \widetilde{\Gamma}_{g,n}^s(2,0)$ and $s_i : \Gamma_{g,n}^s \to \widetilde{\Gamma}_{g,n}^s(e_i, 0)$ are arbitrary set-theoretic sections. 
	
	\smallskip
	\noindent
	Finally, to conclude we only need to show that the extension \labelcref{eq-somme_directe_des_extensions} is isomorphic to $\widetilde{\Gamma}_{g,n}^s(2,(0, \cdots, 0))$.
	This follows from the isomorphism:
    \begin{equation}\label{eq-isomorphisme_E_tildeGammagns20}
		\begin{array}{l|ccl}
            \varphi: & \widetilde{\Gamma}_{g,n}^s(2,l) & \longrightarrow & E \\
            & \tau_\gamma & \longmapsto & \tau_\gamma \\ 
            & T & \longmapsto & (T, 1, \cdots, 1) \\
			& E_i & \longmapsto & ( \underbrace{1, \cdots, 1}_{i}, E_i, 1, \cdots, 1).
        \end{array}
    \end{equation}

	For the case $l \neq (0, \cdots, 0)$, it suffices to note that the extensions $\widetilde{\Gamma}_{g,n}^s(e_i, l_i)$, for all $i \in [\![ 1, n ]\!] $, are isomorphic to the pushouts of $\widetilde{\Gamma}_{g,n}^s(e_i, 0)$ by the maps $\pi_{l_i} : \mathbb{Z} \to \mathbb{Z}_{l_i}$, via the isomorphisms:
	\[ 
    \begin{array}{l|ccl}
        & \langle E_i ~ \vert ~ E_i^{l_i} \rangle \times \widetilde{\Gamma}_{g,n}^s(e_i, 0) / \sim & \longrightarrow & \widetilde{\Gamma}_{g,n}^s(e_i, l_i) \\
		& (\bar{e},g) & \longmapsto & eg,
    \end{array}
    \]
    where $\bar{e}$ is the equivalence class in $\langle E_i ~ \vert ~ E_i^{l_i} \rangle$ of $e \in \langle E_i \rangle_{\mathrm{free}}$.
	Similarly, $\widetilde{\Gamma}_{g,n}^s(2,l_0)$ and $\widetilde{\Gamma}_{g,n}^s(l_0)$ are isomorphic to the pushouts of $\widetilde{\Gamma}_{g,n}^s(2,0)$ and $\widetilde{\Gamma}_{g,n}^s(0)$ induced by the map $\mathbb{Z} \to \mathbb{Z}_{l_0}$.
	Consequently, the equality $c = 2 c_{\widetilde{\Gamma}_{g,n}^s(l_0)} \oplus [e_1](l_1) \oplus \cdots \oplus [e_n](l_n)$ also holds, and the same isomorphism $\varphi$ (see \labelcref{eq-isomorphisme_E_tildeGammagns20}) allows us to conclude.
\end{proof}

When $n = 0$, recall that the group $H^2(\Gamma_{g}^s, \mathbb{Z})$ is a cyclic group (see \Cref{thm-H^2_grand_genre} and \Cref{prop-H^2_petit_genre}). 
The following result allows us to identify $c_{\widetilde{\Gamma}_{g}^s(2,l_0)}$ more precisely:

\begin{proposition}\label{prop-cohomologie_tildeGammags_genere_H^2}
	The cohomology class $c_{\widetilde{\Gamma}_{g}^s(0)}$ is a generator of $H^2(\Gamma_{g}^s, \mathbb{Z})$. 
\end{proposition}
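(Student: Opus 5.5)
Here $\widetilde{\Gamma}_{g}^s(0)$ has the presentation of $\Gamma_g^s$ (Gervais, $n=0$) with one extra central generator $T$ and the 3-chain relation replaced by $r_c = T$. The plan is to split by genus and, in each case, identify $\widetilde{\Gamma}_{g}^s(0)$ with a known extension whose class is already known to generate.

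\textbf{Case $g \geq 4$.} Here I would compare $\widetilde{\Gamma}_{g}^s(0)$ with Harer's universal central extension of \Cref{thm-presentation_ECU}. That presentation has the same generators and the 0/1-braid and lantern relations, but no 3-chain relation. Define $\theta : \widetilde{\Gamma}_{g}^{s,\mathrm{univ}} \to \widetilde{\Gamma}_{g}^s(0)$ by $\tau_\gamma \mapsto \tau_\gamma$. It is well defined because every defining relation of the source holds in the target. It is surjective because $T = r_c$ is a word in the $\tau_\gamma$.

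For injectivity, I would show that the image of $r_c$ in $\widetilde{\Gamma}_{g}^{s,\mathrm{univ}}$ is central. Indeed, $r_c$ maps to $1$ in $\Gamma_g^s$, and the kernel of a central extension is central. I would then check that adding the relation ``$r_c$ central'' and naming it $T$ gives back exactly $\widetilde{\Gamma}_{g}^s(0)$. The kernel of $\widetilde{\Gamma}_{g}^{s,\mathrm{univ}} \to \Gamma_g^s$ is $H_2(\Gamma_g^s,\mathbb{Z}) \simeq \mathbb{Z}$ by \Cref{thm-H_2_grand_genre}. This kernel is the normal closure of the single relator $r_c$, since the Gervais presentation of $\Gamma_g^s$ is obtained by adding that relator. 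It is therefore generated by $r_c$, and $\theta$ is an isomorphism of extensions. Hence $c_{\widetilde{\Gamma}_{g}^s(0)}$ is the class of the universal central extension, which generates $H^2(\Gamma_g^s,\mathbb{Z})\simeq \mathbb{Z}$ by the discussion after \Cref{thm-H^2_grand_genre}.

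\textbf{Cases $g = 2, 3$.} Harer's presentation is not available, so I would argue at the level of classes. By \Cref{prop-H^2_petit_genre} the group $H^2(\Gamma_g^s,\mathbb{Z})$ is cyclic, $\mathbb{Z}$ for $g=3$ and $\mathbb{Z}_{10}$ for $g=2$. It therefore suffices to show that $c:=c_{\widetilde{\Gamma}_{g}^s(0)}$ is not a proper multiple of a generator.

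First step: $c$ equals the image of the 3-chain relator under the Hopf-type map $H_2(\Gamma_g^s,\mathbb{Z}) \to \mathbb{Z}$. Concretely, $\widetilde{\Gamma}_{g}^s(0)$ is the quotient of $F/[F,R]$ by the remaining relators. For $g=3$, $\Gamma_3^s$ is perfect, so $\widetilde{\Gamma}_{3}^s(0)$ is a quotient of the universal central extension, and its kernel is the image of $H_2$ generated by $r_c$. I would then compute the order of this image using the reference \cite{masbaum_central_1995}, where exactly these cases are treated by evaluating on a representation: the class is detected by pairing with the 2-cycle associated with the 3-chain relation.

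For $g=2$, $\Gamma_2^s$ is not perfect, so I would use the universal coefficient sequence with $\mathrm{Ext}(H_1,\mathbb{Z}) = \mathbb{Z}_{10}$. Abelianizing the presentation with $T$ adjoined gives $H_1(\widetilde{\Gamma}_{2}^s(0)) = \mathbb{Z}$, with $\tau \mapsto 1$ and $T \mapsto 10$. The extension thus restricts nontrivially on $H_1$, which shows that $c$ maps to a generator of $\mathrm{Ext}(\mathbb{Z}_{10},\mathbb{Z})$. I expect this $g=2,3$ step, pinning down that the class is primitive and not, say, twice a generator, to be the main obstacle. I would lean on \cite{masbaum_central_1995}, together with the abelianization computation in genus 2.
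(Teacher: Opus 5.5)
For $g\ge 4$ you follow the paper's argument. $\widetilde{\Gamma}_g^s(0)$ is Harer's universal central extension, and its class generates $H^2(\Gamma_g^s,\mathbb{Z})\simeq\mathbb{Z}$. Your observation that $r_c$ is already central there, so adjoining $T=r_c$ as a central generator changes nothing, is exactly what the paper's ``is exactly'' amounts to.

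For $g=2,3$ the paper gives no argument and only refers to \cite{moulai_anomalie_nodate}, so there you take your own route. Your genus-$2$ argument is complete, and stronger than you present it:
\begin{itemize}
\item Since $H_2(\Gamma_2^s,\mathbb{Z})\simeq\mathbb{Z}_2$, the class $c$ lies in $\mathrm{Ext}(H_1(\Gamma_2^s),\mathbb{Z})\simeq\mathbb{Z}_{10}$.
\item A class coming from $\mathrm{Ext}$ is recovered as the abelianized extension $0\to\mathbb{Z}\to H_1(\widetilde{\Gamma}_2^s(0))\to H_1(\Gamma_2^s)\to 0$.
\item Your computation identifies this with $0\to\mathbb{Z}\xrightarrow{\,10\,}\mathbb{Z}\to\mathbb{Z}_{10}\to 0$, the standard generator.
\end{itemize}
What makes it a generator is that $H_1(\widetilde{\Gamma}_2^s(0))$ is torsion-free, not merely that something restricts nontrivially. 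Twice the generator would have middle group $\mathbb{Z}\oplus\mathbb{Z}_2$. The same computation also shows that $T$ has infinite order, which is needed for $c$ to be a $\mathbb{Z}$-valued class at all. So the obstacle you anticipate in genus $2$ is already removed by your own computation, and here you prove more than the paper does.

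In genus $3$ the difficulty is not where you place it, and the real one is left open. Primitivity is automatic:
\begin{itemize}
\item The lantern relation kills $\tau$ in the abelianization, and then $T=r_c\mapsto 10\tau=0$. So $\widetilde{\Gamma}_3^s(0)$ is itself perfect. This, not the perfectness of $\Gamma_3^s$, is what makes it a quotient of the universal central extension.
\item The five-term exact sequence then gives a surjection $H_2(\Gamma_3^s,\mathbb{Z})\simeq\mathbb{Z}\oplus\mathbb{Z}_2\twoheadrightarrow\langle T\rangle$.
\item This surjection is $\pm$ a generator of $\mathrm{Hom}(H_2(\Gamma_3^s,\mathbb{Z}),\mathbb{Z})\simeq H^2(\Gamma_3^s,\mathbb{Z})$ as soon as $\langle T\rangle\simeq\mathbb{Z}$.
\end{itemize}
The genuinely missing step is that $T$ has infinite order in $\widetilde{\Gamma}_3^s(0)$. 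Nothing in your sketch excludes $\langle T\rangle$ being a finite quotient of $H_2$, and the appeal to \cite{masbaum_central_1995} does not say what would settle it. Note also that $r_c$ has exponent sum $10$, so by itself it is not a $2$-cycle in Hopf's formula.

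You can close the gap with facts already quoted in the paper:
\begin{itemize}
\item Let $E$ be the central extension of $\Gamma_3^s$ by $\mathbb{Z}$ whose class generates $H^2$. It is perfect, because that class is a surjection $H_2\to\mathbb{Z}$.
\item Lift the twists conjugation-equivariantly, $\hat{t}_{f(a)}:=\hat{f}\,\hat{t}_a\,\hat{f}^{-1}$. This is well defined because the stabilizer of a non-separating curve admits no nonzero homomorphism to $\mathbb{Z}$: its orientation-preserving part is a quotient of $\Gamma_2^{s+2}$, whose $H_1$ is $\mathbb{Z}_{10}$.
\item Equivariance gives all $0$- and $1$-braid relations.
\item Multiplying every lift by a single central element enforces the lantern relation, since braid relators have exponent sum $0$ and the lantern relator has exponent sum $1$.
\end{itemize}
This yields a surjection $\widetilde{\Gamma}_3^s(0)\to E$ over $\Gamma_3^s$ sending $T$ to a generator of $\mathbb{Z}$, hence an isomorphism of extensions.
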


\begin{proof}
	In the case $g \geq 4$, the group $\widetilde{\Gamma}_{g}^s(0)$ is exactly the universal central extension $\widetilde{\Gamma}_{g}^{s,\mathrm{univ}}$ of $\Gamma_g^s$ (see \Cref{thm-presentation_ECU}).
	Since $H_2(\Gamma_g^s, \mathbb{Z}) \simeq H^2(\Gamma_g^s, \mathbb{Z}) \simeq \mathbb{Z}$ (see \Cref{thm-H_2_grand_genre,thm-H^2_grand_genre}), the cohomology class associated with $\widetilde{\Gamma}_{g}^{s,\mathrm{univ}}$ is a generator of $H^2(\Gamma_g^s, \mathbb{Z})$ and therefore $c_{\widetilde{\Gamma}_g^s(0)}$ is as well.
	For the cases $g = 2$ and $3$, see \cite{moulai_anomalie_nodate}.
\end{proof}

\appendix

\section{Diagrammatic description of \texorpdfstring{$\mathcal{L}_{g, n}(H)$}{LgnH}}\label{ann-description_diagrammatique_Lgn}

In this appendix, we recall a correspondence between $\mathcal{L}_{g,n}(H)$ and an algebra of diagrams (see \Cref{lem-correspondance_Lgn_diagrammes}), introduced in \cite[Sec. 3]{faitg_holonomy_2024}. 
A difference is that here we do not use any matrix formalism.
First, we recall the evaluation \enquote{à la Hennings} introduced in \cite[§4.3]{faitg_holonomy_2024} (see \cite{hennings_invariants_1996} for the original method).

\subsection{Diagrams and the Hennings evaluation}\label{ann-diagrammes_et_Hennings}

We consider ribbon graph diagrams as in \Cref{fig-exemple_diagramme}, where:
\begin{itemize}
	\item Each handle labeled by $a_{j}$ (resp. $b_{j}$ or $m_{g+k}$) contains a number of cups oriented in any direction;
	\item $T$ is a ribbon graph diagram without coupons neither colors, as defined in \cite[Chap. I - §2.1]{turaev_quantum_2010}, whose orientations at the endpoints are compatible with those of the arcs coming from the handles.
\end{itemize}
\begin{figure}[ht!]
	\centering
	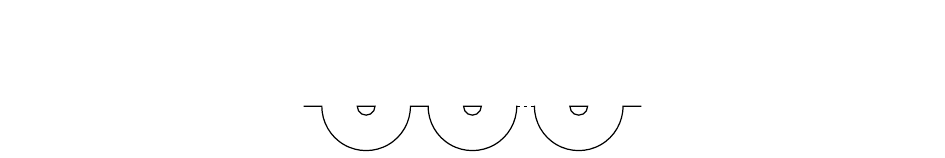
	\caption{Example of ribbon graph diagram}
	\label{fig-exemple_diagramme}
\end{figure}
\noindent
Throughout this \Cref{ann-diagrammes_et_Hennings}, $x_r$ is either the curves $a_j$, $b_j$, or $m_{g+k}$ (see \Cref{fig-courbes_standards_Lgn}).
Note that a diagram can pass through an arbitrarily large number of handles. 
Moreover, if a diagram has $t$ strands, it must have $2t$ arcs above the ribbon graph diagram $T$, one going down and the other one going up for each strand. 
These diagrams will allow us to represent elements of $\mathcal{L}_{g, n}(H)$ but should not be considered as projections of ribbon graphs in the thickened surface $\Sigma_{g,n}^1 \times [0,1]$. 

\begin{figure}[H]
	\centering
	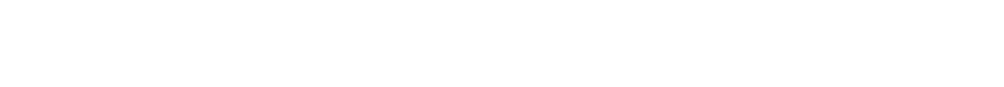
	\caption{Diagrams $D_{x_i}$, $\bar{D}_{x_i}$, and $\overset{?}{D}_{x_{i_1}} \otimes \cdots \otimes \overset{?}{D}_{x_{i_k}}$}
	\label{fig-diagrammes_Dxi_Dxi_bar_et_concatenation}
\end{figure}
\noindent
Denote by $\overset{?}{D}_{x_{i_1}} \otimes \cdots \otimes \overset{?}{D}_{x_{i_k}}$ the horizontal concatenation of the $k$ diagrams $\overset{?}{D}_{x_{i_1}}, \cdots, \overset{?}{D}_{x_{i_k}}$, where $\overset{?}{D}_{x_{i_j}}$ is either $D_{x_{i_j}}$ or $\bar{D}_{x_{i_j}}$ (see \Cref{fig-diagrammes_Dxi_Dxi_bar_et_concatenation}).

\medskip

We now describe a procedure \enquote{à la Hennings}, introduced in \cite{faitg_holonomy_2024}, which to each diagram $D$ with $t$ strands returns an element of $\mathcal{L}_{g, n}(H) \otimes H^{\otimes t}$.
This procedure can be summarized in two steps : label and multiply. 
Let us describe these steps.

\smallskip
\noindent
Let $D$ be a diagram as in \Cref{fig-exemple_diagramme} with $t$ strands. 
Let us recall that we use Sweedler's notation for the co-product $\Delta^{(r)}(x) = x_{(1)} \otimes \cdots \otimes x_{(r)}$ and the notation $R = \sum_{a}r_a \otimes r^a$ for the R-matrix.

\medskip
\textbf{STEP 1 - Label.}
As in \cite[Chap. I - §2.1]{turaev_quantum_2010}, the diagram $D$ can be isotoped so that it is only composed of the elementary (local) diagrams represented in \Cref{fig-diagrammes_locaux_elementaires}.

\begin{figure}[H]
	\centering
	\input{diagramme_local_elementaire.pdf_tex}
	\caption{Elementary local diagrams.}
	\label{fig-diagrammes_locaux_elementaires}
\end{figure}
\noindent
In the first two diagrams in \Cref{fig-diagrammes_locaux_elementaires}, we have $r$ parallel cups in the handle $x_i$. 
Next, we label the arcs of each elementary diagram in $D$ according to the rules in \Cref{fig-regles_etiquetage_Hennings}.
 
\begin{figure}[H]
	\centering
	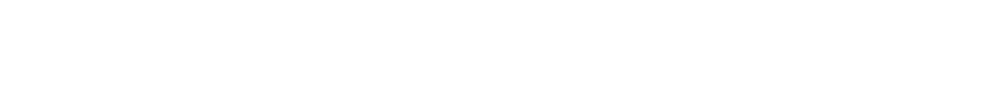
	\caption{Labeling rules \enquote{à la Hennings}.}
	\label{fig-regles_etiquetage_Hennings}
\end{figure}
\noindent
Let us describe \Cref{fig-regles_etiquetage_Hennings}. 
Some labels are components of R-matrices or iterated co-products $\Delta^{(r)}$. 
When these labels are present, we have implicit summations.
In the first two diagrams in \Cref{fig-regles_etiquetage_Hennings}, the cup shown is the $k$-th from the left among the $r$ parallel cups in the handle.
The label $x_i$ denotes either $a_i$, $b_i$, or $m_i$ (the usual generators of $\pi_1(\Sigma_{g,n}^1)$). 
The label $\mathcal{X}_1(i)_{(k)}$ denotes the $(k+1)$-th component of the element $(\id_{\mathcal{L}_{g, n}(H)} \otimes \Delta^{(r)})(\mathcal{X}(i))$, where $\mathcal{X}(i) = \mathcal{X}_0(i) \otimes \mathcal{X}_1(i) \in \mathcal{L}_{g, n}(H) \otimes H$ is the element defined by:
\begin{equation}\label{eq-propriete_element_universel}
	\forall \varphi \in H^*, ~ (\id_{\mathcal{L}_{g, n}(H)} \otimes \, \varphi) \bigl(\mathcal{X}(i) \bigr) = \mathfrak{i}_{x_i}(\varphi),
\end{equation}
where $\mathfrak{i}_{x_i}$ is any embedding defined in \labelcref{eq-definition_injections_canoniques}.
Since $H$ is finite-dimensional, we have $\mathcal{X}(i) = \sum_k \mathfrak{i}_{x_i}(h^k) \otimes h_k$, 
where $\{h_k\}_k$ is a basis of $H$ and $\{h^k\}_k$ is its dual basis.

\medskip
\textbf{STEP 2 - Multiply.}
Now that the arcs of the diagram $D$ are labeled with elements in $H$, we traverse each strand from its origin in the direction of its orientation and multiply from left to right the labels in the order in which we meet them. 
For each strand we obtain an element in $H$ labelling the strand.

\smallskip
\noindent
Let us look at an example: the first chain of diagrammatic equalities in \Cref{fig-regles_etiquetage_consequences}. 
The first two equalities consist of isotoping the diagram so that it is composed of the elementary diagrams in \Cref{fig-diagrammes_locaux_elementaires}. 
The third equality consists of labeling (see step 1), multiplying the labels and relabeling (see step 2), and isotoping the diagram to obtain the initial diagram. 
The last equality uses the identities \labelcref{eq-R-matrice_et_antipode,eq-propriete_g}.

\smallskip
\noindent
In the following and in \Cref{ann-action_ci}, we use the following notations: 
\begin{itemize}[itemsep=0.1em]
	\item The equalities $\overset{\text{top.}}{=}$ are only topological simplifications;
	\item The parts in red indicate the modifications at time \enquote{t};
	\item An integer next to a twist indicates the number of iterations of that twist. By default, this number is 1.
\end{itemize}
Thus, the labeling rules in \Cref{fig-regles_etiquetage_Hennings} imply the rules in \Cref{fig-regles_etiquetage_consequences}: 
\begin{figure}[H]
	\centering
	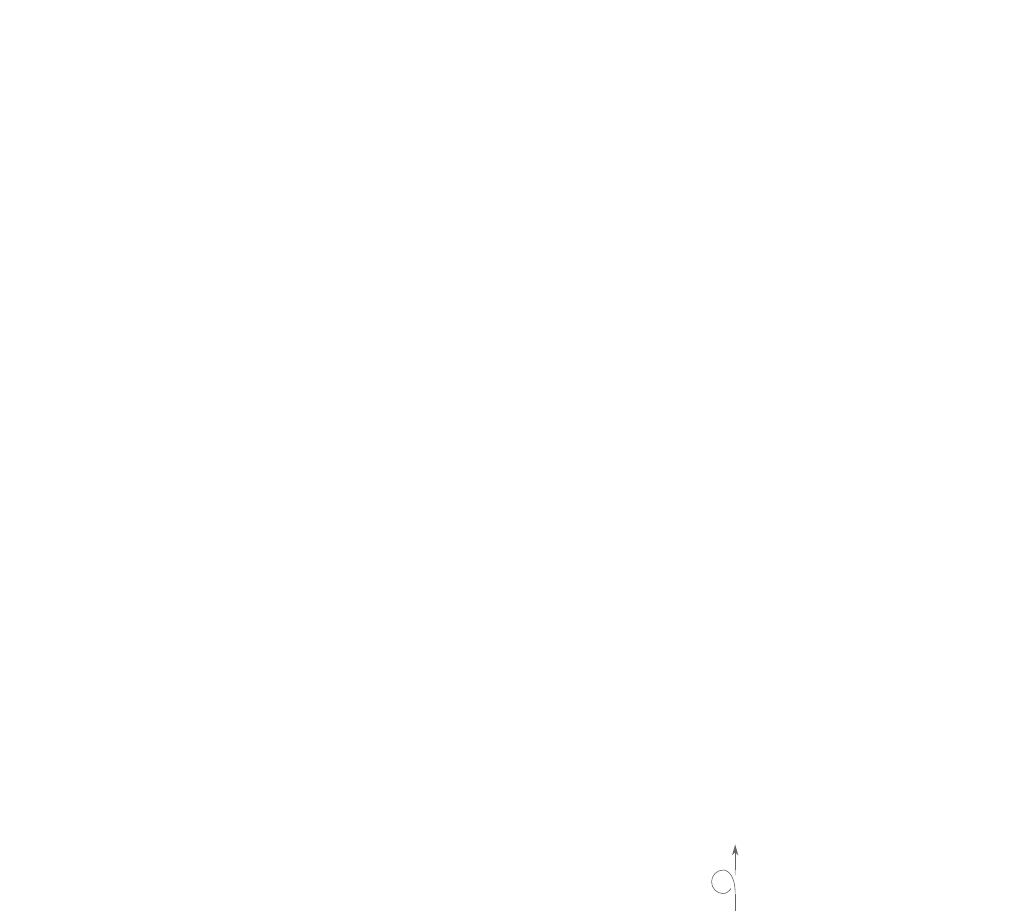
	\caption{Consequences of labeling rules \enquote{à la Hennings}}
	\label{fig-regles_etiquetage_consequences}
\end{figure}

\begin{definition}\label{def-evaluation_diagramme}
	Let $D$ be a diagram as in \Cref{fig-exemple_diagramme}, that passes through $s$ handles with $t$ strands. The \emph{evaluation} of the diagram $D$ is defined by :
	\[ \mathrm{Hen}(D) := \mathcal{X}_0(i_1) \cdots \mathcal{X}_0(i_s) \otimes h_1 \otimes \cdots \otimes h_t \in \mathcal{L}_{g, n}(H) \otimes H^{\otimes t}, \]
	where the element $h_k$ is associated with the $k$-th strand of $D$, as in step 2.
\end{definition}
 
We define a \emph{state} $s$ of $D$ by a map which to each strand of $D$ associates an element of $H^*$. 
In practice, we indicate the state at the start of each strand, as in \Cref{fig-exemple_diagramme_a_etats}; thus we will not always draw the end of the strands (in stated skein theory, we also have to label these ends with elements in $H$ (see \cite[Sec. 6]{baseilhac_noetherian_2025}); here it is $1_H$). 
We denote by $D^s$ the \emph{stated diagram} associated with $(D,s)$. 

\begin{figure}[ht!]
	\centering
	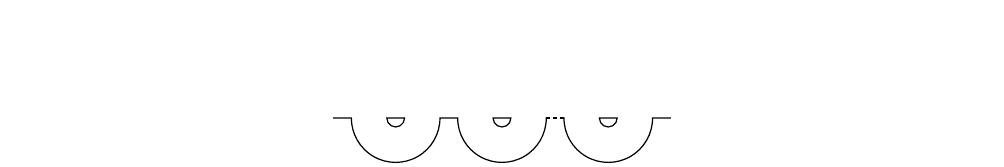
	\caption{A stated diagram}
	\label{fig-exemple_diagramme_a_etats}
\end{figure}
\noindent

\smallskip
\noindent
As with state-free diagrams, let $\overset{?}{D}_{x_{i_1}}^{\varphi_i}$ be the diagram $\overset{?}{D}_{x_i}$ with the state $\varphi_i$ and $\overset{?}{D}_{x_{i_1}}^{\varphi_1} \otimes \cdots \otimes \overset{?}{D}_{x_{i_k}}^{\varphi_k}$ the horizontal concatenation of the $k$ stated diagrams $\overset{?}{D}_{x_{i_1}}^{\varphi_1}, \cdots, \overset{?}{D}_{x_{i_k}}^{\varphi_k}$.

\begin{definition}\label{def-evaluation_a_etats}
	The \emph{stated evaluation} of a stated diagram $D^s$ is defined by: 
	\begin{equation*}
		\mathrm{ev^{st}} (D^s) := \bigl( \id_{\mathcal{L}_{g, n}(H)} \otimes \langle s, \bullet \rangle \bigr) \circ \mathrm{Hen}(D) \in \mathcal{L}_{g, n}(H).
	\end{equation*}
\end{definition}

\noindent
Let $\varphi \in \mathcal{L}_{0, 1}(H)$ and $h \in H$. 
Note that by definition of $\mathcal{X}(i)$ (see \labelcref{eq-propriete_element_universel}) we have $\mathrm{ev^{st}}(D_{x_i}^\varphi) = \mathfrak{i}_{x_i}(\varphi)$.
The element $\mathfrak{i}_{x_i}(\varphi)$ is then represented by the stated diagram in the following \Cref{fig-diagramme_a_etats_Dxi_phi}: 

\begin{figure}[H]
	\centering
	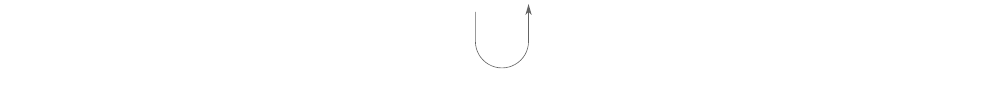
	\caption{Stated diagram $D_{x_i}^{\varphi}$}
	\label{fig-diagramme_a_etats_Dxi_phi}
\end{figure}
\noindent
For example, the image by $\mathrm{ev^{st}}$ of the diagram represented in \Cref{fig-diagramme_a_etats_co-produit} is $\mathfrak{i}_x(\varphi_{(1)}) \, \mathfrak{i}_y(\varphi_{(2)})$, where $\varphi_{(1)} \otimes \varphi_{(2)} = \Delta(\varphi)$. 

\begin{figure}[H]
	\centering
	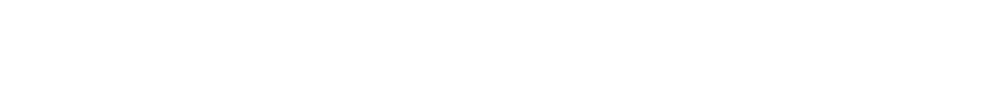
	\caption{Stated diagram of co-product}
	\label{fig-diagramme_a_etats_co-produit}
\end{figure}

Thus, we can define the following linear application:
\begin{equation}\label{eq-correspondance_partielle_Lgn_diagrammes}
	\begin{array}{l|ccl}
		& \mathcal{A}(H) & \longrightarrow & \mathcal{D} \\
		& \mathfrak{i}_{x_{i_1}}(\varphi_1) \cdots \mathfrak{i}_{x_{i_k}}(\varphi_k) & \longmapsto & D_{x_{i_1}}^{\varphi_1} \otimes \cdots \otimes D_{x_{i_k}}^{\varphi_k},
	\end{array}
\end{equation}
where $\mathcal{A}(H)$ is the free $\mathbb{K}$-algebra generated by the elements $\mathfrak{i}_{x_i}(\varphi)$ and $\mathcal{D}$ is the free $\mathbb{K}$-algebra generated by the set of stated ribbon graph diagrams without coupons neither colors, up to isotopy, whose product is given by the horizontal concatenation. 

\subsection{Diagrammatic calculus for \texorpdfstring{$\mathcal{L}_{g, n}(H)$}{LgnH} and the induced representation}

\subsubsection{\textbf{Fusion and exchange relations.}}\label{subsubsec-relations_fusion_et_echange}
The relations \labelcref{eq-injections_canoniques_morphismes_algebres,eq-commutation_ai_bi,eq-commutation_yj_xi} translate into the following diagrammatic equalities, where $x_i$ is $a_i$ or $b_i$, if $i \in [\![1, g]\!]$, and $m_i$, if $i \in [\![g+1, g+n]\!]$; the same applies to $y_j$, and $i < j$:
\begin{figure}[H]
	\centering
	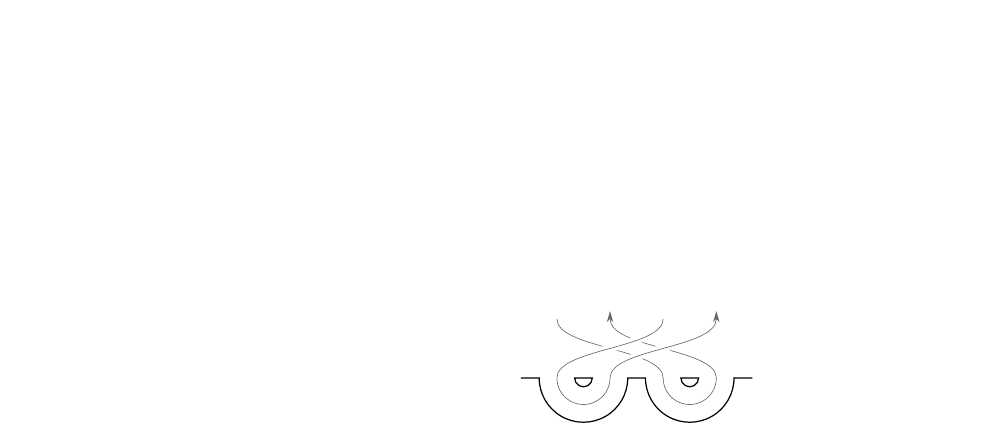
	\caption{Stated diagram of fusion and exchange relations}
	\label{fig-diagrammes_a_etats_relations}
\end{figure}
\noindent
For example, the first diagrammatic equality in \Cref{fig-diagrammes_a_etats_relations} decribes the product of $\mathcal{L}_{0, 1}(H)$. 
Indeed, the stated diagram on the left of the equality denotes the product $\mathfrak{i}_{x_i}(\varphi) \, \mathfrak{i}_{x_i}(\psi)$ by definition of the map \labelcref{eq-correspondance_partielle_Lgn_diagrammes}. 
Let us denote the stated diagram on the right by $D^s$.
Applying step 1 and step 2 to $D$ we compute (with implicit summations):
\begin{equation*}
	\mathrm{Hen}(D) = \mathcal{X}_0(i) \otimes \bigl( \mathcal{X}_1(i)_{(1)}r^aS(r^b) \bigr) \otimes \bigl( r_b \mathcal{X}_1(i)_{(2)}r_a \bigr).
\end{equation*}
Then we get (with implicit summations):
\begin{align*}
	\mathrm{ev^{st}}(D^s) & = \Bigl\langle \id_{\mathcal{L}_{g, n}(H)} \otimes \, \varphi \otimes \psi, \mathcal{X}_0(i) \otimes \bigl( \mathcal{X}_1(i)_{(1)}r^aS(r^b) \bigr) \otimes \bigl( r_b \mathcal{X}_1(i)_{(2)}r_a \bigr) \Bigr\rangle \\
	& = \mathfrak{i}_{x_i}(h^k) \, \bigl\langle \varphi, {h_k}_{(1)}r^aS(r^b) \bigr\rangle \bigl\langle \psi, r_b{h_k}_{(2)}r_a \bigr\rangle = \mathfrak{i}_{x_i}(h^k) \langle \varphi \psi, h_k \rangle = \mathfrak{i}_{x_i}(\varphi \psi).
\end{align*}

\smallskip

Denote by $\bar{\mathcal{D}}$ the quotient algebra of $\mathcal{D}$ by the relations in \Cref{fig-diagrammes_a_etats_relations}.
From the discussion above it follows:

\begin{lemme}\label{lem-correspondance_Lgn_diagrammes}
	The map \labelcref{eq-correspondance_partielle_Lgn_diagrammes} induces an algebra isomorphism $\mathcal{L}_{g, n}(H) \to \bar{\mathcal{D}}$, whose inverse is induced by the stated evaluation $\mathrm{ev^{st}}$ (see \Cref{def-evaluation_a_etats}).
\end{lemme}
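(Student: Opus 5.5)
The approach is to unfold the definition of $\bar{\mathcal{D}}$ and the two maps, and to check three things: the map \labelcref{eq-correspondance_partielle_Lgn_diagrammes} descends to the quotients, the stated evaluation $\mathrm{ev^{st}}$ descends as well, and the two induced maps are mutually inverse algebra morphisms.

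\textbf{Step 1: well-definedness and multiplicativity of $\mathrm{ev^{st}}$.} First, $\mathrm{ev^{st}}$ does not depend on the isotopy representative of a diagram. Invariance under the local moves of \cite[Chap. I]{turaev_quantum_2010} follows from the usual Hennings-type identities: \labelcref{eq-R-matrice_et_co-produit,eq-R-matrice_et_antipode,eq-R-matrice_et_co-unite,eq-Yang-Baxter}, the properties \labelcref{eq-propriete_u,eq-propriete_g} for cups and caps, and the fact that $\Delta^{(r)}$ is coassociative, which lets cups slide inside a handle. This is the step I expect to be the main obstacle, and I will follow \cite[§4.3]{faitg_holonomy_2024} for it.

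Second, $\mathrm{ev^{st}}$ is multiplicative for horizontal concatenation. Concatenated diagrams share no crossings between the two blocks. The $\mathcal{X}_0$ factors are therefore multiplied in order, and the states pair independently with their own strands.

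Third, $\mathrm{ev^{st}}$ kills the relations of \Cref{fig-diagrammes_a_etats_relations}. The fusion relation was computed above. The exchange relations are checked the same way, by reading off $\mathrm{Hen}$ of the right-hand diagram and comparing with \labelcref{eq-commutation_ai_bi,eq-commutation_yj_xi}. Hence $\mathrm{ev^{st}}$ induces an algebra morphism $\bar{\mathcal{D}}\to\mathcal{L}_{g,n}(H)$.

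\textbf{Step 2: the map in the other direction.} The relations defining $\mathcal{L}_{g,n}(H)$ as a quotient of $\mathcal{A}(H)$ are exactly \labelcref{eq-injections_canoniques_morphismes_algebres,eq-commutation_ai_bi,eq-commutation_yj_xi}, together with linearity in each state. By construction, these are sent to the relations of \Cref{fig-diagrammes_a_etats_relations}. Hence \labelcref{eq-correspondance_partielle_Lgn_diagrammes} descends to an algebra morphism $F:\mathcal{L}_{g,n}(H)\to\bar{\mathcal{D}}$.

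\textbf{Step 3: the two maps are inverse.} The identity $\mathrm{ev^{st}}(D^\varphi_{x_i})=\mathfrak{i}_{x_i}(\varphi)$ holds by definition of $\mathcal{X}(i)$, so multiplicativity gives $\mathrm{ev^{st}}\circ F=\id$ on the generators, hence everywhere.

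For $F\circ\mathrm{ev^{st}}=\id$, it suffices to show that $\bar{\mathcal{D}}$ is generated by the concatenations $D^{\varphi_1}_{x_{i_1}}\otimes\cdots\otimes D^{\varphi_k}_{x_{i_k}}$. I would prove this by induction on the number of crossings and cups of a stated diagram:
\begin{itemize}
\item crossings and extra cups in a handle are absorbed using the consequences listed in \Cref{fig-regles_etiquetage_consequences};
\item coproduct splittings as in \Cref{fig-diagramme_a_etats_co-produit} are read backwards;
\item $\bar D$-type (reversed) handles are rewritten via the antipode $S_{\mathcal{L}_{0,1}(H)}$;
\item at each stage, the state is changed by coregular actions of $R$-matrix components.
\end{itemize}
Once generation is established, $F$ is surjective, and together with $\mathrm{ev^{st}}\circ F=\id$ this gives the isomorphism.
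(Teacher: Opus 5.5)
Your Steps 1--2 and the identity $\mathrm{ev^{st}}\circ F=\id$ are essentially all the paper does. It regards the lemma as following from two things: translating the defining relations of $\mathcal{L}_{g,n}(H)$ into \Cref{fig-diagrammes_a_etats_relations}, and evaluation checks such as the one done for the fusion relation.

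The gap is in your generation step, i.e.\ the surjectivity of $F$. Every move you list there is an identity between \emph{evaluations}: it says two stated diagrams have the same image under $\mathrm{ev^{st}}$. This applies to trading crossings, kinks and extra cups for labels via \Cref{fig-regles_etiquetage_consequences}, to reading the splitting of \Cref{fig-diagramme_a_etats_co-produit} backwards, to rewriting reversed handles via \Cref{fig-diagramme_a_etats_Dxi_inverse_phi}, and to modifying states by $R$-matrix components. In $\bar{\mathcal{D}}$ the only identifications available are isotopy and the relations of \Cref{fig-diagrammes_a_etats_relations}. Using these moves as equalities in $\bar{\mathcal{D}}$ therefore presupposes that diagrams with equal evaluations are equal in $\bar{\mathcal{D}}$. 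That is exactly the injectivity of $\mathrm{ev^{st}}$ you are trying to prove, so the argument is circular.

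This is not only a matter of presentation. Each relation in \Cref{fig-diagrammes_a_etats_relations} has exactly two strands on each side, and the ideal they generate is spanned by concatenations $D_1\otimes(L-R)\otimes D_2$. So the number of strands is a grading on $\bar{\mathcal{D}}$, and its degree-one part is just the free span of one-strand stated diagrams up to isotopy. Consider the one-strand diagram of \Cref{fig-diagramme_a_etats_co-produit}, whose evaluation is $\mathfrak{i}_x(\varphi_{(1)})\,\mathfrak{i}_y(\varphi_{(2)})$. In $\bar{\mathcal{D}}$ it is not a linear combination of concatenations of the $D_{x_i}^{\psi}$, so your generation claim is false for $\bar{\mathcal{D}}$ taken literally. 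Likewise, your Step 2 needs multilinearity in the states and the unit relation $D_{x_i}^{\varepsilon}=\emptyset$ to descend from $\mathcal{A}(H)$, and neither is among the pictured relations.

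To repair the proof, proceed in three stages.
\begin{itemize}
\item Enlarge the defining relations of $\bar{\mathcal{D}}$ by the local moves you want to use: multilinearity in the states, the unit relation, splitting a strand through several handles via $\Delta$, and replacing kinks, crossings, cups and caps by the corresponding actions of $v^{\pm1}$, $R$ and $g^{\pm1}$ on the states.
\item Check with the labeling rules that $\mathrm{ev^{st}}$ kills each of these relations; this is what \Cref{fig-regles_etiquetage_consequences} actually provides.
\item Only then run your induction. It becomes a genuine computation in $\bar{\mathcal{D}}$ and yields the surjectivity of $F$.
\end{itemize}
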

\noindent
By this isomorphism and the non-degeneracy of the evaluation pairing $\langle -,-\rangle : H^* \otimes H \to \mathbb{K}$, all the computations in $\mathcal{L}_{g, n}(H)$ are reduced to computations with state-free diagrams, which is what we will do below. 

\smallskip
\noindent
In particular recall the definition of the embeddings $\mathfrak{i}_{x_i^{-1}}$ (see \Cref{subsec-applications_igamma_et_action_de_Gammagn1}): 
\[ \forall \varphi \in H^*, ~ \mathfrak{i}_{x_i^{-1}} (\varphi) = \mathfrak{i}_{x_i}  \circ S_{\mathcal{L}_{0, 1}(H)} (\varphi). \]
The element $\mathfrak{i}_{x_i^{-1}} (\varphi)$ is represented by the stated diagram in the following \Cref{fig-diagramme_a_etats_Dxi_inverse_phi}: 

\begin{figure}[H]
	\centering
	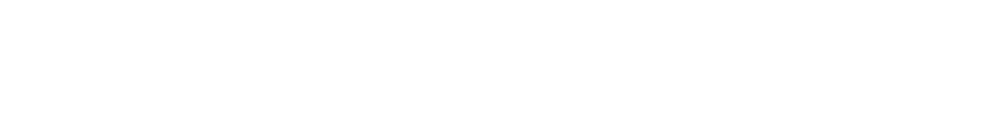
	\caption{Stated diagram of $\mathfrak{i}_{x_i^{-1}}(\varphi)$}
	\label{fig-diagramme_a_etats_Dxi_inverse_phi}
\end{figure}

\noindent
Indeed, let us denote this diagram by $D^{\varphi}$.
Applying step 1 and step 2 to $D$ we compute (with implicit summations):
\begin{equation*}
	\mathrm{Hen}(D) = \mathcal{X}_0(i) \otimes v^{-1} r^a g^{-1}S(\mathcal{X}_1(i))S(r_a).
\end{equation*}
Then we get (with implicit summations):
\begin{align*}
	\mathrm{ev^{st}}(D^s) & = \Bigl\langle \id_{\mathcal{L}_{g, n}(H)} \otimes \, \varphi, \mathcal{X}_0(i) \otimes v^{-1} r^a g^{-1}S \bigl( \mathcal{X}_1(i) \bigr) S(r_a) \Bigr\rangle \\
	& = \mathfrak{i}_{x_i}(h^k) \, \bigl\langle \varphi, r^a u^{-1} S(h_k)S(r_a) \bigr\rangle  = \mathfrak{i}_{x_i} \Bigl( S_{H^*} \bigl( S(r_a) \triangleright \varphi \triangleleft r^a u^{-1} \bigr) \Bigr) = \mathfrak{i}_{x_i} \bigl( S_{\mathcal{L}_{0, 1}(H)}(\varphi) \bigr) = \mathfrak{i}_{x_i^{-1}}(\varphi).
\end{align*}

More generally, let $\gamma \in \pi_1(\Sigma_{g,n}^1)$ be a positively oriented, simple curve represented by a word $x_1 \cdots x_k$ in the usual generators of $\pi_1(\Sigma_{g,n}^1)$ and their inverses.
We defined the map $\mathfrak{i}_\gamma$ in \Cref{def-injection_igamma}.
The element $\mathfrak{i}_\gamma(\varphi)$ will be represented more concisely by a stated diagram formed by a cup passing through a handle labeled by $\gamma$.
This diagram should be interpreted as a single strand with a certain number of twists depending on the normalization of $\gamma$ and passing through the horizontal concatenation of the handles labeled by $x_1, \cdots, x_k$. 
For an example, see \Cref{fig-diagramme_ci}.

\subsubsection{\textbf{Induced representation of $\mathcal{L}_{g, n}(H)$} (see \Cref{def-definition_representation_induite})\textbf{.}}\label{subsubsec-representation_induite}

Recall that $\mathcal{L}_{g, n}(H)$ acts by left multiplication on $\mathcal{L}_{g, n}(H) \otimes_\mathcal{A} \mathbb{K} \simeq \mathcal{L}_{0, g+n}(H) \simeq (H^*)^{\otimes (g+n)}$, where:
\[ \mathcal{A} := \mathrm{Vect}_\mathbb{K} \bigl\{ \mathfrak{i}_{a_1}(\alpha_1)\cdots \mathfrak{i}_{a_g}(\alpha_g) ~ \vert ~ \alpha_j \in H^* \bigr\}\]
and the representation of $\mathcal{A}$ on $\mathbb{K}$ is given by the formula:
\[ \forall \alpha_j \in H^*, ~ \mathfrak{i}_{a_j}(\alpha_j) \cdot 1_\mathbb{K} := \alpha_j(1_H) \, 1_\mathbb{K}. \]
Recall that the basis element of $\mathcal{L}_{g, n}(H)$ are represented diagrammatically by the horizontal concatenation of stated diagrams $\overset{?}{D}_{x_i}^\varphi$ (see \Cref{fig-diagrammes_Dxi_Dxi_bar_et_concatenation,fig-diagramme_a_etats_Dxi_phi} and \labelcref{eq-correspondance_partielle_Lgn_diagrammes}). 
Thus, the representation of $\mathcal{A}$ on $\mathbb{K}$ is represented diagrammatically as follows:
\begin{figure}[H]
	\centering
	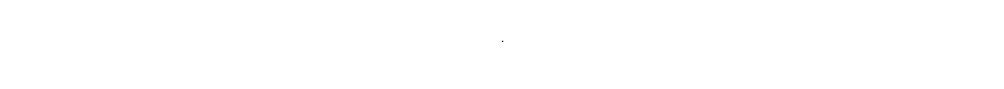
	\caption{Stated diagram of the representation of $\mathcal{A}$}
	\label{fig-diagramme_a_etats_representation_de_A}
\end{figure}
\noindent
For the diagrammatic representation of the induced representation it will be more convenient to consider it as a representation on $\mathcal{L}_{0, g+n}(H)$.
If we denote by $\Theta$ the isomorphism of vector spaces $\mathcal{L}_{g, n}(H) \otimes_\mathcal{A} \mathbb{K} \simeq \mathcal{L}_{0, g+n}(H)$ of \labelcref{eq-isomorphisme_representation_induite_L0g+n}, then by the very definition of the induced representation we have:
\[ \mathfrak{i}_{x_r}(\varphi) \cdot \bigl( \mathfrak{i}_{m_1}(\psi_1) \cdots \mathfrak{i}_{m_{g+n}}(\psi_{g+n}) \bigr) = \Theta \bigl( \mathfrak{i}_{x_r}(\varphi) \, \mathfrak{i}_{b_1}(\psi_1) \cdots \mathfrak{i}_{b_g}(\psi_g) \mathfrak{i}_{m_{g+1}}(\psi_{g+1}) \cdots \mathfrak{i}_{m_{g+n}}(\psi_{g+n}) \otimes 1_{\mathbb{K}} \bigr), \]
where $x_r$ is either $a_j$, $b_j$, or $m_{g+k}$.
The formulas in \Cref{ann-calcul_diagrammatique} use the following statement:

\begin{proposition}\label{prop-diagrammes_a_etats_representation_induite}
	The formulas of the induced representation in \labelcref{eq-formules_representation_induite} are represented by the following diagrams:
	\begin{figure}[H]
		\centering
		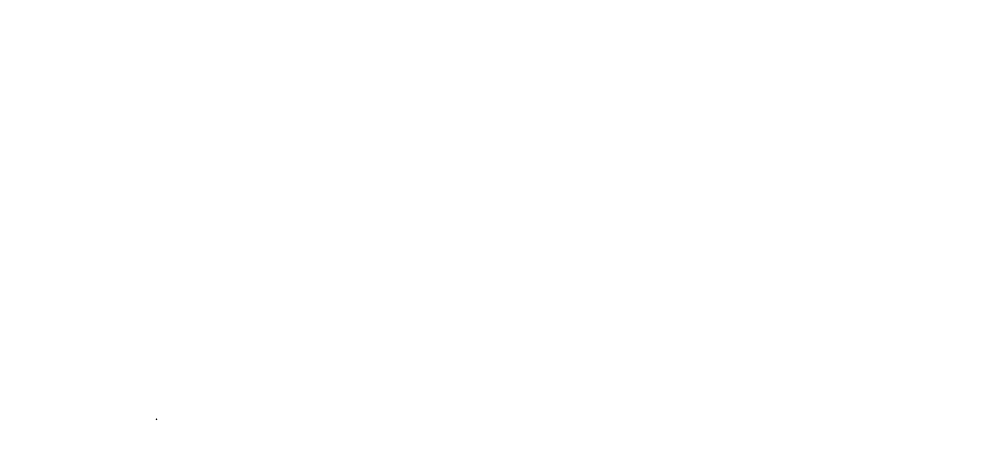
		\caption{Stated diagrams of the induced representation}
		\label{fig-diagrammes_a_etats_representation_induite}
	\end{figure}
\end{proposition}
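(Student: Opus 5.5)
The statement is Proposition \ref{prop-diagrammes_a_etats_representation_induite}: the formulas \labelcref{eq-formules_representation_induite} are encoded by the stated diagrams of \Cref{fig-diagrammes_a_etats_representation_induite}. The plan is to compute, for each generator $\mathfrak{i}_{x_r}(\varphi)$ with $x_r \in \{a_j, b_j, m_{g+k}\}$, the action on a basis vector $\mathfrak{i}_{m_1}(\psi_1)\cdots\mathfrak{i}_{m_{g+n}}(\psi_{g+n})$ in two ways: through the defining formula
\[ \mathfrak{i}_{x_r}(\varphi) \cdot \bigl( \mathfrak{i}_{m_1}(\psi_1) \cdots \mathfrak{i}_{m_{g+n}}(\psi_{g+n}) \bigr) = \Theta \bigl( \mathfrak{i}_{x_r}(\varphi) \, \mathfrak{i}_{b_1}(\psi_1) \cdots \mathfrak{i}_{m_{g+n}}(\psi_{g+n}) \otimes 1_{\mathbb{K}} \bigr), \]
and through the Hennings evaluation $\mathrm{ev^{st}}$ of the corresponding diagram. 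We then check that both give the right-hand sides of \labelcref{eq-formules_representation_induite}. By \Cref{lem-correspondance_Lgn_diagrammes} and the non-degeneracy of $\langle -,-\rangle$, it is enough to work with state-free diagrams and compare the resulting elements of $\mathcal{L}_{g,n}(H)\otimes H^{\otimes t}$.

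\emph{Step 1.} We start from the concatenated diagram $D_{x_r}^{\varphi}\otimes D_{b_1}^{\psi_1}\otimes\cdots\otimes D_{m_{g+n}}^{\psi_{g+n}}$. We then use the exchange relations of \Cref{fig-diagrammes_a_etats_relations} (the diagrammatic forms of \labelcref{eq-commutation_ai_bi,eq-commutation_yj_xi}) to slide the $x_r$-strand past the strands of the handles $b_1,\dots$ (or $m_{g+1},\dots$) of lower index.

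\begin{itemize}
\item For $x_r = m_{g+k}$ and $x_r = b_j$, the strand only has to cross the strands of indices $<j$ (resp. $<g+k$). Each crossing contributes an $R$-matrix factor. Labelling with \Cref{fig-regles_etiquetage_Hennings} and evaluating these factors gives $\coad^r({r_a}_{(i)})$ on each earlier tensor factor. When the strand reaches its own handle, the fusion relation (the first picture in \Cref{fig-diagrammes_a_etats_relations}) merges it with the strand of $\psi_j$ and produces the product $\coad^r(r^a)(\varphi)\psi_j$ in $\mathcal{L}_{0,1}(H)$.
\item For $x_r = a_j$, we also first slide the strand past the earlier handles. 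Then we apply \labelcref{eq-commutation_ai_bi} to move $\mathfrak{i}_{a_j}$ to the right of $\mathfrak{i}_{b_j}$, and further to the right until it reaches the $\mathcal{A}$-part, where $\mathfrak{i}_{a_j}(\alpha)\cdot 1_{\mathbb{K}} = \alpha(1_H)$. Diagrammatically, this last evaluation is the cap in \Cref{fig-diagramme_a_etats_representation_de_A}. It closes the $a_j$-strand into a loop that encircles the $b_j$-strand, and this loop produces the $RR_{21}$ double braiding, i.e. the factor $\Phi_{0,1}(\coad^r(r^ar^b)(\varphi))\triangleright\psi_j\triangleleft r_b$.
\end{itemize}

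\emph{Step 2.} We read off the final diagram and apply $\Theta$, which replaces each handle $b_j$ by $m_j$. The result should be exactly the pictures of \Cref{fig-diagrammes_a_etats_representation_induite}.

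The main obstacle is the $a_j$ case. There one must check carefully that the $R$-matrices coming from the $a_j$–$b_j$ exchange, together with the cap from the $\mathcal{A}$-module, combine via \labelcref{eq-R-matrice_et_co-produit,eq-R-matrice_et_antipode} and \labelcref{eq-propriete_g} (the pivotal element appears when the cap is labelled) into $\Phi_{0,1}$ applied to a coadjoint-twisted $\varphi$, with the remaining $r_b$ acting on the right of $\psi_j$. I would do this by writing $\mathrm{Hen}$ of both diagrams explicitly, as in the fusion computation preceding \Cref{lem-correspondance_Lgn_diagrammes}, and comparing the two sides term by term.
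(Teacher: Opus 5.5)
Your proposal is correct and follows the paper's own argument. The paper also starts from $\mathfrak{i}_{a_j}(\varphi)\,\mathfrak{i}_{b_1}(\psi_1)\cdots\mathfrak{i}_{m_{g+n}}(\psi_{g+n})\otimes 1_{\mathbb{K}}$ and pushes the $a_j$-strand to the right, first through $b_1,\dots,b_{j-1}$, then through $b_j$, then through all remaining handles, using the diagrammatic exchange relations; it then closes the strand with the cap coming from the $\mathcal{A}$-module $\mathbb{K}$ and applies $\Theta$ (the $b_j$ and $m_{g+k}$ cases are left as analogous, via fusion, exactly as you describe). The one step you leave implicit is the topological simplification after the cap: it removes the crossings with the handles $b_{j+1},\dots,m_{g+n}$, which is why $\psi_{j+1},\dots,\psi_{g+n}$ are unchanged in the formula.
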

\noindent
We give the details of the proof for the curves $a_i$ in the following figure, where the equalities $\overset{\text{top.}}{=}$ denote only topological simplifications:

\begin{figure}[H]
	\centering
	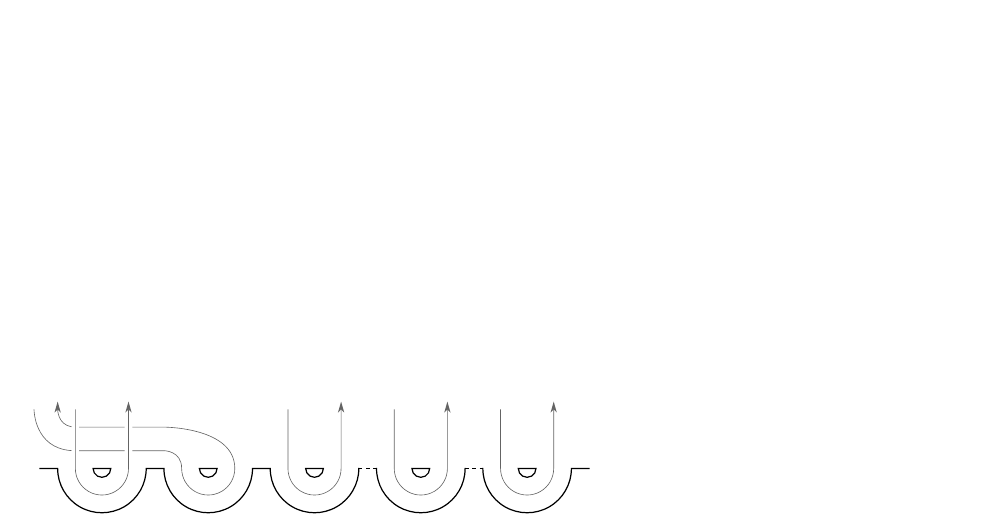
	\caption{Action of $\mathfrak{i}_{a_i}$ via the induced representation part 1}
\end{figure}

\begin{figure}[H]
	\centering
	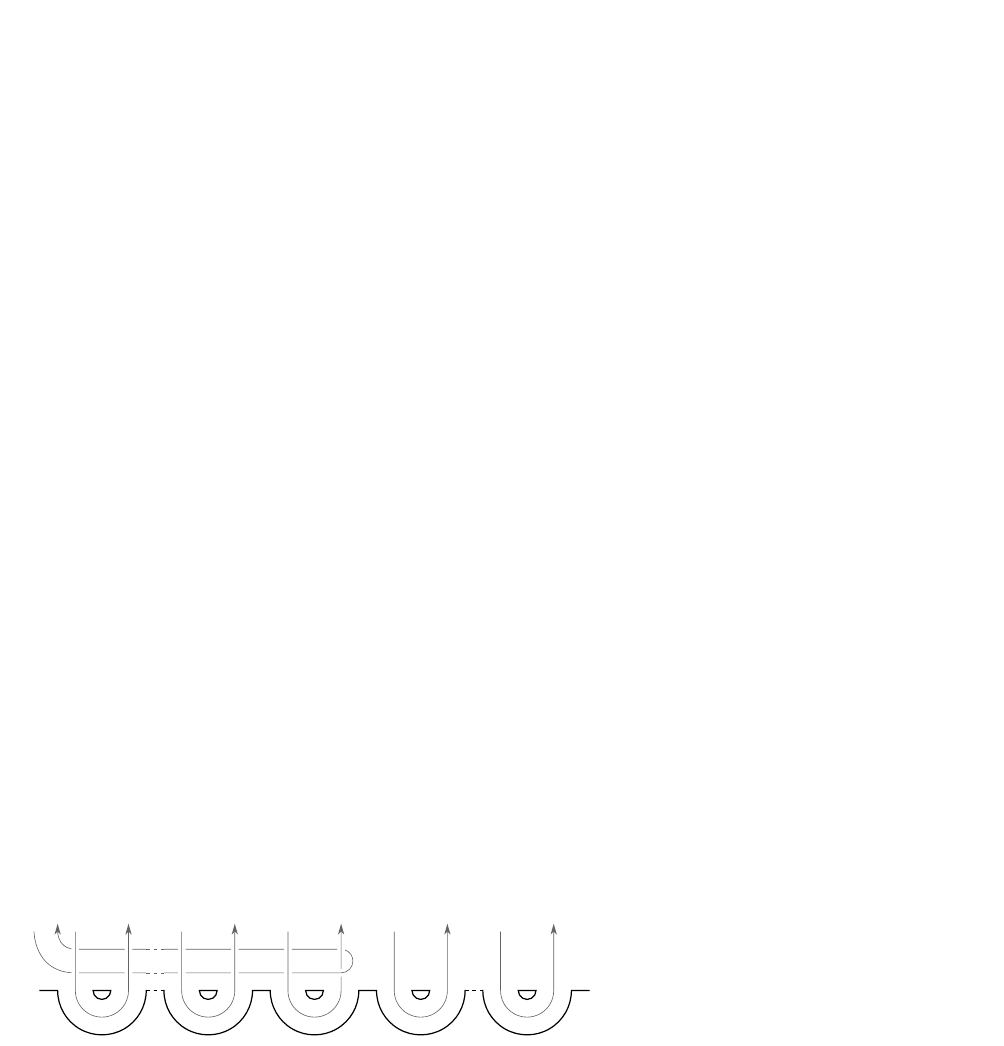
	\caption{Action of $\mathfrak{i}_{a_i}$ via the induced representation part 2}
\end{figure}

\newpage
\section{Diagrammatic computation of the action of some Dehn twists}\label{ann-calcul_diagrammatique}

In this appendix we compute the action of the Dehn twist $\tau_{c_i}$ and give formulas for other Dehn twists by using the diagrammatic correspondence in \Cref{lem-correspondance_Lgn_diagrammes} and \Cref{prop-diagrammes_a_etats_representation_induite}. 

\subsection{Action of \texorpdfstring{$c_i$}{ci}}\label{ann-action_ci}

The following result proves the third formula in \Cref{prop-formules_ai_bi_ci}, and uses \Cref{prop-diagrammes_a_etats_representation_induite}.

\begin{proposition}\label{prop-formule_ci}
	The action of $\hat{\tau}_{c_i}$ on $\psi_1 \otimes \cdots \otimes \psi_{g+n}$ is given by: 
	\[ \hat{\tau}_{c_i} \cdot (\psi_1 \otimes \cdots \otimes \psi_{g+n}) = \psi_1 \otimes \cdots \otimes \psi_{i-1} \otimes \bigl( S(v^{-1}_{(1)}) \triangleright \psi_i \bigr) \otimes (\psi_{i+1} \triangleleft v^{-1}_{(2)}) \otimes \psi_{i+2} \otimes \cdots \otimes \psi_{g+n}. \]
	\begin{center}
		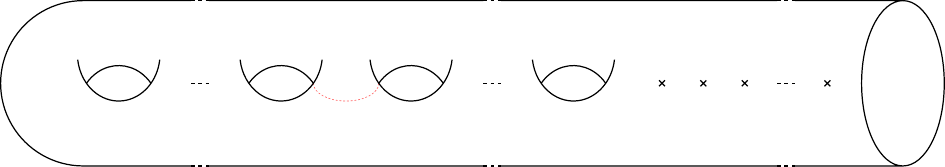
	\end{center}
\end{proposition}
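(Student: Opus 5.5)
The plan is to compute $\hat{\tau}_{c_i}$ diagrammatically and then read off its action through the stated diagrams of the induced representation (\Cref{prop-diagrammes_a_etats_representation_induite}). Only the induced representation needs to be treated. Indeed, by the discussion around \labelcref{eq-representation_induite_tiree_en_arriere}, the actions of elements of $\mathcal{L}_{g,0}(H)$ coincide for $\mathcal{R}^1_{\mathrm{ind}_{g,n}}$ and for $\mathcal{R}_{g,n}^1(H)$. This is why the formula is stated for $\psi_1\otimes\cdots\otimes\psi_{g+n}$.

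First I write $c_i$ as a simple based loop in $\pi_1(\Sigma_{g,n}^1)$ in terms of the standard generators; a natural choice is $c_i = a_i^{-1} a_{i+1}$ up to conjugation by $b$'s and orientation. I then compute its normalization $N(c_i)$ from the ribbon picture. By \Cref{def-injection_igamma},
\[ \hat{\tau}_{c_i}=\mathfrak{i}_{c_i}(\lambda^v)=\lambda^v_{(1)}(v^{N(c_i)})\,\mathfrak{i}_{x_1}(\lambda^v_{(2)})\cdots\mathfrak{i}_{x_k}(\lambda^v_{(k+1)}). \]
Since $\hat{\tau}_{c_i}$ is independent of the orientation (\Cref{prop-action_twists_de_Dehn_non_separants_comme_conjugaison_et_proprietes}), I may choose the orientation that makes the picture simplest. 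Diagrammatically, this element is a single strand with $N(c_i)$ twists passing through the handles $a_i$ and $a_{i+1}$, with state $\lambda^v$.

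Next I stack this diagram on top of the diagram representing $\psi_1\otimes\cdots\otimes\psi_{g+n}$ in the induced representation, using the rules of \Cref{fig-diagrammes_a_etats_representation_induite}. Only the strands labelled $\psi_i$ and $\psi_{i+1}$ are affected. The strands to the left are untouched because $\lambda^v\in\inv\coad^r$: the $R$-matrix legs $\coad^r(r_{a(\cdot)})$ acting on them cancel, by the same computation as in the proof of \eqref{eq-conjugaison_mg+k}. The strands to the right are untouched because the $a_j$-actions do not involve them.

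After applying the Hennings labelling rules and the isotopies of \Cref{fig-regles_etiquetage_consequences}, the $\lambda^v$-strand should wrap around the two strands $\psi_i,\psi_{i+1}$. Its contribution is then $\lambda^v$ evaluated on a double-braiding-type expression, coloured through $\Delta$ onto the two strands, of the form $(\lambda^v\otimes\mathrm{id})(RR_{21})=\Phi_{0,1}(\lambda^v)=v^{-1}$. The resulting element $\Delta(v^{-1})$ acts on the $\psi_i$ strand by $S(\cdot)\triangleright$, because that strand enters the handle upward (it comes from the $a_i^{-1}$ piece), and on the $\psi_{i+1}$ strand by $\triangleleft$. This should give exactly $S(v^{-1}_{(1)})\triangleright\psi_i\otimes\psi_{i+1}\triangleleft v^{-1}_{(2)}$.

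The main obstacle is the bookkeeping in this middle step. The ribbon twists coming from $N(c_i)$, the pivotal elements $g^{\pm1}$ at the cups and caps, and the $R$-matrices must combine into exactly $\Delta(v^{-1})$ with the right antipode on the first leg. To handle this I would first isotope the diagram so that the $\lambda^v$-strand encircles both strands once. I would then use $\Delta(v)=(R_{21}R)^{-1}(v\otimes v)$ together with $\Phi_{0,1}(\lambda^v)=v^{-1}$, in the same way as \eqref{eq-Phi01lambda^v}, to absorb the $(v\otimes v)$ factors against the twists.
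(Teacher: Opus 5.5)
Your strategy (evaluate $\mathfrak{i}_{c_i}(\lambda^v)$ diagrammatically in the induced representation, cancel the $R$-matrix legs on $\psi_1,\dots,\psi_{i-1}$ by $\coad^r$-invariance of $\lambda^v$, and recognise $(\lambda^v\otimes\Delta)(RR_{21})=\Delta\bigl(\Phi_{0,1}(\lambda^v)\bigr)=\Delta(v^{-1})$ at the end) is the paper's. However, the loop you would compute with is wrong, and the decisive computation is only asserted.

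Consider a based loop that passes only through the handles $a_i$ and $a_{i+1}$ and has the homology class $\pm([a_i]-[a_{i+1}])$ of $c_i$, such as $a_i^{-1}a_{i+1}$. It is not simple in the ribbon presentation, because the arcs joining the ends of the two bands inside the disc are forced to cross. So $N(\cdot)$ and $\mathfrak{i}_\gamma$ are not defined for it, and it does not produce $\hat{\tau}_{c_i}$. The paper uses the simple representative $c_i=a_i\,b_{i+1}a_{i+1}^{-1}b_{i+1}^{-1}$, with $N(c_i)=2$.

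The strand therefore also passes twice through $b_{i+1}$, in opposite directions, and this changes the diagram you would evaluate. In the paper's final labelling, the $R$-matrix legs on the $\psi_{i+1}$-strand sit before $\mathcal{M}_1(i+1)$, and that is where the right action $\triangleleft$ on $\psi_{i+1}$ comes from. Your explanation of the antipode (``the $\psi_i$ strand comes from the $a_i^{-1}$ piece'') has no counterpart either. The word contains $a_i$, the leg on $\psi_i$ comes out as $v^{-1}_{(2)}\triangleright\psi_i$, and $S$ only appears in the last rewriting.

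What is missing is the evaluation itself, which goes as follows in the paper.
\begin{itemize}
\item After labelling, the pairs of $R$-matrices indexed by $(a_j,z_j)$ and $(b_j,y_j)$ on $\psi_1,\dots,\psi_{i-1}$, and the legs $r_{z_i}r_{a_i}$ on $\psi_i$, cancel. This uses $\lambda^v(xy)=\lambda^v\bigl(S^{-2}(y)x\bigr)$ and $(S\otimes\id)(R)=R^{-1}$.
\item What remains is $\lambda^v\bigl(r_cS(r_d)S(r^{z})r^{y}\bigr)\,(r^cr_{y}\triangleright\psi_i)\otimes(\psi_{i+1}\triangleleft r_{z}r^d)$.
\item The antipode identities for $R$, together with $(\id\otimes\Delta)(R)=R_{13}R_{12}$ and $(\Delta\otimes\id)(R)=R_{13}R_{23}$, turn this into $\bigl(\Phi_{0,1}(\lambda^v)_{(2)}\triangleright\psi_i\bigr)\otimes\bigl(\psi_{i+1}\triangleleft S^{-1}(\Phi_{0,1}(\lambda^v)_{(1)})\bigr)$.
\item Finally, $(S\otimes S)\circ\Delta^{op}=\Delta\circ S$ with $S(v)=v$ gives the stated form.
\end{itemize}
The two twists and the pivotal elements are eliminated by the diagrammatic simplifications before labelling; no $g^{\pm1}$ or $v^{\pm1}$ survives in the labels. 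So your plan to absorb $(v\otimes v)$ factors via $\Delta(v)=(R_{21}R)^{-1}(v\otimes v)$ plays no role. As written, your proposal announces the expected answer rather than deriving it.
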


\begin{proof}
	The curve $c_i$ is represented by the word $a_i b_{i+1} a_{i+1}^{-1} b_{i+1}^{-1}$ (in the usual generators of $\pi_1(\Sigma_{g,n}^1)$ and their inverses). 
	So, we can draw the curve $c_i$ on the ribbon surface as follow: 
	\begin{center}
	
		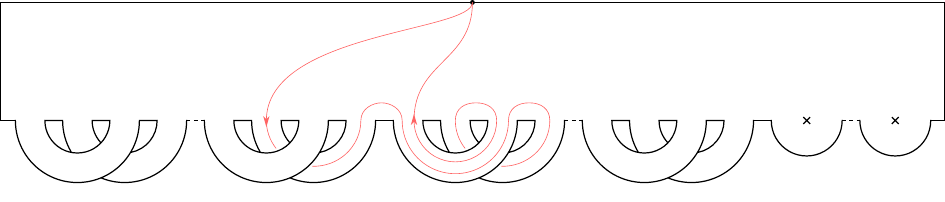
	\end{center}
	\noindent
	Let us compute the normalization of the curve $c_i$: $N(c_i) = N_\cup(c_i) - N_\cap(c_i) = 2$. 
	We get the following diagram:

	\begin{figure}[H]
		\centering
		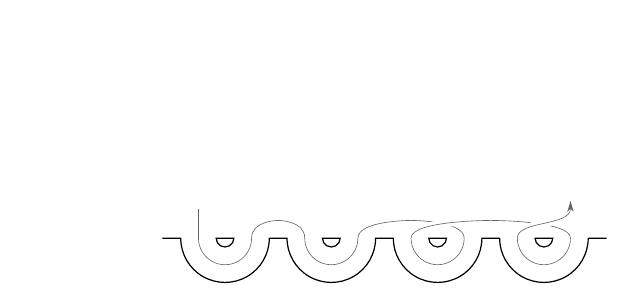
		\caption{Diagram of $\mathfrak{i}_{c_i}$}
		\label{fig-diagramme_ci}
	\end{figure}

	\noindent
	The first equality follows from \Cref{def-injection_igamma}, and the second follows from \Cref{fig-diagramme_a_etats_Dxi_inverse_phi}. 

	\smallskip
	The actions of $\mathfrak{i}_{a_j}(\alpha_j)$ and $\mathfrak{i}_{b_j}(\beta_j)$, $j \in [\![ 1, i+1 ]\!]$ (see \Cref{prop-diagrammes_a_etats_representation_induite}), allow us to reduce the evaluation of the action of $\mathfrak{i}_{c_i}$ to only $\psi_1 \otimes \cdots \otimes \psi_{i+1} \in (H^*)^{\otimes i+1} \subset (H^*)^{\otimes (g+n)}$.
	We have:

	\begin{figure}[H]
		\centering
		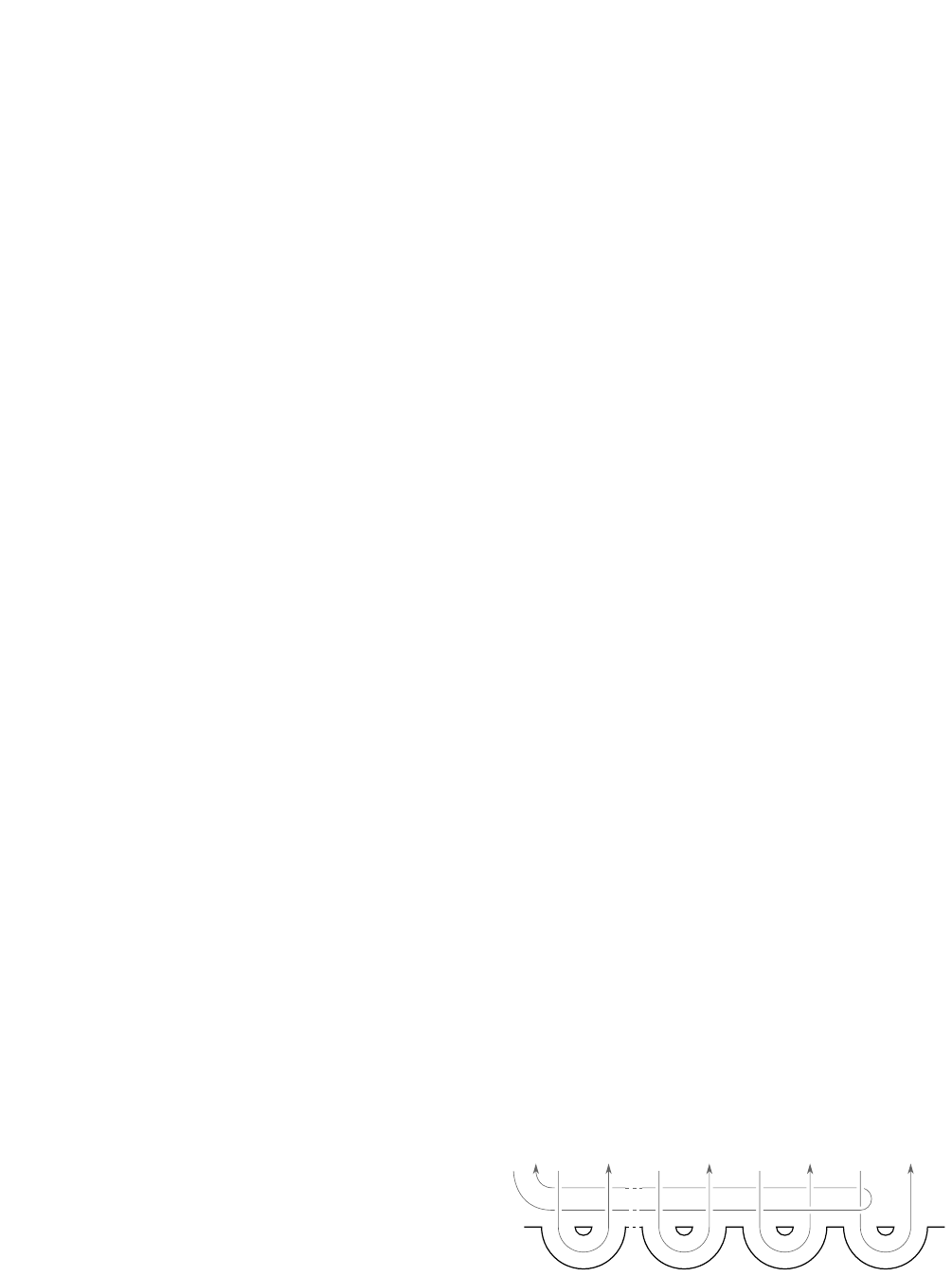
		\caption{Action of $\mathfrak{i}_{c_i}$}
		\label{fig-action_ci}
	\end{figure}

	\newpage

	\begin{figure}[ht!]
		\centering
		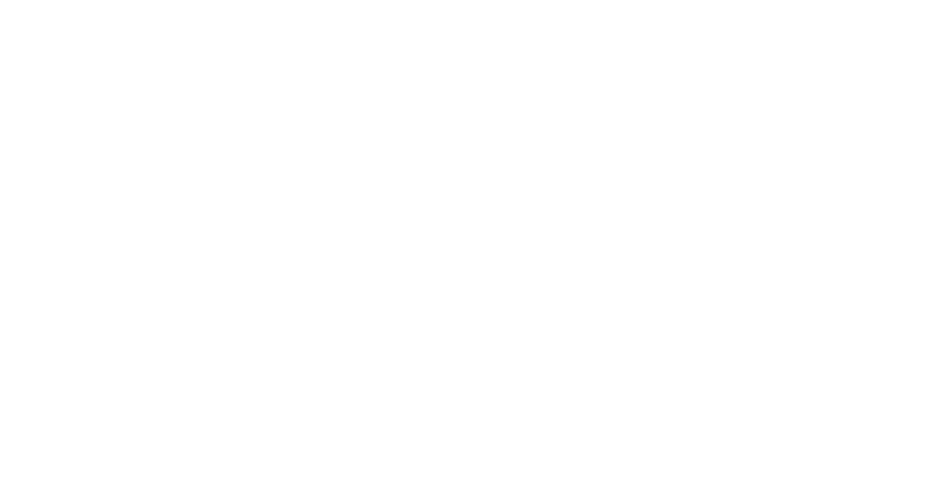
		\caption{Evaluation of $\mathfrak{i}_{c_i}$}
		\label{fig-evaluation_ci}
	\end{figure}
	\noindent
	Let us explain the third diagram in \Cref{fig-evaluation_ci}. 
	The labels indicate the indices of copies of R-matrices. 
	The labeling of this diagram is:
	\begin{equation}\label{eq-etiquetage_ci}
		\begin{split}
			\mathcal{M}_0(1) \cdots \mathcal{M}_0(i+1) & \otimes \bigl( r^{a_1} r^{b_1} \cdots r^{a_{i-1}} r^{b_{i-1}} r^{a_i} r_c S(r_d) S(r^{z_{i+1}}) r^{y_i} S(r^{z_i}) \cdots r^{y_1} S(r^{z_1}) \bigr) \\
			& \otimes \bigl( r_{z_1} r_{a_1} \mathcal{M}_1(1) S(r_{b_1}) r_{y_1} \bigr) \otimes \cdots \otimes \bigl( r_{z_{i-1}} r_{a_{i-1}} \mathcal{M}_1(i-1) S(r_{b_{i-1}}) r_{y_{i-1}} \bigr)\\
			& \otimes \bigl( r_{z_i} r_{a_i} \mathcal{M}_1(i) r^c r_{y_i} \bigr) \otimes \bigl( r_{z_{i+1}} r^d \mathcal{M}_1(i+1) \bigr).
		\end{split}
	\end{equation}
	Recall that for any Dehn twist $\tau_\gamma$, we have $\hat{\tau}_\gamma = \mathfrak{i}_\gamma(\lambda^v)$ (see \Cref{prop-action_twists_de_Dehn_non_separants_comme_conjugaison_et_proprietes}).
	From \Cref{def-evaluation_a_etats}, we need to evaluate the tensor $\id_{\mathcal{L}_{0, i+1}(H)} \otimes \, \lambda^v \otimes \psi_1 \otimes \cdots \otimes \psi_{i+1}$, where $\psi_i \in H^*$, against \labelcref{eq-etiquetage_ci}.
	We get: 
	\begin{equation}\label{eq-action_ci_non_simplifiee}
		\begin{split}
			\lambda^v \bigl( r^{a_1} r^{b_1} \cdots & r^{a_{i-1}} r^{b_{i-1}} r^{a_i} r_c S(r_d) S(r^{z_{i+1}}) r^{y_i} S(r^{z_i}) \cdots r^{y_1} S(r^{z_1}) \bigr) \, \bigl( S(r_{b_1}) r_{y_1} \triangleright \psi_1 \triangleleft r_{z_1} r_{a_1} \bigr) \\
			& \otimes \cdots \otimes \bigl( S(r_{b_{i-1}}) r_{y_{i-1}} \triangleright \psi_{i-1} \triangleleft r_{z_{i-1}} r_{a_{i-1}} \bigr) \otimes \bigl( r^c r_{y_i} \triangleright \psi_i \triangleleft r_{z_i} r_{a_i} \bigr) \otimes \bigl( \psi_{i+1} \triangleleft r_{z_{i+1}} r^d \bigr).
		\end{split}
	\end{equation}
	Now let us simplify the expression \labelcref{eq-action_ci_non_simplifiee}. The R-matrices indexed by $a_1$ and $z_1$ give:
	\begin{equation*}
		\lambda^v \bigl( r^{a_1} \cdots S(r^{z_1}) \bigr) (\cdots \triangleright \psi_1 \triangleleft r_{z_1} r_{a_1}) = \lambda^v \bigl( S^{-1}(r^{z_1}) r^{a_1} \cdots \bigr) (\cdots \triangleright \psi_1 \triangleleft r_{z_1} r_{a_1}) = \lambda^v(\cdots)(\cdots \triangleright \psi_1).
	\end{equation*}  
	The R-matrices indexed by $b_1$ and $y_1$ give:
	\begin{align*}
		\lambda^v(r^{b_1} \cdots r^{y_1}) \bigl( S(r_{b_1}) r_{y_1} \triangleright \psi_1 \bigr) & = \lambda^v \bigl( r^{b_1} \cdots S(r^{y_1}) \bigr) \bigl( S(r_{b_1}) S(r_{y_1}) \triangleright \psi_1 \bigr) \\
		& = \lambda^v \bigl( r^{b_1} \cdots S(r^{y_1}) \bigr) \bigl( S(r_{y_1} r_{b_1}) \triangleright \psi_1 \bigr) \\	
		& = \lambda^v \bigl( S^{-1}(r^{y_1}) r^{b_1} \cdots \bigr) \bigl( S(r_{y_1} r_{b_1}) \triangleright \psi_1 \bigr) = \lambda^v \bigl( \cdots \bigr) \psi_1.
	\end{align*}
	By doing exactly the same thing for the R-matrices indexed by $a_j$, $z_j$ and $b_k$, $y_k$, for all $j \in [\![ 2, i ]\!]$ and $k \in [\![ 2, i-1 ]\!]$, we obtain: 
	\begin{equation}\label{eq-action_ci_simplifiee_partiellement}
		\lambda^v \bigl( r_c S(r_d) S(r^{z_{i+1}}) r^{y_i}  \bigr) \, \psi_1 \otimes \cdots \otimes \psi_{i-1} \otimes \bigl( r^c r_{y_i} \triangleright \psi_i \bigr) \otimes \bigl( \psi_{i+1} \triangleleft r_{z_{i+1}} r^d \bigr).
	\end{equation}
	We can further simplify the expression \labelcref{eq-action_ci_simplifiee_partiellement} by using the following identities: 
	\begin{align*}
		\sum_{c,d,y_i,z_{i+1}} r_c S(r_d) S(r^{z_{i+1}}) r^{y_i} \otimes r^c r_{y_i} \otimes r_{z_{i+1}} r^d & = \sum_{c,d,y_i,z_{i+1}} r_c r_d r^{z_{i+1}} r^{y_i} \otimes r^c r_{y_i} \otimes S^{-1}(r^d r_{z_{i+1}}) \\ 
		& = \sum_{b,y} r_b r^y \otimes r^b_{(2)} {r_y}_{(2)} \otimes S^{-1}(r^b_{(1)} {r_y}_{(1)}).
	\end{align*}
	The first equality uses the identity $\labelcref{eq-R-matrice_et_antipode}$, and the second equality uses the identity $(\id_H \otimes \, \Delta)(R) = R_{13} R_{12}$ on the R-matrices $c$ and $d$, and $(\Delta \otimes \id_H)(R) = R_{13} R_{23}$ on the R-matrices $y_i$ and $z_{i+1}$. 
	We finally arrive at:
	\begin{align*}
		 \hat{\tau}_{c_i} \cdot (\psi_1 \otimes \cdots \otimes \psi_i \otimes \psi_{i+1}) & = \lambda^v(r_b r^y) \, \psi_1 \otimes \cdots \otimes \psi_{i-1} \otimes (r^b_{(2)} {r_y}_{(2)} \triangleright \psi_i) \otimes \bigl( \psi_{i+1} \triangleleft S^{-1}(r^b_{(1)} {r_y}_{(1)}) \bigr) \\ 
		& \overset{\text{\labelcref{eq-phi01}}}{=} \psi_1 \otimes \cdots \otimes \psi_{i-1} \otimes \bigl( \Phi_{0,1} (\lambda^v)_{(2)} \triangleright \psi_i \bigr) \otimes \bigl( \psi_{i+1} \triangleleft S^{-1}(\Phi_{0,1}(\lambda^v)_{(1)}) \bigr) \\ 
		& \overset{\text{\labelcref{eq-Phi01lambda^v}}}{=} \frac{\lambda(v)}{\lambda(v)} \, \psi_1 \otimes \cdots \otimes \psi_{i-1} \otimes (v^{-1}_{(2)} \triangleright \psi_i) \otimes \bigl( \psi_{i+1} \triangleleft S^{-1} (v^{-1}_{(1)}) \bigr) \\
		& = \psi_1 \otimes \cdots \otimes \psi_{i-1} \otimes \bigl( S(v^{-1}_{(1)}) \triangleright \psi_i \bigr) \otimes (\psi_{i+1} \triangleleft v^{-1}_{(2)}),
	\end{align*}
	where the fourth equality uses $(S \otimes S) \circ \Delta^{op} = \Delta \circ S$ and $S(v)=v$. Hence the result.
\end{proof}

\subsection{Actions of \texorpdfstring{$d_i, y, z$}{di,y,z}, and \texorpdfstring{$e$}{e}}\label{ann-action_di_y_z_et_e}

In this subsection, we give the formulas for the actions via the induced representation of the Dehn twists along the curves $d_i$, $y$, $z$, and $e$ represented in the figures below.
The proofs are similar to the one in \Cref{ann-action_ci} (see \cite{moulai_anomalie_nodate} for details). 
To keep on the safe side, we give some details for the curve $z$ in \Cref{prop-formule_z}.

\begin{proposition}\label{prop-formule_di}
	The action of $i_{d_i}(\varphi)$ on $\varepsilon^{\otimes (g+n)}$ is given by: 
	\[ \mathfrak{i}_{d_i}(\varphi) \cdot (\varepsilon^{\otimes (g+n)}) = \varphi(1_H) \, \varepsilon^{\otimes (g+n)}. \]
	\begin{center}
			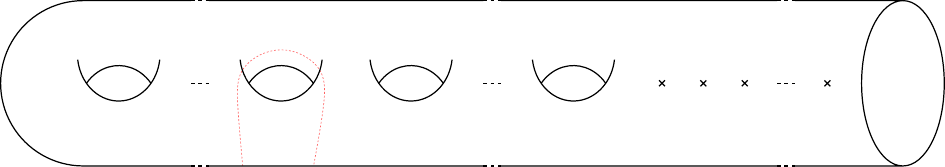
	\end{center}
\end{proposition}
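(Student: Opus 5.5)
The statement to prove is \Cref{prop-formule_di}: $\mathfrak{i}_{d_i}(\varphi)\cdot(\varepsilon^{\otimes(g+n)}) = \varphi(1_H)\,\varepsilon^{\otimes(g+n)}$ in the induced representation. Rather than work diagrammatically, I would compute algebraically, using \Cref{def-injection_igamma} and the formulas \labelcref{eq-formules_representation_induite}.

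The curve $d_i$ is represented by the word $b_i a_i^{-1} b_i^{-1} a_i$ with $N(d_i)=2$. This gives
\[ \mathfrak{i}_{d_i}(\varphi) = \varphi_{(1)}(v^2)\, \mathfrak{i}_{b_i}(\varphi_{(2)})\, \mathfrak{i}_{a_i^{-1}}(\varphi_{(3)})\, \mathfrak{i}_{b_i^{-1}}(\varphi_{(4)})\, \mathfrak{i}_{a_i}(\varphi_{(5)}). \]
I would then apply the five factors to $\varepsilon^{\otimes(g+n)}$ from right to left. Every factor only touches tensor slots with index at most $i$, and $\varepsilon\in\inv\coad^r$, so all the $\coad^r({r_a}_{(k)})$ acting on slots $1,\dots,i-1$ collapse by \labelcref{eq-R-matrice_et_co-unite}. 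This reduces the whole computation to the $i$-th slot.

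The four steps on slot $i$ are as follows.
\begin{enumerate}
\item $\mathfrak{i}_{a_i}(\varphi_{(5)})$ gives $\Phi_{0,1}(\varphi_{(5)})\triangleright\varepsilon\triangleleft r_b$ (after the coadjoint twist collapses). This equals $\varepsilon(\Phi_{0,1}(\varphi_{(5)}))\,\varepsilon=\varphi_{(5)}(1_H)\,\varepsilon$, so this factor disappears.
\item $\mathfrak{i}_{b_i^{-1}}(\varphi_{(4)})$ produces $S_{\mathcal{L}_{0,1}(H)}(\varphi_{(4)})\,\varepsilon=S_{\mathcal{L}_{0,1}(H)}(\varphi_{(4)})$, using that $\varepsilon$ is the unit of $\mathcal{L}_{0,1}(H)$.
\item $\mathfrak{i}_{a_i^{-1}}(\varphi_{(3)})$ acts by $\Phi_{0,1}(\coad^r(r^ar^b)(S_{\mathcal{L}_{0,1}(H)}(\varphi_{(3)})))\triangleright(-)\triangleleft r_b$ on that element.
\item $\mathfrak{i}_{b_i}(\varphi_{(2)})$ multiplies on the left in $\mathcal{L}_{0,1}(H)$.
\end{enumerate}

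The outcome should be $\varphi_{(1)}(v^2)$ times an expression of the shape $\varphi_{(2)}\cdot\bigl(X\triangleright S_{\mathcal{L}_{0,1}}(\varphi_{(4)})\triangleleft Y\bigr)$, where $X,Y$ involve $\Phi_{0,1}(S_{\mathcal{L}_{0,1}}(\varphi_{(3)}))$ and $R$-matrix legs. The target is to show this equals $\varphi(1_H)\,\varepsilon$.

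The main obstacle is this last simplification. Several ingredients must be combined:
\begin{itemize}
\item the explicit formula for $S_{\mathcal{L}_{0,1}(H)}$, which involves $u^{-1}$;
\item the formula for $\Phi_{0,1}$ via $RR_{21}$;
\item the identities \labelcref{eq-propriete_u,eq-propriete_v,eq-propriete_g};
\item the antipode property of $S_{\mathcal{L}_{0,1}(H)}$ with respect to the product of $\mathcal{L}_{0,1}(H)$.
\end{itemize}
The scalar $\varphi_{(1)}(v^2)$ must cancel the twists coming from $u^{-1}$ and from $RR_{21}$ through $v^2=uS(u)$. The antipode axiom should then collapse the remainder to the counit.

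For this step, the first thing I would try is to pair the resulting element against an arbitrary $h\in H$ and write everything as $\varphi$ evaluated on a product in $H$. I would then use the notation $\varphi(h\bullet g)$ from the proof of \labelcref{eq-SL01carre_egale_coadrvinverse} and cancel $R$-matrix pairs via $(S\otimes\id)(R)=R^{-1}$.

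If this becomes unwieldy, I would fall back on the diagrammatic calculus of \Cref{prop-diagrammes_a_etats_representation_induite}, as in the proof of \Cref{prop-formule_ci}. There the strand for $d_i$ against $\varepsilon$-states is isotopic to a trivial cup with compensating twists, which evaluates to $\varphi(1_H)$.
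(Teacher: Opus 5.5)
\textbf{Verdict: genuine gap.} Your reduction is correct as far as it goes, but the decisive computation is never carried out.

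\textbf{What is fine.} Slots $1,\dots,i-1$ stay equal to $\varepsilon\in\inv\coad^r$, so every $\coad^r$-twist on them collapses and only slot $i$ matters. Your steps (1)--(4) are correct.

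\textbf{What is missing.} After absorbing $\varphi_{(5)}(1_H)$, what remains to prove is
\[ \varphi_{(1)}(v^2)\,\varphi_{(2)}\Bigl(\Phi_{0,1}\bigl(\coad^r(r^b)(S_{\mathcal{L}_{0,1}(H)}(\varphi_{(3)}))\bigr)\triangleright S_{\mathcal{L}_{0,1}(H)}(\varphi_{(4)})\triangleleft r_b\Bigr)=\varphi(1_H)\,\varepsilon , \]
with the product taken in $\mathcal{L}_{0,1}(H)$. This identity is the whole content of the proposition. For it you only list ingredients and predict that the antipode axiom ``should'' finish the job.

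That mechanism cannot be applied as stated. The elements $\varphi_{(2)}$ and $S_{\mathcal{L}_{0,1}(H)}(\varphi_{(4)})$ are never multiplied directly: the operator $\Phi_{0,1}(\cdots)\triangleright(\,\cdot\,)\triangleleft r_b$ coming from $\mathfrak{i}_{a_i^{-1}}$ sits between them. The product of $\mathcal{L}_{0,1}(H)$ is compatible with $\coad^r$, not with the coregular actions, so $\varphi_{(2)}$ cannot simply be slid past that operator. Genuine $R$-matrix bookkeeping is needed, and that is exactly where the normalization $v^2$ gets used. Your diagrammatic fallback (``isotopic to a trivial cup with compensating twists'') is also only asserted. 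Carrying it out is precisely the paper's proof, the same diagrammatic evaluation as for $c_i$ in \Cref{prop-formule_ci}.

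\textbf{One way to close the gap algebraically.} It is easier to avoid $\Phi_{0,1}$ altogether.
\begin{enumerate}
\item Use the analogue of \labelcref{eq-conjugaison_a1_et_b1} for the pair $(a_i,b_i)$, together with $\mathfrak{i}_{a_i^{-1}}=\mathfrak{i}_{a_i}\circ S_{\mathcal{L}_{0,1}(H)}$ and $\psi_{(1)}S_{\mathcal{L}_{0,1}(H)}(\psi_{(2)})=\psi(1_H)\,\varepsilon$. This gives $\hat{\tau}_{a_i}^{-1}\,\mathfrak{i}_{b_i}(\psi)\,\hat{\tau}_{a_i}=\psi_{(1)}(v)\,\mathfrak{i}_{b_i}(\psi_{(2)})\,\mathfrak{i}_{a_i^{-1}}(\psi_{(3)})$, and hence
\[ \mathfrak{i}_{d_i}(\varphi)=\varphi_{(1)}(v)\,\hat{\tau}_{a_i}^{-1}\,\mathfrak{i}_{b_i}(\varphi_{(2)})\,\hat{\tau}_{a_i}\,\mathfrak{i}_{b_i^{-1}}(\varphi_{(3)})\,\mathfrak{i}_{a_i}(\varphi_{(4)}). \]
\item By \Cref{prop-formules_ai_bi_ci}, slot $i$ of $\mathfrak{i}_{d_i}(\varphi)\cdot\varepsilon^{\otimes(g+n)}$ is then $\varphi_{(1)}(v)\,v\triangleright\bigl(\varphi_{(2)}\,(v^{-1}\triangleright S_{\mathcal{L}_{0,1}(H)}(\varphi_{(3)}))\bigr)$.
\item Use $\Delta(v)R_{21}=R^{-1}(v\otimes v)$ and write $R^{-1}=\bar r_k\otimes\bar r^k$. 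With implicit summations, the value of this element at $x\in H$ is
\[ \varphi\bigl(v^2\,x_{(1)}\,\bar r_k\,S(r^i)\,r^m\,u^{-1}S(\bar r^k)\,S(x_{(2)})\,S(r_i)\,S(r_m)\bigr). \]
\item The tensor $S(r^i)r^m\otimes S(r_i)S(r_m)$ equals $1\otimes 1$, by $(S\otimes S)(R)=R$ and $R\,(S\otimes\id_H)(R)=1\otimes 1$.
\item Next, $\bar r_k\,u^{-1}S(\bar r^k)=S(u)^{-1}u^{-1}=v^{-2}$, by \labelcref{eq-R-matrice_et_antipode,eq-propriete_u,eq-propriete_v}.
\item What is left is $\varphi(x_{(1)}S(x_{(2)}))=\varphi(1_H)\,\varepsilon(x)$.
\end{enumerate}
Your guess that $v^2$ cancels against $uS(u)$ is right. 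However, the final collapse is the antipode axiom of $H$ inside $\varphi$, not that of $S_{\mathcal{L}_{0,1}(H)}$, and it is reached only after this $R$-matrix cancellation. Some explicit computation of this kind is what your proposal needs in order to be a proof.
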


\smallskip

\begin{proposition}\label{prop-formule_y}
	The action of $\hat{\tau}_y$ on $\psi_1 \otimes \cdots \otimes \psi_{g+n}$ is given by: 
	\begin{align*}
		\hat{\tau}_y \cdot (\psi_1 \otimes \cdots \otimes \psi_{g+n}) = \bigl( r^f S^2(r_e) \triangleright \psi_1 \bigr) \otimes \Bigl( \lambda^v \bigl( S^{-1}(\bullet^2_{(2)}) r^e r_f & \bullet^2_{(1)} S(r_g) S(r^h) \bigr) \star \psi_2 \Bigr) \\
		& \otimes (\psi_3 \triangleleft r_h r^g) \otimes \psi_4 \otimes \cdots \otimes \psi_{g+n}.
	\end{align*}
	\begin{center}
		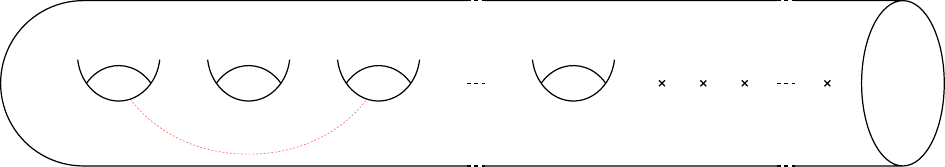
	\end{center}
\end{proposition}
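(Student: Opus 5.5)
We prove the formula for $\hat{\tau}_y$ in the same way as \Cref{prop-formule_ci}, using the diagrammatic calculus of \Cref{ann-description_diagrammatique_Lgn} together with \Cref{prop-diagrammes_a_etats_representation_induite}.

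\textbf{Step 1: write $y$ as a word.} We read off from the figure a word for the based curve $y$ in the standard generators. Since $y$ meets only the first three handles, we expect something of the form $a_1 (\text{letters in } b_2,a_2) a_3^{-1}$ up to conjugation. We then compute its normalization $N(y)$, which gives the power of $v$ that \Cref{def-injection_igamma} inserts. By \Cref{prop-action_twists_de_Dehn_non_separants_comme_conjugaison_et_proprietes}, $\hat{\tau}_y=\mathfrak{i}_y(\lambda^v)$ is independent of the orientation, so we may choose the orientation that makes the diagram simplest.

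\textbf{Step 2: draw and label the diagram.} We draw $\mathfrak{i}_y(\lambda^v)$ as a single strand through the concatenated handles. Inverse letters are handled with \Cref{fig-diagramme_a_etats_Dxi_inverse_phi}, and the action on $\psi_1\otimes\psi_2\otimes\psi_3$ is obtained by stacking with the stated diagrams of the induced representation (\Cref{fig-diagrammes_a_etats_representation_induite}). Because the actions of $\mathfrak{i}_{a_j},\mathfrak{i}_{b_j}$ for $j\le 3$ only involve $\mathcal{R}$-matrix legs on earlier factors, we can restrict to the first three tensor factors. We then label à la Hennings to get an explicit tensor, analogous to \eqref{eq-etiquetage_ci}, and evaluate $\lambda^v\otimes\psi_1\otimes\psi_2\otimes\psi_3$ against it.

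\textbf{Step 3: simplify the $\mathcal{R}$-matrices.} The $\mathcal{R}$-matrices coming from the crossings with the spectator strands of $\psi_1$ (and of $\psi_3$) cancel pairwise, as in \Cref{prop-formule_ci}. We use \eqref{eq-R-matrice_et_antipode}, the invariance $\lambda^v\in\inv\coad^r$ via \eqref{eq-reformulation_invariance_coad^r} to move letters cyclically (up to $S^{\pm2}$) inside $\lambda^v$, and the quasi-triangularity \eqref{eq-R-matrice_et_co-produit} to merge pairs. The strand passing through handle $2$ twice produces a coproduct $\bullet^2_{(1)}\otimes\bullet^2_{(2)}$ of the dual-basis element. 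After pairing it with $\psi_2$, this becomes the convolution $\lambda^v(\cdots)\star\psi_2$ with $S^{-1}$ applied to the second leg; the $S^{-1}$ comes from the inverse letter, and the twist factors $v^{\pm1}$, $g^{\pm1}$ are absorbed using $u$ and \eqref{eq-propriete_g}. The surviving $\mathcal{R}$-matrices are the pairs $(e,f)$ linking factor $1$ to factor $2$ and $(g,h)$ linking factor $2$ to factor $3$. This gives exactly the stated formula $(r^fS^2(r_e)\triangleright\psi_1)\otimes\cdots\otimes(\psi_3\triangleleft r_hr^g)$.

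\textbf{Main obstacle.} We expect the hardest part to be Step 3: tracking the correct $S^{\pm2}$ and the placement of $r^e r_f$ inside the argument of $\lambda^v$. We would check the result by specialising to $\psi_i=\varepsilon$. By \eqref{eq-R-matrice_et_co-unite} the formula then reduces to $\varepsilon\otimes\lambda^v(S^{-1}(\bullet_{(2)})\bullet_{(1)})\otimes\varepsilon$, which is the expression used in \Cref{prop-calcul_relation_lanterne}.
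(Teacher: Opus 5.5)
Your outline is the same as the paper's own argument. The paper only says the formula follows from a Hennings-type diagrammatic computation analogous to \Cref{prop-formule_ci}, deferring details to \cite{moulai_anomalie_nodate} and sketching only the parallel curve $z$: word and normalization, a labelled diagram stacked on the induced-representation diagrams, then simplification via \eqref{eq-R-matrice_et_antipode}, \eqref{eq-propriete_g} and the $\coad^r$-invariance \eqref{eq-reformulation_invariance_coad^r} of $\lambda^v$; your check at $\psi_i=\varepsilon$ is exactly how the formula is used in \Cref{prop-calcul_relation_lanterne}.

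One small correction for Step~3: a reversed passage through a handle is labelled $g^{-1}S(\cdot)$ (cf.\ \Cref{fig-diagramme_a_etats_Dxi_inverse_phi}), so the $S^{-1}(\bullet^2_{(2)})$ does not come directly from the inverse letter. It appears when you move that factor to the front of the argument of $\lambda^v$ using $\lambda^v(xy)=\lambda^v\bigl(S^{-2}(y)x\bigr)$ and $S^2(g)=g$.
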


\smallskip

\begin{proposition}\label{prop-formule_z}
	The action of $\hat{\tau}_z$ on $\psi_1 \otimes \cdots \otimes \psi_{g+n}$ is given by: 
	\begin{equation*}
		\hat{\tau}_z \cdot (\psi_1 \otimes \cdots \otimes \psi_{g+n}) = \bigl\langle \lambda^v, r^c S^2(r_d) S(r_e) S(r^f) \bigr\rangle \, \psi_1 \otimes ( r_f r^e \triangleright \psi_2 ) \otimes (\psi_3 \triangleleft r^d r_c) \otimes \psi_4 \otimes \cdots \otimes \psi_{g+n}.
	\end{equation*}
	\begin{center}
		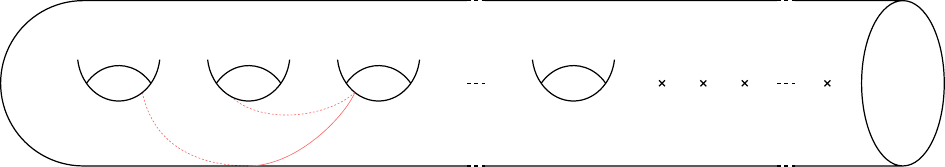
	\end{center}
\end{proposition}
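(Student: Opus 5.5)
The plan is to follow exactly the strategy used for $c_i$ in \Cref{prop-formule_ci}. We realize $\hat{\tau}_z = \mathfrak{i}_z(\lambda^v)$ (\Cref{prop-action_twists_de_Dehn_non_separants_comme_conjugaison_et_proprietes}) as a stated diagram and act with it through the induced representation using \Cref{prop-diagrammes_a_etats_representation_induite}. We then evaluate à la Hennings (\Cref{def-evaluation_a_etats}) and simplify the resulting $R$-matrix expression algebraically.

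First, read off from the lantern figure a word representing $z$ as a based simple curve in $\pi_1(\Sigma_{g,n}^1)$. It should only involve the generators $a_j^{\pm1}, b_j^{\pm1}$ with $j\le 3$, for instance of the shape $a_1\, b_2^{\pm1} a_2^{\pm1} b_2^{\mp1}$ or a comparable word. Draw it in the ribbon surface and compute $N(z)=N_\cup(z)-N_\cap(z)$. Then, by \Cref{def-injection_igamma} and \Cref{fig-diagramme_a_etats_Dxi_inverse_phi}, write $\mathfrak{i}_z(\lambda^v)$ as a single strand with $N(z)$ twists running through the concatenated handles. The twists produce a factor $v^{N(z)}$ and the inverse generators produce pivotal elements. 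Since the actions of $\mathfrak{i}_{a_j},\mathfrak{i}_{b_j}$ for $j>3$ are absent, \Cref{prop-diagrammes_a_etats_representation_induite} shows that only $\psi_1\otimes\psi_2\otimes\psi_3$ is affected. So I would compose the diagram of $\mathfrak{i}_z(\lambda^v)$ with the induced-representation diagrams applied to $\psi_1\otimes\psi_2\otimes\psi_3$, and isotope to the elementary pieces of \Cref{fig-diagrammes_locaux_elementaires}.

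Second, label the arcs by the rules of \Cref{fig-regles_etiquetage_Hennings,fig-regles_etiquetage_consequences}, multiply along strands, and pair with $\lambda^v\otimes\psi_1\otimes\psi_2\otimes\psi_3$. This gives an expression analogous to \labelcref{eq-action_ci_non_simplifiee}: $\lambda^v$ evaluated on a product of $R$-matrix legs, times $\coad$-type actions on each $\psi_j$. I would then cancel pairs of $R$-matrices exactly as in the $c_i$ computation. The tools are the invariance $\lambda^v\in\inv\coad^r$ in the form \labelcref{eq-reformulation_invariance_coad^r}, which moves a factor from the right end of the argument of $\lambda^v$ to the left end at the cost of $S^{-2}$, together with $(S\otimes \id)(R)=R^{-1}$ and $(S\otimes S)(R)=R$ from \labelcref{eq-R-matrice_et_antipode}. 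In addition, $v^{\pm1}$ and $g^{\pm1}$ are absorbed using $\Delta(v)=(R_{21}R)^{-1}v\otimes v$ and $S^2 = g(\cdot)g^{-1}$.

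The main obstacle is the bookkeeping of this simplification. Unlike for $c_i$, the $R$-matrices attached to $\psi_1$ do not all cancel. Two pairs (the indices $d,c$ and $e,f$ in the statement) survive, with one leg in the argument of $\lambda^v$ and the other acting on $\psi_2$ (on the left) or $\psi_3$ (on the right). Meanwhile $\psi_1$ itself must come out untouched. I would first treat the legs acting on $\psi_1$, showing they pair off to the identity by the same two-step cancellation as for the $a_1,z_1$ and $b_1,y_1$ indices in \Cref{ann-action_ci}. Next I would collect the twist contributions coming from $N(z)$. I expect these to be exactly what is needed for the remaining argument of $\lambda^v$ to take the form $r^c S^2(r_d) S(r_e)S(r^f)$, without a leftover scalar $\lambda(v)$, in contrast to the $c_i$ case where $\Phi_{0,1}(\lambda^v)=v^{-1}$ was used. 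Finally I would check the result against the sanity value $\hat{\tau}_z\cdot\varepsilon^{\otimes 3}=\lambda^v(1_H)\varepsilon^{\otimes3}$ used in \Cref{prop-calcul_relation_lanterne}, which holds by \labelcref{eq-R-matrice_et_co-unite}.
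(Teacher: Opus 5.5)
Your overall strategy is the paper's: Hennings evaluation of $\mathfrak{i}_z(\lambda^v)$ through the induced representation, then simplification as in \Cref{prop-formule_ci}. However, the concrete predictions your plan rests on are wrong, and the one step that is genuinely new compared with $c_i$ is missing.

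The word of $z$ is $a_1b_2b_3a_3b_3^{-1}a_2^{-1}b_2^{-1}$, with $N(z)=3-3=0$.
\begin{itemize}
\item A word of the shape you propose involves only the first two handles (compare $c_1=a_1b_2a_2^{-1}b_2^{-1}$). So it could not produce the factor $\psi_3\triangleleft r^dr_c$.
\item Since $N(z)=0$, there are no normalization twists at all. Your step ``collect the twist contributions from $N(z)$ to bring the argument of $\lambda^v$ into the form $r^cS^2(r_d)S(r_e)S(r^f)$'' therefore has nothing to act on. The factors $S^2(r_d)$ and $g\,r_er^f\,g^{-1}$ are already produced by the labelling.
\item Contrary to your first sentence on this point, all $R$-matrices attached to $\psi_1$ do cancel, exactly like the pairs $(a_1,z_1)$ and $(b_1,y_1)$ for $c_i$.
\end{itemize}

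More importantly, both $b_2$ and $b_2^{-1}$ occur in the word, so the $\lambda^v$-strand runs twice through the second handle. The evaluation is therefore not ``$\lambda^v$ of $R$-matrix legs times coregular actions on each $\psi_j$''. One gets
\[
\bigl(S(r_b)r_g\triangleright\psi_1\triangleleft r_hr_a\bigr)\otimes\lambda^v\bigl(r^ar^b\,\bullet_{(1)}\,Y\,S(\bullet_{(2)})\,r^gS(r^h)\bigr)\star\bigl(S(r_f)S(r^e)\triangleright\psi_2\bigr)\otimes\bigl(\psi_3\triangleleft r^dr_c\bigr),
\]
where $Y:=r^cS^2(r_d)\,g\,r_er^f\,g^{-1}$. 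So $\psi_2$ is convolved with a functional.

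The computation then closes as follows.
\begin{itemize}
\item Move $r^gS(r^h)$ to the front with \labelcref{eq-reformulation_invariance_coad^r}, and cancel $(h,a)$, then $(g,b)$, against $\psi_1$.
\item The missing step: use $\coad^r$-invariance in its defining form $\lambda^v\bigl(x_{(1)}YS(x_{(2)})\bigr)=\varepsilon(x)\lambda^v(Y)$. The functional becomes $\lambda^v(Y)\,\varepsilon$, and the convolution disappears since $\varepsilon\star\phi=\phi$.
\item Write $g\,r_er^f\,g^{-1}=S^2(r_e)S^2(r^f)$ and apply $(S\otimes S)(R)=R$ twice. This turns $S^2(r_e)S^2(r^f)\otimes S(r_f)S(r^e)$ into $S(r_e)S(r^f)\otimes r_fr^e$, which gives exactly the stated formula.
\end{itemize}
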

\noindent
Let us give some details. 
The curve $z$ is represented by the word $a_1b_2b_3a_3b_3^{-1}a_2^{-1}b_2^{-1}$ (in the usual generators of $\pi_1(\Sigma_{g,n}^1)$ and their inverses), with normalization $N(z) = 3 - 3 = 0$.
After computing diagrammaticaly the action of $\mathfrak{i}_z$ as in \Cref{fig-action_ci} we get:
\begin{figure}[H]
	\centering
	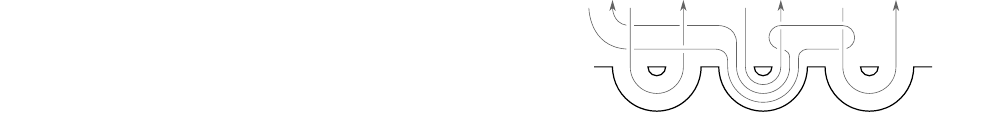
	\caption{Action of $\mathfrak{i}_{z}$}
	\label{fig-action_z}
\end{figure}
\begin{figure}[H]
	\centering
	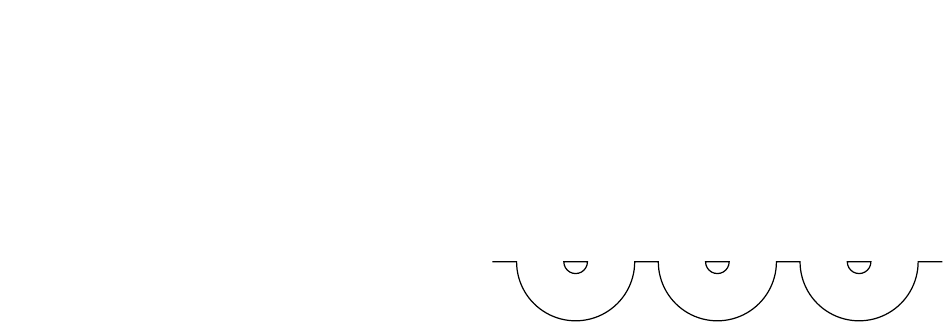
	\caption{Evaluation of $\mathfrak{i}_{z}$}
	\label{fig-evaluation_z}
\end{figure}
\noindent
Labeling the diagram on the bottom side in \Cref{fig-evaluation_z} as for \Cref{fig-evaluation_ci}, we obtain:
\begin{equation}\label{eq-etiquetage_z}
	\begin{split}
		\mathcal{M}_0(1) \mathcal{M}_0(2) \mathcal{M}_0(3) & \otimes \bigl( r^a r^b {\mathcal{M}_1(2)}_{(1)} r^c S^2(r_d) g r_e r^f g^{-1} S({\mathcal{M}_1(2)}_{(2)}) r^g S(r^h) \bigr)\\
		& \otimes \bigl( r_h r_a \mathcal{M}_1(1) S(r_b) r_g \bigr) \otimes \bigl( {\mathcal{M}_1(2)}_{(3)} S(r_f) S(r^e) \bigr) \otimes \bigl( r^d r_c \mathcal{M}_1(3) \bigr).
	\end{split}  
\end{equation}
From \Cref{def-evaluation_a_etats}, we need to evaluate the tensor $\id_{\mathcal{L}_{0, 3}(H)} \otimes \lambda^v \otimes \psi_1 \otimes \psi_2 \otimes \psi_3$, where $\psi_i \in H^*$, against \labelcref{eq-etiquetage_z}.
We get:
\begin{equation}\label{eq-action_z_non_simplifiee}
	\begin{aligned}
		\bigl( S(r_b) r_g \triangleright \psi_1 \triangleleft r_h r_a \bigr) \otimes \lambda^v \bigl( r^a r^b \bullet^2_{(1)} r^c S^2(r_d) g r_e r^f g^{-1} S( & \bullet^2_{(2)}) r^g S(r^h) \bigr) \\
		& \star \bigl( S(r_f) S(r^e) \triangleright \psi_2 \bigr) \otimes (\psi_3 \triangleleft r^d r_c).
	\end{aligned} 
\end{equation}
Finally, the result follows from simplifications of \labelcref{eq-action_z_non_simplifiee} by using \labelcref{eq-R-matrice_et_antipode,eq-propriete_g,eq-reformulation_invariance_coad^r} as in the proof of \Cref{prop-formule_ci}.

\begin{proposition}\label{prop-formule_e}
	The action of $\hat{\tau}_e$ on $\psi_1 \otimes \cdots \otimes \psi_{g+n}$ is given by: 
	\begin{equation*}
		\hat{\tau}_e \cdot (\psi_1 \otimes \cdots \otimes \psi_{g+n}) = \bigl\langle \lambda^v, S(r_a) r_b S(r_c) S(r^d) r^e S(r^f) \bigr\rangle \, (r^b r_e \triangleright \psi_1 \triangleleft r_f r^a) \otimes (\psi_2 \triangleleft r_d r^c) \otimes \psi_3 \otimes \cdots \otimes \psi_{g+n}.
	\end{equation*}
	\begin{center}
		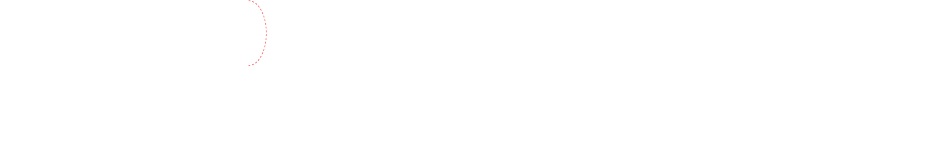
	\end{center}
\end{proposition}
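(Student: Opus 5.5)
We follow the same diagrammatic scheme as in the proof of \Cref{prop-formule_ci} (and the sketch given for \Cref{prop-formule_z}). The first step is to write the curve $e$ as a word in the usual generators of $\pi_1(\Sigma_{g,n}^1)$ and their inverses. From the figure, $e$ only runs through the handles of $a_1$, $b_1$ and $a_2$, $b_2$, e.g. as a word of the form $a_1 b_2 a_2^{-1} b_2^{-1}$ up to the precise pattern read off the picture. We then compute its normalization $N(e)$ by counting cups and caps in the ribbon surface, and apply \Cref{def-injection_igamma} to express $\hat{\tau}_e = \mathfrak{i}_e(\lambda^v)$ as a single stated cup passing through the concatenated handles, with $N(e)$ twists. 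Inverse generators are replaced using the diagram of \Cref{fig-diagramme_a_etats_Dxi_inverse_phi}.

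Next we let this diagram act on $\psi_1 \otimes \cdots \otimes \psi_{g+n}$ via \Cref{prop-diagrammes_a_etats_representation_induite}. Since only the handles $1$ and $2$ are involved, the factors $\psi_3,\dots,\psi_{g+n}$ are untouched. The $R$-matrices coming from crossing the strands attached to $\psi_j$ for $j\ge 3$ cancel pairwise exactly as the $a_j,z_j$ and $b_k,y_k$ pairs did for $c_i$, using \labelcref{eq-R-matrice_et_antipode} and the $\coad^r$-invariance \labelcref{eq-reformulation_invariance_coad^r} of $\lambda^v$. We isotope the resulting diagram into elementary pieces (\Cref{fig-diagrammes_locaux_elementaires}), label by the Hennings rules (\Cref{fig-regles_etiquetage_Hennings,fig-regles_etiquetage_consequences}) and multiply along each strand. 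This yields a labeling analogous to \labelcref{eq-etiquetage_z}, i.e. an element of $\mathcal{L}_{0,2}(H)\otimes H^{\otimes 3}$, and we pair it with $\id \otimes \lambda^v \otimes \psi_1 \otimes \psi_2$.

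The final step is algebraic simplification. The pivotal elements $g^{\pm1}$ produced by the twists and the inverse-orientation pieces are absorbed using \labelcref{eq-propriete_g} and $\lambda\circ S^2=\lambda$. The contributions of the universal elements $\mathcal{M}_1(\cdot)$ of handles $1,2$ collapse once evaluated on $\lambda^v$, leaving six $R$-matrix copies. One copy acts on both sides of $\psi_1$, two copies act on the right of $\psi_2$, and all six appear inside the argument of $\lambda^v$. These are exactly the pieces in the stated formula $\langle \lambda^v, S(r_a) r_b S(r_c) S(r^d) r^e S(r^f)\rangle (r^b r_e\triangleright\psi_1\triangleleft r_f r^a)\otimes(\psi_2\triangleleft r_d r^c)$.

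The main obstacle is this last bookkeeping step. We must place each crossing correctly, and the cyclic reorderings inside $\lambda^v$ permitted by $\coad^r$-invariance must be used carefully, because the same symbol $r_x$ appears both inside $\lambda^v$ and on $\psi_1,\psi_2$. As a sanity check, we will verify that on $\varepsilon^{\otimes 2}$ the formula reduces, via \labelcref{eq-R-matrice_et_co-unite}, to $\lambda^v(1_H)\varepsilon^{\otimes 2}=\varepsilon^{\otimes 2}$, which is consistent with its use in \Cref{prop-calcul_relation_3-chaine}.
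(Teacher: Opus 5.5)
Your strategy is the paper's own: the paper only says this computation is analogous to that of \Cref{prop-formule_ci}. However, the one concrete input you give is wrong, so as written the scheme would not produce the stated formula.

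The word $a_1b_2a_2^{-1}b_2^{-1}$ is exactly the word of $c_1$, i.e.\ of the curve $c$ of the 3-chain relation. In \Cref{prop-calcul_relation_3-chaine} one has $a=a_1$, $b=b_1$, $c=c_1$, $d=a_2$. Feeding this word into your procedure just reproduces \Cref{prop-formule_ci} for $i=1$, namely $\bigl(S(v^{-1}_{(1)})\triangleright\psi_1\bigr)\otimes\bigl(\psi_2\triangleleft v^{-1}_{(2)}\bigr)$, which has no right action on $\psi_1$. That word also does not pass through the $b_1$-handle, contrary to your own description of $e$.

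The curve $e$ is the other boundary component, besides $d=a_2$, of a regular neighbourhood of $a_1\cup b_1\cup c_1$. I believe it is, up to isotopy, the curve $e_2$ of \Cref{prop-calcul_relations_0-tresse}, represented by $b_1a_1^{-1}b_1^{-1}a_1\,b_2a_2^{-1}b_2^{-1}$ with $N(e)=4$. You should confirm this against the figure. With that word, since $v$ is central and $N(d_1)=N(c_1)=2$, one gets $\mathfrak{i}_e(\varphi)=\mathfrak{i}_{d_1}(\varphi_{(1)})\,X(\varphi_{(2)})$ and $\mathfrak{i}_{c_1}(\varphi)=\mathfrak{i}_{a_1}(\varphi_{(1)})\,X(\varphi_{(2)})$. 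Here both use the same factor $X(\psi)=\psi_{(1)}(v^2)\,\mathfrak{i}_{b_2}(\psi_{(2)})\,\mathfrak{i}_{a_2^{-1}}(\psi_{(3)})\,\mathfrak{i}_{b_2^{-1}}(\psi_{(4)})$. So $e$ differs from $c_1$ only by replacing $a_1$ with the commutator $d_1$, whose action on the first tensor factor should be of coadjoint type (compare the formula for $\mathfrak{i}_{\partial_{1,0}}$ quoted in the proof of \Cref{lem-ab_puissance6_dans_noyau}). This is what turns the one-sided action on $\psi_1$ into $r^b r_e\triangleright\psi_1\triangleleft r_f r^a$, while $X$ still gives the right action on $\psi_2$.

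Two further parts of your bookkeeping are off:
\begin{itemize}
\item No strand $\psi_j$ with $j\ge 3$ is ever crossed. By \labelcref{eq-formules_representation_induite}, $\mathfrak{i}_{a_j}$ and $\mathfrak{i}_{b_j}$ only modify $\psi_1,\dots,\psi_j$. The pairwise cancellations in the proof of \Cref{prop-formule_ci} concern the strands $\psi_j$ with $j<i$, and there are none here.
\item In the target formula each of the six $R$-matrices has one leg inside $\lambda^v$ and one outside. Four of them ($a,b,e,f$) act on $\psi_1$, two on each side, and two ($c,d$) act on the right of $\psi_2$. It is not ``one copy on both sides of $\psi_1$''.
\end{itemize}

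Since you do not carry out the labelling and simplification, the claim that one lands on exactly these legs is unsupported. Your check on $\varepsilon^{\otimes 2}$ only tests the counit normalisation, and the $c_1$ formula passes it as well.
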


\printbibliography
\end{document}

%% file: Courbes_relation_lanterne_surface_minimale.pdf_tex
\begingroup%
  \makeatletter%
  \providecommand\color[2][]{%
    \errmessage{(Inkscape) Color is used for the text in Inkscape, but the package 'color.sty' is not loaded}%
    \renewcommand\color[2][]{}%
  }%
  \providecommand\transparent[1]{%
    \errmessage{(Inkscape) Transparency is used (non-zero) for the text in Inkscape, but the package 'transparent.sty' is not loaded}%
    \renewcommand\transparent[1]{}%
  }%
  \providecommand\rotatebox[2]{#2}%
  \newcommand*\fsize{\dimexpr\f@size pt\relax}%
  \newcommand*\lineheight[1]{\fontsize{\fsize}{#1\fsize}\selectfont}%
  \ifx\svgwidth\undefined%
    \setlength{\unitlength}{453.54330709bp}%
    \ifx\svgscale\undefined%
      \relax%
    \else%
      \setlength{\unitlength}{\unitlength * \real{\svgscale}}%
    \fi%
  \else%
    \setlength{\unitlength}{\svgwidth}%
  \fi%
  \global\let\svgwidth\undefined%
  \global\let\svgscale\undefined%
  \makeatother%
  \begin{picture}(1,0.25129032)%
    \lineheight{1}%
    \setlength\tabcolsep{0pt}%
    \put(0,0){\includegraphics[width=\unitlength,page=1]{Courbes_relation_lanterne_surface_minimale.pdf}}%
    \put(0.62473607,0.12427027){\color[rgb]{0,0,0}\makebox(0,0)[lt]{\lineheight{0}\smash{\begin{tabular}[t]{l}$\scriptstyle b_1$\end{tabular}}}}%
    \put(0.53125006,0.22427029){\color[rgb]{0,0,0}\makebox(0,0)[lt]{\lineheight{0}\smash{\begin{tabular}[t]{l}$\scriptstyle b_2$\end{tabular}}}}%
    \put(0.34375006,0.12427026){\color[rgb]{0,0,0}\makebox(0,0)[lt]{\lineheight{0}\smash{\begin{tabular}[t]{l}$\scriptstyle b_3$\end{tabular}}}}%
    \put(0.53125004,0.02427027){\color[rgb]{0,0,0}\makebox(0,0)[lt]{\lineheight{0}\smash{\begin{tabular}[t]{l}$\scriptstyle b_4$\end{tabular}}}}%
    \put(0.42500005,0.14927027){\color[rgb]{0,0,0}\makebox(0,0)[lt]{\lineheight{0}\smash{\begin{tabular}[t]{l}$\scriptstyle x$\end{tabular}}}}%
    \put(0.54375006,0.14927027){\color[rgb]{0,0,0}\makebox(0,0)[lt]{\lineheight{0}\smash{\begin{tabular}[t]{l}$\scriptstyle y$\end{tabular}}}}%
    \put(0.50000007,0.06177026){\color[rgb]{0,0,0}\makebox(0,0)[lt]{\lineheight{0}\smash{\begin{tabular}[t]{l}$\scriptstyle z$\end{tabular}}}}%
    \put(0,0){\includegraphics[width=\unitlength,page=2]{Courbes_relation_lanterne_surface_minimale.pdf}}%
  \end{picture}%
\endgroup%

%% file: Courbes_relation_piqure_surface_minimale.pdf_tex
\begingroup%
  \makeatletter%
  \providecommand\color[2][]{%
    \errmessage{(Inkscape) Color is used for the text in Inkscape, but the package 'color.sty' is not loaded}%
    \renewcommand\color[2][]{}%
  }%
  \providecommand\transparent[1]{%
    \errmessage{(Inkscape) Transparency is used (non-zero) for the text in Inkscape, but the package 'transparent.sty' is not loaded}%
    \renewcommand\transparent[1]{}%
  }%
  \providecommand\rotatebox[2]{#2}%
  \newcommand*\fsize{\dimexpr\f@size pt\relax}%
  \newcommand*\lineheight[1]{\fontsize{\fsize}{#1\fsize}\selectfont}%
  \ifx\svgwidth\undefined%
    \setlength{\unitlength}{453.54330709bp}%
    \ifx\svgscale\undefined%
      \relax%
    \else%
      \setlength{\unitlength}{\unitlength * \real{\svgscale}}%
    \fi%
  \else%
    \setlength{\unitlength}{\svgwidth}%
  \fi%
  \global\let\svgwidth\undefined%
  \global\let\svgscale\undefined%
  \makeatother%
  \begin{picture}(1,0.25129032)%
    \lineheight{1}%
    \setlength\tabcolsep{0pt}%
    \put(0,0){\includegraphics[width=\unitlength,page=1]{Courbes_relation_piqure_surface_minimale.pdf}}%
    \put(0.34375006,0.12427026){\color[rgb]{0,0,0}\makebox(0,0)[lt]{\lineheight{0}\smash{\begin{tabular}[t]{l}$\scriptstyle b_3$\end{tabular}}}}%
    \put(0.53125006,0.22427029){\color[rgb]{0,0,0}\makebox(0,0)[lt]{\lineheight{0}\smash{\begin{tabular}[t]{l}$\scriptstyle b_2$\end{tabular}}}}%
    \put(0.62473607,0.12427027){\color[rgb]{0,0,0}\makebox(0,0)[lt]{\lineheight{0}\smash{\begin{tabular}[t]{l}$\scriptstyle b_1$\end{tabular}}}}%
    \put(0.42500005,0.14927027){\color[rgb]{0,0,0}\makebox(0,0)[lt]{\lineheight{0}\smash{\begin{tabular}[t]{l}$\scriptstyle x$\end{tabular}}}}%
    \put(0.54375006,0.14927027){\color[rgb]{0,0,0}\makebox(0,0)[lt]{\lineheight{0}\smash{\begin{tabular}[t]{l}$\scriptstyle y$\end{tabular}}}}%
    \put(0.50000007,0.06177026){\color[rgb]{0,0,0}\makebox(0,0)[lt]{\lineheight{0}\smash{\begin{tabular}[t]{l}$\scriptstyle z$\end{tabular}}}}%
    \put(0,0){\includegraphics[width=\unitlength,page=2]{Courbes_relation_piqure_surface_minimale.pdf}}%
  \end{picture}%
\endgroup%

%% file: Courbes_relation_3-chaine_surface_minimale.pdf_tex
\begingroup%
  \makeatletter%
  \providecommand\color[2][]{%
    \errmessage{(Inkscape) Color is used for the text in Inkscape, but the package 'color.sty' is not loaded}%
    \renewcommand\color[2][]{}%
  }%
  \providecommand\transparent[1]{%
    \errmessage{(Inkscape) Transparency is used (non-zero) for the text in Inkscape, but the package 'transparent.sty' is not loaded}%
    \renewcommand\transparent[1]{}%
  }%
  \providecommand\rotatebox[2]{#2}%
  \newcommand*\fsize{\dimexpr\f@size pt\relax}%
  \newcommand*\lineheight[1]{\fontsize{\fsize}{#1\fsize}\selectfont}%
  \ifx\svgwidth\undefined%
    \setlength{\unitlength}{453.54330709bp}%
    \ifx\svgscale\undefined%
      \relax%
    \else%
      \setlength{\unitlength}{\unitlength * \real{\svgscale}}%
    \fi%
  \else%
    \setlength{\unitlength}{\svgwidth}%
  \fi%
  \global\let\svgwidth\undefined%
  \global\let\svgscale\undefined%
  \makeatother%
  \begin{picture}(1,0.176299)%
    \lineheight{1}%
    \setlength\tabcolsep{0pt}%
    \put(0,0){\includegraphics[width=\unitlength,page=1]{Courbes_relation_3-chaine_surface_minimale.pdf}}%
    \put(0.5468748,0.11315525){\color[rgb]{0,0,0}\makebox(0,0)[lt]{\lineheight{0}\smash{\begin{tabular}[t]{l}$\scriptstyle c$\end{tabular}}}}%
    \put(0.47812481,0.14440525){\color[rgb]{0,0,0}\makebox(0,0)[lt]{\lineheight{0}\smash{\begin{tabular}[t]{l}$\scriptstyle b$\end{tabular}}}}%
    \put(0.47187481,0.02565525){\color[rgb]{0,0,0}\makebox(0,0)[lt]{\lineheight{0}\smash{\begin{tabular}[t]{l}$\scriptstyle a$\end{tabular}}}}%
    \put(0.57187479,0.13815525){\color[rgb]{0,0,0}\makebox(0,0)[lt]{\lineheight{0}\smash{\begin{tabular}[t]{l}$\scriptstyle d$\end{tabular}}}}%
    \put(0.57187481,0.03190525){\color[rgb]{0,0,0}\makebox(0,0)[lt]{\lineheight{0}\smash{\begin{tabular}[t]{l}$\scriptstyle e$\end{tabular}}}}%
    \put(0,0){\includegraphics[width=\unitlength,page=2]{Courbes_relation_3-chaine_surface_minimale.pdf}}%
  \end{picture}%
\endgroup%

%% file: Courbes_standards_surface.pdf_tex
\begingroup%
  \makeatletter%
  \providecommand\color[2][]{%
    \errmessage{(Inkscape) Color is used for the text in Inkscape, but the package 'color.sty' is not loaded}%
    \renewcommand\color[2][]{}%
  }%
  \providecommand\transparent[1]{%
    \errmessage{(Inkscape) Transparency is used (non-zero) for the text in Inkscape, but the package 'transparent.sty' is not loaded}%
    \renewcommand\transparent[1]{}%
  }%
  \providecommand\rotatebox[2]{#2}%
  \newcommand*\fsize{\dimexpr\f@size pt\relax}%
  \newcommand*\lineheight[1]{\fontsize{\fsize}{#1\fsize}\selectfont}%
  \ifx\svgwidth\undefined%
    \setlength{\unitlength}{453.54313407bp}%
    \ifx\svgscale\undefined%
      \relax%
    \else%
      \setlength{\unitlength}{\unitlength * \real{\svgscale}}%
    \fi%
  \else%
    \setlength{\unitlength}{\svgwidth}%
  \fi%
  \global\let\svgwidth\undefined%
  \global\let\svgscale\undefined%
  \makeatother%
  \begin{picture}(1,0.17628758)%
    \lineheight{1}%
    \setlength\tabcolsep{0pt}%
    \put(0,0){\includegraphics[width=\unitlength,page=1]{Courbes_standards_surface.pdf}}%
    \put(0.78189368,0.06001877){\color[rgb]{0,0,0}\makebox(0,0)[lt]{\lineheight{1.25}\smash{\begin{tabular}[t]{l}$\scriptstyle m_{g+k}$\end{tabular}}}}%
    \put(0.2850172,0.03189377){\color[rgb]{0,0,0}\makebox(0,0)[lt]{\lineheight{1.25}\smash{\begin{tabular}[t]{l}$\scriptstyle a_j$\end{tabular}}}}%
    \put(0.28501895,0.12876882){\color[rgb]{0,0,0}\makebox(0,0)[lt]{\lineheight{1.25}\smash{\begin{tabular}[t]{l}$\scriptstyle b_j$\end{tabular}}}}%
    \put(0,0){\includegraphics[width=\unitlength,page=2]{Courbes_standards_surface.pdf}}%
  \end{picture}%
\endgroup%

%% file: Courbes_standards_ruban.pdf_tex
\begingroup%
  \makeatletter%
  \providecommand\color[2][]{%
    \errmessage{(Inkscape) Color is used for the text in Inkscape, but the package 'color.sty' is not loaded}%
    \renewcommand\color[2][]{}%
  }%
  \providecommand\transparent[1]{%
    \errmessage{(Inkscape) Transparency is used (non-zero) for the text in Inkscape, but the package 'transparent.sty' is not loaded}%
    \renewcommand\transparent[1]{}%
  }%
  \providecommand\rotatebox[2]{#2}%
  \newcommand*\fsize{\dimexpr\f@size pt\relax}%
  \newcommand*\lineheight[1]{\fontsize{\fsize}{#1\fsize}\selectfont}%
  \ifx\svgwidth\undefined%
    \setlength{\unitlength}{453.54330709bp}%
    \ifx\svgscale\undefined%
      \relax%
    \else%
      \setlength{\unitlength}{\unitlength * \real{\svgscale}}%
    \fi%
  \else%
    \setlength{\unitlength}{\svgwidth}%
  \fi%
  \global\let\svgwidth\undefined%
  \global\let\svgscale\undefined%
  \makeatother%
  \begin{picture}(1,0.19383199)%
    \lineheight{1}%
    \setlength\tabcolsep{0pt}%
    \put(0,0){\includegraphics[width=\unitlength,page=1]{Courbes_standards_ruban.pdf}}%
    \put(0.33501576,0.12876882){\color[rgb]{0,0,0}\makebox(0,0)[lt]{\lineheight{0}\smash{\begin{tabular}[t]{l}\scriptsize $b_j$\end{tabular}}}}%
    \put(0.47877171,0.128771){\color[rgb]{0,0,0}\makebox(0,0)[lt]{\lineheight{0}\smash{\begin{tabular}[t]{l}\scriptsize $a_j$\end{tabular}}}}%
    \put(0.7756438,0.12876882){\color[rgb]{0,0,0}\makebox(0,0)[lt]{\lineheight{0}\smash{\begin{tabular}[t]{l}\scriptsize $m_{g+k}$\end{tabular}}}}%
    \put(0,0){\includegraphics[width=\unitlength,page=2]{Courbes_standards_ruban.pdf}}%
  \end{picture}%
\endgroup%

%% file: Courbe_z_ruban.pdf_tex
\begingroup%
  \makeatletter%
  \providecommand\color[2][]{%
    \errmessage{(Inkscape) Color is used for the text in Inkscape, but the package 'color.sty' is not loaded}%
    \renewcommand\color[2][]{}%
  }%
  \providecommand\transparent[1]{%
    \errmessage{(Inkscape) Transparency is used (non-zero) for the text in Inkscape, but the package 'transparent.sty' is not loaded}%
    \renewcommand\transparent[1]{}%
  }%
  \providecommand\rotatebox[2]{#2}%
  \newcommand*\fsize{\dimexpr\f@size pt\relax}%
  \newcommand*\lineheight[1]{\fontsize{\fsize}{#1\fsize}\selectfont}%
  \ifx\svgwidth\undefined%
    \setlength{\unitlength}{453.54330709bp}%
    \ifx\svgscale\undefined%
      \relax%
    \else%
      \setlength{\unitlength}{\unitlength * \real{\svgscale}}%
    \fi%
  \else%
    \setlength{\unitlength}{\svgwidth}%
  \fi%
  \global\let\svgwidth\undefined%
  \global\let\svgscale\undefined%
  \makeatother%
  \begin{picture}(1,0.21291177)%
    \lineheight{1}%
    \setlength\tabcolsep{0pt}%
    \put(0,0){\includegraphics[width=\unitlength,page=1]{Courbe_z_ruban.pdf}}%
    \put(0.07251876,0.00409859){\color[rgb]{0,0,0}\makebox(0,0)[lt]{\lineheight{0}\smash{\begin{tabular}[t]{l}\scriptsize $b_1$\end{tabular}}}}%
    \put(0.12251875,0.00409859){\color[rgb]{0,0,0}\makebox(0,0)[lt]{\lineheight{0}\smash{\begin{tabular}[t]{l}\scriptsize $a_1$\end{tabular}}}}%
    \put(0.27251875,0.00409859){\color[rgb]{0,0,0}\makebox(0,0)[lt]{\lineheight{0}\smash{\begin{tabular}[t]{l}\scriptsize $b_2$\end{tabular}}}}%
    \put(0.32251875,0.00409859){\color[rgb]{0,0,0}\makebox(0,0)[lt]{\lineheight{0}\smash{\begin{tabular}[t]{l}\scriptsize $a_2$\end{tabular}}}}%
    \put(0.47251881,0.00409859){\color[rgb]{0,0,0}\makebox(0,0)[lt]{\lineheight{0}\smash{\begin{tabular}[t]{l}\scriptsize $b_3$\end{tabular}}}}%
    \put(0.52251878,0.00409859){\color[rgb]{0,0,0}\makebox(0,0)[lt]{\lineheight{0}\smash{\begin{tabular}[t]{l}\scriptsize $a_3$\end{tabular}}}}%
    \put(0.6725188,0.00409859){\color[rgb]{0,0,0}\makebox(0,0)[lt]{\lineheight{0}\smash{\begin{tabular}[t]{l}\scriptsize $b_g$\end{tabular}}}}%
    \put(0.72251869,0.00409859){\color[rgb]{0,0,0}\makebox(0,0)[lt]{\lineheight{0}\smash{\begin{tabular}[t]{l}\scriptsize $a_g$\end{tabular}}}}%
    \put(0.83814331,0.03222361){\color[rgb]{0,0,0}\makebox(0,0)[lt]{\lineheight{0}\smash{\begin{tabular}[t]{l}\scriptsize $m_{g+1}$\end{tabular}}}}%
    \put(0.93189327,0.03222361){\color[rgb]{0,0,0}\makebox(0,0)[lt]{\lineheight{0}\smash{\begin{tabular}[t]{l}\scriptsize $m_{g+n}$\end{tabular}}}}%
    \put(0,0){\includegraphics[width=\unitlength,page=2]{Courbe_z_ruban.pdf}}%
    \put(0.49439378,0.1790986){\color[rgb]{0,0,0}\makebox(0,0)[lt]{\lineheight{1.25}\smash{\begin{tabular}[t]{l}\scriptsize $z$\end{tabular}}}}%
  \end{picture}%
\endgroup%

%% file: Courbes_ai_bi_ci.pdf_tex
\begingroup%
  \makeatletter%
  \providecommand\color[2][]{%
    \errmessage{(Inkscape) Color is used for the text in Inkscape, but the package 'color.sty' is not loaded}%
    \renewcommand\color[2][]{}%
  }%
  \providecommand\transparent[1]{%
    \errmessage{(Inkscape) Transparency is used (non-zero) for the text in Inkscape, but the package 'transparent.sty' is not loaded}%
    \renewcommand\transparent[1]{}%
  }%
  \providecommand\rotatebox[2]{#2}%
  \newcommand*\fsize{\dimexpr\f@size pt\relax}%
  \newcommand*\lineheight[1]{\fontsize{\fsize}{#1\fsize}\selectfont}%
  \ifx\svgwidth\undefined%
    \setlength{\unitlength}{453.54313407bp}%
    \ifx\svgscale\undefined%
      \relax%
    \else%
      \setlength{\unitlength}{\unitlength * \real{\svgscale}}%
    \fi%
  \else%
    \setlength{\unitlength}{\svgwidth}%
  \fi%
  \global\let\svgwidth\undefined%
  \global\let\svgscale\undefined%
  \makeatother%
  \begin{picture}(1,0.17628758)%
    \lineheight{1}%
    \setlength\tabcolsep{0pt}%
    \put(0,0){\includegraphics[width=\unitlength,page=1]{Courbes_ai_bi_ci.pdf}}%
    \put(0.36314348,0.1131438){\color[rgb]{0,0,0}\makebox(0,0)[lt]{\lineheight{1.25}\smash{\begin{tabular}[t]{l}$\scriptstyle c_j$\end{tabular}}}}%
    \put(0.29751848,0.13501608){\color[rgb]{0,0,0}\makebox(0,0)[lt]{\lineheight{1.25}\smash{\begin{tabular}[t]{l}$\scriptstyle b_j$\end{tabular}}}}%
    \put(0.28189349,0.03189377){\color[rgb]{0,0,0}\makebox(0,0)[lt]{\lineheight{1.25}\smash{\begin{tabular}[t]{l}$\scriptstyle a_j$\end{tabular}}}}%
  \end{picture}%
\endgroup%

%% file: Courbe_bord_sigma.pdf_tex
\begingroup%
  \makeatletter%
  \providecommand\color[2][]{%
    \errmessage{(Inkscape) Color is used for the text in Inkscape, but the package 'color.sty' is not loaded}%
    \renewcommand\color[2][]{}%
  }%
  \providecommand\transparent[1]{%
    \errmessage{(Inkscape) Transparency is used (non-zero) for the text in Inkscape, but the package 'transparent.sty' is not loaded}%
    \renewcommand\transparent[1]{}%
  }%
  \providecommand\rotatebox[2]{#2}%
  \newcommand*\fsize{\dimexpr\f@size pt\relax}%
  \newcommand*\lineheight[1]{\fontsize{\fsize}{#1\fsize}\selectfont}%
  \ifx\svgwidth\undefined%
    \setlength{\unitlength}{453.54313407bp}%
    \ifx\svgscale\undefined%
      \relax%
    \else%
      \setlength{\unitlength}{\unitlength * \real{\svgscale}}%
    \fi%
  \else%
    \setlength{\unitlength}{\svgwidth}%
  \fi%
  \global\let\svgwidth\undefined%
  \global\let\svgscale\undefined%
  \makeatother%
  \begin{picture}(1,0.17628758)%
    \lineheight{1}%
    \setlength\tabcolsep{0pt}%
    \put(0,0){\includegraphics[width=\unitlength,page=1]{Courbe_bord_sigma.pdf}}%
    \put(0.86314374,0.14126921){\color[rgb]{0,0,0}\makebox(0,0)[lt]{\lineheight{1.25}\smash{\begin{tabular}[t]{l}$\scriptstyle \partial_{g,n}$\end{tabular}}}}%
    \put(0,0){\includegraphics[width=\unitlength,page=2]{Courbe_bord_sigma.pdf}}%
  \end{picture}%
\endgroup%

%% file: Courbes_etoile.pdf_tex
\begingroup%
  \makeatletter%
  \providecommand\color[2][]{%
    \errmessage{(Inkscape) Color is used for the text in Inkscape, but the package 'color.sty' is not loaded}%
    \renewcommand\color[2][]{}%
  }%
  \providecommand\transparent[1]{%
    \errmessage{(Inkscape) Transparency is used (non-zero) for the text in Inkscape, but the package 'transparent.sty' is not loaded}%
    \renewcommand\transparent[1]{}%
  }%
  \providecommand\rotatebox[2]{#2}%
  \newcommand*\fsize{\dimexpr\f@size pt\relax}%
  \newcommand*\lineheight[1]{\fontsize{\fsize}{#1\fsize}\selectfont}%
  \ifx\svgwidth\undefined%
    \setlength{\unitlength}{453.54313407bp}%
    \ifx\svgscale\undefined%
      \relax%
    \else%
      \setlength{\unitlength}{\unitlength * \real{\svgscale}}%
    \fi%
  \else%
    \setlength{\unitlength}{\svgwidth}%
  \fi%
  \global\let\svgwidth\undefined%
  \global\let\svgscale\undefined%
  \makeatother%
  \begin{picture}(1,0.17628758)%
    \lineheight{1}%
    \setlength\tabcolsep{0pt}%
    \put(0.63814363,0.0600188){\color[rgb]{0,0,0}\makebox(0,0)[lt]{\lineheight{1.25}\smash{\begin{tabular}[t]{l}$\scriptstyle \delta_2$\end{tabular}}}}%
    \put(0.31939349,0.14126883){\color[rgb]{0,0,0}\makebox(0,0)[lt]{\lineheight{1.25}\smash{\begin{tabular}[t]{l}$\scriptstyle \delta_1$\end{tabular}}}}%
    \put(0.63814363,0.1256438){\color[rgb]{0,0,0}\makebox(0,0)[lt]{\lineheight{1.25}\smash{\begin{tabular}[t]{l}$\scriptstyle \alpha_3$\end{tabular}}}}%
    \put(0.45689356,0.15376883){\color[rgb]{0,0,0}\makebox(0,0)[lt]{\lineheight{1.25}\smash{\begin{tabular}[t]{l}$\scriptstyle \alpha_2$\end{tabular}}}}%
    \put(0.39126604,0.1131438){\color[rgb]{0,0,0}\makebox(0,0)[lt]{\lineheight{1.25}\smash{\begin{tabular}[t]{l}$\scriptstyle \alpha_1$\end{tabular}}}}%
    \put(0.51939359,0.1256438){\color[rgb]{0,0,0}\makebox(0,0)[lt]{\lineheight{1.25}\smash{\begin{tabular}[t]{l}$\scriptstyle \beta$\end{tabular}}}}%
    \put(0.7147037,0.03501878){\color[rgb]{0,0,0}\makebox(0,0)[lt]{\lineheight{1.25}\smash{\begin{tabular}[t]{l}$\scriptscriptstyle n$\end{tabular}}}}%
    \put(0,0){\includegraphics[width=\unitlength,page=1]{Courbes_etoile.pdf}}%
  \end{picture}%
\endgroup%

%% file: Courbes_3-chaine_rc0.pdf_tex
\begingroup%
  \makeatletter%
  \providecommand\color[2][]{%
    \errmessage{(Inkscape) Color is used for the text in Inkscape, but the package 'color.sty' is not loaded}%
    \renewcommand\color[2][]{}%
  }%
  \providecommand\transparent[1]{%
    \errmessage{(Inkscape) Transparency is used (non-zero) for the text in Inkscape, but the package 'transparent.sty' is not loaded}%
    \renewcommand\transparent[1]{}%
  }%
  \providecommand\rotatebox[2]{#2}%
  \newcommand*\fsize{\dimexpr\f@size pt\relax}%
  \newcommand*\lineheight[1]{\fontsize{\fsize}{#1\fsize}\selectfont}%
  \ifx\svgwidth\undefined%
    \setlength{\unitlength}{453.54313407bp}%
    \ifx\svgscale\undefined%
      \relax%
    \else%
      \setlength{\unitlength}{\unitlength * \real{\svgscale}}%
    \fi%
  \else%
    \setlength{\unitlength}{\svgwidth}%
  \fi%
  \global\let\svgwidth\undefined%
  \global\let\svgscale\undefined%
  \makeatother%
  \begin{picture}(1,0.17628758)%
    \lineheight{1}%
    \setlength\tabcolsep{0pt}%
    \put(0.61314361,0.1256438){\color[rgb]{0,0,0}\makebox(0,0)[lt]{\lineheight{1.25}\smash{\begin{tabular}[t]{l}$\scriptstyle \gamma$\end{tabular}}}}%
    \put(0.31939349,0.14126883){\color[rgb]{0,0,0}\makebox(0,0)[lt]{\lineheight{1.25}\smash{\begin{tabular}[t]{l}$\scriptstyle \delta_1$\end{tabular}}}}%
    \put(0.45689356,0.15376883){\color[rgb]{0,0,0}\makebox(0,0)[lt]{\lineheight{1.25}\smash{\begin{tabular}[t]{l}$\scriptstyle \alpha_2$\end{tabular}}}}%
    \put(0.39126604,0.1131438){\color[rgb]{0,0,0}\makebox(0,0)[lt]{\lineheight{1.25}\smash{\begin{tabular}[t]{l}$\scriptstyle \alpha_1$\end{tabular}}}}%
    \put(0.51939359,0.1256438){\color[rgb]{0,0,0}\makebox(0,0)[lt]{\lineheight{1.25}\smash{\begin{tabular}[t]{l}$\scriptstyle \beta$\end{tabular}}}}%
    \put(0.7147037,0.03501878){\color[rgb]{0,0,0}\makebox(0,0)[lt]{\lineheight{1.25}\smash{\begin{tabular}[t]{l}$\scriptscriptstyle n$\end{tabular}}}}%
    \put(0,0){\includegraphics[width=\unitlength,page=1]{Courbes_3-chaine_rc0.pdf}}%
  \end{picture}%
\endgroup%

%% file: Courbes_union_separante.pdf_tex
\begingroup%
  \makeatletter%
  \providecommand\color[2][]{%
    \errmessage{(Inkscape) Color is used for the text in Inkscape, but the package 'color.sty' is not loaded}%
    \renewcommand\color[2][]{}%
  }%
  \providecommand\transparent[1]{%
    \errmessage{(Inkscape) Transparency is used (non-zero) for the text in Inkscape, but the package 'transparent.sty' is not loaded}%
    \renewcommand\transparent[1]{}%
  }%
  \providecommand\rotatebox[2]{#2}%
  \newcommand*\fsize{\dimexpr\f@size pt\relax}%
  \newcommand*\lineheight[1]{\fontsize{\fsize}{#1\fsize}\selectfont}%
  \ifx\svgwidth\undefined%
    \setlength{\unitlength}{453.54313407bp}%
    \ifx\svgscale\undefined%
      \relax%
    \else%
      \setlength{\unitlength}{\unitlength * \real{\svgscale}}%
    \fi%
  \else%
    \setlength{\unitlength}{\svgwidth}%
  \fi%
  \global\let\svgwidth\undefined%
  \global\let\svgscale\undefined%
  \makeatother%
  \begin{picture}(1,0.17628758)%
    \lineheight{1}%
    \setlength\tabcolsep{0pt}%
    \put(0,0){\includegraphics[width=\unitlength,page=1]{Courbes_union_separante.pdf}}%
  \end{picture}%
\endgroup%

%% file: Courbes_0-tresse.pdf_tex
\begingroup%
  \makeatletter%
  \providecommand\color[2][]{%
    \errmessage{(Inkscape) Color is used for the text in Inkscape, but the package 'color.sty' is not loaded}%
    \renewcommand\color[2][]{}%
  }%
  \providecommand\transparent[1]{%
    \errmessage{(Inkscape) Transparency is used (non-zero) for the text in Inkscape, but the package 'transparent.sty' is not loaded}%
    \renewcommand\transparent[1]{}%
  }%
  \providecommand\rotatebox[2]{#2}%
  \newcommand*\fsize{\dimexpr\f@size pt\relax}%
  \newcommand*\lineheight[1]{\fontsize{\fsize}{#1\fsize}\selectfont}%
  \ifx\svgwidth\undefined%
    \setlength{\unitlength}{453.54313407bp}%
    \ifx\svgscale\undefined%
      \relax%
    \else%
      \setlength{\unitlength}{\unitlength * \real{\svgscale}}%
    \fi%
  \else%
    \setlength{\unitlength}{\svgwidth}%
  \fi%
  \global\let\svgwidth\undefined%
  \global\let\svgscale\undefined%
  \makeatother%
  \begin{picture}(1,0.17628758)%
    \lineheight{1}%
    \setlength\tabcolsep{0pt}%
    \put(0.28501601,0.03189376){\color[rgb]{0,0,0}\makebox(0,0)[lt]{\lineheight{1.25}\smash{\begin{tabular}[t]{l}$\scriptstyle f_{k,l}$\end{tabular}}}}%
    \put(0.7162662,0.03501878){\color[rgb]{0,0,0}\makebox(0,0)[lt]{\lineheight{1.25}\smash{\begin{tabular}[t]{l}$\scriptscriptstyle l$\end{tabular}}}}%
    \put(0.29126602,0.08501881){\color[rgb]{0,0,0}\makebox(0,0)[lt]{\lineheight{1.25}\smash{\begin{tabular}[t]{l}$\scriptscriptstyle k$\end{tabular}}}}%
    \put(0,0){\includegraphics[width=\unitlength,page=1]{Courbes_0-tresse.pdf}}%
    \put(0.28509799,0.13815534){\color[rgb]{0,0,0}\makebox(0,0)[lt]{\lineheight{1.25}\smash{\begin{tabular}[t]{l}$\scriptstyle e_k$\end{tabular}}}}%
    \put(0.11314341,0.03190528){\color[rgb]{0,0,0}\makebox(0,0)[lt]{\lineheight{1.25}\smash{\begin{tabular}[t]{l}$\scriptstyle a_1$\end{tabular}}}}%
    \put(0,0){\includegraphics[width=\unitlength,page=2]{Courbes_0-tresse.pdf}}%
  \end{picture}%
\endgroup%

%% file: Courbes_1-tresse.pdf_tex
\begingroup%
  \makeatletter%
  \providecommand\color[2][]{%
    \errmessage{(Inkscape) Color is used for the text in Inkscape, but the package 'color.sty' is not loaded}%
    \renewcommand\color[2][]{}%
  }%
  \providecommand\transparent[1]{%
    \errmessage{(Inkscape) Transparency is used (non-zero) for the text in Inkscape, but the package 'transparent.sty' is not loaded}%
    \renewcommand\transparent[1]{}%
  }%
  \providecommand\rotatebox[2]{#2}%
  \newcommand*\fsize{\dimexpr\f@size pt\relax}%
  \newcommand*\lineheight[1]{\fontsize{\fsize}{#1\fsize}\selectfont}%
  \ifx\svgwidth\undefined%
    \setlength{\unitlength}{453.54313407bp}%
    \ifx\svgscale\undefined%
      \relax%
    \else%
      \setlength{\unitlength}{\unitlength * \real{\svgscale}}%
    \fi%
  \else%
    \setlength{\unitlength}{\svgwidth}%
  \fi%
  \global\let\svgwidth\undefined%
  \global\let\svgscale\undefined%
  \makeatother%
  \begin{picture}(1,0.17628758)%
    \lineheight{1}%
    \setlength\tabcolsep{0pt}%
    \put(0,0){\includegraphics[width=\unitlength,page=1]{Courbes_1-tresse.pdf}}%
    \put(0.1131434,0.03189376){\color[rgb]{0,0,0}\makebox(0,0)[lt]{\lineheight{1.25}\smash{\begin{tabular}[t]{l}$\scriptstyle a_1$\end{tabular}}}}%
    \put(0.1225184,0.14439381){\color[rgb]{0,0,0}\makebox(0,0)[lt]{\lineheight{1.25}\smash{\begin{tabular}[t]{l}$\scriptstyle b_1$\end{tabular}}}}%
    \put(0,0){\includegraphics[width=\unitlength,page=2]{Courbes_1-tresse.pdf}}%
  \end{picture}%
\endgroup%

%% file: Courbes_lanterne.pdf_tex
\begingroup%
  \makeatletter%
  \providecommand\color[2][]{%
    \errmessage{(Inkscape) Color is used for the text in Inkscape, but the package 'color.sty' is not loaded}%
    \renewcommand\color[2][]{}%
  }%
  \providecommand\transparent[1]{%
    \errmessage{(Inkscape) Transparency is used (non-zero) for the text in Inkscape, but the package 'transparent.sty' is not loaded}%
    \renewcommand\transparent[1]{}%
  }%
  \providecommand\rotatebox[2]{#2}%
  \newcommand*\fsize{\dimexpr\f@size pt\relax}%
  \newcommand*\lineheight[1]{\fontsize{\fsize}{#1\fsize}\selectfont}%
  \ifx\svgwidth\undefined%
    \setlength{\unitlength}{453.54313407bp}%
    \ifx\svgscale\undefined%
      \relax%
    \else%
      \setlength{\unitlength}{\unitlength * \real{\svgscale}}%
    \fi%
  \else%
    \setlength{\unitlength}{\svgwidth}%
  \fi%
  \global\let\svgwidth\undefined%
  \global\let\svgscale\undefined%
  \makeatother%
  \begin{picture}(1,0.17628758)%
    \lineheight{1}%
    \setlength\tabcolsep{0pt}%
    \put(0,0){\includegraphics[width=\unitlength,page=1]{Courbes_lanterne.pdf}}%
    \put(0.19126843,0.07251878){\color[rgb]{0,0,0}\makebox(0,0)[lt]{\lineheight{1.25}\smash{\begin{tabular}[t]{l}$\scriptstyle z$\end{tabular}}}}%
    \put(0.19126842,0.03501876){\color[rgb]{0,0,0}\makebox(0,0)[lt]{\lineheight{1.25}\smash{\begin{tabular}[t]{l}$\scriptstyle y$\end{tabular}}}}%
    \put(0.25064346,0.02564377){\color[rgb]{0,0,0}\makebox(0,0)[lt]{\lineheight{1.25}\smash{\begin{tabular}[t]{l}$\scriptstyle x$\end{tabular}}}}%
    \put(0.38814354,0.02564414){\color[rgb]{0,0,0}\makebox(0,0)[lt]{\lineheight{1.25}\smash{\begin{tabular}[t]{l}$\scriptstyle b_4$\end{tabular}}}}%
    \put(0.32876848,0.11314381){\color[rgb]{0,0,0}\makebox(0,0)[lt]{\lineheight{1.25}\smash{\begin{tabular}[t]{l}$\scriptstyle b_3$\end{tabular}}}}%
    \put(0.19126843,0.1131438){\color[rgb]{0,0,0}\makebox(0,0)[lt]{\lineheight{1.25}\smash{\begin{tabular}[t]{l}$\scriptstyle b_2$\end{tabular}}}}%
    \put(0.1131434,0.02564415){\color[rgb]{0,0,0}\makebox(0,0)[lt]{\lineheight{1.25}\smash{\begin{tabular}[t]{l}$\scriptstyle b_1$\end{tabular}}}}%
    \put(0,0){\includegraphics[width=\unitlength,page=2]{Courbes_lanterne.pdf}}%
  \end{picture}%
\endgroup%

%% file: Courbes_3-chaine.pdf_tex
\begingroup%
  \makeatletter%
  \providecommand\color[2][]{%
    \errmessage{(Inkscape) Color is used for the text in Inkscape, but the package 'color.sty' is not loaded}%
    \renewcommand\color[2][]{}%
  }%
  \providecommand\transparent[1]{%
    \errmessage{(Inkscape) Transparency is used (non-zero) for the text in Inkscape, but the package 'transparent.sty' is not loaded}%
    \renewcommand\transparent[1]{}%
  }%
  \providecommand\rotatebox[2]{#2}%
  \newcommand*\fsize{\dimexpr\f@size pt\relax}%
  \newcommand*\lineheight[1]{\fontsize{\fsize}{#1\fsize}\selectfont}%
  \ifx\svgwidth\undefined%
    \setlength{\unitlength}{453.54313407bp}%
    \ifx\svgscale\undefined%
      \relax%
    \else%
      \setlength{\unitlength}{\unitlength * \real{\svgscale}}%
    \fi%
  \else%
    \setlength{\unitlength}{\svgwidth}%
  \fi%
  \global\let\svgwidth\undefined%
  \global\let\svgscale\undefined%
  \makeatother%
  \begin{picture}(1,0.17628758)%
    \lineheight{1}%
    \setlength\tabcolsep{0pt}%
    \put(0,0){\includegraphics[width=\unitlength,page=1]{Courbes_3-chaine.pdf}}%
    \put(0.25064346,0.13814381){\color[rgb]{0,0,0}\makebox(0,0)[lt]{\lineheight{1.25}\smash{\begin{tabular}[t]{l}$\scriptstyle e$\end{tabular}}}}%
    \put(0.25064346,0.03189376){\color[rgb]{0,0,0}\makebox(0,0)[lt]{\lineheight{1.25}\smash{\begin{tabular}[t]{l}$\scriptstyle d$\end{tabular}}}}%
    \put(0.19126843,0.1131438){\color[rgb]{0,0,0}\makebox(0,0)[lt]{\lineheight{1.25}\smash{\begin{tabular}[t]{l}$\scriptstyle c$\end{tabular}}}}%
    \put(0.1225184,0.14439381){\color[rgb]{0,0,0}\makebox(0,0)[lt]{\lineheight{1.25}\smash{\begin{tabular}[t]{l}$\scriptstyle b$\end{tabular}}}}%
    \put(0.1131434,0.03189376){\color[rgb]{0,0,0}\makebox(0,0)[lt]{\lineheight{1.25}\smash{\begin{tabular}[t]{l}$\scriptstyle a$\end{tabular}}}}%
    \put(0,0){\includegraphics[width=\unitlength,page=2]{Courbes_3-chaine.pdf}}%
  \end{picture}%
\endgroup%

%% file: Courbes_ab_puissance6.pdf_tex
\begingroup%
  \makeatletter%
  \providecommand\color[2][]{%
    \errmessage{(Inkscape) Color is used for the text in Inkscape, but the package 'color.sty' is not loaded}%
    \renewcommand\color[2][]{}%
  }%
  \providecommand\transparent[1]{%
    \errmessage{(Inkscape) Transparency is used (non-zero) for the text in Inkscape, but the package 'transparent.sty' is not loaded}%
    \renewcommand\transparent[1]{}%
  }%
  \providecommand\rotatebox[2]{#2}%
  \newcommand*\fsize{\dimexpr\f@size pt\relax}%
  \newcommand*\lineheight[1]{\fontsize{\fsize}{#1\fsize}\selectfont}%
  \ifx\svgwidth\undefined%
    \setlength{\unitlength}{453.54313407bp}%
    \ifx\svgscale\undefined%
      \relax%
    \else%
      \setlength{\unitlength}{\unitlength * \real{\svgscale}}%
    \fi%
  \else%
    \setlength{\unitlength}{\svgwidth}%
  \fi%
  \global\let\svgwidth\undefined%
  \global\let\svgscale\undefined%
  \makeatother%
  \begin{picture}(1,0.17628758)%
    \lineheight{1}%
    \setlength\tabcolsep{0pt}%
    \put(0,0){\includegraphics[width=\unitlength,page=1]{Courbes_ab_puissance6.pdf}}%
    \put(0.46874978,0.14439381){\color[rgb]{0,0,0}\makebox(0,0)[lt]{\lineheight{1.25}\smash{\begin{tabular}[t]{l}$\scriptstyle b$\end{tabular}}}}%
    \put(0.45937478,0.03189376){\color[rgb]{0,0,0}\makebox(0,0)[lt]{\lineheight{1.25}\smash{\begin{tabular}[t]{l}$\scriptstyle a$\end{tabular}}}}%
    \put(0,0){\includegraphics[width=\unitlength,page=2]{Courbes_ab_puissance6.pdf}}%
  \end{picture}%
\endgroup%

%% file: Courbes_piqure_1.pdf_tex
\begingroup%
  \makeatletter%
  \providecommand\color[2][]{%
    \errmessage{(Inkscape) Color is used for the text in Inkscape, but the package 'color.sty' is not loaded}%
    \renewcommand\color[2][]{}%
  }%
  \providecommand\transparent[1]{%
    \errmessage{(Inkscape) Transparency is used (non-zero) for the text in Inkscape, but the package 'transparent.sty' is not loaded}%
    \renewcommand\transparent[1]{}%
  }%
  \providecommand\rotatebox[2]{#2}%
  \newcommand*\fsize{\dimexpr\f@size pt\relax}%
  \newcommand*\lineheight[1]{\fontsize{\fsize}{#1\fsize}\selectfont}%
  \ifx\svgwidth\undefined%
    \setlength{\unitlength}{453.54313407bp}%
    \ifx\svgscale\undefined%
      \relax%
    \else%
      \setlength{\unitlength}{\unitlength * \real{\svgscale}}%
    \fi%
  \else%
    \setlength{\unitlength}{\svgwidth}%
  \fi%
  \global\let\svgwidth\undefined%
  \global\let\svgscale\undefined%
  \makeatother%
  \begin{picture}(1,0.17628758)%
    \lineheight{1}%
    \setlength\tabcolsep{0pt}%
    \put(0.72564371,0.11626882){\color[rgb]{0,0,0}\makebox(0,0)[lt]{\lineheight{1.25}\smash{\begin{tabular}[t]{l}$\scriptstyle \gamma_1^k$\end{tabular}}}}%
    \put(0.46939356,0.05376879){\color[rgb]{0,0,0}\makebox(0,0)[lt]{\lineheight{1.25}\smash{\begin{tabular}[t]{l}$\scriptstyle \gamma_3^k$\end{tabular}}}}%
    \put(0.39439353,0.11626882){\color[rgb]{0,0,0}\makebox(0,0)[lt]{\lineheight{1.25}\smash{\begin{tabular}[t]{l}$\scriptstyle \gamma_2^k$\end{tabular}}}}%
    \put(0.62251614,0.03501878){\color[rgb]{0,0,0}\makebox(0,0)[lt]{\lineheight{1.25}\smash{\begin{tabular}[t]{l}$\scriptscriptstyle k-1$\end{tabular}}}}%
    \put(0.7156412,0.03501878){\color[rgb]{0,0,0}\makebox(0,0)[lt]{\lineheight{1.25}\smash{\begin{tabular}[t]{l}$\scriptscriptstyle k$\end{tabular}}}}%
    \put(0.78501622,0.03501878){\color[rgb]{0,0,0}\makebox(0,0)[lt]{\lineheight{1.25}\smash{\begin{tabular}[t]{l}$\scriptscriptstyle k+1$\end{tabular}}}}%
    \put(0,0){\includegraphics[width=\unitlength,page=1]{Courbes_piqure_1.pdf}}%
  \end{picture}%
\endgroup%

%% file: Courbes_piqure_2.pdf_tex
\begingroup%
  \makeatletter%
  \providecommand\color[2][]{%
    \errmessage{(Inkscape) Color is used for the text in Inkscape, but the package 'color.sty' is not loaded}%
    \renewcommand\color[2][]{}%
  }%
  \providecommand\transparent[1]{%
    \errmessage{(Inkscape) Transparency is used (non-zero) for the text in Inkscape, but the package 'transparent.sty' is not loaded}%
    \renewcommand\transparent[1]{}%
  }%
  \providecommand\rotatebox[2]{#2}%
  \newcommand*\fsize{\dimexpr\f@size pt\relax}%
  \newcommand*\lineheight[1]{\fontsize{\fsize}{#1\fsize}\selectfont}%
  \ifx\svgwidth\undefined%
    \setlength{\unitlength}{453.54313407bp}%
    \ifx\svgscale\undefined%
      \relax%
    \else%
      \setlength{\unitlength}{\unitlength * \real{\svgscale}}%
    \fi%
  \else%
    \setlength{\unitlength}{\svgwidth}%
  \fi%
  \global\let\svgwidth\undefined%
  \global\let\svgscale\undefined%
  \makeatother%
  \begin{picture}(1,0.17628758)%
    \lineheight{1}%
    \setlength\tabcolsep{0pt}%
    \put(0.60064365,0.1193938){\color[rgb]{0,0,0}\makebox(0,0)[lt]{\lineheight{1.25}\smash{\begin{tabular}[t]{l}$\scriptstyle y^k$\end{tabular}}}}%
    \put(0.69439367,0.1068938){\color[rgb]{0,0,0}\makebox(0,0)[lt]{\lineheight{1.25}\smash{\begin{tabular}[t]{l}$\scriptstyle z^k$\end{tabular}}}}%
    \put(0.75689366,0.0662688){\color[rgb]{0,0,0}\makebox(0,0)[lt]{\lineheight{1.25}\smash{\begin{tabular}[t]{l}$\scriptstyle x^k$\end{tabular}}}}%
    \put(0.62251614,0.03501878){\color[rgb]{0,0,0}\makebox(0,0)[lt]{\lineheight{1.25}\smash{\begin{tabular}[t]{l}$\scriptscriptstyle k-1$\end{tabular}}}}%
    \put(0.7156412,0.03501878){\color[rgb]{0,0,0}\makebox(0,0)[lt]{\lineheight{1.25}\smash{\begin{tabular}[t]{l}$\scriptscriptstyle k$\end{tabular}}}}%
    \put(0.78501622,0.03501878){\color[rgb]{0,0,0}\makebox(0,0)[lt]{\lineheight{1.25}\smash{\begin{tabular}[t]{l}$\scriptscriptstyle k+1$\end{tabular}}}}%
    \put(0,0){\includegraphics[width=\unitlength,page=1]{Courbes_piqure_2.pdf}}%
  \end{picture}%
\endgroup%

%% file: Exemple_diagramme.pdf_tex
\begingroup%
  \makeatletter%
  \providecommand\color[2][]{%
    \errmessage{(Inkscape) Color is used for the text in Inkscape, but the package 'color.sty' is not loaded}%
    \renewcommand\color[2][]{}%
  }%
  \providecommand\transparent[1]{%
    \errmessage{(Inkscape) Transparency is used (non-zero) for the text in Inkscape, but the package 'transparent.sty' is not loaded}%
    \renewcommand\transparent[1]{}%
  }%
  \providecommand\rotatebox[2]{#2}%
  \newcommand*\fsize{\dimexpr\f@size pt\relax}%
  \newcommand*\lineheight[1]{\fontsize{\fsize}{#1\fsize}\selectfont}%
  \ifx\svgwidth\undefined%
    \setlength{\unitlength}{453.54330709bp}%
    \ifx\svgscale\undefined%
      \relax%
    \else%
      \setlength{\unitlength}{\unitlength * \real{\svgscale}}%
    \fi%
  \else%
    \setlength{\unitlength}{\svgwidth}%
  \fi%
  \global\let\svgwidth\undefined%
  \global\let\svgscale\undefined%
  \makeatother%
  \begin{picture}(1,0.16649204)%
    \lineheight{1}%
    \setlength\tabcolsep{0pt}%
    \put(0,0){\includegraphics[width=\unitlength,page=1]{Exemple_diagramme.pdf}}%
    \put(0.65000152,0.00399203){\color[rgb]{0,0,0}\makebox(0,0)[lt]{\lineheight{0}\smash{\begin{tabular}[t]{l}$\scriptstyle x_{i_k}$ \end{tabular}}}}%
    \put(0.53750234,0.00399204){\color[rgb]{0,0,0}\makebox(0,0)[lt]{\lineheight{0}\smash{\begin{tabular}[t]{l}$\scriptstyle x_{i_2}$ \end{tabular}}}}%
    \put(0.42500232,0.00399204){\color[rgb]{0,0,0}\makebox(0,0)[lt]{\lineheight{0}\smash{\begin{tabular}[t]{l}$\scriptstyle x_{i_1}$ \end{tabular}}}}%
    \put(0.49470415,0.10711704){\color[rgb]{0,0,0}\makebox(0,0)[lt]{\lineheight{0}\smash{\begin{tabular}[t]{l}$\scriptstyle T$\end{tabular}}}}%
    \put(0.61874774,0.14774204){\makebox(0,0)[lt]{\lineheight{0}\smash{\begin{tabular}[t]{l}$\scriptstyle a_{i_{2t}}$\end{tabular}}}}%
    \put(0.50624953,0.14774204){\makebox(0,0)[lt]{\lineheight{0}\smash{\begin{tabular}[t]{l}$\scriptstyle a_{i_2}$\end{tabular}}}}%
    \put(0.39374954,0.14774204){\makebox(0,0)[lt]{\lineheight{0}\smash{\begin{tabular}[t]{l}$\scriptstyle a_{i_1}$\end{tabular}}}}%
    \put(0,0){\includegraphics[width=\unitlength,page=2]{Exemple_diagramme.pdf}}%
  \end{picture}%
\endgroup%

%% file: Diagramme_Dxi_Dxi_bar_et_concatenation.pdf_tex
\begingroup%
  \makeatletter%
  \providecommand\color[2][]{%
    \errmessage{(Inkscape) Color is used for the text in Inkscape, but the package 'color.sty' is not loaded}%
    \renewcommand\color[2][]{}%
  }%
  \providecommand\transparent[1]{%
    \errmessage{(Inkscape) Transparency is used (non-zero) for the text in Inkscape, but the package 'transparent.sty' is not loaded}%
    \renewcommand\transparent[1]{}%
  }%
  \providecommand\rotatebox[2]{#2}%
  \newcommand*\fsize{\dimexpr\f@size pt\relax}%
  \newcommand*\lineheight[1]{\fontsize{\fsize}{#1\fsize}\selectfont}%
  \ifx\svgwidth\undefined%
    \setlength{\unitlength}{481.88976378bp}%
    \ifx\svgscale\undefined%
      \relax%
    \else%
      \setlength{\unitlength}{\unitlength * \real{\svgscale}}%
    \fi%
  \else%
    \setlength{\unitlength}{\svgwidth}%
  \fi%
  \global\let\svgwidth\undefined%
  \global\let\svgscale\undefined%
  \makeatother%
  \begin{picture}(1,0.08567653)%
    \lineheight{1}%
    \setlength\tabcolsep{0pt}%
    \put(0,0){\includegraphics[width=\unitlength,page=1]{Diagramme_Dxi_Dxi_bar_et_concatenation.pdf}}%
    \put(0.44705884,0.04258823){\color[rgb]{0,0,0}\makebox(0,0)[lt]{\lineheight{0}\smash{\begin{tabular}[t]{l}$\overset{?}{D}_{x_{i_1}} \otimes \cdots \otimes \overset{?}{D}_{x_{i_k}} :=$ \end{tabular}}}}%
    \put(0.97352907,0.00144615){\color[rgb]{0,0,0}\makebox(0,0)[lt]{\lineheight{0}\smash{\begin{tabular}[t]{l}$\scriptstyle x_{i_k}$ \end{tabular}}}}%
    \put(0.86764675,0.00144615){\color[rgb]{0,0,0}\makebox(0,0)[lt]{\lineheight{0}\smash{\begin{tabular}[t]{l}$\scriptstyle x_{i_2}$ \end{tabular}}}}%
    \put(0.7617644,0.00144615){\color[rgb]{0,0,0}\makebox(0,0)[lt]{\lineheight{0}\smash{\begin{tabular}[t]{l}$\scriptstyle x_{i_1}$ \end{tabular}}}}%
    \put(0,0){\includegraphics[width=\unitlength,page=2]{Diagramme_Dxi_Dxi_bar_et_concatenation.pdf}}%
    \put(0.3970609,0.00141177){\color[rgb]{0,0,0}\makebox(0,0)[lt]{\lineheight{0}\smash{\begin{tabular}[t]{l}$\scriptstyle x_i$ \end{tabular}}}}%
    \put(0,0){\includegraphics[width=\unitlength,page=3]{Diagramme_Dxi_Dxi_bar_et_concatenation.pdf}}%
    \put(0.22352943,0.04258823){\color[rgb]{0,0,0}\makebox(0,0)[lt]{\lineheight{0}\smash{\begin{tabular}[t]{l}$\bar{D}_{x_i} :=$ \end{tabular}}}}%
    \put(0,0){\includegraphics[width=\unitlength,page=4]{Diagramme_Dxi_Dxi_bar_et_concatenation.pdf}}%
    \put(0.17353147,0.00141177){\color[rgb]{0,0,0}\makebox(0,0)[lt]{\lineheight{0}\smash{\begin{tabular}[t]{l}$\scriptstyle x_i$ \end{tabular}}}}%
    \put(0,0){\includegraphics[width=\unitlength,page=5]{Diagramme_Dxi_Dxi_bar_et_concatenation.pdf}}%
    \put(0,0.04258823){\color[rgb]{0,0,0}\makebox(0,0)[lt]{\lineheight{0}\smash{\begin{tabular}[t]{l}$D_{x_i} :=$ \end{tabular}}}}%
  \end{picture}%
\endgroup%

%% file: Diagramme_local_elementaire.pdf_tex
\begingroup%
  \makeatletter%
  \providecommand\color[2][]{%
    \errmessage{(Inkscape) Color is used for the text in Inkscape, but the package 'color.sty' is not loaded}%
    \renewcommand\color[2][]{}%
  }%
  \providecommand\transparent[1]{%
    \errmessage{(Inkscape) Transparency is used (non-zero) for the text in Inkscape, but the package 'transparent.sty' is not loaded}%
    \renewcommand\transparent[1]{}%
  }%
  \providecommand\rotatebox[2]{#2}%
  \newcommand*\fsize{\dimexpr\f@size pt\relax}%
  \newcommand*\lineheight[1]{\fontsize{\fsize}{#1\fsize}\selectfont}%
  \ifx\svgwidth\undefined%
    \setlength{\unitlength}{481.88976378bp}%
    \ifx\svgscale\undefined%
      \relax%
    \else%
      \setlength{\unitlength}{\unitlength * \real{\svgscale}}%
    \fi%
  \else%
    \setlength{\unitlength}{\svgwidth}%
  \fi%
  \global\let\svgwidth\undefined%
  \global\let\svgscale\undefined%
  \makeatother%
  \begin{picture}(1,0.08567794)%
    \lineheight{1}%
    \setlength\tabcolsep{0pt}%
    \put(0,0){\includegraphics[width=\unitlength,page=1]{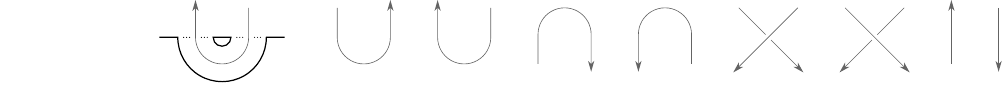}}%
    \put(0.25648953,0.00141318){\color[rgb]{0,0,0}\makebox(0,0)[lt]{\lineheight{0}\smash{\begin{tabular}[t]{l}\scriptsize $x_i$ \end{tabular}}}}%
    \put(0,0){\includegraphics[width=\unitlength,page=2]{Diagramme_local_elementaire.pdf}}%
    \put(0.097666,0.00141317){\color[rgb]{0,0,0}\makebox(0,0)[lt]{\lineheight{0}\smash{\begin{tabular}[t]{l}\scriptsize $x_i$ \end{tabular}}}}%
    \put(0,0){\includegraphics[width=\unitlength,page=3]{Diagramme_local_elementaire.pdf}}%
  \end{picture}%
\endgroup%

%% file: Regles_etiquetage_Hennings.pdf_tex
\begingroup%
  \makeatletter%
  \providecommand\color[2][]{%
    \errmessage{(Inkscape) Color is used for the text in Inkscape, but the package 'color.sty' is not loaded}%
    \renewcommand\color[2][]{}%
  }%
  \providecommand\transparent[1]{%
    \errmessage{(Inkscape) Transparency is used (non-zero) for the text in Inkscape, but the package 'transparent.sty' is not loaded}%
    \renewcommand\transparent[1]{}%
  }%
  \providecommand\rotatebox[2]{#2}%
  \newcommand*\fsize{\dimexpr\f@size pt\relax}%
  \newcommand*\lineheight[1]{\fontsize{\fsize}{#1\fsize}\selectfont}%
  \ifx\svgwidth\undefined%
    \setlength{\unitlength}{481.88976378bp}%
    \ifx\svgscale\undefined%
      \relax%
    \else%
      \setlength{\unitlength}{\unitlength * \real{\svgscale}}%
    \fi%
  \else%
    \setlength{\unitlength}{\svgwidth}%
  \fi%
  \global\let\svgwidth\undefined%
  \global\let\svgscale\undefined%
  \makeatother%
  \begin{picture}(1,0.09847492)%
    \lineheight{1}%
    \setlength\tabcolsep{0pt}%
    \put(0,0){\includegraphics[width=\unitlength,page=1]{Regles_etiquetage_Hennings.pdf}}%
    \put(0.99178531,0.00141317){\color[rgb]{0,0,0}\makebox(0,0)[lt]{\lineheight{0}\smash{\begin{tabular}[t]{l}\scriptsize $1$\end{tabular}}}}%
    \put(0.94472644,0.00141318){\color[rgb]{0,0,0}\makebox(0,0)[lt]{\lineheight{0}\smash{\begin{tabular}[t]{l}\scriptsize $1$\end{tabular}}}}%
    \put(0.88590138,0.00141317){\color[rgb]{0,0,0}\makebox(0,0)[lt]{\lineheight{0}\smash{\begin{tabular}[t]{l}\scriptsize $S(r_a)$\end{tabular}}}}%
    \put(0.89178503,0.08671021){\color[rgb]{0,0,0}\makebox(0,0)[lt]{\lineheight{0}\smash{\begin{tabular}[t]{l}\scriptsize $r^a$\end{tabular}}}}%
    \put(0.78884379,0.00141316){\color[rgb]{0,0,0}\makebox(0,0)[lt]{\lineheight{0}\smash{\begin{tabular}[t]{l}\scriptsize $r^a$\end{tabular}}}}%
    \put(0.78884316,0.08671021){\color[rgb]{0,0,0}\makebox(0,0)[lt]{\lineheight{0}\smash{\begin{tabular}[t]{l}\scriptsize $r_a$\end{tabular}}}}%
    \put(0.65942944,0.08671021){\color[rgb]{0,0,0}\makebox(0,0)[lt]{\lineheight{0}\smash{\begin{tabular}[t]{l}\scriptsize $g$\end{tabular}}}}%
    \put(0.55943231,0.08671021){\color[rgb]{0,0,0}\makebox(0,0)[lt]{\lineheight{0}\smash{\begin{tabular}[t]{l}\scriptsize $1$\end{tabular}}}}%
    \put(0.45649055,0.00435727){\color[rgb]{0,0,0}\makebox(0,0)[lt]{\lineheight{0}\smash{\begin{tabular}[t]{l}\scriptsize $g^{-1}$\end{tabular}}}}%
    \put(0.35647764,0.00435727){\color[rgb]{0,0,0}\makebox(0,0)[lt]{\lineheight{0}\smash{\begin{tabular}[t]{l}\scriptsize $1$\end{tabular}}}}%
    \put(0,0){\includegraphics[width=\unitlength,page=2]{Regles_etiquetage_Hennings.pdf}}%
    \put(0.20648824,0.08671021){\color[rgb]{0,0,0}\makebox(0,0)[lt]{\lineheight{0}\smash{\begin{tabular}[t]{l}\scriptsize ${g^{-1} S \bigl( \mathcal{X}_1(i)}_{(k)} \bigr)$ \end{tabular}}}}%
    \put(0,0){\includegraphics[width=\unitlength,page=3]{Regles_etiquetage_Hennings.pdf}}%
    \put(0.25648953,0.00435727){\color[rgb]{0,0,0}\makebox(0,0)[lt]{\lineheight{0}\smash{\begin{tabular}[t]{l}\scriptsize $x_i$ \end{tabular}}}}%
    \put(0,0){\includegraphics[width=\unitlength,page=4]{Regles_etiquetage_Hennings.pdf}}%
    \put(0.01237059,0.08671021){\color[rgb]{0,0,0}\makebox(0,0)[lt]{\lineheight{0}\smash{\begin{tabular}[t]{l}\scriptsize ${\mathcal{X}_1(i)}_{(k)}$ \end{tabular}}}}%
    \put(0.097666,0.00435728){\color[rgb]{0,0,0}\makebox(0,0)[lt]{\lineheight{0}\smash{\begin{tabular}[t]{l}\scriptsize $x_i$ \end{tabular}}}}%
    \put(0,0){\includegraphics[width=\unitlength,page=5]{Regles_etiquetage_Hennings.pdf}}%
  \end{picture}%
\endgroup%

%% file: Regles_etiquetage_consequences.pdf_tex
\begingroup%
  \makeatletter%
  \providecommand\color[2][]{%
    \errmessage{(Inkscape) Color is used for the text in Inkscape, but the package 'color.sty' is not loaded}%
    \renewcommand\color[2][]{}%
  }%
  \providecommand\transparent[1]{%
    \errmessage{(Inkscape) Transparency is used (non-zero) for the text in Inkscape, but the package 'transparent.sty' is not loaded}%
    \renewcommand\transparent[1]{}%
  }%
  \providecommand\rotatebox[2]{#2}%
  \newcommand*\fsize{\dimexpr\f@size pt\relax}%
  \newcommand*\lineheight[1]{\fontsize{\fsize}{#1\fsize}\selectfont}%
  \ifx\svgwidth\undefined%
    \setlength{\unitlength}{484.54560047bp}%
    \ifx\svgscale\undefined%
      \relax%
    \else%
      \setlength{\unitlength}{\unitlength * \real{\svgscale}}%
    \fi%
  \else%
    \setlength{\unitlength}{\svgwidth}%
  \fi%
  \global\let\svgwidth\undefined%
  \global\let\svgscale\undefined%
  \makeatother%
  \begin{picture}(1,0.91054693)%
    \lineheight{1}%
    \setlength\tabcolsep{0pt}%
    \put(0,0){\includegraphics[width=\unitlength,page=1]{Regles_etiquetage_consequences.pdf}}%
    \put(0.73413431,0.03115183){\color[rgb]{0,0,0}\makebox(0,0)[lt]{\lineheight{0}\smash{\begin{tabular}[t]{l}\scriptsize $S(r_a)gr^a = v$ \end{tabular}}}}%
    \put(0,0){\includegraphics[width=\unitlength,page=2]{Regles_etiquetage_consequences.pdf}}%
    \put(0.55863098,0.03115187){\color[rgb]{0,0,0}\makebox(0,0)[lt]{\lineheight{0}\smash{\begin{tabular}[t]{l}\scriptsize $r^ag^{-1}S(r_a) = v$ \end{tabular}}}}%
    \put(0,0){\includegraphics[width=\unitlength,page=3]{Regles_etiquetage_consequences.pdf}}%
    \put(0.38312761,0.03115186){\color[rgb]{0,0,0}\makebox(0,0)[lt]{\lineheight{0}\smash{\begin{tabular}[t]{l}\scriptsize $r^agr_a = v^{-1}$ \end{tabular}}}}%
    \put(0,0){\includegraphics[width=\unitlength,page=4]{Regles_etiquetage_consequences.pdf}}%
    \put(0.20762422,0.03115186){\color[rgb]{0,0,0}\makebox(0,0)[lt]{\lineheight{0}\smash{\begin{tabular}[t]{l}\scriptsize $r_ag^{-1}r^a = v^{-1 }$ \end{tabular}}}}%
    \put(0,0){\includegraphics[width=\unitlength,page=5]{Regles_etiquetage_consequences.pdf}}%
    \put(0.73413429,0.13060367){\color[rgb]{0,0,0}\makebox(0,0)[lt]{\lineheight{0}\smash{\begin{tabular}[t]{l}\scriptsize $r^ag^{-1}S(r_a) = v$ \end{tabular}}}}%
    \put(0,0){\includegraphics[width=\unitlength,page=6]{Regles_etiquetage_consequences.pdf}}%
    \put(0.55863092,0.13060372){\color[rgb]{0,0,0}\makebox(0,0)[lt]{\lineheight{0}\smash{\begin{tabular}[t]{l}\scriptsize $S(r_a)gr^a = v$ \end{tabular}}}}%
    \put(0,0){\includegraphics[width=\unitlength,page=7]{Regles_etiquetage_consequences.pdf}}%
    \put(0.38312761,0.13060367){\color[rgb]{0,0,0}\makebox(0,0)[lt]{\lineheight{0}\smash{\begin{tabular}[t]{l}\scriptsize $r_ag^{-1}r^a = v^{-1}$ \end{tabular}}}}%
    \put(0,0){\includegraphics[width=\unitlength,page=8]{Regles_etiquetage_consequences.pdf}}%
    \put(0.20746924,0.13122261){\color[rgb]{0,0,0}\makebox(0,0)[lt]{\lineheight{0}\smash{\begin{tabular}[t]{l}\scriptsize $r^agr_a = v^{-1}$ \end{tabular}}}}%
    \put(0,0){\includegraphics[width=\unitlength,page=9]{Regles_etiquetage_consequences.pdf}}%
    \put(0.4942797,0.21835534){\color[rgb]{0,0,0}\makebox(0,0)[lt]{\lineheight{0}\smash{\begin{tabular}[t]{l}\scriptsize $r_a $\end{tabular}}}}%
    \put(0.4942797,0.27685645){\color[rgb]{0,0,0}\makebox(0,0)[lt]{\lineheight{0}\smash{\begin{tabular}[t]{l}\scriptsize $r^a$\end{tabular}}}}%
    \put(0.38897768,0.2417558){\color[rgb]{0,0,0}\makebox(0,0)[lt]{\lineheight{0}\smash{\begin{tabular}[t]{l}$=$\end{tabular}}}}%
    \put(0.24857503,0.24175581){\color[rgb]{0,0,0}\makebox(0,0)[lt]{\lineheight{0}\smash{\begin{tabular}[t]{l}$\overset{\text{top.}}{=}$\end{tabular}}}}%
    \put(0,0){\includegraphics[width=\unitlength,page=10]{Regles_etiquetage_consequences.pdf}}%
    \put(0.49427971,0.34120768){\color[rgb]{0,0,0}\makebox(0,0)[lt]{\lineheight{0}\smash{\begin{tabular}[t]{l}\scriptsize $gr_ag^{-1} $\end{tabular}}}}%
    \put(0.49427971,0.39970879){\color[rgb]{0,0,0}\makebox(0,0)[lt]{\lineheight{0}\smash{\begin{tabular}[t]{l}\scriptsize $r^a$\end{tabular}}}}%
    \put(0.65808282,0.34120768){\color[rgb]{0,0,0}\makebox(0,0)[lt]{\lineheight{0}\smash{\begin{tabular}[t]{l}\scriptsize $S^2(r_a)$\end{tabular}}}}%
    \put(0.65808283,0.39970879){\color[rgb]{0,0,0}\makebox(0,0)[lt]{\lineheight{0}\smash{\begin{tabular}[t]{l}\scriptsize $r^a$\end{tabular}}}}%
    \put(0.55863094,0.36460813){\color[rgb]{0,0,0}\makebox(0,0)[lt]{\lineheight{0}\smash{\begin{tabular}[t]{l}$=$\end{tabular}}}}%
    \put(0.38897768,0.36460813){\color[rgb]{0,0,0}\makebox(0,0)[lt]{\lineheight{0}\smash{\begin{tabular}[t]{l}$=$\end{tabular}}}}%
    \put(0.24857503,0.36460815){\color[rgb]{0,0,0}\makebox(0,0)[lt]{\lineheight{0}\smash{\begin{tabular}[t]{l}$\overset{\text{top.}}{=}$\end{tabular}}}}%
    \put(0,0){\includegraphics[width=\unitlength,page=11]{Regles_etiquetage_consequences.pdf}}%
    \put(0.63468238,0.46406002){\color[rgb]{0,0,0}\makebox(0,0)[lt]{\lineheight{0}\smash{\begin{tabular}[t]{l}\scriptsize $gS(r_a)g^{-1} $\end{tabular}}}}%
    \put(0.63468238,0.52256113){\color[rgb]{0,0,0}\makebox(0,0)[lt]{\lineheight{0}\smash{\begin{tabular}[t]{l}\scriptsize $gr^ag^{-1}$\end{tabular}}}}%
    \put(0.81018571,0.46406002){\color[rgb]{0,0,0}\makebox(0,0)[lt]{\lineheight{0}\smash{\begin{tabular}[t]{l}\scriptsize $S(r_a)$\end{tabular}}}}%
    \put(0.81018574,0.52256113){\color[rgb]{0,0,0}\makebox(0,0)[lt]{\lineheight{0}\smash{\begin{tabular}[t]{l}\scriptsize $r^a$\end{tabular}}}}%
    \put(0.71073383,0.48746048){\color[rgb]{0,0,0}\makebox(0,0)[lt]{\lineheight{0}\smash{\begin{tabular}[t]{l}$=$\end{tabular}}}}%
    \put(0.52938036,0.48746048){\color[rgb]{0,0,0}\makebox(0,0)[lt]{\lineheight{0}\smash{\begin{tabular}[t]{l}$=$\end{tabular}}}}%
    \put(0.38897769,0.48746048){\color[rgb]{0,0,0}\makebox(0,0)[lt]{\lineheight{0}\smash{\begin{tabular}[t]{l}$\overset{\text{top.}}{=}$\end{tabular}}}}%
    \put(0.24857503,0.48746049){\color[rgb]{0,0,0}\makebox(0,0)[lt]{\lineheight{0}\smash{\begin{tabular}[t]{l}$\overset{\text{top.}}{=}$\end{tabular}}}}%
    \put(0,0){\includegraphics[width=\unitlength,page=12]{Regles_etiquetage_consequences.pdf}}%
    \put(0.4942797,0.58691236){\color[rgb]{0,0,0}\makebox(0,0)[lt]{\lineheight{0}\smash{\begin{tabular}[t]{l}\scriptsize $r^a $\end{tabular}}}}%
    \put(0.4942797,0.64541347){\color[rgb]{0,0,0}\makebox(0,0)[lt]{\lineheight{0}\smash{\begin{tabular}[t]{l}\scriptsize $S(r_a)$\end{tabular}}}}%
    \put(0.38897768,0.61031281){\color[rgb]{0,0,0}\makebox(0,0)[lt]{\lineheight{0}\smash{\begin{tabular}[t]{l}$=$\end{tabular}}}}%
    \put(0.24857503,0.61031283){\color[rgb]{0,0,0}\makebox(0,0)[lt]{\lineheight{0}\smash{\begin{tabular}[t]{l}$\overset{\text{top.}}{=}$\end{tabular}}}}%
    \put(0,0){\includegraphics[width=\unitlength,page=13]{Regles_etiquetage_consequences.pdf}}%
    \put(0.49427972,0.70976469){\color[rgb]{0,0,0}\makebox(0,0)[lt]{\lineheight{0}\smash{\begin{tabular}[t]{l}\scriptsize $gr^ag^{-1} $\end{tabular}}}}%
    \put(0.49427972,0.7682658){\color[rgb]{0,0,0}\makebox(0,0)[lt]{\lineheight{0}\smash{\begin{tabular}[t]{l}\scriptsize $S(r_a)$\end{tabular}}}}%
    \put(0.65808283,0.70976469){\color[rgb]{0,0,0}\makebox(0,0)[lt]{\lineheight{0}\smash{\begin{tabular}[t]{l}\scriptsize $S(r^a)$\end{tabular}}}}%
    \put(0.65808284,0.7682658){\color[rgb]{0,0,0}\makebox(0,0)[lt]{\lineheight{0}\smash{\begin{tabular}[t]{l}\scriptsize $r_a$\end{tabular}}}}%
    \put(0.55863095,0.73316514){\color[rgb]{0,0,0}\makebox(0,0)[lt]{\lineheight{0}\smash{\begin{tabular}[t]{l}$=$\end{tabular}}}}%
    \put(0.38897769,0.73316514){\color[rgb]{0,0,0}\makebox(0,0)[lt]{\lineheight{0}\smash{\begin{tabular}[t]{l}$=$\end{tabular}}}}%
    \put(0.24857504,0.73316516){\color[rgb]{0,0,0}\makebox(0,0)[lt]{\lineheight{0}\smash{\begin{tabular}[t]{l}$\overset{\text{top.}}{=}$\end{tabular}}}}%
    \put(0,0){\includegraphics[width=\unitlength,page=14]{Regles_etiquetage_consequences.pdf}}%
    \put(0.63468238,0.83261702){\color[rgb]{0,0,0}\makebox(0,0)[lt]{\lineheight{0}\smash{\begin{tabular}[t]{l}\scriptsize $gr^ag^{-1} $\end{tabular}}}}%
    \put(0.63468238,0.89111813){\color[rgb]{0,0,0}\makebox(0,0)[lt]{\lineheight{0}\smash{\begin{tabular}[t]{l}\scriptsize $gr_ag^{-1}$\end{tabular}}}}%
    \put(0.81018571,0.83261702){\color[rgb]{0,0,0}\makebox(0,0)[lt]{\lineheight{0}\smash{\begin{tabular}[t]{l}\scriptsize $r^a$\end{tabular}}}}%
    \put(0.81018573,0.89111813){\color[rgb]{0,0,0}\makebox(0,0)[lt]{\lineheight{0}\smash{\begin{tabular}[t]{l}\scriptsize $r_a$\end{tabular}}}}%
    \put(0.71073383,0.85601748){\color[rgb]{0,0,0}\makebox(0,0)[lt]{\lineheight{0}\smash{\begin{tabular}[t]{l}$=$\end{tabular}}}}%
    \put(0.52938036,0.85601748){\color[rgb]{0,0,0}\makebox(0,0)[lt]{\lineheight{0}\smash{\begin{tabular}[t]{l}$=$\end{tabular}}}}%
    \put(0.38897768,0.85601748){\color[rgb]{0,0,0}\makebox(0,0)[lt]{\lineheight{0}\smash{\begin{tabular}[t]{l}$\overset{\text{top.}}{=}$\end{tabular}}}}%
    \put(0.24857503,0.8560175){\color[rgb]{0,0,0}\makebox(0,0)[lt]{\lineheight{0}\smash{\begin{tabular}[t]{l}$\overset{\text{top.}}{=}$\end{tabular}}}}%
  \end{picture}%
\endgroup%

%% file: Exemple_diagramme_a_etats.pdf_tex
\begingroup%
  \makeatletter%
  \providecommand\color[2][]{%
    \errmessage{(Inkscape) Color is used for the text in Inkscape, but the package 'color.sty' is not loaded}%
    \renewcommand\color[2][]{}%
  }%
  \providecommand\transparent[1]{%
    \errmessage{(Inkscape) Transparency is used (non-zero) for the text in Inkscape, but the package 'transparent.sty' is not loaded}%
    \renewcommand\transparent[1]{}%
  }%
  \providecommand\rotatebox[2]{#2}%
  \newcommand*\fsize{\dimexpr\f@size pt\relax}%
  \newcommand*\lineheight[1]{\fontsize{\fsize}{#1\fsize}\selectfont}%
  \ifx\svgwidth\undefined%
    \setlength{\unitlength}{481.88976378bp}%
    \ifx\svgscale\undefined%
      \relax%
    \else%
      \setlength{\unitlength}{\unitlength * \real{\svgscale}}%
    \fi%
  \else%
    \setlength{\unitlength}{\svgwidth}%
  \fi%
  \global\let\svgwidth\undefined%
  \global\let\svgscale\undefined%
  \makeatother%
  \begin{picture}(1,0.16611765)%
    \lineheight{1}%
    \setlength\tabcolsep{0pt}%
    \put(0,0){\includegraphics[width=\unitlength,page=1]{Exemple_diagramme_a_etats.pdf}}%
    \put(0.64117789,0.00141176){\color[rgb]{0,0,0}\makebox(0,0)[lt]{\lineheight{0}\smash{\begin{tabular}[t]{l}$\scriptstyle x_{i_k}$ \end{tabular}}}}%
    \put(0.53529631,0.00141176){\color[rgb]{0,0,0}\makebox(0,0)[lt]{\lineheight{0}\smash{\begin{tabular}[t]{l}$\scriptstyle x_{i_2}$ \end{tabular}}}}%
    \put(0.42941395,0.00141176){\color[rgb]{0,0,0}\makebox(0,0)[lt]{\lineheight{0}\smash{\begin{tabular}[t]{l}$\scriptstyle x_{i_1}$ \end{tabular}}}}%
    \put(0.49501567,0.10141176){\color[rgb]{0,0,0}\makebox(0,0)[lt]{\lineheight{0}\smash{\begin{tabular}[t]{l}$\scriptstyle T$\end{tabular}}}}%
    \put(0.61176257,0.13670588){\makebox(0,0)[lt]{\lineheight{0}\smash{\begin{tabular}[t]{l}$\scriptstyle s_{i_{t}}$\end{tabular}}}}%
    \put(0.5058819,0.13670588){\makebox(0,0)[lt]{\lineheight{0}\smash{\begin{tabular}[t]{l}$\scriptstyle s_{i_2}$\end{tabular}}}}%
    \put(0.39999956,0.13670588){\makebox(0,0)[lt]{\lineheight{0}\smash{\begin{tabular}[t]{l}$\scriptstyle s_{i_1}$\end{tabular}}}}%
    \put(0.38823384,0.16023567){\makebox(0,0)[lt]{\lineheight{0}\smash{\begin{tabular}[t]{l}$\scriptstyle \varphi_1$\end{tabular}}}}%
    \put(0.4941162,0.16023567){\makebox(0,0)[lt]{\lineheight{0}\smash{\begin{tabular}[t]{l}$\scriptstyle \varphi_2$\end{tabular}}}}%
    \put(0.59999856,0.16023567){\makebox(0,0)[lt]{\lineheight{0}\smash{\begin{tabular}[t]{l}$\scriptstyle \varphi_t$\end{tabular}}}}%
    \put(0,0){\includegraphics[width=\unitlength,page=2]{Exemple_diagramme_a_etats.pdf}}%
  \end{picture}%
\endgroup%

%% file: Diagramme_a_etats_Dxi_phi.pdf_tex
\begingroup%
  \makeatletter%
  \providecommand\color[2][]{%
    \errmessage{(Inkscape) Color is used for the text in Inkscape, but the package 'color.sty' is not loaded}%
    \renewcommand\color[2][]{}%
  }%
  \providecommand\transparent[1]{%
    \errmessage{(Inkscape) Transparency is used (non-zero) for the text in Inkscape, but the package 'transparent.sty' is not loaded}%
    \renewcommand\transparent[1]{}%
  }%
  \providecommand\rotatebox[2]{#2}%
  \newcommand*\fsize{\dimexpr\f@size pt\relax}%
  \newcommand*\lineheight[1]{\fontsize{\fsize}{#1\fsize}\selectfont}%
  \ifx\svgwidth\undefined%
    \setlength{\unitlength}{481.88976378bp}%
    \ifx\svgscale\undefined%
      \relax%
    \else%
      \setlength{\unitlength}{\unitlength * \real{\svgscale}}%
    \fi%
  \else%
    \setlength{\unitlength}{\svgwidth}%
  \fi%
  \global\let\svgwidth\undefined%
  \global\let\svgscale\undefined%
  \makeatother%
  \begin{picture}(1,0.08964706)%
    \lineheight{1}%
    \setlength\tabcolsep{0pt}%
    \put(0,0){\includegraphics[width=\unitlength,page=1]{Diagramme_a_etats_Dxi_phi.pdf}}%
    \put(0.46764707,0.0837647){\color[rgb]{0,0,0}\makebox(0,0)[lt]{\lineheight{1.25}\smash{\begin{tabular}[t]{l}$\scriptstyle \varphi$\end{tabular}}}}%
    \put(0.53529329,0.00141177){\color[rgb]{0,0,0}\makebox(0,0)[lt]{\lineheight{0}\smash{\begin{tabular}[t]{l}$\scriptstyle x_i$ \end{tabular}}}}%
    \put(0,0){\includegraphics[width=\unitlength,page=2]{Diagramme_a_etats_Dxi_phi.pdf}}%
  \end{picture}%
\endgroup%

%% file: Diagramme_a_etats_co-produit.pdf_tex
\begingroup%
  \makeatletter%
  \providecommand\color[2][]{%
    \errmessage{(Inkscape) Color is used for the text in Inkscape, but the package 'color.sty' is not loaded}%
    \renewcommand\color[2][]{}%
  }%
  \providecommand\transparent[1]{%
    \errmessage{(Inkscape) Transparency is used (non-zero) for the text in Inkscape, but the package 'transparent.sty' is not loaded}%
    \renewcommand\transparent[1]{}%
  }%
  \providecommand\rotatebox[2]{#2}%
  \newcommand*\fsize{\dimexpr\f@size pt\relax}%
  \newcommand*\lineheight[1]{\fontsize{\fsize}{#1\fsize}\selectfont}%
  \ifx\svgwidth\undefined%
    \setlength{\unitlength}{481.88976378bp}%
    \ifx\svgscale\undefined%
      \relax%
    \else%
      \setlength{\unitlength}{\unitlength * \real{\svgscale}}%
    \fi%
  \else%
    \setlength{\unitlength}{\svgwidth}%
  \fi%
  \global\let\svgwidth\undefined%
  \global\let\svgscale\undefined%
  \makeatother%
  \begin{picture}(1,0.08823529)%
    \lineheight{1}%
    \setlength\tabcolsep{0pt}%
    \put(0,0){\includegraphics[width=\unitlength,page=1]{Diagramme_a_etats_co-produit.pdf}}%
    \put(0.38529417,0.0837647){\color[rgb]{0,0,0}\makebox(0,0)[lt]{\lineheight{1.25}\smash{\begin{tabular}[t]{l}$\scriptstyle \varphi$\end{tabular}}}}%
    \put(0.55882192,0.00141177){\color[rgb]{0,0,0}\makebox(0,0)[lt]{\lineheight{0}\smash{\begin{tabular}[t]{l}$\scriptstyle y$ \end{tabular}}}}%
    \put(0.45294031,0.00141177){\color[rgb]{0,0,0}\makebox(0,0)[lt]{\lineheight{0}\smash{\begin{tabular}[t]{l}$\scriptstyle x$ \end{tabular}}}}%
    \put(0,0){\includegraphics[width=\unitlength,page=2]{Diagramme_a_etats_co-produit.pdf}}%
  \end{picture}%
\endgroup%

%% file: Diagramme_a_etats_relations.pdf_tex
\begingroup%
  \makeatletter%
  \providecommand\color[2][]{%
    \errmessage{(Inkscape) Color is used for the text in Inkscape, but the package 'color.sty' is not loaded}%
    \renewcommand\color[2][]{}%
  }%
  \providecommand\transparent[1]{%
    \errmessage{(Inkscape) Transparency is used (non-zero) for the text in Inkscape, but the package 'transparent.sty' is not loaded}%
    \renewcommand\transparent[1]{}%
  }%
  \providecommand\rotatebox[2]{#2}%
  \newcommand*\fsize{\dimexpr\f@size pt\relax}%
  \newcommand*\lineheight[1]{\fontsize{\fsize}{#1\fsize}\selectfont}%
  \ifx\svgwidth\undefined%
    \setlength{\unitlength}{481.88976378bp}%
    \ifx\svgscale\undefined%
      \relax%
    \else%
      \setlength{\unitlength}{\unitlength * \real{\svgscale}}%
    \fi%
  \else%
    \setlength{\unitlength}{\svgwidth}%
  \fi%
  \global\let\svgwidth\undefined%
  \global\let\svgscale\undefined%
  \makeatother%
  \begin{picture}(1,0.42494256)%
    \lineheight{1}%
    \setlength\tabcolsep{0pt}%
    \put(0,0){\includegraphics[width=\unitlength,page=1]{Diagramme_a_etats_relations.pdf}}%
    \put(0.72234211,0.00141314){\color[rgb]{0,0,0}\makebox(0,0)[lt]{\lineheight{0}\smash{\begin{tabular}[t]{l}\scriptsize $y_j$\end{tabular}}}}%
    \put(0.61645975,0.00141315){\color[rgb]{0,0,0}\makebox(0,0)[lt]{\lineheight{0}\smash{\begin{tabular}[t]{l}\scriptsize $x_i$\end{tabular}}}}%
    \put(0.65469589,0.11317785){\color[rgb]{0,0,0}\makebox(0,0)[lt]{\lineheight{1.25}\smash{\begin{tabular}[t]{l}$\scriptstyle \varphi$\end{tabular}}}}%
    \put(0.54881353,0.11317785){\color[rgb]{0,0,0}\makebox(0,0)[lt]{\lineheight{1.25}\smash{\begin{tabular}[t]{l}$\scriptstyle \psi$\end{tabular}}}}%
    \put(0.48998821,0.04258961){\color[rgb]{0,0,0}\makebox(0,0)[lt]{\lineheight{0}\smash{\begin{tabular}[t]{l}$=$\end{tabular}}}}%
    \put(0,0){\includegraphics[width=\unitlength,page=2]{Diagramme_a_etats_relations.pdf}}%
    \put(0.45175377,0.00141315){\color[rgb]{0,0,0}\makebox(0,0)[lt]{\lineheight{0}\smash{\begin{tabular}[t]{l}\scriptsize $x_i$\end{tabular}}}}%
    \put(0.34587141,0.00141314){\color[rgb]{0,0,0}\makebox(0,0)[lt]{\lineheight{0}\smash{\begin{tabular}[t]{l}\scriptsize $y_j$\end{tabular}}}}%
    \put(0.38410764,0.11317784){\color[rgb]{0,0,0}\makebox(0,0)[lt]{\lineheight{1.25}\smash{\begin{tabular}[t]{l}$\scriptstyle \psi$\end{tabular}}}}%
    \put(0.27822529,0.11317785){\color[rgb]{0,0,0}\makebox(0,0)[lt]{\lineheight{1.25}\smash{\begin{tabular}[t]{l}$\scriptstyle \varphi$\end{tabular}}}}%
    \put(0,0){\includegraphics[width=\unitlength,page=3]{Diagramme_a_etats_relations.pdf}}%
    \put(0.72234248,0.15435432){\color[rgb]{0,0,0}\makebox(0,0)[lt]{\lineheight{0}\smash{\begin{tabular}[t]{l}\scriptsize $a_i$\end{tabular}}}}%
    \put(0.6164601,0.15435432){\color[rgb]{0,0,0}\makebox(0,0)[lt]{\lineheight{0}\smash{\begin{tabular}[t]{l}\scriptsize $b_i$\end{tabular}}}}%
    \put(0.65469577,0.26611903){\color[rgb]{0,0,0}\makebox(0,0)[lt]{\lineheight{1.25}\smash{\begin{tabular}[t]{l}$\scriptstyle \varphi$\end{tabular}}}}%
    \put(0.54881341,0.26611903){\color[rgb]{0,0,0}\makebox(0,0)[lt]{\lineheight{1.25}\smash{\begin{tabular}[t]{l}$\scriptstyle \psi$\end{tabular}}}}%
    \put(0.48998809,0.19553079){\color[rgb]{0,0,0}\makebox(0,0)[lt]{\lineheight{0}\smash{\begin{tabular}[t]{l}$=$\end{tabular}}}}%
    \put(0,0){\includegraphics[width=\unitlength,page=4]{Diagramme_a_etats_relations.pdf}}%
    \put(0.45175393,0.15435432){\color[rgb]{0,0,0}\makebox(0,0)[lt]{\lineheight{0}\smash{\begin{tabular}[t]{l}\scriptsize $b_i$\end{tabular}}}}%
    \put(0.34587157,0.15435432){\color[rgb]{0,0,0}\makebox(0,0)[lt]{\lineheight{0}\smash{\begin{tabular}[t]{l}\scriptsize $a_i$\end{tabular}}}}%
    \put(0.38410753,0.26611902){\color[rgb]{0,0,0}\makebox(0,0)[lt]{\lineheight{1.25}\smash{\begin{tabular}[t]{l}$\scriptstyle \psi$\end{tabular}}}}%
    \put(0.27822518,0.26611903){\color[rgb]{0,0,0}\makebox(0,0)[lt]{\lineheight{1.25}\smash{\begin{tabular}[t]{l}$\scriptstyle \varphi$\end{tabular}}}}%
    \put(0,0){\includegraphics[width=\unitlength,page=5]{Diagramme_a_etats_relations.pdf}}%
    \put(0.66940095,0.3072955){\color[rgb]{0,0,0}\makebox(0,0)[lt]{\lineheight{0}\smash{\begin{tabular}[t]{l}\scriptsize $x_i$\end{tabular}}}}%
    \put(0.64881395,0.4190602){\color[rgb]{0,0,0}\makebox(0,0)[lt]{\lineheight{1.25}\smash{\begin{tabular}[t]{l}$\scriptstyle \psi$\end{tabular}}}}%
    \put(0.58410809,0.4190602){\color[rgb]{0,0,0}\makebox(0,0)[lt]{\lineheight{1.25}\smash{\begin{tabular}[t]{l}$\scriptstyle \varphi$\end{tabular}}}}%
    \put(0.54293027,0.34847198){\color[rgb]{0,0,0}\makebox(0,0)[lt]{\lineheight{0}\smash{\begin{tabular}[t]{l}$=$\end{tabular}}}}%
    \put(0,0){\includegraphics[width=\unitlength,page=6]{Diagramme_a_etats_relations.pdf}}%
    \put(0.50469508,0.3072955){\color[rgb]{0,0,0}\makebox(0,0)[lt]{\lineheight{0}\smash{\begin{tabular}[t]{l}\scriptsize $x_i$\end{tabular}}}}%
    \put(0.39881271,0.30729549){\color[rgb]{0,0,0}\makebox(0,0)[lt]{\lineheight{0}\smash{\begin{tabular}[t]{l}\scriptsize $x_i$\end{tabular}}}}%
    \put(0.43704881,0.4190602){\color[rgb]{0,0,0}\makebox(0,0)[lt]{\lineheight{1.25}\smash{\begin{tabular}[t]{l}$\scriptstyle \psi$\end{tabular}}}}%
    \put(0.33116645,0.4190602){\color[rgb]{0,0,0}\makebox(0,0)[lt]{\lineheight{1.25}\smash{\begin{tabular}[t]{l}$\scriptstyle \varphi$\end{tabular}}}}%
  \end{picture}%
\endgroup%

%% file: Diagramme_a_etats_Dxi_inverse_phi.pdf_tex
\begingroup%
  \makeatletter%
  \providecommand\color[2][]{%
    \errmessage{(Inkscape) Color is used for the text in Inkscape, but the package 'color.sty' is not loaded}%
    \renewcommand\color[2][]{}%
  }%
  \providecommand\transparent[1]{%
    \errmessage{(Inkscape) Transparency is used (non-zero) for the text in Inkscape, but the package 'transparent.sty' is not loaded}%
    \renewcommand\transparent[1]{}%
  }%
  \providecommand\rotatebox[2]{#2}%
  \newcommand*\fsize{\dimexpr\f@size pt\relax}%
  \newcommand*\lineheight[1]{\fontsize{\fsize}{#1\fsize}\selectfont}%
  \ifx\svgwidth\undefined%
    \setlength{\unitlength}{481.88976378bp}%
    \ifx\svgscale\undefined%
      \relax%
    \else%
      \setlength{\unitlength}{\unitlength * \real{\svgscale}}%
    \fi%
  \else%
    \setlength{\unitlength}{\svgwidth}%
  \fi%
  \global\let\svgwidth\undefined%
  \global\let\svgscale\undefined%
  \makeatother%
  \begin{picture}(1,0.11905882)%
    \lineheight{1}%
    \setlength\tabcolsep{0pt}%
    \put(0,0){\includegraphics[width=\unitlength,page=1]{Diagramme_a_etats_Dxi_inverse_phi.pdf}}%
    \put(0.46636177,0.11317647){\color[rgb]{0,0,0}\makebox(0,0)[lt]{\lineheight{1.25}\smash{\begin{tabular}[t]{l}$\scriptstyle \varphi$\end{tabular}}}}%
    \put(0.534008,0.00141176){\color[rgb]{0,0,0}\makebox(0,0)[lt]{\lineheight{0}\smash{\begin{tabular}[t]{l}$\scriptstyle x_i$ \end{tabular}}}}%
    \put(0,0){\includegraphics[width=\unitlength,page=2]{Diagramme_a_etats_Dxi_inverse_phi.pdf}}%
  \end{picture}%
\endgroup%

%% file: Diagramme_a_etats_representation_de_A.pdf_tex
\begingroup%
  \makeatletter%
  \providecommand\color[2][]{%
    \errmessage{(Inkscape) Color is used for the text in Inkscape, but the package 'color.sty' is not loaded}%
    \renewcommand\color[2][]{}%
  }%
  \providecommand\transparent[1]{%
    \errmessage{(Inkscape) Transparency is used (non-zero) for the text in Inkscape, but the package 'transparent.sty' is not loaded}%
    \renewcommand\transparent[1]{}%
  }%
  \providecommand\rotatebox[2]{#2}%
  \newcommand*\fsize{\dimexpr\f@size pt\relax}%
  \newcommand*\lineheight[1]{\fontsize{\fsize}{#1\fsize}\selectfont}%
  \ifx\svgwidth\undefined%
    \setlength{\unitlength}{481.88976378bp}%
    \ifx\svgscale\undefined%
      \relax%
    \else%
      \setlength{\unitlength}{\unitlength * \real{\svgscale}}%
    \fi%
  \else%
    \setlength{\unitlength}{\svgwidth}%
  \fi%
  \global\let\svgwidth\undefined%
  \global\let\svgscale\undefined%
  \makeatother%
  \begin{picture}(1,0.08823529)%
    \lineheight{1}%
    \setlength\tabcolsep{0pt}%
    \put(0,0){\includegraphics[width=\unitlength,page=1]{Diagramme_a_etats_representation_de_A.pdf}}%
    \put(0.54982687,0.04411772){\color[rgb]{0,0,0}\makebox(0,0)[lt]{\lineheight{1.25}\smash{\begin{tabular}[t]{l}$\scriptstyle =$\end{tabular}}}}%
    \put(0.39472022,0.08235294){\color[rgb]{0,0,0}\makebox(0,0)[lt]{\lineheight{1.25}\smash{\begin{tabular}[t]{l}$\scriptstyle \alpha_j$\end{tabular}}}}%
    \put(0.57335641,0.08235288){\color[rgb]{0,0,0}\makebox(0,0)[lt]{\lineheight{1.25}\smash{\begin{tabular}[t]{l}$\scriptstyle \alpha_j$\end{tabular}}}}%
    \put(0,0){\includegraphics[width=\unitlength,page=2]{Diagramme_a_etats_representation_de_A.pdf}}%
    \put(0.46236747,0){\color[rgb]{0,0,0}\makebox(0,0)[lt]{\lineheight{0}\smash{\begin{tabular}[t]{l}$\scriptstyle a_j$ \end{tabular}}}}%
    \put(0.51003201,0.04411764){\color[rgb]{0,0,0}\makebox(0,0)[lt]{\lineheight{0}\smash{\begin{tabular}[t]{l}$1_\mathbb{K}$ \end{tabular}}}}%
    \put(0.64394452,0.04411763){\color[rgb]{0,0,0}\makebox(0,0)[lt]{\lineheight{0}\smash{\begin{tabular}[t]{l}$1_\mathbb{K}$ \end{tabular}}}}%
  \end{picture}%
\endgroup%

%% file: Diagramme_a_etats_representation_induite.pdf_tex
\begingroup%
  \makeatletter%
  \providecommand\color[2][]{%
    \errmessage{(Inkscape) Color is used for the text in Inkscape, but the package 'color.sty' is not loaded}%
    \renewcommand\color[2][]{}%
  }%
  \providecommand\transparent[1]{%
    \errmessage{(Inkscape) Transparency is used (non-zero) for the text in Inkscape, but the package 'transparent.sty' is not loaded}%
    \renewcommand\transparent[1]{}%
  }%
  \providecommand\rotatebox[2]{#2}%
  \newcommand*\fsize{\dimexpr\f@size pt\relax}%
  \newcommand*\lineheight[1]{\fontsize{\fsize}{#1\fsize}\selectfont}%
  \ifx\svgwidth\undefined%
    \setlength{\unitlength}{481.88980703bp}%
    \ifx\svgscale\undefined%
      \relax%
    \else%
      \setlength{\unitlength}{\unitlength * \real{\svgscale}}%
    \fi%
  \else%
    \setlength{\unitlength}{\svgwidth}%
  \fi%
  \global\let\svgwidth\undefined%
  \global\let\svgscale\undefined%
  \makeatother%
  \begin{picture}(1,0.47058819)%
    \lineheight{1}%
    \setlength\tabcolsep{0pt}%
    \put(0,0){\includegraphics[width=\unitlength,page=1]{Diagramme_a_etats_representation_induite.pdf}}%
    \put(0.40882353,0.05000001){\color[rgb]{0,0,0}\makebox(0,0)[lt]{\lineheight{1.25}\smash{\begin{tabular}[t]{l}$\scriptstyle =$\end{tabular}}}}%
    \put(0.86764876,0.12058821){\color[rgb]{0,0,0}\makebox(0,0)[lt]{\lineheight{1.25}\smash{\begin{tabular}[t]{l}$\scriptstyle \psi_{g+n}$\end{tabular}}}}%
    \put(0.54411938,0.12058818){\color[rgb]{0,0,0}\makebox(0,0)[lt]{\lineheight{1.25}\smash{\begin{tabular}[t]{l}$\scriptstyle \psi_{g+k-1}$\end{tabular}}}}%
    \put(0.65588408,0.1205855){\color[rgb]{0,0,0}\makebox(0,0)[lt]{\lineheight{1.25}\smash{\begin{tabular}[t]{l}$\scriptstyle \psi_{g+k}$\end{tabular}}}}%
    \put(0.75588406,0.12058821){\color[rgb]{0,0,0}\makebox(0,0)[lt]{\lineheight{1.25}\smash{\begin{tabular}[t]{l}$\scriptstyle \psi_{g+k+1}$\end{tabular}}}}%
    \put(0.46176465,0.12058816){\color[rgb]{0,0,0}\makebox(0,0)[lt]{\lineheight{1.25}\smash{\begin{tabular}[t]{l}$\scriptstyle \psi_{1}$\end{tabular}}}}%
    \put(0.42058822,0.12058816){\color[rgb]{0,0,0}\makebox(0,0)[lt]{\lineheight{1.25}\smash{\begin{tabular}[t]{l}$\scriptstyle \varphi_{k}$\end{tabular}}}}%
    \put(0.2852959,0.12058815){\color[rgb]{0,0,0}\makebox(0,0)[lt]{\lineheight{1.25}\smash{\begin{tabular}[t]{l}$\scriptstyle \psi_{g+n}$\end{tabular}}}}%
    \put(0.19705878,0.12058815){\color[rgb]{0,0,0}\makebox(0,0)[lt]{\lineheight{1.25}\smash{\begin{tabular}[t]{l}$\scriptstyle \psi_1$\end{tabular}}}}%
    \put(0.04999994,0.12058821){\color[rgb]{0,0,0}\makebox(0,0)[lt]{\lineheight{1.25}\smash{\begin{tabular}[t]{l}$\scriptstyle \varphi_{k}$\end{tabular}}}}%
    \put(0,0){\includegraphics[width=\unitlength,page=2]{Diagramme_a_etats_representation_induite.pdf}}%
    \put(0.9500003,0.00294255){\color[rgb]{0,0,0}\makebox(0,0)[lt]{\lineheight{0}\smash{\begin{tabular}[t]{l}\scriptsize $m_{g+n}$\end{tabular}}}}%
    \put(0.83235221,0.00294255){\color[rgb]{0,0,0}\makebox(0,0)[lt]{\lineheight{0}\smash{\begin{tabular}[t]{l}\scriptsize $m_{g+k+1}$\end{tabular}}}}%
    \put(0.73823561,0.00294255){\color[rgb]{0,0,0}\makebox(0,0)[lt]{\lineheight{0}\smash{\begin{tabular}[t]{l}\scriptsize $m_{g+k}$\end{tabular}}}}%
    \put(0.62058666,0.00294256){\color[rgb]{0,0,0}\makebox(0,0)[lt]{\lineheight{0}\smash{\begin{tabular}[t]{l}\scriptsize $m_{g+k-1}$\end{tabular}}}}%
    \put(0.52647089,0.00294255){\color[rgb]{0,0,0}\makebox(0,0)[lt]{\lineheight{0}\smash{\begin{tabular}[t]{l}\scriptsize $m_{1}$\end{tabular}}}}%
    \put(0.36764735,0.00294256){\color[rgb]{0,0,0}\makebox(0,0)[lt]{\lineheight{0}\smash{\begin{tabular}[t]{l}\scriptsize $m_{g+n}$\end{tabular}}}}%
    \put(0.26176499,0.00294257){\color[rgb]{0,0,0}\makebox(0,0)[lt]{\lineheight{0}\smash{\begin{tabular}[t]{l}\scriptsize $m_{1}$\end{tabular}}}}%
    \put(0.11470619,0.00294256){\color[rgb]{0,0,0}\makebox(0,0)[lt]{\lineheight{0}\smash{\begin{tabular}[t]{l}$\scriptstyle m_{g+k}$ \end{tabular}}}}%
    \put(0,0){\includegraphics[width=\unitlength,page=3]{Diagramme_a_etats_representation_induite.pdf}}%
    \put(0.40882353,0.22058823){\color[rgb]{0,0,0}\makebox(0,0)[lt]{\lineheight{1.25}\smash{\begin{tabular}[t]{l}$\scriptstyle =$\end{tabular}}}}%
    \put(0.86765061,0.29117636){\color[rgb]{0,0,0}\makebox(0,0)[lt]{\lineheight{1.25}\smash{\begin{tabular}[t]{l}$\scriptstyle \psi_{g+n}$\end{tabular}}}}%
    \put(0.55000362,0.29117633){\color[rgb]{0,0,0}\makebox(0,0)[lt]{\lineheight{1.25}\smash{\begin{tabular}[t]{l}$\scriptstyle \psi_{j-1}$\end{tabular}}}}%
    \put(0.67353125,0.29117365){\color[rgb]{0,0,0}\makebox(0,0)[lt]{\lineheight{1.25}\smash{\begin{tabular}[t]{l}$\scriptstyle \psi_{j}$\end{tabular}}}}%
    \put(0.7617683,0.29117644){\color[rgb]{0,0,0}\makebox(0,0)[lt]{\lineheight{1.25}\smash{\begin{tabular}[t]{l}$\scriptstyle \psi_{j+1}$\end{tabular}}}}%
    \put(0.4617665,0.29117631){\color[rgb]{0,0,0}\makebox(0,0)[lt]{\lineheight{1.25}\smash{\begin{tabular}[t]{l}$\scriptstyle \psi_{1}$\end{tabular}}}}%
    \put(0.42059007,0.29117631){\color[rgb]{0,0,0}\makebox(0,0)[lt]{\lineheight{1.25}\smash{\begin{tabular}[t]{l}$\scriptstyle \beta_j$\end{tabular}}}}%
    \put(0.28529775,0.2911763){\color[rgb]{0,0,0}\makebox(0,0)[lt]{\lineheight{1.25}\smash{\begin{tabular}[t]{l}$\scriptstyle \psi_{g+n}$\end{tabular}}}}%
    \put(0.19706063,0.2911763){\color[rgb]{0,0,0}\makebox(0,0)[lt]{\lineheight{1.25}\smash{\begin{tabular}[t]{l}$\scriptstyle \psi_1$\end{tabular}}}}%
    \put(0.05000179,0.29117636){\color[rgb]{0,0,0}\makebox(0,0)[lt]{\lineheight{1.25}\smash{\begin{tabular}[t]{l}$\scriptstyle \beta_j$\end{tabular}}}}%
    \put(0,0){\includegraphics[width=\unitlength,page=4]{Diagramme_a_etats_representation_induite.pdf}}%
    \put(0.94999998,0.17352939){\color[rgb]{0,0,0}\makebox(0,0)[lt]{\lineheight{0}\smash{\begin{tabular}[t]{l}\scriptsize $m_{g+n}$\end{tabular}}}}%
    \put(0.84411759,0.1735294){\color[rgb]{0,0,0}\makebox(0,0)[lt]{\lineheight{0}\smash{\begin{tabular}[t]{l}\scriptsize $m_{j+1}$\end{tabular}}}}%
    \put(0.7382353,0.1735294){\color[rgb]{0,0,0}\makebox(0,0)[lt]{\lineheight{0}\smash{\begin{tabular}[t]{l}\scriptsize $m_{j}$\end{tabular}}}}%
    \put(0.63235291,0.1735294){\color[rgb]{0,0,0}\makebox(0,0)[lt]{\lineheight{0}\smash{\begin{tabular}[t]{l}\scriptsize $m_{j-1}$\end{tabular}}}}%
    \put(0.52647057,0.1735294){\color[rgb]{0,0,0}\makebox(0,0)[lt]{\lineheight{0}\smash{\begin{tabular}[t]{l}\scriptsize $m_{1}$\end{tabular}}}}%
    \put(0.36764703,0.17352941){\color[rgb]{0,0,0}\makebox(0,0)[lt]{\lineheight{0}\smash{\begin{tabular}[t]{l}\scriptsize $m_{g+n}$\end{tabular}}}}%
    \put(0.26176467,0.17352942){\color[rgb]{0,0,0}\makebox(0,0)[lt]{\lineheight{0}\smash{\begin{tabular}[t]{l}\scriptsize $m_{1}$\end{tabular}}}}%
    \put(0.11470587,0.17352941){\color[rgb]{0,0,0}\makebox(0,0)[lt]{\lineheight{0}\smash{\begin{tabular}[t]{l}$\scriptstyle b_{j}$ \end{tabular}}}}%
    \put(0,0){\includegraphics[width=\unitlength,page=5]{Diagramme_a_etats_representation_induite.pdf}}%
    \put(0.40882353,0.39117652){\color[rgb]{0,0,0}\makebox(0,0)[lt]{\lineheight{1.25}\smash{\begin{tabular}[t]{l}$\scriptstyle =$\end{tabular}}}}%
    \put(0.04999996,0.46176467){\color[rgb]{0,0,0}\makebox(0,0)[lt]{\lineheight{1.25}\smash{\begin{tabular}[t]{l}$\scriptstyle \alpha_j$\end{tabular}}}}%
    \put(0.19705878,0.4617646){\color[rgb]{0,0,0}\makebox(0,0)[lt]{\lineheight{1.25}\smash{\begin{tabular}[t]{l}$\scriptstyle \psi_1$\end{tabular}}}}%
    \put(0.2852959,0.4617646){\color[rgb]{0,0,0}\makebox(0,0)[lt]{\lineheight{1.25}\smash{\begin{tabular}[t]{l}$\scriptstyle \psi_{g+n}$\end{tabular}}}}%
    \put(0.46176465,0.46176461){\color[rgb]{0,0,0}\makebox(0,0)[lt]{\lineheight{1.25}\smash{\begin{tabular}[t]{l}$\scriptstyle \psi_1$\end{tabular}}}}%
    \put(0.42058822,0.46176461){\color[rgb]{0,0,0}\makebox(0,0)[lt]{\lineheight{1.25}\smash{\begin{tabular}[t]{l}$\scriptstyle \alpha_j$\end{tabular}}}}%
    \put(0.55000177,0.46176461){\color[rgb]{0,0,0}\makebox(0,0)[lt]{\lineheight{1.25}\smash{\begin{tabular}[t]{l}$\scriptstyle \psi_{j-1}$\end{tabular}}}}%
    \put(0.6735294,0.46176467){\color[rgb]{0,0,0}\makebox(0,0)[lt]{\lineheight{1.25}\smash{\begin{tabular}[t]{l}$\scriptstyle \psi_{j}$\end{tabular}}}}%
    \put(0.76176645,0.46176467){\color[rgb]{0,0,0}\makebox(0,0)[lt]{\lineheight{1.25}\smash{\begin{tabular}[t]{l}$\scriptstyle \psi_{j+1}$\end{tabular}}}}%
    \put(0.86764876,0.46176467){\color[rgb]{0,0,0}\makebox(0,0)[lt]{\lineheight{1.25}\smash{\begin{tabular}[t]{l}$\scriptstyle \psi_{g+n}$\end{tabular}}}}%
    \put(0,0){\includegraphics[width=\unitlength,page=6]{Diagramme_a_etats_representation_induite.pdf}}%
    \put(0.94999997,0.34411762){\color[rgb]{0,0,0}\makebox(0,0)[lt]{\lineheight{0}\smash{\begin{tabular}[t]{l}\scriptsize $m_{g+n}$\end{tabular}}}}%
    \put(0.84411761,0.34411761){\color[rgb]{0,0,0}\makebox(0,0)[lt]{\lineheight{0}\smash{\begin{tabular}[t]{l}\scriptsize $m_{i+1}$\end{tabular}}}}%
    \put(0.73823526,0.34411761){\color[rgb]{0,0,0}\makebox(0,0)[lt]{\lineheight{0}\smash{\begin{tabular}[t]{l}\scriptsize $m_{j}$\end{tabular}}}}%
    \put(0.63235291,0.34411761){\color[rgb]{0,0,0}\makebox(0,0)[lt]{\lineheight{0}\smash{\begin{tabular}[t]{l}\scriptsize $m_{j-1}$\end{tabular}}}}%
    \put(0.52647057,0.34411761){\color[rgb]{0,0,0}\makebox(0,0)[lt]{\lineheight{0}\smash{\begin{tabular}[t]{l}\scriptsize $m_{1}$\end{tabular}}}}%
    \put(0.1176472,0.34411761){\color[rgb]{0,0,0}\makebox(0,0)[lt]{\lineheight{0}\smash{\begin{tabular}[t]{l}$\scriptstyle a_j$ \end{tabular}}}}%
    \put(0.36764703,0.34411761){\color[rgb]{0,0,0}\makebox(0,0)[lt]{\lineheight{0}\smash{\begin{tabular}[t]{l}\scriptsize $m_{g+n}$\end{tabular}}}}%
    \put(0.26176467,0.34411761){\color[rgb]{0,0,0}\makebox(0,0)[lt]{\lineheight{0}\smash{\begin{tabular}[t]{l}\scriptsize $m_{1}$\end{tabular}}}}%
  \end{picture}%
\endgroup%

%% file: Action_ai_1.pdf_tex
\begingroup%
  \makeatletter%
  \providecommand\color[2][]{%
    \errmessage{(Inkscape) Color is used for the text in Inkscape, but the package 'color.sty' is not loaded}%
    \renewcommand\color[2][]{}%
  }%
  \providecommand\transparent[1]{%
    \errmessage{(Inkscape) Transparency is used (non-zero) for the text in Inkscape, but the package 'transparent.sty' is not loaded}%
    \renewcommand\transparent[1]{}%
  }%
  \providecommand\rotatebox[2]{#2}%
  \newcommand*\fsize{\dimexpr\f@size pt\relax}%
  \newcommand*\lineheight[1]{\fontsize{\fsize}{#1\fsize}\selectfont}%
  \ifx\svgwidth\undefined%
    \setlength{\unitlength}{481.88976378bp}%
    \ifx\svgscale\undefined%
      \relax%
    \else%
      \setlength{\unitlength}{\unitlength * \real{\svgscale}}%
    \fi%
  \else%
    \setlength{\unitlength}{\svgwidth}%
  \fi%
  \global\let\svgwidth\undefined%
  \global\let\svgscale\undefined%
  \makeatother%
  \begin{picture}(1,0.51955881)%
    \lineheight{1}%
    \setlength\tabcolsep{0pt}%
    \put(0,0){\includegraphics[width=\unitlength,page=1]{Action_ai_1.pdf}}%
    \put(0.56035124,0.00588237){\color[rgb]{0,0,0}\makebox(0,0)[lt]{\lineheight{0}\smash{\begin{tabular}[t]{l}\scriptsize $m_{g+n}$\end{tabular}}}}%
    \put(0.45446887,0.00588235){\color[rgb]{0,0,0}\makebox(0,0)[lt]{\lineheight{0}\smash{\begin{tabular}[t]{l}\scriptsize $b_i$\end{tabular}}}}%
    \put(0.34858651,0.00588235){\color[rgb]{0,0,0}\makebox(0,0)[lt]{\lineheight{0}\smash{\begin{tabular}[t]{l}\scriptsize $b_2$\end{tabular}}}}%
    \put(0.24270414,0.00588237){\color[rgb]{0,0,0}\makebox(0,0)[lt]{\lineheight{0}\smash{\begin{tabular}[t]{l}\scriptsize $a_i$\end{tabular}}}}%
    \put(0.1368218,0.00588235){\color[rgb]{0,0,0}\makebox(0,0)[lt]{\lineheight{0}\smash{\begin{tabular}[t]{l}\scriptsize $b_1$\end{tabular}}}}%
    \put(0.59717303,0.04740116){\color[rgb]{0,0,0}\makebox(0,0)[lt]{\lineheight{0}\smash{\begin{tabular}[t]{l}$1_\mathbb{K}$\end{tabular}}}}%
    \put(0,0){\includegraphics[width=\unitlength,page=2]{Action_ai_1.pdf}}%
    \put(0.00446885,0.04705881){\color[rgb]{0,0,0}\makebox(0,0)[lt]{\lineheight{0}\smash{\begin{tabular}[t]{l}$\overset{\text{top.}}{=}$\end{tabular}}}}%
    \put(0,0){\includegraphics[width=\unitlength,page=3]{Action_ai_1.pdf}}%
    \put(0.56035124,0.15882354){\color[rgb]{0,0,0}\makebox(0,0)[lt]{\lineheight{0}\smash{\begin{tabular}[t]{l}\scriptsize $m_{g+n}$\end{tabular}}}}%
    \put(0.45446887,0.15882353){\color[rgb]{0,0,0}\makebox(0,0)[lt]{\lineheight{0}\smash{\begin{tabular}[t]{l}\scriptsize $b_i$\end{tabular}}}}%
    \put(0.34858652,0.15882353){\color[rgb]{0,0,0}\makebox(0,0)[lt]{\lineheight{0}\smash{\begin{tabular}[t]{l}\scriptsize $b_2$\end{tabular}}}}%
    \put(0.24270415,0.15882354){\color[rgb]{0,0,0}\makebox(0,0)[lt]{\lineheight{0}\smash{\begin{tabular}[t]{l}\scriptsize $a_i$\end{tabular}}}}%
    \put(0.1368218,0.15882353){\color[rgb]{0,0,0}\makebox(0,0)[lt]{\lineheight{0}\smash{\begin{tabular}[t]{l}\scriptsize $b_1$\end{tabular}}}}%
    \put(0.59717308,0.20034235){\color[rgb]{0,0,0}\makebox(0,0)[lt]{\lineheight{0}\smash{\begin{tabular}[t]{l}$1_\mathbb{K}$\end{tabular}}}}%
    \put(0,0){\includegraphics[width=\unitlength,page=4]{Action_ai_1.pdf}}%
    \put(-0.0014135,0.19999999){\color[rgb]{0,0,0}\makebox(0,0)[lt]{\lineheight{0}\smash{\begin{tabular}[t]{l}$\overset{\scriptstyle\labelcref{fig-diagrammes_a_etats_relations}}{=}$\end{tabular}}}}%
    \put(0,0){\includegraphics[width=\unitlength,page=5]{Action_ai_1.pdf}}%
    \put(0.6662361,0.31176469){\color[rgb]{0,0,0}\makebox(0,0)[lt]{\lineheight{0}\smash{\begin{tabular}[t]{l}\scriptsize $m_{g+n}$\end{tabular}}}}%
    \put(0.56035371,0.31176469){\color[rgb]{0,0,0}\makebox(0,0)[lt]{\lineheight{0}\smash{\begin{tabular}[t]{l}\scriptsize $m_{g+1}$\end{tabular}}}}%
    \put(0.45446887,0.31176469){\color[rgb]{0,0,0}\makebox(0,0)[lt]{\lineheight{0}\smash{\begin{tabular}[t]{l}\scriptsize $b_{g}$\end{tabular}}}}%
    \put(0.34858652,0.31176469){\color[rgb]{0,0,0}\makebox(0,0)[lt]{\lineheight{0}\smash{\begin{tabular}[t]{l}\scriptsize $b_i$\end{tabular}}}}%
    \put(0.24270414,0.31176469){\color[rgb]{0,0,0}\makebox(0,0)[lt]{\lineheight{0}\smash{\begin{tabular}[t]{l}\scriptsize $b_1$\end{tabular}}}}%
    \put(0.1368218,0.31176469){\color[rgb]{0,0,0}\makebox(0,0)[lt]{\lineheight{0}\smash{\begin{tabular}[t]{l}\scriptsize $a_i$\end{tabular}}}}%
    \put(0.70305539,0.35328349){\color[rgb]{0,0,0}\makebox(0,0)[lt]{\lineheight{0}\smash{\begin{tabular}[t]{l}$1_\mathbb{K}$\end{tabular}}}}%
    \put(0,0){\includegraphics[width=\unitlength,page=6]{Action_ai_1.pdf}}%
    \put(0.0103512,0.35294116){\color[rgb]{0,0,0}\makebox(0,0)[lt]{\lineheight{0}\smash{\begin{tabular}[t]{l}$=$\end{tabular}}}}%
    \put(0,0){\includegraphics[width=\unitlength,page=7]{Action_ai_1.pdf}}%
    \put(0.49564535,0.4352941){\color[rgb]{0,0,0}\makebox(0,0)[lt]{\lineheight{0}\smash{\begin{tabular}[t]{l}\scriptsize $m_{g+n}$\end{tabular}}}}%
    \put(0.38976295,0.4352941){\color[rgb]{0,0,0}\makebox(0,0)[lt]{\lineheight{0}\smash{\begin{tabular}[t]{l}\scriptsize $m_{i}$\end{tabular}}}}%
    \put(0.28388062,0.4352941){\color[rgb]{0,0,0}\makebox(0,0)[lt]{\lineheight{0}\smash{\begin{tabular}[t]{l}\scriptsize $m_{1}$\end{tabular}}}}%
    \put(0.1368218,0.4352941){\color[rgb]{0,0,0}\makebox(0,0)[lt]{\lineheight{0}\smash{\begin{tabular}[t]{l}\scriptsize $a_i$\end{tabular}}}}%
    \put(0,0){\includegraphics[width=\unitlength,page=8]{Action_ai_1.pdf}}%
  \end{picture}%
\endgroup%

%% file: Action_ai_2.pdf_tex
\begingroup%
  \makeatletter%
  \providecommand\color[2][]{%
    \errmessage{(Inkscape) Color is used for the text in Inkscape, but the package 'color.sty' is not loaded}%
    \renewcommand\color[2][]{}%
  }%
  \providecommand\transparent[1]{%
    \errmessage{(Inkscape) Transparency is used (non-zero) for the text in Inkscape, but the package 'transparent.sty' is not loaded}%
    \renewcommand\transparent[1]{}%
  }%
  \providecommand\rotatebox[2]{#2}%
  \newcommand*\fsize{\dimexpr\f@size pt\relax}%
  \newcommand*\lineheight[1]{\fontsize{\fsize}{#1\fsize}\selectfont}%
  \ifx\svgwidth\undefined%
    \setlength{\unitlength}{481.88976378bp}%
    \ifx\svgscale\undefined%
      \relax%
    \else%
      \setlength{\unitlength}{\unitlength * \real{\svgscale}}%
    \fi%
  \else%
    \setlength{\unitlength}{\svgwidth}%
  \fi%
  \global\let\svgwidth\undefined%
  \global\let\svgscale\undefined%
  \makeatother%
  \begin{picture}(1,1.04117638)%
    \lineheight{1}%
    \setlength\tabcolsep{0pt}%
    \put(0,0){\includegraphics[width=\unitlength,page=1]{Action_ai_2.pdf}}%
    \put(0.56035124,0.00743427){\color[rgb]{0,0,0}\makebox(0,0)[lt]{\lineheight{0}\smash{\begin{tabular}[t]{l}\scriptsize $m_{g+n}$\end{tabular}}}}%
    \put(0.45446888,0.00743429){\color[rgb]{0,0,0}\makebox(0,0)[lt]{\lineheight{0}\smash{\begin{tabular}[t]{l}\scriptsize $m_{i+1}$\end{tabular}}}}%
    \put(0.34862002,0.00743429){\color[rgb]{0,0,0}\makebox(0,0)[lt]{\lineheight{0}\smash{\begin{tabular}[t]{l}\scriptsize $m_{i}$\end{tabular}}}}%
    \put(0.24270414,0.00743429){\color[rgb]{0,0,0}\makebox(0,0)[lt]{\lineheight{0}\smash{\begin{tabular}[t]{l}\scriptsize $m_{i-1}$\end{tabular}}}}%
    \put(0.13682179,0.00743429){\color[rgb]{0,0,0}\makebox(0,0)[lt]{\lineheight{0}\smash{\begin{tabular}[t]{l}\scriptsize $m_{1}$\end{tabular}}}}%
    \put(0.0103512,0.04861073){\color[rgb]{0,0,0}\makebox(0,0)[lt]{\lineheight{0}\smash{\begin{tabular}[t]{l}$=$\end{tabular}}}}%
    \put(0,0){\includegraphics[width=\unitlength,page=2]{Action_ai_2.pdf}}%
    \put(0.56035125,0.15882356){\color[rgb]{0,0,0}\makebox(0,0)[lt]{\lineheight{0}\smash{\begin{tabular}[t]{l}\scriptsize $m_{g+n}$\end{tabular}}}}%
    \put(0.45446888,0.15882355){\color[rgb]{0,0,0}\makebox(0,0)[lt]{\lineheight{0}\smash{\begin{tabular}[t]{l}\scriptsize $b_{i+1}$\end{tabular}}}}%
    \put(0.34858652,0.15882355){\color[rgb]{0,0,0}\makebox(0,0)[lt]{\lineheight{0}\smash{\begin{tabular}[t]{l}\scriptsize $b_i$\end{tabular}}}}%
    \put(0.24270415,0.15882356){\color[rgb]{0,0,0}\makebox(0,0)[lt]{\lineheight{0}\smash{\begin{tabular}[t]{l}\scriptsize $b_{i-1}$\end{tabular}}}}%
    \put(0.13682181,0.15882355){\color[rgb]{0,0,0}\makebox(0,0)[lt]{\lineheight{0}\smash{\begin{tabular}[t]{l}\scriptsize $b_1$\end{tabular}}}}%
    \put(0.59717296,0.20034237){\color[rgb]{0,0,0}\makebox(0,0)[lt]{\lineheight{0}\smash{\begin{tabular}[t]{l}$1_\mathbb{K}$\end{tabular}}}}%
    \put(0,0){\includegraphics[width=\unitlength,page=3]{Action_ai_2.pdf}}%
    \put(0.00446886,0.19999999){\color[rgb]{0,0,0}\makebox(0,0)[lt]{\lineheight{0}\smash{\begin{tabular}[t]{l}$\overset{\text{top.}}{=}$\end{tabular}}}}%
    \put(0,0){\includegraphics[width=\unitlength,page=4]{Action_ai_2.pdf}}%
    \put(0.56035124,0.31176471){\color[rgb]{0,0,0}\makebox(0,0)[lt]{\lineheight{0}\smash{\begin{tabular}[t]{l}\scriptsize $m_{g+n}$\end{tabular}}}}%
    \put(0.45446887,0.31176469){\color[rgb]{0,0,0}\makebox(0,0)[lt]{\lineheight{0}\smash{\begin{tabular}[t]{l}\scriptsize $b_{i+1}$\end{tabular}}}}%
    \put(0.34858652,0.31176469){\color[rgb]{0,0,0}\makebox(0,0)[lt]{\lineheight{0}\smash{\begin{tabular}[t]{l}\scriptsize $b_i$\end{tabular}}}}%
    \put(0.24270415,0.31176471){\color[rgb]{0,0,0}\makebox(0,0)[lt]{\lineheight{0}\smash{\begin{tabular}[t]{l}\scriptsize $b_{i-1}$\end{tabular}}}}%
    \put(0.1368218,0.31176469){\color[rgb]{0,0,0}\makebox(0,0)[lt]{\lineheight{0}\smash{\begin{tabular}[t]{l}\scriptsize $b_1$\end{tabular}}}}%
    \put(0.66776118,0.35328351){\color[rgb]{0,0,0}\makebox(0,0)[lt]{\lineheight{0}\smash{\begin{tabular}[t]{l}$1_\mathbb{K}$\end{tabular}}}}%
    \put(0,0){\includegraphics[width=\unitlength,page=5]{Action_ai_2.pdf}}%
    \put(-0.0014135,0.35294119){\color[rgb]{0,0,0}\makebox(0,0)[lt]{\lineheight{0}\smash{\begin{tabular}[t]{l}$\overset{\scriptstyle\labelcref{fig-diagramme_a_etats_representation_de_A}}{=}$\end{tabular}}}}%
    \put(0,0){\includegraphics[width=\unitlength,page=6]{Action_ai_2.pdf}}%
    \put(0.66623363,0.4647058){\color[rgb]{0,0,0}\makebox(0,0)[lt]{\lineheight{0}\smash{\begin{tabular}[t]{l}\scriptsize $a_i$\end{tabular}}}}%
    \put(0.56035124,0.4647058){\color[rgb]{0,0,0}\makebox(0,0)[lt]{\lineheight{0}\smash{\begin{tabular}[t]{l}\scriptsize $m_{g+n}$\end{tabular}}}}%
    \put(0.45446886,0.46470579){\color[rgb]{0,0,0}\makebox(0,0)[lt]{\lineheight{0}\smash{\begin{tabular}[t]{l}\scriptsize $b_{i+1}$\end{tabular}}}}%
    \put(0.34858651,0.46470579){\color[rgb]{0,0,0}\makebox(0,0)[lt]{\lineheight{0}\smash{\begin{tabular}[t]{l}\scriptsize $b_i$\end{tabular}}}}%
    \put(0.24270414,0.4647058){\color[rgb]{0,0,0}\makebox(0,0)[lt]{\lineheight{0}\smash{\begin{tabular}[t]{l}\scriptsize $b_{i-1}$\end{tabular}}}}%
    \put(0.1368218,0.46470579){\color[rgb]{0,0,0}\makebox(0,0)[lt]{\lineheight{0}\smash{\begin{tabular}[t]{l}\scriptsize $b_1$\end{tabular}}}}%
    \put(0.7030553,0.5062246){\color[rgb]{0,0,0}\makebox(0,0)[lt]{\lineheight{0}\smash{\begin{tabular}[t]{l}$1_\mathbb{K}$\end{tabular}}}}%
    \put(0,0){\includegraphics[width=\unitlength,page=7]{Action_ai_2.pdf}}%
    \put(0.0103512,0.50588228){\color[rgb]{0,0,0}\makebox(0,0)[lt]{\lineheight{0}\smash{\begin{tabular}[t]{l}$=$\end{tabular}}}}%
    \put(0,0){\includegraphics[width=\unitlength,page=8]{Action_ai_2.pdf}}%
    \put(0.77211599,0.61764699){\color[rgb]{0,0,0}\makebox(0,0)[lt]{\lineheight{0}\smash{\begin{tabular}[t]{l}\scriptsize $m_{g+n}$\end{tabular}}}}%
    \put(0.66623363,0.61764699){\color[rgb]{0,0,0}\makebox(0,0)[lt]{\lineheight{0}\smash{\begin{tabular}[t]{l}\scriptsize $b_{i+2}$\end{tabular}}}}%
    \put(0.45446887,0.61764698){\color[rgb]{0,0,0}\makebox(0,0)[lt]{\lineheight{0}\smash{\begin{tabular}[t]{l}\scriptsize $b_{i+1}$\end{tabular}}}}%
    \put(0.56035124,0.61764699){\color[rgb]{0,0,0}\makebox(0,0)[lt]{\lineheight{0}\smash{\begin{tabular}[t]{l}\scriptsize $a_i$\end{tabular}}}}%
    \put(0.34858652,0.61764698){\color[rgb]{0,0,0}\makebox(0,0)[lt]{\lineheight{0}\smash{\begin{tabular}[t]{l}\scriptsize $b_i$\end{tabular}}}}%
    \put(0.24270414,0.61764699){\color[rgb]{0,0,0}\makebox(0,0)[lt]{\lineheight{0}\smash{\begin{tabular}[t]{l}\scriptsize $b_{i-1}$\end{tabular}}}}%
    \put(0.1368218,0.61764698){\color[rgb]{0,0,0}\makebox(0,0)[lt]{\lineheight{0}\smash{\begin{tabular}[t]{l}\scriptsize $b_1$\end{tabular}}}}%
    \put(0.8089377,0.65916579){\color[rgb]{0,0,0}\makebox(0,0)[lt]{\lineheight{0}\smash{\begin{tabular}[t]{l}$1_\mathbb{K}$\end{tabular}}}}%
    \put(0,0){\includegraphics[width=\unitlength,page=9]{Action_ai_2.pdf}}%
    \put(0.0103512,0.65882346){\color[rgb]{0,0,0}\makebox(0,0)[lt]{\lineheight{0}\smash{\begin{tabular}[t]{l}$=$\end{tabular}}}}%
    \put(0,0){\includegraphics[width=\unitlength,page=10]{Action_ai_2.pdf}}%
    \put(0.66623364,0.77058816){\color[rgb]{0,0,0}\makebox(0,0)[lt]{\lineheight{0}\smash{\begin{tabular}[t]{l}\scriptsize $m_{g+n}$\end{tabular}}}}%
    \put(0.56035124,0.77058816){\color[rgb]{0,0,0}\makebox(0,0)[lt]{\lineheight{0}\smash{\begin{tabular}[t]{l}\scriptsize $b_{i+1}$\end{tabular}}}}%
    \put(0.45446887,0.77058815){\color[rgb]{0,0,0}\makebox(0,0)[lt]{\lineheight{0}\smash{\begin{tabular}[t]{l}\scriptsize $a_i$\end{tabular}}}}%
    \put(0.34858652,0.77058815){\color[rgb]{0,0,0}\makebox(0,0)[lt]{\lineheight{0}\smash{\begin{tabular}[t]{l}\scriptsize $b_i$\end{tabular}}}}%
    \put(0.24270415,0.77058816){\color[rgb]{0,0,0}\makebox(0,0)[lt]{\lineheight{0}\smash{\begin{tabular}[t]{l}\scriptsize $b_{i-1}$\end{tabular}}}}%
    \put(0.1368218,0.77058815){\color[rgb]{0,0,0}\makebox(0,0)[lt]{\lineheight{0}\smash{\begin{tabular}[t]{l}\scriptsize $b_1$\end{tabular}}}}%
    \put(0.7030553,0.81210696){\color[rgb]{0,0,0}\makebox(0,0)[lt]{\lineheight{0}\smash{\begin{tabular}[t]{l}$1_\mathbb{K}$\end{tabular}}}}%
    \put(0,0){\includegraphics[width=\unitlength,page=11]{Action_ai_2.pdf}}%
    \put(-0.0014135,0.81176463){\color[rgb]{0,0,0}\makebox(0,0)[lt]{\lineheight{0}\smash{\begin{tabular}[t]{l}$\overset{\scriptstyle\labelcref{fig-diagrammes_a_etats_relations}}{=}$\end{tabular}}}}%
    \put(0,0){\includegraphics[width=\unitlength,page=12]{Action_ai_2.pdf}}%
    \put(0.56035124,0.92352934){\color[rgb]{0,0,0}\makebox(0,0)[lt]{\lineheight{0}\smash{\begin{tabular}[t]{l}\scriptsize $m_{g+n}$\end{tabular}}}}%
    \put(0.45446887,0.92352933){\color[rgb]{0,0,0}\makebox(0,0)[lt]{\lineheight{0}\smash{\begin{tabular}[t]{l}\scriptsize $b_i$\end{tabular}}}}%
    \put(0.34858652,0.92352933){\color[rgb]{0,0,0}\makebox(0,0)[lt]{\lineheight{0}\smash{\begin{tabular}[t]{l}\scriptsize $a_i$\end{tabular}}}}%
    \put(0.24270415,0.92352934){\color[rgb]{0,0,0}\makebox(0,0)[lt]{\lineheight{0}\smash{\begin{tabular}[t]{l}\scriptsize $b_{i-1}$\end{tabular}}}}%
    \put(0.1368218,0.92352933){\color[rgb]{0,0,0}\makebox(0,0)[lt]{\lineheight{0}\smash{\begin{tabular}[t]{l}\scriptsize $b_1$\end{tabular}}}}%
    \put(0.59717304,0.96504814){\color[rgb]{0,0,0}\makebox(0,0)[lt]{\lineheight{0}\smash{\begin{tabular}[t]{l}$1_\mathbb{K}$\end{tabular}}}}%
    \put(0,0){\includegraphics[width=\unitlength,page=13]{Action_ai_2.pdf}}%
    \put(-0.0014135,0.9647058){\color[rgb]{0,0,0}\makebox(0,0)[lt]{\lineheight{0}\smash{\begin{tabular}[t]{l}$\overset{\scriptstyle\labelcref{fig-diagrammes_a_etats_relations}}{=}$\end{tabular}}}}%
  \end{picture}%
\endgroup%

%% file: Courbe_ci_surface.pdf_tex
\begingroup%
  \makeatletter%
  \providecommand\color[2][]{%
    \errmessage{(Inkscape) Color is used for the text in Inkscape, but the package 'color.sty' is not loaded}%
    \renewcommand\color[2][]{}%
  }%
  \providecommand\transparent[1]{%
    \errmessage{(Inkscape) Transparency is used (non-zero) for the text in Inkscape, but the package 'transparent.sty' is not loaded}%
    \renewcommand\transparent[1]{}%
  }%
  \providecommand\rotatebox[2]{#2}%
  \newcommand*\fsize{\dimexpr\f@size pt\relax}%
  \newcommand*\lineheight[1]{\fontsize{\fsize}{#1\fsize}\selectfont}%
  \ifx\svgwidth\undefined%
    \setlength{\unitlength}{453.54313407bp}%
    \ifx\svgscale\undefined%
      \relax%
    \else%
      \setlength{\unitlength}{\unitlength * \real{\svgscale}}%
    \fi%
  \else%
    \setlength{\unitlength}{\svgwidth}%
  \fi%
  \global\let\svgwidth\undefined%
  \global\let\svgscale\undefined%
  \makeatother%
  \begin{picture}(1,0.17628758)%
    \lineheight{1}%
    \setlength\tabcolsep{0pt}%
    \put(0,0){\includegraphics[width=\unitlength,page=1]{Courbe_ci_surface.pdf}}%
    \put(0.6068936,0.14439381){\color[rgb]{0,0,0}\makebox(0,0)[lt]{\lineheight{1.25}\smash{\begin{tabular}[t]{l}$\scriptstyle c_i$\end{tabular}}}}%
    \put(0.29439349,0.05064377){\color[rgb]{0,0,0}\makebox(0,0)[lt]{\lineheight{1.25}\smash{\begin{tabular}[t]{l}$\scriptstyle i$\end{tabular}}}}%
    \put(0,0){\includegraphics[width=\unitlength,page=2]{Courbe_ci_surface.pdf}}%
  \end{picture}%
\endgroup%

%% file: Courbe_ci_ruban.pdf_tex
\begingroup%
  \makeatletter%
  \providecommand\color[2][]{%
    \errmessage{(Inkscape) Color is used for the text in Inkscape, but the package 'color.sty' is not loaded}%
    \renewcommand\color[2][]{}%
  }%
  \providecommand\transparent[1]{%
    \errmessage{(Inkscape) Transparency is used (non-zero) for the text in Inkscape, but the package 'transparent.sty' is not loaded}%
    \renewcommand\transparent[1]{}%
  }%
  \providecommand\rotatebox[2]{#2}%
  \newcommand*\fsize{\dimexpr\f@size pt\relax}%
  \newcommand*\lineheight[1]{\fontsize{\fsize}{#1\fsize}\selectfont}%
  \ifx\svgwidth\undefined%
    \setlength{\unitlength}{453.54330709bp}%
    \ifx\svgscale\undefined%
      \relax%
    \else%
      \setlength{\unitlength}{\unitlength * \real{\svgscale}}%
    \fi%
  \else%
    \setlength{\unitlength}{\svgwidth}%
  \fi%
  \global\let\svgwidth\undefined%
  \global\let\svgscale\undefined%
  \makeatother%
  \begin{picture}(1,0.21291177)%
    \lineheight{1}%
    \setlength\tabcolsep{0pt}%
    \put(0,0){\includegraphics[width=\unitlength,page=1]{Courbe_ci_ruban.pdf}}%
    \put(0.07251875,0.00409859){\color[rgb]{0,0,0}\makebox(0,0)[lt]{\lineheight{0}\smash{\begin{tabular}[t]{l}\scriptsize $b_1$\end{tabular}}}}%
    \put(0.12251875,0.00409859){\color[rgb]{0,0,0}\makebox(0,0)[lt]{\lineheight{0}\smash{\begin{tabular}[t]{l}\scriptsize $a_1$\end{tabular}}}}%
    \put(0.27251876,0.00409859){\color[rgb]{0,0,0}\makebox(0,0)[lt]{\lineheight{0}\smash{\begin{tabular}[t]{l}\scriptsize $b_i$\end{tabular}}}}%
    \put(0.32251876,0.00409859){\color[rgb]{0,0,0}\makebox(0,0)[lt]{\lineheight{0}\smash{\begin{tabular}[t]{l}\scriptsize $a_i$\end{tabular}}}}%
    \put(0.47251877,0.00409859){\color[rgb]{0,0,0}\makebox(0,0)[lt]{\lineheight{0}\smash{\begin{tabular}[t]{l}\scriptsize $b_{i+1}$\end{tabular}}}}%
    \put(0.52251875,0.00409859){\color[rgb]{0,0,0}\makebox(0,0)[lt]{\lineheight{0}\smash{\begin{tabular}[t]{l}\scriptsize $a_{i+1}$\end{tabular}}}}%
    \put(0.67251877,0.00409859){\color[rgb]{0,0,0}\makebox(0,0)[lt]{\lineheight{0}\smash{\begin{tabular}[t]{l}\scriptsize $b_g$\end{tabular}}}}%
    \put(0.72251866,0.00409859){\color[rgb]{0,0,0}\makebox(0,0)[lt]{\lineheight{0}\smash{\begin{tabular}[t]{l}\scriptsize $a_g$\end{tabular}}}}%
    \put(0.83814336,0.03222361){\color[rgb]{0,0,0}\makebox(0,0)[lt]{\lineheight{0}\smash{\begin{tabular}[t]{l}\scriptsize $m_{g+1}$\end{tabular}}}}%
    \put(0.93189332,0.03222361){\color[rgb]{0,0,0}\makebox(0,0)[lt]{\lineheight{0}\smash{\begin{tabular}[t]{l}\scriptsize $m_{g+n}$\end{tabular}}}}%
    \put(0.50064378,0.1790986){\color[rgb]{0,0,0}\makebox(0,0)[lt]{\lineheight{1.25}\smash{\begin{tabular}[t]{l}\scriptsize $c_i$\end{tabular}}}}%
  \end{picture}%
\endgroup%

%% file: Diagramme_ci.pdf_tex
\begingroup%
  \makeatletter%
  \providecommand\color[2][]{%
    \errmessage{(Inkscape) Color is used for the text in Inkscape, but the package 'color.sty' is not loaded}%
    \renewcommand\color[2][]{}%
  }%
  \providecommand\transparent[1]{%
    \errmessage{(Inkscape) Transparency is used (non-zero) for the text in Inkscape, but the package 'transparent.sty' is not loaded}%
    \renewcommand\transparent[1]{}%
  }%
  \providecommand\rotatebox[2]{#2}%
  \newcommand*\fsize{\dimexpr\f@size pt\relax}%
  \newcommand*\lineheight[1]{\fontsize{\fsize}{#1\fsize}\selectfont}%
  \ifx\svgwidth\undefined%
    \setlength{\unitlength}{300.13833114bp}%
    \ifx\svgscale\undefined%
      \relax%
    \else%
      \setlength{\unitlength}{\unitlength * \real{\svgscale}}%
    \fi%
  \else%
    \setlength{\unitlength}{\svgwidth}%
  \fi%
  \global\let\svgwidth\undefined%
  \global\let\svgscale\undefined%
  \makeatother%
  \begin{picture}(1,0.46240614)%
    \lineheight{1}%
    \setlength\tabcolsep{0pt}%
    \put(0,0){\includegraphics[width=\unitlength,page=1]{Diagramme_ci.pdf}}%
    \put(0.92653162,0.00603238){\color[rgb]{0,0,0}\makebox(0,0)[lt]{\lineheight{0}\smash{\begin{tabular}[t]{l}\scriptsize $b_{i+1}$\end{tabular}}}}%
    \put(0.7565314,0.00603238){\color[rgb]{0,0,0}\makebox(0,0)[lt]{\lineheight{0}\smash{\begin{tabular}[t]{l}\scriptsize $a_{i+1}$\end{tabular}}}}%
    \put(0.58653104,0.00603238){\color[rgb]{0,0,0}\makebox(0,0)[lt]{\lineheight{0}\smash{\begin{tabular}[t]{l}\scriptsize $b_{i+1}$\end{tabular}}}}%
    \put(0.4165306,0.00603238){\color[rgb]{0,0,0}\makebox(0,0)[lt]{\lineheight{0}\smash{\begin{tabular}[t]{l}\scriptsize $a_i$\end{tabular}}}}%
    \put(0.20403017,0.07211364){\color[rgb]{0,0,0}\makebox(0,0)[lt]{\lineheight{0}\smash{\begin{tabular}[t]{l}$\overset{\text{top.}}{=}$\end{tabular}}}}%
    \put(0,0){\includegraphics[width=\unitlength,page=2]{Diagramme_ci.pdf}}%
    \put(0.9265316,0.1665583){\color[rgb]{0,0,0}\makebox(0,0)[lt]{\lineheight{0}\smash{\begin{tabular}[t]{l}\scriptsize $b_{i+1}$\end{tabular}}}}%
    \put(0.75653137,0.1665583){\color[rgb]{0,0,0}\makebox(0,0)[lt]{\lineheight{0}\smash{\begin{tabular}[t]{l}\scriptsize $a_{i+1}$\end{tabular}}}}%
    \put(0.58653102,0.1665583){\color[rgb]{0,0,0}\makebox(0,0)[lt]{\lineheight{0}\smash{\begin{tabular}[t]{l}\scriptsize $b_{i+1}$\end{tabular}}}}%
    \put(0.41653064,0.1665583){\color[rgb]{0,0,0}\makebox(0,0)[lt]{\lineheight{0}\smash{\begin{tabular}[t]{l}\scriptsize $a_i$\end{tabular}}}}%
    \put(0.21347463,0.23266955){\color[rgb]{0,0,0}\makebox(0,0)[lt]{\lineheight{0}\smash{\begin{tabular}[t]{l}$=$\end{tabular}}}}%
    \put(0.34569715,0.25628078){\color[rgb]{0,0,0}\makebox(0,0)[lt]{\lineheight{0}\smash{\begin{tabular}[t]{l}\scriptsize $2$\end{tabular}}}}%
    \put(0,0){\includegraphics[width=\unitlength,page=3]{Diagramme_ci.pdf}}%
    \put(0.92653146,0.32711419){\color[rgb]{0,0,0}\makebox(0,0)[lt]{\lineheight{0}\smash{\begin{tabular}[t]{l}\scriptsize $b_{i+1}^{-1}$\end{tabular}}}}%
    \put(0.75653117,0.32711419){\color[rgb]{0,0,0}\makebox(0,0)[lt]{\lineheight{0}\smash{\begin{tabular}[t]{l}\scriptsize $a_{i+1}^{-1}$\end{tabular}}}}%
    \put(0.58653082,0.32711419){\color[rgb]{0,0,0}\makebox(0,0)[lt]{\lineheight{0}\smash{\begin{tabular}[t]{l}\scriptsize $b_{i+1}$\end{tabular}}}}%
    \put(0.41653046,0.32711419){\color[rgb]{0,0,0}\makebox(0,0)[lt]{\lineheight{0}\smash{\begin{tabular}[t]{l}\scriptsize $a_i$\end{tabular}}}}%
    \put(0.15680645,0.32711419){\color[rgb]{0,0,0}\makebox(0,0)[lt]{\lineheight{0}\smash{\begin{tabular}[t]{l}\scriptsize $c_i$\end{tabular}}}}%
    \put(0.34569698,0.4168366){\color[rgb]{0,0,0}\makebox(0,0)[lt]{\lineheight{0}\smash{\begin{tabular}[t]{l}\scriptsize $2$\end{tabular}}}}%
    \put(0.21111353,0.39322544){\color[rgb]{0,0,0}\makebox(0,0)[lt]{\lineheight{0}\smash{\begin{tabular}[t]{l}$:=$\end{tabular}}}}%
  \end{picture}%
\endgroup%

%% file: Action_ci.pdf_tex
\begingroup%
  \makeatletter%
  \providecommand\color[2][]{%
    \errmessage{(Inkscape) Color is used for the text in Inkscape, but the package 'color.sty' is not loaded}%
    \renewcommand\color[2][]{}%
  }%
  \providecommand\transparent[1]{%
    \errmessage{(Inkscape) Transparency is used (non-zero) for the text in Inkscape, but the package 'transparent.sty' is not loaded}%
    \renewcommand\transparent[1]{}%
  }%
  \providecommand\rotatebox[2]{#2}%
  \newcommand*\fsize{\dimexpr\f@size pt\relax}%
  \newcommand*\lineheight[1]{\fontsize{\fsize}{#1\fsize}\selectfont}%
  \ifx\svgwidth\undefined%
    \setlength{\unitlength}{482.01290581bp}%
    \ifx\svgscale\undefined%
      \relax%
    \else%
      \setlength{\unitlength}{\unitlength * \real{\svgscale}}%
    \fi%
  \else%
    \setlength{\unitlength}{\svgwidth}%
  \fi%
  \global\let\svgwidth\undefined%
  \global\let\svgscale\undefined%
  \makeatother%
  \begin{picture}(1,1.34132675)%
    \lineheight{1}%
    \setlength\tabcolsep{0pt}%
    \put(0,0){\includegraphics[width=\unitlength,page=1]{Action_ci.pdf}}%
    \put(0.96330039,0.00448122){\color[rgb]{0,0,0}\makebox(0,0)[lt]{\lineheight{0}\smash{\begin{tabular}[t]{l}\scriptsize $m_{i+1}$\end{tabular}}}}%
    \put(0.86038836,0.00448122){\color[rgb]{0,0,0}\makebox(0,0)[lt]{\lineheight{0}\smash{\begin{tabular}[t]{l}\scriptsize $m_{i}$\end{tabular}}}}%
    \put(0.75159109,0.00448122){\color[rgb]{0,0,0}\makebox(0,0)[lt]{\lineheight{0}\smash{\begin{tabular}[t]{l}\scriptsize $m_{i-1}$\end{tabular}}}}%
    \put(0.64867778,0.00448122){\color[rgb]{0,0,0}\makebox(0,0)[lt]{\lineheight{0}\smash{\begin{tabular}[t]{l}\scriptsize $m_{1}$\end{tabular}}}}%
    \put(0.52223806,0.04562848){\color[rgb]{0,0,0}\makebox(0,0)[lt]{\lineheight{0}\smash{\begin{tabular}[t]{l}$=$\end{tabular}}}}%
    \put(0,0){\includegraphics[width=\unitlength,page=2]{Action_ci.pdf}}%
    \put(0.44872752,0.00448122){\color[rgb]{0,0,0}\makebox(0,0)[lt]{\lineheight{0}\smash{\begin{tabular}[t]{l}\scriptsize $b_{i+1}$\end{tabular}}}}%
    \put(0.3428723,0.00448122){\color[rgb]{0,0,0}\makebox(0,0)[lt]{\lineheight{0}\smash{\begin{tabular}[t]{l}\scriptsize $b_i$\end{tabular}}}}%
    \put(0.23701692,0.00448122){\color[rgb]{0,0,0}\makebox(0,0)[lt]{\lineheight{0}\smash{\begin{tabular}[t]{l}\scriptsize $b_{i-1}$\end{tabular}}}}%
    \put(0.13116154,0.00448122){\color[rgb]{0,0,0}\makebox(0,0)[lt]{\lineheight{0}\smash{\begin{tabular}[t]{l}\scriptsize $b_1$\end{tabular}}}}%
    \put(0,0){\includegraphics[width=\unitlength,page=3]{Action_ci.pdf}}%
    \put(0.48695311,0.04562849){\color[rgb]{0,0,0}\makebox(0,0)[lt]{\lineheight{0}\smash{\begin{tabular}[t]{l}$1_\mathbb{K}$\end{tabular}}}}%
    \put(-0.00115764,0.04562848){\color[rgb]{0,0,0}\makebox(0,0)[lt]{\lineheight{0}\smash{\begin{tabular}[t]{l}$\overset{\text{top.}}{=}$\end{tabular}}}}%
    \put(0,0){\includegraphics[width=\unitlength,page=4]{Action_ci.pdf}}%
    \put(0.44872752,0.13385987){\color[rgb]{0,0,0}\makebox(0,0)[lt]{\lineheight{0}\smash{\begin{tabular}[t]{l}\scriptsize $b_{i+1}$\end{tabular}}}}%
    \put(0.34287229,0.13385987){\color[rgb]{0,0,0}\makebox(0,0)[lt]{\lineheight{0}\smash{\begin{tabular}[t]{l}\scriptsize $b_i$\end{tabular}}}}%
    \put(0.23701691,0.13385987){\color[rgb]{0,0,0}\makebox(0,0)[lt]{\lineheight{0}\smash{\begin{tabular}[t]{l}\scriptsize $b_{i-1}$\end{tabular}}}}%
    \put(0.13116153,0.13385987){\color[rgb]{0,0,0}\makebox(0,0)[lt]{\lineheight{0}\smash{\begin{tabular}[t]{l}\scriptsize $b_1$\end{tabular}}}}%
    \put(0,0){\includegraphics[width=\unitlength,page=5]{Action_ci.pdf}}%
    \put(0.48695313,0.17500715){\color[rgb]{0,0,0}\makebox(0,0)[lt]{\lineheight{0}\smash{\begin{tabular}[t]{l}$1_\mathbb{K}$\end{tabular}}}}%
    \put(0.00472322,0.17500714){\color[rgb]{0,0,0}\makebox(0,0)[lt]{\lineheight{0}\smash{\begin{tabular}[t]{l}$=$\end{tabular}}}}%
    \put(0,0){\includegraphics[width=\unitlength,page=6]{Action_ci.pdf}}%
    \put(0.55458275,0.29264283){\color[rgb]{0,0,0}\makebox(0,0)[lt]{\lineheight{0}\smash{\begin{tabular}[t]{l}\scriptsize $b_{i+1}$\end{tabular}}}}%
    \put(0.44872752,0.29264283){\color[rgb]{0,0,0}\makebox(0,0)[lt]{\lineheight{0}\smash{\begin{tabular}[t]{l}\scriptsize $b_i$\end{tabular}}}}%
    \put(0.34287218,0.29264283){\color[rgb]{0,0,0}\makebox(0,0)[lt]{\lineheight{0}\smash{\begin{tabular}[t]{l}\scriptsize $b_{i-1}$\end{tabular}}}}%
    \put(0.2370168,0.29264283){\color[rgb]{0,0,0}\makebox(0,0)[lt]{\lineheight{0}\smash{\begin{tabular}[t]{l}\scriptsize $b_1$\end{tabular}}}}%
    \put(0.13116152,0.29264281){\color[rgb]{0,0,0}\makebox(0,0)[lt]{\lineheight{0}\smash{\begin{tabular}[t]{l}\scriptsize $a_i$\end{tabular}}}}%
    \put(0,0){\includegraphics[width=\unitlength,page=7]{Action_ci.pdf}}%
    \put(0.59280843,0.3337901){\color[rgb]{0,0,0}\makebox(0,0)[lt]{\lineheight{0}\smash{\begin{tabular}[t]{l}$1_\mathbb{K}$\end{tabular}}}}%
    \put(-0.00115758,0.33379009){\color[rgb]{0,0,0}\makebox(0,0)[lt]{\lineheight{0}\smash{\begin{tabular}[t]{l}$\overset{\text{top.}}{=}$\end{tabular}}}}%
    \put(0,0){\includegraphics[width=\unitlength,page=8]{Action_ci.pdf}}%
    \put(0.5545828,0.45142578){\color[rgb]{0,0,0}\makebox(0,0)[lt]{\lineheight{0}\smash{\begin{tabular}[t]{l}\scriptsize $b_{i+1}$\end{tabular}}}}%
    \put(0.44872757,0.45142578){\color[rgb]{0,0,0}\makebox(0,0)[lt]{\lineheight{0}\smash{\begin{tabular}[t]{l}\scriptsize $b_i$\end{tabular}}}}%
    \put(0.34287223,0.45142578){\color[rgb]{0,0,0}\makebox(0,0)[lt]{\lineheight{0}\smash{\begin{tabular}[t]{l}\scriptsize $b_{i-1}$\end{tabular}}}}%
    \put(0.23701685,0.45142578){\color[rgb]{0,0,0}\makebox(0,0)[lt]{\lineheight{0}\smash{\begin{tabular}[t]{l}\scriptsize $b_1$\end{tabular}}}}%
    \put(0.13116157,0.45142577){\color[rgb]{0,0,0}\makebox(0,0)[lt]{\lineheight{0}\smash{\begin{tabular}[t]{l}\scriptsize $a_i$\end{tabular}}}}%
    \put(0,0){\includegraphics[width=\unitlength,page=9]{Action_ci.pdf}}%
    \put(0.59280844,0.49257306){\color[rgb]{0,0,0}\makebox(0,0)[lt]{\lineheight{0}\smash{\begin{tabular}[t]{l}$1_\mathbb{K}$\end{tabular}}}}%
    \put(0.00472327,0.49257305){\color[rgb]{0,0,0}\makebox(0,0)[lt]{\lineheight{0}\smash{\begin{tabular}[t]{l}$=$\end{tabular}}}}%
    \put(0,0){\includegraphics[width=\unitlength,page=10]{Action_ci.pdf}}%
    \put(0.66043809,0.61020874){\color[rgb]{0,0,0}\makebox(0,0)[lt]{\lineheight{0}\smash{\begin{tabular}[t]{l}\scriptsize $b_{i+1}$\end{tabular}}}}%
    \put(0.55458282,0.61020874){\color[rgb]{0,0,0}\makebox(0,0)[lt]{\lineheight{0}\smash{\begin{tabular}[t]{l}\scriptsize $b_i$\end{tabular}}}}%
    \put(0.44872748,0.61020874){\color[rgb]{0,0,0}\makebox(0,0)[lt]{\lineheight{0}\smash{\begin{tabular}[t]{l}\scriptsize $b_{i-1}$\end{tabular}}}}%
    \put(0.3428721,0.61020874){\color[rgb]{0,0,0}\makebox(0,0)[lt]{\lineheight{0}\smash{\begin{tabular}[t]{l}\scriptsize $b_1$\end{tabular}}}}%
    \put(0.23701688,0.61020872){\color[rgb]{0,0,0}\makebox(0,0)[lt]{\lineheight{0}\smash{\begin{tabular}[t]{l}\scriptsize $b_{i+1}$\end{tabular}}}}%
    \put(0.13116152,0.61020872){\color[rgb]{0,0,0}\makebox(0,0)[lt]{\lineheight{0}\smash{\begin{tabular}[t]{l}\scriptsize $a_i$\end{tabular}}}}%
    \put(0,0){\includegraphics[width=\unitlength,page=11]{Action_ci.pdf}}%
    \put(0.6986636,0.65135601){\color[rgb]{0,0,0}\makebox(0,0)[lt]{\lineheight{0}\smash{\begin{tabular}[t]{l}$1_\mathbb{K}$\end{tabular}}}}%
    \put(-0.00115758,0.651356){\color[rgb]{0,0,0}\makebox(0,0)[lt]{\lineheight{0}\smash{\begin{tabular}[t]{l}$\overset{\text{top.}}{=}$\end{tabular}}}}%
    \put(0,0){\includegraphics[width=\unitlength,page=12]{Action_ci.pdf}}%
    \put(0.66043813,0.76899167){\color[rgb]{0,0,0}\makebox(0,0)[lt]{\lineheight{0}\smash{\begin{tabular}[t]{l}\scriptsize $b_{i+1}$\end{tabular}}}}%
    \put(0.55458287,0.76899167){\color[rgb]{0,0,0}\makebox(0,0)[lt]{\lineheight{0}\smash{\begin{tabular}[t]{l}\scriptsize $b_i$\end{tabular}}}}%
    \put(0.44872753,0.76899167){\color[rgb]{0,0,0}\makebox(0,0)[lt]{\lineheight{0}\smash{\begin{tabular}[t]{l}\scriptsize $b_{i-1}$\end{tabular}}}}%
    \put(0.34287215,0.76899167){\color[rgb]{0,0,0}\makebox(0,0)[lt]{\lineheight{0}\smash{\begin{tabular}[t]{l}\scriptsize $b_1$\end{tabular}}}}%
    \put(0.23701692,0.76899166){\color[rgb]{0,0,0}\makebox(0,0)[lt]{\lineheight{0}\smash{\begin{tabular}[t]{l}\scriptsize $b_{i+1}$\end{tabular}}}}%
    \put(0.13116156,0.76899166){\color[rgb]{0,0,0}\makebox(0,0)[lt]{\lineheight{0}\smash{\begin{tabular}[t]{l}\scriptsize $a_i$\end{tabular}}}}%
    \put(0,0){\includegraphics[width=\unitlength,page=13]{Action_ci.pdf}}%
    \put(0.69866369,0.81013895){\color[rgb]{0,0,0}\makebox(0,0)[lt]{\lineheight{0}\smash{\begin{tabular}[t]{l}$1_\mathbb{K}$\end{tabular}}}}%
    \put(0.00472327,0.81013894){\color[rgb]{0,0,0}\makebox(0,0)[lt]{\lineheight{0}\smash{\begin{tabular}[t]{l}$=$\end{tabular}}}}%
    \put(0,0){\includegraphics[width=\unitlength,page=14]{Action_ci.pdf}}%
    \put(0.76629348,0.92777463){\color[rgb]{0,0,0}\makebox(0,0)[lt]{\lineheight{0}\smash{\begin{tabular}[t]{l}\scriptsize $b_{i+1}$\end{tabular}}}}%
    \put(0.66043821,0.92777463){\color[rgb]{0,0,0}\makebox(0,0)[lt]{\lineheight{0}\smash{\begin{tabular}[t]{l}\scriptsize $b_i$\end{tabular}}}}%
    \put(0.55458287,0.92777463){\color[rgb]{0,0,0}\makebox(0,0)[lt]{\lineheight{0}\smash{\begin{tabular}[t]{l}\scriptsize $b_{i-1}$\end{tabular}}}}%
    \put(0.44872749,0.92777463){\color[rgb]{0,0,0}\makebox(0,0)[lt]{\lineheight{0}\smash{\begin{tabular}[t]{l}\scriptsize $b_1$\end{tabular}}}}%
    \put(0.34287223,0.92777462){\color[rgb]{0,0,0}\makebox(0,0)[lt]{\lineheight{0}\smash{\begin{tabular}[t]{l}\scriptsize $a_{i+1}$\end{tabular}}}}%
    \put(0.23701692,0.92777462){\color[rgb]{0,0,0}\makebox(0,0)[lt]{\lineheight{0}\smash{\begin{tabular}[t]{l}\scriptsize $b_{i+1}$\end{tabular}}}}%
    \put(0.13116156,0.92777462){\color[rgb]{0,0,0}\makebox(0,0)[lt]{\lineheight{0}\smash{\begin{tabular}[t]{l}\scriptsize $a_i$\end{tabular}}}}%
    \put(0,0){\includegraphics[width=\unitlength,page=15]{Action_ci.pdf}}%
    \put(0.80451899,0.96892191){\color[rgb]{0,0,0}\makebox(0,0)[lt]{\lineheight{0}\smash{\begin{tabular}[t]{l}$1_\mathbb{K}$\end{tabular}}}}%
    \put(0.00472327,0.9689219){\color[rgb]{0,0,0}\makebox(0,0)[lt]{\lineheight{0}\smash{\begin{tabular}[t]{l}$=$\end{tabular}}}}%
    \put(0,0){\includegraphics[width=\unitlength,page=16]{Action_ci.pdf}}%
    \put(0.87214875,1.05715334){\color[rgb]{0,0,0}\makebox(0,0)[lt]{\lineheight{0}\smash{\begin{tabular}[t]{l}\scriptsize $b_{i+1}$\end{tabular}}}}%
    \put(0.76629348,1.05715334){\color[rgb]{0,0,0}\makebox(0,0)[lt]{\lineheight{0}\smash{\begin{tabular}[t]{l}\scriptsize $b_i$\end{tabular}}}}%
    \put(0.66043814,1.05715334){\color[rgb]{0,0,0}\makebox(0,0)[lt]{\lineheight{0}\smash{\begin{tabular}[t]{l}\scriptsize $b_{i-1}$\end{tabular}}}}%
    \put(0.5545828,1.05715334){\color[rgb]{0,0,0}\makebox(0,0)[lt]{\lineheight{0}\smash{\begin{tabular}[t]{l}\scriptsize $b_1$\end{tabular}}}}%
    \put(0.44872746,1.05715332){\color[rgb]{0,0,0}\makebox(0,0)[lt]{\lineheight{0}\smash{\begin{tabular}[t]{l}\scriptsize $b_{i+1}$\end{tabular}}}}%
    \put(0.34287223,1.05715332){\color[rgb]{0,0,0}\makebox(0,0)[lt]{\lineheight{0}\smash{\begin{tabular}[t]{l}\scriptsize $a_{i+1}$\end{tabular}}}}%
    \put(0.23701693,1.05715332){\color[rgb]{0,0,0}\makebox(0,0)[lt]{\lineheight{0}\smash{\begin{tabular}[t]{l}\scriptsize $b_{i+1}$\end{tabular}}}}%
    \put(0.13116157,1.05715332){\color[rgb]{0,0,0}\makebox(0,0)[lt]{\lineheight{0}\smash{\begin{tabular}[t]{l}\scriptsize $a_i$\end{tabular}}}}%
    \put(0,0){\includegraphics[width=\unitlength,page=17]{Action_ci.pdf}}%
    \put(0.9103743,1.09830061){\color[rgb]{0,0,0}\makebox(0,0)[lt]{\lineheight{0}\smash{\begin{tabular}[t]{l}$1_\mathbb{K}$\end{tabular}}}}%
    \put(0.00472327,1.0983006){\color[rgb]{0,0,0}\makebox(0,0)[lt]{\lineheight{0}\smash{\begin{tabular}[t]{l}$=$\end{tabular}}}}%
    \put(0,0){\includegraphics[width=\unitlength,page=18]{Action_ci.pdf}}%
    \put(0.91331468,1.1571278){\color[rgb]{0,0,0}\makebox(0,0)[lt]{\lineheight{0}\smash{\begin{tabular}[t]{l}\scriptsize $m_{i+1}$\end{tabular}}}}%
    \put(0.80745942,1.1571278){\color[rgb]{0,0,0}\makebox(0,0)[lt]{\lineheight{0}\smash{\begin{tabular}[t]{l}\scriptsize $m_i$\end{tabular}}}}%
    \put(0.70160408,1.1571278){\color[rgb]{0,0,0}\makebox(0,0)[lt]{\lineheight{0}\smash{\begin{tabular}[t]{l}\scriptsize $m_{i-1}$\end{tabular}}}}%
    \put(0.59574873,1.1571278){\color[rgb]{0,0,0}\makebox(0,0)[lt]{\lineheight{0}\smash{\begin{tabular}[t]{l}\scriptsize $m_1$\end{tabular}}}}%
    \put(0.44872746,1.15712779){\color[rgb]{0,0,0}\makebox(0,0)[lt]{\lineheight{0}\smash{\begin{tabular}[t]{l}\scriptsize $b_{i+1}$\end{tabular}}}}%
    \put(0.34287223,1.15712779){\color[rgb]{0,0,0}\makebox(0,0)[lt]{\lineheight{0}\smash{\begin{tabular}[t]{l}\scriptsize $a_{i+1}$\end{tabular}}}}%
    \put(0.23701693,1.15712779){\color[rgb]{0,0,0}\makebox(0,0)[lt]{\lineheight{0}\smash{\begin{tabular}[t]{l}\scriptsize $b_{i+1}$\end{tabular}}}}%
    \put(0.13116157,1.15712779){\color[rgb]{0,0,0}\makebox(0,0)[lt]{\lineheight{0}\smash{\begin{tabular}[t]{l}\scriptsize $a_i$\end{tabular}}}}%
    \put(0,0){\includegraphics[width=\unitlength,page=19]{Action_ci.pdf}}%
    \put(0.00472327,1.19827506){\color[rgb]{0,0,0}\makebox(0,0)[lt]{\lineheight{0}\smash{\begin{tabular}[t]{l}$=$\end{tabular}}}}%
    \put(0,0){\includegraphics[width=\unitlength,page=20]{Action_ci.pdf}}%
    \put(0.59574866,1.25710224){\color[rgb]{0,0,0}\makebox(0,0)[lt]{\lineheight{0}\smash{\begin{tabular}[t]{l}\scriptsize $m_{i+1}$\end{tabular}}}}%
    \put(0.48989479,1.25710224){\color[rgb]{0,0,0}\makebox(0,0)[lt]{\lineheight{0}\smash{\begin{tabular}[t]{l}\scriptsize $m_{i}$\end{tabular}}}}%
    \put(0.38403948,1.25710224){\color[rgb]{0,0,0}\makebox(0,0)[lt]{\lineheight{0}\smash{\begin{tabular}[t]{l}\scriptsize $m_{i-1}$\end{tabular}}}}%
    \put(0.2781842,1.25710224){\color[rgb]{0,0,0}\makebox(0,0)[lt]{\lineheight{0}\smash{\begin{tabular}[t]{l}\scriptsize $m_{1}$\end{tabular}}}}%
    \put(0.13116156,1.25708357){\color[rgb]{0,0,0}\makebox(0,0)[lt]{\lineheight{0}\smash{\begin{tabular}[t]{l}\scriptsize $c_i$\end{tabular}}}}%
    \put(0,0){\includegraphics[width=\unitlength,page=21]{Action_ci.pdf}}%
  \end{picture}%
\endgroup%

%% file: Etiquetage_ci.pdf_tex
\begingroup%
  \makeatletter%
  \providecommand\color[2][]{%
    \errmessage{(Inkscape) Color is used for the text in Inkscape, but the package 'color.sty' is not loaded}%
    \renewcommand\color[2][]{}%
  }%
  \providecommand\transparent[1]{%
    \errmessage{(Inkscape) Transparency is used (non-zero) for the text in Inkscape, but the package 'transparent.sty' is not loaded}%
    \renewcommand\transparent[1]{}%
  }%
  \providecommand\rotatebox[2]{#2}%
  \newcommand*\fsize{\dimexpr\f@size pt\relax}%
  \newcommand*\lineheight[1]{\fontsize{\fsize}{#1\fsize}\selectfont}%
  \ifx\svgwidth\undefined%
    \setlength{\unitlength}{453.54330709bp}%
    \ifx\svgscale\undefined%
      \relax%
    \else%
      \setlength{\unitlength}{\unitlength * \real{\svgscale}}%
    \fi%
  \else%
    \setlength{\unitlength}{\svgwidth}%
  \fi%
  \global\let\svgwidth\undefined%
  \global\let\svgscale\undefined%
  \makeatother%
  \begin{picture}(1,0.5286675)%
    \lineheight{1}%
    \setlength\tabcolsep{0pt}%
    \put(0,0){\includegraphics[width=\unitlength,page=1]{Etiquetage_ci.pdf}}%
    \put(0.69359382,0.12663622){\color[rgb]{0,0,0}\makebox(0,0)[lt]{\lineheight{0}\smash{\begin{tabular}[t]{l}\tiny $z_{i+1}$\end{tabular}}}}%
    \put(0.69359382,0.10163622){\color[rgb]{0,0,0}\makebox(0,0)[lt]{\lineheight{0}\smash{\begin{tabular}[t]{l}\tiny $d$\end{tabular}}}}%
    \put(0.61859378,0.10163622){\color[rgb]{0,0,0}\makebox(0,0)[lt]{\lineheight{0}\smash{\begin{tabular}[t]{l}\tiny $c$\end{tabular}}}}%
    \put(0.56859377,0.126636){\color[rgb]{0,0,0}\makebox(0,0)[lt]{\lineheight{0}\smash{\begin{tabular}[t]{l}\tiny $z_{i}$\end{tabular}}}}%
    \put(0.61859378,0.12663604){\color[rgb]{0,0,0}\makebox(0,0)[lt]{\lineheight{0}\smash{\begin{tabular}[t]{l}\tiny $y_{i}$\end{tabular}}}}%
    \put(0.55609382,0.10163622){\color[rgb]{0,0,0}\makebox(0,0)[lt]{\lineheight{0}\smash{\begin{tabular}[t]{l}\tiny $a_{i}$\end{tabular}}}}%
    \put(0.41859382,0.126636){\color[rgb]{0,0,0}\makebox(0,0)[lt]{\lineheight{0}\smash{\begin{tabular}[t]{l}\tiny $z_{i-1}$\end{tabular}}}}%
    \put(0.49359379,0.12663604){\color[rgb]{0,0,0}\makebox(0,0)[lt]{\lineheight{0}\smash{\begin{tabular}[t]{l}\tiny $y_{i-1}$\end{tabular}}}}%
    \put(0.48109384,0.10163622){\color[rgb]{0,0,0}\makebox(0,0)[lt]{\lineheight{0}\smash{\begin{tabular}[t]{l}\tiny $b_{i-1}$\end{tabular}}}}%
    \put(0.40609387,0.10163622){\color[rgb]{0,0,0}\makebox(0,0)[lt]{\lineheight{0}\smash{\begin{tabular}[t]{l}\tiny $a_{i-1}$\end{tabular}}}}%
    \put(0.2685938,0.126636){\color[rgb]{0,0,0}\makebox(0,0)[lt]{\lineheight{0}\smash{\begin{tabular}[t]{l}\tiny $z_1$\end{tabular}}}}%
    \put(0.34359381,0.12663604){\color[rgb]{0,0,0}\makebox(0,0)[lt]{\lineheight{0}\smash{\begin{tabular}[t]{l}\tiny $y_1$\end{tabular}}}}%
    \put(0.33109386,0.10163622){\color[rgb]{0,0,0}\makebox(0,0)[lt]{\lineheight{0}\smash{\begin{tabular}[t]{l}\tiny $b_1$\end{tabular}}}}%
    \put(0.25609382,0.10163622){\color[rgb]{0,0,0}\makebox(0,0)[lt]{\lineheight{0}\smash{\begin{tabular}[t]{l}\tiny $a_1$\end{tabular}}}}%
    \put(0,0){\includegraphics[width=\unitlength,page=2]{Etiquetage_ci.pdf}}%
    \put(0.76859466,0.0047625){\color[rgb]{0,0,0}\makebox(0,0)[lt]{\lineheight{0}\smash{\begin{tabular}[t]{l}\scriptsize $m_{i+1}$\end{tabular}}}}%
    \put(0.61859457,0.0047625){\color[rgb]{0,0,0}\makebox(0,0)[lt]{\lineheight{0}\smash{\begin{tabular}[t]{l}\scriptsize $m_{i}$\end{tabular}}}}%
    \put(0.46859372,0.00476251){\color[rgb]{0,0,0}\makebox(0,0)[lt]{\lineheight{0}\smash{\begin{tabular}[t]{l}\scriptsize $m_{i-1}$\end{tabular}}}}%
    \put(0.31859455,0.00476251){\color[rgb]{0,0,0}\makebox(0,0)[lt]{\lineheight{0}\smash{\begin{tabular}[t]{l}\scriptsize $m_1$\end{tabular}}}}%
    \put(0,0){\includegraphics[width=\unitlength,page=3]{Etiquetage_ci.pdf}}%
    \put(0.15609458,0.06413623){\color[rgb]{0,0,0}\makebox(0,0)[lt]{\lineheight{0}\smash{\begin{tabular}[t]{l}$=$\end{tabular}}}}%
    \put(0,0){\includegraphics[width=\unitlength,page=4]{Etiquetage_ci.pdf}}%
    \put(0.76859465,0.18601233){\color[rgb]{0,0,0}\makebox(0,0)[lt]{\lineheight{0}\smash{\begin{tabular}[t]{l}\scriptsize $m_{i+1}$\end{tabular}}}}%
    \put(0.61859456,0.18601233){\color[rgb]{0,0,0}\makebox(0,0)[lt]{\lineheight{0}\smash{\begin{tabular}[t]{l}\scriptsize $m_{i}$\end{tabular}}}}%
    \put(0.46859371,0.18601234){\color[rgb]{0,0,0}\makebox(0,0)[lt]{\lineheight{0}\smash{\begin{tabular}[t]{l}\scriptsize $m_{i-1}$\end{tabular}}}}%
    \put(0.31859455,0.18601234){\color[rgb]{0,0,0}\makebox(0,0)[lt]{\lineheight{0}\smash{\begin{tabular}[t]{l}\scriptsize $m_1$\end{tabular}}}}%
    \put(0,0){\includegraphics[width=\unitlength,page=5]{Etiquetage_ci.pdf}}%
    \put(0.15609457,0.24538606){\color[rgb]{0,0,0}\makebox(0,0)[lt]{\lineheight{0}\smash{\begin{tabular}[t]{l}$\overset{\text{top.}}{=}$\end{tabular}}}}%
    \put(0.76859466,0.36726233){\color[rgb]{0,0,0}\makebox(0,0)[lt]{\lineheight{0}\smash{\begin{tabular}[t]{l}\scriptsize $m_{i+1}$\end{tabular}}}}%
    \put(0.61859456,0.36726233){\color[rgb]{0,0,0}\makebox(0,0)[lt]{\lineheight{0}\smash{\begin{tabular}[t]{l}\scriptsize $m_{i}$\end{tabular}}}}%
    \put(0.46859372,0.36726234){\color[rgb]{0,0,0}\makebox(0,0)[lt]{\lineheight{0}\smash{\begin{tabular}[t]{l}\scriptsize $m_{i-1}$\end{tabular}}}}%
    \put(0.31859455,0.36726234){\color[rgb]{0,0,0}\makebox(0,0)[lt]{\lineheight{0}\smash{\begin{tabular}[t]{l}\scriptsize $m_1$\end{tabular}}}}%
    \put(0,0){\includegraphics[width=\unitlength,page=6]{Etiquetage_ci.pdf}}%
  \end{picture}%
\endgroup%

%% file: Courbe_di_surface.pdf_tex
\begingroup%
  \makeatletter%
  \providecommand\color[2][]{%
    \errmessage{(Inkscape) Color is used for the text in Inkscape, but the package 'color.sty' is not loaded}%
    \renewcommand\color[2][]{}%
  }%
  \providecommand\transparent[1]{%
    \errmessage{(Inkscape) Transparency is used (non-zero) for the text in Inkscape, but the package 'transparent.sty' is not loaded}%
    \renewcommand\transparent[1]{}%
  }%
  \providecommand\rotatebox[2]{#2}%
  \newcommand*\fsize{\dimexpr\f@size pt\relax}%
  \newcommand*\lineheight[1]{\fontsize{\fsize}{#1\fsize}\selectfont}%
  \ifx\svgwidth\undefined%
    \setlength{\unitlength}{453.54313407bp}%
    \ifx\svgscale\undefined%
      \relax%
    \else%
      \setlength{\unitlength}{\unitlength * \real{\svgscale}}%
    \fi%
  \else%
    \setlength{\unitlength}{\svgwidth}%
  \fi%
  \global\let\svgwidth\undefined%
  \global\let\svgscale\undefined%
  \makeatother%
  \begin{picture}(1,0.17628758)%
    \lineheight{1}%
    \setlength\tabcolsep{0pt}%
    \put(0,0){\includegraphics[width=\unitlength,page=1]{Courbe_di_surface.pdf}}%
    \put(0.6068936,0.14439381){\color[rgb]{0,0,0}\makebox(0,0)[lt]{\lineheight{1.25}\smash{\begin{tabular}[t]{l}$\scriptstyle d_i$\end{tabular}}}}%
    \put(0.29439349,0.05064377){\color[rgb]{0,0,0}\makebox(0,0)[lt]{\lineheight{1.25}\smash{\begin{tabular}[t]{l}$\scriptstyle i$\end{tabular}}}}%
    \put(0,0){\includegraphics[width=\unitlength,page=2]{Courbe_di_surface.pdf}}%
  \end{picture}%
\endgroup%

%% file: Courbe_y_surface.pdf_tex
\begingroup%
  \makeatletter%
  \providecommand\color[2][]{%
    \errmessage{(Inkscape) Color is used for the text in Inkscape, but the package 'color.sty' is not loaded}%
    \renewcommand\color[2][]{}%
  }%
  \providecommand\transparent[1]{%
    \errmessage{(Inkscape) Transparency is used (non-zero) for the text in Inkscape, but the package 'transparent.sty' is not loaded}%
    \renewcommand\transparent[1]{}%
  }%
  \providecommand\rotatebox[2]{#2}%
  \newcommand*\fsize{\dimexpr\f@size pt\relax}%
  \newcommand*\lineheight[1]{\fontsize{\fsize}{#1\fsize}\selectfont}%
  \ifx\svgwidth\undefined%
    \setlength{\unitlength}{453.54313407bp}%
    \ifx\svgscale\undefined%
      \relax%
    \else%
      \setlength{\unitlength}{\unitlength * \real{\svgscale}}%
    \fi%
  \else%
    \setlength{\unitlength}{\svgwidth}%
  \fi%
  \global\let\svgwidth\undefined%
  \global\let\svgscale\undefined%
  \makeatother%
  \begin{picture}(1,0.17628758)%
    \lineheight{1}%
    \setlength\tabcolsep{0pt}%
    \put(0,0){\includegraphics[width=\unitlength,page=1]{Courbe_y_surface.pdf}}%
    \put(0.60689363,0.1287688){\color[rgb]{0,0,0}\makebox(0,0)[lt]{\lineheight{1.25}\smash{\begin{tabular}[t]{l}$\scriptstyle y$\end{tabular}}}}%
    \put(0,0){\includegraphics[width=\unitlength,page=2]{Courbe_y_surface.pdf}}%
  \end{picture}%
\endgroup%

%% file: Courbe_z_surface.pdf_tex
\begingroup%
  \makeatletter%
  \providecommand\color[2][]{%
    \errmessage{(Inkscape) Color is used for the text in Inkscape, but the package 'color.sty' is not loaded}%
    \renewcommand\color[2][]{}%
  }%
  \providecommand\transparent[1]{%
    \errmessage{(Inkscape) Transparency is used (non-zero) for the text in Inkscape, but the package 'transparent.sty' is not loaded}%
    \renewcommand\transparent[1]{}%
  }%
  \providecommand\rotatebox[2]{#2}%
  \newcommand*\fsize{\dimexpr\f@size pt\relax}%
  \newcommand*\lineheight[1]{\fontsize{\fsize}{#1\fsize}\selectfont}%
  \ifx\svgwidth\undefined%
    \setlength{\unitlength}{453.54313407bp}%
    \ifx\svgscale\undefined%
      \relax%
    \else%
      \setlength{\unitlength}{\unitlength * \real{\svgscale}}%
    \fi%
  \else%
    \setlength{\unitlength}{\svgwidth}%
  \fi%
  \global\let\svgwidth\undefined%
  \global\let\svgscale\undefined%
  \makeatother%
  \begin{picture}(1,0.17628758)%
    \lineheight{1}%
    \setlength\tabcolsep{0pt}%
    \put(0,0){\includegraphics[width=\unitlength,page=1]{Courbe_z_surface.pdf}}%
    \put(0.60689362,0.1256438){\color[rgb]{0,0,0}\makebox(0,0)[lt]{\lineheight{1.25}\smash{\begin{tabular}[t]{l}$\scriptstyle z$\end{tabular}}}}%
    \put(0,0){\includegraphics[width=\unitlength,page=2]{Courbe_z_surface.pdf}}%
  \end{picture}%
\endgroup%

%% file: Action_z_courte.pdf_tex
\begingroup%
  \makeatletter%
  \providecommand\color[2][]{%
    \errmessage{(Inkscape) Color is used for the text in Inkscape, but the package 'color.sty' is not loaded}%
    \renewcommand\color[2][]{}%
  }%
  \providecommand\transparent[1]{%
    \errmessage{(Inkscape) Transparency is used (non-zero) for the text in Inkscape, but the package 'transparent.sty' is not loaded}%
    \renewcommand\transparent[1]{}%
  }%
  \providecommand\rotatebox[2]{#2}%
  \newcommand*\fsize{\dimexpr\f@size pt\relax}%
  \newcommand*\lineheight[1]{\fontsize{\fsize}{#1\fsize}\selectfont}%
  \ifx\svgwidth\undefined%
    \setlength{\unitlength}{481.88976378bp}%
    \ifx\svgscale\undefined%
      \relax%
    \else%
      \setlength{\unitlength}{\unitlength * \real{\svgscale}}%
    \fi%
  \else%
    \setlength{\unitlength}{\svgwidth}%
  \fi%
  \global\let\svgwidth\undefined%
  \global\let\svgscale\undefined%
  \makeatother%
  \begin{picture}(1,0.11815962)%
    \lineheight{1}%
    \setlength\tabcolsep{0pt}%
    \put(0,0){\includegraphics[width=\unitlength,page=1]{Action_z_courte.pdf}}%
    \put(0.9012826,0.00448234){\color[rgb]{0,0,0}\makebox(0,0)[lt]{\lineheight{0}\smash{\begin{tabular}[t]{l}\scriptsize $m_{3}$\end{tabular}}}}%
    \put(0.79540156,0.00448237){\color[rgb]{0,0,0}\makebox(0,0)[lt]{\lineheight{0}\smash{\begin{tabular}[t]{l}\scriptsize $m_{2}$\end{tabular}}}}%
    \put(0.68951922,0.00448237){\color[rgb]{0,0,0}\makebox(0,0)[lt]{\lineheight{0}\smash{\begin{tabular}[t]{l}\scriptsize $m_{1}$\end{tabular}}}}%
    \put(0.56304721,0.04565883){\color[rgb]{0,0,0}\makebox(0,0)[lt]{\lineheight{0}\smash{\begin{tabular}[t]{l}$=$\end{tabular}}}}%
    \put(0,0){\includegraphics[width=\unitlength,page=2]{Action_z_courte.pdf}}%
    \put(0.52481187,0.00448236){\color[rgb]{0,0,0}\makebox(0,0)[lt]{\lineheight{0}\smash{\begin{tabular}[t]{l}\scriptsize $m_{3}$\end{tabular}}}}%
    \put(0.41893087,0.00448238){\color[rgb]{0,0,0}\makebox(0,0)[lt]{\lineheight{0}\smash{\begin{tabular}[t]{l}\scriptsize $m_{2}$\end{tabular}}}}%
    \put(0.3130485,0.00448238){\color[rgb]{0,0,0}\makebox(0,0)[lt]{\lineheight{0}\smash{\begin{tabular}[t]{l}\scriptsize $m_{1}$\end{tabular}}}}%
    \put(0.16598833,0.00448239){\color[rgb]{0,0,0}\makebox(0,0)[lt]{\lineheight{0}\smash{\begin{tabular}[t]{l}\scriptsize $z$\end{tabular}}}}%
    \put(0,0){\includegraphics[width=\unitlength,page=3]{Action_z_courte.pdf}}%
  \end{picture}%
\endgroup%

%% file: Etiquetage_z.pdf_tex
\begingroup%
  \makeatletter%
  \providecommand\color[2][]{%
    \errmessage{(Inkscape) Color is used for the text in Inkscape, but the package 'color.sty' is not loaded}%
    \renewcommand\color[2][]{}%
  }%
  \providecommand\transparent[1]{%
    \errmessage{(Inkscape) Transparency is used (non-zero) for the text in Inkscape, but the package 'transparent.sty' is not loaded}%
    \renewcommand\transparent[1]{}%
  }%
  \providecommand\rotatebox[2]{#2}%
  \newcommand*\fsize{\dimexpr\f@size pt\relax}%
  \newcommand*\lineheight[1]{\fontsize{\fsize}{#1\fsize}\selectfont}%
  \ifx\svgwidth\undefined%
    \setlength{\unitlength}{453.54330709bp}%
    \ifx\svgscale\undefined%
      \relax%
    \else%
      \setlength{\unitlength}{\unitlength * \real{\svgscale}}%
    \fi%
  \else%
    \setlength{\unitlength}{\svgwidth}%
  \fi%
  \global\let\svgwidth\undefined%
  \global\let\svgscale\undefined%
  \makeatother%
  \begin{picture}(1,0.34352326)%
    \lineheight{1}%
    \setlength\tabcolsep{0pt}%
    \put(0,0){\includegraphics[width=\unitlength,page=1]{Etiquetage_z.pdf}}%
    \put(0.94984461,0.00086813){\color[rgb]{0,0,0}\makebox(0,0)[lt]{\lineheight{0}\smash{\begin{tabular}[t]{l}\scriptsize $m_{3}$\end{tabular}}}}%
    \put(0.79984458,0.00086813){\color[rgb]{0,0,0}\makebox(0,0)[lt]{\lineheight{0}\smash{\begin{tabular}[t]{l}\scriptsize $m_{2}$\end{tabular}}}}%
    \put(0.64984459,0.00086813){\color[rgb]{0,0,0}\makebox(0,0)[lt]{\lineheight{0}\smash{\begin{tabular}[t]{l}\scriptsize $m_{1}$\end{tabular}}}}%
    \put(0.59984377,0.11961685){\color[rgb]{0,0,0}\makebox(0,0)[lt]{\lineheight{1.25}\smash{\begin{tabular}[t]{l}\tiny $h$\end{tabular}}}}%
    \put(0.67484375,0.11961685){\color[rgb]{0,0,0}\makebox(0,0)[lt]{\lineheight{1.25}\smash{\begin{tabular}[t]{l}\tiny $g$\end{tabular}}}}%
    \put(0.79984372,0.09461686){\color[rgb]{0,0,0}\makebox(0,0)[lt]{\lineheight{1.25}\smash{\begin{tabular}[t]{l}\tiny $f$\end{tabular}}}}%
    \put(0.79984372,0.11961685){\color[rgb]{0,0,0}\makebox(0,0)[lt]{\lineheight{1.25}\smash{\begin{tabular}[t]{l}\tiny $e$\end{tabular}}}}%
    \put(0.8748437,0.11961685){\color[rgb]{0,0,0}\makebox(0,0)[lt]{\lineheight{1.25}\smash{\begin{tabular}[t]{l}\tiny $d$\end{tabular}}}}%
    \put(0.8748437,0.09461686){\color[rgb]{0,0,0}\makebox(0,0)[lt]{\lineheight{1.25}\smash{\begin{tabular}[t]{l}\tiny $c$\end{tabular}}}}%
    \put(0.66234376,0.09461686){\color[rgb]{0,0,0}\makebox(0,0)[lt]{\lineheight{1.25}\smash{\begin{tabular}[t]{l}\tiny $b$\end{tabular}}}}%
    \put(0.58734371,0.09461673){\color[rgb]{0,0,0}\makebox(0,0)[lt]{\lineheight{1.25}\smash{\begin{tabular}[t]{l}\tiny $a$\end{tabular}}}}%
    \put(0,0){\includegraphics[width=\unitlength,page=2]{Etiquetage_z.pdf}}%
    \put(0.49046874,0.06024198){\color[rgb]{0,0,0}\makebox(0,0)[lt]{\lineheight{0}\smash{\begin{tabular}[t]{l}$=$\end{tabular}}}}%
    \put(0,0){\includegraphics[width=\unitlength,page=3]{Etiquetage_z.pdf}}%
    \put(0.94984457,0.18211813){\color[rgb]{0,0,0}\makebox(0,0)[lt]{\lineheight{0}\smash{\begin{tabular}[t]{l}\scriptsize $m_{3}$\end{tabular}}}}%
    \put(0.79984458,0.18211813){\color[rgb]{0,0,0}\makebox(0,0)[lt]{\lineheight{0}\smash{\begin{tabular}[t]{l}\scriptsize $m_{2}$\end{tabular}}}}%
    \put(0.6498446,0.18211813){\color[rgb]{0,0,0}\makebox(0,0)[lt]{\lineheight{0}\smash{\begin{tabular}[t]{l}\scriptsize $m_{1}$\end{tabular}}}}%
    \put(0,0){\includegraphics[width=\unitlength,page=4]{Etiquetage_z.pdf}}%
    \put(0.48734469,0.241492){\color[rgb]{0,0,0}\makebox(0,0)[lt]{\lineheight{0}\smash{\begin{tabular}[t]{l}$\overset{\text{top.}}{=}$\end{tabular}}}}%
    \put(0,0){\includegraphics[width=\unitlength,page=5]{Etiquetage_z.pdf}}%
    \put(0.43734377,0.18211813){\color[rgb]{0,0,0}\makebox(0,0)[lt]{\lineheight{0}\smash{\begin{tabular}[t]{l}\scriptsize $m_{3}$\end{tabular}}}}%
    \put(0.2873446,0.18211813){\color[rgb]{0,0,0}\makebox(0,0)[lt]{\lineheight{0}\smash{\begin{tabular}[t]{l}\scriptsize $m_{2}$\end{tabular}}}}%
    \put(0.13734458,0.18211813){\color[rgb]{0,0,0}\makebox(0,0)[lt]{\lineheight{0}\smash{\begin{tabular}[t]{l}\scriptsize $m_{1}$\end{tabular}}}}%
  \end{picture}%
\endgroup%

%% file: Courbe_e_surface.pdf_tex
\begingroup%
  \makeatletter%
  \providecommand\color[2][]{%
    \errmessage{(Inkscape) Color is used for the text in Inkscape, but the package 'color.sty' is not loaded}%
    \renewcommand\color[2][]{}%
  }%
  \providecommand\transparent[1]{%
    \errmessage{(Inkscape) Transparency is used (non-zero) for the text in Inkscape, but the package 'transparent.sty' is not loaded}%
    \renewcommand\transparent[1]{}%
  }%
  \providecommand\rotatebox[2]{#2}%
  \newcommand*\fsize{\dimexpr\f@size pt\relax}%
  \newcommand*\lineheight[1]{\fontsize{\fsize}{#1\fsize}\selectfont}%
  \ifx\svgwidth\undefined%
    \setlength{\unitlength}{453.54313407bp}%
    \ifx\svgscale\undefined%
      \relax%
    \else%
      \setlength{\unitlength}{\unitlength * \real{\svgscale}}%
    \fi%
  \else%
    \setlength{\unitlength}{\svgwidth}%
  \fi%
  \global\let\svgwidth\undefined%
  \global\let\svgscale\undefined%
  \makeatother%
  \begin{picture}(1,0.17628758)%
    \lineheight{1}%
    \setlength\tabcolsep{0pt}%
    \put(0,0){\includegraphics[width=\unitlength,page=1]{Courbe_e_surface.pdf}}%
    \put(0.60689362,0.1225188){\color[rgb]{0,0,0}\makebox(0,0)[lt]{\lineheight{1.25}\smash{\begin{tabular}[t]{l}$\scriptstyle e$\end{tabular}}}}%
    \put(0,0){\includegraphics[width=\unitlength,page=2]{Courbe_e_surface.pdf}}%
  \end{picture}%
\endgroup%